\def\centerarc[#1](#2)(#3:#4:#5)
\DeclareMathOperator\supp{supp}
\newcommand*{\cob}{b}
\newcommand*{\bdf}{\bff}
\newcommand*{\ibdf}{\rho_{\tindex,0}}
\newcommand*{\ind}{\mathcal{A}}
\def\ang#1{{\langle #1 \rangle }}
\newcommand*{\sfr}{\mathsf{r}}
\newcommand*{\sfl}{\mathsf{l}}
\newcommand*{\bbdf}{\hat{\rho}_{\alpha}}
\newcommand*{\tc}{\mathrm{3co}}
\newcommand*{\te}{\mathrm{3co,sf}}
\newcommand*{\sco}{ { o }_{\mathcal{C}^{\tindex}} }
\newcommand*{\tscf}{ {^{\mathrm{3sc}}\pi}_{ \mathrm{ff}_{\tindex}} }
\newcommand*{\itccf}{ {^{\mathrm{3co}}\iota}_{\mathrm{cf}_{\tindex}} }
\newcommand*{\itcodff}{ {^{\mathrm{3co}}\iota}_{\mathrm{dff}_{\tindex}} }
\newcommand*{\idtscdff}{ {^{\mathrm{d3sc}}\iota}_{\mathrm{dff}_{\tindex}} }
\newcommand*{\zf}{\mathrm{3cocf}_{\tindex}}
\newcommand*{\lz}{\zeta_{\tindex} }
\newcommand*{\tsclz}{\zeta_{\tindex} }
\newcommand*{\gtau}{\tau_{\tindex}^{\mathrm{sf}}}
\newcommand*{\bfa}{\mathrm{bf}}
\newcommand*{\sfa}{\mathrm{sf}}
\newcommand*{\econ}{[\overline{\mathbb{R}^{n}};\{0\}]}
\newcommand*{\WFsL}{\mathrm{WF}'_{\sigma, L^{\infty}}}
\newcommand*{\WFs}{\mathrm{WF}_{\sigma}'}
\newcommand*{\WFdff}{\mathrm{WF}_{\dff}'}
\newcommand*{\WFcf}{\mathrm{WF}_{\cf}'}
\newcommand*{\Ellss}{\mathrm{Ell}_{\sigma}} 
\newcommand*{\Elldff}{\mathrm{Ell}_{\dff}}
\newcommand*{\Ellcf}{\mathrm{Ell}_{\cf}}
\newcommand*{\tcosclt}{\tau_{\tindex}^{\mathrm{3co}}}
\newcommand*{\tcoscut}{\tau^{\tindex}_{\mathrm{co,sc}}}
\newcommand*{\tcosclm}{\mu_{\tindex}^{\mathrm{3co}}}
\newcommand*{\tcoscum}{\mu^{\tindex}_{\mathrm{co,sc}}}
\newcommand*{\tscresut}{\tau^{\tindex}_{\mathrm{sf}}}
\newcommand*{\tscresum}{\mu^{\tindex}_{\mathrm{sf}}}
\newcommand*{\tscreslt}{\tau_{\tindex}^{\mathrm{sf}}}
\newcommand*{\tscreslm}{\mu_{\tindex}}
\newcommand*{\tscblz}{\zeta_{\tindex} }
\newcommand*{\tscuz}{\zeta_{\tindex} }
\newcommand*{\tcoblz}{\zeta_{\tindex}^{\mathrm{3co}} }
\newcommand*{\tcolz}{\zeta_{\tindex}^{\mathrm{3co}} }
\newcommand*{\tscttlz}{\zeta_{\tindex} }
\newcommand*{\tcottlz}{\zeta_{\tindex}^{\mathrm{3co}}}
\newcommand*{\tscblt}{\tau_{\tindex}}
\newcommand*{\tscbut}{\tau_{\mathrm{b}}^{\tindex}}
\newcommand*{\tscblm}{\mu_{\tindex}}
\newcommand*{\tscbum}{\mu_{\mathrm{b}}^{\tindex}}
\newcommand*{\tcoblt}{\tau^{\mathrm{3co}}_{\tindex}}
\newcommand*{\tcobut}{\tau_{\mathrm{co,b}}^{\tindex}}
\newcommand*{\tcoblm}{\mu^{\mathrm{3co}}_{\tindex}}
\newcommand*{\tcobum}{\mu_{\mathrm{co,b}}^{\tindex}}
\newcommand*{\tscodlz}{\zeta_{\tindex} }
\newcommand*{\tcoodlz}{\zeta_{\tindex}^{\mathrm{3co}}}
\newcommand*{\tcosclz}{\zeta_{\tindex}^{\mathrm{3co}}}
\newcommand*{\tcoscuz}{\zeta^{\tindex}_{\mathrm{co,sc}}}
\newcommand*{\dtsccb}{\mathrm{d3sc,3co,res}}
\newcommand*{\tB}{G}
\newcommand*{\ttB}{\tilde{G}}
\newcommand*{\hutau}{\hat{\tau}^{\tindex}}
\newcommand*{\humu}{\hat{\mu}^{\tindex}}
\newcommand*{\humuresj}{ \hat{\mu}^{\tindex}_{\mathrm{sf},j} }
\newcommand*{\Xo}{\overline{\mathbb{R}^{n}}}
\newcommand*{\voro}{\tilde{\vor}}
\newcommand*{\tscX}{[ \overline{\mathbb{R}^{n}} ; \mathcal{C}_{\tindex} ]}
\newcommand*{\Cinfty}{\mathcal{C}^{\tindex}_{\infty}}
\newcommand*{\Co}{\mathcal{C}^{\tindex}_0}
\newcommand*{\tvor}{\tilde{\vor}}
\newcommand*{\vom}{\mathsf{m}}
\newcommand*{\tbff}{\tilde{\rho}_{\ff}}
\newcommand*{\bcv}{ \Sigma_{\sigma} }
\newcommand*{\brpm}{ \mathcal{R}_{\mathrm{2sc,dmf}, \pm} }
\newcommand*{\brp}{ \mathcal{R}_{\mathrm{2sc,dmf}, +} }
\newcommand*{\brm}{ \mathcal{R}_{\mathrm{2sc,dmf}, -} }
\newcommand*{\hutaub}{ \hat{\tau}^{\tindex}_{\mathrm{b}} }
\newcommand*{\humub}{ \hat{\mu}^{\tindex}_{\mathrm{b}} }
\newcommand*{\rhobf}{ \rho_{ \mathrm{b}, \infty } }
\newcommand*{\rhoresf}{ \rho_{\mathrm{sf},\infty} }
\newcommand*{\hutaures}{ \hat{\tau}^{\tindex}_{\mathrm{sf}} }
\newcommand*{\intnb}{ \fint }
\newcommand*{\humures}{\hat{\mu}^{\tindex}_{\mathrm{sf}}}
\newcommand*{\Psfo}{\Psi_{\mathrm{d3sc,3co,res}}}
\newcommand*{\hvor}{\hat{\vor}}
\newcommand*{\sHno}{ \sH_{\intn} }
\newcommand*{\sHto}{ \sH_{\intt} }
\newcommand*{\rf}{\mathrm{rf}_{\tindex}}
\newcommand*{\twosc}{ \mathrm{2sc} }
\newcommand*{\so}{ H_{\mathrm{d3sc,3co,res}} }
\newcommand*{\efN}{ \tilde{N}_{\cf} (\Delta_{z^{\tindex}})}
\newcommand*{\intn}{ |\utau|^{2} + |\umu|_{h^{\tindex}}^{2} }
\newcommand*{\intt}{ |\ltau|^{2} + |\lmu|_{h_{\tindex}}^{2} }
\newcommand*{\intnres}{ | \utaures |^{2} + |\umures|^{2}_{h^{\tindex}} }
\newcommand*{\tindex}{\alpha}
\newcommand*{\fint}{ |\ubtau|^{2} + | \ubmu |_{h^{\tindex}}^2 }
\newcommand*{\Xd}{X}
\newcommand*{\psf}{ \overline{^{\mathrm{d3sc,3co,res}}T^{\ast}} }
\newcommand*{\dmf}{\mathrm{dmf}}
\newcommand*{\dtsccf}{\mathrm{d3sccf}_{\tindex}}
\newcommand{\dff}{\mathrm{dff}_{\tindex}}
\newcommand*{\tcocf}{\mathrm{3cocf}_{\tindex}}
\newcommand*{\mf}{\mathrm{mf}}
\newcommand*{\Pf}{\Psi_{\mathrm{d3sc,3co,res},\delta}}
\newcommand*{\vor}{\mathsf{r}}
\newcommand*{\vol}{\mathsf{l}}
\newcommand*{\vob}{\mathsf{b}}
\newcommand*{\vos}{\mathsf{s}}
\newcommand*{\vov}{\mathsf{v}}
\newcommand*{\ubtau}{\tau^{\tindex}_{\mathrm{b}}}
\newcommand*{\ubmu}{\mu^{\tindex}_{\mathrm{b}}}
\newcommand*{\utaub}{\tau^{\tindex}_{\mathrm{b}}}
\newcommand*{\umub}{\mu^{\tindex}_{\mathrm{b}}}
\newcommand*{\btcocf}{\rho_{\mathrm{3cocf}_{\tindex}}}
\newcommand*{\bff}{\rho_{\alpha}}
\newcommand*{\hbff}{\hat{\rho}_{\tindex}}
\newcommand*{\cf}{\mathrm{cf}_{\tindex}}
\newcommand*{\utaucob}{ \tau^{\tindex}_{\mathrm{co,b}} }
\newcommand*{\umucob}{ \mu^{\tindex}_{\mathrm{co,b}} }
\newcommand*{\utaucosc}{ \tau^{\tindex}_{\mathrm{co,sc}} }
\newcommand*{\umucosc}{ \mu^{\tindex}_{\mathrm{co,sc}} }
\newcommand*{\ltau}{\tau_{\tindex}}
\newcommand*{\lmu}{\mu_{\tindex}}
\newcommand*{\utau}{\tau^{\tindex}}
\newcommand*{\umu}{\mu^{\tindex}}
\newcommand*{\utaures}{ \tau^{\tindex}_{\mathrm{sf}} }
\newcommand*{\umures}{ \mu^{\tindex}_{\mathrm{sf}} }
\newcommand*{\ltaucf}{\ltau}
\newcommand*{\lmucf}{\lmu}
\newcommand*{\inttcf}{ | \ltaucf |^{2} + | \lmucf |^{2}_{h_{\tindex}} }
\newcommand*{\Psf}{\Pf}
\newcommand*{\sH}{\mathsf{H}}
\newcommand*{\sclH}{ \mathsf{H}_{ | \tau_{\tindex} |^{2} + | \mu_{\tindex} |_{h_{\tindex}}^{2} } }
\newcommand*{\scuH}{ \mathsf{H}_{ | \tau^{\tindex} |^{2} + | \mu^{\tindex} |_{h^{\tindex}}^2 } }
\newcommand*{\ff}{\mathrm{ff}_{\tindex}}
\newcommand*{\ep}{ -- }
\numberwithin{equation}{section}
\theoremstyle{plain}
\newtheorem{theorem}{Theorem}[section]
\newtheorem{lemma}{Lemma}[section]
\newtheorem{proposition}{Proposition}[section]
\newtheorem{corollary}{Corollary}[section]
\theoremstyle{plain}
\newtheorem{main theorem}{Theorem}
\theoremstyle{definition}
\newtheorem{definition}{Definition}[section]
\newtheorem{remark}{Remark}[section]
\begin{document}
\title[Second microlocalized Fredholm theory for the three-body problem]{Second microlocalization and Fredholm theory for the three-body problem}
\author{Yilin Ma}
\address{Mathematical Sciences Institute, Australian National University, Acton, ACT 2601, Australia}
\email{yilin.ma@anu.edu.au}
\address{Department of Mathematics, University College London, 25 Gordon Street, London, WC1H 0AY, UK}
\email{yilin.ma@ucl.ac.uk}

\begin{abstract}
This paper studies quantum three-body scattering within a modern microlocal framework. We show that the three-body Helmholtz operator at positive energy gives rise to a pair of Fredholm maps between suitable anisotropic Hilbert spaces. Notably, we consider decay at various faces of spatial infinity separately, made precise via a compactification. Despite the problem's extensive history, new phenomena arise under this perspective, particularly regarding diffraction. Treating these phenomena requires the method of `second microlocalization' introduced by Vasy in \cite{AndrasSM} for the uniform Fredholm analysis of two-body Helmholtz operators at low energy, which does not directly extend to the three-body setting. This paper clarifies this structure.

We construct the conormal three-cone algebra, which serves as a `converse perspective' to the second microlocalization in question. This algebra exhibits a scattering structure at one spatial infinity face and a specific fibered structure at another, connected by a fibered cone. We show that by introducing suitable microlocal blow-ups at fiber infinity, the conormal three-cone algebra can be modified at the symbolic level to construct the desired second microlocalized algebra, which can be further promoted to a calculus. Incorporating variable orders, we use this calculus to prove microlocal propagation estimates with respect to a new flow in phase space. This flow has several radial sets (i.e., equilibria) which behave like saddles, so radial point estimates are required. By combining these estimates with elliptic regularity, and applying the result of Vasy in \cite{AndrasSM} as a black box, we show that the refined Fredholm maps can indeed be constructed.
\end{abstract}
\maketitle

\tableofcontents
\section{Introduction}
\subsection{The three-body problem}
\label{first subsection in the introduction section}
On the Euclidean space $\mathbb{R}^{n}$, consider a family of linear subspaces 
\begin{equation*}
 \{ X_{\tindex}^{\circ} \subset \mathbb{R}^{n} :  \alpha \in \ind \},
\end{equation*}
where $\ind$ is a finite index set, such that
\begin{equation} \label{three-body condition on the subspaces}
X_{\tindex}^{\circ} \cap X_{\tindex'}^{\circ} = \{ 0 \}, \quad \tindex \neq \tindex'.
\end{equation}
For each $\alpha \in \ind$, we will use the convention that
\begin{equation*}
(X^{\tindex})^{\circ} \coloneq (X_{\tindex}^{\circ})^{\perp}, \quad n_{\tindex}\coloneq \text{dim}( X_{\tindex}^{\circ} ), \ n^{\tindex} \coloneq n - n_{\tindex}.
\end{equation*}
Then it is clear that there exists a family of linear decompositions
\begin{equation} \label{distingushed decomposition}
\mathbb{R}^{n} = X_{\tindex}^{\circ} \oplus ( X^{\tindex} )^{\circ},
\end{equation}
with the corresponding decomposition of variables being $ z = (z_{\alpha}, z^{\alpha})$, where 
\begin{equation*}
z_{\alpha} = ( z_{\tindex,1}, ... , z_{\tindex,n_{\tindex}} ), \quad  z^{\alpha} = ( z^{\alpha}_{1}, ... , z^{\alpha}_{n^{\tindex}} ).
\end{equation*}
 \par 
It is extremely useful to introduce the radial compactification
\begin{equation*}
\overline{\mathbb{R}^{n}} \coloneq \mathbb{R}^{n} \sqcup \mathbb{S}^{n-1} \cong \mathbb{B}^{n},
\end{equation*}
where $\mathbb{B}^{n}$ is the $n$-dimensional, closed unit ball. See for instance \cite[Definition 6.48]{PeterNotes} or \cite[\S 2.1]{AndrewNSC} for the precise construction of $\overline{\mathbb{R}^{n}}$. Thus $\overline{\mathbb{R}^{n}}$ is a smooth, compact manifold with boundary $\mathbb{S}^{n-1}$ (the `sphere at infinity' representing the set of directions). Each factor in (\ref{distingushed decomposition}) can also be compactified in this way, and their compactifications will be denoted by 
\begin{equation*}
X_{\tindex} \coloneq \overline{X_{\tindex}^{\circ}}, \quad X^{\tindex}  \coloneq \overline{(X^{\tindex})^{\circ}}.
\end{equation*}
It follows that $X_{\tindex}$ and $X^{\tindex}$ are also smooth, compact manifolds with boundaries, with their boundaries being denoted by $
\mathcal{C}_{\alpha} \coloneq \partial X_{\tindex}$, $\mathcal{C}^{\alpha} \coloneq \partial X^{\tindex}$. Note that $\mathcal{C}_{\alpha}$, $\mathcal{C}^{\alpha}$ can be identified as embedded submanifolds of $\overline{\mathbb{R}^{n}}$. \par

A natural operator one studies under this setting is the Helmholtz operator 
\begin{equation} \label{Helmhotz operator}
P \coloneq \Delta + V - \lambda^2, \quad \lambda > 0,
\end{equation}
where $\Delta$ is the positive Laplacian (i.e., it is negative of the usual Laplacian, which is positive as an operator). Here, the potential $V$ is of the form
\begin{equation} \label{three-body potential full}
V \coloneq \sum_{ \tindex \in \ind } \pi_{\tindex}^{\ast}( V^{\tindex} ),
\end{equation}
where for each $\tindex \in \ind$, we let $\pi_{\tindex} :  X_{\tindex}^{\circ} \oplus (X^{\tindex})^{\circ} \rightarrow (X^{\tindex})^{\circ}$ be the natural projection, and
\begin{equation*}
V^{\alpha} \in  S^{-2-\delta} ( X^{\tindex} )
\end{equation*}
for some arbitrarily small $\delta > 0$, in the sense that $V^{\tindex} \in \mathcal{C}^{\infty}( (X^{\tindex})^{\circ} )$ and
\begin{equation*}
| \partial_{z^{\tindex}}^{\beta^{\tindex}} V^{\tindex} | \leq C_{\beta^{\tindex}} \langle z^{\tindex} \rangle^{-2 - \delta - |\beta^{\tindex}|}
\end{equation*}
for all $\beta^{\tindex} \in \mathbb{N}^{n^{\tindex}}_0$. We shall henceforth always write $V^{\tindex} = \pi_{\tindex}^{\ast}(V^{\tindex})$ for brevity. \par

If $n = 6$ and $n_{\tindex} = n^{\tindex} = 3$ for each $\tindex \in \ind$, then this includes the setup for the genuine quantum mechanical three-body problem with interesting potentials (e.g., Yukawa) and the center of mass removed. Thus in particular, we will often refer to $z_{\tindex}$ as the `free variables' and $z^{\tindex}$ the `interaction variables' for every $\tindex \in \ind$. \emph{It is a general principle in this article that we will use subscript to denote objects related to $X_{\tindex}$ and superscript to denote objects related to $X^{\tindex}$}. \par

The study of scattering theory for the three-body problem has a long history; among its most significant developments are the seminal works of \cite{EnssScattering, SSACompleteness}. Here we refrain from giving an exhaustive list of references.

In this paper, we will prove the following theorem:
\begin{main theorem} \label{main theorem 1}
Let $P$ be defined by (\ref{Helmhotz operator}) with $n_{\tindex} \geq 2$ and $n^{\tindex} \geq 3$. Moreover, suppose that $\Delta_{z^{\tindex}} + V^{\tindex}$ has no bound state nor half-bound state for every $\tindex \in \ind$. Then we can construct a pair of Hilbert spaces $\mathcal{X}_{\pm}$ and $\mathcal{Y}_{\pm}$ explicitly, such that the maps
\begin{equation} \label{introduction; main maps}
P: \mathcal{X}_{\pm} \rightarrow \mathcal{Y}_{\pm}
\end{equation}
are Fredholm. 
\end{main theorem}
We will delay the precise definitions of $\mathcal{X}_{\pm}$ and $\mathcal{Y}_{\pm}$ to \S \ref{subsection; statements of the main theorem} below. \par

Here, we remark that the condition of no bound state for every `subsystem' $\Delta_{z^{\tindex}} + V^{\tindex}$ is imposed for simplicity (and only really relevant for our analysis in \S \ref{decay at the front face section}), while the condition of no half-bound state for $\Delta_{z^{\tindex}} + V^{\tindex}$ is indeed generic.  \par

Theorem \ref{main theorem 1} should also hold in the case where $n_{\tindex} = 1$. However, in this case, the mathematical techniques required differ slightly from that of the $n_{\tindex} \geq 2$ case. Meanwhile, the assumption that $n^{\tindex} \geq 3$ is necessary to apply the results of \cite{AndrasSM}.

In fact, we expect the maps (\ref{introduction; main maps}) to be \emph{invertible}, which we will not prove in this paper. However, once invertibility is shown, we would have a precise description for the solution to $Pu = f \in \mathcal{Y}_{\pm}$, as the spaces $\mathcal{X}_{\pm}$ can be constructed explicitly. Our construction will allow for a large class of $f$. In particular, it holds that $\mathcal{S}(\mathbb{R}^{n}) \subset \mathcal{Y}_{\pm}$. In \S \ref{subsection; Summary of main innovations and future directions}, we will also briefly explain why proving invertibility requires non-trivial effort.

These restrictions are imposed primarily to contain the length of this paper.

It is clear that the constructions of $\mathcal{X}_{\pm}$ and $\mathcal{Y}_{\pm}$ are not unique. Indeed, although they are not phrased as so, the propagation results of \cite{AndrasThesis} are already enough to establish a Fredholm theory. This can be proved systematically if one makes a mild resolution to the three-body calculus (which is similar to the resolution we make in \S \ref{a further resolution at fiber infinity} below) at the fiber infinity of the three-body cotangent bundle, which will allow us to incorporate appropriate variable orders at spatial infinity. Nevertheless, one could also proceed more directly. See \cite{JesseWave, JesseWave2} for how this is done in the context of the Klein-Gordon equation.

However, the tools we develop in this paper will lead to a different, and arguably \emph{more precise} Fredholm theory, in that the construction of $\mathcal{X}_{\pm}$ also accounts for \emph{geometric diffraction} of the bicharacteristic flow. These tools naturally generalize the method of `second microlocalization' developed by Vasy in \cite{AndrasSM}, where the zero-energy problem is considered for the Helmholtz operator in the two-body setting, i.e., when $V \in S^{-2 - \delta}( \overline{\mathbb{R}^{n}} )$ instead. \par

Since this paper is lengthy, we will dedicate a considerable amount of effort in this introduction to motivate our construction. In particular, we will discuss briefly: 
\begin{itemize}
\item the meaning of second microlocalization within the context of the three-body problem, and why it arises naturally; as well as
\item how this second microlocal structure leads to a natural Fredholm framework.
\end{itemize}
Our aim is to provide the readers with a clear roadmap before delving into the details.

\subsection{Review of the two-body problem}
\label{a quick review of the two-body problem subsection}
Suppose for the moment that we have a potential $V \in S^{-\delta}( \overline{\mathbb{R}^{n}} )$ for some small $\delta > 0$ (the study of $P$ with such potentials will be referred to as the `two-body' problem). Then much is known for the solution theory to (\ref{Helmhotz operator}). In particular, there exists anisotropic Hilbert spaces $\mathcal{X}_{\mathrm{sc}}^{m_{\pm}, \mathsf{r}_{\pm}}$ and $\mathcal{Y}^{m_{\pm}, \mathsf{r}_{\pm}}_{\mathrm{sc}}$ such that 
\begin{equation} \label{Fredholm statement two-body case}
P : \mathcal{X}^{m_{\pm}, \mathsf{r}_{\pm} }_{\mathrm{sc}} \rightarrow \mathcal{Y}^{m_{\pm}, \mathsf{r}_{\pm}}_{\mathrm{sc}}
\end{equation}
are Fredholm, and in fact invertible maps. \par

The proof of the above statement relies crucially on the calculus of \emph{scattering operators} $\Psi^{m, \mathsf{r}}_{\mathrm{sc}, \delta}( \overline{\mathbb{R}^{n}} )$ with variable orders (the presence of a small $\delta >0$ is typical in the definition of variable order operators) $\vor \in \mathcal{C}^{\infty}( \mathbb{S}^{n-1} \times \overline{\mathbb{R}^{n}} )$. Here $m \in \mathbb{R}$ is the index of microlocal differential regularity (in the usual sense), while roughly speaking, $\vor$ measures decay microlocally at the spatial infinity $\mathbb{S}^{n-1}$. (However, if $\vor$ is replaced by some $r \in \mathbb{R}$, then $r$ would measure genuine spatial decay, which can nevertheless be microlocalized.) Scattering operators are microlocalized on the phase space $\overline{^{\mathrm{sc}}T^{\ast}} \overline{\mathbb{R}^{n}} \cong \overline{\mathbb{R}^{n}} \times \overline{\mathbb{R}^{n}} $. Thus, we can also understand $\vor$ as smooth functions on the spatial infinity (i.e., the boundary of the first factor) of $\overline{\mathbb{R}^{n}} \times \overline{\mathbb{R}^{n}}$. We remark that it is natural to work with scattering operators since $P \in \Psi^{2,0}_{\mathrm{sc}}( \overline{\mathbb{R}^{n}} )$. \par

Now, Sobolev spaces can also be defined with respect to variable order scattering operators. For $m \in \mathbb{R}$ and $\vor \in \mathcal{C}^{\infty}( \mathbb{S}^{n-1} \times \overline{\mathbb{R}^{n}} )$, such spaces will be denoted by $H^{m,\vor}( {\mathbb{R}^{n}} )$. The fact that (\ref{Fredholm statement two-body case}) are Fredholm can then be characterized quantitatively by a pair of estimates
\begin{equation} 
\label{2 body Fredholm estimate}
\| u \|_{H^{m_{\pm}, \mathsf{r}_{\pm}} } \leq C( \| Pu \|_{ H^{ m_{\pm}  -2 , \mathsf{r}_{\pm} + 1 } } + \| u \|_{H^{M , N }} )
\end{equation}
for all $u \in \mathcal{X}_{\mathrm{sc}}^{m_{\pm}, \vor_{\pm}}
$. Here $M < m_{\pm}$, $N < \vor_{\pm}$, the constant $C > 0$ is independent of $\lambda$, and
\begin{equation*}
\mathcal{X}_{\mathrm{sc}}^{m_{\pm}, \mathsf{r}_{\pm} } \coloneq \{ u \in H^{ m_{\pm} , \mathsf{r}_{\pm}}(  {\mathbb{R}^{n}} ) : Pu \in H^{ m_{\pm} - 2, \mathsf{r}_{\pm}+1 } (  {\mathbb{R}^{n}} ) \}, \quad \mathcal{Y}_{\mathrm{sc}}^{ m_{\pm} , \mathsf{r}_{\pm}} \coloneq H^{ m_{\pm} - 2, \mathsf{r}_{\pm}+1}( {\mathbb{R}^{n}} ).
\end{equation*}
See \cite{AndrewNSC, AndrasBook} for such a characterization of the Fredholm theory. Here we just remark it is crucial that 
\begin{equation*}
H^{m_{\pm}, \vor_{\pm}} (  {\mathbb{R}^{n}} ) \subset H^{M , N } ( {\mathbb{R}^{n}} )
\end{equation*}
are compact inclusions. From the quantitative perspective, (\ref{2 body Fredholm estimate}) can be seen as an alternative to elliptic regularity estimate for non-elliptic operators whose characteristic sets have well-behaved bicharacteristic flow, the proof of which will follow from a combination of elliptic regularity, propagation of regularity and radial point estimates.  \par

More concretely, let $\zeta$ be the variable dual to $z$. Then it can be checked that
\begin{equation} \label{Elliptic set of P in the scattering sense}
\Sigma_{\mathrm{sc}}( \lambda ) \coloneq \{ (y,\zeta) \in \mathbb{S}^{n-1} \times \mathbb{R}^{n} :  |\zeta| = \lambda \}
\end{equation}
is the characteristic set for $P$ in the scattering sense. Indeed, clearly $P$ is elliptic at the fiber infinity $\overline{\mathbb{R}^{n}} \times \mathbb{S}^{n-1}$, while at the spatial infinity $\mathbb{S}^{n-1} \times \mathbb{R}^{n}$, the principal symbol $p = |\zeta|^{2} - \lambda^{2}$ is only non-vanishing on (\ref{Elliptic set of P in the scattering sense}). For $x \coloneq |z|^{-1}$, the rescaled Hamiltonian vector field ${\mathsf{H}_{p}} \coloneq \frac{1}{x} H_{p}|_{x = 0}$ defines a complete flow on $\Sigma_{\mathrm{sc}}(\lambda)$, with bicharacteristics (i.e., integral curves of $\mathsf{H}_{p}$) converging to the sink$(-)$/source$(+)$ $\mathcal{R}_{\mathrm{sc},\pm}(\lambda)$ asymptotically in forward$(-)$/backward$(+)$ times. The sets $\mathcal{R}_{\mathrm{sc},\pm}(\lambda)$ are referred to as the outgoing$(-)$/incoming$(+)$ \emph{radial sets}. \par

\begin{figure}
\includegraphics{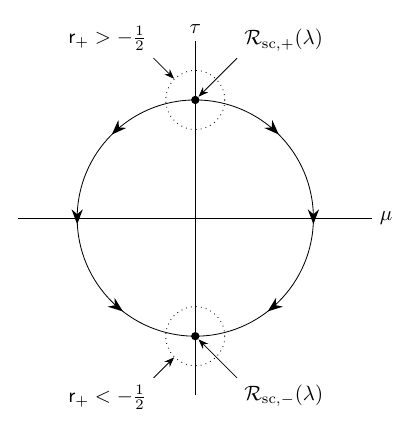}
\caption{The projection of $\Sigma_{\mathrm{sc}}(\lambda)$ to momentum space and the behavior of the bicharacteristic flow there. Forward bicharacteristics travel from $\mathcal{R}_{\mathrm{sc}, +}(\lambda)$ to $\mathcal{R}_{\mathrm{sc}, -}(\lambda)$ along a circle of radius $\lambda$. The variable orders $\mathsf{r}_{\pm}$ satisfy threshold conditions at $\mathcal{R}_{\mathrm{sc},\pm}(\lambda)$.} 
\end{figure}

The radial sets $\mathcal{R}_{\mathrm{sc}, \pm}(\lambda)$ can be most conveniently characterized with projective coordinates: Let $y = z/|z|$ and $( x, y, \tau, \mu )$ be phase space variables defined with respect to the degenerate form $\tau dx/x^2 + \mu \cdot dy/x$. Then we have 
\begin{equation*}
\mathcal{R}_{\mathrm{sc}, \pm}(\lambda) = \{ (y, \zeta) \in \mathbb{S}^{n-1} \times \mathbb{R}^{n} : \tau = \pm \lambda, \mu = 0 \}.
\end{equation*} 
Now, real principal type propagation of regularity estimate applies away from $\mathcal{R}_{\mathrm{sc}, \pm}( \lambda )$. On the other hand, the different, \emph{radial type} propagation estimates must be used at $\mathcal{R}_{\mathrm{sc}, \pm}(\lambda)$, which will have the threshold requirements of:
\begin{gather*}
\text{$\mathsf{r}_{+} > -1/2$ at $\mathcal{R}_{\mathrm{sc}, +}(\lambda)$ and $\mathsf{r}_{+} < - 1/2$ at $\mathcal{R}_{\mathrm{sc},-}(\lambda)$ for the forward time estimates;} \\
\text{$\mathsf{r}_{-} < -1/2$ at $\mathcal{R}_{\mathrm{sc},+}(\lambda)$ and $\mathsf{r}_{-} > -1/2$ at $\mathcal{R}_{\mathrm{sc}, -}(\lambda)$ for the backward time estimates.}
\end{gather*}
In particular, this forces $\mathsf{r}_{\pm}$ to be functions if we wish to obtain \emph{global} estimates (i.e., if we require both the above and below threshold estimates at $\mathcal{R}_{\mathrm{sc}, \pm}(\lambda)$ to be valid for \emph{some fixed choices of} $\mathsf{r}_{\pm}$). Moreover, $\mathsf{r}_{\pm}$ need to be monotonically decreasing$(+)$/increasing$(-)$ along the bicharacteristic flow. 
\par

We remark that propagation of regularity, classically also known as propagation of singularities, has been intensely studied in the late 1900s. See \cite{HormanderVolume3} and the reference therein for more details. A modern treatment can also be found in \cite{AndrasBook, PeterNotes}. On the other hand, radial point estimates were pioneered by Melrose in \cite{Melrose94}, though a natural precursor is the Mourre estimate (in fact, Melrose's radial point estimates could also be viewed as `microlocal Mourre estimates'). See \cite{Mourre1,Mourre2} for the classical results of Mourre, and also the paper \cite{SimonNbody} of Perry--Sigal--Simon, in which they extend Mourre's results to the many-body setting. Nevertheless, microlocal estimates which contain both propagation of regularity and radial point estimates were not established until much later. \par

The scattering calculus will be reviewed in \S \ref{subsection the scattering calculus}, though we also refer the readers to \cite{AndrasBook} for a comprehensive treatment.
\par

\subsection{The necessity of three-body operators} In the three-body case, the potential $V$ defined in (\ref{three-body potential full}) is not smooth on $\overline{\mathbb{R}^{n}}$ in general. Thus, the scattering calculus is no longer suitable for the study of $P$, since $P$ is no longer a scattering operator. To overcome this obstacle, an important observation made by Vasy in \cite{AndrasThesis} is to consider the blow-up 
\begin{equation}
\beta_{\text{3sc}}: [ \overline{\mathbb{R}^{n}}; \mathcal{C} ] \rightarrow \overline{\mathbb{R}^{n}},
\end{equation}
where we are writing
\begin{equation} \label{three-body blowup}
\mathcal{C}  \coloneq \{ \mathcal{C}_{\tindex} :  \tindex \in \ind \}.
\end{equation}
By (\ref{three-body condition on the subspaces}), we know that $\mathcal{C}_{ \tindex } \cap \mathcal{C}_{ \tindex' } = \emptyset$ for all $\tindex \neq \tindex'$. Thus $\mathcal{C}$ is a disjoint union of $\mathcal{C}_{\tindex}$ over $\tindex \in \ind$ (in particular, it is often enough to assume that $\mathcal{C} = \mathcal{C}_{\tindex}$). Let $\mf$ (the `main face'), $\ff$ (the `front face in the direction of $\mathcal{C}_{\tindex}$') be the lifts of $\mathbb{S}^{n-1}$ and $\mathcal{C}_{\tindex}$ to $[ \overline{\mathbb{R}^{n}} ; \mathcal{C} ]$ respectively, and let $\mathrm{ff}$ be the disjoint union of $\ff$ over $\tindex \in \ind$. Then $[ \overline{\mathbb{R}^{n}} ; \mathcal{C} ]$ is a smooth, compact manifold with corners and boundary faces $\mf$, $\mathrm{ff}$. Moreover, it is not hard to check that
\begin{equation*}
V \in S^{-2-\delta, 0}( [ \overline{\mathbb{R}^{n}} ; \mathcal{C} ] ),
\end{equation*}
where $-2 -\delta$ is the order of decay at $\mf$ and $0$ is the order of decay at $\mathrm{ff}$. See the beginning of \S \ref{section microlocal preliminaries} below for clarification on this notation. \par

Vasy constructed the spaces of conormal \emph{three-body} operators $\Psi^{m,r}_{\mathrm{3scc}}( [ \overline{\mathbb{R}^{n}} ; \mathcal{C} ] )$ such that $P \in \Psi^{2,0}_{\mathrm{3scc}}( [ \overline{\mathbb{R}^{n}} ; \mathcal{C} ] )$. Here $m$ is again the index of microlocal differential regularity, while $r$ measures decay at the \emph{full boundary} of $[ \overline{\mathbb{R}^{n}} ; \mathcal{C} ]$ (i.e., simutantously at both $\mf$ and $\mathrm{ff}$). Vasy proved that regularity of solutions to $Pu = f \in \mathcal{S}( \mathbb{R}^{n} )$ propagates microlocally along bicharacteristics which are `broken' at $\mathcal{C} \times \mathbb{R}^{n}$ in some sense. In particular, there is a natural notion of diffraction when the bicharacteristics of $P$ meet $\mathcal{C}$ at an angle. \par

However, since $[\overline{\mathbb{R}^{n}} ; \mathcal{C}]$ has several boundary hypersurfaces, the three-body operators can also be defined such that decay at $\mf$ and $\mathrm{ff}$ are measured separately, i.e., we can consider spaces of operators $\Psi^{m,r,l}_{\mathrm{3scc}}( [ \overline{\mathbb{R}} ; \mathcal{C} ] )$ such that $r,l \in \mathbb{R}$ measure decay at $\mf$ and $\mathrm{ff}$ respectively. The corresponding Sobolev spaces can also be weighted at $\mf$ and $\ff$ separately. One of our aims is to prove an analogue of (\ref{Fredholm statement two-body case}) with \emph{decoupled} indices in this sense.

\subsection{Motivating the second microlocalization}
\label{motivation subsection}
To illustrate the necessity of the second microlocalization we introduce in this paper, let us briefly explain the changes we encounter when attempting to show that regularity for solutions to $Pu=f$ propagates along bicharacteristics in the three-body setting. \par

To this end, let us discuss the bicharacteristic flow of $P$ in more details. At first we shall follow \cite{AndrasThesis}. For brevity, in the below we will often omit writing $\lambda$ in $\Sigma_{\mathrm{sc}}(\lambda)$, $\mathcal{R}_{\mathrm{sc},\pm}(\lambda)$. To begin, let us note that
\begin{equation*}
\Sigma_{ \mathrm{sc} } \cap ( \mathcal{C}_{\tindex} \times \mathbb{R}^{n} ) = \{ ( y_{\tindex},  \zeta )  \in \mathcal{C}_{\tindex} \times \mathbb{R}^{n} : |\zeta_{\tindex}|^2 + |\zeta^{\tindex}|^{2} = \lambda^{2} \},
\end{equation*}
where $\zeta_{\tindex}$, $\zeta^{\tindex}$ are dual to $z_{\tindex}$, $z^{\tindex}$ respectively. Consider the natural projection
\begin{equation*}
{^{\mathrm{sc}}\pi}_{ \mathcal{C}_{\tindex} }^{\perp}: \mathcal{C}_{\tindex} \times \mathbb{R}^{n}_{\zeta} \rightarrow \mathcal{C}_{\tindex} \times \mathbb{R}^{n_{\tindex}}_{\zeta_{\tindex}}.
\end{equation*}
Then we have
\begin{equation*}
{^{\mathrm{sc}}\pi}_{ \mathcal{C}_{\tindex}}^{\perp} \left( \Sigma_{ \mathrm{sc} } \cap ( \mathcal{C}_{\tindex} \times \mathbb{R}^{n} ) \right) = \{ ( y_{\tindex}, \zeta_{\tindex}) \in \mathcal{C}_{\tindex} \times \mathbb{R}^{n_{\tindex}} : |\zeta_{\tindex}| \leq \lambda \}.
\end{equation*}
This set splits into the disjoint union of
\begin{equation*}
\begin{gathered}
\Sigma_{\mathrm{n}} \coloneq \{ ( y_{\tindex}, \zeta_{\tindex} ) \in \mathcal{C}_{\tindex} \times \mathbb{R}^{n_{\tindex}} : |\zeta_{\tindex}| < \lambda  \} \ \text{and} \\
  \Sigma_{\mathrm{t}}\coloneq \{ ( y_{\tindex}, \zeta_{\tindex} ) \in \mathcal{C}_{\tindex} \times \mathbb{R}^{n_{\tindex}} : |\zeta_{\tindex}| = \lambda \},
\end{gathered}
\end{equation*}
where $\Sigma_{\mathrm{n}}$ is the `normal' (in fact, `transversal' would be a more accurate description, though then there is a clash of notations) part, and $\Sigma_{\mathrm{t}}$ is the `tangential' part. One can then show that a \emph{free} bicharacteristic segment $\gamma \subset \Sigma_{\mathrm{sc}}$ (i.e., a segment of some integral curve of $\mathsf{H}_{p}$) that is away from the radial sets $\mathcal{R}_{\mathrm{sc}, \pm}$ falls under three categories near $\mathcal{C}_{\tindex} \times \mathbb{R}^{n}$: 
\begin{enumerate}
\item Free propagation: If $ \gamma \cap ( \mathcal{C} \times \mathbb{R}^{n} ) = \emptyset$, then regularity propagates along $\gamma$ as in the two-body, or free setting. 
\item Transversal propagation: If $\gamma \cap ( \mathcal{C}_{\tindex} \times \mathbb{R}^{n} ) \neq \emptyset$ and $\gamma \not\subset \mathcal{C}_{\tindex} \times \mathbb{R}^{n}$, then $\gamma$ intersects $\mathcal{C}_{\tindex} \times \mathbb{R}^{n}$ at a unique $( y_{\tindex,0}, \zeta_{0} ) \in \Sigma_{\mathrm{n}}$ such that ${^{\mathrm{sc}}\pi_{\mathcal{C}_{\tindex}}^{\perp} }( y_{\tindex,0}, \zeta_{0} ) = ( y_{\tindex,0}, \zeta_{\tindex,0} )$. Regularity propagates along $\gamma$ as in the free setting until $\gamma$ intersects $\mathcal{C}_{\tindex} \times \mathbb{R}^{n}$, at which point diffraction phenomena occur.
\item Tangential propagation: If $\gamma \subset \mathcal{C}_{\tindex} \times \mathbb{R}^{n}$, then it holds that $\gamma \subset \mathcal{C}_{\tindex} \times \mathbb{R}^{n_{\tindex}}$ since $\mathcal{C}_{\tindex}$ is totally geodesic. Thus ${^{\mathrm{sc}}\pi_{\mathcal{C}_{\tindex}}^{\perp}}( \gamma ) \subset \Sigma_{\mathrm{t}}$, and regularity propagates along $\gamma$ as if it were a free bicharacteristic curve on $\mathcal{C}_{\tindex}$, i.e., an integral curve of
\begin{equation*}
\mathsf{H}_{p_{\tindex}} \coloneq \frac{1}{x_{\tindex}} H_{|\zeta_{\tindex}|^2} \big|_{x_{\tindex} = 0}, \quad x_{\tindex} \coloneq |z_{\tindex}|^{-1}.
\end{equation*}
\end{enumerate}
Using methods inspired by the proof of the Mourre estimate, Vasy showed that propagation of regularity (not including radial set estimates at the lifts of $\mathcal{R}_{\mathrm{sc}, \pm}$) for solutions to $Pu=f$ can be completely described under the above interpretation, provided we work on the scale of Sobolev spaces with spatial indices $r=l$ (i.e., just the scattering Sobolev spaces). \par

However, it is equally natural, at least in cases (1) and (2) above, to adopt an alternate perspective in the presence of decoupled indices, i.e., when $r \neq l$. To see this, suppose we consider a different rescaling of the Hamiltonian vector field
\begin{equation} \label{restriction of the mf Hamiltonian vector field}
\mathsf{H}_{p,\mf} \coloneq \frac{1}{x^{\tindex}} H_{p} \big|_{x^{\tindex} = 0},
\end{equation}
where $x^{\tindex} \coloneq |z^{\tindex}|^{-1}$ is a local defining function for $\mf$. Then it can be checked that $\mathsf{H}_{p,\mf}$ is tangent to $\mathrm{mf} \times \mathbb{R}^{n}$. Moreover,
if $\beta_{\mathrm{3sc}}$ also denotes the extended blow-down map
\begin{equation} \label{intro; beta 3sc is a natural blow down map}
\beta_{\mathrm{3sc}}: [ \overline{\mathbb{R}^{n}} ; \mathcal{C} ] \times \overline{\mathbb{R}^{n}} \rightarrow  \overline{\mathbb{R}^{n}} \times \overline{ \mathbb{R}^{n} },
\end{equation}
then $\mathsf{H}_{p,\mf}$ defines a complete flow on the lift of $ \Sigma_{\mathrm{sc}}$ through $\beta_{\mathrm{3sc}}$, which is usually denoted by $\beta_{\mathrm{3sc}}^{\ast}( \Sigma_{\mathrm{sc}} )$, and that the free bicharacteristics lift to become integral curves of $\mathsf{H}_{p, \mathrm{mf}}$ as well. Here, it is worth remarking that 
\begin{equation*}
\overline{ ^{\mathrm{3sc}}T^{\ast} }[ \overline{\mathbb{R}^{n}} ; \mathcal{C} ] = [ \overline{ ^{\mathrm{3sc}}T^{\ast} }[ \overline{\mathbb{R}^{n}} ; \mathcal{C} ] ; \overline{ ^{\mathrm{sc}}T^{\ast}}_{\mathcal{C}} \overline{\mathbb{R}^{n}} ] \cong [ \overline{\mathbb{R}^{n}} ; \mathcal{C} ] \times \overline{\mathbb{R}^{n}}
\end{equation*}
is also the phase space for the conormal three-body operators (i.e., where these operators are naturally microlocalized on). In particular, (\ref{intro; beta 3sc is a natural blow down map}) is also a natural blow-down map. \par

\begingroup
\begin{center}
\begin{figure}
\includegraphics{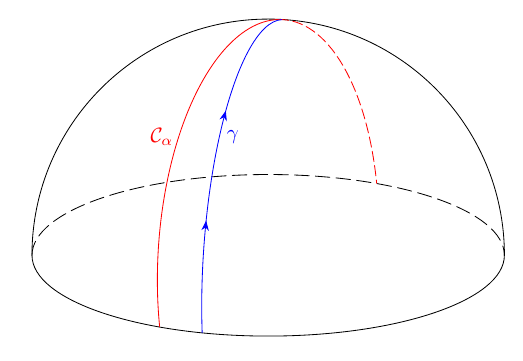}
\caption{A hemisphere in $\partial \overline{\mathbb{R}^{3}}$ when $\mathcal{C}_{\tindex}$ has dimension 1, i.e., a great circle. The curve $\gamma$ intersects $\mathcal{C}_{\tindex}$ at the radial set $\mathcal{R}_{\mathrm{n},+}$.}
\end{figure}
\begin{figure}
\includegraphics{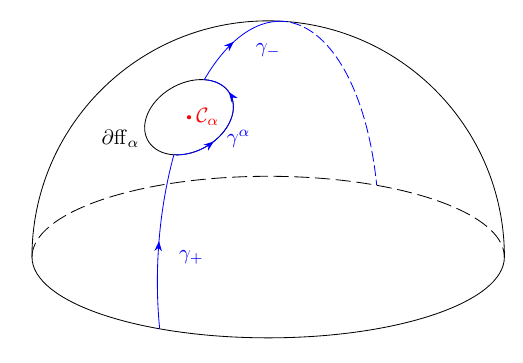}
\caption{The lift of a hemisphere in $\partial \overline{\mathbb{R}^{3}}$ to $[ \overline{\mathbb{R}^{3}} ; \mathcal{C}_{\tindex} ]$ when $\mathcal{C}_{\tindex}$ has dimension 0, i.e., just a point. If $Pu$ is sufficiently regular, then the microlocal regularity of $u$ propagates along $\gamma_{\pm}$, and upon reaching $\mathcal{C}_{\tindex}$, propagates to the `other side' of $\mathcal{C}_{\tindex}$ by traversing around $\partial \ff$ via curves like $\gamma^{\tindex}$.}
\label{figure3}
\end{figure}
\end{center}
\endgroup

Now, such a lift does not change the situation in case (1) above, since $\beta_{\mathrm{3sc}}$ restricts to the identity on $\gamma$ in this case. In case (2), it would be useful to introduce projective coordinates separately in the free and interaction position variables, i.e., we shall write $z_{\tindex} = ( x_{\tindex}, y_{\tindex} )$ and $z^{\tindex} = ( x^{\tindex}, y^{\tindex} )$ in the obvious senses. Moreover, let $\bdf \coloneq x_{\tindex}/x^{\tindex}$. Then it can be shown that
\begin{equation} \label{coordinates near 3sc bundle}
( \bdf, y_{\tindex}, x^{\tindex}, y^{\tindex}, \zeta_{\tindex} , \tau^{\tindex}, \mu^{\tindex} )
\end{equation}
are valid coordinates on $[ \overline{\mathbb{R}^{n}} ; \mathcal{C} ] \times \mathbb{R}^{n}$ near $\partial \ff \times \mathbb{R}^{n}$ with respect to the form 
\begin{equation*}
\zeta_{\tindex} \cdot d z_{\tindex} + \tau^{\tindex} \frac{dx^{\tindex}}{ (x^{\tindex})^2 } + \mu^{\tindex} \cdot \frac{dy^{\tindex}}{x^{\tindex}}.
\end{equation*}
Furthermore, $\bdf$ and $x^{\tindex}$ are local defining functions for $\ff \times \mathbb{R}^{n}$ and $\mf \times \mathbb{R}^n$ respectively. The advantage of working with (\ref{coordinates near 3sc bundle}) is that we can now define the \emph{normal} radial sets by
\begin{equation*}
\mathcal{R}_{\mathrm{n},\pm} \coloneq \{ \bdf = 0, x^{\tindex} = 0, \tau^{\tindex} = \pm ( \lambda^{2} - |\zeta_{\tindex}|^2 )^{\frac{1}{2}}, \mu^{\tindex} = 0  \} \subset \partial \ff \times \mathbb{R}^{n}.
\end{equation*}
It can be checked that $\mathcal{R}_{\mathrm{n},\pm}$ are saddle points for the flow of $\mathsf{H}_{p,\mf}$. Moreover, integral curves of $\mathsf{H}_{p,\mf}$ can only enter$(+)$/exit$(-)$ $\partial \ff \times \mathbb{R}^{n}$ via $\mathcal{R}_{\mathrm{n}, \pm}$. \par

Now, let us return to case (2) above, and let $\gamma$ be the bicharacteristic segment in question. Suppose we view $\ff \times \mathbb{R}^{n}_{\zeta}$ through the identification
\begin{equation*} \label{obvious fibration}
\ff \times \mathbb{R}^{n}_{\zeta} \cong \mathcal{C}_{\tindex} \times \mathbb{R}^{n_{\tindex}}_{\zeta_{\tindex}} \times {^{\mathrm{sc}}TX^{\tindex}},
\end{equation*}
and define the corresponding natural projection 
\begin{equation} \label{three-body projection}
{ ^{\mathrm{3sc}} \pi_{\ff}^{\perp} } \coloneq { ^{\mathrm{sc}}\pi_{\mathcal{C}_{\tindex}}^{\perp}} \circ  \beta_{\mathrm{3sc}}|_{\ff \times \mathbb{R}^{n} } : \mathcal{C}_{\tindex} \times \mathbb{R}^{n_{\tindex}}_{\zeta_{\tindex}} \times {^{\mathrm{sc}}TX^{\tindex}} \rightarrow \mathcal{C}_{\tindex} \times \mathbb{R}^{n_{\tindex}}_{\zeta_{\tindex}}.
\end{equation}
Then $\beta_{\mathrm{3sc}}^{\ast} \gamma$ gets broken at $(^{\mathrm{3sc}}\pi_{\ff}^{\perp})^{-1}( \Sigma_{\mathrm{n}} )$ into a pair of segments $\gamma_{\pm} \subset \mathrm{mf} \times \mathbb{R}^{n}$ such that $\gamma_{+} \cup \gamma_{-} = \gamma$ when restricted to $\mf^{\circ} \times \mathbb{R}^{n}$. Regularity propagates along $\gamma_{\pm}$ as in the free setting. If $\gamma$ intersects $\mathcal{C}_{\tindex} \times \mathbb{R}^{n}$ at a point whose projection under ${^{\mathrm{3sc}}\pi_{\ff}^{\perp}}$ is $( y_{\tindex,0}, \zeta_{\tindex,0} ) \in \Sigma_{\mathrm{n}}$, then upon viewing $\gamma_{\pm}$ as integral curves of $\mathsf{H}_{p,\mf}$, we have
\begin{equation*}
\lim_{t \rightarrow \pm \infty} \gamma_{\pm}(t) \in \mathcal{R}_{\mathrm{n}, \pm}( y_{\tindex,0}, \zeta_{\tindex,0} ),
\end{equation*}
where we are writing
\begin{equation*}
\mathcal{R}_{\mathrm{n}, \pm}( y_{\tindex}, \zeta_{\tindex} ) \coloneq \mathcal{R}_{\mathrm{n},\pm} \cap ( \{ ( y_{\tindex}, \zeta_{\tindex} ) \} \times { ^{\mathrm{sc}}T^{\ast}_{\mathcal{C}^{\tindex}} X^{\tindex} } ), 
\end{equation*}
i.e., the intersection is taking place at a fiber of (\ref{three-body projection}), which could also be written just as $(^{\mathrm{3sc}}\pi_{\ff}^{\perp})^{-1}(y_{\tindex}, \zeta_{\tindex})$, though we are using more explicit notation here. Moreover, it can be shown that 
\begin{equation} \label{restriction of the mf Hamiltonian flow to a fiber}
\mathsf{H}_{p, \mf}|_{  \{ ( y_{\tindex}, \, \zeta_{\tindex} ) \} \times { ^{\mathrm{sc}}T^{\ast}_{\mathcal{C}^{\tindex}} X^{\tindex} }   } = \mathsf{H}_{|\zeta^{\tindex}|^2}, \quad \sH_{|\zeta^{\tindex}|^2} \coloneq \frac{1}{x^{\tindex}} H_{|\zeta^{\tindex}|^{2}} \big|_{x^{\tindex} = 0}
\end{equation}
for every $( y_{\tindex}, \zeta_{\tindex} ) \in \mathcal{C}_{\tindex} \times \mathbb{R}^{n_{\tindex}}$. Thus, one can also connect the forward time asymptotics of $\gamma_{+}$ with the backward time asymptotics of $\gamma_{-}$ via integral curves of $\mathsf{H}_{|\zeta^{\tindex}|^2}$ (which are still integral curves of $\mathsf{H}_{p,\mf}$ in view of (\ref{restriction of the mf Hamiltonian flow to a fiber})), say $\gamma^{\tindex}$, such that $\gamma^{\tindex}$ lives on $\Sigma_{\partial \ff}(y_{\tindex,0}, \zeta_{\tindex,0})$, where
\begin{equation}
\label{introduction normal characteristic set}
\Sigma_{ \partial \ff } ( y_{\tindex}, \zeta_{\tindex} ) \coloneq \{ ( y_{\tindex}, \zeta_{\tindex},  y^{\tindex} , \zeta^{\tindex} ) \in \{ ( y_{\tindex}, \zeta_{\tindex} ) \} \times { ^{\mathrm{sc}}T^{\ast}_{\mathcal{C}^{\tindex}} X^{\tindex} }  : |\zeta^{\tindex}|^{2} = \lambda^{2} - |\zeta_{\tindex}|^2 \}.
\end{equation}
Notice that $\Sigma_{\partial \ff} (y_{\tindex},\zeta_{\tindex})$ is contained in $\beta_{\mathrm{3sc}}^{\ast}( \Sigma_{\mathrm{sc}} )$, and can be identified as a submanifold of $^{\mathrm{sc}}T^{\ast}_{\mathcal{C}^{\tindex}}X^{\tindex}$. From this perspective, $\Sigma_{\partial \ff}( y_{\tindex}, \zeta_{\tindex} )$ is simply the characteristic set of
\begin{equation} \label{indicial operator of P}
\hat{P}_{\tindex}( |\zeta_{\tindex}| ) \coloneq \Delta_{z^{\tindex}} + V^{\tindex} - ( \lambda^{2} - |\zeta_{\tindex}|^2 )
\end{equation}
if we view (\ref{indicial operator of P}) as an element of $\Psi_{\mathrm{sc}}^{2,0}(X^{\tindex})$. Moreover, we can write $\mathcal{R}_{\mathrm{n},\pm}(y_{\tindex}, \zeta_{\tindex})$ as
\begin{equation*}
 \{ ( y_{\tindex}, \zeta_{\tindex},  y^{\tindex} , \utau, \umu ) \in \{ ( y_{\tindex}, \zeta_{\tindex} ) \} \times { ^{\mathrm{sc}}T^{\ast}_{\mathcal{C}^{\tindex}} X^{\tindex} }  : \tau^{\tindex} = \pm ( \lambda^{2} - |\zeta_{\tindex}|^2 )^{\frac{1}{2}}, \mu^{\tindex} = 0  \},
\end{equation*}
which are the incoming($+$)/outgoing($-$) radial sets of $\hat{P}_{\tindex}( |\zeta_{\tindex}| )$ in this sense as well.  \par

Since we have assumed that $|\zeta_{\tindex}| < \lambda$ (by the criteria for transversal propagation), the operator (\ref{indicial operator of P}) defines a \emph{finite} energy, two-body problem. Thus, we may expect microlocal regularity to propagate also from the `end point' of $\gamma_{+}$ (i.e., the point $\lim_{t \rightarrow \infty} \gamma_{+}(t)$) to the `starting point' of $\gamma_{-}$ (i.e., the point $\lim_{t \rightarrow -\infty} \gamma_{-}(t)$) via bicharacteristics of (\ref{indicial operator of P}) as in the two-body setting. See also Figure \ref{figure3} for a graphical illustration. We remark that this is exactly the setup of \emph{diffraction} as presented in \cite{JaredDiffraction}. \par


Now, one could attempt to carry out propagation estimates following the above dynamical considerations. However, one obstacle, and indeed a major difference between the three-body and the scattering calculi, is that the leading order behavior of a general $A \in \Psi^{m,r,l}_{\mathrm{3sc}}( [ \overline{\mathbb{R}^{n}} ; \mathcal{C} ] )$ at $\mathrm{ff}$ is no longer captured by its principal symbol. In particular, for operators whose (full) symbols are partially classical at $\mf \times \mathbb{R}^{n}$, the principal symbol map would correspond to a restriction at $\mf \times \mathbb{R}^{n}$ upon normalizing with respect to some boundary defining function of $\mf$. In fact, this is the reason why the rescalling (\ref{restriction of the mf Hamiltonian vector field}) was considered. \par

To capture decay at $\mathrm{ff}$, we need to consider the \emph{indicial operators}
\begin{equation} \label{membership of indicial operators}
{ ^{\mathrm{3sc}}\hat{N}_{\ff,l} }(A) \in \mathcal{C}^{\infty}( \mathcal{C}_{\tindex} \times \mathbb{R}^{n_{\tindex}}_{\zeta_{\tindex}} ; \Psi^{m,r-l}_{\mathrm{sc}}( X^{\tindex} ) ), \quad \tindex \in \ind,
\end{equation}
with a large-parameter behavior as $|\zeta_{\tindex}| \rightarrow \infty$ (see \S \ref{parameters-dependent families subsection} for the precise description of large-parameter scattering operators). The indicial operators are multiplicative just like the principal symbol, and we can understand them as `operator-valued symbols' in a sense. In fact, it is not hard to see that
\begin{equation*}
{ ^{\mathrm{3sc}}\hat{N}_{\ff,0}} (P) ( y_{\tindex}, \zeta_{\tindex} ) = \hat{P}_{\tindex}( |\zeta_{\tindex}| ),
\end{equation*}
i.e., the indicial operator of $P$ at $\ff$ is simply the operator (\ref{indicial operator of P}). Hence, we also expect that the two-body phenomena which occur in the three-body bicharacteristic dynamic to be prominent at the level of indicial operators as well.  \par

Let $H^{m,r,l}( \mathbb{R}^{n} )$ be the Sobolev spaces defined with respect to the three-body operators, where $m,r,l \in \mathbb{R}$ measure differential regularity and decay at $\mf$, $\mathrm{ff}$ respectively. Here the presence of variable orders are also possible, though we do not consider them for the current discussion. Then it can be checked that the inclusions
\begin{equation*}
H^{M,N,L}( \mathbb{R}^{n} ) \subset H^{m,r,l}( \mathbb{R}^{n} )
\end{equation*}
are compact only when $M < m$, $N < r$ and $L < l$ (and likewise in the variable orders case). \par

As a consequence, propagation estimates which are made using only the principal symbol map will not be sufficient in obtaining an analogy to (\ref{2 body Fredholm estimate}), as we would not be able to obtain a compact remainder in this way, i.e., the indicial operators must also be involved. The latter is exactly what was done in \cite{AndrasThesis}: positive commutator calculations were carried out using both the principal symbol and the indicial operators. However, such calculations necessitate $r = l$. Thus, in the settings where $r \neq l$, the arguments of \cite{AndrasThesis} cease to be valid, and we would need to make calculations with respects to the principal symbol and the indicial operators separately.

One way to overcome this challenge is to apply techniques similar to that of \cite[\S 7]{Peter3b}, the key of which is the uniform invertibility for the parametrized two-body operators $\hat{P}_{\tindex}( |\zeta_{\tindex}| )${\ep}at least for $|\zeta_{\tindex}| < \lambda$, i.e., the case of transversal propagation. Heuristically, this aligns with the definition that ellipticity at $\ff$ corresponds to the pointwise invertibility of $\hat{P}_{\tindex}(y_{\tindex}, \zeta_{\tindex})$ for every $(y_{\tindex}, \zeta_{\tindex})$, with an inverse that belongs to $\Psi_{\mathrm{sc}}^{-2,0}( \overline{\mathbb{R}^{n}} )$. Unfortunately, the aforementioned invertibility result does not lead to a family of inverses in the correct class. Indeed, we only know that $\hat{P}_{\tindex}$ is uniformly invertible between suitable anisotropic Sobolev spaces, the inverse of which may not be a family of scattering operators. But this can be supplemented by concrete estimates and the fact that $\hat{P}_{\tindex}$ is truly the partial Fourier transform of $P$ in the free variables. We will not go into more details on this.

If the above discussions can be made rigorous, then we would have understood propagation of regularity along free bicharacteristic which intersect $\mathcal{C}_{\tindex} \times \mathbb{R}^{n}$ transversally, and by extension any free bicharacteristics that are away from $\mathcal{C}_{\tindex} \times \mathbb{R}^{n}$ as well (i.e., cases (1) and (2) above). It remains to consider the case of tangential propagation (i.e., case (3) above). The problem is how propagation phenomena should be understood \emph{as they transition from transversal propagation to tangential propagation}. \par

Indeed, recall that propagation estimates in the transversal directions rely crucially on the understanding of the bicharacteristic flow for $\hat{P}_{\tindex}(|\zeta_{\tindex}|)$ where $(y_{\tindex}, \zeta_{\tindex}) \in \Sigma_{\mathrm{t}}$, i.e., when $|\zeta_{\tindex}| < \lambda$. The transition from transversal to tangential propagation thus corresponds to the transition from $(y_{\tindex}, \zeta_{\tindex}) \in \Sigma_{\mathrm{n}}$ to $( y_{\tindex}, \zeta_{\tindex} ) \in \Sigma_{\mathrm{t}}$, or equivalently, the process of letting $|\zeta_{\tindex}| \rightarrow \lambda$ from below. \par

This gives rises to two problems:
\begin{enumerate}
    \item For principal symbol consideration: The sets $\Sigma_{\partial \ff}(y_{\tindex}, \zeta_{\tindex})$ defined in (\ref{introduction normal characteristic set}) degenerate for $( y_{\tindex}, \zeta_{\tindex} ) \in \Sigma_{\mathrm{t}}$ in the sense that they becomes zero-dimensional in the interaction momentum variables, i.e., if $(y_{\tindex}, \zeta_{\tindex}) \in \Sigma_{\mathrm{t}}$, then $\Sigma_{\partial \ff}( y_{\tindex}, \zeta_{\tindex} ) \subset \{ ( y_{\tindex}, \zeta_{\tindex} ) \} \times o_{\mathcal{C}^{\tindex}}$ when viewed as a submanifold of $ \{ ( y_{\tindex}, \zeta_{\tindex} ) \} \times {^{\mathrm{sc}}T^{\ast}_{\mathcal{C}^{\tindex}}}X^{\tindex}$, where $o_{\mathcal{C}^{\tindex}}$ denotes the zero section in ${^{\mathrm{sc}}T^{\ast}_{\mathcal{C}^{\tindex}}}X^{\tindex}$ (i.e., $\mathcal{C}^{\tindex} \times \{ 0 \}$). Thus, the flow of $\sH_{p,\mf}$ is no longer `nice' on $\Sigma_{\partial \ff}(y_{\tindex}, \zeta_{\tindex})$ as it consists entirely of stationary points. Moreover, it can be checked that
   \begin{equation*}
   \beta_{\mathrm{3sc}}^{\ast}( \mathcal{R}_{\mathrm{sc}, \pm} ) \cap ( \ff \times \mathbb{R}^{n} ) \subset \mathcal{R}_{\mathrm{n}, \pm} \cap ( { ^{\mathrm{3sc}} \pi_{\ff}^{\perp} })^{-1}( \Sigma_{\mathrm{t}} ). 
   \end{equation*}
   Thus in particular, by understanding the aforementioned degeneracy, we also understand better the flow of $\sH_{p,\mf}$ at $\beta_{\mathrm{3sc}}^{\ast}( \mathcal{R}_{\mathrm{sc}, \pm} )$ in the process.

    \item For indicial operator consideration: Suppose we view $\hat{P}_{\tindex}(|\zeta_{\tindex}|)$ as a family of scattering operators (which is necessary if $P$ were to be viewed as an element of $\Psi_{\mathrm{3scc}}^{2,0,0}([ \overline{\mathbb{R}^{n}} ; \mathcal{C} ])$). Then we no longer have a uniform solvability theory of $\hat{P}_{\tindex}(|\zeta_{\tindex}|)$ that is analogous to the invertibility (\ref{Fredholm statement two-body case}) as $|\zeta_{\tindex}| \rightarrow \lambda$ from below. 
    \end{enumerate}
This is where second microlocalization comes in.

\subsection{Second microlocalization for the two-body problem}
\label{an overview of second microlocalization subsection}
In \cite{AndrasSM}, Vasy considered the zero-energy problem for the Helmholtz operator, i.e., the uniform behavior of 
\begin{equation} \label{the zero energy Helmhotz operator}
P( \sigma ) \coloneq \Delta + V - \sigma^{2}, \quad \sigma \geq 0
\end{equation}
as $\sigma \rightarrow 0$ from above, and where $V \in S^{-2-\delta}( \overline{\mathbb{R}^{n}} )$. Its characteristic set 
\begin{equation*}
\Sigma_{\mathrm{sc}}( 0 ) = \mathbb{S}^{n-1} \times \{ 0 \}
\end{equation*}
degenerates in the sense that it is zero-dimensional in the momentum variables (i.e., the degeneracy for the flow of $\sH_{p,\mf}$ at $\Sigma_{\partial \ff}(y_{\tindex}, \zeta_{\tindex})$, $(y_{\tindex}, \zeta_{\tindex}) \in \Sigma_{\mathrm{t}}$ discussed above), and therefore consists entirely of stationary bicharacteristics (i.e., points), which is a useless Hamiltonian dynamic in terms of microlocal propagation estimates.  \par

To overcome this problem, Vasy considered the phase space blow-up
\begin{equation} \label{phase space for second microlocalization in the introduction}
[ \overline{\mathbb{R}^{n}} \times \overline{\mathbb{R}^{n}} ; \mathbb{S}^{n-1} \times \{ 0 \} ],
\end{equation}
so polar coordinates are introduced about $\Sigma_{\mathrm{sc}}( 0 )$. As $\sigma \rightarrow 0$, the family of radial sets $\mathcal{R}_{\mathrm{sc}, \pm}(\sigma)$ converge onto the front face created by (\ref{phase space for second microlocalization in the introduction}), and remain to be disjoint there. In particular, upon carrying out the blow-up (\ref{phase space for second microlocalization in the introduction}), the bicharacteristic flow at (the lift of) $\Sigma_{\mathrm{sc}}( 0 )$ becomes useful once again. This resolves the degeneracy at the dynamical level in a sense. \par

However, to perform concrete calculations, one must also consider how such a resolution makes sense at the analytic level of operators. In terms of microlocal analysis, this amounts to constructing an algebra of operators whose elements can be microlocalized at (\ref{phase space for second microlocalization in the introduction}){\ep}a process that is difficult to execute directly. \par

Vasy's approach, therefore, is to adopt a `converse perspective', which starts from the spaces of conormal b-operators $\Psi_{\mathrm{bc}}^{m,l}( \overline{\mathbb{R}^{n}} )$ for $m,l \in \mathbb{R}$. Here, $l$ measures spatial decay, while $m$ is the index of microlocal differential regularity, now measured with respect to the Lie algebra of b-vector fields $\mathcal{V}_{\mathrm{b}}( \overline{\mathbb{R}^{n}} )$, which is the span of $\{ \langle  z \rangle \partial_{z_1}, ... , \langle z \rangle \partial_{z_{n}} \}$ over $\mathcal{C}^{\infty}( \overline{\mathbb{R}^{n}} )$. In particular, if $m \in \mathbb{N}_0$, then we can understand $\Psi_{\mathrm{bc}}^{m,0}( \overline{\mathbb{R}^{n}} )$ as the natural microlocalization for those differential operators of order $m$ generated by $\mathcal{V}_{\mathrm{b}}( \overline{\mathbb{R}^{n}} )$ with coefficients in $\mathcal{C}^{\infty}( \overline{\mathbb{R}^{n}} )$.  \par

The conormal b-operators are microlocalized on the fiber-compactified \emph{b-cotangent bundle} $\overline{ ^{\mathrm{b}}T^{\ast} }\overline{\mathbb{R}^{n}}$ (which needs to be constructed separately). Moreover, it can be shown that 
\begin{equation} 
\label{second microlocalization diffeomorphism in the introduction}
[ \overline{ ^{\mathrm{sc}}T^{\ast}} \overline{\mathbb{R}^{n}} ; o_{\mathbb{S}^{n-1}} ] \cong [ \overline{ ^{\mathrm{b}}T^{\ast} }\overline{\mathbb{R}^{n}} ;  {^{\mathrm{b}}S^{\ast}_{ \mathbb{S}^{n-1} }} \overline{\mathbb{R}^{n}} ].
\end{equation}
Here, $o_{\mathbb{S}^{n-1}}$ is simply the zero section in ${^{\mathrm{sc}}T^{\ast}_{\mathbb{S}^{n-1}}}\overline{\mathbb{R}^{n}}$ (i.e., $\mathbb{S}^{n-1} \times \{ 0 \}$), and thus $[ \overline{ ^{\mathrm{sc}}T^{\ast}} \overline{\mathbb{R}^{n}} ; o_{\mathbb{S}^{n-1}} ]$ is the same manifold as (\ref{phase space for second microlocalization in the introduction}). Moreover, the diffeomorphism in (\ref{second microlocalization diffeomorphism in the introduction}) restricts to the identity map in the interiors. Thus the microlocal structures are changed only at the boundaries.  \par

Now, an important observation is that the blow-up (\ref{second microlocalization diffeomorphism in the introduction}) occurs at a corner face of $\overline{ ^{\mathrm{b}}T^{\ast} }\overline{\mathbb{R}^{n}}$. The characterization for conormality is therefore insensitive to such a blow-up. Indeed, a natural subclass for the second microlocalized operators can already be defined by
\begin{equation} \label{second microlocalized algebra tautology case}
\Psi^{ m, m + l , l }_{\mathrm{sc,b}}( \overline{\mathbb{R}^{n}} ) \coloneq \Psi^{ m,l }_{\mathrm{bc}}( \overline{\mathbb{R}^{n}} ),
\end{equation}
where the main idea is as follows: b-differential regularity can be microlocalized for conormal b-operators, and this corresponds to decay at the fiber infinity ${ ^{\mathrm{b}}S^{\ast} } \overline{\mathbb{R}^{n}}$ for symbols which are conormal to $\overline{ ^{\mathrm{b}}T^{\ast}} \overline{\mathbb{R}^{n}}$. By the above comment on conormality, any amount of decay at ${ ^{\mathrm{b}}S^{\ast}} \overline{\mathbb{R}^{n}}$ is also transferred to the new face created by blowing up ${ ^{\mathrm{b}}S^{\ast}_{\mathbb{S}^{n-1}}} \overline{\mathbb{R}^{n}}${\ep}which we may refer to as the \emph{scattering face} for brevity. Moreover, we can check that the lifts of ${^{\mathrm{sc}}S^{\ast}} \overline{\mathbb{R}^{n}}$ and $\overline{ ^{\mathrm{sc}}T^{\ast} }_{\mathbb{S}^{n-1}} \overline{ \mathbb{R}^{n} }$ to the left hand side of (\ref{second microlocalization diffeomorphism in the introduction}) can be respectively identified with the lift of ${ ^{\mathrm{b}}S^{\ast} } \overline{\mathbb{R}^{n}}$ to the right hand side of (\ref{second microlocalization diffeomorphism in the introduction}) and the scattering face. Thus, regularity at both the lifts of ${^{\mathrm{sc}}S^{\ast}} \overline{\mathbb{R}^{n}}$ and $\overline{ ^{\mathrm{sc}}T^{\ast} }_{\mathbb{S}^{n-1}} \overline{ \mathbb{R}^{n} }$, which can only be understood microlocally, are indeed captured by symbolic calculations. In (\ref{second microlocalized algebra tautology case}), the indices $m$ and $m+l$ measure microlocal decay respectively at these faces. 
\par

On the other hand, $l$ measures spatial decay more globally in the b-sense (typically requiring either the normal of indicial operators). But as the lift of $\overline{ ^{\mathrm{b}}T^{\ast} } \overline{\mathbb{R}^{n}}$ identifies with the new face created by blowing up the left hand side of (\ref{second microlocalized algebra tautology case}){\ep}which we may refer to as the \emph{b-face}, we can understand $l$ as the index which microlocally measures decay at the b-face as well. In particular, this confirms that the microlocal nature of the b-face is non-symbolic.  \par

Thus morally speaking, elements of (\ref{second microlocalized algebra tautology case}) can be viewed as being defined through modifying conormal b-operators purely at the symbolic level, in such a way that the symbols are required to be conormal on (\ref{second microlocalization diffeomorphism in the introduction}), though this is really a tautology at the level of (\ref{second microlocalized algebra tautology case}). This idea generalizes easily to define the class
\begin{equation} \label{introduction: sc,b operators all indices are arbitrary}
\Psi^{m , r, l}_{\mathrm{sc,b}}( \overline{\mathbb{R}^{n}} ), \quad m,r,l \in \mathbb{R},
\end{equation}
where $r$ measures microlocal decay at the scattering face. One could also incorporate variable orders $\mathsf{m}$ and $\mathsf{r}$. However, the index which measure decay at the b-spatial infinity (i.e., $l$), which microlocally also includes the b-face, can only be constant. \par

Correspondingly, we can define a family of Sobolev spaces $H^{ m ,\mathsf{r},l}_{\mathrm{sc,b}}( \overline{\mathbb{R}^{n}} )$. Then Vasy proved that we have a pair of uniform (in the sense that the constant $C > 0$ below is independent of $\sigma$) estimates
\begin{equation} \label{intro; two-body second microlocal Fredholm estimates}
\| u \|_{H^{m_{\pm}, \mathsf{r}_{\pm}, l_{\pm}}_{\mathrm{sc,b}} } \leq C ( \| P( \sigma ) \|_{H_{\mathrm{sc,b}}^{m_{\pm}-2, \mathsf{r}_{\pm}+ 1, l_{\pm} + 2}} + \| u \|_{H_{\mathrm{sc,b}}^{M, N ,L} } )
\end{equation}
for all $u \in \mathcal{X}_{\mathrm{sc,b}}^{m_{\pm}, \vor_{\pm}, l_{\pm}}$ and all $\sigma \in S$, where $S \subset [0,\infty)$ is compact; the orders $\mathsf{r}_{\pm}$, $l_{\pm}$ satisfy suitable monotonicity (required for $\vor_{\pm}$ only) and threshold conditions, in particular
\begin{equation} \label{intro; threshold conditions for l}
| l_{\pm} + 1 | \leq \frac{n-2}{2},
\end{equation}
and that $M, N, L \in \mathbb{R}$ are sufficiently negative (see \cite[Proposition 5.6]{AndrasSM} for the precise statement of these conditions). Here, we also define
\begin{equation*}
\begin{gathered}
\mathcal{X}_{\mathrm{sc,b}}^{m_{\pm}, \vor_{\pm}, l_{\pm}} =\mathcal{X}_{\mathrm{sc,b}}^{m_{\pm}, \vor_{\pm}, l_{\pm}}(\sigma) \coloneq \{ u \in H_{\mathrm{sc,b}}^{m_{\pm}, \vor_{\pm}, l_{\pm}} ( \overline{\mathbb{R}^{n}} ) : P( \sigma ) u \in H_{\mathrm{sc,b}}^{m_{\pm} - 2, \vor_{\pm} + 1, l_{\pm} + 2} (\overline{\mathbb{R}^{n}}) \}, \\
\mathcal{Y}_{\mathrm{sc,b}}^{m_{\pm}, \vor_{\pm}, l_{\pm}} \coloneq H_{\mathrm{sc,b}}^{m_{\pm} - 2, \vor_{\pm} + 1, l_{\pm} + 2}( \overline{\mathbb{R}^{n}} ).
\end{gathered}
\end{equation*}
Then the maps 
\begin{equation} \label{intro; two-body second microlocal Fredholm map}
P(\sigma): \mathcal{X}_{\mathrm{sc,b}}^{m_{\pm}, \vor_{\pm}, l _{\pm}} \rightarrow \mathcal{Y}_{\mathrm{sc,b}}^{m_{\pm}, \vor_{\pm}, l_{\pm}}
\end{equation}
are Fredholm, and in fact invertible if there are no bound state nor half-bound states at zero energy. In other words, an analogue to (\ref{a quick review of the two-body problem subsection}) is satisfied for the zero-energy problem \emph{provided we work on the scale of second microlocalized, sc,b-Sobolev spaces}. Better yet, the invertibility holds uniformly as $\sigma \rightarrow 0$, in the sense that the inverses are uniformly bounded as maps $H_{\mathrm{sc,b}}^{m_{\pm} - 2, \vor_{\pm} + 1, l_{\pm} + 2}( \overline{\mathbb{R}^{n}} ) \rightarrow H_{\mathrm{sc,b}}^{m_{\pm}, \vor_{\pm}, l_{\pm}}( \overline{\mathbb{R}^{n}} )$. So in particular, we also understand how the Fredholm framework transitions from positive to zero-energy.  \par

\begin{remark}
Let $H_{\mathrm{b}}^{m,l}( \overline{\mathbb{R}^{n}} )$, $m,l \in \mathbb{R}$ be the b-Sobolev spaces such that spatial decay is (unconventionally) measured with respect to the scattering, or in this case simply the Euclidean $L^{2}$ (see \S \ref{b-calculus subsection} below for clarifications). Then it is a result of \cite[\S 6.2]{MelroseIndex} that for every $m \in \mathbb{R}$ and $| l + 1 | < (n-2)/2${\ep}exactly the threshold condition (\ref{intro; threshold conditions for l}), the map
\begin{equation} \label{intro; Fredholm map zero energy b setting}
P(0) : H_{\mathrm{b}}^{m, l}( \overline{\mathbb{R}^{n}} ) \rightarrow H_{\mathrm{b}}^{m-2, l+2} ( \overline{\mathbb{R}^{n}} )
\end{equation}
is also Fredholm of index $0$. Moreover, we can check that 
\begin{equation*}
H_{\mathrm{b}}^{m,l}( \overline{\mathbb{R}^{n}} ) = H_{\mathrm{sc,b}}^{m, m +l, l}( \overline{\mathbb{R}^{n}} ) , \quad 
H_{\mathrm{b}}^{m-2, l + 2}( \overline{\mathbb{R}^{n}} ) = H_{\mathrm{sc,b}}^{m-2, m + l, l + 2}( \overline{\mathbb{R}^{n}} ).
\end{equation*}
Thus, in a sense, the results of \cite{AndrasSM} constitute a theory that connects the Fredholm maps (\ref{intro; Fredholm map zero energy b setting}) and (\ref{Fredholm statement two-body case}) in a uniform manner. See also \cite{Andrewbpaper1, Andrewbpaper2} for an alternate approach via directly constructing a parametrix for the resolvent family.
\end{remark}

The study of second microlocalization dates back to the work of Bony \cite{BonySM84}, where it was introduced at a homogenous Lagrangian $\Lambda$ of $T^{\ast} M \backslash o$, $M$ is a smooth manifold. Thus, $\Lambda$ can also be understood as a Legendrian submanifold of $S^{\ast} M$. More recently, this study is extended to the semiclassical setting in \cite{AndrasJaredSM09}, where the construction was done locally by writing $\Lambda$ -- now a general Lagrangian submanifold, in the model case $\mathbb{R}^{n} \times \{ 0 \}$. \par

In the setting of Euclidean scattering calculus, it is well known (see \cite[Proposition 8]{MelroseMaciej96}) that conjugation by Fourier transform defines an isomorphism $\Psi^{m,r}_{\mathrm{sc}}( \overline{\mathbb{R}^{n}} ) \cong \Psi^{r,m}_{\mathrm{sc}}( \overline{\mathbb{R}^{n}} )$. Thus, it is equally natural to consider second microlocalization at a Legendrian submanifold $\mathcal{L} \subset  { ^{\mathrm{sc}}T^{\ast}_{\mathbb{S}^{n-1}}} \overline{\mathbb{R}^{n}}$ (or more generally, a Legendrian submanifold $\mathcal{L} \subset { ^{\mathrm{sc}}T^{\ast}_{\partial M} }M$, where $M$ is a smooth, compact manifold with boundary). Thus, $\Psi^{m,r,l}_{\mathrm{sc,b}}( \overline{\mathbb{R}^{n}} )$ is simply the local model
\begin{equation}
\label{introduction second microlocal operators at o}
\Psi^{m,r,l}_{\mathrm{sc,2}}( \overline{\mathbb{R}^{n}} ; o_{\mathbb{S}^{n-1}} ),
\end{equation}
where the subscript `sc,2' indicates directly that members of (\ref{introduction second microlocal operators at o}) are the scattering operators second microlocalized at $o_{\mathbb{S}^{n-1}}$. It is then likely (though not written down anywhere to the best of our knowledge) that (\ref{introduction second microlocal operators at o}) can be used to define the more general class 
\begin{equation}
\label{introduction second microlocal operators at L}
\Psi^{m,r,l}_{\mathrm{sc}, 2}( \overline{\mathbb{R}^{n}} ; \mathcal{L} ), \quad m,r,l \in \mathbb{R},
\end{equation}
where elements of (\ref{introduction second microlocal operators at L}) are required to be microlocalized on the phase space $[ \overline{ ^{\mathrm{sc}}T^{\ast}} \overline{\mathbb{R}^{n}} ; \mathcal{L} ]$, i.e., their symbols should be degenerate at $\mathcal{L}$ in the geometric sense that they are conormal to the front face of $[ \overline{ ^{\mathrm{sc}}T^{\ast}} \overline{\mathbb{R}^{n}} ; \mathcal{L} ]$. 
\begin{remark}
By the same idea, we speculate that Vasy's construction can also be used for the second microlocalization at a Legendrian submanifold of $\mathcal{L} \subset S^{\ast} \overline{\mathbb{R}^{n}}$ (or more generally for $S^{\ast} M$). Thus, one should be able to define second microlocalization more precisely in this case as well. Again, this is not written anywhere to the best of our knowledge.
\end{remark}
\begin{remark}
The zero-energy problem is also tackled by Vasy through another, \emph{Lagrangian regularity} approach, which still makes use of second microlocalization, but not (directly) variable orders. We refer the interested readers to \cite{AndrasLagrangian1,AndrasLagrangian2} for more details.
\end{remark}

\subsection{Second microlocalization for the three-body problem}
Let us now return to the three-body problem. Then the discussion in \S \ref{motivation subsection} suggests that we should define a new class of operators
\begin{equation} \label{introduction second microlocalized operators in the three-body setting}
\Psi_{\mathrm{3sc,2}}^{m,r,l,b} \big( [ \overline{\mathbb{R}^{n}} ; \mathcal{C} ] ; \bigcup_{\tindex \in \ind} \overline{{^{\mathrm{3sc}}\pi_{\ff}^{-1}}( o_{\mathcal{C}^{\tindex}} )} \, \big), \quad m,r,l,b \in \mathbb{R},
\end{equation}
whose elements are microlocalized on the blow-up
\begin{equation} \label{the correct blowup}
\big[  [ \overline{\mathbb{R}^{n}} ; \mathcal{C} ] \times \overline{\mathbb{R}^{n}} ; 
\bigcup_{\tindex \in \ind} \overline{{^{\mathrm{3sc}}\pi_{\ff}^{-1}}( o_{\mathcal{C}^{\tindex}} )} \,  \big] ,
\end{equation}
where we recall that $o_{\mathcal{C}^{\tindex}}$ is the zero section in ${ ^{\mathrm{sc}}T^{\ast}_{\mathcal{C}^{\tindex}}}X^{\tindex}$. Here ${^{\mathrm{3sc}}}\pi_{\ff}$ is the natural projection 
\begin{equation*}
{ ^{\mathrm{3sc}}\pi_{\ff} } : \ff \times \mathbb{R}^{n} \cong \mathcal{C}_{\tindex} \times \mathbb{R}^{n_{\tindex}}_{\zeta_{\tindex}} \times {^{\mathrm{sc}}T^{\ast}} X^{\tindex} \rightarrow { ^{\mathrm{sc}}T^{\ast} X^{\tindex} }, \quad \tindex \in \ind,
\end{equation*}
though we could also view ${^{\mathrm{3sc}}\pi_{\ff}}$ as being orthogonal to the projection in (\ref{three-body projection}), which the notation suggests already. We remark that ${^{\mathrm{3sc}}\pi_{\ff}^{-1}}(o_{\mathcal{C}^{\tindex}})$ is simply a more compact notation for $\mathcal{C}_{\tindex} \times \mathbb{R}^{n_{\tindex}}_{\zeta_{\tindex}} \times o_{\mathcal{C}^{\tindex}}$. \par

Now, the reason why (\ref{the correct blowup}) is the correct blow-up to make is because it resolves both problems outlined at the end of \S \ref{motivation subsection} in the following senses: 
\begin{itemize}
\item Resolution of problem (1): Upon blowing up $\mathcal{C}_{\tindex} \times \mathbb{R}^{n_{\tindex}}_{\zeta_{\tindex}} \times o_{\mathcal{C}^{\tindex}}$, we introduce projective coordinates near $\Sigma_{\partial \ff} ( y_{\tindex}, \zeta_{\tindex} ) \subset \{ ( y_{\tindex}, \zeta_{\tindex} )\} \times o_{\mathcal{C}^{\tindex}}$, $( y_{\tindex}, \zeta_{\tindex} ) \in \Sigma_{\mathrm{t}}$. We can check that the flow of $\sH_{p,\mf}$ lifts to become non-trivial again at the lift of all such $\Sigma_{\partial \ff} ( y_{\tindex}, \zeta_{\tindex} )$, which is akin to the effects of blowing up $o_{\mathbb{S}^{n-1}} \subset {^{\mathrm{sc}}T^{\ast}} \mathbb{R}^{n}$ for the zero-energy problem in the two-body case. Indeed, we have already mentioned that $\Sigma_{\partial \ff}(y_{\tindex}, \zeta_{\tindex})$ is simply the scattering characteristic set for $\hat{P}_{\tindex}$ at $(y_{\tindex}, \zeta_{\tindex})$. In particular, the lifts of $\mathcal{R}_{\mathrm{n},\pm}$ and $\mathcal{R}_{\mathrm{sc}, \pm}$ remain disjoint upon intersecting with $({^{\mathrm{3sc}}\pi_{\ff}^{\perp}})^{-1}( \Sigma_{\mathrm{t}} )$ as well.
\item Resolution of problem (2): Recall that for finite energy in the free variables (i.e., if we ignore the large-parameter behavior as $|\zeta_{\tindex}| \rightarrow \infty$), indicial operators of three-body operators are in a sense microlocalized on $\mathcal{C}_{\tindex} \times \mathbb{R}^{n_{\tindex}}_{\zeta_{\tindex}} \times { ^{\mathrm{sc}}T^{\ast} X^{\tindex} }$. Thus, upon carrying out the blow-up at $\mathcal{C}_{\tindex} \times \mathbb{R}^{n_{\tindex}}_{\zeta_{\tindex}} \times o_{\mathcal{C}^{\tindex}}$, we expect that indicial operators for the second microlocalized operators be microlocalized on the blow-up 
\begin{equation} \label{intro: first appearance; phase space of the indicial operator at dff}
\mathcal{C}_{\tindex} \times \mathbb{R}^{n_{\tindex}}_{\zeta_{\tindex}} \times [ {^{\mathrm{sc}}T^{\ast}}X^{\tindex} ; o_{\mathcal{C}^{\tindex}} ],
\end{equation}
and so form a parametrized family of operators in $\Psi_{\mathrm{sc,b}}(\overline{\mathbb{R}^{}}) \coloneq \bigcup_{m,r,l \in \mathbb{R}} \Psi_{\mathrm{sc,b}}^{m,r,l}( \overline{\mathbb{R}^{n}} )$. It follows from the above discussion that one now has a uniformly Fredholm, in fact invertibility theory for $\hat{P}_{\tindex}(|\zeta_{\tindex}|)$, even as $|\zeta_{\tindex}| \rightarrow \lambda$ from below. Hence, the transition from transversal to tangential phenomena is understood.
\end{itemize}

We will now briefly discuss how to construct the class of second microlocalized operators in question. By disjointness, henceforth we are allowed to consider the case where $\mathcal{C}$ has just one element, i.e., $\mathcal{C} = \mathcal{C}_{\tindex}$. \par

Our strategy will be to follow the philosophy of \cite{AndrasSM}. Thus, we will start by establishing a `converse' perspective in the three-body setting, which will take the form of a new algebra of operators acting on the position space defined by the iterated blow-up
\begin{equation} \label{iterated blow-up}
X\coloneq [ [ \overline{\mathbb{R}^{n}} ; \mathcal{C}_{\tindex} ] ; \ff \cap \mf ].
\end{equation}
The boundary of (\ref{iterated blow-up}) consists of three faces: $\mathrm{dmf}$, which is the lift of $\mf$; $\dff$, which is the lift of $\ff$; and a new face $\cf$, which is the lift of $\mf \cap \ff$, and has the structure of a fibered cone connecting the first two faces. We will refer the new algebra as the conormal, `three-cone' algebra, which is filtered over spaces of conormal three-cone operators. \par

We will write $\Psi_{\mathrm{3coc}}^{m,r,l,b}(\Xd)$, $m,r,l,b \in \mathbb{R}$ for the spaces of conormal three-cone operators. Here $m$ is the order of microlocal differential regularity, which is measured with respect to the Lie algebra of three-cone vector fields $\mathcal{V}_{\mathrm{3co}}(X)$, where $\mathcal{V}_{\mathrm{3co}}(\Xd)$ is the span over $\mathcal{C}^{\infty}(\Xd)$ by $\{ \partial_{z_{\tindex, 1}}, ... , \partial_{z_{\tindex,n_{\tindex}}}, \langle z^{\tindex} \rangle \partial_{z^{\tindex}_1}, ... , \langle z^{\tindex} \rangle \partial_{z^{\tindex}_{n^{\tindex}}}   \} $ in a neighborhood of $\cf$, and agrees everywhere else with the standard Euclidean vector fields. On the other hand, $r,l,b$ measure spatial decay respectively at $\dmf$, $\dff$ and $\cf$. In particular, if $m \in \mathbb{N}_0$, then we can understand $\Psi_{\mathrm{3coc}}^{m,0,0,0}(\Xd)$ as the natural microlocalization for those differential operators of order $m$ which are generated by $\mathcal{V}_{\mathrm{3co}}(\Xd)$ with coefficients in $\mathcal{C}^{\infty}(\Xd)$. \par

Furthermore, conormal three-cone operators are microlocalized on the fiber-compactified \emph{three-cone cotangent bundle} $\overline{^{\mathrm{3co}}T^{\ast}} \Xd$. However, only regularity in the differential sense and decay at $\dmf$ is microlocal in the symbolic sense, i.e., they are captured by symbolic calculations at ${ ^{\mathrm{3co}}S^{\ast} }\Xd$ and $\overline{^{\mathrm{3co}}T^{\ast}}_{\dmf} \Xd$ respectively. In order to capture decay at $\cf$ and $\dff$, one needs more global considerations, for which indicial operators are again required.

The spaces of conormal three-cone operators will be constructed explicitly in \S \ref{the conormal three-cone operators section} below. We will see that $A \in \Psi_{\mathrm{3coc}}^{m,r,l,b}(\Xd)$ restricts away from $\cf$ to an element of $\Psi_{\mathrm{3scc}}^{m,r,l}( [ \overline{\mathbb{R}^{n}} ; \mathcal{C}_{\tindex} ] )$. In fact, $A$ has a scattering structure at $\dmf$. Moreover, we can locally write $A$ near $\dff$ as the partial quantization in the free variables (i.e., $(z_{\tindex},\zeta_{\tindex})$) of a family of conormal b-operators with a large-parameter behavior. Similarly, we can also write $A$ near $\cf$ as the partial quantization in the free variables of a family of conormal cone operators, and with a large-parameter behavior as well. By interpreting the three-cone operators locally as partial quantizations, basic operations such as composition can also be easily understood.\par

Here, the spaces of conormal cone operators can be understood as operators acting on a cone with a scattering structure at the cone-end and a conormal b-structure at the cone-tip. When such operators arise in the context of the conormal three-cone operators, the cone in question will often by written as $[ \hat{X}^{\tindex} ; \{ 0 \} ]$, where $\hat{X}^{\tindex} \cong \overline{\mathbb{R}^{n^{\tindex}}} $, though $\hat{X}^{\tindex}$ is also explicitly determined, for we have
\begin{equation} \label{introduction equation: the canonical identification of cf}
\cf \cong \mathcal{C}_{\tindex} \times [ \hat{X}^{\tindex} ; \{ 0 \} ]
\end{equation}
as a canonical identification. Then $[ \hat{X}^{\tindex} ; \{ 0 \} ]$ is a smooth, compact manifold with two boundary faces: $\mathcal{C}_{\infty}^{\tindex}$, which is the lift of the original boundary of $\hat{X}^{\tindex}$; and $\mathcal{C}_{0}^{\tindex}$, which is the lift of $\{ 0 \}$. We will interpret $\mathcal{C}^{\tindex}_{\infty}$ at the cone-end and $\mathcal{C}^{\tindex}_{0}$ the cone-tip. Moreover, under identification (\ref{introduction equation: the canonical identification of cf}), $\mathcal{C}_{\tindex} \times \mathcal{C}^{\tindex}_{\infty}$ intersects $\dmf$ and $\mathcal{C}_{\tindex} \times \mathcal{C}_{0}^{\tindex}$ intersects $\dff$. In fact, this is exactly how we connect the structures of conormal three-cone operators at $\dmf$ and $\dff$. \par

We will write $\Psi_{\mathrm{coc}}^{m,r,l}( [ \hat{X}^{\tindex} ; \{ 0 \} ] )$, $m,r,l \in \mathbb{R}$ for the spaces of conormal cone operators on $[ \hat{X}^{\tindex} ; \{ 0 \} ]$. Here $m$ is the index of microlocal differential regularity measured with respect to the Lie algebra of co-vector fields $\mathcal{V}_{\mathrm{co}}( [ \hat{X}^{\tindex} ; \{ 0 \} ] )$. Such vector fields agree with $\mathcal{V}_{\mathrm{b}}([ \hat{X}^{\tindex} ; \{ 0 \} ])$ and the Euclidean (or scattering) vector fields on $\hat{X}^{\tindex}$ respectively away from $\mathcal{C}^{\tindex}_{\infty}$ and $\mathcal{C}^{\tindex}_{0}$. Meanwhile, $r$ and $l$ measure spatial decay respectively at $\mathcal{C}^{\tindex}_{0}$ and $\mathcal{C}^{\tindex}_{\infty}$. We refer the readers to \S \ref{subsection the cone calculus} for a detailed introduction to the conormal cone algebra.  \par

To connect the conormal three-cone operators with the conormal three-body operators (similar to how scattering operators and the conormal b-operators are connected in \S \ref{an overview of second microlocalization subsection}, i.e., via blow-ups in phase spaces), it will be conceptually convenient to resolve the conormal three-body operators separately at the spatial corner of $[ \overline{\mathbb{R}^{n}} ; \mathcal{C}_{\tindex} ]$, i.e., $\mf \cap \ff$. Upon doing so, we will construct new spaces of conormal, \emph{decoupled} (in the sense that $\mf$ and $\ff$ are now separated by $\cf$) three-body operators $\Psi_{\mathrm{d3scc}}^{m,r,l,\nu}(\Xd)$, $m,r,l,\nu \in \mathbb{R}$, where $m$ is the order of standard differential regularity, while $r, l,\nu$ measure spatial decay at $\dmf$, $\dff$ and $\cf$ respectively. \par

By construction, elements of $\Psi_{\mathrm{d3scc}}^{m,r,l,\nu}(\Xd)$ are operators acting on $X$, and as such have the same base space as the conormal three-cone operators. Moreover, they can be microlocalized on the phase space
\begin{equation*}
\overline{^{\mathrm{d3sc}}T^{\ast}}\Xd  =  \big[ \overline{^{\mathrm{3sc}}T^{\ast}} [ \overline{\mathbb{R}^{n}} ; \mathcal{C}_{\tindex} ]; \overline{^{\mathrm{3sc}}T^{\ast}}_{\mf \cap \ff} [ \overline{\mathbb{R}^{n}} ; \mathcal{C}_{\tindex} ]  \big] \cong X \times \overline{\mathbb{R}^{n}}.
\end{equation*} 
A crucial observation in this paper is the following diffeomorphism:
\begin{equation} 
\label{introduction second microlocal diffeomorphism}
\left[ \overline{^{\mathrm{d3sc}}T^{\ast}} \Xd ; \beta_{\mathrm{d3sc}}^{\ast} ( \overline{\tscf^{-1}( 
{\sco} )} ) \right] \cong 
\left[ \overline{ ^{\mathrm{3co}} T^{\ast}} \Xd  ; \itccf ( \mathcal{C}_{\tindex} \times {^{\mathrm{co}}S^{\ast} [ \hat{X}^{\tindex} ; \{ 0 \} ]} ) \right],
\end{equation}
which is the analogue of (\ref{second microlocalization diffeomorphism in the introduction}) in the three-body setting. Much like (\ref{second microlocalization diffeomorphism in the introduction}), the diffeomorphism (\ref{introduction second microlocal diffeomorphism}) again restricts to the identity map in the interiors. \par

Here in the introduction, we shall not attempt to explain the meaning of
\begin{equation} \label{introduction 3co corner where we blow up}
\itccf ( \mathcal{C}_{\tindex} \times {^{\mathrm{co}}S^{\ast} [ \hat{X}^{\tindex} ; \{ 0 \} ]} )
\end{equation}
on the right hand side of (\ref{introduction second microlocal diffeomorphism}). See \S \ref{subsection diffeomorphism of the phase spaces}, and in particular the discussions before Proposition \ref{second microlocal diffeomorphism} for clarification. On the left hand side of (\ref{introduction second microlocal diffeomorphism}), we are simply writing 
\begin{equation*}
\beta_{\mathrm{d3sc}}:  \overline{^{\mathrm{d3sc}}T^{\ast}} \Xd \rightarrow  \overline{ ^{\mathrm{3sc}}T^{\ast} } [ \overline{\mathbb{R}^{n}} ; \mathcal{C}_{\tindex} ]
\end{equation*}
for the natural blow-down map of $\overline{^{\mathrm{d3sc}}T^{\ast}} \Xd$. In \S \ref{subsection definition of the second microlocalized algebra}, we will show that diffeomorphism (\ref{introduction second microlocal diffeomorphism}) gives rise to the spaces of second microlocalized, decoupled three-body operators
\begin{equation}
\label{introduction d3sc 2 notation}
\Psi_{\mathrm{d3sc,2}}^{m,r,l,\nu,b} \big( \Xd ;   \beta_{\mathrm{d3sc}}^{\ast} ( \overline{{^{\mathrm{3sc}}\pi_{\ff}^{-1}}( o_{\mathcal{C}^{\tindex}} )} )  \big) =  \Psi_{\mathrm{d3sc,3co}}^{m,r,l, \nu, b }(\Xd), \quad m,r,l, \nu, b \in \mathbb{R}.
\end{equation} 
This procedure is akin to how (\ref{second microlocalization diffeomorphism in the introduction}) gives rise to the sc,b-operators (\ref{introduction: sc,b operators all indices are arbitrary}), i.e., one modifies the conormal three-cone operators $\Psi_{\mathrm{3coc}}^{m,r,l,b}(\Xd)$ at the symbolic level. However, this modification is now more involved since the right hand side of (\ref{introduction second microlocal diffeomorphism}) is no longer a corner blow-up. Nevertheless, it is still a blow-up at a submanifold of ${ ^{\mathrm{3co}}S^{\ast}_{\cf}}\Xd$ (which is a corner), i.e., we can show that (\ref{introduction 3co corner where we blow up}) is contained in ${ ^{\mathrm{3co}}S^{\ast}_{\cf}}\Xd$. Thus, through careful, symbolic calculations, one could show that (\ref{introduction d3sc 2 notation}) is indeed well-defined. 
\par

Let us also remark that either of the notations in (\ref{introduction d3sc 2 notation}) is a sensible choice: the subscript `d3sc,2' on the left hand side of (\ref{introduction d3sc 2 notation}) indicates that it is a space of decoupled three-body operators which are second microlocalized at $\beta_{\mathrm{d3sc}}^{\ast} ( \overline{{^{\mathrm{3sc}}\pi_{\ff}^{-1}}( o_{\mathcal{C}^{\tindex}} )} ) $; the subscript `d3sc,3co' on the right hand side of (\ref{introduction d3sc 2 notation}) indicates that it is a space of operators with simultaneously the decoupled three-body structure and the three-cone structure.  \par

In fact, since $ \overline{\tscf^{-1}( \sco )} \subset \overline{ ^{\mathrm{3sc}}T^{\ast}}_{\mathrm{mf} \cap \ff } [ \overline{\mathbb{R}^{n}} ; \mathcal{C}_{\tindex} ] $, it is known that
\begin{equation}
\label{introduction blow up orders dont matter}
\left[ \overline{^{\mathrm{d3sc}}T^{\ast}} \Xd ; \beta_{\mathrm{d3sc}}^{\ast} ( \overline{\tscf^{-1}( 
{\sco} )} ) \right] = \big[ \overline{^{\mathrm{3sc}}T^{\ast}} [ \overline{\mathbb{R}^{n}} ; \mathcal{C}_{\tindex} ] ;   \overline{\tscf^{-1}( 
{\sco} )} ; \overline{^{\mathrm{3sc}}T^{\ast}}_{\mf \cap \ff} [ \overline{\mathbb{R}^{n}} ; \mathcal{C}_{\tindex} ] \big],
\end{equation}
in the sense that we could also carry out the blow-up at $\overline{\tscf^{-1}( \sco )}$ first instead. Moreover, since $\overline{^{\mathrm{3sc}}T^{\ast}}_{\mf \cap \ff} [ \overline{\mathbb{R}^{n}} ; \mathcal{C}_{\tindex} ]$ lifts to a corner face of 
\begin{equation*}
\big[ \overline{^{\mathrm{3sc}}T^{\ast}} [ \overline{\mathbb{R}^{n}} ; \mathcal{C}_{\tindex} ] ;   \overline{\tscf^{-1}( 
{\sco} )} \, \big] = \big[ [ \overline{\mathbb{R}^{n}} ; \mathcal{C}_{\tindex} ] \times \overline{\mathbb{R}^{n}} ; \overline{\tscf^{-1}( 
{\sco} )} \, \big],
\end{equation*}
conormality on the latter space is again insensitive to the second blow-up in (\ref{introduction blow up orders dont matter}). Thus, the same argument wchih motivated us to define (\ref{second microlocalized algebra tautology case}) shows that it is reasonable to define
\begin{equation*}
\Psi_{\mathrm{3sc,2}}^{m,r,l,b} \left(  [ \overline{\mathbb{R}^{n}} ; \mathcal{C}_{\tindex} ] ; \overline{\tscf^{-1}(\sco)} \, \right) \coloneq \Psi_{\mathrm{d3sc,3co}}^{m,r,l,r+l, b} (\Xd)
\end{equation*}
as well, yileding the desired algebra of operators. \par

Nevertheless, a further, though relatively mild resolution at the (lift of the) fiber infinity 
\begin{align}
\begin{split} 
\label{second microlocal diffeomorphism resol intro}
& \left[ \overline{^{\mathrm{d3sc}}T^{\ast}} \Xd ; \beta_{\mathrm{d3sc}}^{\ast} ( \overline{\tscf^{-1}( 
{\sco} )} ) , \idtscdff( \mathcal{C}_{\tindex} \times {^{\mathrm{sc}}S^{\ast}} X^{\tindex} ) \right] \\
& \quad \quad \cong
\left[ \overline{ ^{\mathrm{3co}} T^{\ast}} \Xd  ; \itccf (\mathcal{C}_{\tindex} \times {^{\mathrm{co}}S^{\ast} [ \hat{X}^{\tindex} ; \{ 0 \} ]} ); \itcodff( \mathcal{C}_{\tindex} \times { ^{\mathrm{b}}S^{\ast} X^{\tindex} } ) \right],
\end{split}
\end{align}
is necessary to incorporate variable orders. Here, we will again not elaborate on the meanings of $\idtscdff( \mathcal{C}_{\tindex} \times {^{\mathrm{sc}}S^{\ast}} X^{\tindex} )$ and $ \itcodff( \mathcal{C}_{\tindex} \times { ^{\mathrm{b}}S^{\ast} X^{\tindex} } )$ in (\ref{second microlocal diffeomorphism resol intro}), and just remark that they are contained in ${^{\mathrm{d3sc}}S^{\ast}_{\dff}}\Xd$ and ${ ^{\mathrm{3co}}S^{\ast}_{\dff}}\Xd$ respectively. Thus ultimately, we would actually like to study the spaces of more refined operators
\begin{equation}
\label{further resolved second microlocalized operators introduction section}
\Psi_{\mathrm{d3sc,3co,res}}^{m,r,l,\nu,b,s}(\Xd), \quad m, r, l, \nu, b, s \in \mathbb{R},
\end{equation}
such that elements of (\ref{further resolved second microlocalized operators introduction section}) are microlocalized on (\ref{second microlocal diffeomorphism resol intro}), with the point being that \emph{every} index above can now be replaced by an \emph{admissible} variable order. Moreover, it can also be checked that $\Psi_{\mathrm{d3sc,3co}}^{m,r,l,\nu,b}(\Xd)$ is a natural subspace of $ \Psi_{\mathrm{d3sc,3co,res}}^{m,r,l,\nu,b,m+l}(\Xd)$. See \S\S \ref{a further resolution at fiber infinity}--\ref{Variable orders compatibility subsection} below.
 \par

Now, suppose we denote (\ref{second microlocal diffeomorphism resol intro}) by 
\begin{equation*}
\psf \Xd.
\end{equation*}
Moreover, let us write
\begin{equation*}
\text{$^{\mathrm{d3sc,3co,res}}S^{\ast}X$, $\overline{^{\mathrm{d3sc,3co,res}}T^{\ast}}_{\mathrm{dmf}} \Xd$, $\overline{^{\mathrm{d3sc,3co,res}}T^{\ast}}_{\dff} \Xd$, $\dtsccf$, $\tcocf$, $\rf$}
\end{equation*}
respectively for the lifts of 
\begin{equation*}
\text{${ ^{\mathrm{d3sc}}S^{\ast} }X$, $\overline{ ^{\mathrm{d3sc}}T^{\ast} }_{\dmf}X$, $\overline{^{\mathrm{d3sc}}T^{\ast}}_{\dff}X$, $\beta_{\mathrm{d3sc}}^{\ast}( \overline{{ ^{\mathrm{3sc}}\pi_{\ff}^{-1} }( o_{\mathcal{C}_{\tindex}} )} )$, $\overline{ ^{\mathrm{d3sc}}T^{\ast} }_{\cf} X$, $\idtscdff( \mathcal{C}_{\tindex} \times {^{\mathrm{sc}}S^{\ast}} X^{\tindex} )$}
\end{equation*}
to the first line of (\ref{second microlocal diffeomorphism resol intro}), or equivalently 
\begin{equation*}
\begin{gathered}
\text{${ ^{\mathrm{3co}}S^{\ast} }X$, $\overline{ ^{\mathrm{3co}}T^{\ast} }_{\dmf}X$, $\overline{^{\mathrm{3co}}T^{\ast}}_{\dff}X$, $\overline{ ^{\mathrm{3co}}T^{\ast} }_{\cf} X$,} \\ 
\text{$ \itccf ( \mathcal{C}_{\tindex} \times {^{\mathrm{co}}S^{\ast} [ \hat{X}^{\tindex} ; \{ 0 \} ]} )$, $\itcodff( \mathcal{C}_{\tindex} \times { ^{\mathrm{b}}S^{\ast} X^{\tindex} } )$}
\end{gathered}
\end{equation*}
to the second line of (\ref{second microlocal diffeomorphism resol intro}). Then for elements of $\Psi_{\mathrm{d3sc,3co,res}}^{m,r,l,\nu,b,s}(\Xd)$, the notion of `regularity', even microlocally, is complicated (but is similar to how regularity is understood for elements of $\Psi_{\mathrm{sc,b}}( \overline{\mathbb{R}^{n}} )$). Indeed, it no longer makes sense to distinguish between `differential regularity' and `spatial decay', since, for instance, $\psf \Xd$ is not even a fiber bundle (although we will keep this notation throughout this paper for convenience). \par

From the symbolic viewpoint, a natural substitute is to consider decay of symbols at the boundary faces of $\psf \Xd$. However, it turns out that 
\begin{equation} \label{introduction: microlocally symbolic faces}
\text{$^{\mathrm{d3sc,3co,res}}S^{\ast}X$, $\psf_{\dmf} \Xd$, $\dtsccf$, $\rf$}
\end{equation}
are the only reasonable `microlocally symbolic' faces in the following sense: the order of decay for the (principal) symbol of any d3sc,3co,res-operator at (\ref{introduction: microlocally symbolic faces}) determines completely the corresponding order of the operator. Moreover, the leading order decay at (\ref{introduction: microlocally symbolic faces}) is captured simultaneously by a principal symbol map, which we will introduce in \S \ref{subsection principal symbol}. In (\ref{further resolved second microlocalized operators introduction section}), we use the indices $m,r, \nu, s$ to measure microlocal decay at the faces (\ref{introduction: microlocally symbolic faces}), respectively.

In a different, more global sense, the indices $l,b$ measure spatial decay respectively at $\dff$ and $\cf$. Roughly speaking, such notions of decay can be understood as being lifted from the spatial decay at $\dff$, $\cf$ in the conormal three-cone setting{\ep}much like how for elements of $\Psi_{\mathrm{sc,b}}^{m,r,l}( \overline{\mathbb{R}^{n}} )$, the index $l$ measures not only microlocal decay at the b-face, but also more globally at the lifted spatial decay (i.e., at $\partial \overline{\mathbb{R}^{n}}$) in the b-sense as well. \par

To capture leading order decay at $\dff$ and $\cf$, we will require the introduction of indicial operators at these faces. If $A \in \Psi_{\mathrm{d3sc,3co,res}}^{m, r, l, \nu, b, s}( \Xd )$ satisfies suitable classicality conditions, then the indicial operators at $\dff$ and $\cf$ will be respectively given by
\begin{gather}
{^{\mathrm{d3sc,3co,res}}\hat{N}_{\dff, l}}(A) \in \mathcal{C}^{\infty}( \mathcal{C}_{\tindex} \times \mathbb{R}^{n_{\tindex}}_{\zeta_{\tindex}} ; \Psi_{\mathrm{sc,b}}^{s-l, r - l, b - l} ( X^{\tindex} )  ), \label{intro; indicial operator at dff} \\
{^{\mathrm{d3sc,3co,res}}\hat{N}_{\cf, b}}(A) \in \mathcal{C}^{\infty}( \mathcal{C}_{\tindex} \times \mathbb{R}^{n_{\tindex}}_{\zeta_{\tindex}} ; \Psi_{\mathrm{bc}}^{\nu - b/2, r - b/2, l - b/2} ( [ \hat{X}^{\tindex} ; \{ 0 \} ] ) ). \label{intro; indicial operator at cf}
\end{gather}
In particular, (\ref{intro; indicial operator at cf}) can be mirolocalized on (\ref{intro: first appearance; phase space of the indicial operator at dff}) as required. These indicial operators will be introduced in \S \ref{subsection partial classicality and indicial operators}. However, unlike the indicial operators at $\ff$ for three-body operators, the behavior of (\ref{intro; indicial operator at dff}) and (\ref{intro; indicial operator at cf}) as $|\zeta_{\tindex}| \rightarrow \infty$ will no longer be large-parameter. Instead, they will also exhibit second-microlocal structures, which we will describe in \S \S \ref{subsection second microlocalization for the indicial operators dff} and \ref{subsection second microlocalization for the indicial operators cf}. 

\par

In \S \ref{subsection composition and adjoint}, we will show that 
\begin{equation} \label{intro; d3sc,3co,res calculus}
 \Psi_{\mathrm{d3sc,3co,res}}( \Xd ) \coloneq \bigcup_{m,r,l,\nu,b,s \in \mathbb{R}} \Psi_{\mathrm{d3sc,3co,res}}^{m,r,l,\nu,b,s}(\Xd)
\end{equation}
is indeed a graded algebra, and that the principal symbol and indicial operator maps are multiplicative. Moreover, $\Psi_{\mathrm{d3sc,3co,res}}^{m,r,l,\nu,b,s}(\Xd)$ is closed under taking adjoints (in various senses). These properties thus promote (\ref{intro; d3sc,3co,res calculus}) into a `calculus'. \par

We will also investigate several standard results regarding microlocalization in the second microlocal framework. In particular, we will introduce the operator wavefront set in \S \ref{Operator wavefront set subsection}, as well as the elliptic and characteristic sets in \S \ref{symbolic version of elliptic and characterstic sets}. In \S \ref{subsection Sobolev spaces under second microlocalization}, we will introduce a new scale of Sobolev spaces $H_{\mathrm{d3sc,3co,res}}^{m,r,l,\nu,b,s}(\Xd)$ corresponding to (\ref{further resolved second microlocalized operators introduction section}), and then establish microlocal elliptic regularity estimate (in various senses) in \S \ref{subsection;microlocal elliptic regularity estimates}.

Finally, let us remark that the above discussion remains valid even in the case where we replace $m,r,l,\nu,b,s \in \mathbb{R}$ by \emph{admissible} variable orders $\vom, \vor, \vol, \vov, \vob, \vos \in \mathcal{C}^{\infty}( \psf \Xd )$. See \S \ref{the presence of variable orders section} below for clarification on admissibility and \S \ref{subsection construction of variable order operators} on the construction of the corresponding spaces of operators $\Psf^{\vom, \vor, \vol, \vov, \vob, \vos}(\Xd)$, which depend on some arbitrarily small $\delta > 0$.

\subsection{Rescaled Hamiltonian and propagation estimates}
In what remains of this introduction, we will briefly explain how to obtain the required Fredholm maps within the second microlocal framework. \par

Returning to the discussion on Hamiltonian flow. Let 
\begin{equation*}
\rho_{\dmf}, \rho_{\dtsccf}, \rho_{\tcocf} \in \mathcal{C}^{\infty}( \psf \Xd )
\end{equation*}
be defining functions for $\psf_{\dmf}\Xd$, $\dtsccf$ and $\tcocf$ respectively. Moreover, let 
\begin{equation*}
\bcv \subset \psf_{\dmf} \Xd \cup \dtsccf
\end{equation*}
denote the `symbolic' characteristic set of $P \in \Psi_{\mathrm{d3sc,3co,res}}^{2,0,0,0,0,2}(\Xd)$, in the sense that the principal symbol of $P$ in the  d3sc,3co,res-calculus vanishes on $\Sigma_{\sigma}$. Then we can show that 
\begin{equation}  \label{intro cal 6}
\rho_{\dmf}^{-1} \rho_{\dtsccf}^{-1} \rho_{\tcocf}^{-2} H_{p}
\end{equation}
defines a complete flow on $\bcv$. The dynamic of this flow is detailedly studied in \S \ref{subsection characterization for the flow of the second microlocal dynamic}.  \par

Following the discussion in \S \ref{motivation subsection}, when attempting to propagate microlocal regularity for solutions to $Pu = f$ in $\beta_{\mathrm{3sc}}^{\ast}( \Sigma_{\mathrm{sc}} )$ along the flow of $\sH_{p,\mf}$, the primary challenges emerged with tangential propagation phenomena. In the second microlocal framework, we will replace the flow of $\sH_{p,\mf}$ by that of $\rho_{\dmf}^{-1} \rho_{\dtsccf}^{-1} \rho_{\tcocf}^{-2} H_{p}$.

A central part of this paper is to show that microlocal propagation of regularity for solutions to $Pu = f$ on $\Sigma_{\sigma}$ can be completely understood via the flow of $\rho_{\dmf}^{-1} \rho_{\dtsccf}^{-1} \rho_{\tcocf}^{-2} H_{p}${\ep}much as in the two-body setting, which we have explained briefly in \S \ref{a quick review of the two-body problem subsection}{\ep}so long as $f$ is sufficiently regular. Thus, when this is combined with microlocal elliptic regularity estimate, we would be able to conclude microlocal regularity for $u$ at all of the `symbolic' boundary faces of $\psf_{\dmf} \Xd$, i.e., (\ref{introduction: microlocally symbolic faces}). \par

\begin{remark}
Notice that for this discussion, the `res' part of the operators is actually unnecessary. Indeed, $\bcv$ is compactly contained away from both the fiber infinity and $\rf$, the latter face being the new front face introduced by (\ref{second microlocal diffeomorphism resol intro}). Moreover, we will only be interested in microlocal propagation at the symbolic faces. Thus, our attention here will be restricted to just $\psf_{\dmf} \Xd$ and $\dtsccf$ (though nevertheless including their intersections with $\tcocf$ and $\psf_{\dff}\Xd$). 
\end{remark}

In \S \ref{principal type propagation section}, we will show that standard principal type propagation of regularity estimate holds even along the flow of $\rho_{\dmf}^{-1} \rho_{\dtsccf}^{-1} \rho_{\tcocf}^{-2} H_{p}$. Thus, it suffices to consider the radial sets, which we define as the sets where $\rho_{\dmf}^{-1} \rho_{\dtsccf}^{-1} \rho_{\tcocf}^{-2} H_{p}$ vanishes. \par

In \S \S \ref{subsection rescaled hamiltonian in the second microlocal framework} and \ref{subsection radial points in the second microlocal framework}, we find that there are the following radial sets:
\begin{itemize}
\item $\mathcal{R}_{\mathrm{n}, \dmf, \pm} \subset \psf_{\dmf}\Xd \cap \dtsccf$, which are saddles;
\item $\mathcal{R}_{\mathrm{n},\dff,\pm} \subset \psf_{\dff}\Xd \cap \dtsccf$, which are saddles;
\item $\mathcal{R}_{0,\pm} \subset \tcocf^{\circ} \cap \psf_{\dmf}\Xd$, which are saddles; 
\item $\brpm \subset \psf_{\dmf} \Xd$, which are respectively source (corresponding to the $+$ sign) and sink (corresponding to the $-$ sign). 
\end{itemize} \par

Here, the radial sets $\brpm$ are simply the lifts of the two-body radial sets $\mathcal{R}_{\mathrm{sc}, \pm}$ from ${^{\mathrm{sc}}T^{\ast}_{\mathbb{S}^{n-1}} \overline{\mathbb{R}^{n}} }$ to $\psf_{\dmf} \Xd$. In \S \ref{global radial point estimate section} below, we will show that the usual radial point estimates at $\mathcal{R}_{\mathrm{sc},\pm}$ essentially extend to hold for $\brpm$ as well. In particular, under suitable above threshold requirement, we can conclude microlocal regularity for $u$ at $\brp$; while under suitable below threshold requirement, and moreover assuming that $u$ is microlocally regular in a punctured neighborhood of $\brm$, we can conclude microlocal regularity for $u$ at $\brm$. \par

On the other hand, the radial sets $\mathcal{R}_{\mathrm{n},\dmf, \pm}$, $\mathcal{R}_{\mathrm{n}, \dff, \pm}$ together replace the roles of $\mathcal{R}_{\mathrm{n},\pm}$ in the discussion from \S \ref{motivation subsection}. Indeed, suppose for the moment that we are away from $\tcocf$ (i.e., away from zero-energy in the interaction variables, where only transversal propagation phenomena occur). Then an integral curve $\tilde{\gamma}$ of $\sH_{p,\mf}$ which intersects ${^{\mathrm{3sc}}T^{\ast}_{\ff} [ \overline{\mathbb{R}^{n}} ; \mathcal{C}_{\tindex} ]}$ transversally in forward time lifts to an integral curve $\gamma$ of $\rho_{\dmf}^{-1} \rho_{\dtsccf}^{-1} \rho_{\tcocf}^{-2} H_{p}$ which converges to $\dtsccf \backslash \tcocf$ as $t \rightarrow \infty$. In fact, we must have $\lim_{t \rightarrow \infty} \gamma(t) \in \mathcal{R}_{\mathrm{n}, \dmf, +} \backslash \tcocf$. \par

Moreover, we will find that $\rho_{\dmf}^{-1} \rho_{\dtsccf}^{-1} \rho_{\tcocf}^{-2} H_{p}$ is actually tangent to $\tcocf$, and the flow of $\rho_{\dmf}^{-1} \rho_{\dtsccf}^{-1} \rho_{\tcocf}^{-2} H_{p}$ enables us to propagate microlocal regularity for $u$ on $\dtsccf \backslash \tcocf$ from $\mathcal{R}_{\mathrm{n}, \dmf,+} \backslash \tcocf$ to $\mathcal{R}_{\mathrm{n}, \dff, +} \backslash \tcocf$, where we will meet $\psf_{\dff} \Xd$. Then the `two-body' propagation phenomena occur once again on $\psf_{\dff} \Xd${\ep}much like how they occurred on ${^{\mathrm{3sc}}T^{\ast}_{\ff} [ \overline{\mathbb{R}^{n}} ; \mathcal{C}_{\tindex} ] }$ in the discussion from \S \ref{motivation subsection}. It follows that microlocal regularity for $u$ can be further propagated from $\mathcal{R}_{\mathrm{n}, \dff, +} \backslash \tcocf$ to $\mathcal{R}_{\mathrm{n}, \dff, - } \backslash \tcocf$. \par

Finally, we can propagate microlocal regularity from $\mathcal{R}_{\mathrm{n}, \dff, -} \backslash \tcocf$ to $\mathcal{R}_{ \mathrm{n}, \dmf, +} \backslash \tcocf$, i.e., the flow of $\rho_{\dmf}^{-1} \rho_{\dtsccf}^{-1} \rho_{\tcocf}^{-2} H_{p}$ must eventually return to $\psf_{\dmf} \Xd$. However, we also remark that there are complete integral curves of $\rho_{\dmf}^{-1} \rho_{\dtsccf}^{-1} \rho_{\tcocf}^{-2} H_{p}$ which connect $\mathcal{R}_{\mathrm{n}, \dmf, +} \backslash \tcocf$ to $\mathcal{R}_{\mathrm{n}, \dmf,-} \backslash \tcocf$ directly, and propagation along these curves is necessary to conclude microlocal regularity of $u$ at $\mathcal{R}_{\mathrm{n},\dmf, -} \backslash \tcocf$. \par

The usage of a radial point estimate is required each time we wish to conclude microlocal regularity of $u$ at a radial set, which also requires suitable threshold conditions to be satisfied. See \S \ref{transversal propagation section} below for details. This roundabout analysis is strictly more refined than the transversal propagation analysis carried out in \S \ref{motivation subsection}. \par

Now, the behavior of the $\rho_{\dmf}^{-1} \rho_{\dtsccf}^{-1} \rho_{\tcocf}^{-2} H_{p}$-flow at $\mathcal{R}_{0,\pm}$ is much more interesting. To explain the dynamical significance of $\mathcal{R}_{0,\pm}$, recall that integral curves $\tilde{\gamma}$ of $\sH_{p}$ in $\Sigma_{\mathrm{sc}}$ always reach the two-body radial sets $\mathcal{R}_{\mathrm{sc}, \pm}$ in either forward($-$) or backward($+$) time asymptotics. Suppose that $\lim_{t \rightarrow \infty} \tilde{\gamma}(t)$ reaches precisely the intersection of $\mathcal{R}_{\mathrm{sc}, -}$ and ${^{\mathrm{sc}}T^{\ast}_{\mathcal{C}_{\tindex}}} \overline{\mathbb{R}^{n}}$. Then we will show in \S \ref{subsection: characterization for the flow} that $\beta_{\mathrm{3sc}}^{\ast} \tilde{\gamma}$ must reach ${ ^{\mathrm{3sc}}T^{\ast}_{\ff} [ \overline{\mathbb{R}^{n}} ; \mathcal{C}_{\tindex} ] }$ at zero interaction energy (i.e., when $\zeta^{\tindex} = 0$). Thus, let $\gamma$ be the lift of $\tilde{\gamma}$ to $\psf_{\dmf}\Xd$; then we must have $\lim_{t \rightarrow \infty} \gamma(t) \in \tcocf$. In fact, we will show that $\lim_{t \rightarrow  \infty} \gamma(t) \in \mathcal{R}_{0, -}$. Likewise, if $\tilde{\gamma}$ is such that $\lim_{t \rightarrow -\infty} \tilde{\gamma}(t)$ reaches the intersection of $\mathcal{R}_{\mathrm{sc},+}$ and ${^{\mathrm{sc}}T^{\ast}_{\mathcal{C}_{\tindex}}} \overline{\mathbb{R}^{n}}$, then we must have $\lim_{t \rightarrow -\infty} \gamma(t) \in \mathcal{R}_{0,+}$.  \par

It follows that we have a very different dynamical picture on $\psf_{\dmf} \cap \tcocf$. For instance, in the forward time asymptotics, there are integral curves of $\rho_{\dmf}^{-1} \rho_{\dtsccf}^{-1} \rho_{\tcocf}^{-2} H_{p}$ which travel from $\mathcal{R}_{0, +}$ to $\mathcal{R}_{\mathrm{n}, \dmf, +} \cap \tcocf$. Moreover, the above discussion for the flow of $\rho_{\dmf}^{-1} \rho_{\dtsccf}^{-1} \rho_{\tcocf}^{-2} H_{p}$ on $\dtsccf \backslash \tcocf$ applies verbatim for the flow at $\dtsccf \cap \tcocf$ as well, and we conclude that microlocal regularity of $u$ can also be propagated on $\dtsccf \cap \tcocf$ from $\mathcal{R}_{\mathrm{n}, \dmf, +} \cap \tcocf$ to $\mathcal{R}_{\mathrm{n}, \dmf, -} \cap \tcocf$ through $\mathcal{R}_{\mathrm{n}, \dff, \pm} \cap \tcocf$. This amount of microlocal regularity can then be propagated further to $\brm$. The radial point estimates required for the discussion here will be presented in \S \ref{section: radial point estimates in the tangential directions}.
\par

The complete situation is much more complicated, with the point being that \emph{microlocal regularity of $u$ can always be propagated from $\mathcal{R}_{\mathrm{2sc,dmf},+}$ to $\mathcal{R}_{\mathrm{2sc,dmf},-}$}. We refer eto Figure \ref{figure 6} below for a graphical illustration. The precise description is deferred to the body of this paper, specifically \S \ref{almost semi-Fredholm estimates with symbolic decay section}, where we will establish the following result:

\begin{proposition} [Proposition \ref{fredholm estimate with symbolic decay}] \label{fredholm estimate with symbolic decay introduction}
Let $P$ be defined by (\ref{Helmhotz operator}) with $n_{\tindex} \geq 2$ and $n^{\tindex} \geq 3$. Then we can construct variable orders $\vor_{\pm}, \vol_{\pm}, \vob_{\pm} \in \mathcal{C}^{\infty}( \psf \Xd )$ and constant orders $m_{\pm}, s_{\pm} \in \mathbb{R}$ explicitly, such that for every $M, S \in \mathbb{R}$ and sufficiently small $\delta > 0$, we have
\begin{align} \label{almost semi-Fredholm estimates with symbolic decay introduction}
\begin{split}
 \| u \|_{ H_{\mathrm{d3sc,3co,res}}^{ m_{ \pm }, \vor_{ \pm }, \vol_{ \pm }, \vor_{ \pm } + \vol_{ \pm }, \vob_{ \pm }, s_{ \pm }  } } & \leq  C ( \| P u \|_{ H_{\mathrm{d3sc,3co,res}}^{ m_{ \pm } - 2, \vor_{ \pm } + 1, \vol_{ \pm }, \vor_{ \pm } + \vol_{\pm  } + 1, \vob_{ \pm } + 2, s_{ \pm } - 2 } } \\
 & \quad + \| u \|_{H_{\mathrm{d3sc,3co,res}}^{M, \vor_{\pm} - \delta, \vol_{\pm} , \vor_{\pm} + \vol_{\pm} - \delta, \vob_{\pm} ,S} } )
\end{split}
\end{align}
in the strong sense that if the right hand sides of (\ref{almost semi-Fredholm estimates with symbolic decay introduction}) are finite, then so are the left hand sides, and the estimates hold. \par

The error terms on the right hand side of (\ref{almost semi-Fredholm estimates with symbolic decay introduction}) improve on the left hand sides (i.e., by being of strictly lower orders) at all of the `symbolic' boundary faces of $\psf \Xd$, which include ${^{\mathrm{d3sc,3co,res}}S^{\ast}}\Xd$, $\psf_{\dmf}\Xd$, $\dtsccf$ and $\rf$, but they do not improve on the left hand sides at the global faces $\cf$ and $\dff$.
\end{proposition}

For the construction of the variable orders in the above proposition, see \S \ref{variable order construction section} below. \par

The strategy we employ in proving Proposition \ref{fredholm estimate with symbolic decay introduction} is simply to follow the flow of (\ref{intro cal 6}), along which we will propagate microlocal regularity. However, the error terms appearing in (\ref{almost semi-Fredholm estimates with symbolic decay introduction}) are not compact, since we have not yet obtained decay at the additional, global faces $\cf$ and $\dff$. Indeed, for two sets of variable orders 
\begin{equation*}
\vom, \vor, \vol, \vov, \vob, \vos \in \mathcal{C}^{\infty}( \psf \Xd ), \quad \vom', \vor', \vol', \vov', \vob', \vos' \in \mathcal{C}^{\infty}( \psf \Xd )
\end{equation*}
 which are admissible, we only have compact inclusions
\begin{equation*}
\text{$H_{\mathrm{d3sc,3co,res}}^{\vom, \vor, \vol, \vov, \vob, \vos}(\Xd)  \subset H_{\mathrm{d3sc,3co,res}}^{\vom', \vor', \vol', \vov', \vob', \vos'}(\Xd)$ for $\vom' < \vom$, $\vor' < \vor$, $\vol' < \vol$, $\vov' < \vov$, $\vob' < \vob$, $\vos' < \vos$,}
\end{equation*}
which is problematic for the purpose of obtaining Fredholm maps.

\subsection{Decay estimates at the global faces}
To obtain further decay at $\cf$, we will make heavy usage of Vasy's construction in \cite[\S 4]{AndrasSM}, where decay at the b-face was obtained in the second microlocalized, two-body setting. The main ingredient is a positive commutator estimate at the level of b-normal operators. Such an estimate will then automatically translate into the framework of second microlocalized scattering operators due to (\ref{second microlocalized algebra tautology case}).  \par

Suppose $A \in \Psi_{\mathrm{d3sc,3co,res}}(\Xd)$ is suitably classical. Then we have already stated that the indicial operator of $A$ at $\cf$ is a family of conormal co-operators on $[\hat{X}^{\tindex} ; \{ 0 \}  ]$ parametrized by $\mathcal{C}_{\tindex} \times \mathbb{R}^{n_{\tindex}}_{\zeta_{\tindex}}$. See (\ref{intro; indicial operator at dff}). Recall that conormal co-operators are scattering-like near $\mathcal{C}^{\tindex}_{\infty}$ (the cone-end) and b-like near $\mathcal{C}^{\tindex}_{0}$ (the cone-tip). Thus Vasy's construction applies naturally near $\mathcal{C}^{\tindex}_{0}$ but not $\mathcal{C}^{\tindex}_{\infty}$. In fact, the argument works everywhere away from $\mathcal{C}^{\tindex}_{\infty}$.  This suggests Vasy's construction should allow us to obtain decay at $\cf \cong \mathcal{C}_{\tindex} \times [ \hat{X}^{\tindex} ; \{ 0 \} ]$ everywhere except for an arbitrarily small neighborhood of $\mathcal{C}_{\tindex} \times \mathcal{C}^{\tindex}_{\infty}$.
 \par

As it turns out, the scattering structure which occurs parametrically at $\mathcal{C}_{\tindex} \times \mathcal{C}_{\infty}^{\tindex}$ microlocally corresponds exactly to the structure at $\psf_{\dmf}\Xd${\ep}at least when we are away from ${^{\mathrm{d3sc,3co,res}}S^{\ast}} \Xd$ (i.e., $|\zeta_{\tindex}| < \infty$ if we also spatially restrict to $\cf$), which is the case here since we only work near the characteristic set of $P${\ep}albeit now in a different sense at $\cf$. Indeed, our construction in this paper will show that 
\begin{equation}  \label{intro cal 7}
\tcocf \backslash  { ^{\mathrm{d3sc,3co,res}}S^{\ast}}\Xd \cong \mathcal{C}_{\tindex} \times \mathbb{R}^{n_{\tindex}}_{\zeta_{\tindex}} \times \overline{ ^{\mathrm{co}}T^{\ast} }[ \hat{X}^{\tindex} ; \{ 0 \} ],
\end{equation}
where we can identify 
\begin{equation*}
\mathcal{C}_{\tindex} \times \mathbb{R}^{n_{\tindex}}_{\zeta_{\tindex}} \times { ^{\mathrm{co}}S^{\ast}}[ \hat{X}^{\tindex} ; \{  0 \} ],  \ \mathcal{C}_{\tindex} \times \mathbb{R}^{n_{\tindex}}_{\zeta_{\tindex}} \times \overline{ ^{\mathrm{co}}T^{\ast} }_{\mathcal{C}^{\tindex}_{\infty}}[ \hat{X}^{\tindex} ; \{ 0 \} ], \  \mathcal{C}_{\tindex} \times \mathbb{R}^{n_{\tindex}}_{\zeta_{\tindex}} \times \overline{ ^{\mathrm{co}}T^{\ast} }_{\mathcal{C}^{\tindex}_0}[ \hat{X}^{\tindex} ; \{ 0 \} ]
\end{equation*}
with $\dtsccf \cap \tcocf$, $\psf_{\dmf} \Xd \cap \tcocf$ and $\psf_{\dff} \Xd \cap \tcocf$ respectively. However, since $\psf_{\dmf} \Xd$ is a `symbolic' face (i.e., one of (\ref{introduction: microlocally symbolic faces})), we have already obtained a $\delta$-small amount of decay at this face in the error terms of (\ref{almost semi-Fredholm estimates with symbolic decay introduction}). Fortunately, we can take advantage of this even at the level of indicial operator, which will allow us to obtain decay at $\cf$ also near $\mathcal{C}_{\tindex} \times \mathcal{C}^{\tindex}_{\infty}$, where Vasy's construction no longer applies. \par

The above calculations will be carried out in \S \ref{decay at the corner face section}, the result of which is the following:
\begin{proposition}[Proposition \ref{decay at the corner face proposition}] \label{decay at the corner face proposition introduction}
Let $P$ be defined by (\ref{Helmhotz operator}) with $n_{\tindex} \geq 2$ and $n^{\tindex} \geq 3$. Then we can construct variable orders $\vor_{\pm}, \vol_{\pm}, \vob_{\pm} \in \mathcal{C}^{\infty}( \psf \Xd )$ and constant orders $m_{\pm}, s_{\pm} \in \mathbb{R}$ explicitly, such that for every $M, S \in \mathbb{R}$ and sufficiently small $\delta > 0$, we have
\begin{align} \label{corner face decay equation 1 introcution}
\begin{split}
 \| u \|_{ H_{\mathrm{d3sc,3co,res}}^{ m_{ \pm }, \vor_{ \pm }, \vol_{ \pm }, \vor_{ \pm } + \vol_{ \pm }, \vob_{ \pm }, s_{ \pm }  } } & \leq  C ( \| P u \|_{ H_{\mathrm{d3sc,3co,res}}^{ m_{ \pm } - 2, \vor_{ \pm } + 1, \vol_{ \pm }, \vor_{ \pm } + \vol_{\pm  } + 1, \vob_{ \pm } + 2, s_{ \pm } - 2 } } \\
 & \quad + \| u \|_{H_{\mathrm{d3sc,3co,res}}^{M, \vor_{\pm} - \delta, \vol_{\pm} , \vor_{\pm} + \vol_{\pm} - \delta , \vob_{\pm} - \delta ,S} } )
\end{split}
\end{align}
for all $u \in H_{\mathrm{d3sc,3co,res}}^{m_{\pm}, \vor_{\pm}, \vol_{\pm}, \vor_{\pm} + \vol_{\pm}, \vob_{\pm}, s_{\pm}}$ such that $Pu \in H_{\mathrm{d3sc,3co,res}}^{ m_{ \pm } - 2, \vor_{ \pm } + 1, \vol_{ \pm }, \vor_{ \pm } + \vol_{\pm  } + 1, \vob_{ \pm } + 2, s_{ \pm } - 2 }$. \par

The error terms on the right hand side of (\ref{corner face decay equation 1 introcution}) improve on the left hand sides (i.e., by being of strictly lower orders) at all of the `symbolic' boundary faces of $\psf \Xd$ and the global face $\cf$, but they do not improve on the left hand sides at the global face $\dff$.

\end{proposition}

It remains to improve decay for the error terms in (\ref{corner face decay equation 1 introcution}) at $\dff$, which will be the goal in \S \ref{decay at the front face section}.  In doing so, we note that if $P$ is viewed as an element of $\Psi_{\mathrm{d3sc,3co,res}}^{2,0,0,0,0,2}(\Xd)$, then 
\begin{equation*}
{ ^{\mathrm{d3sc,3co,res}}\hat{N}_{\dff, 0}}(P) ( y_{\tindex}, \zeta_{\tindex} ) = \hat{P}_{\tindex}( | \zeta_{\tindex} | ) = \Delta_{z^{\tindex}} + V^{\tindex} - ( \lambda^{2} - |\zeta_{\tindex}|^2 ),
\end{equation*}
where we remark that $\hat{P}_{\tindex}$ has already been considered in (\ref{indicial operator of P}). Recall that if $P$ is instead viewed as a three-body operator, then $\hat{P}_{\tindex}$ is also the indicial operator of $P$ at $\ff$, and it has the natural interpretation as a family of scattering operators on $X^{\tindex}$. Thus, an advantage of the second microlocalization is that $\hat{P}_{\tindex}$ can now be viewed systematically as a family of sc,b-operators (cf. (\ref{intro; indicial operator at dff})). Then as discussed in \S \ref{an overview of second microlocalization subsection}, there exists a pair of invertible maps (i.e., (\ref{intro; two-body second microlocal Fredholm map}), albeit with $\hat{P}_{\tindex}(|\zeta_{\tindex}|)$ replacing $P(\sigma)$) for the family $\hat{P}_{\tindex}( |\zeta_{\tindex}| )$ which holds uniformly as $|\zeta_{\tindex}| \rightarrow \lambda$ from below.  \par

To make use of the the maps (\ref{intro; two-body second microlocal Fredholm map}) in this context, it is crucial that
\begin{equation*} \label{intro; normal operator at ff}
\Delta + V^{\tindex} - \lambda^{2} = \mathcal{F}_{  z_{\tindex} \rightarrow \zeta_{\tindex} }^{-1} \hat{P}_{\tindex}( |\zeta_{\tindex}| ) \mathcal{F}_{z_{\tindex} \rightarrow \zeta_{\tindex}},
\end{equation*}
where $\mathcal{F}_{z_{\tindex} \rightarrow \zeta_{\tindex}}$ is the Fourier transform which sends $z_{\tindex}$ to its dual variable $\zeta_{\tindex}$. Moreover, the invertibility of (\ref{intro; two-body second microlocal Fredholm map}) allows us to remove the error terms in (\ref{intro; two-body second microlocal Fredholm estimates}). In fact, the d3sc,3co,res-Sobolev spaces we consider in this paper are all measured with respect to the Euclidean (or scattering) $L^{2}$. Thus, it turns out that we can use Plancherel theorem in the free variables, combined with the aforementioned errorless version of (\ref{intro; two-body second microlocal Fredholm estimates}){\ep}now applied to the operators $\hat{P}(|\zeta_{\tindex}|)$ acting on $X^{\tindex}$, to improve the error terms of (\ref{corner face decay equation 1 introcution}) further at $\dff$ as well. \par

\subsection{Statements of the main theorems} \label{subsection; statements of the main theorem}
Finally, we can define clearly the spaces $\mathcal{X}_{\pm}$ and $\mathcal{Y}_{\pm}$ which appeared in the statement of Theorem \ref{main theorem 1}. We will set
\begin{align*}
\mathcal{X}_{\pm} & \coloneq  \mathcal{X}_{\mathrm{d3sc,3co,res}}^{m_{\pm}, \vor_{\pm}, \vol_{\pm}, \vob_{\pm}, s_{\pm}} \\
& \coloneq \{ u \in H_{\mathrm{d3sc,3co,res}}^{m_{\pm}, \vor_{\pm}, \vol_{\pm}, \vor_{\pm} + \vol_{\pm}, \vob_{\pm}, s_{\pm} }( \Xd ) : Pu \in  H_{\mathrm{d3sc,3co,res}}^{m_{\pm} - 2, \vor_{\pm} + 1, \vol_{\pm}, \vor_{\pm} + \vol_{\pm} + 1, \vob_{\pm} + 2, s_{\pm} - 2 }( \Xd )  \}, \\
\mathcal{Y}_{\pm} & \coloneq \mathcal{Y}_{\mathrm{d3sc,3co,res}}^{m_{\pm}, \vor_{\pm}, \vol_{\pm}, \vob_{\pm}, s_{\pm}} \coloneq H_{\mathrm{d3sc,3co,res}}^{ m_{\pm}, \vor_{\pm}, \vol_{\pm}, \vor_{\pm} + \vol_{\pm}, \vob_{\pm}, s_{\pm} }(\Xd),
\end{align*}
where the variable orders are constructed in \S \ref{variable order construction section}, and are as in the statements of Propositions \ref{fredholm estimate with symbolic decay introduction} and \ref{decay at the corner face proposition introduction}. It follows that Proposition \ref{decay at the corner face proposition introduction} holds for every $u \in \mathcal{X}_{\mathrm{d3sc,3co,res}}^{m_{\pm}, \vor_{\pm}, \vol_{\pm}, \vob_{\pm}, s_{\pm}}$, so that the above discussions immediately lead to the following theorem: 
\begin{main theorem} \label{intro; main theorem 2}
Let $P$ be defined by (\ref{Helmhotz operator}) with $n_{\tindex} \geq 2$ and $n^{\tindex} \geq 3$. Moreover, suppose that $\Delta_{z^{\tindex}} + V^{\tindex}$ has no bound state nor half-bound state for every $\tindex \in \ind$. Then we can construct variable orders $\vor_{\pm}, \vol_{\pm}, \vob_{\pm} \in \mathcal{C}^{\infty}( \psf \Xd )$ and constant orders $m_{\pm}, s_{\pm} \in \mathbb{R}$ explicitly, such that for every $M, S \in \mathbb{R}$ and sufficiently small $\delta > 0$, we have
\begin{align} \label{front face decay equation  introcution}
\begin{split}
 \| u \|_{ H_{\mathrm{d3sc,3co,res}}^{ m_{ \pm }, \vor_{ \pm }, \vol_{ \pm }, \vor_{ \pm } + \vol_{ \pm }, \vob_{ \pm }, s_{ \pm }  } } & \leq  C ( \| P u \|_{ H_{\mathrm{d3sc,3co,res}}^{ m_{ \pm } - 2, \vor_{ \pm } + 1, \vol_{ \pm }, \vor_{ \pm } + \vol_{\pm  } + 1, \vob_{ \pm } + 2, s_{ \pm } - 2 } } \\
 & \quad + \| u \|_{H_{\mathrm{d3sc,3co,res}}^{M, \vor_{\pm} - \delta, \vol_{\pm} - \delta , \vor_{\pm} + \vol_{\pm} - \delta , \vob_{\pm} - \delta ,S} } )
\end{split}
\end{align}
for all $u \in \mathcal{X}_{\mathrm{d3sc,3co,res}}^{m_{\pm}, \vor_{\pm}, \vol_{\pm}, \vob_{\pm}, s_{\pm}}$.
\end{main theorem}
Now, $ \mathcal{X}_{\mathrm{d3sc,3co,res}}^{m_{\pm}, \vor_{\pm}, \vol_{\pm}, \vob_{\pm}, s_{\pm}} $, $\mathcal{Y}_{\mathrm{d3sc,3co,res}}^{m_{\pm}, \vor_{\pm}, \vol_{\pm}, \vob_{\pm}, s_{\pm}}$ are naturally Hilbert spaces. Moreover, we will have
\begin{equation*}
\begin{gathered}
H_{\mathrm{d3sc,3co,res}}^{ m_{-}, \vor_{-}, \vol_{-}, \vor_{-} + \vol_{-}, \vob_{-}, s_{-} } (\Xd) = \left( H_{\mathrm{d3sc,3co,res}}^{ m_{+} - 2, \vor_{+} + 1, \vol_{+}, \vor_{+} + \vol_{+} + 1, \vob_{+} + 2, s_{+} - 2 } (\Xd) \right)^{\ast}, \\
H_{\mathrm{d3sc,3co,res}}^{ m_{-} - 2, \vor_{-} + 1, \vol_{-}, \vor_{-} + \vol_{-} + 1, \vob_{-} + 2, s_{-} - 2 } (\Xd) = \left( H_{\mathrm{d3sc,3co,res}}^{m_{+}, \vor_{+}, \vol_{+}, \vor_{+} + \vol_{+}, \vob_{+}, s_{+} } (\Xd) \right)^{\ast},
\end{gathered}
\end{equation*}
where the adjoints are taken with respect to the standard $L^{2}$ pairing. Then by a standard argument, which can be found for instance in \cite[\S 3]{AndrewNSC}, we can conclude that Theorem \ref{intro; main theorem 2} in fact implies Theorem \ref{main theorem 1}. \par

The proof of Theorem \ref{intro; main theorem 2} will be given in \S \ref{section 10 proof of main theorem subsection}.

\subsection{Summary of main innovations and future directions} \label{subsection; Summary of main innovations and future directions}
Before we conclude this introduction, let us remark that our main innovations in this paper are the following:
\begin{itemize}
\item The construction of the calculus $\Psf(\Xd)$ through the introduction of a new, conormal three-cone algebra $\Psi_{\mathrm{3coc}}(\Xd) \coloneq \bigcup_{ m,r,l \in \mathbb{R} } \Psi_{\mathrm{3coc}}^{m,r,l}( \Xd )$, which is the microlocalization of a new Lie algebra $\mathcal{V}_{\mathrm{3co}}(\Xd) $. 
\item The incorporation of the second microlocalized scattering algebra $\Psi_{\mathrm{sc,b}}(X^{\tindex})$ into the $\Psi_{\mathrm{d3sc,3co,res}}(\Xd)$ algebra (as the indicial operators at $\dff$ for elements in the latter algebra), which allows for Vasy's uniform low energy resolvent estimate from \cite{AndrasSM} to be used as a `black box'. 
\item The analysis of non-tangential (i.e., free or transversal) microlocal propagation via saddle-point radial estimates, which also allows for a symbolic understanding of the transition from transversal to tangential microlocal propagation phenomena. 
\end{itemize}

Some natural future directions which remain to be investigated, but are beyond the scope of this paper, include the following:
\begin{itemize}
\item Invertibility of the Fredholm maps (\ref{introduction; main maps}) constructed in Theorem \ref{main theorem 1}.
\item Incorporation of bound states for the subsystems.
\item Refined asymptotic expansions for generalized eigenfunctions. 
\item Scattering matrix behavior in the non-tangential limit.
\item The Sch\"ordinger propagator.
\end{itemize}

In fact, if we were to follow the standard methods (see \cite{Melrose94}, \cite[\S 3.3]{AndrewNSC}) to prove the invertibility of (\ref{introduction; main maps}), then it appears that obtaining asymptotic expansions for solutions to $Pu \in \mathcal{S}( \mathbb{R}^{n} )$ would already be necessary. Indeed, in the two-body setting, one typically starts with an asymptotic expansion for $u$, which is followed by the application of a `boundary pairing' lemma. Consequentially, we believe it is wiser to leave the question of invertibility to a future paper, where we will treat asymptotic expansions as well.

\subsection{Acknowledgements}
This paper is based on the author's PhD thesis \cite{YilinThesis}. \par 

The author expresses deepest gratitude to Andrew Hassell and And\'as Vasy for the many fruitful discussions that shaped this work. The author is especially indebted to Andrew Hassell for suggesting such an interesting problem, for his generous financial support, and for his valuable assistance with the writing of \S \ref{decay at the front face section}. The author also benefited from communications with Dean Baskin, Gong Chen, Moritz Doll, Jesse Gell-Redman, Peter Hintz, Qiuye Jia, Richard Melrose, Pierre Portal, Jared Wunsch, and Po-Lam Yung.

Throughout his studies, the author has been supported by an Australian Research Training Program (RTP) Scholarship, and he is grateful for having been selected for this.

\section{Microlocal preliminaries}
\label{section microlocal preliminaries}
In this section, we will review several well-known pseudodifferential algebras in the literature. These include the conormal, or small b-algebra and scattering algebra, which will be presented in \S \ref{b-calculus subsection} and \S \ref{subsection the scattering calculus} respectively, as well as Vasy's second microlocalized (sc,b) algebra, which will be discussed in \S \ref{subsection Vasy's second microlocalized calculus}. The three-body algebra, which is reviewed carefully in \S \ref{subsection Vasy's second microlocalized calculus}, and which have already been defined in \cite{AndrasThesis}, will now be considered with decoupled indices. Additionally, we will introduce the conormal `cone algebra', which is a combination of the scattering algebra and the b-algebra on a cone, and thus require no new theoretical understanding, and will be introduced in \S \ref{subsection the cone calculus}. Finally, in \S \ref{parameters-dependent families subsection}, we will consider parameterized families of pseudodifferential operators which exhibit specific large-parameter behaviors. \par

We shall introduce some central concepts which will be relevant throughout our constructions below. The first is the notion of \emph{conormal symbols}, which can be defined on any smooth manifold $M$ with corners: Let $\{ H_{j} \}_{j=1}^{N}$ be the collection of boundary hypersurfaces of $M$, $\{ \rho_{H_{j}} \}_{j=1}^{N}$ be their defining functions, and let also $\mathcal{V}(M)$ be the Lie algebra of smooth vector fields on $M$. Then we will define
\begin{equation} \label{conormal symbol class}
S^{m_{1}, ..., m_{N}}(M), \quad m_1, ..., m_{N} \in \mathbb{R}
\end{equation}
by those functions $a \in \mathcal{C}^{\infty}( M^{\circ} )$ such that 
\begin{equation*}
V_{1} \cdots V_{K} a \in \rho_{H_1}^{-m_1} \cdots \rho_{H_{N}}^{-m_{N}} L^{\infty}(M)
\end{equation*}
for any choices of $V_{1}, ..., V_{K} \in \mathcal{V}_{\mathrm{b}}(M)$ and $K \in \mathbb{N}_0$, where $\mathcal{V}_{\mathrm{b}}(M)$ consists of $V \in \mathcal{V}(M)$ such that $V$ is tangent to $H_{j}$, $j =1,...,N$. \par

In applications, the manifold appearing in (\ref{conormal symbol class}) often comes from a \emph{blow-up}, which is the second central concept we will introduce. Let $M$ be as above, which we now also assume has dimension $n$. Then $L \subset M$ is called a \emph{$p$-submanifold} if locally we can always find coordinates $( x_{1}, ..., x_{k}, y_{1}, ... , y_{n-k} ) \in [ 0 , \infty]^{k} \times \mathbb{R}^{n-k}$ such that $L$ is given by the vanishing of a subset of these coordinates. The \emph{blow-up} of $M$ at $S$ is then given by
\begin{equation} \label{definition of a blow-up}
[ M ; L ] \coloneq ( M \backslash L ) \sqcup S^{+}L,
\end{equation}
where $S^{+}L$ is the inward pointing spherical normal bundle of $L$ in $M$. Associated to (\ref{definition of a blow-up}) is always a \emph{blow-down} map
\begin{equation*}
\beta :[M ; L] \rightarrow M,
\end{equation*}
which is defined by the identity on $M \backslash L$ and the collapsing of $S^{+}L$ onto $L$. One typically employs projective coordinates near $S^{+}L$. We will refer the readers to \cite{MelroseBook} for a more thorough introduction of blow-ups. \par

Finally, if $M$ is a compact manifold with boundary, then we will write 
\begin{equation*}
\dot{\mathcal{C}}^{\infty}(M)
\end{equation*}
for the set of $\varphi \in \mathcal{C}^{\infty}(M^{\circ})$ which vanish to infinite orders at $\partial M$, which is clearly a generalization of the Schwartz functions. In particular, if $M = \overline{\mathbb{R}^{n}}$, then we will write
\begin{equation*}
\dot{\mathcal{C}}^{\infty}( \overline{\mathbb{R}^{n}} ) = \mathcal{S}(\mathbb{R}^{n}).
\end{equation*}

\subsection{The conormal b-calculus}
\label{b-calculus subsection}
The b-algebra is the cornerstone of Melrose's program for studying pseudodifferential operators which are `degenerate' at spatial infinity. In general, one can introduce the b-algebra on arbitrary smooth, compact manifolds $M$ with corners. For the sake of defining the normal operators more easily below, we will only consider the case when $M$ is a smooth, compact manifold with boundary $\partial M$. We mainly need to keep in mind of the case of $M = \overline{\mathbb{R}^{n}}$ in this paper. \par

We first introduce the spaces of b-pseudodifferential operators on $M$. These operators are designed to quantize differential operators generated by the b-vector fields $\mathcal{V}_{\mathrm{b}}( M )$, where $\mathcal{V}_{\mathrm{b}}( M )$ is defined by all $V \in \mathcal{V}( M )$ such that $V$ is tangent to $\partial M$. Thus, locally in a collar neighborhood of $\partial M$ with coordinates $(x,y)$ such that $x$ is a local defining function for $\partial M$, members of $\mathcal{V}_{\mathrm{b}}( M )$ are simply given by the span of $\{ x \partial_{x}, \partial_{y} \}$ over $\mathcal{C}^{\infty}( M )$. \par

Now, one can construct a smooth vector bundle ${ ^{\mathrm{b}}T^{\ast} M }$ such that there exists an identification $\mathcal{C}^{\infty}( M ; { ^{\mathrm{b}}T^{\ast} M } ) = \mathcal{V}_{\mathrm{b}}( M )$. Thus naturally, we have $^{\mathrm{b}}T^{\ast}_{M^{\circ}} M  \cong T^{\ast} M^{\circ}$, while near the boundary ${ ^{\mathrm{b}}T^{\ast}_{\partial M}  M }$ we can find a local frame $\{ x \partial_{x}, \partial_{y} \}$ in the coordinates $(x,y)$. It follows that the dual bundle of ${ ^{\mathrm{b}}T }M$, which we denote by ${ ^{\mathrm{b}}T^{\ast} } M$, will satisfy ${^{\mathrm{b}}T^{\ast}_{M^{\circ}} M} \cong {T^{\ast} M^{\circ} }$, and that $^{\mathrm{b}}T^{\ast}M$ has a local frame given by $\{ dx/x, dy \}$ near ${^{\mathrm{b}}T^{\ast}_{ \partial M } M }$. One can then find local coordinates $(x,y, \tau_{\mathrm{b}}, \mu_{\mathrm{b}} )$ on ${ ^{\mathrm{b}}T^{\ast} } M $ near ${ ^{\mathrm{b}}T^{\ast}}_{ \partial M } M$ with respect to the form
\begin{equation} \label{cannonical one form in the b case}
\tau_{\mathrm{b}} \frac{dx}{x} + \mu_{\mathrm{b}} \cdot dy,
\end{equation}
while over $^{\mathrm{b}}T^{\ast}_{M^{\circ}}M$ we can simply everywhere use Euclidean coordinates $(z,\zeta)$ where $\zeta$ is dual to $z$. Finally, we can compactify ${ ^{\mathrm{b}}T^{\ast} M }$ radially in the fibers, thereby giving rise to $\overline{ ^{\mathrm{b}}T^{\ast} } M$, which is a smooth compact manifold with corners. \par

Following the construction used in \cite{AndrasBook}, we will define the b-operators by specifying their kernels locally everywhere. Since we would like these operators to act on functions, we will request that they are sections of the right b-density bundle ${^{\mathrm{b}} \Omega_{R}}M \coloneq \pi_{R}^{\ast} { ^{\mathrm{b}} \Omega M } $, where $^{\mathrm{b}}\Omega M \rightarrow M$ is the bundle of b-densities arising naturally from $\mathcal{C}^{\infty}( M ; { ^{\mathrm{b}}T^{\ast} M } )$ and $\pi_{R}$ is the right-projection $M^{2} \rightarrow M$. This complication will typically be ignored implicitly by fixing a global trivialization of ${ ^{\mathrm{b}} \Omega M }$, which can easily be done with respect to a choice of strictly positive b-density $\nu_{\mathrm{b}}$. Notably, suppose we choose $\nu_{\mathrm{b}}$ to be such that 
\begin{equation*}
\nu_{\mathrm{b}} = \Big| \frac{dx'}{x'} dy' \hspace{0.5mm} \Big|
\end{equation*}
in the coordinates $(x,y)$, then upon changing to the variables $(t, y)$ where $x = e^{-t}$, we have
\begin{equation*}
\nu_{\mathrm{b}} = | dt' dy' |,
\end{equation*}
i.e., $\nu_{\mathrm{b}}$ gets transformed into the Euclidean right density in the coordinates $(t,y)$. Moreover, notice that $dx/x = - dt$. Therefore in the coordinate system $(x,y,\tau_{\mathrm{b}},\mu_{\mathrm{b}})$ defined with respect to (\ref{cannonical one form in the b case}), one actually has that $-\tau_{\mathrm{b}}$ is dual to $dt$. In particular, this suggests that the near-diagonal part of a general b-operator, which will be defined via quantization, should act on distributions taking values from $[ 0, \infty )_{t} \times \partial M$ in a way similar to the standard setting.
\par

For $m,l \in \mathbb{R}$, the space of conormal symbols $S^{m,l}( \overline{ ^{\mathrm{b}}T^{\ast} } M )$ can be defined as in the beginning of this section. Here $m$ measures decay at $^{\mathrm{b}}S^{\ast} M$ and $l$ measures decay at $\overline{ ^{\mathrm{b}}T^{\ast}}_{ \partial M } M$. More concretely, $S^{m,l}( \overline{ ^{\mathrm{b}}T^{\ast} } M )$ consists of those $a \in \mathcal{C}^{\infty}( T^{\ast} M^{\circ} )$ such that spatially in a neighborhood of $\partial M$, and in local coordinates $(x,y, \tau_{\mathrm{b}}, \mu_{\mathrm{b}})$, we have
\begin{equation}
\label{b-conormal estimates 1}
| ( x \partial_{x} )^{j} \partial_{y}^{\alpha} \partial_{\tau_{\mathrm{b}} }^{k} \partial_{\mu_{\mathrm{b}}}^{\beta} a | \leq C_{j \alpha k \beta} x^{-l} \langle \tau_{\mathrm{b}}, \mu_{\mathrm{b}} \rangle^{m - k - |\beta|}. 
\end{equation}
The above estimates can also be expressed in coordinates $(t, y, \tau_{\mathrm{b}}, \mu_{\mathrm{b}})$, in which case we have
\begin{equation}
\label{b-conormal estimates 2}
| \partial_{t}^{j} \partial_{y}^{\alpha} \partial_{\tau_{\mathrm{b}}}^{k} \partial_{\mu_{\mathrm{b}}}^{\beta} a | \leq C_{j \alpha k \beta} e^{lt} \langle \tau_{\mathrm{b}}, \mu_{\mathrm{b}} \rangle^{m - k - |\beta|}.
\end{equation}
In particular, $a$ can locally be viewed as belonging to H\"ormander's class of uniform symbols with an additional, exponential growth in $t$. Meanwhile, spatially away from $\partial M$, $a$ simply restricts to a standard symbol of order $m$ on $T^{\ast}M^{\circ}$.
\par

We now come back to the definition of b-operators. In fact, we will only consider the conormal classes of these operators, the resulting calculus (the precise meaning of this terminology will be made clear shortly) is known as the `small calculus'. 
\par

Thus for $m,l \in \mathbb{R}$, we are interested in the construction of a space
\begin{equation*}
\Psi^{m,l}_{\mathrm{bc}}( M ).
\end{equation*}
We will first and foremost require that each $A \in \Psi_{\mathrm{bc}}^{m,l}(M)$ be a continuous linear map
\begin{equation*}
A : \dot{\mathcal{C}}^{\infty}( M ) \rightarrow \dot{\mathcal{C}}^{\infty}( M ).
\end{equation*}
In particular, $A$ must have a distributional Schwartz kernel, which we will view as a section of ${^{\mathrm{b}}\Omega}_{R}M$. Next, let $\psi \in \mathcal{C}^{\infty}( M )$ denote an arbitrary cut-off function at some point of $\partial M$. Then we will require that
\begin{equation*}
\psi A \psi \coloneq  \tilde{B}_{\psi} + R_{\psi} .
\end{equation*}
Here, $\tilde{B}_{\psi}$ is given by a `b-quantization'
\begin{equation}
\label{the b-quantization in section 2}
\tilde{B}_{\psi} \coloneq \frac{1}{(2\pi)^{n}} \int_{  \mathbb{R}^{n}  } e^{ i \frac{x-x'}{x} \tau_{\mathrm{b}} + i ( y - y' ) \cdot \mu_{\mathrm{b}}   }\tilde{\varphi} \big( \frac{x-x'}{x} \big) \varphi( |y - y'| ) \tilde{b}_{\psi}( x, y, \tau_{\mathrm{b}}, \mu_{\mathrm{b}} ) d\tau_{\mathrm{b}} d\mu_{\mathrm{b}} \big| \frac{dx'}{x'} dy' \big|,
\end{equation}
where $\tilde{b}_{\psi} \in S^{m,l}( \overline{ ^{\mathrm{b}}T^{\ast} } M )$, $\varphi \in \mathcal{C}^{\infty}_{c}( \mathbb{R} )$ is supported in a small neighborhood of $0$, and $\tilde{\varphi}(s) \coloneq \varphi( 1 - e^{s} )$. However, it is often instructive to work instead in the coordinates $(t,y, \tau_{\mathrm{b}}, \mu_{\mathrm{b}})$, in which case one could also replace $\tilde{B}_{\psi}$ with the standard quantization
\begin{equation} 
\label{the real b-quantisation in time variables}
 B_{\psi} \coloneq \frac{1}{(\pi)^{n}} \int_{\mathbb{R}^{2n}} e^{-i ( t - t' ) \cdot \tau_{\mathrm{b}} + i ( y -y' ) \cdot \mu_{\mathrm{b}} }  \varphi( t - t' ) \varphi( |y-y'| ) b_{\psi} (t, y, \tau_{\mathrm{b}}, \mu_{\mathrm{b}}) d\tau_{\mathrm{b}} d\mu_{\mathrm{b}} | dt' dy' |,
\end{equation}
where $b_{\psi}$ is the representation of $\tilde{b}_{\psi}$ in the new coordinates. In particular, this allows one to understand $B_{\psi}$ via the standard theory. Meanwhile, the remainder term $R_{\psi}$ is a smooth kernel which satisfies
\begin{equation} \label{the real b-R decay term}
| \partial_{t}^{j} \partial_{y}^{\alpha} \partial_{t'}^{j'} \partial_{y'}^{\alpha'} R_{\psi} | \leq C_{j \alpha j' \alpha' NM} e^{tl} \langle y - y' \rangle^{-N} e^{-M| t - t' |}
\end{equation}
for all $M > 0$. \par

 If instead $\psi \in \mathcal{C}^{\infty}(M)$ is a cut-off function at some point in the interior of $M$, then we will simply require that $\psi A \psi$ be a standard pseudodifferential operator with compact support of order $m$ on $M^{\circ}$. In particular, the symbol of $\psi A \psi$ must belong to $S^{m}( \overline{T^{\ast}}M^{\circ} )$ (as specified at the beginning of this section), and it must also be spatially supported away from $\partial M$. We remark that this makes sense because $\overline{ ^{\mathrm{b}}T^{\ast} }_{M^{\circ}}M \cong \overline{T^{\ast}} M^{\circ}$. See \cite{AndrasBook} for a thorough treatment of this material.
 \par

Now, let $\phi \in \mathcal{C}^{\infty}( M )$ be another cut-off function. Then we are interested in the behaviors of $\phi A \psi$ for various choice of $\phi$ and $\psi$, whenever they have disjoint supports. We will need to consider three cases:
\begin{enumerate}
\item $\supp \phi$ and $\supp \psi$ are both disjoint from $ \partial M $, 
\item $\supp \phi$ is disjoint from $\partial M$ but $\supp \psi$ is not, or vice versa,
\item both $\supp \phi$ and $\supp \psi$ intersect $\partial M$, but are still disjoint. 
\end{enumerate}
Let $K_{\phi, \psi}$ be the kernels of $\phi A \psi$ in each case. \par 
Firstly, we shall require that $K_{\phi, \psi}$ to always be smooth. Secondly, $K_{\phi, \psi}$ must be rapidly decreasing in case (2). Finally, in case (3), suppose we work in the coordinates $(t, y, t', y')$ where $y$ and $y'$ might not necessarily be related (i.e., they might not be coordinates of the same chart). Then we will require that the estimates
\begin{equation} \label{b-kernel K_3}
| \partial_{t}^{j} \partial_{y}^{\alpha} \partial_{t'}^{j'} \partial_{y'}^{\alpha'} K_{\phi, \psi} | \leq C_{j \alpha j' \alpha' M} e^{tl} e^{-M| t - t' |}
\end{equation}
be satisfied for all $M > 0$. \par

This concludes the construction of $\Psi^{m,l}_{\mathrm{bc}}(M)$. \par

Now, it is straightforward to show that operator composition defines a map
\begin{equation*}
\Psi^{ m_1 , l_1 }_{\mathrm{bc}} (M ) \times \Psi^{ m_2 ,l_2 }_{\mathrm{bc}} ( M ) \rightarrow \Psi^{ m_1 + m_2, l_1 + l_2}_{\mathrm{bc}} ( M )
\end{equation*}
for every $m_{j}, l_{j} \in \mathbb{R}$, $j=1,2$. Thus the conormal b-alegbra is the filtered algebra defined by
\begin{equation*}
\Psi_{\mathrm{bc}}(M) \coloneq \bigcup_{m,l \in \mathbb{R}} \Psi_{\mathrm{bc}}^{m,l}(M)
\end{equation*}
Moreover, the usual principal symbol map defines a short exact sequence
\begin{equation*}
0 \rightarrow \Psi^{ m - 1 , l - 1 }_{\mathrm{bc}}  ( M ) \rightarrow \Psi^{ m, l }_{\mathrm{bc}} ( M ) \longrightarrow_{^{\mathrm{b}}\sigma_{ m , l }} ( S^{ m, l } / S^{ m - 1, l  } ) ( \overline{ ^{\mathrm{b}}T^{\ast} } M ) \rightarrow 0,
\end{equation*}
and thus descends to an isomorphism
\begin{equation*}
{ ^{\mathrm{b}}\sigma_{m,l} } : ( \Psi_{\mathrm{bc}}^{m,l} / \Psi^{m-1, l }_{\mathrm{bc}} ) ( M ) \xrightarrow{\sim} ( S^{ m , l } / S^{ m - 1, l  } ) ( \overline{ ^{\mathrm{b}}T^{\ast} } M )
\end{equation*}
which is multiplicative in the sense that 
\begin{equation*}
^{\mathrm{b}}\sigma_{ m_1 + m_2 , l_1 + l_2}(A_1 A_2) = {^{\mathrm{b}}\sigma_{ m_1, l_1 }}( A_1 ) {^{\mathrm{b}}\sigma_{ m_2, l_2 }}( A_2 )
\end{equation*} 
whenever $A_{j} \in \Psi^{m_j,l_j}_{\mathrm{bc}}( M )$, $j=1,2$. \par

Let $L^{2}_{\mathrm{b}}(M) \coloneq L^{2}(M , \nu_{\mathrm{b}})$ be the space of $L^2$ functions defined with respect to a fixed, strictly positive b-density $\nu_{\mathrm{b}}$. Let $A^{\ast_{\mathrm{b}}}$ denote the adjoint of $A \in \Psi^{m,l}_{\mathrm{bc}}(M)$ with respect to $L^2_{\mathrm{b}}(M)$. Then it can be shown that $A^{\ast_{\mathrm{b}}} \in \Psi^{m,r}_{\mathrm{bc}}(M)$ as well. Moreover, we have
\begin{equation} 
\label{b algebra principal symbol and indicial operators commutation properties}
{^{\mathrm{b}}\sigma_{m,l}}(A^{\ast_{\mathrm{b}}}) = \overline{{^{\mathrm{b}}\sigma_{m,l}}(A)}.
\end{equation} 
Thus following the standard convention, one could say that the principal symbol map promotes the conormal b-algebra into the \emph{conormal, or small b-calculus}.

One can now carry out the standard microlocalization construction with respect to the principal symbol. Thus, we can define the notions of elliptic, characteristic and wavefront sets, and all of the usual results apply. One could also define suitable notions of b-Sobolev spaces. However, in the discussion of second microlocalization, there are actually two definitions of b-Sobolev spaces (yielding distinct spaces) which can be considered. \par

In the first definition, which is only natural from the perspective of second microlocalization (see \S \ref{subsection Vasy's second microlocalized calculus} below), b-Sobolev spaces will be defined with respect to the weighted space $L^{2}_{\mathrm{sc}}(M) \coloneq x^{n/2} L^{2}_{\mathrm{b}}(M)$. Here, the subscript `sc' indicates that $L^{2}_{\mathrm{sc}}(M)$ is the natural $L^{2}$ space on $M$ when one works with the \emph{scattering calculus}. See also \S \ref{subsection the scattering calculus} below for further discussion on this in the case where $M = \overline{\mathbb{R}^{n}}$. \par

Thus for $m \geq 0$, we will define
\begin{equation*}
H_{\mathrm{b}}^{m,0}( M) \coloneq \{ u \in L^{2}_{\mathrm{sc}}( M ) :  \Lambda u \in L^{2}_{\mathrm{sc}}( M ) \},
\end{equation*}
where $\Lambda \in \Psi_{\mathrm{b}}^{m,0}( M )$ is a fixed elliptic operator, in the sense that the principal symbol of $\Lambda$ is elliptic. Notice that this construction is independent of the choices of $\Lambda \in \Psi^{m,0}_{\mathrm{bc}}(M)$ and $\nu_{\mathrm{b}}$ (i.e., the strictly positive density used in the definition of $L^{2}_{\mathrm{b}}(M)$), in the sense that different choices give rises to spaces with equivalent norms. \par

For $l \in \mathbb{R}$, we also set
\begin{equation*}
H_{\mathrm{b}}^{m,l}( M ) \coloneq x^{l} H_{\mathrm{b}}^{m,0}( M ).
\end{equation*}
It follows that $H_{\mathrm{b}}^{m,l}( M )$ is a Hilbert space with the square norm
\begin{equation*}
\| u \|_{H_{\mathrm{b}}^{m,l}}^2 \coloneq \| x^{-l} u \|_{L^{2}_{\mathrm{sc}}}^{2} + \| x^{-l} \Lambda u \|_{L^{2}_{\mathrm{sc}}}^{2}. 
\end{equation*} 
The cases of $m \leq 0$ can be defined by duality with respect to $L^{2}_{\mathrm{sc}}(M)$.  See also \cite{AndrasLagrangian1, AndrasLagrangian2} for the same details on this definition of the b-Sobolev spaces.
\par

Alternatively,  we could define b-Sobolev spaces with respect to $L^{2}_{\mathrm{b}}(M)$ for a fixed choice (and thus all choices) of $\nu_{\mathrm{b}}$. Such a construction follows from the above procedure verbatim, except that one must replace $L^{2}_{\mathrm{sc}}(M)$ by $L^{2}_{\mathrm{b}}(M)$ everywhere. We refer to \cite{AndrasBook} for these materials. For our actual application in mind (i.e., the three-body problem), we will primarily be working with the b-Sobolev spaces defined with respect to $L^{2}_{\mathrm{sc}}(M)$. \par

It is also possible to assume that the regularity order $m$ is variable dependent, i.e., $m$ is replaced by some $\mathsf{m} \in \mathcal{C^{\infty}}( ^{\mathrm{b}}S^{\ast} M )$. However, the spatial order $l$ must remain constant. For $\delta > 0$ sufficiently small, the spaces of conormal symbols $S^{\vom, l}_{\delta}( \overline{ ^{\mathrm{b}}T^{\ast}} M )$ now consist of those $a \in \mathcal{C}^{\infty}( T^{\ast}M^{\circ} )$ such that, upon spatially restricted to a neighborhood of $\partial M$, estimates (\ref{b-conormal estimates 2}) are replaced by 
\begin{equation*}
| \partial_{t}^{j} \partial_{y}^{\alpha} \partial_{\tau_{\mathrm{b}}}^{k} \partial_{\mu_{\mathrm{b}}}^{\beta} a | \leq C_{j \alpha k \beta} e^{lt} \langle \tau_{\mathrm{b}}, \mu_{\mathrm{b}} \rangle^{m - k - |\beta| + \delta | (j, \alpha, k , \beta ) |}.
\end{equation*}
Meanwhile, spatially away from $\partial M$, and in Euclidean local coordinates $(z,\zeta) \in T^{\ast} M^{\circ}$, $a$ is required to satisfy
\begin{equation*}
| \partial_{z}^{\alpha} \partial_{\zeta}^{\beta} a | \leq C_{\alpha \beta} \langle \zeta \rangle^{m - |\beta| + \delta |\beta |}.
\end{equation*} 
In other words, we lose a $\delta > 0$ order of decay at fiber infinity upon differentiating in any of the variables.
\par

Correspondingly, the spaces of operators $\Psi_{\mathrm{bc},\delta}^{\vom, l}(M)$ can now be defined by the same construction as in the case of $\Psi^{m,l}_{\mathrm{bc}}(M)$, except that one replaces $S^{m,l}( \overline{ ^{\mathrm{b}}T^{\ast}}M )$ by $S^{m,l}_{\delta}( \overline{ ^{\mathrm{b}}T^{\ast}}M )$ everywhere. The principal symbol map captures the leading differential order of $\Psi_{\mathrm{bc}, \delta}^{\vom, l}(M)$ modulo $\Psi_{\mathrm{bc},\delta}^{\vom - 1 + 2 \delta, l}(M)$, and it is multiplicative as before. Moreover, if $A \in \Psi_{\mathrm{bc}, \delta}^{\vom, l}(M)$, then we have $A^{\ast_{\mathrm{b}}} \in \Psi_{\mathrm{bc},\delta}^{\vom,l}(M)$ as well. \par

Finally, b-Sobolev spaces can be defined with respect to variable orders. We simply set
\begin{equation*}
H_{\mathrm{b}}^{\vom, l}(M) \coloneq \{ u \in H_{\mathrm{b}}^{M, l}(M) :  \Lambda u \in L^{2}_{\mathrm{sc}}(M) \}, \quad \| u \|_{H_{\mathrm{b}}^{\vom,l}}^2 \coloneq \| u \|_{H_{\mathrm{b}}^{M,l}}^2  + \| Au \|_{L^{2}}^{2},
\end{equation*}
where $A \in \Psi_{\mathrm{bc},\delta}^{\vom, l}(M)$ is elliptic in the symbolic sense and $M < \vom$. The specific choices of $A$, $M$ and $\nu_{\mathrm{b}}$ (for the definition of $L^{2}_{\mathrm{sc}}(M)$) is irrelevant for this construction. \par

We will refer the readers to \cite{AndrasBook} for a detailed exposition of this material.

\subsection{The scattering calculus} 
\label{subsection the scattering calculus}
The scattering calculus was originally introduced by Parenti and Shubin \cite{ParentiScattering, ShubinScattering} in the non-geometric setting, and then by Melrose \cite{Melrose94} in the geometric setting. It has since been applied to various aspects of the scattering theory. In this subsection, we will review the scattering algebra on the radially compactified Euclidean space $\overline{\mathbb{R}^{n}}$, though we remark that the scattering algebra can be defined more generally on any smooth, compact manifold $M$ with boundary. We will refer the readers to the \cite{AndrasBook} for a thorough exposition of the latter construction. \par

Following the general program of Melrose, the scattering algebra is defined to quantize those differential operators generated by the scattering vector fields 
\begin{equation*}
\mathcal{V}_{\mathrm{sc}}( \overline{\mathbb{R}^{n}} ) \coloneq x \mathcal{V}_{\mathrm{b}}( \overline{\mathbb{R}^{n}} )
\end{equation*}
where $x$ is any global defining function of $\partial \overline{\mathbb{R}^{n}} \cong \mathbb{S}^{n-1}$. Thus initially, locally in any collar neighborhood of $\mathbb{S}^{n-1}$ with coordinates $(x,y)$, $\mathcal{V}_{\mathrm{sc}}( \overline{\mathbb{R}^{n}} )$ must be spanned over $\mathcal{C}^{\infty}( \overline{\mathbb{R}^{n}} )$ by the vector fields $\{ x^{2} \partial_{x}, x \partial_{y}\}$. In Euclidean coordinates, this can also be realized more globally as the span over $\mathcal{C}^{\infty}(\overline{\mathbb{R}^{n}})$ of $\{ \partial_{z_1}, ... , \partial_{z_{n}} \}$. One can then construct a vector bundle ${ ^{\mathrm{sc}}T \overline{\mathbb{R}^{n}} }$ such that $ \mathcal{C}^{\infty} ( \overline{\mathbb{R}^{n}} ; { ^{\mathrm{sc}}T \overline{\mathbb{R}^{n}} } ) = \mathcal{V}_{\mathrm{sc}}( \overline{\mathbb{R}^{n}} )$. Moreover, ${ ^{\mathrm{sc}}T_{\mathbb{R}^{n}} \overline{\mathbb{R}^{n}} } \cong T \mathbb{R}^{n}$, while near ${ ^{\mathrm{sc}}T^{\ast}_{\mathbb{S}^{n-1}} \overline{\mathbb{R}^{n}}}$, we have that ${ ^{\mathrm{sc}}T \overline{\mathbb{R}^{n}}  }$ is spanned by the local frame $\{ x^{2} \partial_{x}, x \partial_{y} \}$ in coordinates $(x,y)$. It follows that if ${ ^{\mathrm{sc}}T^{\ast} \overline{\mathbb{R}^{n}}}$ denotes the dual bundle of ${ ^{\mathrm{sc}}T \overline{\mathbb{R}^{n}} }$, then ${ ^{\mathrm{sc}}T^{\ast}_{\mathbb{R}^{n}} \overline{\mathbb{R}^{n}} } = T^{\ast} \mathbb{R}^{n}$, and $\{ dx/x^{2}, dy/x \}$ is a local frame for ${ ^{\mathrm{sc}}T^{\ast} \overline{\mathbb{R}^{n}} }$ near ${ ^{\mathrm{sc}}T^{\ast}_{ \partial \overline{\mathbb{R}^{n}} } \overline{\mathbb{R}^{n}} }$. In particular, we can also use the Euclidean frames $\{ d{z_1}, ... , d{z_n} \}$ over the interior of ${ ^{\mathrm{sc}}T^{\ast} \overline{\mathbb{R}^{n}} }$. \par
  Spatially in a neighborhood of $\partial \overline{\mathbb{R}^{n}}$, we can find coordinates $(x,y, \tau, \mu )$ on ${ ^{\mathrm{sc}}T^{\ast} \overline{\mathbb{R}^{n}} }$ with respect to the form
\begin{equation*}
\tau \frac{dx}{x^2} + \mu \cdot dy,
\end{equation*}
while over the interior (i.e., just $\mathbb{R}^{n}$) we can simply use $( z , \zeta)$, where $\zeta$ is dual to $dz$. It is a fact which only applies in the Euclidean case that we have
\begin{equation} \label{special diffeomorphism of the scattering bundle}
 ^{\mathrm{sc}}T^{\ast}  \overline{\mathbb{R}^{n}}  \cong \overline{\mathbb{R}^{n}} \times \mathbb{R}^{n},
\end{equation} 
and this is reflected by the change of variables from $(z, \zeta)$ to $( x, y, \tau, \mu )$ being uniformly valid as $x \rightarrow 0$. \par 
Now, let $\overline{^{\mathrm{sc}}T^{\ast}}  \overline{\mathbb{R}^{n}} \cong \overline{\mathbb{R}^{n}} \times \overline{\mathbb{R}^{n}}$ denote the fiber compactification of $^{\mathrm{sc}}T^{\ast}  \overline{\mathbb{R}^{n}}$. Then for $m,r \in \mathbb{R}$, we can consider the space of conormal symbols $S^{m,r}( \overline{^{\mathrm{sc}}T^{\ast}} \overline{\mathbb{R}^{n}}  )$, where $m$ measures decay at $^{\mathrm{sc}}S^{\ast} \overline{\mathbb{R}^{n}}$ and $r$ measures decay at $\overline{ ^{\mathrm{sc}}T^{\ast} }_{\mathbb{S}^{n-1}} \overline{\mathbb{R}^{n}}$. One can check that elements of $S^{m,r}( \overline{ ^{\mathrm{sc}}T^{\ast} } \overline{\mathbb{R}^{n}} )$ consists of those $a \in \mathcal{C}^{\infty}( T^{\ast} \mathbb{R}^{n} )$ such that 
\begin{equation}
\label{Euclidean scattering symbolic estimates}
| \partial_{z}^{\alpha} \partial_{\zeta}^{\beta} a  | \leq C_{\alpha \beta} \langle z \rangle^{r - |\alpha|} \langle \zeta \rangle^{m - |\beta|},
\end{equation}
and this characterizes $S^{m,r}( \overline{ ^{\mathrm{sc}}T^{\ast}} \overline{\mathbb{R}^{n}} )$ completely in view of (\ref{special diffeomorphism of the scattering bundle}). Nevertheless, we will note that in coordinates $( x, y, \tau , \mu )$, (\ref{Euclidean scattering symbolic estimates}) can also be written as 
\begin{equation}
\label{Melrosian scattering symbolic estimates}
|( x \partial_{x} )^{j} \partial_{y}^{\alpha} \partial_{\tau}^{k} \partial_{\mu}^{\beta} a | \leq C_{j \alpha k \beta} x^{-r} \langle \tau, \mu \rangle^{m - k - |\beta|} .
\end{equation}
\par

The space of operators 
\begin{equation*}
\Psi^{ m , r }_{\mathrm{sc}}( \overline{\mathbb{R}^{n}} )  
\end{equation*}
will be defined by the Euclidean quantizations of $S^{m,r}( \overline{ ^{\mathrm{sc}}T^{\ast} } \overline{\mathbb{R}^{n}}$), i.e., $\Psi_{\mathrm{sc}}^{m,r}( \overline{\mathbb{R}^{n}} )$ consists of those operators $A$ such that
\begin{equation} 
\label{scattering quantization}
A = \frac{1}{(2\pi)^{n}} \int_{\mathbb{R}^{n}} e^{i( z - z' ) \cdot \zeta} a(z,\zeta) d\zeta |dz'|
\end{equation}
for some $a \in S^{m,r}( \overline{ ^{\mathrm{sc}}T^{\ast} } \overline{\mathbb{R}^{n}} )$. \par

It can be shown that operator composition is a mapping
\begin{equation*}
\Psi^{ m_1 , r_1 }_{\mathrm{sc}} ( \overline{\mathbb{R}^{n}} ) \times \Psi^{ m_2 ,r_2 }_{\mathrm{sc}} ( \overline{\mathbb{R}^{n}} ) \rightarrow \Psi^{ m_1 + m_2, r_1 + r_2}_{\mathrm{sc}} ( \overline{\mathbb{R}^{n}} ).
\end{equation*}
The scattering algebra is then the filtered algebra defined by the union 
\begin{equation*}
\Psi_{\mathrm{sc}}( \overline{\mathbb{R}^{n}} ) \coloneq \bigcup_{m,r \in \mathbb{R}} \Psi_{\mathrm{sc}}^{m,r}( \overline{\mathbb{R}^{n}} ).
\end{equation*}
Moreover, the principal symbol map $ ^{\mathrm{sc}}\sigma_{ m, r }(A) = a$ gives rise to a short exact sequence
\begin{equation*}
0 \rightarrow \Psi^{ m - 1 , r - 1 }_{\mathrm{sc}}  ( \overline{\mathbb{R}^{n}} ) \rightarrow \Psi^{ m, r }_{\mathrm{sc}} ( \overline{\mathbb{R}^{n}} ) \longrightarrow_{^{\mathrm{sc}}\sigma_{ m ,r }} ( S^{ m, r } / S^{ m - 1, r - 1  } ) ( \overline{ ^{\mathrm{sc}}T^{\ast} } \overline{\mathbb{R}^{n}} ) \rightarrow 0,
\end{equation*}
such that the descended isomorphism
\begin{equation*}
{^{\mathrm{sc}}\sigma_{m,r}} : ( \Psi_{\mathrm{sc}}^{m,r} / \Psi^{m-1, r-1}_{\mathrm{sc}} ) ( \overline{\mathbb{R}^{n}} ) \xrightarrow{\sim} ( S^{ m, r } / S^{ m - 1, r - 1  } ) ( \overline{ ^{\mathrm{sc}}T^{\ast} } \overline{\mathbb{R}^{n}} )
\end{equation*}
is multiplicative in the sense that
\begin{equation*}
^{\mathrm{sc}}\sigma_{ m_1 + m_2 , r_1 + r_2}(A_1 A_2) = {^{\mathrm{sc}}\sigma_{ m_1, r_1 }}( A_1 ) {^{\mathrm{sc}}\sigma_{ m_2, r_2 }}( A_2 )
\end{equation*} 
whenever $A_{j} \in \Psi^{m_j,r_j}_{\mathrm{sc}}( \overline{\mathbb{R}^{n}} )$, $j=1,2$. \par

As already mentioned in the discussion of the b-Sobolev spaces in \S \ref{b-calculus subsection}, the natural $L^2$ spaces one works with in the context of the scattering calculus are $L^{2}_{\mathrm{sc}}( \overline{\mathbb{R}^{n}}) = L^{2}( \overline{\mathbb{R}^{n}} , \nu_{\mathrm{sc}} )$. Here $\nu_{\mathrm{sc}} \in {^{\mathrm{sc}} \Omega \overline{\mathbb{R}^{n}}}$ is any fixed, strictly positive scattering density, where ${^{\mathrm{sc}} \Omega \overline{\mathbb{R}^{n}}} \rightarrow \overline{\mathbb{R}^{n}}$ is the bundle defined by the natural densities arising from $\mathcal{C}^{\infty}( \overline{\mathbb{R}^{n}} ; { ^{\mathrm{sc}}T^{\ast} \overline{\mathbb{R}^{n}} } )$. For example, given any strictly positive b-density $\nu_{\mathrm{b}}$, we know that $x^{-n} \nu_{\mathrm{b}}$ is a strictly positive scattering density whenever $x$ is a global defining function for $\mathbb{S}^{n-1}$. In particular, this justifies the notation used in the definition of b-Sobolev spaces in \S \ref{b-calculus subsection}. Moreover, in the Euclidean case, a canonical choice of strictly positive scattering density is simply $|dz|$, so the resulting $L^{2}_{\mathrm{sc}}(\overline{\mathbb{R}^{n}})$ space actually agrees with the standard $L^{2}(\mathbb{R}^{n})$ space. \par

Let $A^{\ast}$ be the adjoint of $A \in \Psi^{m,r}_{\mathrm{sc}}(\overline{\mathbb{R}^{n}})$ taken with respect to $L^{2}(\mathbb{R}^{n})$. Then we can show that $A^{\ast} \in \Psi^{m,r}_{\mathrm{sc}}(\overline{\mathbb{R}^{n}})$ as well, and we have
\begin{equation}
\label{sc algebra principal symbol commutation properties}
{^{\mathrm{sc}}\sigma_{m,r}}(A^{\ast}) = \overline{ ^{\mathrm{sc}}\sigma_{m,r}(A) }.
\end{equation}
Thus, when we combine (\ref{sc algebra principal symbol commutation properties}) with the multiplicative property of the principal symbol map discussed above, one could say that the scattering algebra is endowed with a symbol calculus. It is therefore also referred to as the \emph{scattering calculus}. 
\par

One could also discuss microlocalization, in particular the notions of elliptic, characteristic and wavefront sets in this setting. Heuristically speaking, since the principal symbol captures principal decay microlocally at all boundary faces of $\overline{^{\mathrm{sc}}T^{\ast}} \overline{\mathbb{R}^{n}} \cong \overline{\mathbb{R}^{n}} \times \overline{\mathbb{R}^{n}}$, these objects can be viewed as direct extensions from the standard (i.e., H\"ormander's uniform algebra $\Psi_{\infty}( \mathbb{R}^{n} )$, see \cite{AndrasBook} for where this notation is used) case, and is thus straightforward in a sense. Moreover, the Sobolev spaces $H_{\mathrm{sc}}^{m,r}( \overline{\mathbb{R}^{n}} )$ defined using scattering elliptic operators turn out to be the same as the usual Sobolev spaces with spatial weights $H^{m,r}( \mathbb{R}^{n} )$, except that we now have an alternate characterization given by
\begin{equation*}
H^{m,r}( \mathbb{R}^{n} )  = \{ u \in \mathcal{S}'( \mathbb{R}^{n} ) :  \Lambda u \in L^{2}( \mathbb{R}^{n} ) \},
\end{equation*}
where $\Lambda \in \Psi_{\mathrm{sc}}^{m,r}( \overline{\mathbb{R}^{n}} )$ is elliptic, i.e., its principal symbol is elliptic. This definition is independent of the choice of $\Lambda$, though in practice, one typically takes $\Lambda$ to be invertible (and thus $\Lambda^{-1} \in \Psi_{\mathrm{sc}}^{-m,-r}(\overline{\mathbb{R}^{n}})$ by ellipticity), and define
\begin{equation*}
\| u \|_{H^{m,r}} \coloneq \| \Lambda u \|_{L^{2}}.
\end{equation*}
\par

We can also extend $m$ and $r$ to be variable dependent, and require that they be smooth functions $\mathsf{m} \in \mathcal{C}^{\infty}( ^{\mathrm{sc}}S^{\ast} \overline{\mathbb{R}^{n}} )$ and $\mathsf{r} \in \mathcal{C}^{\infty}( \overline{^{\mathrm{sc}}T^{\ast}}_{\mathbb{S}^{n-1}} \overline{\mathbb{R}^{n}} )$ respectively. Let $\delta > 0$ be sufficiently small. Then we will define $S^{\vom, \vor}_{\delta}( \overline{ ^{\mathrm{sc}}T^{\ast}} \overline{\mathbb{R}^{n}} )$ by those $a \in \mathcal{C}^{\infty}( T^{\ast} \mathbb{R}^{n} )$ such that \begin{equation*}
\begin{gathered}
| \partial_{z}^{\alpha} \partial_{\zeta}^{\beta} a  | \leq C_{\alpha \beta} \langle z \rangle^{r - |\alpha| + \delta |( \alpha, \beta )|} \langle \zeta \rangle^{m - |\beta| + \delta |(\alpha, \beta)|},
\end{gathered}
\end{equation*}
i.e., we lose a $\delta > 0$ order of decay at both $\overline{^{\mathrm{sc}}T^{\ast}}_{\mathbb{S}^{n-1}} \overline{\mathbb{R}^{n}}$ and ${^{\mathrm{sc}}S^{\ast}}\overline{\mathbb{R}^{n}}$ upon differentiating in any of the variables. Correspondingly, the space of operators $\Psi^{\mathsf{m}, \mathsf{r}}_{\mathrm{sc}, \delta}( \overline{\mathbb{R}^{n}} )$ will be defined by directly quantizing elements of $S^{\vom,\vor}_{\delta}( \overline{ ^{\mathrm{sc}} }T^{\ast} \overline{\mathbb{R}^{n}} )$ through (\ref{scattering quantization}). Moreover, operator composition still holds, with a symbol calculus at the level of $\Psi^{\mathsf{m}, \mathsf{r}}_{\mathrm{sc}, \delta}( \overline{\mathbb{R}^{n}} )$ modulo $\Psi^{\mathsf{m} - 1 +2\delta, \mathsf{r} - 1 + 2\delta}_{\mathrm{sc}, \delta}( \overline{\mathbb{R}^{n}} )$. Additionally, (\ref{sc algebra principal symbol commutation properties}) holds as well. Finally, scattering Sobolev spaces $H_{\mathrm{sc}}^{\vom,\vor}( \overline{\mathbb{R}^{n}} ) = H^{\vom, \vor}(\mathbb{R}^{n})$ with variable orders can also be defined with respect to variable orders, i.e.,
\begin{equation*}
\begin{gathered}
H^{\vom, \vor}( \mathbb{R}^{n} ) \coloneq \{ u \in H^{M,N}( \mathbb{R}^{n} ) : \Lambda u \in L^2 ( \mathbb{R}^{n} )  \}, \quad  \| u \|_{H^{\vom,\vor}}^2 \coloneq \| u \|_{H^{M,N}}^2 + \| \Lambda u \|_{L^2}^2,
\end{gathered}
\end{equation*}
where $\Lambda \in \Psi_{\mathrm{sc},\delta}^{\vom, \vor}(\overline{\mathbb{R}^{n}})$ is elliptic and that $M \leq \vom$, $N \leq \vor$. This definition is independent of the choice of $\Lambda$, $M$ and $N$. Again, see \cite{AndrasBook,AndrewNSC} for more details.

\subsection{The conormal three-body calculus} 
\label{subsection the three-body calculus}
We now review the conormal three-body algebra and some of its basic properties. We will assume that $\mathcal{C} =  \mathcal{C}_{\tindex}$ has just one component. Hence the position space in question is just the blow-up
\begin{equation*}
\beta_{\mathrm{3sc}} : [ \overline{\mathbb{R}^{n}} ; \mathcal{C}_{\tindex} ] \rightarrow \overline{\mathbb{R}^{n}}. 
\end{equation*} \par

By construction, $[ \overline{\mathbb{R}^{n}} ; \mathcal{C}_{\tindex} ]$ is a smooth manifold with codimension two corners. The boundary hypersurfaces of $[ \overline{\mathbb{R}^{n}}; \mathcal{C}_{\tindex} ]$ are given by $\mf$ and $ \ff $, where $\mf$ is the lift of $\mathbb{S}^{n-1}$ and $ \ff $ the new front face (or equivalently the lift of $\mathcal{C}_{\tindex}$). 
\par

\begin{figure}
\centering
\includegraphics{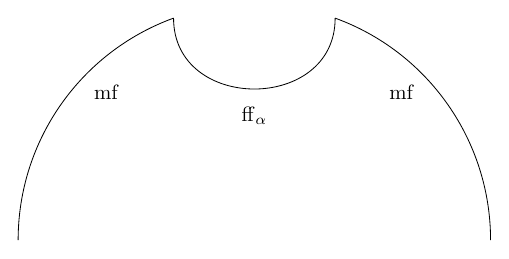}
\caption{The space $[ \overline{\mathbb{R}^{2}} ; \mathcal{C}_{\tindex} ]$ near the lift of $\mathcal{C}_{\tindex}$ when $\mathcal{C}_{\tindex}$ is a point.}
\end{figure}

Away from $\text{ff}_{\alpha}$ there is a diffeomorphism
\begin{equation*}
[ \overline{\mathbb{R}^{n}}; \mathcal{C}_{\alpha} ] \backslash \text{ff}_{\alpha} \cong_{\text{Id}} \overline{\mathbb{R}^{n}} \backslash \mathcal{C}_{\alpha}, 
\end{equation*}
so the smooth structure of $[\overline{\mathbb{R}^{n}} ; \mathcal{C}_{\tindex} ]$ on any compact subsets of this region is clear. In order to understand the smooth structure near $\ff$, we will derive projective coordinates near $\mathrm{ff}_{\tindex}$. To this end, in a small neighborhood of $\mathcal{C}_{\alpha}$, we will consider regions of the form
\begin{equation} \label{condition near the parallel blow-up}
1 \leq c |z_{\alpha}|, \ |z^{\alpha}| \leq c |z_{\alpha}|, \ | z_{\tindex,j} | \leq c | z_{\tindex,N_{\tindex}} |, \ j \neq N_{\tindex}, \ N_{\tindex} = 1, ..., n_{\tindex}
\end{equation}
for some constant $c > 0$. Thus, in regions where (\ref{condition near the parallel blow-up}) is satisfied, we can initially introduce
\begin{equation} \label{usual scattering coordinates}
 x_{\alpha} \coloneq \frac{1}{|z_{\alpha}|}, \ y_{\tindex,j} \coloneq \frac{z_{\tindex,j}}{|z_{\alpha}|}, \ j \neq N_{\tindex}, \ Y^{\alpha} \coloneq \frac{z^{\alpha}}{|z_{\alpha}|}
\end{equation}
which are valid coordinates on $\overline{\mathbb{R}^{n}}$ near $\mathcal{C}_{\tindex}$, where $\mathcal{C}_{\tindex} = \{ x_{\tindex} = 0, Y^{\tindex} = 0 \}$. By introducing projective coordinates at $\mathcal{C}_{\tindex}$, it follows that we can find coordinates
\begin{equation} \label{coordinates near ff}
x_{\alpha}, \, y_{\alpha} , \, z^{\alpha} \coloneq \frac{Y^{\alpha}}{x_{\alpha}}
\end{equation}
in $\mathrm{ff}_{\alpha}^{\circ}$, where $z^{\alpha} \in (X^{\tindex})^{\circ}$ and $x_{\alpha}$ is a local defining function for $\ff^{\circ}$. On the other hand, in a small neighborhood of $\ff \cap \mf$, we can find coordinates
\begin{equation} \label{coordinates near mf cap ff}
\bdf\coloneq |Y^{\alpha}| = \frac{x_{\alpha}}{x^{\alpha}}, \ x^{\alpha} \coloneq \frac{x_{\alpha}}{|Y^{\alpha}|} = \frac{1}{|z^{\tindex}|}, \ y_{\alpha}, \ y^{\alpha}_{k} \coloneq \frac{ Y^{\alpha}_{k} }{|Y^{\alpha}|}, \ k \neq N^{\tindex}, \ N^{\tindex} = 1 , ... , n^{\tindex},
\end{equation}
which can be introduced whenever $|z^{\alpha}_{k}| \leq c |z^{\alpha}_{N^{\tindex}}|$ for all $k \neq N^{\tindex}$ and $1 \leq c|z^{\alpha}|$. Here $\bdf$ is a local defining function for $\ff $, while $x^{\alpha}$ is again a local defining function of $\mf $. \par

Having described the geometry of $[ \overline{\mathbb{R}^{n}} ; \mathcal{C}_{\tindex} ]$, we can now introduce the Lie algebra of three-body vector fields $\mathcal{V}_{\mathrm{3sc}}( [ \overline{\mathbb{R}^{n}} ; \mathcal{C}_{\tindex} ] )$ simply by the lift of $\mathcal{V}_{\mathrm{sc}}( \overline{\mathbb{R}^{n}} )$, i.e., 
\begin{equation*}
\mathcal{V}_{\mathrm{3sc}} ( [ \overline{\mathbb{R}^{n}} ; \mathcal{C}_{\tindex} ] ) \coloneq \beta_{\mathrm{3sc}}^{\ast} \mathcal{V}_{\mathrm{sc}} ( \overline{\mathbb{R}^{n}} ). 
\end{equation*}
In local coordinates (\ref{usual scattering coordinates}), we find that $\mathcal{V}_{\text{sc}}( \overline{\mathbb{R}^{n}} )$ is spanned over $\mathcal{C}^{\infty}( [ \overline{\mathbb{R}^{n}} ; \mathcal{C}_{\tindex} ] )$ by 
\begin{alignat}{2} \label{scattering vector fields special cases}
x_{\alpha}^{2} \partial_{x_{\alpha}}, \, x_{\alpha} \partial_{y_{\alpha}}, \, x_{\tindex} \partial_{Y^{\alpha}}.
\end{alignat}
In local coordinates (\ref{coordinates near ff}) and (\ref{coordinates near mf cap ff}), elements of (\ref{scattering vector fields special cases}) lift to, in the same order
\begin{alignat}{2}
x_{\alpha}^2 \partial_{x_{\alpha}} - x_{\alpha} z^{\alpha} \cdot \partial_{z^{\alpha}} , \, x_{\alpha} \partial_{y_{\alpha}}, \, \partial_{z^{\alpha}}
\end{alignat}
and
\begin{equation}
\begin{gathered}
\bdf (x^{\alpha})^2 \partial_{x^{\alpha}}, \ \bdf x^{\alpha} \partial_{y_{\alpha}}, \vspace{0.5mm} \\ 
\bdf x^{\alpha} y^{\alpha}_{k} \partial_{ \bdf } - (x^{\tindex})^2 y^{\tindex}_{k} \partial_{x^{\tindex}} + x^{\alpha} \partial_{y^{\alpha}_{k}} - x^{\alpha} y^{\alpha}_{k} y^{\alpha} \cdot \partial_{y^{\alpha}}, \ k \neq N^{\tindex}, \ N^{\tindex} = 1, ... , n^{\tindex}, \\
\bdf x^{\alpha} ( 1 - |y^{\alpha}|^{2} )^{1/2} \partial_{\bdf} - (x^{\tindex})^2 ( 1 - |y^{\tindex}|^2 )^{1/2} \partial_{x^{\tindex}}  - x^{\alpha} (1-|y^{\alpha}|^2)^{1/2} y^{\alpha} \cdot \partial_{y^{\alpha}}
\end{gathered}
\end{equation}
respectively. Thus, $\mathcal{V}_{\mathrm{3sc}}( [ \overline{\mathbb{R}^{n}} ; \mathcal{C}_{\tindex} ] )$ is spanned over $\mathcal{C}^{\infty}( [ \overline{\mathbb{R}^{n}} ; \mathcal{C}_{\tindex} ] )$ by
\begin{equation} \label{three body tangent bundle frame 1}
x_{\alpha}^2 \partial_{x_{\alpha}}, \, x_{\alpha} \partial_{y_{\alpha}}, \,  \partial_{z^{\alpha}}
\end{equation}
and
\begin{equation} 
\label{three body tangent bundle frame 2}
\begin{gathered}
\bdf (x^{\tindex})^2 \partial_{x^{\tindex}}, \, \bdf x^{\alpha} \partial_{\bdf} - (x^{\tindex})^2 \partial_{x^{\tindex}}, \, \bdf x^{\alpha} \partial_{y_{\alpha}}, \, x^{\alpha} \partial_{y^{\alpha}},
\end{gathered}
\end{equation}  
in coordinates (\ref{coordinates near ff}) and (\ref{coordinates near mf cap ff}) respectively, while $\mathcal{V}_{\mathrm{3sc}}( [ \overline{\mathbb{R}^{n}} ; \mathcal{C}_{\tindex} ] ) = \mathcal{V}_{\mathrm{sc}}( \overline{\mathbb{R}^{n}} )$ away from $\ff$. 
\par

Now, the Melrose program produces a compressed vector bundle ${ ^{\mathrm{3sc}}T [ \overline{\mathbb{R}^{n}} ; \mathcal{C}_{\tindex} ] }$ such that ${ ^{\mathrm{3sc}}T_{\mathbb{R}^{n}} [ \overline{\mathbb{R}^{n}} ; \mathcal{C}_{\tindex} ] } = T \mathbb{R}^{n}$ and $\mathcal{C}^{\infty}( [ \overline{\mathbb{R}^{n}} ; \mathcal{C}_{\tindex} ] ; { ^{\mathrm{3sc}}T [ \overline{\mathbb{R}^{n}} ; \mathcal{C}_{\tindex} ] } ) = \mathcal{V}_{\mathrm{3sc}}( [ \overline{\mathbb{R}^{n}} ; \mathcal{C}_{\tindex} ] )$. Then in a small neighborhood of $^{\mathrm{3sc}}T_{\ff}[ \overline{\mathbb{R}^{n}} ; \mathcal{C}_{\tindex} ]$, there exists local frames for ${ ^{\mathrm{3sc}}T [ \overline{\mathbb{R}^{n}} ; \mathcal{C}_{\tindex} ] }$ given by (\ref{three body tangent bundle frame 1}) and (\ref{three body tangent bundle frame 2}), while ${ ^{\mathrm{3sc}}T_{U} [ \overline{\mathbb{R}^{n}} ; \mathcal{C}_{\tindex} ] } = { ^{\mathrm{sc}}T_{U} \overline{\mathbb{R}^{n}} }$ whenever $U \subset [ \overline{\mathbb{R}^{n}} ; \mathcal{C}_{\tindex} ] $ is an open neighborhood away from $\ff$. \par

Equivalently, we can use
\begin{equation*}
\begin{gathered}
\rho_{\tindex}^2 x^{\tindex} \partial_{\rho_{\tindex}}, \, \bdf x^{\alpha} \partial_{\bdf} - (x^{\tindex})^2 \partial_{x^{\tindex}}, \, \bdf x^{\alpha} \partial_{y_{\alpha}}, \, x^{\alpha} \partial_{y^{\alpha}},
\end{gathered}
\end{equation*}  
as a basis of vector fields in place of (\ref{three body tangent bundle frame 2}). Moreover, the presence of the `mixed' vector field $\rho_{\tindex} x^{\tindex} \partial_{\bff} - (x^{\tindex})^2 \partial_{x^{\tindex}}$ can be eliminated by considering slightly degenerate coordinate systems $( x_{\tindex}, y_{\tindex}, x^{\tindex}, y^{\tindex} )$ and $( x_{\tindex}, y_{\tindex}, \bff, y^{\tindex})$. These coordinate systems do not extend to the the entire boundary of $ [ \overline{\mathbb{R}^{n}} ; \mathcal{C}_{\tindex} ]$, even near $\mathcal{C}_{\tindex}$, but are nevertheless valid over the interior $\mathbb{R}^{n}$ of $[ \overline{\mathbb{R}^{n}} ; \mathcal{C}_{\tindex} ]$. More precisely, coordinates $(x_{\tindex}, y_{\tindex}, x^{\tindex}, y^{\tindex})$ are valid up to $\ff$ but not $\mf$, while coordinates $( x_{\tindex}, y_{\tindex}, \bff, y^{\tindex} )$ are valid up to $\mf$ but not $\ff$. Additionally, in coordinates $(x_{\tindex}, y_{\tindex}, x^{\tindex}, y^{\tindex})$, a basis of vector fields which spans $\mathcal{V}_{\mathrm{3sc}}([ \overline{\mathbb{R}^{n}} ; \mathcal{C}_{\tindex} ])$ over $\mathcal{C}^{\infty}( [ \overline{\mathbb{R}^{n}} ; \mathcal{C}_{\tindex} ] )$ is given by
\begin{equation*}
x_{\tindex}^2 \partial_{x_{\tindex}}, \, x_{\tindex} \partial_{y_{\tindex}}, \, (x^{\tindex})^2 \partial_{x^{\tindex}}, \, x^{\tindex} \partial_{y^{\tindex}},
\end{equation*}
while in coordinates $(x_{\tindex}, y_{\tindex}, \bff, y^{\tindex})$, such a basis is given by
\begin{equation*}
x_{\tindex}^2 \partial_{x_{\tindex}}, \, x_{\tindex} \partial_{y_{\tindex}}, \, x^{\tindex} \bdf \partial_{\bdf}, \, x^{\tindex} \partial_{y^{\tindex}}.
\end{equation*}
These facts will also be used implicitly in the introduction of momentum variables below. The usage of degenerate coordinates will be the focus in the discussion of three-cone vector fields in \S \ref{subsection definition of the three-cone bundle and the variable changes} as well.

Alternatively, one could easily see that
\begin{equation*}
{ ^{\mathrm{3sc}}T[ \overline{ \mathbb{R}^{n} } ; \mathcal{C} ] } = \beta_{\mathrm{3sc}}^{\ast} ({ ^{\mathrm{sc}}T \overline{ \mathbb{R}^{n} }  }).
\end{equation*}
Namely, ${ ^{\mathrm{3sc}}T[ \overline{ \mathbb{R}^{n} } ; \mathcal{C} ] }$ is nothing but the pull-back bundle of ${ ^{\mathrm{sc}}T\overline{ \mathbb{R}^{n} }  }$ through $\beta_{\mathrm{3sc}}$. \par

As usual, we will let ${ ^{\mathrm{3sc}}T^{\ast} [ \overline{\mathbb{R}^{n}} ; \mathcal{C}_{\tindex} ] }$ denote the bundle dual to ${ ^{\mathrm{3sc}}T [ 
\overline{\mathbb{R}^{n}} ; \mathcal{C}_{\tindex} ] }$, though one could also realize $^{\mathrm{3sc}}T^{\ast}[ \overline{\mathbb{R}^{n}} ; \mathcal{C}_{\tindex} ]$ as a pulled back bundle, i.e.,
\begin{equation} \label{bundle pull back definition of the three-body cotangent bundle}
^{\mathrm{3sc}}T^{\ast}[ \overline{\mathbb{R}^{n}} ; \mathcal{C}_{\tindex} ] = \beta_{\mathrm{3sc}}^{\ast}( ^{\mathrm{sc}}T^{\ast} \overline{\mathbb{R}^{n}} ). 
\end{equation}
It follows that ${ ^{\mathrm{3sc}}T^{\ast}_{\mathbb{R}^{n}} } [ \overline{\mathbb{R}^{n}} ; \mathcal{C}_{\tindex} ] = { T^{\ast} \mathbb{R}^{n} }$. Moreover, we have ${ ^{\mathrm{3sc}}T^{\ast}_{U} [ \overline{\mathbb{R}^{n}} ; \mathcal{C}_{\tindex} ] } = { ^{\mathrm{sc}}T^{\ast}_{U} \overline{\mathbb{R}^{n}} }$ whenever $U \subset [ 
\overline{\mathbb{R}^{n}} ; \mathcal{C}_{\tindex} ]$ is an open neighborhood away from $\ff$. \par

However, for the purpose of computing local frames near ${^{\mathrm{3sc}}T^{\ast}_{\ff} [ \overline{\mathbb{R}^{n}} ; \mathcal{C}_{\tindex} ]}$, it would actually be more useful to start from (\ref{bundle pull back definition of the three-body cotangent bundle}), in which case we need to pull back the scattering frame 
$dx_{\tindex}/x_{\tindex}^2$, $dy_{\tindex}/x_{\tindex}$, $dY^{\tindex}/x_{\tindex}$ through $\beta_{\mathrm{3sc}}$. Doing so in coordinates (\ref{coordinates near ff}), we only need observe
\begin{equation*}
d z^{\tindex}_{k} = \frac{dY^{\tindex}_{k}}{x_{\tindex}} - x_{\tindex}  z^{\tindex}_{k} \frac{dx_{\tindex}}{x_{\tindex}^2}, \ k = 1, ... , n^{\tindex}
\end{equation*}
to conclude that 
\begin{equation*}
\frac{dx_{\tindex}}{x_{\tindex}^2}, \ \frac{dy_{\tindex}}{x_{\tindex}}, \ dz^{\tindex}
\end{equation*}
is a local frame for $^{\mathrm{3sc}}T^{\ast}[ \overline{\mathbb{R}^{n}} ; \mathcal{C} ]$ if we exclude $\mf$ spatially. This determines new coordinate $( x_{\tindex}, y_{\tindex}, z^{\tindex}, \tau_{\tindex}, \mu_{\tindex}, \zeta^{\tindex} )$ with respect to the form
\begin{equation*}
\tau_{\tindex} \frac{dx_{\tindex}}{x_{\tindex}^{2}} + \mu_{\tindex} \cdot \frac{dy_{\tindex}}{x_{\tindex}} + \zeta^{\tindex} \cdot dz^{\tindex}.
\end{equation*} \par

Meanwhile, we will use coordinates (\ref{coordinates near mf cap ff}) spatially in a small neighborhood of $\mf \cap \ff$. Then by applying the same pull-back procedure as before, we observe that
\begin{equation} \label{three-body near corner local frame 1}
\frac{dx_{\tindex}}{x_{\tindex}^2}, \ \frac{dy_{\tindex}}{x_{\tindex}}, \ \frac{d\bdf}{x^{\tindex} \bdf }, \ \frac{dy^{\tindex}}{x^{\tindex}}
\end{equation}
is a local frame for ${^{\mathrm{3sc}}T^{\ast}}[\overline{\mathbb{R}^{n}}; \mathcal{C}]$. Indeed, in such coordinates, we have
\begin{equation*}
\begin{gathered}
\frac{d \bdf}{x^{\tindex} \bdf } = \sum_{j=1}^{n^{\tindex}-1} y^{\tindex}_{k} \frac{dY^{\tindex}_{k}}{x_{\tindex}} + ( 1 - |y^{\tindex}|^{1/2} ) \frac{dY^{\tindex}_{n^{\tindex}}}{x_{\tindex}}, \\
\frac{dy^{\tindex}_{k}}{x^{\tindex}} = \frac{dY^{\tindex}_{k}}{x_{\tindex}} - y^{\tindex}_{k} \frac{d \bdf}{x^{\tindex} \bdf}, \ k \neq N^{\tindex}.
\end{gathered}
\end{equation*} \par

Note also that since
\begin{equation*}
\frac{dx^{\tindex}}{(x^{\tindex})^2} = \bdf \frac{dx_{\tindex}}{x_{\tindex}^2} - \frac{d\bdf}{x^{\tindex} \bdf },
\end{equation*}
we could use
\begin{equation} \label{three-body near corner local frame 2}
\frac{dx_{\tindex}}{x_{\tindex}^2}, \ \frac{dy_{\tindex}}{x_{\tindex}}, \ \frac{dx^{\tindex}}{(x^{\tindex})^2}, \ \frac{dy^{\tindex}}{x^{\tindex}}
\end{equation}
equivalently as a valid frame as well. Thus in this region, we can also introduce coordinates $( \bdf, y_{\tindex}, x^{\tindex}, y^{\tindex}, \tau_{\tindex}, \mu_{\tindex}, \tau^{\tindex}, \mu^{\tindex} )$ with respect to the form
\begin{equation} \label{three-body covector fields}
\tau_{\tindex} \frac{dx_{\tindex}}{x_{\tindex}^{2}} + \mu_{\tindex} \cdot \frac{dy_{\tindex}}{x_{\tindex}} + \tau^{\tindex} \frac{dx^{\tindex}}{(x^{\tindex})^2} + \mu^{\tindex} \cdot \frac{dy^{\tindex}}{x^{\tindex}}.
\end{equation} \par

By changing the basis back from (\ref{three-body near corner local frame 2}) to (\ref{three-body near corner local frame 1}), one shows that (\ref{three-body covector fields}) becomes
\begin{equation*}
\tau_{\tindex}^{\mathrm{3sc,sf}} \frac{dx_{\tindex}}{x_{\tindex}^{2}} + \mu_{\tindex}^{\mathrm{3sc,sf}} \cdot \frac{dy_{\tindex}}{x_{\tindex}} + \tau^{\tindex}_{\mathrm{3sc,sf}} \frac{d\bdf}{x^{\tindex}\bdf} + \mu^{\tindex}_{\mathrm{3sc,sf}} \cdot \frac{dy^{\tindex}}{x^{\tindex}}.
\end{equation*}
Here, the subscript `sf' abbreviates `scattering-fibred' in the sense of \cite{AAScatteringFibred01,AAScatteringFibred99}. See also the Remark \ref{scattering fibered structure remark} below. Moreover, the change of variables is given by
\begin{equation} \label{OG tau}
\tau_{\tindex}^{\mathrm{3sc,sf}} = \tau_{\tindex} + \bdf \tau^{\tindex}, \ \mu_{\tindex}^{\mathrm{3sc,sf}} = \mu_{\tindex}, \ \tau^{\tindex}_{\mathrm{3sc,sf}} = - \tau^{\tindex}, \ \mu^{\tindex}_{\mathrm{3sc,sf}} = \mu^{\tindex}.
\end{equation}
Thus in particular, we can use
\begin{equation}
\label{three-body covector fields from the other direction}
 \bdf, y_{\tindex}, x^{\tindex},   y^{\tindex}, \tau_{\tindex}^{\mathrm{3sc,sf}},  \mu_{\tindex},   \tau^{\tindex}_{\mathrm{3sc,sf}} , \mu^{\tindex} 
\end{equation}
as equally valid coordinates near ${ ^{\mathrm{3sc}}T^{\ast}_{\mf \cap \ff} } [ \overline{\mathbb{R}^{n}} ; \mathcal{C}_{\tindex} ]$. \par

In view of of pull-back definition (\ref{bundle pull back definition of the three-body cotangent bundle}) and the Euclidean structure (\ref{special diffeomorphism of the scattering bundle}), we can also understand the three-body cotangent bundle as 
\begin{equation*} \label{Euclidean three-body blow up}
^{\mathrm{3sc}}T^{\ast} [ \overline{\mathbb{R}^{n}} ; \mathcal{C}_{\tindex} ] = [ \overline{\mathbb{R}^{n}} ; \mathcal{C}_{\tindex} ] \times \mathbb{R}^{n}.
\end{equation*}
As before, this is reflected by the change of variables from $( z_{\tindex}, z^{\tindex} )$ to any one of the above coordinates being uniformly valid up to the boundary of $[ \overline{\mathbb{R}^{n}} ; \mathcal{C}_{\tindex} ]$. In fact, one could even do this partially in the fiber variables, i.e., one could replace $\zeta_{\tindex}$, $\zeta^{\tindex}$ by $(\tau_{\tindex}, \mu_{\tindex})$, $(\tau^{\tindex}, \mu^{\tindex})$ respectively whenever necessary. The resulting local coordinates will always be valid. \par

Another way to understand $^{\mathrm{3sc}}T^{\ast} [ \overline{\mathbb{R}^{n}} ; \mathcal{C}_{\tindex} ]$ is to look at it as a blow-up. This is best understood if we also compactify $^{\mathrm{3sc}}T^{\ast} [ \overline{\mathbb{R}^{n}} ; \mathcal{C}_{\tindex} ]$ radially in the fibers, thereby creating the manifold with corner $\overline{ ^{\mathrm{3sc}}T^{\ast} } [ \overline{\mathbb{R}^{n}} ; \mathcal{C}_{\tindex} ] \cong [ \overline{\mathbb{R}^{n}} ; \mathcal{C}_{\tindex} ] \times \overline{\mathbb{R}^{n}}$. Then we have
\begin{equation*}
\overline{ ^{\mathrm{3sc}}T^{\ast} } [ \overline{\mathbb{R}^{n}} ; \mathcal{C}_{\tindex} ] = [ \overline{^{\mathrm{sc}}T^{\ast}}\overline{\mathbb{R}^{n}} ; \overline{ ^{\mathrm{sc}}T^{\ast} }_{\mathcal{C}_{\tindex}} \overline{\mathbb{R}^{n}} ],
\end{equation*}
where the blow-down map is simply $\beta_{\mathrm{3sc}}$ (i.e., the blow-down map of $[ \overline{\mathbb{R}^{n}} ; \mathcal{C}_{\tindex} ]$) acting on the first factor of $[ \overline{\mathbb{R}^{n}} ; \mathcal{C}_{\tindex} ] \times \overline{\mathbb{R}^{n}}$.

\begin{remark}
\label{scattering fibered structure remark}
There is a more general, and perhaps also structurally more illuminating way of understanding the three-body structure, that is to follow the program of Hassell-Vasy \cite{AAScatteringFibred01,AAScatteringFibred99}, where they realized the three-body structure as a special case of the scattering-fibred structure on $[\overline{\mathbb{R}^{n}}; \mathcal{C}]$, with the fibrations being just the blow-down map $\beta_{\mathrm{3sc}}: \mathrm{ff}_{\tindex} \rightarrow \mathcal{C}_{\tindex}$. The readers are encouraged to read their papers to understand this more general perspective. \
\end{remark}

Below we shall characterize conormal symbols on $\overline{ ^{\mathrm{3sc}}T^{\ast} }[ \overline{\mathbb{R}^{n}} ; \mathcal{C}_{\tindex} ]$ by estimates. Suppose that $m,r,l \in \mathbb{R}$ measure decay at ${^{\mathrm{3sc}} S^{\ast}}[\overline{\mathbb{R}^{n}}; \mathcal{C}]$, $\overline{ ^{\mathrm{3sc}} T^{\ast}}_{ \mf }[\overline{\mathbb{R}^{n}}; \mathcal{C}]$ and $\overline{ ^{\mathrm{3sc}} T^{\ast}}_{ \ff }[\overline{\mathbb{R}^{n}}; \mathcal{C}]$ respectively. In the same order, we will also write $\rho_{\infty}, \rho_{\mathrm{mf}}$ and $\rho_{\mathrm{ff}_{\tindex } }$ for boundary defining functions of these boundary faces. Then 
\begin{equation*}
S^{m,r,l}( \overline{ ^{\mathrm{3sc}} T^{\ast}}[ \overline{\mathbb{R}^{n}} ; \mathcal{C} ] )
\end{equation*}
consists of those $a \in \mathcal{C}^{\infty}( T^{\ast} \mathbb{R}^{n} )$ such that in a small neighborhood of $\overline{ ^{\mathrm{3sc}} T^{\ast} }_{\mathrm{ff}_{\tindex}} [ \overline{\mathbb{R}^{n}} ; \mathcal{C} ]$, we have
\begin{equation} \label{symbol estimate three-body ordinary constant order}
| \partial_{z_{\alpha}}^{\beta_{\alpha}} \partial_{z^{\alpha}}^{\beta^{\alpha}} \partial_{\zeta}^{\gamma} a | \leq C_{\beta_{\alpha} \beta^{\alpha} \gamma} \rho_{\mathrm{mf}}^{-r + |\beta_{\tindex}| + |\beta^{\tindex}|} \bdf^{-l + |\beta_{\tindex}|}  \rho_{\infty}^{-m + | \gamma |}.
\end{equation}
Note that in coordinates $(x_{\tindex}, y_{\tindex}, z^{\tindex}, \tau_{\tindex}, \mu_{\tindex}, \zeta^{\tindex} )$, the above becomes
\begin{equation} \label{symbol estimate three-body ordinary constant order boundary coordiates interior}
| (x_{\tindex} \partial_{x_{\tindex}})^{j} \partial_{y_{\tindex}}^{\beta_{\tindex}} \partial_{z^{\tindex}}^{\beta^{\tindex}} \partial_{\tau_{\tindex}}^{k} \partial_{\mu_{\tindex}}^{\gamma_{\tindex}} \partial_{\zeta^{\tindex}}^{\gamma^{\tindex}} a | \leq C_{j  \beta_{\tindex} \beta^{\tindex} k \gamma_{\tindex} \gamma^{\tindex} } \rho_{\mathrm{mf}}^{-r +  |\beta^{\tindex}|} \rho_{\mathrm{ff}_{\tindex}}^{-l } \rho_{\infty}^{-m + k + |\gamma_{\tindex}| + |\gamma^{\tindex}| },
\end{equation}
while in the coordinates $( \bdf , y_{\tindex}, x^{\tindex}, y^{\tindex}, \tau_{\tindex}, \mu_{\tindex}, \tau^{\tindex}, \mu^{\tindex} )$, they become
\begin{align} \label{symbol estimate three-body ordinary constant order boundary coordiates corner}
\begin{split}
& | ( \bdf \partial_{ \bdf })^{j_{\tindex}} \partial_{y_{\tindex}}^{\beta_{\tindex}} ( 
x^{\tindex} \partial_{x^{\tindex}})^{j^{\tindex}} \partial_{y^{\tindex}}^{\beta^{\tindex}} \partial_{\tau_{\tindex}}^{k_{\tindex}} \partial_{\mu_{\tindex}}^{\gamma_{\tindex}} \partial_{\tau^{\tindex}}^{k^{\tindex}} \partial_{\mu^{\tindex}}^{\gamma^{\tindex}} a  | \\
& \qquad \leq C_{j_{\tindex} \beta_{\tindex} j^{\tindex} (\beta^{\tindex}) k_{\tindex} \gamma_{\tindex} k^{\tindex} (\gamma^{\tindex}) } \rho_{\mathrm{mf}}^{-r } \rho_{\mathrm{ff}_{\tindex}}^{-l_{\tindex} } \rho_{\infty}^{-m + k_{\tindex} + k^{\tindex} + |\gamma_{\tindex}| + |(\gamma^{\tindex})'|}.
\end{split}
\end{align}
While away from $\overline{^{\mathrm{3sc}}T^{\ast}}\Xd$, we simply require that $a$ satisfies the estimates (\ref{Euclidean scattering symbolic estimates}).

The spaces of conormal three-body pseudodifferential operators
\begin{equation*}
\Psi^{m,r,l}_{\mathrm{3scc}}( [ \overline{\mathbb{R}^{n}} ; \mathcal{C}_{\tindex} ] )
\end{equation*}
will now be defined by the direct quantization of symbols $a \in S^{m,r,l}( \overline{ 
^{\mathrm{3sc}}T^{\ast} } [ \overline{\mathbb{R}^{n}} ; \mathcal{C}_{\tindex} ]  )$, i.e., the same formula (\ref{scattering quantization}) as in the definition for scattering operators. It can be shown that the operators thus defined are closed under composition:
\begin{equation*}
\Psi^{m_1,r_1,l_1}_{\mathrm{3scc}}( [ \overline{\mathbb{R}^{n}} ; \mathcal{C}_{\tindex} ] ) \times \Psi^{m_2,r_2,l_2}_{\mathrm{3scc}}( [ \overline{\mathbb{R}^{n}} ; \mathcal{C}_{\tindex} ] ) \rightarrow \Psi^{m_1 + m_2,r_1 +r_2,l_1+l_2}_{\mathrm{3scc}}( [ \overline{\mathbb{R}^{n}} ; \mathcal{C}_{\tindex} ] ).
\end{equation*}
We three-body algebra is then the filtered algebra defined by the union
\begin{equation*}
\Psi_{\mathrm{3scc}}( [ \overline{\mathbb{R}^{n}} ; \mathcal{C}_{\tindex} ] ) \coloneq \bigcup_{m,r,l \in \mathbb{R}} \Psi_{\mathrm{3scc}}^{m,r,l}( [ \overline{\mathbb{R}^{n}} ; \mathcal{C}_{\tindex} ] ).
\end{equation*}
Moreover, the map ${ ^{\mathrm{3sc}} \sigma }_{m,r,l}(A) = a$ defines a short exact sequence
\begin{align*}
& 0 \rightarrow \Psi^{ m - 1 , \mathsf{r} - 1, l }_{\mathrm{3scc}}  ( [ \overline{\mathbb{R}^{n}} ; \mathcal{C}_{\tindex} ] ) \rightarrow \Psi^{ m, r }_{\mathrm{3scc}} ( [ \overline{\mathbb{R}^{n}} ; \mathcal{C}_{\tindex} ] ) \\
& \qquad \qquad \longrightarrow_{^{\mathrm{3sc}}\sigma_{ m ,r,l }} ( S^{ m, r, l } / S^{ m - 1, r - 1, l  } ) ( \overline{ ^{\mathrm{3sc}}T^{\ast} } [ \overline{\mathbb{R}^{n}} ; \mathcal{C}_{\tindex} ] ) \rightarrow 0
\end{align*}
such that the descended isomorphism
\begin{equation*}
{ ^{\mathrm{3sc}} \sigma_{m,r,l} } : ( \Psi_{\mathrm{3scc}}^{m,r,l} / \Psi^{m-1, r-1, l}_{\mathrm{3scc}} ) ( [ \overline{\mathbb{R}^{n}} ; \mathcal{C}_{\tindex} ] ) \xrightarrow{\sim} ( S^{ m, r, l } / S^{ m - 1, r - 1, l  } ) ( \overline{ ^{\mathrm{3sc}}T^{\ast} } [ \overline{\mathbb{R}^{n}} ; \mathcal{C}_{\tindex} ] )
\end{equation*}
is multiplicative in the sense that
\begin{equation*}
^{\mathrm{3sc}}\sigma_{ m_1 + m_2 , r_1 + r_2, l_1 + l_2 }(A_1 A_2) = {^{\mathrm{3sc}}\sigma_{ m_1, r_1, l_1 }}(A_1) {^{\mathrm{3sc}}\sigma_{ m_2, r_2, l_2 }}(A_2)
\end{equation*} 
whenever $A_{j} \in \Psi^{m_j,r_j,l_j}_{\mathrm{3sc}}( [\overline{\mathbb{R}^{n}}; \mathcal{C}_{\tindex}] )$, $j=1,2$. \par

To capture principal order decay at $\overline{^{\mathrm{3sc}}T^{\ast}}_{\ff} [ \overline{\mathbb{R}^{n}} ; \mathcal{C} ]$, we will need to consider the \emph{indicial operator}. To this end, let $A = \mathrm{Op}(a) \in \Psi^{m,r,l}_{\mathrm{3scc}}( [ \overline{\mathbb{R}^{n}} ; \mathcal{C}_{\tindex} ] )$, where $a \in S^{m,r,l}(  \overline{ ^{\mathrm{3sc}}T^{\ast} } [ \overline{\mathbb{R}^{n}} ; \mathcal{C}_{\tindex} ] )$ is \emph{partially classical} at $\overline{ ^{\mathrm{3sc}}T^{\ast}}_{\ff}[ \overline{\mathbb{R}^{n}} ; \mathcal{C}_{\tindex} ]$, in the sense that
\begin{equation*}
a - \sum_{j=0}^{J-1} x_{\tindex}^{-l+j} a_{j} \in S^{m , r - J, l - J}(   \overline{ ^{\mathrm{3sc}}T^{\ast} } [ \overline{\mathbb{R}^{n}} ; \mathcal{C}_{\tindex} ] )
\end{equation*}
is valid locally near $\overline{ ^{\mathrm{3sc}}T^{\ast}}_{\ff}[ \overline{\mathbb{R}^{n}} ; \mathcal{C}_{\tindex} ]$, where for every $j \geq 0$, we have
\begin{equation*}
a_{j} \in \mathcal{C}^{\infty}( \mathcal{C}_{\tindex} \times \mathbb{R}^{n_{\tindex}} ; S^{m, r - l - j}(\overline{ ^{\mathrm{sc}}T^{\ast} X^{\tindex} } ) )
\end{equation*}
with a large-parameter behavior as $|\zeta_{\tindex}| \rightarrow \infty$. See \S\ref{parameters-dependent families subsection} below for a precise description of this statement. Then the indicial operator of $A$ is defined by
\begin{equation*}
{ ^{\mathrm{3sc}} \hat{N}_{\ff, l} }(A) ( y_{\tindex}, \tau_{\tindex}, \mu_{\tindex} ) \coloneq \frac{1}{(2\pi)^{n^{\tindex}}} \int_{\mathbb{R}^{n^{\tindex}}} e^{ i ( z^{\tindex} - (z^{\tindex})' ) \cdot \zeta^{\tindex} }  a_0 ( y_{\tindex}, z^{\tindex}, \tau_{\tindex}, \mu_{\tindex}, \zeta^{\tindex} ) d\zeta^{\tindex} |dz^{\tindex}|.
\end{equation*} \par

Now, it can be shown that
\begin{equation} \label{membership of the three-body indicial operator}
{ ^{\mathrm{3sc}} \hat{N}_{\ff, l} }(A) \in \mathcal{C}^{\infty}( \mathcal{C}_{\tindex} \times \mathbb{R}^{n_{\tindex}} ; \Psi^{m,r-l}_{\mathrm{sc}}( X^{\tindex} ) )
\end{equation}
with a large-parameter behavior as $|\zeta_{\tindex}| \rightarrow \infty$, see \S \ref{parameters-dependent families subsection} again. Moreover, indicial operators are multiplicative in the sense that 
\begin{equation*}
{ ^{\mathrm{3sc}} \hat{N}_{\ff, l_1 + l_2} }(A_1 A_2) = 
{ ^{\mathrm{3sc}} \hat{N}_{\ff, l_1} }(A_1) { ^{\mathrm{3sc}} \hat{N}_{\ff, l_2} }(A_2)
\end{equation*}
whenever $A_{j} \in \Psi^{m_j,r_j,l_j}_{\mathrm{3sc}}( [\overline{\mathbb{R}^{n}}; \mathcal{C}_{\tindex}] )$, $j=1,2$, are partially classical at $\overline{^{\mathrm{3sc}}T^{\ast}}_{\ff} [ \overline{\mathbb{R}^{n}} ; \mathcal{C}_{\tindex} ]$. \par

Let $A^{\ast}$ be the adjoint of $A$ taken with respect to $L^{2}( \mathbb{R}^{n} )$ for $A \in \Psi^{m,r,l}_{\mathrm{3sc}}( [ \overline{\mathbb{R}^{n}} ; \mathcal{C}_{\tindex} ] )$. Then it can be shown that $A^{\ast} \in \Psi^{m,r,l}_{\mathrm{3sc}}( [ \overline{\mathbb{R}^{n}} ; \mathcal{C}_{\tindex} ] )$ as well, and we have
\begin{equation} \label{three-body adjoint}
{^{\mathrm{3sc}}\sigma_{m,r,l}}(A) = \overline{ {^{\mathrm{3sc}}\sigma_{m,r,l}}(A) }, \quad {^{\mathrm{3sc}}\hat{N}_{\ff, l}}(A^{\ast}) = {^{\mathrm{3sc}}\hat{N}_{\ff,l} }(A)^{\ast}.
\end{equation}
The adjoint of the rightmost term of (\ref{three-body adjoint}) can be interpret as being pointwise with respect to $L^{2}(X^{\tindex})$, and it is not be hard to see that ${^{\mathrm{3sc}}\hat{N}^{\ast}}(A)$ also has a large-parameter structure as $|\zeta_{\tindex}| \rightarrow 0$. In particular, the conormal three-body algebra is endowed with a symbol calculus. It is therefore also referred to as the \emph{three-body calculus}. 
\par

There is again a suitable notion of microlocalization with respect to the principal symbol. However, one could now even define microlocalization at $\overline{ 
^{\mathrm{3sc}}T^{\ast}}_{\ff} [ \overline{\mathbb{R}^{n}} ; \mathcal{C}_{\tindex} ]$, though this is not quite a `complete' picture, and would require further analysis, such as via a blow-up at the fiber infinity{\ep}much life the resolution considered in \S \ref{a further resolution at fiber infinity} below. See \cite{AndrasNbody} for more details on this incompleteness. \par

The three-body Sobolev spaces $H^{m,r,l}( \mathbb{R}^{n} )$ can also be defined. For $r=l=0$, they are simply defined by $H^{m,0}$ as the scattering case, and for $r, l \neq 0$ by factoring in the spatial weights, i.e., 
\begin{equation*}
H^{m,r,l}( \mathbb{R}^{n} ) = x_{\mf}^{r} x_{\ff}^{l} H^{m,0,0}( \mathbb{R}^{n} )
\end{equation*}
where $x_{\mf}, x_{\ff} \in \mathcal{C}^{\infty}( [ \overline{\mathbb{R}^{n}} ; \mathcal{C}_{\tindex} ] )$ are boundary defining functions for $\mf$ and $\ff$ respectively. \par

Finally, we can replace $m$ and $r$ by variable orders 
\begin{equation*}
\vom \in \mathcal{C}^{\infty}( ^{\mathrm{3sc}}S^{\ast}[ \overline{\mathbb{R}^{n}} ; \mathcal{C_{\tindex}} ] ), \quad \vor \in \mathcal{C}^{\infty}( \overline{^{\mathrm{3sc}}T^{\ast}}_{\mf} [ \overline{\mathbb{R}^{n}} ; \mathcal{C}_{\tindex} ] )
\end{equation*}
respectively, though not $l$, which requires the fiber blow-up mentioned above (see \cite{JesseWave} for this discussion in the Klein-Gordon case). All of the above results have natural analogies in the variable orders case, but we do not go into further details on this.

\subsection{Vasy's second microlocal calculus} 
\label{subsection Vasy's second microlocalized calculus}
We have already discussed the main idea of Vasy's second microlocalized algebra in the introduction. For a more thorough construction, the readers are immediately referred to Vasy's original papers \cite{AndrasSM,AndrasLagrangian1,AndrasLagrangian2}. In general, Vasy's second microlocalization can be done on any smooth, compact manifold $M$ with boundary. As in the case of the scattering algebra, here we just give a summarized version of this material in the case where $M = \overline{\mathbb{R}^{n}}$. \par

There are two equivalent perspectives one could adopt in understanding Vasy's method of second microlocalization. We will first introduce the `direct' perspective. As mentioned already in \S \ref{an overview of second microlocalization subsection}, our primary goal for considering second microlocalization is to study the zero-energy problem for the Helmhotz operator $P(\sigma)$ (as defined in (\ref{the zero energy Helmhotz operator})), $\sigma \geq 0$, whose characteristic set degenerates at $o_{\mathbb{S}^{n-1}}$ in ${ \overline{^{\mathrm{sc}}T^{\ast}}  } \overline{\mathbb{R}^{n}}$ as $\sigma \rightarrow 0$. Here, $o_{\mathbb{S}^{n-1}}$ denotes the zero section in ${ ^{\mathrm{sc}}T^{\ast}}\overline{\mathbb{R}^{n}}$. Thus the direct perspective is to define an algebra of operators which can be microlocalized on the phase space $[ \overline{ ^{\mathrm{sc}}T^{\ast} } \overline{\mathbb{R}^{n}} ; o_{\mathbb{S}^{n-1}} ]$, and this is indeed `direct' in the sense that it tackles the problem which motivates us directly. \par

However, it is very difficult to work from the direct perspective. We will therefore consider a `converse' perspective, in which case one instead starts from the space of conormal b-algebra $\Psi_{\mathrm{bc}}( \overline{\mathbb{R}^{n}} )$, and blow up its natural phase space, namely the fiber-compactified b-cotangent bundle $\overline{ ^{\mathrm{b}}T^{\ast} } \overline{\mathbb{R}^{n}}$, at the corner, i.e., we will consider the blow-up $[ \overline{^{\mathrm{b}}T^{\ast} } \overline{\mathbb{R}^{n}} ; {^{\mathrm{b}}S^{\ast}_{\mathbb{S}^{n-1}} \overline{\mathbb{R}^{n}}} ]$. Such a blow-up will then be highly manageable, since the resolution occurs only at fiber infinity, and is thus relevant only at the symbolic level. \par

One can connect the direct perspective with the converse perspective by observing that
\begin{equation} 
\label{second microlocalization diffeomorphism in the introduction section 2}
[ \overline{ ^{\mathrm{sc}}T^{\ast}} \overline{\mathbb{R}^{n}} ; o_{\mathbb{S}^{n-1}} ] \cong [ \overline{ ^{\mathrm{b}}T^{\ast} }\overline{\mathbb{R}^{n}} ;  {^{\mathrm{b}}S^{\ast}_{ \mathbb{S}^{n-1} }} \overline{\mathbb{R}^{n}} ].
\end{equation} 
Here, the diffeomorphism (\ref{second microlocalization diffeomorphism in the introduction section 2}) is realized by extending the identify map from the interior. Moreover, the lifts of ${ ^{\mathrm{sc}}S^{\ast}} \overline{\mathbb{R}^{n}}$ and ${^{\mathrm{b}}S^{\ast} } \overline{\mathbb{R}^{n}}$ are identified; the lift of $\overline{^{\mathrm{sc}} T^{\ast}}_{\mathbb{S}^{n-1}} \overline{\mathbb{R}^{n}}$ to the left hand side of (\ref{second microlocalization diffeomorphism in the introduction section 2}) identifies with the lift of $ {^{\mathrm{b}}S^{\ast}}_{\mathbb{S}^{n-1}} \overline{\mathbb{R}^{n}} $ to the right hand side of (\ref{second microlocalization diffeomorphism in the introduction section 2}); and the lift of $\overline{^{\mathrm{b}}T^{\ast}}_{\mathbb{S}^{n-1}} \overline{\mathbb{R}^{n}}$ to the right hand side of (\ref{second microlocalization diffeomorphism in the introduction section 2}) identifies with the lift of $o_{\mathbb{S}^{n-1}}$ to the left hand side of (\ref{second microlocalization diffeomorphism in the introduction section 2}).
\par

It will be convenient to write (\ref{second microlocalization diffeomorphism in the introduction section 2}) using the new notation
\begin{equation} \label{second microlocalization diffeomorphism in the introduction section 2.1}
\overline{^{\mathrm{sc,b}}T^{\ast}} \overline{\mathbb{R}^{n}}.
\end{equation}
However, one must be careful as (\ref{second microlocalization diffeomorphism in the introduction section 2.1}) will \emph{no longer be a fiber bundle}. The notation (\ref{second microlocalization diffeomorphism in the introduction section 2.1}) is therefore slightly abusive, but will nonetheless be kept since it resembles the standard conventions used in the literature. We will let $\mathrm{sf}$ (the `scattering face') and $\mathrm{bf}$ (the `b-face') denote the lifts of $\overline{ ^{\mathrm{sc}}T^{\ast} }_{\mathbb{S}^{n-1}} \overline{\mathbb{R}^{n}}$ and $\overline{ ^{\mathrm{b}}T^{\ast} }_{\mathbb{S}^{n-1}} \overline{\mathbb{R}^{n}}$ respectively. We will also let ${ ^{\mathrm{sc,b}}S^{\ast} }\overline{\mathbb{R}^{n}}$ denote the life of ${ ^{\mathrm{sc}}S^{\ast} } \overline{\mathbb{R}^{n}}$, or equivalently the lift of ${ ^{\mathrm{b}} }S^{\ast} \overline{\mathbb{R}^{n}} $, which we shall often refer to as just the `fiber infinity'.
\par

Next, we will consider the spaces of conormal symbols $S^{m,r,l}( \overline{^{\mathrm{sc,b}}T^{\ast}}\overline{\mathbb{R}^{n}} )$, where $m,r,l \in \mathbb{R}$ measure decay respectively at $\mathrm{sf}$, $\mathrm{bf}$ and ${^{\mathrm{sc,b}}S^{\ast}} \overline{\mathbb{R}^{n}}$. In the same order, let $\rho_{\sfa}$, $\rho_{\bfa}$ and $\rho_{\infty}$ be some defining functions for these boundary faces. Then $S^{m,r,l}( \overline{^{\mathrm{sc,b}}T^{\ast}}\overline{\mathbb{R}^{n}} )$ consists of all those $a \in \mathcal{C}^{\infty}( T^{\ast} \mathbb{R}^{n} )$ such that, in a small neighborhood of $\mathrm{bf}$ where the coordinates $(t, y, \tau_{\mathrm{b}}, \mu_{\mathrm{b}})$, $x = e^{-t}$ are valid, we have
\begin{equation}
\label{symbol estimates in the Vasy second microlocalized setting}
| \partial_{t}^{j} \partial_{y}^{\alpha} \partial_{\tau_{\mathrm{b}} }^{k} \partial_{\mu_{\mathrm{b}}}^{\beta} a | \leq C_{j \alpha k \beta} \rho_{\infty}^{-m + k + |\beta|} \rho_{\sfa}^{-r + k + |\beta|} \rho_{\bfa}^{-l},
\end{equation}
and the restrictions of $a$ to regions away from $\mathrm{bf}$ identify with elements of $S^{m,r}(\overline{^{\mathrm{sc}}T^{\ast}} \overline{\mathbb{R}^{n}} )$. Notice in particular that we indeed have 
\begin{equation*}
S^{m,m + l,l}( \overline{^{\mathrm{sc,b}}T^{\ast}}\overline{\mathbb{R}^{n}} ) = S^{m,l}( \overline{ ^{\mathrm{b}}T^{\ast} } \overline{\mathbb{R}^{n}} ),
\end{equation*}
as one could arrange so that $x \simeq \rho_{\sfa} \rho_{\infty}$, where $x$ is some global defining function for $\mathbb{S}^{n-1}$. \par 
The space of second microlocalized operators
\begin{equation*}
\Psi_{\mathrm{sc,b}}^{m,r,l}( \overline{\mathbb{R}^{n}} )
\end{equation*}
is now defined by the following procedure: Recall from the construction of the conormal b-algebra that any $ \tilde{A} \in \Psi^{m,l}_{\mathrm{bc}}( \overline{\mathbb{R}^{n}} )$ is defined by specifying the kernels of $\psi \tilde{A} \psi, \phi \tilde{A} \psi$ for various choices of cut-off functions $\phi, \psi \in \mathcal{C}^{\infty}( \overline{\mathbb{R}^{n}} )$ such that $\phi$, $\psi$ have disjoint supports. Moreover, only the kernels of $\psi \tilde{A} \psi$ are defined by quantizing elements of $S^{m,l}( \overline{ ^{\mathrm{b}}T^{\ast} } \overline{\mathbb{R}^{n}} )$. It turns out that we can define $A \in \Psi^{m,r,l}_{\mathrm{sc,b}}( \overline{\mathbb{R}^{n}} )$ by the same procedure, i.e., we will require that $A : \mathcal{S}( \mathbb{R}^{n} ) \rightarrow \mathcal{S}( \mathbb{R}^{n} )$ be a continuous linear mapping, and then specify the kernels of $\psi A \psi$, $\phi A \psi$ (still viewed as sections of ${ ^{\mathrm{b}}\Omega_{R} } \overline{\mathbb{R}^{n}} $), where $\phi$ and $\psi$ are as before. \par

In fact, in view of (\ref{second microlocalization diffeomorphism in the introduction section 2}), we will simply require that $\phi A \psi \in \Psi_{\mathrm{bc}}^{-\infty,l}(\overline{\mathbb{R}^{n}})$. Namely, we will require that the kernels of $\phi A \psi$ be smooth, and satisfy additionally that $\phi A \psi$ is rapidly decreasing, resp. the decay estimates (\ref{b-kernel K_3}) if $\supp \phi$, $\supp \psi$ are as in cases (2), resp. (3) of that discussion. Similarly, if $\psi$ is supported away from $\mathbb{S}^{n-1}$, then we will require that $\psi A \psi \in \Psi_{\mathrm{bc}}^{m, -\infty}( \overline{\mathbb{R}^{n}} )$, or in other words, $\psi A \psi \in \Psi^{m}_{\infty}( \mathbb{R}^{n} )$ with compact support (again, see \cite{AndrasBook} for where this notation is used). Notice that these definitions make sense since the second blow-up in (\ref{second microlocalization diffeomorphism in the introduction section 2}) occurs only at the spatial infinity of $\overline{ ^{\mathrm{b}}T^{\ast} } \overline{\mathbb{R}^{n}}$.  
\par

The only modification in our definition will therefore occur to the kernels of $\psi A \psi$, where $\psi$ cuts off at some point of $\mathbb{S}^{n-1}$. In this case, we will again require that 
\begin{equation*}
\psi A \psi = B_{\psi} + R_{\psi},
\end{equation*}
and let $R_{\psi}$ be defined exactly as if $A$ were an element of $\Psi^{-\infty,l}_{\mathrm{bc}}( \overline{\mathbb{R}^{n}} )$, i.e., we will require that $R_{\psi}$ has a smooth kernel, and satisfy additionally the decay estimates (\ref{the real b-R decay term}). We then modify the construction of $B_{\psi}$ so that it is still defined by the quantization (\ref{the real b-quantisation in time variables}), except that we now require its symbol $b_{\psi}$ to belong to $S^{m,r,l}( \overline{ ^{\mathrm{sc,b}}T^{\ast} } \overline{\mathbb{R}^{n}} )$ instead. \par

This completes the definition of $A$ and therefore $\Psi^{m,r,l}_{\mathrm{sc,b}}( \overline{\mathbb{R}^{n}} )$. \par

Now, it can be shown that operator composition is a map
\begin{equation*}
\Psi^{ m_1 , r_1 , l_1 }_{\mathrm{sc,b}} ( \overline{\mathbb{R}^{n}} ) \times \Psi^{ m_2 , r_2 , l_2 }_{\mathrm{sc,b}} ( \overline{\mathbb{R}^{n}} ) \rightarrow \Psi^{ m_1 + m_2, r_1 + r_2 , l_1 + l_2}_{\mathrm{sc,b}} ( \overline{\mathbb{R}^{n}} )
\end{equation*}
for every $m_{j}, r_{j}, l_{j} \in \mathbb{R}$, $j=1,2$. Vasy's second microlocalized algebra is then defined by the filtered algebra
\begin{equation*}
\Psi_{\mathrm{sc,b}}( \overline{\mathbb{R}^{n}} ) = \bigcup_{m,r,l \in \mathbb{R}} \Psi_{\mathrm{sc,b}}^{m,r,l}( \overline{\mathbb{R}^{n}} ).
\end{equation*}
Moreover, the principal symbol map descends to an isomorphism
\begin{equation*}
{ ^{\mathrm{sc,b}}\sigma_{m,r,l} }:  ( \Psi_{\mathrm{sc,b}}^{m,r,l} / \Psi^{m-1, r-1, l}_{\mathrm{sc,b}} ) ( \overline{\mathbb{R}^{n}} ) \xrightarrow{\sim} ( S^{m, r, l } / S^{ m - 1, r - 1 , l  } ) ( \overline{ ^{\mathrm{sc,b}}T^{\ast} } \overline{\mathbb{R}^{n}} ),
\end{equation*}
and it is multiplicative in the sense that 
\begin{equation*}
^{\mathrm{sc,b}}\sigma_{ m_1 + m_2 , r_1 + r_2, l_1 + l_2}(A_1 A_2) = {^{\mathrm{sc,b}}\sigma_{ m_1, r_1, l_1 }}(A_1) {^{\mathrm{sc,b}}\sigma_{ m_2, r_2, l_2 }}(A_2)
\end{equation*} 
whenever $A_{j} \in \Psi^{m_j,r_j,l_j}_{\mathrm{sc,b}}(\overline{\mathbb{R}^{n}})$, $j=1,2$.  \par

Let $A^{\ast_{\mathrm{b}}}$ be the adjoint of $A \in \Psi^{m,r,l}_{\mathrm{sc,b}}( \overline{\mathbb{R}^{n}} )$ taken with respect to $L^{2}_{\mathrm{b}}( \overline{\mathbb{R}^{n}} )$. Then by construction (i.e., the converse perspective) we clearly have $A^{\ast_{\mathrm{b}}} \in \Psi^{m,r,l}_{\mathrm{sc,b}}( \overline{\mathbb{R}^{n}} )$. However, since we can also understand $A$ as the modification at zero-energy of some scattering operator, it is natural to consider the Euclidean, $L^{2}( \mathbb{R}^{n} )$-adjoint of $A$ as well, which we shall denote by $A^{\ast}$. Then it is not hard to see that
\begin{equation*}
{^{\mathrm{sc,b}}\sigma_{m,r,l}(A^{\ast_{\mathrm{b}}})} = {^{\mathrm{sc,b}}\sigma_{m,r,l}(A^{\ast})} = \overline{ ^{\mathrm{sc,b}}\sigma_{m,r,l}(A) }.
\end{equation*} 
Thus, when combined with the multiplicative property discussed above, one could say that the principal symbol map promotes the second microlocalized algebra into a calculus.
\par

The usual microlocalization procedure also applies for $\Psi^{m,r,l}_{\mathrm{sc,b}}(\overline{\mathbb{R}^{n}})$, which can be done at the scattering face and the fiber infinity separately. Additionally, variable order operators can also be introduced. Namely, one can replace $m$, $r$ by smooth functions $\mathsf{m} \in \mathcal{C}^{\infty}( ^{\mathrm{sc,b}}S^{\ast} \overline{\mathbb{R}^{n}} )$ and $\mathsf{r} \in \mathcal{C}^{\infty}( \overline{^{\mathrm{sc,b}}T^{\ast}}_{\mathbb{S}^{n-1}} \overline{\mathbb{R}^{n}} )$ respectively. However, the order $l$ needs to remain constant as in the conormal $\mathrm{b}$-case. The corresponding space of conormal symbols
\begin{equation*}
S^{\vom, \vor, l}( \overline{ ^{\mathrm{sc,b}}T^{\ast}} \overline{\mathbb{R}^{n}} )
\end{equation*}
now consists of those $a \in \mathcal{C}^{\infty}( T^{\ast} \mathbb{R}^{n} )$ such that, in a small neighborhood of $\mathrm{bf}$ where the coordinates $(t,y, \tau_{\mathrm{b}}, \mu_{\mathrm{b}})$ are valid, we can replace (\ref{symbol estimates in the Vasy second microlocalized setting}) by
\begin{equation*}
| \partial_{t}^{j} \partial_{y}^{\alpha} \partial_{\tau_{\mathrm{b}} }^{k} \partial_{\mu_{\mathrm{b}}}^{\beta} a | \leq C_{j \alpha k \beta} \rho_{\infty}^{- \vom - \delta |  |  + k + |\beta|} \rho_{\sfa}^{-r + k + |\beta|} \rho_{\bfa}^{-l}.
\end{equation*}
Here $\delta > 0$ must be taken sufficiently small. \par

Finally, we can define a new scale of Sobolev spaces $H^{\vom,\vor,l}_{\mathrm{sc,b}}( \overline{\mathbb{R}^{n}} )$. Much like in our discussion of the b-Sobolev spaces, integrability for elements of $H_{\mathrm{sc,b}}^{\vom,\vor,l}( \overline{\mathbb{R}^{n}} )$ will be measured with respect to the Euclidean density (equivalently the scattering density), instead of some choice of b-density. In fact, having established the definition of $H_{\mathrm{b}}^{m,l}( \overline{\mathbb{R}^{n}} )$, it suffices to set
 \begin{equation*}
 H_{\mathrm{sc,b}}^{\vom,\vor,l}( \overline{\mathbb{R}^{n}} ) \coloneq \{ u \in H_{\mathrm{b}}^{M, l}( \overline{\mathbb{R}^{n}} ) : \Lambda u \in L^{2}( \mathbb{R}^{n} ) \},
 \end{equation*}
where $\Lambda \in \Psi_{\mathrm{sc,b}, \delta}^{\vom, \vor, l}( \overline{\mathbb{R}^{n}} )$ is elliptic in the symbolic sense, and $M$ is sufficiently negative. One can then show that $H_{\mathrm{sc,b}}^{\vom,\vor,l}( \overline{\mathbb{R}^{n}} )$ is a Hilbert space with the square norm
 \begin{equation*}
 \| u \|_{H_{\mathrm{sc,b}}^{\vom,\vor,l}}^2 \coloneq \| u \|_{H_{\mathrm{b}}^{M, l}}^2 + \| \Lambda u \|_{L^2}^2,
 \end{equation*}
though we also remark that the choices of $\Lambda \in \Psi_{\mathrm{sc,b}, \delta}^{\vom,\vor,l}(\overline{\mathbb{R}^{n}})$ and $M$ are unimportant, so long as they satisfy the required conditions.

\subsection{The conormal cone calculus} 
\label{subsection the cone calculus}
We now introduce the conormal cone algebra, which has at least implicitly appeared in \cite{AndrewPeijie} already. We will only consider such an algebra when it is defined on an exact cone, which we now realize as a blow-up $[ \mathbb{R}^{n} ; \{ 0 \} ]$. Upon further compactifying $\mathbb{R}^{n}$ radially, we also obtain $[ \overline{\mathbb{R}^{n}} ; \{ 0 \} ]$. This is a smooth, compact manifold with two disjoint boundary faces: the front face of $[ \overline{\mathbb{R}^{n}} ; \{ 0 \} ]$, which is identified with the `tip' of the cone, and the lift of $\partial \overline{\mathbb{R}^{n}}$, which is identified with the `end' of the cone. Both these faces are diffeomorphic to $\mathbb{S}^{n-1}$, so we will use additional subscripts $\mathbb{S}^{n-1}_{0}$, resp. $\mathbb{S}^{n-1}_{\infty}$ to distinguish the tip, resp. end of the cone.    \par

Let $x_{\infty} \in \mathcal{C}^{\infty}( [ \overline{\mathbb{R}^{n}} ; \{ 0 \} ] )$ be a global defining function for $\mathbb{S}^{n-1}_{\infty}$. Then the Lie algebra of cone vector fields is defined as
\begin{equation*}
\mathcal{V}_{\mathrm{co}}( [ \overline{\mathbb{R}^{n}} ; \{ 0 \} ] ) \coloneq x_{\infty} \mathcal{V}_{\mathrm{b}}( \econ ).
\end{equation*}
In other words, elements of $\mathcal{V}_{\mathrm{co}}( [ \overline{\mathbb{R}^{n}} ; \{ 0 \} ] ) $ are required to be scattering-like near $\mathbb{S}^{n-1}_{\infty}$ and b-like near $\mathbb{S}^{n-1}_{0}$. It follows from the program of Melrose that we can construct a compressed tangent bundle ${ ^{\mathrm{co}}T}[ \overline{\mathbb{R}^{n}} ; \{ 0 \} ]$ such that $\mathcal{C}^{\infty}( [ \overline{\mathbb{R}^{n}} ; \{ 0 \} ] ;  { ^{\mathrm{co}}T}[ \overline{\mathbb{R}^{n}} ; \{ 0 \} ] ) = \mathcal{V}_{\mathrm{co}}( [ \overline{\mathbb{R}^{n}} ; \{ 0 \} ] )$ and ${^{\mathrm{co}}T}_{\mathbb{R}^{n}}[ \overline{\mathbb{R}^{n}} ; \{ 0 \} ] = T \mathbb{R}^{n}$. The dual bundle of ${^{\mathrm{co}}T}\econ$ is denoted $^{\mathrm{co}}T^{\ast} \econ$. \par

Suppose we consider polar variables $( r , y ) \in \mathbb{R}^{n}$, where $r = |z|$, $y = z/|z|$ under the standard abuse of notations, and where $z$ denotes the Euclidean coordinates on $\mathbb{R}^{n}$. Then $r$ extends to be a defining function for $\mathbb{S}^{n-1}_{0}$ in $[ \overline{\mathbb{R}^{n}} ; \{ 0 \} ]$. Meanwhile, if we write $x = r^{-1}$, then $x$ extends to be a defining function for $\mathbb{S}^{n-1}_{\infty}$. It follows that we have
\begin{equation} \label{cone algebra cal 0.09}
[ \overline{\mathbb{R}^{n}} ; \{ 0 \} ] \backslash \mathbb{S}^{n-1}_{0} \cong  [0, \infty)_{x} \times \mathbb{S}^{n-1}_{\infty}, \quad [ \overline{\mathbb{R}^{n}} ; \{ 0 \} ] \backslash \mathbb{S}^{n-1}_{\infty} \cong [ 0, \infty )_{r} \times \mathbb{S}^{n-1}_{0}.
\end{equation}
Additionally, under the identifications (\ref{cone algebra cal 0.09}), we have
\begin{equation} \label{cone algebra cal 0.1}
\begin{gathered}
{^{\mathrm{co}}T^{\ast}_{ [ 0, \infty )_{x} \times \mathbb{S}^{n-1}_{\infty} }} \econ \cong {^{\mathrm{sc}}T^{\ast}_{ [ 0, \infty )_{x} \times \mathbb{S}^{n-1}_{\infty} } }\overline{\mathbb{R}^{n}}, \\
{^{\mathrm{co}}T^{\ast}_{[ 0, \infty )_{r} \times \mathbb{S}^{n-1}_{0}}} \econ \cong { ^{\mathrm{b}}T^{\ast}_{ [0, \infty)_{r} \times \mathbb{S}^{n-1}_{0} } } \econ
\end{gathered}
\end{equation}
as bundles. Together (\ref{cone algebra cal 0.1}) determines the full structure of ${^{\mathrm{co}}T^{\ast}}\econ$. \par

Let $\overline{ ^{\mathrm{co}}T^{\ast} } \econ$ denote the radial compactification of ${^{\mathrm{co}}T^{\ast}} \econ$ in the fibers. Ten we can consider the space of conormal symbols 
\begin{equation} 
\label{constant order conormal symbol cone cotangent bundle}
S^{m,r,l}( \overline{ ^{\mathrm{co}}T^{\ast} } \econ ).
\end{equation}
Here, $m,r ,l \in \mathbb{R}$ measure decay at ${ ^{\mathrm{co}}S^{\ast} } [ \overline{\mathbb{R}^{n}} ; \{ 0 \} ]$, $\overline{ ^{\mathrm{co}}T^{\ast} }_{\mathbb{S}^{n-1}_{\infty}} \econ$ and $\overline{ ^{\mathrm{co}}T^{\ast} }_{\mathbb{S}^{n-1}_{0}} \econ$ respectively. It follows from (\ref{cone algebra cal 0.09}), (\ref{cone algebra cal 0.1}) that (\ref{constant order conormal symbol cone cotangent bundle}) can be characterized by those $a \in \mathcal{C}^{\infty}( T^{\ast} ( \mathbb{R}^{n} \backslash \{ 0 \} ) )$ such that 
\begin{equation*}
\begin{gathered}
\text{$a$ restricts to a $S^{m, r}( \overline{^{\mathrm{sc}}T^{\ast}} \overline{\mathbb{R}^{n}} )$ symbol near $\overline{^{\mathrm{co}}T^{\ast}}_{\mathbb{S}^{n-1}_{\infty}} \overline{\mathbb{R}^{n}}$, } \\
\text{$a$ restricts to a $S^{m, l}(\overline{^{\mathrm{b}}T^{\ast}} \econ)$ symbol near $\overline{^{\mathrm{co}}T^{\ast}}_{\mathbb{S}^{n-1}_{0}} \econ$.}
\end{gathered}
\end{equation*}
\par

We can now define the spaces of conormal cone operators
\begin{equation}
\label{constant orders conormal cone operators first definition}
\Psi^{m,r,l}_{\mathrm{coc}}( \econ )
\end{equation}
by specifying their kernels locally everywhere{\ep}much as in the conormal b-case. As such, elements of (\ref{constant orders conormal cone operators first definition}) are first and foremost continuous linear maps 
\begin{equation*}
A : \dot{\mathcal{C}}^{\infty}( \econ ) \rightarrow \dot{\mathcal{C}}^{\infty}( \econ ).
\end{equation*}
Thus, $A$ can be identified with kernels which are sections of the right cone density bundle ${^{\mathrm{co}}\Omega_{R}}\econ \coloneq \pi_{R}^{\ast} { ^{\mathrm{co}}\Omega } \econ $. Here $^{\mathrm{co}}\Omega [ \overline{\mathbb{R}^{n}} ; \{ 0 \} ]$ denotes the space of cone density arising naturally from $\mathcal{C}^{\infty}( [ \overline{\mathbb{R}^{n}} ; \{ 0 \} ] ; { ^{\mathrm{co}}T^{\ast} } [ \overline{\mathbb{R}^{n}} ; \{ 0 \} ] )$, and $\pi_{R} : \econ^2 \rightarrow \econ$ is the right projection. The dependency on ${ ^{\mathrm{co}}\Omega }_{R} \econ$ can be omitted upon choosing a global trivialization, which can be done by fixing a strictly positive $\nu_{\mathrm{co}} \in {^{\mathrm{co}}\Omega}\econ$. Without loss of generality, spatially in a small neighborhood of $\mathbb{S}^{n-1}_{+}$, resp. $\mathbb{S}^{n-1}_{0}$, we may assume that
\begin{equation*}
\nu_{\mathrm{co}} = |dz|, \ \text{resp.} \ \nu_{\mathrm{co}} = | dt dy  |
\end{equation*}
where $z$ is the Euclidean coordinates on $\mathbb{R}^{n}$ and $(t,y)$ is obtained by taking $r = e^{-t}$, where $r = |z|$ is a local defining function for $\mathbb{S}^{n-1}_{0}$. \par

Let $\psi \in \mathcal{C}^{\infty}( \econ )$ a cut-off function at some point of $\mathbb{S}^{n-1}_{0}$. Then we will require that
\begin{equation} 
\label{cone algebra b parts}
\psi A \psi \in \Psi_{\mathrm{bc}}^{m,r}( \econ ).
\end{equation}
On the other hand, if $\psi \in \mathcal{C}^{\infty}( \econ )$ is supported away from $\mathbb{S}^{n-1}_{0}$ (possibly intersecting $\mathbb{S}^{n-1}_{\infty}$), then we will require that
\begin{equation} 
\label{cone algebra scattering part}
\psi A \psi  \in \Psi^{m,r}_{\mathrm{sc}}( \overline{\mathbb{R}^{n}} ).
\end{equation}
\par

 It remains to specify $\phi A \psi$, where $\phi \in \mathcal{C}^{\infty}( \econ )$ is another cut-off such that $\supp \phi \cap \supp \psi = \emptyset$. There is a total of six cases to be considered:
\begin{enumerate}
    \item $ \supp \phi $ and $ \supp \psi $ are both disjoint from $\mathbb{S}^{n-1}_{0} \cup \mathbb{S}^{n-1}_{\infty}$,
    \item $\supp \phi$ intersects $\mathbb{S}^{n-1}_{0}$ and $\supp \psi$ intersects $\mathbb{S}^{n-1}_{\infty}$, or vice versa,
    \item $\supp \phi$ is disjoint from $ \mathbb{S}^{n-1}_{0} \cup \mathbb{S}^{n-1}_{\infty}$ but $\supp \psi$ intersects $\mathbb{S}^{n-1}_{0}$, or vice versa,
    \item $\supp \phi$ is disjoint from $ \mathbb{S}^{n-1}_0 \cup \mathbb{S}^{n-1}_{\infty} $ but $\supp \psi$ intersects $\mathbb{S}^{n-1}_{\infty}$, or vice versa,
    \item both $\supp \phi$ and $\supp \psi$ intersect $\mathbb{S}^{n-1}_{\infty}$, but are still disjoint,
    \item both $\supp \phi$ and $\supp \psi$ intersect $\mathbb{S}^{n-1}_{0}$, but are still disjoint.
\end{enumerate}
Let $K_{\phi, \psi}$ be the kernels of $\phi A \psi$ in each case.
\par

Firstly, we shall require that $K_{\phi, \psi}$ to always be smooth. Secondly, $K_{\phi, \psi}$ must be rapidly decreasing in cases (1)--(5). The only exception is in case (6), where we require that estimates (\ref{b-kernel K_3}) be satisfied. \par

This concludes the construction of $\Psi^{m,r,l}_{\mathrm{coc}}( \econ )$. \par 
As expected, composition is a map
\begin{equation*}
\Psi^{m_1,r_1,l_1}_{\mathrm{coc}}( \econ ) \times \Psi^{m_2,r_2,l_2}_{\mathrm{coc}}( \econ)  \rightarrow \Psi^{m_1+m_2, r_1 + r_2, l_1 + l_2}_{\mathrm{coc}}( \econ )
\end{equation*}
for every $m_{j}, r_{j}, l_{j} \in \mathbb{R}$, $j=1,2$. This can be easily shown since $\Psi^{m,r}_{\mathrm{sc}}( \overline{\mathbb{R}^{n}} )$, $\Psi^{m,l}_{\mathrm{bc}}( \econ )$ both have this property. The conormal cone algebra is then the filtered algebra defined by the union
\begin{equation*}
\Psi_{\mathrm{coc}}( \econ ) \coloneq \bigcup_{m,r,l \in \mathbb{R}} \Psi_{\mathrm{coc}}^{m,r,l}( \econ ).
\end{equation*}
Moreover, by the standard argument, one can show that the principal symbol map defines a short exact sequence
\begin{align*}
 0 & \rightarrow \Psi^{ m - 1 , r - l, 1 }_{\mathrm{coc}}  ( \econ) \rightarrow \Psi^{ m, r, l }_{\mathrm{coc}} ( \econ ) \\ 
& \qquad \longrightarrow_{^{\mathrm{co}}\sigma_{ m , r,  l }} ( S^{ m, r, l } / S^{ m - 1, r - 1,  l  } ) ( \overline{ ^{\mathrm{co}}T^{\ast} } \econ ) \rightarrow 0.
\end{align*}
Thus, $^{\mathrm{co}}\sigma_{m,r,l}$ descends to an isomorphism
\begin{equation*}
{ ^{\mathrm{co}}\sigma_{m,r,l} }:  ( \Psi_{\mathrm{coc}}^{m,r,l} / \Psi^{m-1, r-1, l}_{\mathrm{coc}} ) ( \econ ) \xrightarrow{\sim} ( S^{m, r, l } / S^{ m - 1, r - 1 , l  } ) ( \overline{ ^{\mathrm{co}}T^{\ast} } \econ ),
\end{equation*}
which is multiplicative in the sense that
\begin{equation*}
^{\mathrm{co}}\sigma_{ m_1 + m_2 , r_1 + r_2, l_1 + l_2 }(A_1 A_2) = {^{\mathrm{co}}\sigma_{ m_1, r_1, l_1  }}(A_1) {^{\mathrm{co}}\sigma_{ m_2, r_2, l_2 }}(A_2)
\end{equation*} 
whenever $A_{j} \in \Psi_{\mathrm{coc}}^{m_j,r_j,l_j}(\econ)$, $j=1,2$. \par

Let $L^{2}_{\mathrm{co}}( [ \overline{\mathbb{R}^{n}} ; \{ 0 \} ] ) \coloneq L^{2}( [ \overline{\mathbb{R}^{n}} ; \{ 0 \} ] , \nu_{\mathrm{co}} )$ be the space of $L^2$ functions defined with respect to a strictly positive cone density $\nu_{\mathrm{co}}$, and let $A^{\ast_{\mathrm{co}}}$ be the adjoint of $A \in \Psi_{\mathrm{co}}^{m,r,l}( [ \overline{\mathbb{R}^{n}} ; \{ 0 \} ] )$ with respect to $L^{2}_{\mathrm{co}}( [ \overline{\mathbb{R}^{n}} ; \{ 0 \} ] )$. Then it can be shown easily that $A^{\ast_{\mathrm{co}}} \in \Psi^{m,r,l}_{\mathrm{co}}( [ \overline{\mathbb{R}^{n}} ; \{ 0 \} ] )$ as well. Moreover, we again have 
\begin{equation*}
{^{\mathrm{co}}\sigma_{m,r,l}}(A^{\ast_{\mathrm{co}}}) = \overline{{^{\mathrm{co}}\sigma_{m,r,l}}(A)}.
\end{equation*} 
Thus in particular, with the principal symbol map, one could also refer to $\Psi_{\mathrm{coc}}( [ \overline{\mathbb{R}^{n}} ; \{ 0 \} ] )$ as the \emph{conormal, or small cone calculus}. 
\par

We can also replace $m$ and $r$ by variable orders 
\begin{equation*}
\mathsf{m} \in \mathcal{C}^{\infty}( ^{\mathrm{co}}S^{\ast} \econ  ), \quad \mathsf{r} \in \mathcal{C}^{\infty}( \overline{^{\mathrm{co}}T^{\ast}}_{\mathbb{S}^{n-1}_{\infty}} \econ )
\end{equation*}
respectively. However, the order $l$ must remain constant. If $\delta > 0$ is sufficiently small, then the space of variable orders conormal symbols 
\begin{equation*}
S^{\vom, \vor, l}_{\delta}( \overline{ ^{\mathrm{co}}T^{\ast}} \econ ),
\end{equation*}
can now defined by those $a \in \mathcal{C}^{\infty}( T^{\ast}  ( \mathbb{R}^{n} \backslash \{ 0 \} ) )$ such that
\begin{equation*}
\begin{gathered}
\text{$a$ restricts to a $S^{\vom, \vor}_{\delta}( \overline{^{\mathrm{sc}}T^{\ast}} \overline{\mathbb{R}^{n}} )$ symbol near $\overline{^{\mathrm{co}}T^{\ast}}_{\mathbb{S}^{n-1}_{\infty}} \overline{\mathbb{R}^{n}}$, } \\
\text{$a$ restricts to a $S^{\vom, l}_{\delta}(\overline{^{\mathrm{b}}T^{\ast}} \econ)$ symbol near $\overline{^{\mathrm{co}}T^{\ast}}_{\mathbb{S}^{n-1}_{0}} \econ$.}
\end{gathered}
\end{equation*}
\par

The corresponding space of operators $\Psi_{\mathrm{coc},\delta}^{\vom,\vor,l}( \econ )$ can now be defined in exactly the same way as $\Psi^{m,r,l}_{\mathrm{coc}}( \econ )$, except for minor modifications: Let $\psi \in \mathcal{C}^{\infty}( \econ )$ be a cut-off function at some point of $\mathcal{C}^{\infty}_{0}$. Then we will require that
\begin{equation*}
\psi A \psi \in \Psi_{\mathrm{bc}, \delta}^{\vom, l}( \econ ).
\end{equation*}
If instead $\psi \in \mathcal{C}^{\infty}( \econ )$ is supported away from $\mathbb{S}^{n-1}_{0}$, then we will require that
\begin{equation*}
\psi A \psi \in \Psi_{\mathrm{sc}, \delta}^{\vom, \vor}( \overline{\mathbb{R}^{n}} ).
\end{equation*}
Here the variable orders are implicitly understood as their restrictions.
\subsection{Large-parameter families of operators} 
\label{parameters-dependent families subsection}
In this subsection, we will consider families of conormal b-, scattering and cone operators which are smoothly parametrized by a vector bundle over a compact, smooth manifold without boundary, with a large-parameter behavior (in the sense of Shubin, see \cite{ShubinLP}) in the fibers. Although this construction can be done generally, it will be sufficient for our purpose to only consider the cases where the vector bundles in question are trivial. In fact, it will be convenient to radially compactify the fibers of such bundles.\par

To this end, let $\mathcal{M}$ be a compact, $d$-dimensional smooth manifold without boundary. Let also $M$ be a compact, smooth manifold with boundary. Then for $m,l \in \mathbb{R}$, we will at first introduce the space of large-parameter conormal b-operators
\begin{equation} \label{large-parameter b algebra}
\Psi_{\mathrm{bc,lp}}^{m,l}( M ; \mathcal{M} \times {\mathbb{R}^{k}} ),
\end{equation}
where $\mathcal{M} \times {\mathbb{R}^{k}}$ is the set of parameters. Following the construction of $\Psi_{\mathrm{bc}}^{m,l}( M )$, we will require elements $A \in \Psi_{\mathrm{bc,lp}}^{m,l}(M; \mathcal{M} \times {\mathbb{R}^{k}} )$ to be continuous linear maps $\dot{\mathcal{C}}^{\infty}(M) \rightarrow \dot{\mathcal{C}}^{\infty} ( M )$. Thus $A$ has a distributional Schwartz kernel, which we will view as a section of the right b-density bundle ${^{\mathrm{b}}\Omega_{R}} M$. \par

Let $\psi \in \mathcal{C}^{\infty}(M)$ be a cut-off function at some point of $\partial M$. Then we will require that
\begin{equation}
\label{large-parameters b operator construction 0.1}
\psi A \psi \coloneq B_{\psi} + R_{\psi}.
\end{equation}
Here $B_{\psi}$ takes the form (\ref{the real b-quantisation in time variables}), except that its symbol $b_{\psi}$ now belongs to 
\begin{equation} \label{large-parameters b-symbol spaces.}
S^{m,l}( \mathcal{M} \times  \overline{ ^{\mathrm{b}}T^{\ast} \oplus \mathbb{R}^{k} } M  ).
\end{equation}
The notation $\overline{^{\mathrm{b}}T^{\ast} \oplus \mathbb{R}^{k}} M$ indicates that we are direct summing to each fiber of ${^{\mathrm{b}}T^{\ast}} M$ the Euclidean space $\mathbb{R}^{k}$, and then compactifying the resulting fibers radially. The index $m$ then measures decay at the `joint fiber infinity' of $\mathcal{M} \times \overline{ ^{\mathrm{b}}T^{\ast} \oplus \mathbb{R}^{k}}M$, while the index $l$ measures decay at $\mathcal{M} \times \overline{ ^{\mathrm{b}}T^{\ast} \oplus \mathbb{R}^{k}}_{\partial M}M$. \par

Concretely, if $b_{\psi} \in S^{m,l}( \mathcal{M} \times  \overline{ ^{\mathrm{b}}T^{\ast} \oplus \mathbb{R}^{k} } M )$, then this is equivalent to saying that $b_{\psi} \in \mathcal{C}^{\infty}( \mathcal{M} \times \mathbb{R}^{k} \times T^{\ast} M^{\circ} )$, such that in a neighborhood of $\mathcal{M} \times \overline{ ^{\mathrm{b}}T^{\ast} \oplus \mathbb{R}^{k} }_{\partial M}M$ where the coordinates $( w, \eta, t, y, \tau_{\mathrm{b}}, \mu_{\mathrm{b}} )$ (cf. discussion on the conormal b-symbols in \S \ref{b-calculus subsection}) are valid, and where $(w,\eta) \in M \times \mathbb{R}^{k}$, we have the estimates
\begin{equation} \label{large-parameter b estimates}
| \partial_{w}^{\alpha} \partial^{\sigma}_{\eta} \partial_{t}^{j} \partial_{y}^{\beta} \partial^{\kappa}_{\tau_{\mathrm{b}}} \partial^{\gamma}_{\mu_{\mathrm{b}}} b_{\psi} | \leq C_{ \alpha \sigma j \beta \kappa \gamma}  e^{lt} \langle \tau_{\mathrm{b}}, \mu_{\mathrm{b}}, \eta \rangle^{m - \kappa - |\gamma| - |\sigma| }.
\end{equation}
Meanwhile, over the interior $\mathcal{M} \times \mathbb{R}^{k} \times T^{\ast}M^{\circ}$, and in Euclidean coordinates $( w, \eta, z, \zeta )$, we have the estimates
\begin{equation*}
| \partial_{w}^{\alpha} \partial_{\eta}^{\sigma} \partial_{z}^{\beta} \partial_{\zeta}^{\gamma} b_{\psi} | \leq C_{ \alpha \sigma \beta \gamma } \langle \zeta, \eta \rangle^{m - |\sigma| - |\gamma|}.
\end{equation*} \par

Returning now to (\ref{large-parameters b operator construction 0.1}), we will also require that $R_{\psi}$ be smooth in all of the variables, and additionally satisfies the decay estimates
\begin{equation} \label{estimates large-parameters R_b term}
| \partial_{w}^{\alpha} \partial^{\sigma}_{\eta} \partial_{t}^{j} \partial_{y}^{\beta} \partial_{t'}^{j'} \partial_{y'}^{\beta'} R_{\psi} | \leq C_{\alpha \sigma j \beta j' \beta'}  e^{lt} \langle \eta \rangle^{-N} \langle y - y' \rangle^{-M} e^{-L|t-t'|}
\end{equation}
for any $N,M, L \in \mathbb{R}$, i.e., $R_{\psi}$ remains trivial in the regularity sense, even in the presence of the large-parameter $\eta$. \par

If instead $\psi \in \mathcal{C}^{\infty}(M)$ is supported away from $\partial M$, then we will require that $\psi A \psi$ be given by a standard quantization (i.e., locally everywhere in Euclidean coordinates), with the exception that its symbol now belongs to $S^{m,l}( \mathcal{M} \times \overline{ ^{\mathrm{b}}T^{\ast} \oplus \mathbb{R}^{k} } M )$ as before. \par

Now, let $\phi \in \mathcal{C}^{\infty}(M)$ be another cut-off  function such that $\supp \phi \cap \supp \psi = \emptyset$. Then we can consider the off-diagonal terms $\phi A \psi$ with kernels $K_{\phi, \psi}$. We will also require that $K_{\phi, \psi}$ be smooth in all of the variables, and they must satisfy decay estimates as in the conormal b-cases, which depend on the support properties of $\phi$ and $\psi$ (cf. cases (1)--(3) in \S \ref{b-calculus subsection}). Additionally, $K_{\phi, \psi}$ must decay to infinite orders as $|\eta| \rightarrow \infty$ as well. \par

This completes the construction of $\Psi^{m,0}_{\mathrm{bc,lp}}(M ; \mathcal{M} \times {\mathbb{R}^{k}} )$. \par

Next, for $m,r \in \mathbb{R}$, we will introduce the space of large-parameter scattering operators
\begin{equation} \label{large-parameters operator class for the scattering case}
\Psi^{m,r}_{\mathrm{sc,lp}}( \overline{\mathbb{R}^{n}} ; \mathcal{M} \times {\mathbb{R}^{k}} ).
\end{equation}
Such a space consists of families of operators defined by the direct, Euclidean quantizations (i.e., as in (\ref{scattering quantization})) of symbols in 
\begin{equation} \label{scattering large-parameters symbols}
S^{m,r}( \mathcal{M} \times \overline{ ^{\mathrm{sc}}T^{\ast} \oplus \mathbb{R}^{k} } \overline{\mathbb{R}^{n}} ).
\end{equation}
Here, $m$ measures decay at the joint fiber infinity of $\overline{ ^{\mathrm{sc}}T^{\ast} \oplus \mathbb{R}^{k} } \overline{\mathbb{R}^{n}}$, while $r$ measures decay at the base infinity $\mathcal{M} \times \overline{ ^{\mathrm{sc}}T^{\ast} \oplus \mathbb{R}^{k} }_{\mathbb{S}^{n-1}} \overline{\mathbb{R}^{n}}$. Equivalently, $a$ belongs to (\ref{scattering large-parameters symbols}) if $a \in \mathcal{C}^{\infty}( \mathcal{M} \times \mathbb{R}^{k} \times T^{\ast} \mathbb{R}^{n} )$, and that the estimates
\begin{equation} \label{scattering large-parameters estimate}
| \partial_{w}^{\alpha} \partial^{\sigma}_{\eta} \partial_{z}^{\beta} \partial_{\zeta}^{\gamma} a | \leq C_{ \alpha \sigma \beta \gamma } \langle z \rangle^{r - |\beta|} \langle \zeta , \eta \rangle^{m - |\gamma| - |\sigma| }
\end{equation} 
are satisfied. In fact, in the context of the three-body algebra, with $ \mathcal{M} = \mathcal{C}_{\tindex}$, $k = n_{\tindex}$, and also replacing $\overline{\mathbb{R}^{n}}$ by $X^{\tindex}$ and $r$ by $r - l$. We can show that (\ref{large-parameters operator class for the scattering case}) is exactly the space of operators for which indicial operators of operators (which must also be classical at $\ff$ in some suitable sense) in $\Psi^{m,r,l}_{\mathrm{3scc}}( [ \overline{\mathbb{R}^{n}} ; \mathcal{C}_{\tindex} ] )$ belong to. \par

Finally, for $m,r,l \in \mathbb{R}$, we will also introduce the space of large-parameter families of conormal cone operators
\begin{equation} 
\label{large-parameters cone operators}
\Psi^{m,r,l}_{\mathrm{coc,lp}}( [ \overline{\mathbb{R}^{n}} ; \{ 0 \} ] ; \mathcal{M} \times {\mathbb{R}^{k}} ).
\end{equation}
As before, the space (\ref{large-parameters cone operators}) can be defined in essentially the same way as how we defined $\Psi_{\mathrm{coc}}^{m,r,l}(\econ)$. As such, we will first require that (\ref{large-parameters cone operators}) consists of continuous linear maps $A : \dot{\mathcal{C}}^{\infty}( [ \overline{\mathbb{R}^{n}} ; \{ 0 \} ] ) \rightarrow \dot{\mathcal{C}}^{\infty}( [ \overline{\mathbb{R}^{n}} ; \{ 0 \} ] )$. Thus, each $A$ has a distributional Schwartz kernel, which we will view as a section of ${^{\mathrm{co}}\Omega}_{R} \econ$. \par

Let $\psi \in \mathcal{C}^{\infty}( \econ )$ be a cut-off function at some point of $\mathbb{S}^{n-1}_{0}$ (recall that this is the tip of the cone $\econ$, which is realized as the lift of $\{ 0 \}$). Then we will require that
\begin{equation}
\psi A \psi \in \Psi_{\mathrm{bc,lp}}^{m,l}( \econ ; \mathcal{M} \times {\mathbb{R}^{k}} ).
\end{equation}
On the other hand, if $\psi$ is supported away from $\mathbb{S}^{n-1}_{0}$, possibly intersecting $\mathbb{S}^{n-1}_{\infty}$ (recall that this is the end of the cone $\econ$, which is realized as the lift of $\partial \overline{\mathbb{R}^{n}}$), then we will require that
\begin{equation}
\psi A \psi \in \Psi_{\mathrm{sc,lp}}^{m,r}( \overline{\mathbb{R}^{n}} ; \mathcal{M} \times {\mathbb{R}^{k}}  ).
\end{equation} 
\par

By the above construction, the symbol of $A$ belong to the space
\begin{equation*} 
S^{m,r,l}( \mathcal{M} \times \overline{^{\mathrm{co}}T^{\ast} \oplus \mathbb{R}^{k} } [ \overline{\mathbb{R}^{n}} ; \{ 0 \} ] ).
\end{equation*}
Here, the index $m$ measures decay at the joint fiber infinity of $\overline{ ^{\mathrm{co}}T^{\ast} \oplus \mathbb{R}^{k} } \econ$, $r$ measures decay at $\mathcal{M} \times \overline{ ^{\mathrm{co}}T^{\ast} \oplus \mathbb{R}^{k}}_{\mathbb{S}^{n-1}_{\infty}} \econ$, and $l$ measures decay at $\mathcal{M} \times \overline{ ^{\mathrm{co}}T^{\ast} \oplus \mathbb{R}^{k} }_{\mathbb{S}^{n-1}_{0}} \econ$. Equivalently, $b \in S^{m,r,l}( \mathcal{M} \times \overline{ ^{\mathrm{co}}T^{\ast} \oplus \mathbb{R}^{k} } \econ )$ if and only if $b \in \mathcal{C}^{\infty}( \mathcal{M} \times \mathbb{R}^{k} \times T^{\ast} \mathbb{R}^{n} \backslash \{ 0 \}  )$, and $b$ satisfies the $S^{m,r}( \mathcal{M} \times \overline{ ^{\mathrm{sc}}T^{\ast} \oplus \mathbb{R}^{k} } \overline{\mathbb{R}^{n}} )$ estimates spatially in a neighborhood of $\mathbb{S}^{n-1}_{\infty}$, as well as the $S^{m,l}( \mathcal{M} \times \overline{ ^{\mathrm{b}}T^{\ast} \oplus \mathbb{R}^{k} } \econ )$ estimates spatially in a neighborhood of $\mathbb{S}^{n-1}_{0}$.
 \par
Lastly, if $\phi \in \mathcal{C}^{\infty}( \econ )$ is another cut-off function such that $\supp \phi \cap \supp \psi = \emptyset$. Let $K_{\phi, \psi}$ denotes the kernel of $\phi A \psi$. Then we will require that $K_{\phi, \psi}$ be smooth in all of the variables. They must also satisfy decay estimates as in the conormal cone cases, which depend on the support properties of $\phi$ and $\psi$ (cf. cases (1)--(6) in \S \ref{subsection the cone calculus}). Additionally, $K_{\phi, \psi}$ must always decay to infinite orders as $|\eta| \rightarrow \infty$ (where $\eta$ is the fiber variable for $\mathcal{M} \times \mathbb{R}^{k}$).
\par

The spaces of large-parameter conormal b-, scattering and cone operators also compose in the expected manners, thereby giving rises to filtered algebras of operators as discussed in \S \S \ref{b-calculus subsection}, \ref{subsection the scattering calculus} and \ref{subsection the cone calculus} respectively. Moreover, following essentially the same procedures as those illustrated in the above subsections, one could also discuss the principal symbol maps, adjoints as well as microlocalization in the large-parameter settings. Variable orders can also be introduced in the same ways where possible. For brevity, we will omit giving a detailed presentation of these materials.

\section{The conormal three-cone operators} 
\label{the conormal three-cone operators section}
For brevity, starting from now we shall always assume that
\begin{equation*}
\mathcal{C} = \mathcal{C}_{\tindex},
\end{equation*}
although the construction below generalizes directly to the case where $\mathcal{C} = \{ \mathcal{C}_{\tindex} : \tindex \in \ind \}$ due to disjointness. \par

Recall that we have a natural projection
\begin{equation} \label{important 3sc projection}
{^{\mathrm{3sc}}\pi_{\ff}} : \mathcal{C}_{\tindex} \times \mathbb{R}^{n_{\tindex}}_{\zeta_{\tindex}} \times {^{\mathrm{sc}}T^{\ast}} X^{\tindex} \rightarrow { ^{\mathrm{sc}}T^{\ast} X^{\tindex} }.
\end{equation}
Let $o_{\mathcal{C}^{\tindex}}$ denote the zero section in ${^{\mathrm{sc}}T^{\ast}_{\mathcal{C}^{\tindex}}} X^{\tindex}$. Then our goal is to construct a class operators whose phase space (i.e., where these operators are microlocalized on) is the blow-up
\begin{equation} \label{three-body second microlocalized blow-up}
\left[ \hspace{0.5mm} \overline{^{\mathrm{3sc}}T^{\ast}} [ \overline{\mathbb{R}^{n}} ; \mathcal{C}_{\alpha} ] ; \overline{^{\mathrm{3sc}}\pi_{\ff}^{-1}( o_{\mathcal{C}^{\tindex}} )} \hspace{0.5mm} \right],
\end{equation}
where we are now using more systematic notations in comparison to (\ref{the correct blowup}). \par

As already mentioned, it would be difficult to work directly with (\ref{three-body second microlocalized blow-up}). To get around this technicality, in this section we will construct a `converse' perspective{\ep}similar to how Vasy approaches second microlocalization in the two-body (i.e., scattering) case (see \S \ref{subsection Vasy's second microlocalized calculus} above), in the form of the introduction of a new class of operators, which we will refer to as the conormal \emph{three-cone} operators below. \par

By design, the three-cone operators will act on the position space
\begin{equation*}
\Xd \coloneq [ [ \overline{\mathbb{R}^{n}} ; \mathcal{C}_{\alpha} ] ; \ff \cap \mf ],
\end{equation*}
although at this stage, it is not immediately clear why we require the additional blow-up at the corner of $[ \overline{\mathbb{R}^{n}} ; \mathcal{C}_{\alpha} ]$. This will be clarified in \S \ref{section second microlocalization} below. It can be seen that $\Xd$ is a manifold with corners having three boundary hypersurfaces. These are the lifts of the two boundary hypersurfaces of $[ \overline{\mathbb{R}^{n}} ; \mathcal{C}_{\alpha} ]$, which will be denoted by $\dmf$ (the `decoupled' main face) for the lift of $\mathrm{mf}$, and $\dff$ (the `decoupled' front face) for the lift of $\mathrm{ff}_{\tindex}$, as well as the lift of $\ff \cap \mf$, i.e., the new face that has just been created by the blow-up, which will be denoted by $\mathrm{cf}_{\tindex}$ (for the `corner face'). \par

In the discussion below, we will continue to follow Melrose's program. Thus, in \S \ref{subsection definition of the three-cone bundle and the variable changes}, we will by defining the Lie algebra of three-cone vector fields on $\Xd$, and from there construct the three-cone cotangent bundle, on which we can define a class of conormal symbols. In \S\S \ref{overview of three-cone bundle subsection}--\ref{subsection construction of cf}, we construct spaces of conormal three-cone operators via local cut-offs{\ep}much like in the cases of the conormal b- and cone algebras. In fact, the latter two are heavily related to the conormal three-cone algebra. Indeed, as we will show in \S \ref{subsection partial quantization near dff} and \S \ref{subsection partial quantization near cf}, one could write any conormal three-cone operator near $\dff$ as the partial quantization in the free variables of a family of conormal b-operators,  and near $\cf$ as the partial quantization of a family of conormal cone operators, and away from these faces as a scattering operator. Finally, we will also discuss briefly why the local constructions we present in this section are compatible in \S \ref{compatibility of three-cone operators subsection}.

\subsection{The three-cone vector fields, cotangent bundle and symbols}
\label{subsection definition of the three-cone bundle and the variable changes}
Observe that since $\Xd$ arises from blowing up $\ff \cap \mf$ (i.e., the corner) in $[ \overline{\mathbb{R}^{n}} ; \mathcal{C}_{\alpha} ]$, the geometry of $\Xd \backslash \cf$ is the same as that of $[ \overline{\mathbb{R}^{n}} ; \mathcal{C}_{\alpha} ] \backslash ( \ff \cap \mf )$, and as such requires no further discussion. Thus, we shall henceforth restrict our attention to a neighborhood of $\cf$ (i.e., the lift of $\ff \cap \mf$). \par

Starting from coordinates $(\bdf, x^{\tindex}, y_{\tindex}, y^{\tindex})$ as introduced in (\ref{coordinates near mf cap ff}), we can find local coordinates on $\Xd$ by introducing projective coordinates about $\{ \bdf = x^{\tindex} = 0\}$. Thus, there are two choices of such coordinates: in a neighborhood of $\partial \dff$ (i.e., $\cf \cap \dff$), we have coordinates
\begin{equation} \label{three-cone position variables near dff}
\bbdf \coloneq \frac{\bdf}{x^{\tindex}}, \, x^{\tindex}, \, y_{\tindex}, y^{\tindex},
\end{equation}
where $\bbdf$, resp. $x^{\tindex}$ are local boundary defining functions for $\dff$, resp. $\mathrm{cf}_{\tindex}$; in a neighborhood of $\partial \dmf$ (i.e., $\cf \cap \dmf$), we have coordinates
\begin{equation} \label{three-cone position variable near dmf}
\hat{x}^{\tindex} \coloneq \frac{x^{\tindex}}{\bdf}, \, \bdf, \, y_{\tindex}, \, y^{\tindex},
\end{equation}
where $\hat{x}^{\tindex}$, resp. $\bdf$ are local boundary defining functions for $\dmf$, resp. $\mathrm{cf}_{\tindex}$. In particular, there is a natural fibration for $\mathrm{cf}_{\tindex}$ with base $\mathcal{C}_{\tindex}$ and fibers which can be realized as follows: away from $\dff$ we can make a change of variable from $( \hat{x}^{\tindex}, y^{\tindex
} )$ to $\hat{z}^{\tindex}$, where
\begin{equation} \label{fiber of cf coordinates}
\hat{x}^{\tindex} = \frac{1}{|\hat{z}^{\tindex}|}, \, y_{j}^{\tindex} = \frac{\hat{z}^{\tindex}_{j}}{|\hat{z}^{\tindex}|}, \quad  j =1,...,n^{\tindex},
\end{equation}
and these coordinates are valid everywhere except when $\hat{z}^{\tindex} = 0$ (in which case $\hat{x}^{\tindex} = \infty$, so coordinates (\ref{three-cone position variable near dmf}) no longer make sense). At the point $\{ \hat{z}^{\alpha} = 0 \}$, we will glue another sphere $\mathcal{C}^{\tindex} \cong \mathbb{S}^{n^{\alpha}-1}$, which is defined by $\bbdf = 1/\hat{x}^{\tindex} = 0$ and $x^{\tindex} = 0$ in coordinates (\ref{three-cone position variables near dff}). Thus, if $\hat{X}^{\tindex}$ denotes the canonically chosen, radially compactiied vector space with interior coordinates $\hat{z}^{\tindex}$, then the fibers of $\mathrm{cf}_{\tindex}$ can be realized simply as the blow up $[ \hat{X}^{\tindex} ; \{ 0 \} ]$.
\par

\begin{figure}
\centering
\includegraphics{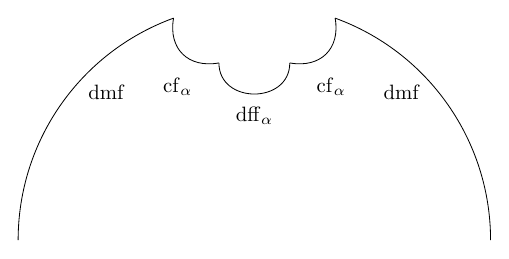}
\caption{The space $[ \overline{\mathbb{R}^{2}} ; \mathcal{C}_{\tindex} ; \mf \cap \ff ]$ near the lift of $\mathcal{C}_{\tindex}$ when $\mathcal{C}_{\tindex}$ is a point. The three-cone structure connects the microlocal structures at $\dmf$ and $\dff$ through $\cf$. }
\end{figure}

However, as it is the case for the three-body structure, it would be useful to introduce somewhat degenerate coordinates systems, which are valid over the interior of $\Xd$. 
\begin{itemize}
\item Near $\mathrm{cf}_{\tindex} \cap \dff$, such coordinates are either $( x_{\tindex}, y_{\tindex}, x^{\tindex}, y^{\tindex} )$ or $( x_{\tindex}, y_{\tindex}, \bbdf, y^{\tindex} )$, where we always have $x_{\tindex} = \bbdf (x^{\tindex})^2$. The former coordinates are valid up to $\dff$ but not $\cf$. We must assume that $ x^{\tindex} \neq 0 $. Moreover, $\dff^{\circ} = \{ x_{\tindex} = 0 \}$. The latter coordinates are valid up to $\cf$ but not $\dff$. We must assume that $\bbdf \neq 0$. Moreover, $\mathrm{cf}_{\tindex}^{\circ} = \{ x_{\tindex} = 0 \}$, though $x_{\alpha}$ is only a quadratic defining function for $\cf^{\circ}$ in this case. 
\item Near $\mathrm{cf}_{\tindex} \cap \dmf$, such coordinates are either $( x_{\tindex}, y_{\tindex}, \hat{x}^{\tindex}, y^{\tindex} )$ or $(x_{\tindex}, y_{\tindex}, \bdf, y^{\tindex})$, where we always have $x_{\tindex} = \bdf^{2} \hat{x}^{\tindex}$. The former coordinates are valid up to  $\cf$ but not $\dmf$. We must assume that $\hat{x}^{\tindex} \neq 0$. Moreover, $\mathrm{cf}_{\tindex} = \{ x_{\tindex} = 0 \}$ to the second order. The latter coordinates (which will not be of major importance here) are valid up to $\dmf$ but not $\cf$. We must assume that $\bdf \neq 0$. Moreover, $\dmf = \{ \bdf = 0 \}$. 
\end{itemize}
We remark that often times, it is even better to further change $(x_{\tindex}, y_{\tindex})$ into the Euclidean coordinates $z_{\tindex}$. We shall change between these coordinates rather freely in the calculations below as we see fit, without further elaborations.
\par

The three-cone vector fields (i.e., the vector fields which we will microlocalize on $\Xd$) will now be defined by modifying the three-body vector fields at $\cf$ in such a way that we have a cone structure at each fiber $[ \hat{X}^{\tindex} ; \{  0 \} ]$ of $\mathrm{cf}_{\tindex}$. As in the discussion of \S \ref{subsection the cone calculus}, we shall understand $[ \hat{X}^{\alpha} ; \{ 0 \} ]$ as a cone, where the lift of $\partial \hat{X}^{\tindex}$ is the end of the cone and the lift of $\{ 0 \}$ is the tip of the cone. We will denote the lift of $\partial \hat{X}^{\alpha}$ by $\mathcal{C}^{\tindex}_{\infty}$ and the lift of $\{ 0 \}$ by $\mathcal{C}^{\tindex}_{0}$. Both faces are of course diffeomorphic to $\mathbb{S}^{n^{\tindex}-1}$. Then the space three-cone vector fields will be defined as those smooth vector fields $V$ on $X$ such that
\begin{itemize}
\item $V$ is an element of $\mathcal{V}_{\mathrm{sc}}(X_{\tindex}) \oplus_{\mathcal{C}^{\infty}(\Xd)} \mathcal{V}_{\mathrm{b}}(X^{\tindex})$ in a small neighborhood of $\dff$;
\item $V$ is an element of $\mathcal{V}_{\mathrm{sc}}(X_{\tindex}) \oplus_{\mathcal{C}^{\infty}(\Xd)} \mathcal{V}_{\mathrm{co}}( \econ ) $ in a small neighborhood of $\cf$;
\item $V$ is an element of $\mathcal{V}_{\mathrm{sc}}(\overline{\mathbb{R}^{n}})$ away from $\dff \cup \cf$.
\end{itemize}
These conditions can be captured globally by defining
\begin{equation*}
\mathcal{V}_{\mathrm{3co}}( \Xd ) \coloneq \mathcal{V}_{\mathrm{sc}}(X_{\tindex}) \oplus_{\mathcal{C}^{\infty}(\Xd)} x_{\cf}^{-1} \mathcal{V}_{\mathrm{sc}}(X^{\tindex}),
\end{equation*}
where $x_{\cf} \in \mathcal{C}^{\infty}( \Xd )$ is a global defining function for $\cf$. From this definition, it is straightforward to check that $\mathcal{V}_{\mathrm{3co}}(\Xd)$ is also a Lie algebra. In particular, elements of $\mathcal{V}_{\mathrm{3co}}(\Xd)$ are reduced naturally to the three-body vector fields away from $\Xd \backslash \cf$. \par

Explicitly in local coordinates, in a small neighborhood of $\cf \cap \dff$ where $x_{\cf} \simeq x^{\tindex}$, elements of $\mathcal{V}_{\mathrm{3co}}( \Xd )$ can be written in coordinates $(x_{\tindex}, y_{\tindex}, x^{\tindex}, y^{\tindex})$ as the $\mathcal{C}^{\infty}( \Xd )$ span of
\begin{equation} \label{definition of three-cone vector fields 1}
x_{\tindex}^2 \partial_{x_{\tindex}}, \,  x_{\tindex} \partial_{y_{\tindex}}, \, x^{\tindex} \partial_{x^{\tindex}}, \, \partial_{y^{\tindex}}.
\end{equation}
Alternatively, they can be written in coordinates $( x_{\tindex}, y_{\tindex}, \bbdf, y^{\tindex} )$ as the $\mathcal{C}^{\infty}( \Xd )$ span of
\begin{equation} \label{definition of three-cone vector fields 2}
x_{\tindex}^2 \partial_{x_{\tindex}}, \, x_{\tindex} \partial_{y_{\tindex}}, \, \bbdf \partial_{\bbdf}, \,\partial_{y^{\tindex}}.
\end{equation}
On the other hand, in a small neighborhood of $\mathrm{cf}_{\tindex} \cap \dmf$ where $x_{\cf} \simeq \rho_{\ff}$, elements of $\mathcal{V}_{\mathrm{3co}}( \Xd )$ can be written in coordinates $( x_{\tindex}, y_{\tindex}, \hat{x}^{\tindex}, y^{\tindex} )$ as the $\mathcal{C}^{\infty}( \Xd )$ span of
\begin{equation} \label{definition of three-cone vector fields 3}
x_{\tindex}^2 \partial_{x_{\tindex}}, \, x_{\tindex} \partial_{y_{\tindex}}, \, (\hat{x}^{\tindex})^2 \partial_{\hat{x}^{\tindex}}, \, \hat{x}^{\tindex} \partial_{y^{\tindex}}.
\end{equation}
Alternatively, they can be written in coordinates $ ( x_{\tindex}, y_{\tindex}, \bdf, y^{\tindex} ) $ as the $\mathcal{C}^{\infty}( \Xd )$ span of
\begin{equation} \label{definition of three-cone vector fields 4}
x_{\tindex}^2 \partial_{x_{\tindex}}, \, x_{\tindex} \partial_{y_{\tindex}}, \, \hat{x}^{\tindex} \bdf \partial_{\bdf}, \,  \hat{x}^{\tindex} \partial_{y^{\tindex}}.
\end{equation} \par

By Melrose's program, it follows that we can find a tangent bundle ${^{\mathrm{3co}}T} \Xd$ such that ${^{\mathrm{3co}}T_{\mathbb{R}^{n}}\Xd} = T\mathbb{R}^{n}$ and $\mathcal{C}^{\infty}( \Xd ; {^{\mathrm{3co}}T \Xd} ) = \mathcal{V}_{\mathrm{3co}}(\Xd)$. Moreover, by construction, we must have ${^\mathrm{3co}T}_{U} \Xd = {^{\mathrm{3sc}}T}_{U} X$ whenever $U \subset \Xd$ is away from $\cf$. The three-cone cotangent bundle $^{\mathrm{3co}}T^{\ast}\Xd$ is then defined by the dual bundle of ${^\mathrm{3co}}T \Xd$. This bundle satisfies ${^\mathrm{3co}T^{\ast}_{\mathbb{R}^{n}}}\Xd = T^{\ast}\mathbb{R}^{n}$ and ${^{\mathrm{3co}}T^{\ast}_{U}} \Xd = {^{\mathrm{3sc}}T^{\ast}}_{U} X$, whenever $U$ is as above. \par

Coordinates on ${^\mathrm{3co}T^{\ast}\Xd}$ are defined by local frames of covector fields, which, when written with respect to (\ref{definition of three-cone vector fields 1}) and (\ref{definition of three-cone vector fields 2}), are given by
\begin{equation} \label{covector fields 1}
\tau_{\tindex}^{\mathrm{3co,b}} \frac{dx_{\tindex}}{x_{\tindex}^2} + \mu_{\tindex}^{\mathrm{3co,b}} \cdot \frac{dy_{\tindex}}{x_{\tindex}} + \tau^{\tindex}_{\mathrm{3co,b}} \frac{dx^{\tindex}}{x^{\tindex}} + \mu^{\tindex}_{\mathrm{3co,b}} \cdot dy^{\tindex},
\end{equation}
and 
\begin{equation} \label{covector fields 2}
\tau_{\tindex}^{\mathrm{3co,co, b}} \frac{dx_{\tindex}}{x_{\tindex}^2} + \mu_{\tindex}^{\mathrm{3co,co, b}} \cdot \frac{dy_{\tindex}}{x_{\tindex}} + \tau^{\tindex}_{\mathrm{3co,co, b}} \frac{d \bbdf }{ \bbdf } + \mu^{\tindex}_{\mathrm{3co,co, b}} \cdot dy^{\tindex}
\end{equation}
respectively. It follows that (\ref{covector fields 1}) gives rise to the coordinates
\begin{equation} \label{three cone phase varibales 1}
\bbdf,  y_{\tindex}, x^{\tindex},  y^{\tindex}, \tau_{\tindex}^{\mathrm{3co,b}}, \mu_{\tindex}^{\mathrm{3co,b}},   \tau^{\tindex}_{\mathrm{3co,b}},  \mu^{\tindex}_{\mathrm{3co,b}}, 
\end{equation}
while (\ref{covector fields 2}) gives rise to the coordinates
\begin{equation} \label{three cone phase variables 2}
\bbdf, y_{\tindex},  x^{\tindex}, y^{\tindex}, \tau_{\tindex}^{\mathrm{3co,co,b}}, \mu_{\tindex}^{\mathrm{3co,co, b}},  \tau^{\tindex}_{\mathrm{3co,co,b}},  \mu^{\tindex}_{\mathrm{3co,co,b}} .
\end{equation}
Both (\ref{three cone phase varibales 1}) and (\ref{three cone phase variables 2}) are valid in a small neighborhood of ${ ^{\mathrm{3co}}T^{\ast}_{\cf \cap \dff} \Xd }$. The change of variable formulae between them are given by
\begin{equation*}
\tau^{\mathrm{3co,co,b}}_{\tindex} = \tau_{\tindex}^{\mathrm{3co,b}} + \frac{\bbdf (x^{\tindex})^2}{2} \tau^{\tindex}_{\mathrm{3co,b}}, \ \mu_{\tindex}^{\mathrm{3co,co,b}} = \mu_{\tindex}^{\mathrm{3co,b}}, \ \tau^{\tindex}_{\mathrm{3co,co,b}} = - \frac{\tau^{\tindex}_{\mathrm{3co,b}}}{2}, \ \mu^{\tindex}_{\mathrm{3co,co,b}} = \mu^{\tindex}_{\mathrm{3co,b}}.
\end{equation*} \par
On the other hand, covector fields can be written with respect to (\ref{definition of three-cone vector fields 3}) and (\ref{definition of three-cone vector fields 4}) as 
\begin{equation} \label{covector fields 3}
\tau_{\tindex}^{\mathrm{3co,co, sc}} \frac{dx_{\tindex}}{x_{\tindex}^2} + \mu_{\tindex}^{\mathrm{3co,co, sc}} \cdot \frac{dy_{\tindex}}{x_{\tindex}} + \tau^{\tindex}_{\mathrm{3co,co,sc}} \frac{d \hat{x}^{\tindex} }{ ( \hat{x}^{\tindex} )^2 } + \mu^{\tindex}_{\mathrm{3co,co,sc}} \cdot \frac{dy^{\tindex}}{\hat{x}^{\tindex}},
\end{equation}
and 
\begin{equation} \label{covector fields 4}
\tau_{\tindex}^{\te} \frac{dx_{\tindex}}{x_{\tindex}^2} + \mu_{\tindex}^{ \te } \cdot \frac{dy_{\tindex}}{x_{\tindex}} - \tau^{\tindex}_{\te} \frac{d\bdf}{\hat{x}^{\tindex} \bdf} + \mu^{\tindex}_{\te} \cdot \frac{dy^{\tindex}}{\hat{x}^{\tindex}}.
\end{equation}
Here, (\ref{covector fields 3}) gives rise to the coordinates 
\begin{equation} \label{three cone phase variables 3}
\bdf,  y_{\tindex},   \hat{x}^{\tindex},   y^{\tindex},  \tau_{\tindex}^{\mathrm{3co,sc}},   \mu^{\mathrm{3co,sc}}_{\tindex},  \tau^{\tindex}_{\mathrm{3co,sc}},  \mu^{\tindex}_{\mathrm{3co,sc}}, 
\end{equation}
while (\ref{covector fields 4}) gives rise to the coordinates
\begin{equation} \label{three cone phase variables 4}
\bdf,  y_{\tindex},   \hat{x}^{\tindex},  y^{\tindex},  \tau_{\tindex}^{\mathrm{3co,sf}},  \mu_{\tindex}^{\mathrm{3co,sf}}, \tau^{\tindex}_{\mathrm{3co,sf}}, \mu^{\tindex}_{\mathrm{3co,sf}}. 
\end{equation} 
Note that we have chosen in (\ref{covector fields 4}) the variable dual to $d \bdf / \hat{x}^{\tindex} \bdf$ with a negative sign because this gives rise to the convenient scaling property (\ref{revisted notation 3sc res 1}) below. Both (\ref{three cone phase variables 3}) and (\ref{three cone phase variables 4}) are valid in a small neighborhood of $^{\mathrm{3co}}T^{\ast}_{\cf \cap \dmf} \Xd$. The change of variables formulae between them are given by
\begin{equation*}
\tau_{\tindex}^{\mathrm{3co,sc}} = \tau^{\te}_{\tindex} - \frac{\bdf^2}{2} \tau^{\tindex}_{\te} , \ \mu_{\tindex}^{\mathrm{3co,sc}} = \mu_{\tindex}^{\te}, \ \tau^{\tindex}_{\mathrm{3co,sc}} =  \frac{ \tau^{\tindex}_{\te} }{2}, \ \mu^{\tindex}_{\mathrm{3co,sc}} = \mu^{\tindex}_{ \te }.
\end{equation*}  \par 
It is convenient to work with canonical coordinates. To this end, notice that in either position coordinates $(x_{\tindex}, y_{\tindex}, x^{\tindex}, y^{\tindex})$ and $(x_{\tindex}, y_{\tindex}, \bbdf, y^{\tindex})$, we can always write
\begin{equation*}
\tau_{\tindex}^{\bullet} \frac{dx_{\tindex}}{x_{\tindex}^{2}} + \mu_{\tindex}^{\bullet} \cdot dy^{\tindex} = \zeta_{\tindex}^{\bullet} \cdot dz_{\tindex},
\end{equation*}
where $\bullet$ could be any one of the subscripts `$\text{3co,b}$', `$\text{3co,co,b}$', `$\text{3co,co,sc}$' or `$\text{3co,sf}$'. Hence we could replace every $( x_{\tindex}, y_{\tindex}, \tau_{\tindex}^{\bullet}, \mu_{\tindex}^{\bullet} )$ with $( z_{\tindex}, \zeta_{\tindex}^{\bullet})$, with the latter coordinates being the canonical Euclidean coordinates in the free variables. Moreover, as in the b-case, we could also write $\bbdf = e^{-\hat{t}_{\ff}}$, $x^{\tindex} = e^{-t^{\tindex}}$. Combined with the above discussion, this implies that we can find canonical coordinates
\begin{equation*}
z_{\tindex},  t^{\tindex}, y^{\tindex}, \zeta_{\tindex}^{\mathrm{3co,b}}, - \tau^{\tindex}_{\mathrm{3co,b}},  \mu^{\tindex}_{\mathrm{3co,b}}
\end{equation*}
in place of (\ref{three cone phase varibales 1}), i.e., $-\tau^{\tindex}_{\mathrm{3sc,b}}$ is dual to $t^{\tindex}$. Likewise, the coordinates
\begin{equation*}
z_{\tindex}, \hat{t}_{\tindex},  y^{\tindex},  \zeta_{\tindex}^{\mathrm{3co,co, b}},  - \tau^{\tindex}_{\mathrm{3co,co,b}},  \mu^{\tindex}_{\mathrm{3co,co, b}}
\end{equation*}
are also canonical, i.e., $-\tau^{\tindex}_{\mathrm{3co,b}}$ is dual to $\hat{t}_{\tindex}$, and can be used in place of (\ref{three cone phase variables 2}). Furthermore, by making the variable change from $(\hat{x}^{\tindex}, y^{\tindex})$ to $\hat{z}^{\tindex}$, and by defining $\zeta_{\mathrm{3co,co,sc}}^{\tindex}$ with respect to 
\begin{equation*}
\tau^{\tindex}_{\mathrm{3co,co, sc}} \frac{ d \hat{x}^{\tindex} }{ ( \hat{x}^{\tindex} )^2 } + \mu_{\mathrm{3co,co, sc}}^{\tindex} \cdot \frac{dy^{\tindex}}{\hat{x}^{\tindex}} = \zeta^{\tindex}_{\mathrm{3co,co, sc}} \cdot d \hat{z}^{\tindex},
\end{equation*}
we can replace (\ref{three cone phase variables 3}) with the canonical coordinates $(z_{\tindex}, \hat{z}^{\tindex}, \zeta_{\tindex}^{\mathrm{3co,co,sc}}, \zeta^{\tindex}_{\mathrm{3co,co,sc}})$ as well.
\par

Now, the notations introduced above, while systematic, are quite burdening. So it is very natural to have them relaxed at a few places. First, in view of our motivation for defining the three-cone structure, which is to understand second microlocalization for the three-body structure, one should be tempted to compare (\ref{covector fields 1}) with (\ref{three-body covector fields}) and (\ref{covector fields 4}) with (\ref{three-body covector fields from the other direction}). These coordinates are related by
\begin{gather} 
\tau_{\tindex} = \tau_{\tindex}^{\mathrm{3co,b}}, \ \mu_{\tindex} = \mu_{\tindex}^{\mathrm{3co,b}}, \ \frac{\tau^{\tindex}}{x^{\tindex}} =  \tau^{\tindex}_{\mathrm{3co,b}}, \  \frac{\mu^{\tindex}}{x^{\tindex}} = \mu^{\tindex}_{\mathrm{3co,b}},  \label{revisted notation 3sc b} \\
\tau_{\tindex}^{\mathrm{3sc,sf}} = \tau_{\tindex}^{\mathrm{3co,sf}}, \ \mu_{\tindex} = \mu_{\tindex}^{\mathrm{3co,sf}}, \ \frac{\tau^{\tindex}}{\bdf} = \tau^{\tindex}_{\mathrm{3co,sf}}, \ \frac{\mu^{\tindex}}{\bdf} = \mu^{\tindex}_{\mathrm{3co,sf}} \label{revisted notation 3sc res 1}
\end{gather}
respectively. Hence, by drawing analogy from Vasy's sc,b-notations (i.e., writing $\tau_{\mathrm{b}} = \tau/x$, $\mu_{\mathrm{b}} = \mu/x$ in the two-body case), we are encouraged to suppress the subscripts `$\mathrm{3co}$' which appears in (\ref{revisted notation 3sc b}), and simply write
\begin{equation} \label{revised notation 3sc b 2}
\tau^{\tindex}_{\mathrm{b}} \coloneq \frac{\tau^{\tindex}}{x^{\tindex}} = \tau^{\tindex}_{\mathrm{3co,b}} , \ \mu^{\tindex}_{\mathrm{b}} \coloneq \frac{\mu^{\tindex}}{x^{\tindex}} = \mu^{\tindex}_{\mathrm{3co,b}}.
\end{equation}
Likewise, we will also write, for brevity
\begin{equation} \label{revised notation 3sc res 2}
\tau^{\tindex}_{\mathrm{sf}} \coloneq \frac{\tau^{\tindex}}{\bdf} = \tau^{\tindex}_{\mathrm{3co,sf}}, \ \mu^{\tindex}_{\mathrm{sf}} \coloneq \frac{\mu^{\tindex}}{\bdf} = \mu^{\tindex}_{\mathrm{3co,sf}},
\end{equation}
as well as 
\begin{equation}
\tau^{\tindex}_{\mathrm{co, b}} \coloneq \tau^{\tindex}_{\mathrm{3co,co,b}}, \ \mu^{\tindex}_{\mathrm{co,b}} \coloneq \mu^{\tindex}_{\mathrm{3co,co,sc}}, \quad \tau^{\tindex}_{\mathrm{co,sc}} \coloneq \tau^{\tindex}_{\mathrm{3co,co,sc}}, \ \mu^{\tindex}_{\mathrm{co,sc}} \coloneq \mu^{\tindex}_{\mathrm{3co,co,sc}} .
\end{equation}
Moreover, since the variables change between (\ref{covector fields 2}) and (\ref{covector fields 3}) is given by
\begin{equation*}
\tau_{\tindex}^{\mathrm{3co,co,sc}} = \tau_{\tindex}^{\mathrm{3co,co,b}}, \ \mu_{\tindex}^{\mathrm{3co,co,sc}} = \mu_{\tindex}^{\mathrm{3co,co,b}}, \ \tau^{\tindex}_{\mathrm{co,sc}} = - \frac{ \tau^{\tindex}_{\mathrm{co,b}} }{ \hat{\rho}_{\tindex} }, \ \mu^{\tindex}_{\mathrm{co,sc}} = \frac{\mu^{\tindex}_{\mathrm{co,b}}}{\hat{\rho}_{\tindex}},
\end{equation*}
it also makes sense to just write
\begin{equation} \label{revised notation 3co}
\tau_{\tindex}^{\mathrm{3co}} \coloneq \tau_{\tindex}^{\mathrm{3co,co,sc}} = \tau_{\tindex}^{\mathrm{3co,co,b}}, \  \mu_{\tindex}^{\mathrm{3co}} \coloneq \mu_{\tindex}^{\mathrm{3co,co,sc}} = \mu_{\tindex}^{\mathrm{3co,co,b}}.
\end{equation}
Additionally, we set
\begin{equation} \label{revised notation 3co 1.5}
\tau_{\tindex}^{\mathrm{sf}} \coloneq  \tau_{\tindex}^{\mathrm{3sc,sf}} = \tau_{\tindex}^{\mathrm{3co,sf}}.
\end{equation}
In terms of Euclidean coordinates, we will write
\begin{equation} \label{revised notation Euclidean}
\zeta_{\tindex} = \zeta_{\tindex}^{\mathrm{3co,b}}, \ \zeta_{\tindex}^{\mathrm{3co}} \coloneq \zeta_{\tindex}^{\mathrm{3co,co,sc}} = \zeta_{\tindex}^{\mathrm{3co,b}}, \ \zeta^{\tindex}_{\mathrm{co,sc}} \coloneq \zeta^{\tindex}_{\mathrm{3co,sc}}.
\end{equation}
Henceforth, we will always be adopting notations (\ref{revisted notation 3sc b})--(\ref{revised notation Euclidean}) without further comments. \par

Finally, let $\overline{^{\mathrm{3co}}T^{\ast}} \Xd$ denote the compactification of $^{\mathrm{3co}}T^{\ast} \Xd$ radially in the fibers. We shall now give a characterization of the conormal three-cone symbol spaces in terms of estimates. To this end, let $m,r,l,b \in \mathbb{R}$ measure decay respectively at ${^{\mathrm{3co}}S^{\ast}}\Xd$, $\overline{^{\mathrm{3co}}T^{\ast} }_{\dmf} \Xd$, $\overline{^{\mathrm{3co}}T^{\ast} }_{\dff} \Xd$ and $\overline{^{\mathrm{3co}}T^{\ast} }_{\cf} \Xd$. In the same orders, let $\rho_{\infty}, \rho_{\dmf}, \rho_{\dff}, \rho_{\cf} \in \mathcal{C}^{\infty}( \overline{^{\mathrm{3co}}T^{\ast}} \Xd )$ be defining functions for these boundary faces, Moreover, let $U_{H}$ be some small neighborhood of $\overline{^{\mathrm{3co}}T^{\ast}}_{H}\Xd$ for $H \in \{ \dmf, \dff, \cf \}$. Then
\begin{equation*}
S^{m,r,l,b}( \overline{^{\mathrm{3co}}T^{\ast} }\Xd )
\end{equation*}
will consist of those functions $a \in \mathcal{C}^{\infty}( T^{\ast} \mathbb{R}^{n} )$ such that:
\begin{itemize}
\item Near every point of $U_{\cf} \backslash U_{\dmf}$, we either have
\begin{equation}
\label{symbol estimates constant order three cone 1}
| \partial_{z_{\tindex}}^{\beta_{\tindex}} \partial_{t^{\tindex}}^{j} \partial_{y^{\tindex}}^{\beta^{\tindex}} \partial_{ \tscblz }^{\gamma_{\tindex}} \partial_{ \tscbut }^{k} \partial_{ \tscbum }^{\gamma^{\tindex}} a  | \leq C_{\beta_{\tindex} j \beta^{\tindex} \gamma_{\tindex} k \gamma^{\tindex}}  \rho_{\infty}^{-m + |\gamma_{\tindex}| + k + |\gamma^{\tindex}|}  \rho_{\cf}^{-b+ 2 |\beta_{\tindex}|} \rho_{\dff}^{-l + |\beta_{\tindex}|}
\end{equation}
in some coordinates of the form $( z_{\tindex}, t^{\tindex}, y^{\tindex}, \tscblz, \tscbut, \tscbum)$; or equivalently
\begin{equation} \label{symbol estimates constant order three cone 2}
| \partial_{z_{\tindex}}^{\beta_{\tindex}} \partial_{ \hat{t}_{\tindex} }^{j} \partial_{y^{\tindex}}^{\beta^{\tindex}} \partial_{ \tcoblz }^{\gamma_{\tindex}} \partial_{ \tcobut }^{k} \partial_{ \tcobum }^{\gamma^{\tindex}} a | \leq C_{\beta_{\tindex} j \beta^{\tindex} \gamma_{\tindex} k \gamma^{\tindex}}  \rho_{\infty}^{-m + |\gamma_{\tindex}| + k + |\gamma^{\tindex}|}   \rho_{\cf}^{-b+ 2 |\beta_{\tindex}|} \rho_{\dff}^{-l + |\beta_{\tindex}|}
\end{equation}
in some coordinates of the form $(z_{\tindex}, \hat{t}_{\tindex}, y^{\tindex}, \tcoblz, \tcobut, \tcobum)$.
\item In $U_{\cf} \backslash U_{\dff}$, we have
\begin{equation}
\label{symbol estimates constant order three cone 3}
| \partial_{z_{\tindex}}^{\beta_{\tindex}} \partial_{ \hat{z}^{\tindex} }^{\beta^{\tindex}} \partial_{ \tcosclz }^{\gamma_{\tindex}} \partial_{ \tcoscuz }^{\gamma^{\tindex}} a  | \leq C_{\beta_{\tindex} \beta^{\tindex} \gamma_{\tindex} \gamma^{\tindex}} \rho_{\infty}^{-m + |\gamma_{\tindex}| + |\gamma^{\tindex}|} \rho_{\dmf}^{-r + |\beta_{\tindex}| + |\beta^{\tindex}|} \rho_{\cf}^{-b+ 2 |\beta_{\tindex}|}
\end{equation}
in the coordinates $( z_{\tindex}, \hat{z}^{\tindex}, \tcosclz, \tcoscuz)$.
\item In any region that is away from $U_{\cf}$, we will require that $a$ restricts to an element of $S^{m,r,l}( \overline{ ^{\mathrm{3sc}}T^{\ast} } X )$. 
\end{itemize}
This concludes the characterization of $S^{m,r,l,b}( \overline{^{\mathrm{3co}}T^{\ast}} \Xd )$.

\begin{remark}
\label{product type description remark three-cone case}
Notice that if $a \in S^{m,r,l,b}( \overline{ ^{\mathrm{3co}}T^{\ast} }X )$, then locally near $\overline{^{\mathrm{3co}}T^{\ast}}_{\dff}\Xd$ and $\overline{^{\mathrm{3co}}T^{\ast}}_{\cf}\Xd$, we can understand $a$ as having a `product-like' symbolic structures in the base variables, as well as a `joint' symbolic structure in the fiber variables. \par

Here, the product structure near $\dff$ is that of a product structure for $\overline{\mathbb{R}^{n_{\tindex}}} \oplus X^{\tindex}$. Indeed, estimates (\ref{symbol estimates constant order three cone 1}) can also be written as
\begin{equation} 
\label{symbol estimates constant order three cone 4}
| \partial_{z_{\tindex}}^{\beta_{\tindex}} \partial_{t^{\tindex}}^{j} \partial_{y^{\tindex}}^{\beta^{\tindex}} \partial_{ \tscblz }^{\gamma_{\tindex}} \partial_{ \tscbut }^{k} \partial_{ \tscbum }^{\gamma^{\tindex}} a  | \leq C_{\beta_{\tindex} j \beta^{\tindex} \gamma_{\tindex} k \gamma^{\tindex}} \langle z_{\tindex} \rangle^{l - |  \beta_{\tindex}| }  e^{t^{\tindex} ( b - 2l ) } \langle \zeta_{\tindex}, \utaub, \umub \rangle^{ m - | \gamma_{\tindex} | - k - |\gamma^{\tindex}| }.
\end{equation}
Thus, by combining (\ref{symbol estimates constant order three cone 4}) with the symbol estimates in the three-body calculus, we see that locally near $\overline{^{\mathrm{3co}}T^{\ast}}_{\dff}\Xd$, $a$ can be thought of as having a scattering structure of order $l$ in the base variables along $\overline{\mathbb{R}^{n_{\tindex}}}$, and a b-structure of order $b-2l$ in the base variables along $X^{\tindex}$. \par

Likewise, the product structure near $\cf$ is that of a product structure for $\overline{\mathbb{R}^{n_{\tindex}}} \times [ \hat{X}^{\tindex} ; \{ 0 \} ]$. To see this, note that estimates (\ref{symbol estimates constant order three cone 5}) can also be written as, respectively
\begin{align}
\label{symbol estimates constant order three cone 5}
\begin{split}
& | \partial_{z_{\tindex}}^{\beta_{\tindex}} \partial_{ \hat{t}_{\tindex} }^{j} \partial_{y^{\tindex}}^{\beta^{\tindex}} \partial_{ \tcoblz }^{\gamma_{\tindex}} \partial_{ \tcobut }^{k} \partial_{ \tcobum }^{\gamma^{\tindex}} a | \\
& \qquad \leq C_{\beta_{\tindex} j \beta^{\tindex} \gamma_{\tindex} k \gamma^{\tindex}} \langle z_{\tindex} \rangle^{b/2 - |\beta_{\tindex}|}  e^{\hat{t}_{\tindex} ( l - b/2 ) } \langle \zeta_{\tindex}^{\mathrm{3co}}, \utaucob, \umucob \rangle^{m - |\gamma_{\tindex}| - k - \gamma^{\tindex} },
\end{split}
\end{align}
while  (\ref{symbol estimates constant order three cone 5}) can be written as
\begin{equation}
\label{symbol estimates constant order three cone 6}
| \partial_{z_{\tindex}}^{\beta_{\tindex}} \partial_{ \hat{z}^{\tindex} }^{\beta^{\tindex}} \partial_{ \tcosclz }^{\gamma_{\tindex}} \partial_{ \tcoscuz }^{\gamma^{\tindex}} a  | \leq C_{\beta_{\tindex} \beta^{\tindex} \gamma_{\tindex} \gamma^{\tindex}} \langle z_{\tindex} \rangle^{b/2 - |\beta_{\tindex}|} \langle \hat{z}^{\tindex} \rangle^{r - b/2 - |\beta^{\tindex}|} \langle \zeta_{\tindex}^{\mathrm{3co}}, \zeta^{\tindex}_{\mathrm{co,sc}} \rangle^{m - |\gamma_{\tindex}| - |\gamma^{\tindex}|}.
\end{equation}
Thus together, (\ref{symbol estimates constant order three cone 5}), (\ref{symbol estimates constant order three cone 6}) imply that locally near $\overline{^{\mathrm{3co}}T^{\ast}}_{\cf}\Xd$, $a$ can be thought of as having a scattering structure of order $b/2$ in the base variables along $X_{\tindex}$, as well as a cone structure of orders $l - b/2$ at $\mathcal{C}^{\tindex}_{0}$, $r - b/2$ at $\mathcal{C}^{\tindex}_{\infty}$, in the base variables along $[ \hat{X}^{\tindex} ; \{ 0 \} ]$.
\end{remark}
\subsection{Definition of the three-cone algebra: an overview}
\label{construction of three-cone overview} \label{overview of three-cone bundle subsection}
Let $m,r,l \in \mathbb{R}$. Starting from this subsection, we will define the space of conormal three-cone operators
\begin{equation}
\label{conormal three-cone operators OG}
\Psi^{m,r, l, b}_{ \mathrm{3coc} }( \Xd ).
\end{equation}
It will be instructive to first give an overview of this construction. \par 
As usual, operators $A \in \Psi^{m,r,l,b}_{\mathrm{3coc}}( \Xd )$ are first and foremost continuous linear maps 
\begin{equation*}
A : \mathcal{S}( \mathbb{R}^{n} ) \rightarrow \mathcal{S}(\mathbb{R}^{n}),
\end{equation*}
so in particular, each $A$ has a distributional Schwartz kernel, which we will realize as a section of the right three-cone density bundle ${ ^{\mathrm{3co}}\Omega_{R}} \Xd \coloneq \pi_{R}^{\ast} { ^{\mathrm{3co}} \Omega \Xd }$. Here, $ { ^{\mathrm{3co}} \Omega \Xd}$ is the bundle of densities arising naturally from $\mathcal{C}^{\infty}(\Xd ; { ^{\mathrm{3co}}T^{\ast} \Xd })$, while $\pi_{R}: X^{2} \rightarrow X$ is the natural projection to the right factor of $X^2$. However, this dependency will usually be omitted upon choosing a global trivialization of ${ ^{\mathrm{3co}} \Omega_{R} \Xd }$, which can be done by fixing a strictly positive three-cone density $\nu_{\mathrm{3co}}$. \par

In the construction below, we will often work with coordinates systems in which $\nu_{\mathrm{3co}}$ is assumed to take the form of a standard, Euclidean density. This can be done without loss of generality, since:
\begin{itemize}
\item In the coordinates $(z_{\tindex}, t^{\tindex}, y^{\tindex})$ near $\dff \cap \cf$, we have $\nu_{\mathrm{3co}} \simeq |dz_{\tindex} dt^{\tindex} dy^{\tindex}|$;
\item In the coordinates $(z_{\tindex}, \hat{t}_{\tindex}, y^{\tindex})$ near $\dff \cap \cf$, we have $\nu_{\mathrm{3co}} \simeq | dz_{\tindex} d\hat{t}_{\tindex} dy^{\tindex} |$; 
\item In the coordinates $(z_{\tindex}, \hat{z}^{\tindex})$ near $\dmf \cap \cf$, we have $\nu_{\mathrm{3co}} \simeq | dz_{\tindex} d\hat{z}^{\tindex} |$. 
\end{itemize}  \par

As in the constructions of the conormal b- and cone operators, the three-cone operators are also global in nature. Thus, quantization alone will not be sufficient in capturing the properties of three-cone operators. Our construction will therefore be done locally everywhere (even though we are essentially working on $\mathbb{R}^{n}$) in such a way that the local pieces can be patched together in the end. 
\par

We also remark that another way to construct the three-cone operators is through a double space construction, which has the advantage that the resulting construction is automatically invariant. However, following Vasy's second microlocalization for the scattering calculus \cite{AndrasBook}, we will nevertheless choose to present the aforementioned local construction. In particular, this local construction will have the advantage that it can be more easily converted to the second microlocalized settings when variable orders, even at $\dff$ or $\cf$, are present.  \par 
We will work from the perspective of Schwartz kernels. The standard idea is to consider partitions of unity. In fact, it will be enough if we just choose a sum
\begin{equation*}
1 = \tilde{\psi}_{0} +\tilde{\psi}_{\dff} + \tilde{\psi}_{\cf}
\end{equation*}
of $\mathcal{C}^{\infty}( \Xd )$ functions, such that $\tilde{\psi}_{\dff}$, resp. $ \tilde{\psi}_{\cf}$ are supported in a small neighborhood of $\dff$, resp. $\cf$, while $\tilde{\psi}_{0}$ is supported away from $\dff \cup \cf$. Notice that if $\tilde{\psi}_{\dff}$ is identically $1$ at $\dff$, then $\tilde{\psi}_{\cf}$ cannot be identically $1$ at $\cf$, and vice versa.  \par

Now, let $\psi_{0}, \psi_{\dff}, \psi_{\cf} \in \mathcal{C}^{\infty}( \Xd )$ be identically $1$ on the supports of $\tilde{\psi}_{0}$, $\tilde{\psi}_{\dff}$ and $\tilde{\psi}_{\cf}$ respectively. Then we can write
\begin{align} 
\label{standard operator decomposition using partition of unity}
\begin{split}
 A = {} & {\psi}_{0} A {\psi}_{0} \tilde{\psi}_{0} + {\psi}_{\cf} A {\psi}_{\cf} \tilde{\psi}_{\cf} + {\psi}_{\dff} A {\psi}_{\dff} \tilde{\psi}_{\dff} \\
& + (1 - {\psi}_{0} ) A \tilde{\psi}_{0} + (1 - {\psi}_{\cf} ) A \tilde{\psi}_{\cf} + (1 - {\psi}_{\dff} ) A \tilde{\psi}_{\dff}.
\end{split}
\end{align}
Thus, it would be enough if we can make sense of each term in (\ref{standard operator decomposition using partition of unity}). In particular, we can now assume that $\psi_{\dff}$, resp. $\psi_{\cf}$ are identically $1$ at $\dff$, resp. $\cf$. \par 
Since three-cone vector fields restrict to scattering vector fields at $\dmf$, it is immediately clear that we should define
\begin{equation} \label{three-cone scattering part of the definition}
\psi_{0} A \psi_{0} \in \Psi_{\mathrm{sc}}^{m , r}(  \overline{\mathbb{R}^{n}} ).
\end{equation}
On the other hand, we will require the last three terms in (\ref{standard operator decomposition using partition of unity}) to have rapidly decreasing, smooth kernels. In \S \S \ref{subsection construction of dff}--\ref{subsection construction of cf} below, we will construct the terms $\psi_{\dff} A \psi_{\dff}$, $\psi_{\cf} A \psi_{\cf}$ via explicit constructions. 

\subsection{Construction of the three-cone operators near $\dff$}
\label{subsection construction of dff}
Let us first restrict our attention to a small neighborhood of $\dff$, i.e., we will construct the operator $\psi_{\dff} A \psi_{\dff}$, where $\psi_{\dff} \in \mathcal{C}^{\infty}(\Xd)$ is a cut-off function at $\dff$. In fact, consider a further (finite) partition $\{ \tilde{\psi}_{j} \}_{j=1}^{J} \subset \mathcal{C}^{\infty}(X)$ such that $\sum_{j=1}^{J} \tilde{\psi}_{j} = 1$ on $\supp \psi_{\dff}$. For each $j = 1, ..., J$, let also ${\psi}_{j} \in \mathcal{C}^{\infty}(X)$ be chosen such that ${\psi}_{j}$ identically $1$ on $\supp \tilde{\psi}_{j}$. Then we have
\begin{equation} \label{local decomposition near dff}
\psi_{\dff} A \psi_{\dff} = \sum_{j = 1}^{J} \psi_{\dff} {\psi}_{j} A \psi_{\dff} {\psi}_{j}  \tilde{\psi}_{j} + \sum_{j,k = 1}^{J} ( 1 - {\psi}_{j} ) \tilde{\psi}_{k}  \psi_{\dff} A \varphi_{j} \psi_{\dff}.
\end{equation}
Thus, in order to define $\psi_{\dff} A \psi_{\dff}$, one only needs to define each term in (\ref{local decomposition near dff}). \par

More generally, what we need is to define all operators of the forms $\psi A \psi$, $\phi A \psi$, where $\psi, \phi \in \mathcal{C}^{\infty} ( \Xd )$ are cut-off functions such that:
\begin{itemize}
\item $\supp \psi$, $\supp \phi$ are as small as needed (i.e., in such a way that the local coordinates used in the definitions of $\psi A \psi$, $\phi A \psi$ are valid),
\item $\supp \psi$, $\supp \phi$ are contained in a small neighborhood of $\dff$,
\item $\supp \psi \cap \supp \phi = \emptyset$.
\end{itemize} \par

Consider the operators $\psi A \psi$ with $\psi$ as above. It is only worth considering in details the cases where $\supp \psi $ intersects $\mathrm{cf}_{\tindex} \cap \dff$, e.g., $\psi$ is a cut-off function at a point of $\cf \cap \dff$. Otherwise, $\supp \psi$ must intersect the boundary of $\Xd$ only at $\dff^{\circ}$ (which we remark is also the same as $\mathrm{ff}_{\tindex}^{\circ}$), where $A$ will be required to have a three-body structure, i.e., we will define
\begin{equation}
\label{the interior of a second-microlocalised operator that is three-body scattering}
\psi A \psi  \coloneq B_{\psi,\mathrm{3sc}}
\end{equation}
for some $B_{\psi,\mathrm{3sc}} \in \Psi^{m,-\infty, l }_{\mathrm{3scc}}( [ \overline{\mathbb{R}^{n}} ; \mathcal{C}_{\tindex} ] )$. 

 \begin{remark}
Clearly, one can also combine (\ref{the interior of a second-microlocalised operator that is three-body scattering}) with (\ref{three-cone scattering part of the definition}) to see that if $\psi \in \mathcal{C}^{\infty}(X)$ is supported away from $\cf$, then we must have $\psi A \psi \in \Psi_{\mathrm{3scc}}^{m,r,l}( [ \overline{\mathbb{R}^{n}} ; \mathcal{C}_{\tindex} ] )$. 
 \end{remark}

Thus, suppose that $\supp \psi$ indeed intersects $\mathrm{cf}_{\tindex} \cap \dff$. Then due to the non-local nature of the three-cone operators as mentioned before, the operator $\psi A \psi$ will only partially be determined by a quantization. \par

Consider first the `near-diagonal' part (see Remark \ref{stretched diagonal remark} below) of $\psi A \psi$, which will be given by a quantization. Then by working spatially in the coordinates $( z_{\tindex}, x^{\tindex}, y^{\tindex} )$ such that $\dff$ is given by $|z_{\tindex}| \rightarrow \infty$, this quantization is initially given by
\begingroup
\begin{align} 
\begin{split} \label{on-diagonal part of the second microlocalized operator}
\widetilde{B}_{\psi, \mathrm{3co,b}}  \coloneq {} & \frac{1}{(2\pi)^{n}} \int_{ \mathbb{R}^{n} }  e^{ i (z_{\tindex} - z_{\tindex}') \cdot \tscblz + i \frac{ (x^{\tindex} - ( x^{\tindex} )') }{x^{\tindex}} \tscbut + i ( y^{\tindex} - (y^{\tindex})' ) \cdot \tscbum }  \tilde{\varphi} \Big( \frac{ x^{\tindex} - (x^{\tindex})'  }{x^{\tindex}} \Big) \varphi( |y^{\tindex} - ( y^{\tindex} )'| ) \\
& \times  \tilde{b}_{\psi, \mathrm{3co,b} } ( z_{\tindex}, x^{\tindex}, y^{\tindex}, \tscblz , \tscbut , \tscbum  ) d\tscblz d\tscbut d\tscbum \Big| dz_{\tindex}' \frac{d(x^{\tindex})'  }{ ( x^{\tindex} )' } d(y^{\tindex})'  \Big|,
\end{split}
\end{align}
\endgroup
where $\tilde{b}_{\psi, \mathrm{3co,b}} \in S^{m, -\infty ,r,l}(  \overline{^{\mathrm{3co}}T^{\ast}}\Xd )$, $\varphi \in \mathcal{C}^{\infty}_{c}( \mathbb{R} )$ is supported in a small neighborhood of $0$, and $\tilde{\varphi}(s) \coloneq \varphi( 1 - e^{s} )$. However, for $x^{\tindex} = e^{-t^{\tindex}}$, one could also switch to the coordinates $( z_{\tindex}, t^{\tindex}, y^{\tindex} )$, and consider equivalently the standard quantization
\begingroup
\begin{align} 
\label{the 3scb quantization written in terms of t}
\begin{split}
B_{\psi, \mathrm{3co,b}} \coloneq {} & \frac{1}{(2\pi)^{n}} \int_{ \mathbb{R}^{n} }  e^{ i (z_{\tindex} - z_{\tindex}') \cdot \tscblz - i ( t^{\tindex} - (t^{\tindex})' ) \tscbut + i ( y^{\tindex} - (y^{\tindex})' ) \cdot \tscbum } \varphi(t-t') \varphi( |y^{\tindex} - ( y^{\tindex} )'| ) \\
& \times  b_{\psi, \mathrm{3co,b} } ( z_{\tindex}, t^{\tindex}, y^{\tindex}, \tscblz , \tscbut , \tscbum  )  d\zeta_{\tindex}^{\mathrm{3sc,b}} d\tau^{\tindex}_{\mathrm{3sc,b}} d\mu^{\tindex}_{\mathrm{3sc,b}} | dz_{\tindex}' d(t^{\tindex})' d(y^{\tindex})'  |,
\end{split}
\end{align}
\endgroup
where $b_{\psi, \mathrm{3co,b}}$ is the representation of $\tilde{b}_{\psi, \mathrm{3co,b}}$ in the new coordinates. \par

 On the other hand, we will also need to consider an `off-diagonal' part of $\psi A \psi $, which is defined by a smooth kernel $R_{\psi, \mathrm{3co,b}}$ satisfying the estimates
\begin{align} 
\label{3scb part near diagonal off diagonal part}
\begin{split}
& |  \partial_{z_{\tindex}}^{\beta_{\tindex}}  \partial_{z_{\tindex}'}^{\beta_{\tindex}'}  \partial_{t^{\tindex}}^{j} \partial_{y^{\tindex}}^{\gamma_{\tindex}} \partial_{ (t^{\tindex})' }^{j'}  \partial_{ (y^{\tindex})' }^{\gamma_{\tindex}'} R_{\psi, \mathrm{3co,b}} | \\
& \qquad  \leq C_{\beta_{\tindex} \beta_{\tindex}' j \gamma_{\tindex} j' \gamma_{\tindex}'NML} x_{\dff}^{-l} x_{\cf}^{-b}  \langle z_{\tindex} \rangle^{-|\beta_{\tindex}|} \langle z_{\tindex}' \rangle^{-|\beta_{\tindex}'|} \langle z_{\tindex} - z_{\tindex}' \rangle^{-N} \langle y^{\tindex} - (y^{\tindex})' \rangle^{-L} e^{-M| t^{\tindex} - (t^{\tindex})' |}
\end{split}
\end{align}
for all $N, M, L \in \mathbb{R}$. Here $x_{\dff}, x_{\cf} \in \mathcal{C}^{\infty}(\Xd)$ are some boundary defining functions for $\dff$ and $\cf$ respectively. We also let $x_{\dff}'$, $x_{\cf}'$ denote the representations of $x_{\dff}$, $x_{\cf}$ in the right variables. Thus, $R_{\psi, \mathrm{3co,b}}$ is residual only in the regularity sense, with an additional, super-exponential decay as $|t^{\tindex} - (t^{\tindex})'| \rightarrow \infty$. Notice that $x_{\cf} \simeq e^{-t^{\tindex}}$ on the support of $R_{\psi, \mathrm{3co,b}}$. Thus, the latter exponential decay condition is equivalent to saying that $x_{\cf}/x_{\cf}'$ vanishes to infinite orders at it tends to $0$ or $\infty$.
\par

Together, we then require that
\begin{equation} \label{on-diagonal part of the operator focusing on dff}
\psi A \psi \coloneq   B_{\psi, \mathrm{3co,b}} + R_{\psi, \mathrm{3co,b}}, 
\end{equation}
where $B_{\psi, \mathrm{3co,b}}$, $R_{\psi, \mathrm{3co,b}}$ are constructed as above.   \par 

\begin{remark}
\label{stretched diagonal remark}
In the above discussion, the notions of `near-diagonal' and `off-diagonal' are used only heuristically: the `diagonal' in question should really be understood as the lifted diagonal in some suitable stretch double space, in the sense of Melrose.
\end{remark}

We now move onto the specifications of $\phi A \psi$, where $\phi, \psi \in \mathcal{C}^{\infty}(\Xd)$ are as specified as in the beginning of this subsection. Then there are three cases to be considered (compare these with cases (1)--(3) in \S \ref{b-calculus subsection}):
\begin{enumerate}
    \item $ \supp \phi $ and $\supp \psi $ are both disjoint from $\mathrm{cf}_{\tindex}$,
    \item $\supp \phi $ is disjoint from $\mathrm{cf}_{\tindex}$ but $\supp \psi $ is not, or vice versa, 
    \item both $\supp \phi $ and $\supp \psi $ intersect $\mathrm{cf}_{\tindex}$, but are still disjoint.
\end{enumerate}
Let $K_{\phi, \psi, \mathrm{3co,b}}$ be the kernel of $\phi A \psi$ in each case. \par 
We shall require $K_{\phi, \psi, \mathrm{3co,b}}$ to always be smooth. In case (1), the coordinates $(z_{\tindex}, z^{\tindex}, z_{\tindex}', (z^{\tindex})')$ must be valid on $\supp K_{\phi, \psi, \mathrm{3co,b}}$, and we will require additionally that
\begin{equation} 
\label{3sc b kernel estimate 1}
| \partial_{z_{\tindex}}^{\beta_{\tindex}} \partial_{z_{\tindex}'}^{\beta_{\tindex}'}  \partial_{z^{\tindex}}^{\beta^{\tindex}} \partial_{(z^{\tindex})'}^{ (\beta^{\tindex})' } K_{\phi, \psi, \mathrm{3co,b}} | \leq C_{ \beta_{\tindex} \beta_{\tindex}' \beta^{\tindex} ( \beta^{\tindex} )' N } x_{\dff}^{-l} \langle z_{\tindex} \rangle^{-|\beta_{\tindex}|}  \langle z_{\tindex}' \rangle^{-|\beta_{\tindex}'|}  \langle z_{\tindex} - z_{\tindex}' \rangle^{-N}
\end{equation}
for all $N \in \mathbb{R}$. In case (2), suppose that $\supp \phi$ is disjoint from $\cf$ such that the coordinates $( z_{\tindex}, z^{\tindex} , z_{\tindex}, (t^{\tindex})', (y^{\tindex})' )$ are valid on $\supp K_{\phi, \psi, \mathrm{3co,b}}$. Then we will require that 
\begin{align} \label{3sc b kernel estimate 2}
\begin{split}
& |\partial_{z_{\tindex}}^{\beta_{\tindex}}  \partial_{z_{\tindex}'}^{\beta_{\tindex}'}   \partial_{ z^{\tindex} }^{\beta^{\tindex}} \partial_{(t^{\tindex})'}^{j'} \partial_{(y^{\tindex})'}^{(\beta^{\tindex})'}  K_{\phi, \psi, \mathrm{3co, b}} | \\
& \qquad \leq C_{\beta_{\tindex} \beta_{\tindex}' j \gamma^{\tindex} (\beta^{\tindex})' NM } x_{\dff}^{-l} ( x_{\cf}' )^{M} \langle z_{\tindex} \rangle^{-|\beta_{\tindex}|}  \langle z_{\tindex}' \rangle^{-|\beta_{\tindex}'|}  \langle z_{\tindex} - z_{\tindex}' \rangle^{-N} 
\end{split}
\end{align}
for all $N, M \in \mathbb{R}$. A similar (and indeed obvious) requirement must be met if instead $\supp \psi$ is disjoint from $\cf$. \par

 Finally, in case (3), the coordinates $( z_{\tindex}, t^{\tindex}, y^{\tindex}, z_{\tindex}', t^{\tindex}, (y^{\tindex})' )$ are valid on $\supp K_{\phi, \psi, \mathrm{3co,b}}$, and we will require that $K_{\phi, \psi, \mathrm{3co,b}}$ be vanishing to infinite orders super-exponentially as $|t^{\tindex} - (t^{\tindex})'| \rightarrow \infty$, or equivalently as $x_{\cf}/x_{\cf}'$ tends to $0$ or $\infty$, though there is no decay in the diagonal along the fibers of $\dff$ (since this does not make sense by disjointedness, i.e., the coordinates $y^{\tindex}$ amd $(y^{\tindex})'$ are unrelated). In particular, the estimates
\begin{align} 
\label{3sc b kernel estiamte 3}
\begin{split}
& | \partial_{z_{\tindex}}^{\beta_{\tindex}}  \partial_{z_{\tindex}'}^{\beta_{\tindex}'}  \partial_{t^{\tindex}}^{j} \partial_{y^{\tindex}}^{\gamma^{\tindex}} \partial_{(t^{\tindex})'}^{j'}  \partial_{ (y^{\tindex})' }^{(\gamma^{\tindex})'} K_{\phi, \psi, \mathrm{3co,b}} |\\
& \qquad \leq  C_{\beta_{\tindex} \beta_{\tindex}' j \gamma_{\tindex} j' \gamma_{\tindex}'NM} x_{\dff}^{-l} x_{\cf}^{-b} \langle z_{\tindex} \rangle^{-|\beta_{\tindex}|} \langle z_{\tindex}' \rangle^{-|\beta_{\tindex}'|}  \langle z_{\tindex} - z_{\tindex}' \rangle^{-N} e^{-M|t^{\tindex} - (t^{\tindex})'|}
\end{split}
\end{align} 
for all $N, M \in \mathbb{R}$.
\begin{remark}
\label{remark left and right variable decays are the same}
As in the conormal b- or cone cases, one could replace $x_{\cf}^{-b}$ by $(x_{\cf}')^{-b}$ (or vice versa) in estimates (\ref{3scb part near diagonal off diagonal part}) or (\ref{3sc b kernel estiamte 3}) due to the presence of infinite orders of decay as $x_{\cf}/x_{\cf}$ tends to $0$ or infinity. Moreover, in the present case, one could also replace $x_{\dff}^{-l}$ by $(x_{\dff}')^{-l}$ (or vice versa) in estimates (\ref{3scb part near diagonal off diagonal part}) and (\ref{3sc b kernel estimate 1})--(\ref{3sc b kernel estiamte 3}) as well. To see this, simply observe that $x_{\dff}^{-l} \simeq |z_{\tindex}|^{l} x_{\cf}^{2l}$. Then either $x_{\cf}^{2l} \simeq 1$ (if $K_{\phi, \psi, \mathrm{3co,b}}$ decays to infinite order at $\cf$ in the left variables) or $x_{\cf}^{2l} \simeq (x_{\cf}')^{2l}$ (due to the argument at the beginning of this remark). As for the $\langle z_{\tindex} \rangle^{l}$ term, we can use Peetre's inequality $\langle z_{\tindex} - z_{\tindex}' \rangle^{-N} \leq C_{N} ( \langle z_{\tindex}' \rangle / \langle z_{\tindex} \rangle )^{-N}$ and the fact $\langle z_{\tindex} \rangle^{l} = \langle z_{\tindex}' \rangle^{l} ( \langle z_{\tindex} \rangle / \langle z_{\tindex}' \rangle )^{l}$ to see that $\langle z_{\tindex} \rangle^{l}$ can be replaced by $\langle z_{\tindex}' \rangle^{l}$, and vice versa.
\end{remark}
\begin{remark}
In particular, since $\langle z_{\tindex} \rangle^{-1} \simeq x_{\dff} x_{\cf}^{2}$, estimates (\ref{3scb part near diagonal off diagonal part}) and (\ref{3sc b kernel estimate 1})--(\ref{3sc b kernel estiamte 3}) contain the following information: the kernels $R_{\psi, \mathrm{3co,b}}$ and $K_{\phi, \psi, \mathrm{3co,b}}$ enjoy a gain or order $1$, resp. $2$ at $\dff$, resp. $\cf$ upon differentiating in the $z_{\tindex}$ variable. The same statement is also true if the left variables are replaced with the right variables. In fact, due to Remark \ref{remark left and right variable decays are the same} above, decay in the left and right variables are interchangeable. 
\end{remark}
\subsection{Construction of the three-cone operators near $\cf$}
\label{subsection construction of cf}

Next, we will carry out a similar construction in a neighborhood of $\mathrm{cf}_{\tindex}$. In fact, to some extend the situation near $\mathrm{cf}_{\tindex} \cap \dff$ has already been considered in \S \ref{subsection construction of dff} above. Thus, our construction below must also satisfy appropriate compatibility conditions in this region. This will be investigated in \S \ref{compatibility of three-cone operators subsection} below. \par

As before, we will define all operators of the forms $\psi A \psi$, $\phi A \psi$, where $\psi, \phi \in \mathcal{C}^{\infty}(X)$ are cut-off functions such that:
\begin{itemize}
\item $\supp \psi$, $\supp \phi$ are as small as needed (i.e., in such a way that the local coordinates used in the definitions of $\psi A \psi$, $\phi A \psi$ are valid),
\item $\supp \psi$, $\supp \phi$ are contained in a small neighborhood of $\cf$,
\item $\supp \psi \cap \supp \phi = \emptyset$.
\end{itemize} \par

First, let us again consider the behavior of $A$ in a neighborhood of $\cf \cap \dff$. Therefore, we will proceed to define $\psi A \psi$, where $\supp \psi$ intersects $\cf \cap \dff$, e.g., when $\psi$ is a cut-off function at some point of $\cf \cap \dff$. Initially, it will be natural to work in the coordinates $( z_{\tindex}, \bbdf, y^{\tindex} )$ such that $\mathrm{cf}_{\tindex}$ is given by $|z_{\tindex}| \rightarrow \infty$. Then we will require that the `near-diagonal' part of the $\psi A \psi$ be given by the quantization
\begingroup
\begin{align} \label{complicated 3co,b quantization}
\begin{split}
\widetilde{B}_{\psi, \mathrm{3co,co,b}} \coloneq {} & \frac{1}{(2\pi)^{n}} \int_{ \mathbb{R}^{n} }  e^{ i (z_{\tindex} - z_{\tindex}') \cdot \tcoblz + i \frac{ \bbdf - \bbdf' }{ \bbdf } \tcobut + i ( y^{\tindex} - (y^{\tindex})' ) \cdot \tcobum }  \tilde{\varphi}\Big( \frac{ \bbdf - \bbdf '  }{ \bbdf } \Big) \varphi( |y^{\tindex} - ( y^{\tindex} )'| )  \\
& \times \tilde{b}_{\psi, \mathrm{3co,co,b} } ( z_{\tindex}, \bbdf, y^{\tindex}, \tcoblz , \tcobut, \tcobum  )  d \tcoblz d\tcobut d\tcobum \Big| dz_{\tindex}' \frac{ d \bbdf' }{ \bbdf' } d(y^{\tindex})'  \Big|,
\end{split}
\end{align}
\endgroup
where $\tilde{b}_{\psi, \mathrm{3co,co, b}} \in S^{m,-\infty, l , b}( \overline{^{\mathrm{3co}}T^{\ast}} \Xd )$, $\varphi \in \mathcal{C}^{\infty}_{c}( \mathbb{R} )$ is supported in a small neighborhood of $0$, and $\tilde{\varphi}(s) \coloneq \varphi( 1 - e^{s} )$. However, for $\bbdf = e^{- \hat{t}_{\tindex} }$, one could make a further coordinates change to $( z_{\tindex}, \hat{t}_{\tindex},  y^{\tindex})$. Thus equivalently, and equivalently consider the standard quantization
\begingroup
\begin{align} 
\label{the 3cob quantization written in terms of t}
\begin{split}
B_{\psi, \mathrm{3co,co, b}} \coloneq {} & \frac{1}{(2\pi)^{n}} \int_{ \mathbb{R}^{n} }  e^{ i (z_{\tindex} - z_{\tindex}') \cdot \tcoblz - i ( \hat{t}_{\tindex} - \hat{t}_{\tindex}' ) \tcobut + i ( y^{\tindex} - (y^{\tindex})' ) \cdot \tcobum }  \varphi( \hat{t}_{\tindex} - \hat{t}_{\tindex}' ) \varphi( |y^{\tindex} - ( y^{\tindex} )'| )  \\
& \times b_{\psi, \mathrm{3co, co, b} } ( z_{\tindex}, \hat{t}_{\tindex}, y^{\tindex}, \tcoblz , \tcobut , \tcobum  )   d\tcoblz d\tcobut d\tcobum  |dz_{\tindex}' d\hat{t}_{\tindex}' d(y^{\tindex})' |,
\end{split}
\end{align}
\endgroup
where $b_{\psi, \mathrm{3co,co,b}}$ is the representation of $\tilde{b}_{\psi, \mathrm{3co,co,b}}$ in the new coordinates. \par

On the other hand, we will require that the `off-diagonal' part of $\psi A \psi$ be a smooth kernel $R_{\psi, \mathrm{3co,co,b}}$ which satisfies the decay estimates
\begin{align} \label{estimate on R 3b b}
\begin{split}
& | \partial_{z_{\tindex}}^{\beta_{\tindex}} \partial_{z_{\tindex}'}^{\beta_{\tindex}'} \partial_{\hat{t}_{\tindex} }^{j} \partial_{\hat{t}_{\tindex}'}^{j'} \partial_{y^{\tindex}}^{\gamma_{\tindex}} \partial_{ (y^{\tindex})' }^{\gamma_{\tindex}'} R_{\psi, \mathrm{3co,co, b}} |   \\
& \qquad \leq C_{\beta_{\tindex} \beta_{\tindex}' j j' \gamma_{\tindex} \gamma_{\tindex}'NML}  x_{\cf}^{-b} x_{\dff}^{-l}  \langle z_{\tindex} \rangle^{-|\beta_{\tindex}|}  \langle z_{\tindex}' \rangle^{-|\beta_{\tindex}'|}  \langle z_{\tindex} - z_{\tindex}' \rangle^{-N}  \langle y^{\tindex} - ( y^{\tindex} )' \rangle^{-L} e^{-M| \hat{t}_{\tindex} - \hat{t}_{\tindex}' |}.
\end{split}
\end{align}
for all $N, M, L \in \mathbb{R}$. The infinite orders of super-exponential decay as $|\hat{t}_{\tindex} - \hat{t}_{\tindex}| \rightarrow \infty$ can again be replaced by the infinite orders of polynomial decay as $x_{\dff}/x_{\dff}' \rightarrow 0$ or infinity. \par

Together, we then require that
\begin{equation}
\label{on-diagonal part focuing on cf}
\psi A \psi \coloneq B_{\psi, \mathrm{3co,co,b}} +  R_{\psi, \mathrm{3co,co, b}},
\end{equation} 
where $B_{\psi, \mathrm{3co, co, b}}$, $R_{\psi, \mathrm{3co,co, b}}$ are constructed as above. \par 

\begin{remark}
The content of Remark \ref{stretched diagonal remark} applies to the above discussion as well.
\end{remark}

Next, we will consider the behavior of $A$ in a neighborhood of $\mathrm{cf}_{\tindex} \cap \dmf$, i.e., the definition of $\psi A \psi$ where $\supp \psi$ intersects $\cf \cap \dmf$. In fact, in this case it would be enough if $\psi$ is supported away from $\cf \cap \dff$. It would be convenient to first switch to the spatial coordinates $(z_{\tindex}, \hat{x}^{\tindex}, y^{\tindex})$, which we recall can then be further changed into the variables $( z_{\tindex}, \hat{z}^{\tindex} )$, where the relationships are determined by (\ref{fiber of cf coordinates}). Then we will define
\begin{equation}
\label{3cosc quantization -1}
\psi A \psi \coloneq B_{\psi, \mathrm{3co, co, sc}},
\end{equation}
where $B_{\psi, \mathrm{3co,co, sc}}$ is determined by the quantization
\begin{equation} 
\label{3cosc quantization}
\frac{1}{(2\pi)^{n}} \int_{\mathbb{R}^{n}} e^{ i( z_{\tindex} - z_{\tindex}' ) \cdot \tcosclz + i ( 
\hat{z}^{\tindex} - (\hat{z}^{\tindex})' ) \cdot \tcoscuz }   b_{\psi, \mathrm{3co,co, sc}}( z_{\tindex}, \hat{z}^{\tindex}, \tcosclz , \tcoscuz ) d \tcosclz d \tcoscuz | dz_{\tindex}' d ( \hat{z}^{\tindex} )' |
\end{equation}
for some $b_{\psi, \mathrm{3co, co, sc}} \in S^{m,r,-\infty, b}( \overline{ ^{\tc}T^{\ast} } \Xd  )$. \par 
Finally, we will need to consider the cross terms. Let $\phi, \psi \in \mathcal{C}^{\infty}(\Xd)$ be specified as in the beginning of this subsection. Then under the most thorough classifications, there are now six cases to be considered (compare these with cases (1)--(6) in \S \ref{subsection the cone calculus}):
\begin{enumerate}
    \item $ \supp \phi $ and $ \supp \psi $ are both disjoint from $\dff \cup \dmf$,
    \item $\supp \phi$ intersects $\dff$ and $\supp \psi$ intersects $\dmf$, or vice versa,
    \item $\supp \phi$ is disjoint from $ \dff \cup \dmf$ but $\supp \psi$ intersects $\dff$, or vice versa,
    \item $\supp \phi$ is disjoint from $ \dff \cup \dmf $ but $\supp \psi$ intersects $\dmf$, or vice versa,
    \item both $\supp \phi$ and $\supp \psi$ intersect $\dmf$, but are still disjoint,
    \item both $\supp \phi$ and $\supp \psi$ intersect $\dff$, but are still disjoint.
\end{enumerate}
Let $K_{\phi, \psi, \mathrm{3co,co}}$ be the kernel of $\phi A \psi$ in each case. \par 
As always, we shall first require $K_{\phi, \psi, \mathrm{3co,co}}$ to always be smooth. In case (1), the coordinates $(z_{\tindex}, \hat{z}^{\tindex}, z_{\tindex}',  (\hat{z}^{\tindex})' )$ are valid on $\supp K_{\phi, \psi, \mathrm{3co,co}}$, and we will require that
\begin{equation} \label{3con cross term estimaets 1}
| \partial_{z_{\tindex}}^{\beta_{\tindex}}  \partial_{z_{\tindex}'}^{\beta_{\tindex}'}  \partial_{ \hat{z}^{\tindex}}^{\beta^{\tindex}} \partial_{( \hat{z}^{\tindex})'}^{ (\beta^{\tindex})' } K_{\phi, \psi, \mathrm{3co,co}} | \leq C_{ \beta_{\tindex} \beta_{\tindex}' \beta^{\tindex} ( \beta^{\tindex} )' N }  x_{\cf}^{-b} \langle z_{\tindex} \rangle^{-|\beta_{\tindex}|} \langle z_{\tindex}' \rangle^{-|\beta_{\tindex}'|}  \langle z_{\tindex} - z_{\tindex}' \rangle^{-N}
\end{equation}
for all $N \in \mathbb{R}$. In case (2), suppose for definiteness that $\supp \phi$ intersects $\dff$ and $\supp \psi$ intersects $\dmf$. Then the coordinates $( z_{\tindex}, \hat{t}_{\tindex}, y^{\tindex}, z_{\tindex}', (\hat{z}^{\tindex})' )$ are valid on $\supp K_{\phi, \psi, \mathrm{3co,co}}$, and we will require that
\begin{align} \label{3con cross term estimates 2}
\begin{split}
& |  \partial_{z_{\tindex}}^{\beta_{\tindex}}  \partial_{z_{\tindex}'}^{ \beta_{\tindex}' } \partial_{\hat{t}_{\tindex}}^{j} \partial_{y^{\tindex}}^{\gamma^{\tindex}} \partial_{ (\hat{z}^{\tindex})' }^{ (\beta^{\tindex})' } K_{\phi, \psi, \mathrm{3co,co}} | \\
& \qquad \leq C_{\beta_{\tindex} \beta_{\tindex}' j \gamma^{\tindex} ( \beta^{\tindex} )'} x_{\cf}^{-b} x_{\dff}^{M} (x_{\dmf}')^{L} \langle z_{\tindex} \rangle^{-|\beta_{\tindex}|}  \langle z_{\tindex}' \rangle^{-|\beta_{\tindex}'|}  \langle z_{\tindex} - z_{\tindex}' \rangle^{-N}\rangle^{-L}
\end{split}
\end{align}
for all $N, M, L \in \mathbb{R}$. Here, $x_{\dmf} \in \mathcal{C}^{\infty}(\Xd)$ is a defining function for $\dmf$, and we will also let $x_{\dmf}'$ denote the representation of $x_{\dmf}$ in the right variables. Notice that $x_{\dff} \simeq e^{-\hat{t}_{\tindex}}$, $x_{\dmf}' \simeq \langle \hat{z}^{\tindex} \rangle^{-1}$ on $\supp K_{\phi, \psi, \mathrm{3co,co}}$ as well. \par

 In case (3), and assume for definiteness that $\supp \psi$ intersects $\dff$. Then the required estimates for $K_{\phi, \psi, \mathrm{3co,co}}$ are exactly the same as (\ref{3con cross term estimates 2}), except that we now omit the term $(x_{\dmf}')^{L}$ and replace $x_{\dff}^{M}$ with $(x_{\dff}')^{M}$. In case (4), and assuming that $\supp \psi$ intersects $\dmf$, then we can work in the coordinates $( z_{\tindex}, \hat{z}^{\tindex}, z_{\tindex}', ( \hat{z}^{\tindex} )' )$ again, and require that
\begin{equation} \label{3con cross term estimates 4}
| \partial_{z_{\tindex}}^{\beta_{\tindex}}  \partial_{z_{\tindex}'}^{\beta_{\tindex}'}  \partial_{ \hat{z}^{\tindex}}^{\beta^{\tindex}} \partial_{( \hat{z}^{\tindex})'}^{ (\beta^{\tindex})' } K_{\phi, \psi, \mathrm{3co,co}} | \leq C_{ \beta_{\tindex} \beta_{\tindex}' \beta^{\tindex} ( \beta^{\tindex} )' N } x_{\cf}^{-b} (x_{\dmf}')^{M} \langle z_{\tindex} \rangle^{-|\beta_{\tindex}|} \langle z_{\tindex}' \rangle^{-|\beta_{\tindex}'|}  \langle z_{\tindex} - z_{\tindex}' \rangle^{-N} 
\end{equation}
for all $N, M \in \mathbb{R}$. In case (5), and working again in coordinates $( z_{\tindex}, \hat{z}^{\tindex}, z_{\tindex}', ( \hat{z}^{\tindex} )' )$, we will impose the even stronger conditions that
\begin{align} \label{3con cross term estimates 5}
\begin{split}
& | \partial_{z_{\tindex}}^{\beta_{\tindex}}  \partial_{z_{\tindex}'}^{\beta_{\tindex}'}  \partial_{ \hat{z}^{\tindex}}^{\beta^{\tindex}} \partial_{( \hat{z}^{\tindex})'}^{ (\beta^{\tindex})' } K_{\phi, \psi, \mathrm{3co,co}} | \\
& \qquad \leq C_{ \beta_{\tindex} \beta_{\tindex}' \beta^{\tindex} ( \beta^{\tindex} )' N } x_{\cf}^{-b} x_{\dmf}^{M}   (x_{\dmf}')^{M'} \langle z_{\tindex} \rangle^{-|\beta_{\tindex}|} \langle z_{\tindex}' \rangle^{-|\beta_{\tindex}'|}  \langle z_{\tindex} - z_{\tindex}' \rangle^{-N}
\end{split}
\end{align}
for all $N, M, M' \in \mathbb{R}$. Finally, in case $(6)$, the coordinates $( z_{\tindex}, \hat{t}_{\tindex}, y^{\tindex}, z_{\tindex}', \hat{t}_{\tindex}', (y^{\tindex})' )$ are valid, and we will require as in (\ref{3sc b kernel estiamte 3}) that 
\begin{align}  
\label{3con cross term estimates 6}
\begin{split}
& | \partial_{z_{\tindex}}^{\beta_{\tindex}}  \partial_{z_{\tindex}'}^{\beta_{\tindex}'}  \partial_{ \hat{t}_{\tindex} }^{j} \partial_{y^{\tindex}}^{\beta^{\tindex}} \partial_{ \hat{t}_{\tindex}' }^{j'}  \partial_{ (y^{\tindex})' }^{(\beta^{\tindex})'} K_{\phi, \psi, \mathrm{3co,co}} | \\
& \qquad \leq  C_{\beta_{\tindex} \beta_{\tindex}' j \beta^{\tindex} j' (\beta^{\tindex})'NM} x_{\cf}^{-b} x_{\dff}^{-l} \langle z_{\tindex} \rangle^{-|\beta_{\tindex}|} \langle z_{\tindex}' \rangle^{-|\beta_{\tindex}'|}  \langle z_{\tindex} - z_{\tindex}' \rangle^{-N} e^{-M| \hat{t}_{\tindex} - \hat{t}_{\tindex}' |}.
\end{split}
\end{align} 
for all $N, M \in \mathbb{R}$. We remark again that the infinite orders of super-exponential decay as $| \hat{t}_{\tindex} - \hat{t}_{\tindex} | \rightarrow \infty$ in (\ref{3con cross term estimates 6}) is equivalent to the infinite orders of polynomial decay of $x_{\dff}/x_{\dff}'$ as it tends to $0$ or infinity.

\subsection{Compatibility of the constructions} 
\label{compatibility of three-cone operators subsection}
We now discuss the compatibility properties of our constructions above. This is not required to show that (\ref{conormal three-cone operators OG}) is well-defined, but will be necessary if one wishes to construct specific elements of $\Psi_{\mathrm{3coc}}^{m,r,l,b}(\Xd)$ starting from local pieces. It is also important for the considerations of principal symbol and indicial operators, which will not be of interest until \S \ref{Section: Microlocal properties of the second microlocalized algebra} below. \par

What is worth showing is that our definitions are compatible with each others near the corners $\cf \cap \dmf$ and $\cf \cap \dff$. However, this is automatic near $\cf \cap \dmf$, i.e., the terms $\psi_{0} A \psi_0$, $\psi A \psi$, when $\psi \in \mathcal{C}^{\infty}(\Xd)$ is supported near $\cf \cap \dmf$, are naturally compatible in the usual sense (i.e., by the Kuranishi trick, so that their symbols are the same on the intersection between $\psi$ and $\psi_0$, modulo a term of lower order at both $\overline{^{\mathrm{3co}}T^{\ast}}_{\dmf} \Xd$ and fiber infinity), since they are both defined by quantizations.
\par

It remains to check compatibility at $\cf \cap \dff$. To this end, we will need to check compatibility between the terms in the formulae (\ref{on-diagonal part of the operator focusing on dff}) and (\ref{on-diagonal part focuing on cf}) for a fixed $\psi$, which is a cut-off function at some point of $\dff \cap \cf$, as well as the remainder terms $K_{\phi, \psi}$, where $\phi, \psi$ are supported in regions where compatibility is of relevance (i.e., case (3) in the discussion of \S \ref{subsection construction of dff}, against case (6) in the discussion of \S \ref{subsection construction of cf}) . \par

Overall, compatibility near $\cf \cap \dff$ will follow from a simple change of variables between $( z_{\tindex}, x^{\tindex}, y^{\tindex} )$ and $( z_{\tindex}, \bbdf, y^{\tindex} )$ (or equivalently between $ ( z_{\tindex}, t^{\tindex}, y^{\tindex} ) $ and $( z_{\tindex}, \hat{t}_{\tindex}, y^{\tindex} )$), which is determined by the relation
\begin{equation*}
\bbdf = \frac{1}{|z_{\tindex}| (x^{\tindex})^2 }.
\end{equation*}
It is standard to look at the near- and off-diagonal behaviors separately (where the `diagonal' in this context is the lift of the usual diagonal to some corner resolution of $X^2$). \par

Note again that compatibility between the terms $B_{\mathrm{3co,b}}$ and $B_{\mathrm{3co,co, b}}$, which are defined by quantizations, follows from the Kuranishi trick. Thus, it suffices to restrict our attention to the question of compatibility between $R_{\psi, \mathrm{3co,b}}$ and $R_{\psi, \mathrm{3co,co,b}}$.  \par

For this, suppose that $\varphi \in \mathcal{C}_{c}^{\infty}(\mathbb{R})$ is cut-off function at $1$, and write
\begin{equation} \label{definition of compatibility cut-off functions}
\varphi_{1} \coloneq \varphi\Big( \frac{|z_{\tindex}|}{|z_{\tindex}'|} \Big), \quad \varphi_{2} \coloneq ( 1 - \varphi ) \Big( \frac{|z_{\tindex}|}{|z_{\tindex}'|} \Big).
\end{equation} 
Then it is easy to verify that the defining estimates for $R_{\psi, \mathrm{3co,b}}$ and $R_{\psi, \mathrm{3co,co,b}}$ must agree on $\supp \varphi_1$. Indeed, one only needs to observe that, for instance
\begin{equation*}
\frac{\bbdf}{\bbdf'} = \frac{|z_{\tindex}|}{|z_{\tindex}'|} \frac{x^{\tindex}}{(x^{\tindex})'} \simeq \frac{x^{\tindex}}{(x^{\tindex})'}
\end{equation*}
on $\supp \varphi_1$. On the other hand, it is obvious that $R_{\mathrm{3co,b}}$, $R_{\mathrm{3co,co,b}}$ are rapidly decreasing on $\supp \varphi_{2}$. Thus, we can conclude that (\ref{on-diagonal part of the operator focusing on dff}) and (\ref{on-diagonal part focuing on cf}) are indeed compatible as anticipated. \par

In fact, the same argument applies verbatim for the relevant terms $K_{\phi, \psi, \mathrm{3co,b}}$, $K_{\phi, \psi, \mathrm{3co,co}}$. This concludes our discussion on compatibility.

\subsection{Characterization near $\dff$ through partial quantization}
\label{subsection partial quantization near dff}
In this subsection, we will show that if $\psi_{\dff} \in \mathcal{C}^{\infty}( \Xd )$ is a cut-off function at $\dff$, then for any $A \in \Psi^{m,r, l, b}_{\mathrm{3coc}}(X)$, $\psi_{\dff} A \psi_{\dff}$ can be characterized as the partial quantization in the free variables of some `operator-valued symbol', which is more precisely an element of $S^{m}( \overline{\mathbb{R}^{n_{\tindex}}} ; \Psi_{\mathrm{bc,lp}}^{m, b - 2l}(X^{\tindex}; {\mathbb{R}^{n_{\tindex}}} ))$. See Proposition \ref{membership of operator valued symbol near dff} below for the exact statements. Such a characterization will be useful when we discuss for instance composition in \S \ref{subsection composition and adjoint}. \par 

At first we will proceed slightly more generally. Thus, let $\phi, \psi \in \mathcal{C}^{\infty}( \Xd )$ be cut-off functions defined as in the beginning of \S \ref{subsection construction of dff}. Then depending on the locations of their supports, we will realize all operators of the forms 
\begin{equation}
\label{partial quantization subsection operators of interest}
\text{$B_{\psi, \mathrm{3sc}}$, $B_{\psi, \mathrm{3co,b}}$, $R_{\psi, \mathrm{3co,b}}$, $K_{\phi, \psi, \mathrm{3co,b}}$}
\end{equation}
as kernels on $\overline{\mathbb{R}^{n_{\tindex}}} \times X^{\tindex}$, which we view as sections of the bundle spanned by $| dz_{\tindex}' | \nu_{\mathrm{b}}' $. Here $\nu_{\mathrm{b}}'$ is a strictly positive b-density which is written in the right variables, and thus we have $\nu_{\mathrm{b}}' \simeq |d(t^{\tindex})' d(y^{\tindex})'|$ near $\partial X^{\tindex}$. 
\par

\begingroup
\allowdisplaybreaks
Now, for $Z_{\tindex} \coloneq z_{\tindex} - z_{\tindex}'$ (and hence $|dz_{\tindex}'| \nu_{\mathrm{b}}' = |dZ_{\tindex}| \nu_{\mathrm{b}}'$), suppose we define
\begin{align}
\hat{B}_{\psi, \mathrm{3sc}}  & \coloneq  \int_{\mathbb{R}^{n_{\tindex}}} e^{-iZ_{\tindex} \cdot \zeta_{\tindex}}  B_{\psi, \mathrm{3sc}} ( z_{\tindex}, z_{\tindex} - Z_{\tindex} ) dZ_{\tindex}  \nu_{\mathrm{b}}', \label{quantization term is a partial quantization B3sc}  \\
\hat{B}_{\psi, \mathrm{3co,b}} & \coloneq  \int_{\mathbb{R}^{n_{\tindex}}} e^{-iZ_{\tindex} \cdot \zeta_{\tindex}}  B_{\psi , \mathrm{3co,b}} ( z_{\tindex}, z_{\tindex} - Z_{\tindex} ) dZ_{\tindex} \nu_{\mathrm{b}}', \label{quantization term is a partial quantization B3cob} \\
\hat{R}_{\psi, \mathrm{3co,b}} & \coloneq \int_{\mathbb{R}^{n_{\tindex}}} e^{-iZ_{\tindex} \cdot \zeta_{\tindex}}  R_{\psi, \mathrm{3co,b}} ( z_{\tindex}, z_{\tindex} - Z_{\tindex} ) dZ_{\tindex}  \nu_{\mathrm{b}}', \label{remainder term is a partial quantization R3cob} \\
\hat{K}_{\phi, \psi, \mathrm{3co,b}} & \coloneq\int_{\mathbb{R}^{n_{\tindex}}} e^{-iZ_{\tindex} \cdot \zeta_{\tindex}}  K_{\phi, \psi, \mathrm{3co,b}}  ( z_{\tindex}, z_{\tindex} - Z_{\tindex} ) dZ_{\tindex} \nu_{\mathrm{b}}' \label{remainder term is a partial quantization K3cob}
\end{align}
as parametrized families of operators acting on $X^{\tindex}$, where in every case the space of parameters is ${\mathbb{R}^{n_{\tindex}}_{z_{\tindex}}} \times {\mathbb{R}^{n_{\tindex}}_{\zeta_{\tindex}}}$. Then it is immediate that we have
\begin{align}
B_{\psi, \mathrm{3sc}} & = \frac{1}{( 2 \pi )^{n_{\tindex}} } \int_{\mathbb{R}^{n_{\tindex}}} e^{ i ( z_{\tindex} - z_{\tindex}' ) \cdot \zeta_{\tindex} } \hat{B}_{\psi, \mathrm{3sc}} ( z_{\tindex}, \zeta_{\tindex} ) d \zeta_{\tindex} | d z_{\tindex}' |, \label{B 3sc as partial quantization} \\
B_{\psi, \mathrm{3co,b}} & = \frac{1}{( 2 \pi )^{n_{\tindex}} } \int_{\mathbb{R}^{n_{\tindex}}} e^{ i ( z_{\tindex} - z_{\tindex}' ) \cdot \zeta_{\tindex} } \hat{B}_{\psi,\mathrm{3co,b}} ( z_{\tindex}, \zeta_{\tindex} ) d \zeta_{\tindex} | d z_{\tindex}' |, \label{B 3co b as partial quantization} \\
R_{\psi, \mathrm{3co,b}} & = \frac{1}{( 2 \pi )^{n_{\tindex}} } \int_{\mathbb{R}^{n_{\tindex}}} e^{ i ( z_{\tindex} - z_{\tindex}' ) \cdot \zeta_{\tindex} } \hat{R}_{\psi, \mathrm{3co,b}} ( z_{\tindex}, \zeta_{\tindex} ) d \zeta_{\tindex} | d z_{\tindex}' |, \label{R 3co b as partial quantization} \\
{K}_{\phi, \psi, \mathrm{3co,b}} & = \frac{1}{( 2 \pi )^{n_{\tindex}} } \int_{\mathbb{R}^{n_{\tindex}}} e^{ i ( z_{\tindex} - z_{\tindex}' ) \cdot \zeta_{\tindex} } \hat{K}_{\phi, \psi, \mathrm{3co,b}} ( z_{\tindex}, \zeta_{\tindex} ) d \zeta_{\tindex} | d z_{\tindex}' |. \label{K 3co b as partial quantization}
\end{align}
In other words, the operators in (\ref{partial quantization subsection operators of interest}) can respectively be written as partial quantizations of (\ref{quantization term is a partial quantization B3sc})--(\ref{remainder term is a partial quantization K3cob}) in the free variables. Here, the dependencies of (\ref{partial quantization subsection operators of interest}) on the interaction variables (i.e., the variable which belong to $(X^{\tindex})^2$) have been suppressed in (\ref{quantization term is a partial quantization B3sc})--(\ref{remainder term is a partial quantization K3cob}). 
\endgroup

The operators (\ref{quantization term is a partial quantization B3sc})--(\ref{remainder term is a partial quantization K3cob}) can also be characterized more explicitly. Indeed, it is easy to see from the definition of the three-body operators that (\ref{quantization term is a partial quantization B3sc}) can be written as
\begin{equation} \label{definition of hat psi 3sc}
\hat{B}_{\psi, \mathrm{3sc}} = \frac{1}{(2\pi)^{n^{\tindex}}} \int_{\mathbb{R}^{n^{\tindex}}} e^{ i( z^{\tindex} - (z^{\tindex})' ) } b_{\psi, \mathrm{3sc}}( z_{\tindex}, z^{\tindex}, \zeta_{\tindex}, \zeta^{\tindex} ) d\zeta^{\tindex} |d(z^{\tindex})'|
\end{equation}
for some $b_{\psi, \mathrm{3sc}} \in S^{m, -\infty, l}( \overline{ ^{\mathrm{3sc}}T^{\ast} } [ \overline{\mathbb{R}^{n_{\tindex}}} ; \mathcal{C}_{\tindex} ] )$. Likewise, we can write (\ref{quantization term is a partial quantization B3cob}) using (\ref{the 3scb quantization written in terms of t}) as
\begin{align}
\begin{split} \label{definition of Bb}
\hat{B}_{\psi, \mathrm{3co,b}} =  {} & \frac{1}{(2\pi)^{n^{\tindex}}} \int_{\mathbb{R}^{n^{\tindex}}}  e^{ - i (t^{\tindex} - (t^{\tindex})') \tscbut + i ( y^{\tindex} - (y^{\tindex})'  ) \cdot \tscbum  }  \varphi(t-t') \varphi( |y^{\tindex} - ( y^{\tindex} )'| )  \\
& \times b_{\psi_{\dff} \psi^{\tindex}, \mathrm{3co,b} } ( z_{\tindex}, t^{\tindex}, y^{\tindex}, \tscblz , \tscbut , \tscbum  )  d \tscbut d\tscbum | d(t^{\tindex})' d(y^{\tindex})' |
\end{split}
\end{align} 
for some $b_{\psi, \mathrm{3co,b}} \in S^{m,-\infty, l, b}( \overline{ ^{\mathrm{3co}}T^{\ast} } X )$. 
\par

On the other hand, the integral (\ref{remainder term is a partial quantization R3cob}) converges absolutely, and in particular must decay to infinite order as $|\zeta_{\tindex}| \rightarrow \infty$, since $R_{\psi, \mathrm{3co,b}}$ also has this property in $|Z_{\tindex}|$. Thus, it is easy to see from (\ref{3scb part near diagonal off diagonal part}) that $\hat{R}_{\psi, \mathrm{3co,b}}$ will be smooth in all of the variables, and moreover must satisfy the estimates
\begin{align} 
\label{quote and quote symbolic estiamtes}
\begin{split}
& | \partial_{z_{\tindex}}^{\beta_{\tindex}} \partial_{\tscblz}^{\gamma_{\tindex}} \partial_{t^{\tindex}}^{j} \partial_{y^{\tindex}}^{\beta^{\tindex}} \partial_{(t^{\tindex})'}^{j'} \partial_{(y^{\tindex})'}^{ ( \beta^{\tindex} )' } \hat{R}_{\psi, \mathrm{3co,b}} | \\
& \qquad \leq C_{ \beta_{\tindex} \gamma_{\tindex} j \beta^{\tindex} j' (\beta^{\tindex})' } x_{\dff}^{-l} x_{\mathcal{C}^{\tindex}}^{-b} \langle z_{\tindex} \rangle^{-|\beta_{\tindex}|} \langle \tscblz \rangle^{-N} \langle y^{\tindex} - (y^{\tindex})' \rangle^{-L} e^{- M | t^{\tindex} - (t^{\tindex})' | }
\end{split}
\end{align}
for all $N,M \in \mathbb{R}$. Here $x_{\mathcal{C}^{\tindex}} \in \mathcal{C}^{\infty}( X^{\tindex} )$ is a boundary defining function, and this can be used to replace $x_{\cf}$ since $x_{\cf} \simeq x_{\mathcal{C}^{\tindex}}$ in a small neighborhood of $\dff$. We also let $x_{\mathcal{C}^{\tindex}}'$ denote the same function written in the right interaction variables. In particular, neither $x_{\mathcal{C}^{\tindex}}$ nor $x_{\mathcal{C}^{\tindex}}'$ depend on the free variables. \par

The above discussion for $\hat{R}_{\psi, \mathrm{3co,b}}$ extends essentially verbatim to the studies of $\hat{K}_{\phi, \psi, \mathrm{3co,b}}$. It is therefore easy to see that each $\hat{K}_{\phi, \psi, \mathrm{3co,b}}$ is smooth in all of the variables. Moreover, they must satisfy the estimates
\begin{align}
\label{estimates for operator valued symbol near dff constant order K}
\begin{split}
| \partial_{z_{\tindex}}^{\beta_{\tindex}} \partial_{\tscblz}^{\gamma_{\tindex}} \partial_{z^{\tindex}}^{\beta^{\tindex}} \partial_{ (z^{\tindex})' }^{ ( \beta^{\tindex} )' } \hat{K}_{\phi, \psi, \mathrm{3co,b}} | & \leq C_{ \beta_{\tindex} \beta_{\tindex}' \beta^{\tindex} ( \beta^{\tindex} )' N } x_{\dff}^{-l} \langle z_{\tindex} \rangle^{-|\beta_{\tindex}|}   \langle \lz \rangle^{-N}, \\
| \partial_{z_{\tindex}}^{\beta_{\tindex}} \partial_{\tscblz}^{\gamma_{\tindex}} \partial_{t^{\tindex}}^{j} \partial_{y^{\tindex}}^{\gamma^{\tindex}} \partial_{(z^{\tindex})'}^{(\beta^{\tindex})'}  \hat{K}_{\phi, \psi, \mathrm{3co,b}}  | & \leq C_{  \beta_{\tindex} \gamma_{\tindex} j \gamma^{\tindex} ( \beta^{\tindex} )' NM }  x_{\dff}^{-l} ( x_{\mathcal{C}^{\tindex}}' )^{M} \langle z_{\tindex} \rangle^{ -|\beta_{\tindex}| }   \langle \lz \rangle^{-N} , \\
| \partial_{z_{\tindex}}^{\beta_{\tindex}} \partial_{\tscblz}^{\gamma_{\tindex}} \partial_{t^{\tindex}}^{rr} \partial_{y^{\tindex}}^{\beta^{\tindex}} \partial_{(t^{\tindex})'}^{j'} \partial_{(y^{\tindex})'}^{(\beta^{\tindex})'}   \hat{K}_{\phi, \psi, \mathrm{3co,b}}  | & \leq C_{ \beta_{\tindex} \gamma_{\tindex}' j \beta^{\tindex} j' (\beta^{\tindex})' NM } x_{\dff}^{-l} x_{\mathcal{C}^{\tindex}}^{-b} \langle z_{\tindex} \rangle^{ -|\beta_{\tindex}| }   \langle \lz \rangle^{-N} e^{  - M|t^{\tindex} - (t^{\tindex})' |}
\end{split}
\end{align}
for all $N,M \in \mathbb{R}$, corresponding respectively to cases (1)--(3) in the discussion of \S \ref{subsection construction of dff}, except for the `vice versa' part, in which case one simply switch the roles of the interaction variables in the obvious way. \par

Finally, these implications are clearly reversible, i.e., (\ref{definition of hat psi 3sc})--(\ref{estimates for operator valued symbol near dff constant order K}) can also be used to define (\ref{quantization term is a partial quantization B3sc})--(\ref{remainder term is a partial quantization K3cob}), and subsequently the operators in (\ref{partial quantization subsection operators of interest}). As such, the above discussions also provide an alternate characterization for the conormal three-cone operators near $\dff$. \par

In fact, suppose we write
\begin{equation*}
A_{\psi_{\dff}} \coloneq \psi_{\dff} A \psi_{\dff},
\end{equation*}
where $\psi_{\dff} \in \mathcal{C}^{\infty}( \Xd )$ is some cut-off function at $\dff$. Then we have the following observation regarding the membership of $\hat{A}_{\psi_{\dff}}$.
\begin{proposition}
\label{membership of operator valued symbol near dff}
Let $A \in \Psi_{\mathrm{3coc}}^{m,r,l,b}( \Xd )$, and set
\begin{equation}
\label{operator valued symbol near dff 1}
\hat{A}_{\psi_{\dff}} \coloneq  \int_{\mathbb{R}^{n_{\tindex}}} e^{-iZ_{\tindex} \cdot \zeta_{\tindex}}  A_{\psi_{\dff}} ( z_{\tindex}, z_{\tindex} - Z_{\tindex} ) dZ_{\tindex}  \nu_{\mathrm{b}}', \quad Z_{\tindex} = z_{\tindex} - z_{\tindex}'.
\end{equation}
Then we have
\begin{equation}
\label{operator valued symbol near dff 1.5}
\hat{A}_{\psi_{\dff}} \in S^{l}( \overline{\mathbb{R}^{n_{\tindex}}_{z_{\tindex}}} ; \Psi_{\mathrm{bc, lp}}^{m,  b - 2l} ( X^{\tindex} ;  {\mathbb{R}^{n_{\tindex}}_{\zeta_{\tindex}}} ) ).
\end{equation}
Moreover, it holds that
\begin{equation}
\label{operator valued symbol near dff 2}
A_{\psi_{\dff}} = \frac{1}{( 2 \pi )^{n_{\tindex}} } \int_{\mathbb{R}^{n_{\tindex}}} e^{ i( z_{\tindex} - z_{\tindex}' ) } \hat{A}_{\psi_{\dff}} ( z_{\tindex}, \zeta_{\tindex} ) d\zeta_{\tindex} |dz_{\tindex}'|.
\end{equation}
\end{proposition}
\begin{proof}
Formula (\ref{operator valued symbol near dff 2}) follows clearly from (\ref{operator valued symbol near dff 1}). To get (\ref{operator valued symbol near dff 1.5}), note that if $\psi^{\tindex} \in \mathcal{C}^{\infty}(X^{\tindex})$, then it is straightforward to see that
\begin{equation*} \label{construction of the three-cone operators product cut-off functions near dff}
\psi_{\dff} \psi^{\tindex} \in \mathcal{C}^{\infty}( X ).
\end{equation*}
Thus, we can apply the discussions earlier in this subsection with cut-offs $\psi_{\dff} \psi^{\tindex}$ in places of $\psi$. Concretely, we will consider all operators of the forms $\psi^{\tindex} \hat{A}_{\dff} \psi^{\tindex}$, $\phi^{\tindex} \hat{A}_{\dff} \psi^{\tindex}$, where $\phi^{\tindex} \in \mathcal{C}^{\infty}(X^{\tindex})$ is some cut-off function with sufficiently small support. We can then compare our discussion above with the constructions made in \S \ref{parameters-dependent families subsection} for the space $\Psi_{\mathrm{bc.lp}}^{m,r-l,b-2l}( X^{\tindex} ; { \mathbb{R}^{n_{\tindex}} } )$ (i.e., when $\mathcal{M}$ is a point) to see that (\ref{operator valued symbol near dff 1.5}) is true at least in the case when $l = 0$. For $l \neq 0$, we simply observe that $x_{\dff}^{-l} \simeq \langle z_{\tindex} \rangle^{l} x_{\mathcal{C}^{\tindex}}^{2l} $ on the support of $\hat{A}_{\psi_{\dff}}$. \par

This concludes the proof of the proposition.
\end{proof}

\subsection{Characterization near $\cf$ through partial quantization}
\label{subsection partial quantization near cf}
A similar procedure can be carried out near $\cf$. Thus in this subsection, we will show that for any $A \in \Psi_{\mathrm{3coc}}^{m,r,l,b}(\Xd)$, $\psi_{\cf} A \psi_{\cf}$ can also be characterized as the partial quantization in the free variables of some `operator-valued symbol', which now lives in $S^{b/2}( \overline{\mathbb{R}^{n_{\tindex}}} ; \Psi_{\mathrm{coc,lp}}^{m, r - b/2, l - b/2} ( [ \hat{X}^{\tindex} ; \{ 0 \} ] ; \mathbb{R}^{n_{\tindex}} ) )$. See Proposition \ref{membership of operator valued symbol near cf} below for the precise statements.
\par

We will now proceed as in \S \ref{subsection partial quantization near dff}. Thus, let $\phi, \psi \in \mathcal{C}^{\infty}( \Xd )$ be cut-off functions as in the beginning of \S \ref{subsection construction of cf}. Then we will realize all operators of the forms
\begin{equation}
\label{partial quantization subsection operators of interest cone operators}
\text{$B_{\psi, \mathrm{3co,co,sc}}$, $B_{\psi, \mathrm{3co,co,b}}$, $R_{\psi, \mathrm{3co,co,b}}$, $K_{\phi, \psi, \mathrm{3co,co,b}}$}
\end{equation}
as Schwartz kernels on $\overline{\mathbb{R}^{n_{\tindex}}} \times [ \hat{X}^{\tindex} ; \{ 0 \} ]$, which we view as sections of the bundle spanned by $|dz_{\tindex}'| \nu_{\mathrm{co}}'$, where $\nu_{\mathrm{co}}'$ is a strictly positive cone density that is written in the right variables. Thus we have $\nu_{\mathrm{co}}' \simeq | d(t^{\tindex})' d(y^{\tindex})' |$ near $\mathcal{C}^{\tindex}_{0}$ (i.e., the lift of $\{ 0 \}$) and $\nu_{\mathrm{co}}' \simeq | d( \hat{z}^{\tindex} )' |$ near $\mathcal{C}^{\tindex}_{\infty}$ (i.e., the lift of $\partial \hat{X}^{\tindex}$). \par

For $Z_{\tindex} = z_{\tindex} - z_{\tindex}'$ (and hence $dz_{\tindex}' \nu_{\mathrm{co}}' = | dZ_{\tindex} | \nu_{\mathrm{co}}'$)), we again define 
\begingroup
\allowdisplaybreaks
\begin{align}
\hat{B}_{\psi, \mathrm{3co,co,sc}} &\coloneq \big( \int_{\mathbb{R}^{n_{\tindex}}} e^{-iZ_{\tindex} \cdot \zeta_{\tindex}^{\mathrm{3co}} }  B_{ \psi , \mathrm{3co,co,sc}} ( z_{\tindex}, z_{\tindex} - Z_{\tindex} ) |dZ_{\tindex}| \big) \nu_{\mathrm{co}}', \label{quantization term is a partial quantization B 3co co sc} , \\
\hat{B}_{\psi, \mathrm{3co,co,b}} &\coloneq \big( \int_{\mathbb{R}^{n_{\tindex}}} e^{-iZ_{\tindex} \cdot \zeta_{\tindex}^{\mathrm{3co}} }  B_{ \psi , \mathrm{3co,co,b}} ( z_{\tindex}, z_{\tindex} - Z_{\tindex} ) |dZ_{\tindex}| \big) \nu_{\mathrm{co}}',  \label{quantization term is a partial quantization B 3co co b} \\
\hat{R}_{\psi, \mathrm{3co,co,b}} &\coloneq \big( \int_{\mathbb{R}^{n_{\tindex}}} e^{-iZ_{\tindex} \cdot \zeta_{\tindex}^{\mathrm{3co}} }  R_{ \psi , \mathrm{3co,co,b}} ( z_{\tindex}, z_{\tindex} - Z_{\tindex} ) |dZ_{\tindex}| \big) \nu_{\mathrm{co}}',  \label{quantization term is a partial quantization R 3co co b} \\
\hat{K}_{\phi, \psi, \mathrm{3co,co}} &\coloneq \big( \int_{\mathbb{R}^{n_{\tindex}}} e^{-iZ_{\tindex} \cdot \zeta_{\tindex}^{\mathrm{3co}} }  K_{\phi, \psi, \mathrm{3co,co}} ( z_{\tindex}, z_{\tindex} - Z_{\tindex} ) |dZ_{\tindex}| \big) \nu_{\mathrm{co}}'  \label{quantization term is a partial quantization K 3co co}
\end{align}
as parametrized families of operators acting on $[ \hat{X}^{\tindex} ; \{ 0 \} ]$, where in every case the space of parameters is $\mathbb{R}^{n_{\tindex}}_{z_{\tindex}} \times \mathbb{R}^{n_{\tindex}}_{\zeta_{\tindex}^{\mathrm{3co}}}$. Then it is immediate that we have
\begin{align}
B_{\psi, \mathrm{3co,co,sc}} & = \frac{1}{( 2 \pi )^{n_{\tindex}}} \int_{\mathbb{R}^{n_{\tindex}}} e^{ i ( z_{\tindex} - z_{\tindex}' ) \cdot \zeta_{\tindex}^{\mathrm{3co}} } \hat{B}_{ \psi, \mathrm{3co,co,sc} } ( z_{\tindex}, \zeta_{\tindex}^{\mathrm{3co}} )  d\zeta_{\tindex}^{\mathrm{3co}} | dz_{\tindex}' |, \label{B 3co co sc as partial quantization}  \\
B_{\psi, \mathrm{3co,co,b}} & = \frac{1}{( 2 \pi )^{n_{\tindex}}} \int_{\mathbb{R}^{n_{\tindex}}} e^{ i ( z_{\tindex} - z_{\tindex}' ) \cdot \zeta_{\tindex}^{\mathrm{3co}} } \hat{B}_{ \psi, \mathrm{3co,co,b} } ( z_{\tindex}, \zeta_{\tindex}^{\mathrm{3co}} )  d\zeta_{\tindex}^{\mathrm{3co}} | dz_{\tindex}' |, \label{B 3co co b as partial quantization} \\
R_{\psi, \mathrm{3co,co,b}} & = \frac{1}{( 2 \pi )^{n_{\tindex}}} \int_{\mathbb{R}^{n_{\tindex}}} e^{ i ( z_{\tindex} - z_{\tindex}' ) \cdot \zeta_{\tindex}^{\mathrm{3co}} } \hat{R}_{ \psi, \mathrm{3co,co,b} } ( z_{\tindex}, \zeta_{\tindex}^{\mathrm{3co}} )  d\zeta_{\tindex}^{\mathrm{3co}} | dz_{\tindex}' |, \label{R 3co co b as partial quantization} \\
K_{\phi, \psi, \mathrm{3co,co}  } & = \frac{1}{( 2 \pi )^{n_{\tindex}}} \int_{\mathbb{R}^{n_{\tindex}}} e^{ i ( z_{\tindex} - z_{\tindex}' ) \cdot \zeta_{\tindex}^{\mathrm{3co}} } \hat{K}_{ \phi, \psi, \mathrm{3co,co} } ( z_{\tindex}, \zeta_{\tindex}^{\mathrm{3co}} )  d\zeta_{\tindex}^{\mathrm{3co}} | dz_{\tindex}' |. \label{K 3co co as partial quantization}
\end{align}
\endgroup
As in the case near $\dff$, dependencies of the operators in (\ref{partial quantization subsection operators of interest cone operators}) on the interaction variables, which are now the variables belonging to $[ \hat{X}^{\tindex} ; \{ 0 \} ]^2$, have been suppressed in (\ref{quantization term is a partial quantization B 3co co sc})--(\ref{quantization term is a partial quantization K 3co co}).
\par 

We now characterize (\ref{quantization term is a partial quantization B 3co co sc})--(\ref{quantization term is a partial quantization K 3co co}) more explicitly. First, by using (\ref{3cosc quantization}), it is easy to see that (\ref{quantization term is a partial quantization B 3co co sc}) can also be written as
\begin{equation}
\label{partial quantization hat B 3co co sc}
\hat{B}_{\psi, \mathrm{3co, co, sc}} = \frac{1}{(2\pi)^{n^{\tindex}}} \int_{\mathbb{R}^{n^{\tindex}}}  e^{ i( \hat{z}^{\tindex} - (\hat{z}^{\tindex})' ) \cdot \zeta^{\tindex}_{\mathrm{co,sc}} } b_{ \psi , \mathrm{3co, co,sc} }( z_{\tindex}, \hat{z}^{\tindex}, \zeta_{\tindex}^{\mathrm{3co}}, \zeta^{\tindex}_{\mathrm{co,sc}} ) d\zeta^{\tindex}_{\mathrm{co,sc}} |d ( \hat{z}^{\tindex} )' |
\end{equation}
for some $b_{\psi, \mathrm{3co, co,sc}} \in S^{m,r,-\infty, b}( \overline{ ^{\mathrm{3co}}T^{\ast} } \Xd )$. Likewise, by using (\ref{the 3cob quantization written in terms of t}), we can write (\ref{quantization term is a partial quantization B 3co co b}) as
\begin{align} 
\label{3co,b operator-valued symbol definittion}
\begin{split}
\hat{B}_{\psi, \mathrm{3co,co, b}} \coloneq {} & \frac{1}{( 2\pi )^{n^{\tindex}}}  \int_{\mathbb{R}^{n^{\tindex}}}  e^{ - i ( \hat{t}_{\tindex} - \hat{t}_{\tindex} ) \tcobut + i ( y^{\tindex} - (y^{\tindex})' ) \cdot \tcobum  } \varphi( \hat{t}_{\tindex} - \hat{t}_{\tindex}' ) \varphi( |y^{\tindex} - ( y^{\tindex} )'| )  \\
& \times b_{ \psi, \mathrm{3co,co, b} } ( z_{\tindex}, \hat{t}_{\ff}, y^{\tindex}, \tcoblz , \tcobut , \tcobum  )  d \tcobut d\tcobum | d(\hat{t}_{\tindex})' d(y^{\tindex})' |
\end{split}
\end{align}
for some $b_{\psi , \mathrm{3co,co,b}} \in S^{m, -\infty, l ,b}( \overline{ ^{\mathrm{3co}}T^{\ast} } X )$. \par

Next, we note from (\ref{estimate on R 3b b}) that $R_{\psi ,\mathrm{3co,co,b}}$ decays to infinite orders as $|Z_{\tindex}| \rightarrow \infty$. Hence the integral in (\ref{quantization term is a partial quantization R 3co co b}) converges absolutely. In particular, $\hat{R}_{\psi, \mathrm{3co,co,b}}$ is smooth in all of the variables, and must also decay to infinite orders as $|\zeta_{\tindex}^{\mathrm{3co}}| \rightarrow \infty$, i.e., we have the estimates
\begin{align}
\label{symbolic estimate operator valued symbol R near cf}
\begin{split}
 & | \partial_{z_{\tindex}}^{\beta_{\tindex}} \partial_{ \tcottlz }^{\gamma_{\tindex}} \partial_{\hat{t}_{\tindex} }^{j} \partial_{y^{\tindex}}^{\beta^{\tindex}} \partial_{\hat{t}_{\tindex}'}^{j'}  \partial_{ (y^{\tindex})' }^{(\beta^{\tindex})'} \hat{R}_{\psi, \mathrm{3co,co,b}} | \\ 
 & \qquad \leq C_{\beta_{\tindex} \gamma_{\tindex} j j' \beta^{\tindex} (\beta^{\tindex})'NML}  x_{\cf}^{-b} x_{\mathcal{C}^{\tindex}_{0}}^{-l}    \langle z_{\tindex} \rangle^{-|\beta_{\tindex}| } \langle \tcottlz \rangle^{-N} \langle y^{\tindex} - ( y^{\tindex} )' \rangle^{-L} e^{ -M| \hat{t}_{\tindex} - \hat{t}_{\tindex}' |}
\end{split}
\end{align}
for all $N,L, M \in \mathbb{R}$. Here $x_{\mathcal{C}^{\tindex}_{0}} \in \mathcal{C}^{\infty}( [\hat{X}^{\tindex} ; \{ 0 \} ] )$ is a defining function for $\mathcal{C}^{\tindex}_{0}$. We also let $x_{\mathcal{C}^{\tindex}_{\infty}} \in \mathcal{C}^{\infty}( [ \hat{X}^{\tindex} ; \{ 0 \} ] )$ be a defining function for $\mathcal{C}^{\tindex}_{\infty}$. These functions satisfy $x_{\dff} \simeq x_{\mathcal{C}^{\tindex}_{0}}$, $x_{\dmf} \simeq x_{\mathcal{C}^{\tindex}_{\infty}}$ in a small neighborhood of $\cf$. Moreover, let $x_{\mathcal{C}^{\tindex}_{0}}'$, $x_{\mathcal{C}^{\tindex}_{\infty}}'$ denote respectively the representations of $x_{\mathcal{C}^{\tindex}_{0}}$, $x_{\mathcal{C}^{\tindex}_{\infty}}$ in the right interaction variables. In particular, none of the the functions we have just introduced depend on the free variables.
\par

The argument that was used in the discussion for $\hat{R}_{\psi, \mathrm{3co,co,b}}$ can be applied to the studies of $\hat{K}_{\phi, \psi , \mathrm{3co,co}}$ verbatim. From this, it is easy to see that each $\hat{K}_{\phi, \psi, \mathrm{3co,co}}$ is smooth in all of the variables. Moreover, corresponding to cases (1)--(6) in \S {\ref{subsection construction of cf}}, they respectively satisfy
\begingroup
\allowdisplaybreaks
\begin{align}
\label{symbolic estimate operator valued symbol K near cf}
\begin{split}
| \partial_{z_{\tindex}}^{\beta_{\tindex}}  \partial_{\tcottlz}^{ \gamma_{\tindex}} \partial_{ \hat{z}^{\tindex}}^{\beta^{\tindex}} \partial_{( \hat{z}^{\tindex})'}^{ (\beta^{\tindex})' } \hat{K}_{\phi, \psi, \mathrm{3co,co}} |  \leq {} & C_{ \beta_{\tindex} \gamma_{\tindex} \beta^{\tindex} ( \beta^{\tindex} )' N } x_{\cf}^{-b} \langle z_{\tindex} \rangle^{-|\beta_{\tindex}| }  \langle \tcottlz \rangle^{-N} , \\
| \partial_{z_{\tindex}}^{\beta_{\tindex}}  \partial_{ \tcottlz }^{ \beta_{\tindex}' } \partial_{\hat{t}_{\tindex}}^{j} \partial_{y^{\tindex}}^{\gamma^{\tindex}} \partial_{ (\hat{z}^{\tindex})' }^{ (\beta^{\tindex})' } \hat{K}_{\phi, \psi, \mathrm{3co,co}} | \leq {} & C_{\beta_{\tindex} \gamma_{\tindex} j \gamma^{\tindex} ( \beta^{\tindex} )' NML} x_{\cf}^{-b} x_{\mathcal{C}^{\tindex}_{0}}^{M} ( x_{\mathcal{C}^{\tindex}_{\infty}}' )^{L} \langle z_{\tindex} \rangle^{-|\beta_{\tindex}|}   \langle \tcottlz \rangle^{-N},  \\ 
| \partial_{z_{\tindex}}^{\beta_{\tindex}}  \partial_{ \tcottlz }^{ \beta_{\tindex}' } \partial_{\hat{t}_{\tindex}}^{j} \partial_{y^{\tindex}}^{\gamma^{\tindex}} \partial_{ (\hat{z}^{\tindex})' }^{ (\beta^{\tindex})' } \hat{K}_{\phi, \psi, \mathrm{3co,co}} | \leq {} & C_{\beta_{\tindex} \gamma_{\tindex} j \gamma^{\tindex} ( \beta^{\tindex} )' NM} x_{\cf}^{-b} (x_{\mathcal{C}^{\tindex}_{0}}')^{M} \langle z_{\tindex} \rangle^{-|\beta_{\tindex}|  }   \langle \tcottlz \rangle^{-N}  , \\ 
| \partial_{z_{\tindex}}^{\beta_{\tindex}}  \partial_{ 
\tcottlz }^{\gamma_{\tindex}}  \partial_{ \hat{z}^{\tindex}}^{\beta^{\tindex}} \partial_{( \hat{z}^{\tindex})'}^{ (\beta^{\tindex})' }\hat{K}_{\phi, \psi, \mathrm{3co,co}} | \leq {} & C_{ \beta_{\tindex} \gamma_{\tindex} \beta^{\tindex} ( \beta^{\tindex} )' NM } x_{\cf}^{-b} (x_{\mathcal{C}^{\tindex}_{\infty}}')^{M}  \langle z_{\tindex} \rangle^{-|\beta_{\tindex}|  }  \langle \tcottlz \rangle^{-N}, \\
| \partial_{z_{\tindex}}^{\beta_{\tindex}}  \partial_{ \tcottlz }^{\gamma_{\tindex}}  \partial_{ \hat{z}^{\tindex}}^{\beta^{\tindex}} \partial_{( \hat{z}^{\tindex})'}^{ (\beta^{\tindex})' } \hat{K}_{\phi, \psi, \mathrm{3co,co}} | \leq {} & C_{ \beta_{\tindex} \gamma_{\tindex} \beta^{\tindex} ( \beta^{\tindex} )' N M M' } x_{\cf}^{-b} x_{\mathcal{C}^{\tindex}_{\infty}}^{M}  (x_{\mathcal{C}^{\tindex}_{\infty}}')^{M'}  \langle z_{\tindex} \rangle^{-|\beta_{\tindex}| }  \langle \tcottlz \rangle^{-N} , \\
 | \partial_{z_{\tindex}}^{\beta_{\tindex}}  \partial_{ \tcottlz }^{ \gamma_{\tindex} }   \partial_{ \hat{t}_{\tindex} }^{j} \partial_{y^{\tindex}}^{\beta^{\tindex}} \partial_{ \hat{t}_{\tindex}' }^{j'}  \partial_{ (y^{\tindex})' }^{(\beta^{\tindex})'} \hat{K}_{\phi, \psi, \mathrm{3co,co}} | \leq {} &  C_{\beta_{\tindex} \gamma_{\tindex} j \beta^{\tindex} j' (\beta^{\tindex})' NM} x_{\cf}^{-b} x_{\mathcal{C}^{\tindex}_{0}}^{-l}  \langle z_{\tindex} \rangle^{-|\beta_{\tindex}| }  \langle \tcottlz \rangle^{-N} e^{ -M| \hat{t}_{\tindex} - \hat{t}_{\tindex}' |}
 \end{split}
\end{align}
\endgroup
for all $N,M, M', L \in \mathbb{R}$, except for the `vice versa' parts, in which cases one simply switch the roles of the interaction variables in the obvious ways. \par

Finally, these implications are again reversible, i.e., (\ref{partial quantization hat B 3co co sc})--(\ref{symbolic estimate operator valued symbol K near cf}) can also be used to define (\ref{quantization term is a partial quantization B 3co co sc})--(\ref{quantization term is a partial quantization K 3co co}), and subsequently the operators in (\ref{partial quantization subsection operators of interest cone operators}). As such, the above discussions also provide an alternate characterization for the conormal three-cone operators near $\cf$. \par

In fact, suppose we write
\begin{equation*}
A_{\psi_{\cf}} \coloneq \psi_{\cf} A \psi_{\cf},
\end{equation*}
where $\psi_{\cf} \in \mathcal{C}^{\infty}(\Xd)$ is some cut-off function at $\cf$. Then we have the following observation regarding membership of $\hat{A}_{\psi_{\cf}}$. 
\begin{proposition}
\label{membership of operator valued symbol near cf}
Let $A \in \Psi_{\mathrm{3coc}}^{m,r,l,b}( \Xd )$, and set
\begin{equation} 
\label{three-cone case writing A psicf as a partial quantization -1}
\hat{A}_{\psi_{\cf}} \coloneq  \int_{\mathbb{R}^{n_{\tindex}}} e^{-iZ_{\tindex} \cdot \zeta_{\tindex}^{\mathrm{3co}} }  A_{\psi_{\cf}} ( z_{\tindex}, z_{\tindex} - Z_{\tindex} ) dZ_{\tindex}  \nu_{\mathrm{co}}', \quad Z_{\tindex} = z_{\tindex} - z_{\tindex}'.
\end{equation}
Then we have
\begin{equation*}
\hat{A}_{\psi_{\cf}} \in S^{b/2}( \overline{\mathbb{R}^{n_{\tindex}}_{z_{\tindex}}} ; \Psi_{\mathrm{coc,lp}}^{m, r - b/2, l - b/2} ( [ \hat{X}^{\tindex} ; \{ 0 \} ] ; \mathbb{R}^{n_{\tindex}}_{\zeta_{\tindex}^{\mathrm{3co}}} ) ).
\end{equation*}
Moreover, it holds that
\begin{equation} 
\label{three-cone case writing A psicf as a partial quantization}
A_{\psi_{\cf}} = \frac{1}{( 2 \pi )^{n_{\tindex}} } \int_{\mathbb{R}^{n_{\tindex}}} e^{ i( z_{\tindex} - z_{\tindex}' ) \cdot \zeta_{\tindex}^{\mathrm{3co}} } \hat{A}_{\psi_{\cf}} ( z_{\tindex}, \zeta_{\tindex}^{\mathrm{3co}} ) d\zeta_{\tindex}^{\mathrm{3co}} |dz_{\tindex}'|.
\end{equation}
\end{proposition}
\begin{proof}
This proposition is the direct analogy of Proposition \ref{membership of operator valued symbol near dff} near $\cf$, and thus their proofs are similar. The main difference is that we now have $\psi_{\cf} \psi^{\tindex} \in \mathcal{C}^{\infty}(\Xd)$ whenever $\psi^{\tindex} \in \mathcal{C}^{\infty}( [ \hat{X}^{\tindex} ; \{ 0 \} ] )$. Moreover, $x_{\cf}^{-b} \simeq \langle z_{\tindex} \rangle^{b/2} x_{\mathcal{C}^{\tindex}_{0}}^{b/2} x_{\mathcal{C}^{\tindex}_{\infty}}^{b/2}$ on the support of $\hat{A}_{\psi_{\cf}}$.
\end{proof}

\section{Second microlocalization} 
\label{section second microlocalization}
Having constructed the spaces of conormal three-cone operators $\Psi^{m,r,l, b}_{\mathrm{3coc}}( \Xd )$, we now proceed to second microlocalize the three-body algebra at $\tscf^{-1}( \sco )$ as announced. Here, let us recall that $\tscf$ is defined by the natural projection
\begin{equation} \label{three-body cotangent bundle natural projection}
\tscf : {^{\mathrm{3sc}}T^{\ast}_{\ff}} [ \overline{\mathbb{R}^{n}} ; \mathcal{C}_{\tindex} ] \cong \mathcal{C}_{\tindex} \times \mathbb{R}^{n_{\tindex}} \times {^{\mathrm{sc}}T^{\ast}} X^{\tindex} \rightarrow {^{\mathrm{sc}}T^{\ast}} X^{\tindex}
\end{equation}
while $o_{\mathcal{C}^{\tindex}} \subset {^{\mathrm{sc}}T^{\ast}_{\mathcal{C}^{\tindex}}} X^{\tindex}$ is the zero section. The spaces of second microlocalized operators, once properly defined, will be denoted by
\begin{equation} 
\label{the OG second microlocalized algebra}
\Psi^{m,r,l,b}_{\mathrm{3sc,2}} \big( [ \overline{\mathbb{R}^{n}} ; \mathcal{C}_{\tindex} ] ; \overline{\pi_{\ff}^{-1}( o_{\mathcal{C}^{\tindex}} )} \, \big),
\end{equation}
though we will actually consider a slightly more general class below (with different notation).
 \par

As mentioned in the introduction, our construction will follow the philosophy laid out in \cite{AndrasSM}. Thus, there will be a `direct' perspective, as well as a `converse' perspective{\ep}much like what was done in \S \ref{subsection Vasy's second microlocalized calculus}. The converse perspective will be  based on the conormal three-cone operators constructed in \S \ref{the conormal three-cone operators section}. We will explain second microlocalization from both the direct and converse perspectives, and then connect them via a blow-up at the end. It will become obvious how (\ref{the OG second microlocalized algebra}) should be defined once this connection is established.

\subsection{Diffeomorphism of the phase spaces} \label{subsection diffeomorphism of the phase spaces}
We first explain the direct perspective. In fact, it will be useful to proceed more generally. Thus, in addition to considering the three-body structure, we will also define the \emph{decoupled} three-body cotangent bundle by a (somewhat trivial) corner blow-up
\begin{equation} \label{construction of the d3sc bundle}
\overline{^{\mathrm{d3sc}}T^{\ast}} \Xd \coloneq \big[ \overline{ ^{\mathrm{3sc}}T^{\ast} } [ \overline{\mathbb{R}^{n}} ; \mathcal{C}_{\tindex} ] ; \overline{ ^{\mathrm{3sc}}T^{\ast} }_{ \mathrm{mf} \cap \ff } [ \overline{\mathbb{R}^{n}} ; \mathcal{C}_{\tindex} ] \big].
\end{equation}
Let ${^{\mathrm{d3sc}}S^{\ast}}\Xd$, $\overline{^{\mathrm{d3sc}}T^{\ast}}_{\dmf}\Xd$, $\overline{^{\mathrm{d3sc}}T^{\ast}}_{\dff} \Xd$ denote the lifts of ${^{\mathrm{3sc}}S^{\ast}} [ \overline{\mathbb{R}^{n}} ; \mathcal{C}_{\tindex} ]$, $\overline{^{\mathrm{3sc}}T^{\ast}}_{\dmf} [ \overline{\mathbb{R}^{n}} ; \mathcal{C}_{\tindex} ]$ and $\overline{^{\mathrm{3sc}}T^{\ast}}_{\dff} [ \overline{\mathbb{R}^{n}} ; \mathcal{C}_{\tindex} ]$ respectively. We also let $\overline{^{\mathrm{d3sc}}T^{\ast}}_{\cf} \Xd$ denote the new front face created by the blow-up (\ref{construction of the d3sc bundle}), or equivalently, the lift of $\overline{ ^{\mathrm{3sc}}T^{\ast} }_{ \mathrm{mf} \cap \ff } [ \overline{\mathbb{R}^{n}} ; \mathcal{C}_{\tindex} ]$. \par 
It is indeed easy to see that there exist well-behaved spaces of operators
\begin{equation} \label{algebra definition of the decoupled 3sc operators}
\Psi_{\mathrm{d3scc}}^{m,r,l,\nu}( \Xd )
\end{equation}
that can be defined globally by quantising conormal symbols on $\overline{^{\mathrm{d3sc}}T^{\ast}} \Xd$, i.e., the spaces
\begin{equation} \label{decoupled 3sc symbols}
S^{m,r,l,\nu} ( \overline{^{\mathrm{d3sc}}T^{\ast}} \Xd ),
\end{equation}
in the sense that each element $A \in \Psi_{\mathrm{d3scc}}^{m,r,l,\nu}(X)$ can be defined in Euclidean coordinates $(z,\zeta)$ by an operator of the form
\begin{equation}
\label{the standard quantization again}
\frac{1}{(2\pi)^{n}} \int_{\mathbb{R}^{n}} e^{i ( z - z' ) \cdot \zeta} a(z,\zeta) d\zeta |dz'|
\end{equation}
for some $a \in S^{m,r,l,\nu}( \overline{ ^{\mathrm{d3sc}}T^{\ast} }X )$. \par

Here, the indices $m,r,l, \nu \in \mathbb{R}$ measure decay at ${^{\mathrm{d3sc}}S^{\ast}}\Xd$, $\overline{^{\mathrm{d3sc}}T^{\ast}}_{\dmf}\Xd$, $\overline{^{\mathrm{d3sc}}T^{\ast}}_{\dff}\Xd$ and $\overline{^{\mathrm{d3sc}}T^{\ast}}_{\cf}\Xd$ respectively. As a consequence of this construction, it is straightforward to verify that one could microlocalize elements of (\ref{algebra definition of the decoupled 3sc operators}) even at the face ${ \overline{^{\mathrm{d3sc}}T^{\ast}}_{\cf} \Xd }$, in the sense that the principal symbol map extends to capture principal order decay at this face. \par 
Since (\ref{construction of the d3sc bundle}) is a blow-up of $\overline{ ^{\mathrm{3sc}}T^{\ast}} [ \overline{\mathbb{R}^{n}} ; \mathcal{C}_{\tindex} ]$ at a corner face, its effects are extremely mild in the microlocal sense that conormality is insensitive under this blow-up. Namely, it is easy to verify that
\begin{equation} \label{blow up is insensitive from 3sc to d3sc}
S^{m,r,l}( \overline{ ^{\mathrm{3sc}}T^{\ast}} [ \overline{\mathbb{R}^{n}}  ; \mathcal{C}_{\tindex} ] ) = S^{m,r,l,r+l} ( \overline{^{\mathrm{d3sc}}T^{\ast}} \Xd ),
\end{equation}
which directly implies that
\begin{equation*}
\Psi^{m,r,l}_{\mathrm{3scc}}( [ \overline{\mathbb{R}^{n}} ; \mathcal{C}_{\tindex} ] ) = \Psi_{\mathrm{d3scc}}^{m,r,l,r+l}( \Xd ).
\end{equation*}
Thus, $\Psi_{\mathrm{d3scc}}^{m,r,l,\nu}( \Xd )$ can indeed be viewed as a refinement of $\Psi_{\mathrm{3scc}}^{m,r,l}( [ \overline{\mathbb{R}^{n}} ; \mathcal{C}_{\tindex} ] )$. \par

Let us recall that our main objective of second microlocalization, herustically speaking, is as follows: An operator $A$ which belongs to $\Psi^{m,r,l,b}_{\mathrm{3sc,2}}( X ; \overline{ \tscf^{-1}( o_{\mathcal{C}^{\tindex}} )} )$, if somehow defined, must be microlocalized on the phase space
\begin{equation} 
\label{second microlocalized 3sc operators must be microlocalized here}
\big[ \, \overline{^{\mathrm{3sc}}T^{\ast}} [ \overline{\mathbb{R}^{n}} ; \mathcal{C}_{\tindex} ] ; \overline{\tscf^{-1}( \sco )} \, \big].
\end{equation}
We now make this statement slightly more precise: the symbol of $A$ must belong to
\begin{equation*}
S^{m,r,l,b}\big( \big[ \, \overline{^{\mathrm{3sc}}T^{\ast}} [ \overline{\mathbb{R}^{n}} ; \mathcal{C}_{\tindex} ] ; \overline{\tscf^{-1}( \sco )} \, \big] \big).
\end{equation*} 
Here, the indices $m,r,l \in \mathbb{R}$ measure decay at the lifts of ${^{\mathrm{3sc}}S^{\ast}} [ \overline{\mathbb{R}^{n}} ; \mathcal{C}_{\tindex} ]$, $\overline{ ^{\mathrm{3sc}}T^{\ast} }_{\mf} [ \overline{\mathbb{R}^{n}} ; \mathcal{C}_{\tindex} ] $ and $\overline{ ^{\mathrm{3sc}}T^{\ast} }_{\ff} [ \overline{\mathbb{R}^{n}} ; \mathcal{C}_{\tindex} ]$ respectively, while $b \in \mathbb{R}$ measures decay at the new front face of (\ref{second microlocalized 3sc operators must be microlocalized here}), or equivalently, the lift of $ \overline{\tscf^{-1}( \sco )}$. \par

In fact, it will be convenient to blow up (\ref{second microlocalized 3sc operators must be microlocalized here}) further at the lift of $\overline{ ^{\mathrm{3sc}}T^{\ast}}_{\mathrm{mf} \cap \ff } [ \overline{\mathbb{R}^{n}} ; \mathcal{C}_{\tindex} ]$. Moreover, since $ \overline{\tscf^{-1}( \sco )} \subset \overline{ ^{\mathrm{3sc}}T^{\ast}}_{\mathrm{mf} \cap \ff } [ \overline{\mathbb{R}^{n}} ; \mathcal{C}_{\tindex} ] $, we could carry out the blow-up at $\overline{ ^{\mathrm{3sc}}T^{\ast}}_{\mathrm{mf} \cap \ff }X$ first instead. More precisely, let 
\begin{equation*}
\beta_{\mathrm{d3sc}}:  \overline{^{\mathrm{d3sc}}T^{\ast}} \Xd \rightarrow  \overline{ ^{\mathrm{3sc}}T^{\ast} } [ \overline{\mathbb{R}^{n}} ; \mathcal{C}_{\tindex} ]
\end{equation*}
denote the blow-down map of (\ref{construction of the d3sc bundle}). Then we have
\begin{equation} \label{second microlocal blow up 3s tp d3sc}
\big[ \overline{^{\mathrm{3sc}}T^{\ast}} [ \overline{\mathbb{R}^{n}} ; \mathcal{C}_{\tindex} ] ; \overline{\tscf^{-1}( \sco )}, \overline{ ^{\mathrm{3sc}}T^{\ast}}_{\mathrm{mf} \cap \ff } [ \overline{\mathbb{R}^{n}} ; \mathcal{C}_{\tindex} ] \big] = \big[ \overline{^{\mathrm{d3sc}}T^{\ast}} \Xd ; \beta_{\mathrm{d3sc}}^{\ast} ( \overline{\tscf^{-1}( 
{\sco} )} ) \big].
\end{equation} 
Notice that $\overline{ ^{\mathrm{3sc}}T^{\ast}}_{\mathrm{mf} \cap \ff } [ \overline{\mathbb{R}^{n}} ; \mathcal{C}_{\tindex} ]$ lifts to a corner face of $[ \overline{^{\mathrm{3sc}}T^{\ast}} [ \overline{\mathbb{R}^{n}} ; \mathcal{C}_{\tindex} ] ; \overline{\tscf^{-1}( \sco )}]$ as well, and so (\ref{second microlocal blow up 3s tp d3sc}) is again the blow-up of $[ \overline{^{\mathrm{3sc}}T^{\ast}} [ \overline{\mathbb{R}^{n}} ; \mathcal{C}_{\tindex} ] ; \overline{\tscf^{-1}( \sco )} ]$ at a corner face. Conormality is therefore insensitive under this blow-up, and we have as in (\ref{blow up is insensitive from 3sc to d3sc}) that
\begin{align}
\begin{split}
\label{blow up is insensitive from 3sc to d3sc second microlocalized}
& S^{m,r,l,b}\big( \big[ \overline{^{\mathrm{3sc}}T^{\ast}} [ \overline{\mathbb{R}^{n}} ; \mathcal{C}_{\tindex} ] ; \overline{\tscf^{-1}( \sco )} \, \big] \big) = S^{m,r,l, r+l, b} \left( \big[ \overline{^{\mathrm{d3sc}}T^{\ast}} \Xd ; \beta_{\mathrm{d3sc}}^{\ast} ( \overline{\tscf^{-1}( 
{\sco} )} ) \big] \right),
\end{split}
\end{align}
where the right hand side of (\ref{blow up is insensitive from 3sc to d3sc second microlocalized}) could also be defined more generally as
\begin{equation*}
S^{m,r,l, \nu, b} \left( \big[ \overline{^{\mathrm{d3sc}}T^{\ast}} \Xd ; \beta_{\mathrm{d3sc}}^{\ast} ( \overline{\tscf^{-1}( 
{\sco} )} ) \big] \right).
\end{equation*}
Here, the indices $m,r,l,\nu \in \mathbb{R}$ measure decay at the lifts of ${^{\mathrm{d3sc}}S^{\ast}} \Xd$,  $\overline{^{\mathrm{d3sc}}T^{\ast}}_{\dmf} \Xd$, $\overline{^{\mathrm{d3sc}}T^{\ast}}_{\dff} \Xd$ and $\overline{^{\mathrm{d3sc}}T^{\ast}}_{\cf} \Xd$ respectively, while $b \in \mathbb{R}$ measures decay at the new front face of the right hand side of (\ref{second microlocal blow up 3s tp d3sc}), or equivalently the lift of $\overline{\tscf^{-1}( \sco )}$ (or $\beta_{\mathrm{d3sc}}^{\ast}( \overline{\tscf^{-1}( \sco )})$) .\par

Now, (\ref{blow up is insensitive from 3sc to d3sc second microlocalized}) naturally motivates us to ask whether we could instead second microlocalize the decoupled space of operators $\Psi^{m,r,l,\nu}_{\mathrm{d3scc}}( \Xd )$ at $\beta_{\mathrm{d3sc}}^{\ast} ( \overline{\tscf^{-1}( 
{\sco} )} )$, and define a slightly larger space of operators
\begin{equation} 
\label{formally written decoupled second microlocalized operators}
\Psi_{\mathrm{d3sc,2}}^{m,r,l,\nu, b} \left( \Xd ; \beta_{\mathrm{d3sc}}^{\ast}(\overline{\tscf^{-1}(\sco)}) \right)
\end{equation}
in such a way that, heuristically speaking, the right hand side of (\ref{second microlocal blow up 3s tp d3sc}) is the phase space for (\ref{formally written decoupled second microlocalized operators}). This procedure, if done, would have the potential to be useful (and indeed it is, as we will see below shortly), for suppose that we have somehow defined (\ref{formally written decoupled second microlocalized operators}), then we would also have defined space (\ref{the OG second microlocalized algebra}) simply by setting
\begin{equation}
\label{formally written decoupled second microlocalized operators 1}
\Psi_{\mathrm{3sc,2}}^{m,r,l,b} \left(  [ \overline{\mathbb{R}^{n}} ; \mathcal{C}_{\tindex} ] ; \overline{\tscf^{-1}(\sco)} \, \right) \coloneq \Psi_{\mathrm{d3sc,2}}^{m,r,l,r+l, b} \left( \Xd ; \beta_{\mathrm{d3sc}}^{\ast}(\overline{\tscf^{-1}(\sco)}) \right),
\end{equation}
which must be reasonable in view of (\ref{blow up is insensitive from 3sc to d3sc second microlocalized}). In particular, the space (\ref{formally written decoupled second microlocalized operators}) is again a refinement of $\Psi_{\mathrm{3sc,2}}^{m,r,l,b} ( [ \overline{\mathbb{R}^{n}} ; \mathcal{C}_{\tindex} ] ; \overline{\tscf^{-1}(\sco)})$, and one pays only very little price in order to work with these more general operators (since one gets $\Psi^{m,r,l,\nu}_{\mathrm{d3scc}}( \Xd )$ almost `for free').
\par

On the other hand, let us consider the `converse' perspective, which starts from the three-cone structure. Recall that $\cf$ admits a trivial fibration with base $\mathcal{C}_{\tindex}$ and fibers $[ \hat{X}^{\tindex} ; \{ 0 \} ]$. Then one finds easily that the three-cone cotangent bundle restricts to a product structure
\begin{equation} \label{product identification at cf 3co sense}
^{\mathrm{3co}}T^{\ast}_{\mathrm{cf}_{\tindex}} \Xd \cong \mathcal{C}_{\tindex} \times \mathbb{R}^{n_{\tindex}} \times { ^{\mathrm{co}}T^{\ast} }[ \hat{X}^{\tindex} ; \{ 0 \} ].
\end{equation}
We are interested in extending (\ref{product identification at cf 3co sense}) to its closure. In fact, it is not hard to see that
\begin{equation} \label{a simple identification of the sphere bundle at ff in the 3co sense}
\overline{^{\mathrm{3co}}T^{\ast}}_{\mathrm{cf}_{\tindex}} \Xd \cong \mathcal{C}_{\tindex} \times \overline{ ^{\mathrm{co}}T^{\ast} \oplus \mathbb{R}^{n_{\tindex}} } [ \hat{X}^{\tindex} ; \{  0 \} ],
\end{equation}
where the notation on the right hand side of (\ref{a simple identification of the sphere bundle at ff in the 3co sense}) means that we are direct summing to each fiber of $^{\mathrm{co}}T^{\ast}[ \hat{X}^{\tindex} ; \{ 0 \} ]$ the Euclidean space $\mathbb{R}^{n_{\tindex}}$, and then compactifing the resulting space radially. Under identification (\ref{a simple identification of the sphere bundle at ff in the 3co sense}), the restricted fiber infinity ${^{\mathrm{3co}}S^{\ast}_{\mathrm{cf}_{\tindex}}}\Xd$ agrees with the Cartesian product between $\mathcal{C}_{\tindex}$ and the fiber infinity of $ \overline{ ^{\mathrm{co}}T^{\ast} \oplus \mathbb{R}^{n_{\tindex}} } [ \hat{X}^{\tindex} ; \{  0 \} ] $. Therefore, one can identify $\mathcal{C}_{\tindex} \times {^{\mathrm{co}}S^{\ast}[ \hat{X}^{\tindex} ; \{ 0 \} ]}$ naturally as a natural submanifold of ${^{\mathrm{3co}}S^{\ast}_{\cf}} \Xd$. In fact, if $\mathbb{R}^{n_{\tindex}}$ has coordinates $\zeta_{\tindex}^{\mathrm{3co}}$, then $\mathcal{C}_{\tindex} \times { ^{\mathrm{co}}S^{\ast} }[ \hat{X}^{\tindex} ; \{ 0 \} ]$ can be more explicitly identified as 
\begin{equation*}
\mathcal{C}_{\tindex} \times \partial \overline{ \{ \zeta_{\tindex}^{\mathrm{3co}} = 0 \} },
\end{equation*}
where the closure is taken in $\overline{ ^{\mathrm{co}}T^{\ast} \oplus \mathbb{R}^{n_{\tindex}} }[ \hat{X}^{\tindex} ; \{ 0 \} ]$.
 \par  
Locally, starting from coordinates $(\bbdf, x^{\tindex}, y_{\tindex}, y^{\tindex}, \tcoblt, \tcoblm, \tcobut, \tcobum)$, and also in a neighborhood of the fibers where $| ( \tcoblt, \tcoblm ) | \leq c | ( \tcobut, \tcobum ) |$, we can introduce projective coordinates of the form
\begin{equation*}
\frac{ \tcoblt }{ | ( \tcobut, \tcobum ) | }, \frac{ \tcoblm }{ | ( \tcobut, \tcobum ) | },  \frac{ \tcobut }{ | ( \tcobut, \tcobum ) | },   \frac{ \tcobum } {| ( \tcobut, \tcobum ) |},    \frac{1}{| ( \tcobut, \tcobum ) |},
\end{equation*}
such that if $\itccf$ denotes the natural inclusion
\begin{equation*}
\itccf : \mathcal{C}_{\tindex} \times \overline{ ^{\mathrm{co}}T^{\ast} \oplus \mathbb{R}^{n_{\tindex}} } [ \hat{X}^{\tindex} ; \{  0 \} ] \rightarrow \overline{^{\mathrm{3co}}T^{\ast}} \Xd
\end{equation*}
as determined by $(\ref{a simple identification of the sphere bundle at ff in the 3co sense})$, then $\itccf(  \mathcal{C}_{\tindex} \times { ^{\mathrm{co}}S^{\ast} }[ \hat{X}^{\tindex} ; \{ 0 \} ] ) ]$ can be written as 
\begin{equation} \label{three cone blow up where 1}
\left\{ x^{\tindex} = 0,  \frac{1}{ | ( \tcobut, \tcobum ) | } = 0,  \frac{ \tcoblt }{ |(\tcobut, \tcobum)| } = 0, \frac{ \tcoblm }{ |(\tcobut, \tcobum)| } = 0 \right\}.
\end{equation}
\par

Alternatively, starting from the coordinates $( \bdf, \hat{x}^{\tindex}, y_{\tindex}, y^{\tindex}, \tcosclt , \tcosclm , \tcoscut, \tcoscum )$, and in a neighborhood of the fibers where $| ( \tcosclt, \tcosclm ) | \leq c | ( \tcoscut, \tcoscum ) |$, we can introduce projective coordinates of the form
\begin{equation*}
\frac{ \tcosclt }{ | ( \tcoscut, \tcoscum ) | }, \frac{ \tcosclm }{ | ( \tcoscut, \tcoscum ) | }, \frac{ \tcoscut }{ | ( \tcoscut, \tcoscum ) | },   \frac{ \tcoscum }{| ( \tcoscut, \tcoscum ) |},  \frac{1}{| ( \tcoscut, \tcoscum ) |},
\end{equation*}
in which case we can also write $\itccf ( \mathcal{C}_{\tindex} \times { ^{\mathrm{co}}S^{\ast} }[ \hat{X}^{\tindex} ; \{ 0 \} ] ) $ as
\begin{equation} \label{three cone blow up where 2}
\left\{ \bdf = 0, \frac{1}{ | ( \tcoscut, \tcoscum ) |} = 0,  \frac{ \tcosclt }{ | ( \tcoscut, \tcoscum ) | } = 0,  \frac{ \tcosclm }{ | ( \tcoscut, \tcoscum ) | } = 0 \right\}.
\end{equation}
This concludes the description of $\itccf ( \mathcal{C}_{\tindex} \times { ^{\mathrm{co}}S^{\ast} }[ \hat{X}^{\tindex} ; \{ 0 \} ] )$ in local coordinates. \par
We now demonstrate the connection between the direct and converse perspectives.
\begin{proposition}
The identity map extends from the the interior to a diffeomorphism
\begin{equation} 
\label{second microlocal diffeomorphism}
\left[ \overline{^{\mathrm{d3sc}}T^{\ast}} \Xd ; \beta_{\mathrm{d3sc}}^{\ast} ( \overline{\tscf^{-1}( 
{\sco} )} ) \right] \cong 
\left[ \overline{ ^{\mathrm{3co}} T^{\ast}} \Xd  ; \itccf ( \mathcal{C}_{\tindex} \times {^{\mathrm{co}}S^{\ast} [ \hat{X}^{\tindex} ; \{ 0 \} ]} ) \right],
\end{equation}
where the lifts of $ { ^{\mathrm{d3sc}}S^{\ast} \Xd }$, $\overline{ ^{\mathrm{d3sc}}T^{\ast}}_{\dmf}\Xd$, $\overline{ ^{\mathrm{d3sc}}T^{\ast}}_{\dff}\Xd$ to the left hand side of (\ref{second microlocal diffeomorphism}) identify with the lifts of ${ ^{\mathrm{3co}}S^{\ast} \Xd }$, $\overline{ ^{\mathrm{3co}}T^{\ast} }_{\dmf} \Xd$, $\overline{ ^{\mathrm{3co}}T^{\ast} }_{\dff} \Xd$ to the right hand side of (\ref{second microlocal diffeomorphism}). Moreover, the front face for the left hand side of (\ref{second microlocal diffeomorphism}) is identified with the lift of $\overline{ ^{\mathrm{3co}}T^{\ast} }_{\cf} \Xd$ to the right hand side, while the front face for the right hand side of (\ref{second microlocal diffeomorphism}) is identified with the lift of $\overline{ ^{\mathrm{d3sc}}T^{\ast} }_{\cf} \Xd$ in the left hand side.
\end{proposition}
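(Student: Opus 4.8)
The plan is to verify the claimed diffeomorphism by working in the local coordinate systems introduced above, since both sides are manifolds with corners obtained by iterated blow-ups of the common interior $T^{\ast}\mathbb{R}^{n}$, and the identity map is a diffeomorphism there; the only issue is to check that it extends smoothly with smooth inverse across each boundary face, and to track where each face goes. I would first dispose of the region away from $\cf$: there $\overline{^{\mathrm{d3sc}}T^{\ast}}\Xd = \overline{^{\mathrm{3sc}}T^{\ast}}X$ restricted away from the corner, $\overline{^{\mathrm{3co}}T^{\ast}}\Xd = \overline{^{\mathrm{3sc}}T^{\ast}}X$ there too (by the defining property $^{\mathrm{3co}}T^{\ast}_{U}\Xd = {^{\mathrm{3sc}}T^{\ast}}_{U}X$ for $U$ away from $\cf$), and neither blow-up center in \eqref{second microlocal diffeomorphism} meets this region, so the statement is trivial away from $\cf$. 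Thus the content is entirely local near $\cf$, and I would split into the two charts: a neighborhood of $\cf\cap\dff$ and a neighborhood of $\cf\cap\dmf$ (the transversal region $\cf^{\circ}$ being covered by both or handled similarly).

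Near $\cf\cap\dff$ I would use the position coordinates $(z_{\tindex},x^{\tindex},y^{\tindex})$ (equivalently $(\bbdf,x^{\tindex},y_{\tindex},y^{\tindex})$) together with the fiber coordinates $(\tcoblt,\tcoblm,\tcobut,\tcobum)$ of \eqref{covector fields 2}, whose relation to the three-body fiber coordinates $(\tau_{\tindex},\mu_{\tindex},\tau^{\tindex},\mu^{\tindex})$ is recorded after \eqref{revisted notation 3sc res 1}–\eqref{revised notation 3co}. On the left side of \eqref{second microlocal diffeomorphism} one first passes $\overline{^{\mathrm{3sc}}T^{\ast}}X \to \overline{^{\mathrm{d3sc}}T^{\ast}}\Xd$ by blowing up the corner face $\overline{^{\mathrm{3sc}}T^{\ast}}_{\mathrm{mf}\cap\ff}X$ — which in these coordinates amounts to introducing $\bbdf = \bdf/x^{\tindex}$ — and then blows up $\beta_{\mathrm{d3sc}}^{\ast}\overline{\tscf^{-1}(\sco)}$, which is cut out by $\{x^{\tindex}=0\} \cap \{\tau^{\tindex}=\mu^{\tindex}=0\}$ together with the fiber-infinity condition; on the right side one blows up $\itccf(\mathcal{C}_{\tindex}\times{^{\mathrm{co}}S^{\ast}}[\hat X^{\tindex};\{0\}])$, which in the same coordinates is exactly \eqref{three cone blow up where 1}. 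The key computation is to check that these two $p$-submanifolds, once one has performed the (insensitive, corner-type) blow-up of $\overline{^{\mathrm{3sc}}T^{\ast}}_{\mathrm{mf}\cap\ff}X$, coincide, so that by the commutativity of blow-ups at a $p$-submanifold (Melrose) the two iterated blow-ups agree and the identity lifts to a diffeomorphism. I would do the analogous computation near $\cf\cap\dmf$ using \eqref{covector fields 3}, \eqref{three cone phase variables 3}, the change of variables after \eqref{three cone phase variables 4}, and the identification \eqref{three cone blow up where 2}; here one uses $\hat x^{\tindex} = x^{\tindex}/\bdf$ as the resolving coordinate, and the relevant center is again $\{\bdf=0\}\cap\{\tau^{\tindex}_{\mathrm{res}}=\mu^{\tindex}_{\mathrm{res}}=0\}\cap\{\text{fiber infinity}\}$.

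Once the diffeomorphism of the underlying manifolds is established, the identification of the boundary hypersurfaces is bookkeeping: the lifts of ${^{\mathrm{d3sc}}S^{\ast}}\Xd$, $\overline{^{\mathrm{d3sc}}T^{\ast}}_{\dmf}\Xd$, $\overline{^{\mathrm{d3sc}}T^{\ast}}_{\dff}\Xd$ correspond to $\{1/|(\tau^{\tindex},\mu^{\tindex})|=0\}$, $\{\hat x^{\tindex}=0\}$, $\{\bbdf=0\}$ respectively in the coordinates above, and one reads off directly that these are the lifts of the corresponding three-cone faces, while the new front face on the left (resolving $\beta_{\mathrm{d3sc}}^{\ast}\overline{\tscf^{-1}(\sco)}$) is carried to the lift of $\overline{^{\mathrm{3co}}T^{\ast}}_{\cf}\Xd$ and the new front face on the right (resolving $\itccf(\dots)$) is carried to the lift of $\overline{^{\mathrm{d3sc}}T^{\ast}}_{\cf}\Xd$ — essentially because blowing up the three-cone cotangent bundle at its corner-face-at-fiber-infinity undoes, microlocally, the passage from scattering to b/cone structure at $\cf$, which is the whole point of the "converse perspective". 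The main obstacle I anticipate is purely computational stamina: one must carefully match the several competing fiber-coordinate conventions ($\mathrm{3sc,b}$, $\mathrm{3co,b}$, $\mathrm{3co,sc}$, $\te$, and the Euclidean versions) across the two charts and verify that the projective coordinates arising from the two iterated blow-ups literally agree, including the quadratic factors like $x_{\tindex}=\bbdf(x^{\tindex})^{2}$ and $x_{\tindex}=\bdf^{2}\hat x^{\tindex}$ that make the relation between the resolving functions nonlinear. No genuinely new idea is needed beyond the commutativity of blow-ups and the insensitivity of conormality under corner blow-ups, both already invoked in the excerpt.
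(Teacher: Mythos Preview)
Your overall plan---localize near $\cf$, split into the $\cf\cap\dff$ and $\cf\cap\dmf$ charts, and verify by matching projective coordinates---is correct and is exactly what the paper does. The paper's proof consists of writing down explicit projective coordinate systems (ten on each side, labeled (D.1)--(D.10) and (C.1)--(C.10)) arising from the two iterated blow-ups, and then observing that the rescaling $\tau^{\tindex}_{\mathrm{b}}=\tau^{\tindex}/x^{\tindex}$, $\mu^{\tindex}_{\mathrm{b}}=\mu^{\tindex}/x^{\tindex}$ (resp.\ $\tau^{\tindex}_{\mathrm{res}}=\tau^{\tindex}/\bdf$, $\mu^{\tindex}_{\mathrm{res}}=\mu^{\tindex}/\bdf$) identifies them pairwise. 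Your final paragraph anticipates precisely this computation.

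However, the framing in your middle paragraph is not right and would lead you astray if taken literally. You write that ``these two $p$-submanifolds, once one has performed the corner-type blow-up of $\overline{^{\mathrm{3sc}}T^{\ast}}_{\mathrm{mf}\cap\ff}X$, coincide, so that by the commutativity of blow-ups the two iterated blow-ups agree.'' This presupposes that $\overline{^{\mathrm{d3sc}}T^{\ast}}\Xd$ and $\overline{^{\mathrm{3co}}T^{\ast}}\Xd$ are the \emph{same} space, with the two centers sitting inside it as equal submanifolds. They are not: $\overline{^{\mathrm{3co}}T^{\ast}}\Xd$ is the fiber compactification of a genuinely different bundle (built from the three-cone vector fields), not a blow-up of $\overline{^{\mathrm{3sc}}T^{\ast}}X$. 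The fiber coordinates differ by the rescaling above, so the two compactifications are distinct manifolds with corners sharing only their interior. Commutativity of blow-ups is therefore not the operative principle; there is no common ambient space in which the two centers could ``coincide.'' What actually happens---and what you correctly anticipate at the end---is that the blow-ups of the two \emph{different} spaces at their respective centers produce diffeomorphic results, and this must be checked by direct coordinate matching, not deduced from a general lemma. Drop the commutativity language, keep the coordinate-matching plan, and your argument is the paper's.
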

\begin{proof}
The proof of this result will follow directly from computing the necessary local coordinates on both manifolds, and then realizing that they can be identified. 
\par

However, before we proceed to the actual calculations, let us remark that the interiors of $\overline{ ^{\mathrm{d3sc}}T^{\ast} } \Xd$ and $\overline{ ^{\mathrm{3co}}T^{\ast} } \Xd$, both being $T^{\ast} \mathbb{R}^{n}$, are of course naturally identified. Moreover, the lifts of $\overline{^{\mathrm{d3sc}}T^{\ast}}_{\dmf^{\circ}} \Xd$ and $\overline{^{\mathrm{3co}}T^{\ast}}_{\dmf^{\circ}} \Xd$ are both naturally identified with $\overline{ ^{\mathrm{3sc}}T^{\ast} }_{\mf^{\circ}} X$, while the lifts of $\overline{ ^{\mathrm{d3sc}}T^{\ast}
_{\dff^{\circ}}} \Xd$ and $\overline{ ^{\mathrm{3co}}T^{\ast} }_{\dff^{\circ}} \Xd$ are both naturally identified with $\overline{ ^{\mathrm{3sc}}T^{\ast} }_{\ff^{\circ}}X$. The situation is thus simplified in such a way that we need only be concerned with the situation spatially in a neighborhood of $\cf$, which will be the focus below.  
\par

We will first work from the perspective of blowing up $ \overline{^{\mathrm{d3sc}}T^{\ast}} \Xd$ at $\beta_{\mathrm{d3sc}}^{\ast} \overline{( \pi_{\ff}^{-1} ( o_{\mathcal{C}^{\tindex}} ) )}$. In doing so, we will first derive coordinates on $ \overline{ ^{\mathrm{d3sc}}T^{\ast}} \Xd$ near $\overline{ ^{\mathrm{d3sc}}T^{\ast}}_{\cf} \Xd$. \par

This is easy, since by starting from the coordinates $( \bdf, x^{\tindex}, y_{\tindex}, y^{\tindex}, \tau_{\tindex}, \mu_{\tindex}, \tau^{\tindex}, \mu^{\tindex} )$, we can construct coordinates system on $\overline{ ^{\mathrm{d3sc}}T^{\ast} } \Xd$ by introducing projective coordinates about $\{ \bdf = 0 , x^{\tindex} = 0 \}$, from which we see that
\begin{equation} \label{d3sc scattering coordiantes 1}
\bbdf,   x^{\tindex},  y_{\tindex},  y^{\tindex},  \tau_{\tindex},   \mu_{\tindex}, \tau^{\tindex},  \mu^{\tindex} 
\end{equation}
are valid coordinates in a neighborhood of ${ ^{\mathrm{d3sc}}T^{\ast}_{\cf \cap \dff} } \Xd$. Here $\bbdf$ is a local defining function for ${ ^{\mathrm{d3sc}}T^{\ast}_{\dff}} \Xd$ and $x^{\tindex}$ is a local defining function for ${^{\mathrm{d3sc}}T^{\ast}_{\cf}} \Xd$. \par

To derive coordinates which are valid near ${^{\mathrm{d3sc}}T^{\ast}_{\cf \cap \dmf}}\Xd$, it would be better to start from the coordinates $( \bdf, x^{\tindex}, y_{\tindex}, y^{\tindex}, \tau_{\tindex}^{\mathrm{sf}} , \mu_{\tindex}, \tau^{\tindex}, \mu^{\tindex} )$ introduced in (\ref{three-body covector fields from the other direction}), where we have also used (\ref{OG tau}) and (\ref{revised notation 3co 1.5}) to simplify the notations. Then by introducing projective coordinates about $\{ \bdf = 0, x^{\tindex} = 0 \}$, we see that
\begin{equation} \label{d3sc scattering coordinates 2}
\bdf,  \hat{x}^{\tindex}, y_{\tindex},  y^{\tindex},  \tau_{\tindex}^{\mathrm{sf}}, \mu_{\tindex},   \tau^{\tindex},  \mu^{\tindex}
\end{equation}
are valid coordinates in a neighborhood of ${ ^{\mathrm{d3sc}}T^{\ast}_{\cf \cap \dmf} }\Xd$. Here $\hat{x}^{\tindex}$ is a defining function for ${ ^{\mathrm{d3sc}}T^{\ast}_{\dmf}} \Xd$ and $\bdf$ is a defining function for ${ ^{\mathrm{d3sc}}T^{\ast}_{\cf}} \Xd$. 
\par

Now, notice that $o_{\mathcal{C}^{\tindex}} \subset { ^{\mathrm{sc}}T^{\ast} X^{\tindex} }$  is given in local coordinates by
\begin{equation*}
o_{\mathcal{C}^{\tindex}} = \{ x^{\tindex} = 0, | (\tau^{\tindex}, \mu^{\tindex} )| = 0 \},
\end{equation*}
thus we locally have
\begin{equation} \label{where to blow up 1}
 \tscf^{-1}( o_{\mathcal{C}^{\tindex}} ) = \{ \bdf = x^{\tindex} = 0, |(\tau^{\tindex}, \mu^{\tindex} )| = 0 \},
\end{equation} 
where we recall that ${ ^{\mathrm{3sc}}\pi_{\ff} }$ is the projection defined in (\ref{three-body cotangent bundle natural projection}). Then by lifting (\ref{where to blow up 1}) further to $\overline{ ^{\mathrm{d3sc}}T^{\ast} } \Xd$, we have that 
\begin{gather}
\beta_{\mathrm{d3sc}}^{\ast}( \tscf^{-1} ( o_{\mathcal{C}^{\tindex}} ) ) = \{ x^{\tindex} = 0, \tau^{\tindex} = 0, \mu^{\tindex} = 0 \}, \ \text{resp} \label{first lifted zero section} \\
\beta_{\mathrm{d3sc}}^{\ast}( \tscf^{-1} ( o_{\mathcal{C}^{\tindex}} ) ) = \{ \bdf = 0 , \tau^{\tindex} = 0, \mu^{\tindex} = 0 \}. \label{second lifted zero section}
\end{gather}
in coordinates (\ref{d3sc scattering coordiantes 1}) resp (\ref{d3sc scattering coordinates 2}). 
\par 
We proceed to introduce projective coordinates about (\ref{first lifted zero section}). By the standard abuse of notations, this will produce coordinates of the forms
\begin{gather} 
 \bbdf, x^{\tindex}, y_{\tindex},  y^{\tindex},  \tau_{\tindex},  \mu_{\tindex},  \frac{\tau^{\tindex}}{x^{\tindex}},  \frac{\mu^{\tindex}}{x^{\tindex}}, \tag{D.1} \label{D.1} \\ 
\bbdf, \frac{x^{\tindex}}{| ( \tau^{\tindex}, \mu^{\tindex} ) |}, y_{\tindex}, y^{\tindex}, \tau_{\tindex}, \mu_{\tindex}, \frac{\tau^{\tindex}}{| ( \tau^{\tindex}, \mu^{\tindex} ) |}, \frac{\mu^{\tindex}}{| ( \tau^{\tindex}, \mu^{\tindex} ) |},  | ( \tau^{\tindex}, \mu^{\tindex} ) |. 
\tag{D.2} \label{D.2}
\end{gather}
On the other hand, introducing projective coordinates about (\ref{second lifted zero section}) will produce coordinates of the forms
\begin{gather}
 \bdf, \hat{x}^{\tindex}, y_{\tindex},  y^{\tindex},  \tau_{\tindex}^{\mathrm{sf}},  \mu_{\tindex},  \frac{\tau^{\tindex}}{ \bdf },  \frac{\mu^{\tindex}}{ \bdf }, \tag{D.3}  \label{D.3} \\ 
\frac{ \bdf }{| ( \tau^{\tindex}, \mu^{\tindex} ) |}, \hat{x}^{\tindex}, y_{\tindex}, y^{\tindex}, \tau_{\tindex}^{\mathrm{sf}}, \mu_{\tindex}, \frac{\tau^{\tindex}}{| ( \tau^{\tindex}, \mu^{\tindex} ) |}, \frac{\mu^{\tindex}}{| ( \tau^{\tindex}, \mu^{\tindex} ) |}, |( \tau^{\tindex}, \mu^{\tindex} )|. \tag{D.4} \label{D.4}
\end{gather} \par 
Next, we will consider coordinates near the fiber infinity. Starting from the coordinates system $( \bdf, x^{\tindex}, y_{\tindex}, y^{\tindex}, \tau_{\tindex}, \mu_{\tindex}, \tau^{\tindex}, \mu^{\tindex} )$ once again, and assume that $|( \tau_{\tindex}, \mu_{\tindex} )|$ is dominating, then there are natural coordinates on $\overline{ ^{\mathrm{3sc}}T^{\ast}}X$ of the form
\begin{equation*}
\bdf, x^{\tindex},  y_{\tindex},  y^{\tindex},  \frac{1}{|( \tau_{\tindex}, \mu_{\tindex} )|}, \frac{\tau_{\tindex} }{|( \tau_{\tindex}, \mu_{\tindex} )|},  \frac{\mu_{\tindex} }{|( \tau_{\tindex}, \mu_{\tindex} )|},  \frac{\tau^{\tindex} }{|( \tau_{\tindex}, \mu_{\tindex} )|},  \frac{\mu^{\tindex} }{|( \tau_{\tindex}, \mu_{\tindex} )|},
\end{equation*}
where $|( \tau_{\tindex}, \mu_{\tindex} )|^{-1}$ defines fiber infinity. Thus, coordinates on $\overline{ ^{\mathrm{d3sc}}T^{\ast}}\Xd$ near $\overline{ ^{\mathrm{d3sc}}T^{\ast} }_{\cf} \Xd$ can be found by introducing projective coordinates about $\{ \bdf = 0 , x^{\tindex} = 0 \}$. It follows that
\begin{equation} \label{d3sc scattering coordiantes 3}
\bbdf, x^{\tindex},  y_{\tindex},  y^{\tindex},  \frac{1}{|( \tau_{\tindex}, \mu_{\tindex} )|}, \frac{\tau_{\tindex} }{|( \tau_{\tindex}, \mu_{\tindex} )|},  \frac{\mu_{\tindex} }{|( \tau_{\tindex}, \mu_{\tindex} )|},  \frac{\tau^{\tindex} }{|( \tau_{\tindex}, \mu_{\tindex} )|},  \frac{\mu^{\tindex} }{|( \tau_{\tindex}, \mu_{\tindex} )|}
\end{equation}
are valid coordinates in a neighborhood of $\overline{ ^{\mathrm{d3sc}}T^{\ast}}_{\cf \cap \dff} \Xd$. Here $\bbdf$ is a local defining function for $\overline{ ^{\mathrm{d3sc}}T^{\ast}}_{\dff} \Xd$ and $x^{\tindex}$ is a local defining function for $\overline{ ^{\mathrm{d3sc}}T^{\ast}}_{\cf} \Xd$. \par

Alternatively, if we instead start from the coordinates system $( \bdf, x^{\tindex}, y_{\tindex}, y^{\tindex}, \tau_{\tindex}^{\mathrm{sf}}, \mu_{\tindex}, \tau^{\tindex}, \mu^{\tindex} )$, and suppose that $|( \tau_{\tindex}^{\mathrm{sf}}, \mu_{\tindex} )|$ is dominating, then there are natural coordinates on $\overline{ ^{\mathrm{3sc}}T^{\ast} }X$ of the form
\begin{equation*} 
\bdf, x^{\tindex},  y_{\tindex},  y^{\tindex},  \frac{1}{|( \gtau , \mu_{\tindex} )|}, \frac{\gtau }{|( \gtau, \mu_{\tindex} )|},  \frac{\mu_{\tindex} }{|( \gtau, \mu_{\tindex} )|},  \frac{\tau^{\tindex} }{|( \gtau, \mu_{\tindex} )|},  \frac{\mu^{\tindex} }{|( \gtau, \mu_{\tindex} )|},
\end{equation*}
where $|( \gtau, \mu_{\tindex} )|^{-1}$ defines fiber infinity. By introducing projective coordinates about $\{ \bdf = 0, x^{\tindex} = 0 \}$, we find that
\begin{equation} \label{d3sc scattering coordiantes 4}
\bdf, \hat{x}^{\tindex},  y_{\tindex},  y^{\tindex},  \frac{1}{|( \gtau , \mu_{\tindex} )|}, \frac{\gtau }{|( \gtau, \mu_{\tindex} )|},  \frac{\mu_{\tindex} }{|( \gtau, \mu_{\tindex} )|},  \frac{\tau^{\tindex} }{|( \gtau, \mu_{\tindex} )|},  \frac{\mu^{\tindex} }{|( \gtau, \mu_{\tindex} )|}.
\end{equation}
are valid coordinates in a neighborhood of $\overline{ ^{\mathrm{d3sc}}T^{\ast} }_{\cf \cap \dmf}$. Here $\hat{x}^{\tindex}$ is a local defining function for $\overline{ ^{\mathrm{d3sc}}T^{\ast} }_{\dmf} \Xd$ and $\bdf$ is a local defining function for $\overline{ ^{\mathrm{d3sc}}T^{\ast} }_{\cf} \Xd$. 
\par

Locally, it is easy to see that
\begin{gather*}
\overline{ \tscf^{-1}( o_{\mathcal{C}^{\tindex}} ) } = \left\{ \bdf = x^{\tindex} = 0, \frac{ \tau^{\tindex} }{|( \tau_{\tindex}, \mu_{\tindex} )|} = 0, \frac{ \mu^{\tindex} }{|( \tau_{\tindex}, \mu_{\tindex} )|} = 0 \right\}, \ \text{resp} \\
\overline{ \tscf^{-1}( o_{\mathcal{C}^{\tindex}} ) } = \left\{ \bdf = x^{\tindex} = 0, \frac{ \gtau }{|( \tau_{\tindex}, \mu_{\tindex} )|} = 0, \frac{ \mu^{\tindex} }{|( \gtau, \mu_{\tindex} )|} = 0 \right\}
\end{gather*}
in coordinates (\ref{d3sc scattering coordiantes 3}) resp (\ref{d3sc scattering coordiantes 4}). Thus we also have
\begin{gather}
\beta_{\mathrm{d3sc}}^{\ast} ( \overline{ \tscf^{-1}( o_{\mathcal{C}^{\tindex}} ) } ) = \left\{ x^{\tindex}  = 0, \frac{ \tau^{\tindex} }{|( \tau_{\tindex}, \mu_{\tindex} )|} = 0, \frac{ \mu^{\tindex} }{|( \tau_{\tindex}, \mu_{\tindex} )|} = 0  \right\}, \ \text{resp} \label{third lifted zero section} \\
\beta_{\mathrm{d3sc}}^{\ast} ( \overline{ \tscf^{-1}( o_{\mathcal{C}^{\tindex}} ) } ) = \left\{ \bdf  = 0, \frac{ \tau^{\tindex} }{|( \gtau, \mu_{\tindex} )|} = 0, \frac{ \mu^{\tindex} }{|( \gtau, \mu_{\tindex} )|} = 0  \right\} \label{fourth lifted zero section}
\end{gather}
in coordinates (\ref{d3sc scattering coordiantes 3}) resp (\ref{d3sc scattering coordiantes 4}). \par

We proceed to introduce projective coordinates about (\ref{third lifted zero section}). Then by the standard abuse of notations once again, this will produce coordinates of the form 
\begin{equation}
\bbdf, x^{\tindex}, y_{\tindex}, y^{\tindex}, \frac{1}{|( \tau_{\tindex}, \mu_{\tindex} )|}, \frac{\tau_{\tindex} }{|( \tau_{\tindex}, \mu_{\tindex} )|},  \frac{\mu_{\tindex} }{|( \tau_{\tindex}, \mu_{\tindex} )|}, \frac{\tau^{\tindex} }{ x^{\tindex} |( \tau_{\tindex}, \mu_{\tindex} )|} ,  \frac{\mu^{\tindex} }{ x^{\tindex} |( \tau_{\tindex}, \mu_{\tindex} )|}, \tag{D.5} \label{D.5} 
\end{equation}
and
\begin{equation}
\begin{gathered}
\bbdf, \frac{x^{\tindex} | ( \tau_{\tindex}, \mu_{\tindex} ) | }{ | ( \tau^{\tindex}, \mu^{\tindex} ) | }, y_{\tindex}, y^{\tindex}, \\
\frac{1}{|( \tau_{\tindex}, \mu_{\tindex} )|}, \frac{\tau_{\tindex}}{|(\tau_{\tindex}, \mu_{\tindex})|}, \frac{\mu_{\tindex}}{|(\tau_{\tindex},\mu_{\tindex})|}, \frac{\tau^{\tindex}}{|(\tau^{\tindex}, \mu^{\tindex})|}, \frac{\mu^{\tindex}}{|(\tau^{\tindex}, \mu^{\tindex})|}, \frac{|(\tau^{\tindex}, \mu^{\tindex})|}{|(\tau_{\tindex}, \mu_{\tindex})|}. \tag{D.6} \label{D.6}
\end{gathered}
\end{equation}
On the other hand, introducing projective coordinates about (\ref{fourth lifted zero section}) will produce coordinates of the forms
\begin{equation}
\bdf, \hat{x}^{\tindex}, y_{\tindex}, y^{\tindex}, \frac{1}{|( \gtau, \mu_{\tindex} )|}, \frac{\gtau }{|( \gtau, \mu_{\tindex} )|},  \frac{\mu_{\tindex} }{|( \gtau, \mu_{\tindex} )|}, \frac{\tau^{\tindex} }{ \bdf |( \gtau, \mu_{\tindex} )|} ,  \frac{\mu^{\tindex} }{ \bdf |( \gtau, \mu_{\tindex} )|}, \tag{D.7} \label{D.7} 
\end{equation}
and 
\begin{equation}
\begin{gathered}
\frac{ \bdf | ( \gtau, \mu_{\tindex} ) | }{ | ( \tau^{\tindex},  \mu^{\tindex} ) | }, \hat{x}^{\tindex}, y_{\tindex}, y^{\tindex}, \\
\frac{1}{|( \gtau, \mu_{\tindex} )|}, \frac{\gtau}{|(\gtau, \mu_{\tindex})|}, \frac{\mu_{\tindex}}{|(\gtau,\mu_{\tindex})|}, \frac{\tau^{\tindex}}{|(\tau^{\tindex}, \mu^{\tindex})|}, \frac{\mu^{\tindex}}{|(\tau^{\tindex}, \mu^{\tindex})|}, \frac{|(\tau^{\tindex}, \mu^{\tindex})|}{|(\gtau, \mu_{\tindex})|}. \tag{D.8} \label{D.8}
 \end{gathered}
\end{equation} \par

Alternatively, suppose we start from the coordinates systems $( \bbdf, x^{\tindex}, y_{\tindex}, y^{\tindex}, \tau_{\tindex}, \mu_{\tindex}, \tau^{\tindex}, \mu^{\tindex} )$ or $( \bdf, \hat{x}^{\tindex}, y_{\tindex}, y^{\tindex}, \gtau, \mu_{\tindex}, \tau^{\tindex}, \mu^{\tindex} )$, then we can respectively consider regions near the fiber infinity where $|(\tau^{\tindex}, \mu^{\tindex})|$ is dominating. In particular we must be away from $\beta_{\mathrm{d3sc}}^{\ast}( \overline{ \tscf^{-1}( o_{\mathcal{C}^{\tindex}} ) } )$, and so the coordinates systems
\begin{gather} \tag{D.9} \label{D.9}
\bbdf, x^{\tindex},  y_{\tindex},  y^{\tindex},  \frac{1}{|( \tau^{\tindex}, \mu^{\tindex} )|}, \frac{\tau_{\tindex} }{|( \tau^{\tindex}, \mu^{\tindex} )|},  \frac{\mu_{\tindex} }{|( \tau^{\tindex}, \mu^{\tindex} )|},  \frac{\tau^{\tindex} }{|( \tau^{\tindex}, \mu^{\tindex} )|},  \frac{\mu^{\tindex} }{|( \tau^{\tindex}, \mu^{\tindex} )|},  \\
\bdf, \hat{x}^{\tindex},  y_{\tindex},  y^{\tindex},  \frac{1}{|( \tau^{\tindex}, \mu^{\tindex} )|}, \frac{\gtau }{|( \tau^{\tindex}, \mu^{\tindex} )|},  \frac{\mu_{\tindex} }{|( \tau^{\tindex}, \mu^{\tindex} )|},  \frac{\tau^{\tindex} }{|( \tau^{\tindex}, \mu^{\tindex} )|},  \frac{\mu^{\tindex} }{|( \tau^{\tindex}, \mu^{\tindex} )|} \tag{D.10} \label{D.10}
\end{gather}
are equally valid on the blow-up as well.

\par 
This gives a complete description of the coordinates systems for the left hand side of (\ref{second microlocal diffeomorphism}) in a neighborhood of the lift of $\overline{ ^{\mathrm{d3sc}}T^{\ast}}_{\cf}\Xd$.
\par 
Next, we will switch to the perspective of blowing up $\overline{ ^{\mathrm{3co}}T^{\ast} }\Xd$ at $\itccf ( \mathcal{C}_{\tindex} \times {^{\mathrm{co}}S^{\ast} [ \hat{X}^{\tindex} ; \{ 0 \} ]} )$. Recall that the latter set has already been described in local coordinates. However, for the proof of this proposition, when working in a neighborhood of $\overline{ ^{\mathrm{3co}}T^{\ast} }_{\cf \cap \dff} \Xd$, it would actually be more convenient to start with the coordinates
\begin{equation} \tag{C.1} \label{C.1}
\bbdf, x^{\tindex}, y_{\tindex}, y^{\tindex}, \tscblt, \tscblm, \tscbut, \tscbum
\end{equation}
instead. The procedure is then the same: at regions of relevance, we can change the fiber coordinates to 
\begin{equation} \label{phase spaces are the same fiber change}
\frac{ \tscblt }{ | ( \tscbut, \tscbum ) | },  \frac{ \tscblm }{ | ( \tscbut, \tscbum ) | },  \frac{ \tscbut }{ | ( \tscbut, \tscbum ) | },  \frac{ \tscbum }{| ( \tscbut, \tscbum ) |},  \frac{1}{| ( \tscbut, \tscbum ) |},
\end{equation}
and note that $\itccf ( \mathcal{C}_{\tindex} \times {^{\mathrm{co}}S^{\ast} [ \hat{X}^{\tindex} ; \{ 0 \} ]} )$ is equivalently given in these coordinates by
\begin{equation} \label{where to blow up fiber infinity 3sc b}
\Big\{ x^{\tindex} = 0,  \frac{ \tscblt }{ |(\tscbut, \tscbum )| } = 0,  \frac{ \tscblm }{ |(\tscbut, \tscbum )| } = 0, \frac{1}{ | ( \tscbut, \tscbum ) | } = 0 \Big\} .
\end{equation} \par

Likewise, when working in a neighborhood of $\overline{ ^{\mathrm{3co}}T^{\ast} }_{\cf \cap \dmf} \Xd$, it would be more convenient to start with the coordinates
\begin{equation} \tag{C.2} \label{C.2}
\bdf, \hat{x}^{\tindex}, y_{\tindex}, y^{\tindex}, \tscreslt, \tscreslm, \tscresut, \tscresum ,
\end{equation}
after which we can make the same fiber variables change to
\begin{equation} \label{fiber variables change 3sc res}
\frac{ \tscreslt }{ | ( \tscresut, \tscresum ) | },  \frac{ \tscreslm }{ | ( \tscresut, \tscresum ) | },  \frac{ \tscresut }{ | ( \tscresut, \tscresum ) | },  \frac{ \tscresum }{| ( \tscresut, \tscresum ) |},  \frac{1}{| ( \tscresut, \tscresum ) |}.
\end{equation}
Then $\itccf ( \mathcal{C}_{\tindex} \times {^{\mathrm{co}}S^{\ast} [ \hat{X}^{\tindex} ; \{ 0 \} ]} )$ is written in (\ref{fiber variables change 3sc res}) as 
\begin{equation} \label{where to blow up fiber infinity 3sc res}
\Big\{ \bdf = 0,  \frac{ \tscreslt }{ |(\tscresut, \tscresum )| } = 0,  \frac{ \tscreslm }{ |(\tscresut, \tscresum )| } = 0,  \frac{1}{ | ( \tscresut, \tscresum ) | } = 0 \Big\} .
\end{equation}
\par 
It follows from introducing projective coordinates about (\ref{where to blow up fiber infinity 3sc b}) that there are coordinates of the forms 
\begin{gather}
\bbdf, x^{\tindex}, y_{\tindex}, y^{\tindex}, \frac{\tscblt}{ x^{\tindex} | ( \tscbut, \tscbum ) |}, \frac{\tscblm}{ x^{\tindex} | ( \tscbut, \tscbum ) | }, \frac{\tscbut}{| ( \tscbut, \tscbum ) |}, \frac{\tscbum}{| ( \tscbut, \tscbum ) |}, \frac{1}{x^{\tindex} | ( \tscbut, \tscbum ) |}, \tag{C.3} \label{C.3} \\
\bbdf, x^{\tindex} | ( \tscbut, \tscbum ) |, y_{\tindex}, y^{\tindex},  \tscblt, \tscblm, \frac{\tscbut}{| ( \tscbut, \tscbum ) |}, \frac{\tscbum}{| ( \tscbut, \tscbum ) |}, \frac{1}{| ( \tscbut, \tscbum ) |}, \tag{C.4} \label{C.4} 
\end{gather}
and 
\begin{equation}
\begin{gathered}
\bbdf, \frac{x^{\tindex} | ( \tscbut, \tscbum ) | }{ | ( \tscblt, \tscblm ) | }, y_{\tindex}, y^{\tindex}, \\
\frac{\tscblt}{| ( \tscblt, \tscblm ) |}, \frac{\tscblm}{| ( \tscblt, \tscblm ) |}, \frac{|( \tscblt, \tscblm )|}{ | ( \tscbut, \tscbum ) | }, \frac{\tscbut}{| ( \tscbut, \tscbum ) |}, \frac{\tscbum}{| ( \tscbut, \tscbum ) |}, \frac{1}{|(\tscblt, \tscblm)|}. \tag{C.5} \label{C.5}
\end{gathered}
\end{equation} 
On the other hand, by introducing projective coordinates about (\ref{where to blow up fiber infinity 3sc res}), we find that there are coordinates of the forms
\begin{equation}
\begin{gathered}
\bdf, \hat{x}^{\tindex}, y_{\tindex}, y^{\tindex}, \\ \frac{\tscreslt}{ \bdf | ( \tscresut, \tscresum ) |}, \frac{\tscreslm}{ \bdf | ( \tscresut, \tscresum ) | }, \frac{\tscresut}{| ( \tscresut, \tscresum ) |}, \frac{\tscresum}{| ( \tscresut, \tscresum ) |}, \frac{1}{ \bdf | ( \tscresut, \tscresum ) |}, \tag{C.6} \label{C.6} 
\end{gathered}
\end{equation}
and 
\begin{equation}
\bdf | ( \tscresut, \tscresum ) |, \hat{x}^{\tindex}, y_{\tindex}, y^{\tindex},  \tscreslt, \tscreslm, \frac{\tscresut}{| ( \tscresut, \tscresum ) |}, \frac{\tscresum}{| ( \tscresut, \tscresum ) |}, \frac{1}{| ( \tscresut, \tscresum ) |}, \tag{C.7} \label{C.7}
\end{equation}
as well as 
\begin{equation}  \tag{C.8} \label{C.8}
\begin{gathered}
\frac{ \bdf | ( \tscresut, \tscresum ) | }{ | ( \tscreslt, \tscreslm ) | }, \hat{x}^{\tindex},  y_{\tindex}, y^{\tindex}, \\
\frac{\tscreslt}{| ( \tscreslt, \tscreslm ) |}, \frac{\tscreslm}{| ( \tscreslt, \tscreslm ) |}, \frac{|( \tscreslt, \tscreslm )|}{ | ( \tscresut, \tscresum ) | }, \frac{\tscresut}{| ( \tscresut, \tscresum ) |}, \frac{\tscresum}{| ( \tscresut, \tscresum ) |}, \frac{1}{|(\tscreslt, \tscreslm)|}.
\end{gathered}
\end{equation} \par

Alternatively, if we start from the coordinates systems $( \bbdf, x^{\tindex}, y_{\tindex}, y^{\tindex}, \tscblt, \tscblm, \tscbut, \tscbum )$ or $( \bdf, \hat{x}^{\tindex}, y_{\tindex}, y^{\tindex}, \tscreslt, \tscreslm, \tscresut, \tscresum )$, then we can respectively consider regions near the fiber infinity where $|( \tscblt, \tscblm )|$ or $|( \tscreslt, \tscreslm )|$ dominates. Since in such regions, we must be away from $\itccf ( \mathcal{C}_{\tindex} \times {^{\mathrm{co}}S^{\ast} [ \hat{X}^{\tindex} ; \{ 0 \} ]} )$, the coordinates systems
\begin{gather}
\bbdf, x^{\tindex}, y_{\tindex}, y^{\tindex}, \frac{ \tscblt }{ | ( \tscblt, \tscblm ) | },  \frac{ \tscblm }{ | ( \tscblt, \tscblm ) | },  \frac{ \tscbut }{ | ( \tscblt, \tscblm ) | },  \frac{ \tscbum }{| ( \tscblt, \tscblm ) |},  \frac{1}{| ( \tscblt, \tscblm ) |},   \tag{C.9} \label{C.9} \\
\bdf, \hat{x}^{\tindex}, y_{\tindex}, y^{\tindex}, \frac{ \tscreslt }{ | ( \tscreslt, \tscreslm )  | },  \frac{ \tscreslm }{ | ( \tscreslt, \tscreslm )  | },  \frac{ \tscresut }{ | ( \tscreslt, \tscreslm )  | },  \frac{ \tscresum }{| ( \tscreslt, \tscreslm )  |},  \frac{1}{| ( \tscreslt, \tscreslm ) |} \tag{C.10} \label{C.10}
\end{gather}
are equality valid on the blow-up as well. \par

This gives a complete description of the coordinates systems for the right hand side of (\ref{second microlocal diffeomorphism}) in a neighborhood of the lift of $\overline{ ^{\mathrm{3co}}T^{\ast}}_{\cf}\Xd$. \par

We now proceed to connect coordinates (\ref{D.1})--(\ref{D.10}) and (\ref{C.1})--(\ref{C.10}) together. In fact, all we need is to making the scalings
\begin{gather*}
 \tscbut = \frac{\tau^{\tindex}}{x^{\tindex}} , \,  \tscbum = \frac{\mu^{\tindex}}{x^{\tindex}}, \quad \tscresut = \frac{\tau^{\tindex}}{\bdf}, \, \tscresum = \frac{\mu^{\tindex}}{\bdf},
\end{gather*}
which were also done in \S \ref{subsection definition of the three-cone bundle and the variable changes}, in particular (\ref{revised notation 3sc b 2}) and (\ref{revised notation 3sc res 2}). Then it is easy to see that the aforementioned coordinate systems correspond as follows:
\begin{gather*}
(\mathrm{\ref{D.1}}) \leftrightarrow (\mathrm{\ref{C.1}}), \ (\mathrm{\ref{D.2}}) \leftrightarrow (\mathrm{\ref{C.4}}), \ (\mathrm{\ref{D.3}}) \leftrightarrow (\mathrm{\ref{C.2}}), \ (\mathrm{\ref{D.4}}) \leftrightarrow (\mathrm{\ref{C.7}}), \ (\mathrm{\ref{D.5}}) \leftrightarrow (\mathrm{\ref{C.9}}), \\
(\mathrm{\ref{D.6}}) \leftrightarrow (\mathrm{\ref{C.5}}), \ (\mathrm{\ref{D.7}}) \leftrightarrow (\mathrm{\ref{C.10}}), \ (\mathrm{\ref{D.8}}) \leftrightarrow (\mathrm{\ref{C.8}}), \ (\mathrm{\ref{D.9}}) \leftrightarrow (\mathrm{\ref{C.3}}), \ (\mathrm{\ref{D.10}}) \leftrightarrow (\mathrm{\ref{C.6}}).
\end{gather*} \par

This concludes the proof of the proposition.
\end{proof}
Fro brevity, henceforth we will write
\begin{equation*}
\overline{^{\mathrm{d3sc,3co}}T^{\ast}} \Xd
\end{equation*}
to denote (\ref{second microlocal diffeomorphism}). Here, we emphasize that $\overline{^{\mathrm{d3sc,3co}}T^{\ast}} \Xd$ is no longer a fiber bundle in spite of the suggestive (and indeed absuive) notation. We will also write 
\begin{equation*}
\text{$^{\mathrm{d3sc,3co}}S^{\ast} \Xd$, $\overline{ ^{\mathrm{d3sc,3co}}T^{\ast}}_{\dmf} \Xd$, $ \overline{ ^{\mathrm{d3sc,3co}}T^{\ast}}_{\dff} \Xd$, $\dtsccf$, $\tcocf$}
\end{equation*}
respectively for the lifts of either
\begin{equation*}
\text{${ ^{\mathrm{d3sc}}S^{\ast} }X$, $\overline{ ^{\mathrm{d3sc}}T^{\ast} }_{\dmf}X$, $\overline{^{\mathrm{d3sc}}T^{\ast}}_{\dff}X$, $\beta_{\mathrm{d3sc}}^{\ast}( \overline{{ ^{\mathrm{3sc}}\pi_{\ff}^{-1} }( o_{\mathcal{C}_{\tindex}} )} )$, $\overline{ ^{\mathrm{d3sc}}T^{\ast} }_{\cf} X$}
\end{equation*}
to the left hand side of (\ref{second microlocal diffeomorphism}), or equivalently
\begin{equation*}
\text{${ ^{\mathrm{3co}}S^{\ast} }X$, $\overline{ ^{\mathrm{3co}}T^{\ast} }_{\dmf}X$, $\overline{^{\mathrm{3co}}T^{\ast}}_{\dff}X$, $\overline{ ^{\mathrm{3co}}T^{\ast} }_{\cf} X$, $ \itccf ( \mathcal{C}_{\tindex} \times {^{\mathrm{co}}S^{\ast} [ \hat{X}^{\tindex} ; \{ 0 \} ]} )$ }
\end{equation*}
to the right hand side of (\ref{second microlocal diffeomorphism}). We remark that the notations $\dtsccf$ and $\tcocf$ are so chosen since they can both be identified as the spatial restriction of a bundle at $\cf$ (in particular, the notation `$\overline{^{\mathrm{d3sc,3co}}T^{\ast}}_{\cf}X$' alone would not be sufficient).

\subsection{Construction of the second microlocalized operators} 
\label{subsection definition of the second microlocalized algebra}
Thus far, we have shown how one can relate $\Psi_{\mathrm{d3scc}}^{m,r,l,\nu}( \Xd )$ and $\Psi_{\mathrm{3coc}}^{m,r,l,b}( \Xd )$ at the phase space level through a blow-up. The resulting manifold (\ref{second microlocal diffeomorphism}) is then on the one hand, the required phase space for the second microlocalized operators (\ref{formally written decoupled second microlocalized operators}), which also leads to the definition of (\ref{the OG second microlocalized algebra}) through (\ref{formally written decoupled second microlocalized operators 1}); and on the other hand, an easily manageable resolution of the three-cone cotangent bundle. \par

This suggests that the operators in (\ref{formally written decoupled second microlocalized operators}) should have features of both the decoupled three-body structure as well as the conormal three-cone structure{\ep}much like how the operators in $\Psi_{\mathrm{sc,b}}^{m,r,l}( \overline{\mathbb{R}^{n}} )$, as defined in \S \ref{subsection Vasy's second microlocalized calculus}, have features of both the scattering structure as well as the conormal b-structure. \par

In this subsection, we will construct the operators (\ref{formally written decoupled second microlocalized operators}), and subsequently (\ref{the OG second microlocalized algebra}) through (\ref{formally written decoupled second microlocalized operators 1}) as well. Somewhat tautologically, and indeed following the convention established in the two-body setting \cite{AndrasSM}, we will henceforth write
\begin{equation} 
\label{deliberate mixed structure notation d3sc 3co}
\Psi_{\mathrm{d3sc,2}}^{m,r,l,\nu, b} \left( \Xd ; \beta_{\mathrm{d3sc}}^{\ast}(\overline{\tscf^{-1}(\sco)}) \right) =  \Psi_{\mathrm{d3sc,3co}}^{m,r,l,\nu,b} (X),
\end{equation}
where the subscript `d3sc,3co' on the right hand side of (\ref{deliberate mixed structure notation d3sc 3co}) reflects the aforementioned, mixed structure of the operators.
\par 

Recall from \S\S \ref{construction of three-cone overview}--\ref{subsection construction of cf} the construction of the conormal three-cone operators $\Psi^{m,r,l,b}_{\mathrm{3coc}}(\Xd)$. We will now construct (\ref{deliberate mixed structure notation d3sc 3co}) essentially by the same procedure. Such a construction will also be analogous to the construction of $\Psi^{m,r,l}_{\mathrm{sc,b}}( \overline{\mathbb{R}^{n}} )$ in \S \ref{subsection Vasy's second microlocalized calculus}, though the latter follows from the construction of $\Psi_{\mathrm{bc}}^{m,l}( \overline{\mathbb{R}^{n}} )$ instead. \par

To this end, recall that elements $\tilde{A} \in \Psi_{\mathrm{3coc}}^{m,r,l,b}(X)$ are continuous linear maps $\tilde{A} : \mathcal{S}( \mathbb{R}^{n} ) \rightarrow \mathcal{S}( \mathbb{R}^{n} )$ such that $\tilde{A}$ is determined by operators of the forms $\psi \tilde{A} \psi$, $\phi \tilde{A} \psi$, where $\phi, \psi \in \mathcal{C}^{\infty}(X)$ are cut-off functions whose supports are disjoint. Moreover, only the terms $\psi \tilde{A} \psi$ are determined by quantizations of elements in $S^{m,r,l,b}( \overline{ ^{\mathrm{3co}}T^{\ast} }X )$. As mentioned above, we will construct elements $A \in \Psi_{\mathrm{d3sc,3co}}^{m,r,l,\nu,b}(X)$ via the same procedure: We will require that each $A$ be a continuous linear map $\mathcal{S}( \mathbb{R}^{n} ) \rightarrow \mathcal{S}( \mathbb{R}^{n} )$, then we will specify operators of the forms $\psi A \psi$, $\phi A \psi$, where $\phi, \psi \in \mathcal{C}^{\infty}(X)$ are again cut-off functions with disjoint supports. We will always identify $A$ by its Schwartz kernel, which we view as some section of $^{\mathrm{3co}}\Omega_{R}X$. \par

In fact, in view of (\ref{second microlocal diffeomorphism}), we will only modify the specifications of $\psi A \psi$ (where quantizations of symbols are involved) when $\psi$ is supported in a small neighborhood of $\cf$. The remaining terms will be defined as if $A$ were an element of $\Psi_{\mathrm{3coc}}^{m,r,l,b}(X)$. \par

First, if $\psi$ is instead supported away from $\cf$, then since the right hand side of (\ref{second microlocal diffeomorphism}) is a resolution of $\overline{ ^{\mathrm{3co}}T^{\ast} }X$ only at $\overline{ ^{\mathrm{3co}}T^{\ast} }_{\cf}X$, we will still require that $\psi A \psi \in \Psi_{\mathrm{3coc}}^{m,r,l,-\infty}(X)$, or indeed more explicitly $\psi A \psi \in \Psi^{m,r, l}_{\mathrm{3scc}}( [ \overline{\mathbb{R}^{n}} ; \mathcal{C}_{\tindex} ] )$. \par

Next, we will also require that $\phi A \psi \in \Psi_{\mathrm{3coc}}^{-\infty, -\infty, l, b}(X)$. Equivalently, $\phi A \psi$ must have a smooth kernel $K_{\phi \psi}$, such that we have the following cases:
\begin{itemize}
\item If $\phi$, $\psi$ are as in the last three terms of (\ref{standard operator decomposition using partition of unity}), then $K_{\phi \psi}$ will also be rapidly decreasing;
\item If $\supp \phi$, $\supp \psi$ are contained in a small neighborhood of $\dff$, then depending on the precise regions in which $\supp \phi$, $\supp \psi$ are contained in, $K_{\phi \psi}$ must satisfy the characterizing estimates in one of the cases (1)--(3) of \S \ref{subsection construction of dff};
\item If $\supp \phi$, $\supp \psi$ are contained in a small neighborhood of $\cf$, then depending on the precise regions in which $\supp \phi$, $\supp \psi$ are contained in, $K_{\phi \psi}$ must satisfy the characterizing estimates in one of the cases (1)--(6) of \S \ref{subsection construction of cf}.
\end{itemize} \par 

Now, if $\supp \psi$ intersects $\cf \cap \dff$, then we can define $\psi A \psi$ in two (equivalent) ways:
\begin{itemize}
\item Following (\ref{the 3scb quantization written in terms of t}), we can define 
\begin{equation}
\label{d3sc3co 3cob}
\psi A \psi \coloneq B_{\psi, \mathrm{3co,b}} + R_{\psi, \mathrm{3co,b}}.
\end{equation}
Here $B_{\psi,\mathrm{3co,b}}$ is again given by the quantization (\ref{the 3scb quantization written in terms of t}), except its symbol $b_{\psi, \mathrm{3co,b}}$ must now be an element of $S^{m,-\infty,l,\nu,b}( \overline{ ^{\mathrm{d3sc,3co}}T^{\ast} } X)$. The kernel of $R_{\psi,\mathrm{3co,b}}$ is required to be smooth, and moreover must satisfy estimates (\ref{3scb part near diagonal off diagonal part}).
\item Following (\ref{on-diagonal part focuing on cf}), we can define
\begin{equation}
\label{d3sc3co 3cocob}
\psi A \psi \coloneq B_{\psi, \mathrm{3co,co,b}} + R_{\psi, \mathrm{3co,co,b}}.
\end{equation}
Here $B_{\psi, \mathrm{3co,co,b}}$ is again given by the quantization (\ref{the 3cob quantization written in terms of t}), except its symbol $b_{\psi,\mathrm{3co,co,b}}$ must now be an element of $S^{m, -\infty, l, \nu, b}( \overline{ ^{\mathrm{d3sc,3co}}T^{\ast} } X )$. The kernel of $R_{\psi, \mathrm{3co,co,b}}$ is again required to be smooth, and moreover must satisfy estimates
\end{itemize}
These two definitions are again equivalent in view of the procedures carried out in \S \ref{compatibility of three-cone operators subsection}.  \par 

Finally, if $\supp \psi$ intersects $\cf \cap \dmf$, then we will define
\begin{equation}
\label{d3sc3co 3cocosc}
\psi A \psi \coloneq B_{\psi, \mathrm{3co,co,sc}},
\end{equation}
where $B_{\psi, \mathrm{3co,co,sc}}$ is defined as in (\ref{3cosc quantization}), except that its symbol $b_{\psi,\mathrm{3co,co,sc}}$ must now belong to $S^{m, r, -\infty, \nu, b}( \overline{ ^{\mathrm{d3sc,3co}}T^{\ast}}X )$.

\par

For completeness, we will also provide concrete symbolic characterization for elements of $S^{m,r,l,\nu,b}( \overline{^{\mathrm{d3sc,3co}}T^{\ast}} \Xd )$. Even in the second microlocalized case, it would still be more natural to start from the perspective of the three-cone algebra (i.e., the converse perspective). \par

Suppose we let 
\begin{equation*}
\rho_{\infty}, \rho_{\dmf} ,  \rho_{\dff} , \rho_{\dtsccf},  \rho_{\tcocf} \in \mathcal{C}^{\infty}( \overline{ ^{\mathrm{d3sc,3co}}T^{\ast}}X )
\end{equation*}
be some boundary defining functions for 
\begin{equation*}
\text{$^{\mathrm{d3sc,3co}}S^{\ast} \Xd$, $\overline{ ^{\mathrm{d3sc,3co}}T^{\ast}}_{\dmf} \Xd$,  $\overline{ ^{\mathrm{d3sc,3co}}T^{\ast}}_{\dff} \Xd$,  $\dtsccf$,  
$\tcocf$}
\end{equation*}
respectively. For $H \in \{ \dmf, \cf, \dff \}$, let $U_{H} \subset \overline{ ^{\mathrm{3coc}}T^{\ast} } \Xd$ be some sufficiently small neighborhood of $\overline{ ^{\mathrm{3co}}T^{\ast} }_{H} \Xd$, and let $\widetilde{U}_{H}$ be the lift of $U_{H}$ to $\overline{^{\mathrm{d3sc,3co}}T^{\ast}}\Xd$. Thus in particular, $\widetilde{U}_{\cf}$ could cover $\tcocf$ as well. Then $S^{m,r,l,\nu,b}( \overline{ ^{\mathrm{d3sc,3co}}T^{\ast} } \Xd )$ consists of those $a \in \mathcal{C}^{\infty}( T^{\ast} \mathbb{R}^{n} )$ such that the following conditions are satisfied:
\begin{itemize}
    \item Near every point of $\widetilde{U}_{\cf} \backslash \widetilde{U}_{\dmf}$, we either have
    \begin{align} 
    \label{symbolc estimates at cf away from dmf for first second microlocal}
    \begin{split}
    & | \partial_{z_{\tindex}}^{\beta_{\tindex}} \partial_{t^{\tindex}}^{j} \partial_{y^{\tindex}}^{ \tscblz } \partial_{ \tscblz }^{\gamma_{\tindex}} \partial_{ \tscbut }^{k} \partial_{ \tscbum }^{\gamma^{\tindex}} a  | \\
    & \qquad \leq C_{\beta_{\tindex} j \beta^{\tindex} \gamma_{\tindex} k \gamma^{\tindex}} \rho_{\infty}^{ - m + |\gamma_{\tindex}|+k+|\gamma^{\tindex}|} \rho_{\dff}^{-l + |\beta_{\tindex}|} \rho_{\dtsccf}^{- \nu + 2 |\beta_{\tindex}| + k + |\gamma^{\tindex}| } \rho_{\zf}^{ - b + 2|\beta_{\tindex}| }
    \end{split}
\end{align}
in some coordinates of the form $( z_{\tindex}, t^{\tindex}, y^{\tindex}, \tscblz, \tscblt, \tscbut, \tscbum )$; or equivalently
    \begin{align*} 
    \label{d3sc,3co symbol estimates 2}
    \begin{split}
    &| \partial_{z_{\tindex}}^{\beta_{\tindex}} \partial_{\hat{t}_{\tindex}}^{j} \partial_{y^{\tindex}}^{\beta^{\tindex}} \partial_{\tcolz}^{\gamma_{\tindex}} \partial_{\tcobut}^{k} \partial_{\tcobum}^{\gamma^{\tindex}} a | \\
    & \qquad C_{\beta_{\tindex} j \beta^{\tindex} \gamma_{\tindex} k \gamma^{\tindex}} \rho_{\infty}^{ - m + |\gamma_{\tindex}|+k+|\gamma^{\tindex}|} \rho_{\dff}^{-l + |\beta_{\tindex}|} \rho_{\dtsccf}^{- \nu + 2 |\beta_{\tindex}| + k + |\gamma^{\tindex}| } \rho_{\zf}^{ - b + 2|\beta_{\tindex}| }
    \end{split}
    \end{align*}
    in some coordinates of the form $(z_{\tindex}, \hat{t}_{\tindex}, y^{\tindex}, \tcoblz, \tcobut, \tcobum)$.
    \item In $\widetilde{U}_{\cf} \backslash \widetilde{U}_{\dff}$, we have
    \begin{equation*}
     \label{d3sc,3co symbol estimate 3}
    | \partial_{z_{\tindex}}^{\beta_{\tindex}} \partial_{\hat{z}^{\tindex}}^{\beta^{\tindex}} \partial_{\tcolz}^{\gamma_{\tindex}} \partial_{\tcoscuz}^{\gamma^{\tindex}} a | \leq  C_{\beta_{\tindex} \beta^{\tindex} \gamma_{\tindex} \gamma^{\tindex}} \rho_{\infty}^{ - m + |\gamma_{\tindex}| + |\gamma^{\tindex}|} \rho_{\dmf}^{- r + |\beta_{\tindex}| + |\beta^{\tindex}| } \rho_{\dtsccf}^{-\nu + 2|\beta_{\tindex}| + |\gamma^{\tindex}| } \rho_{\zf}^{-b + 2 |\beta_{\tindex}|}.
    \end{equation*}
     in the coordinates $( z_{\tindex}, \hat{z}^{\tindex}, \tcolz, \tcoscuz )$.
    \item In any region that is away from $\widetilde{U}_{\cf}$, we will require that $a$ restricts to an element of $S^{m,r,l}( \overline{ ^{\mathrm{3sc}}T^{\ast} } [ \overline{\mathbb{R}^{n}} ; \mathcal{C}_{\tindex} ] )$. 
\end{itemize} \par

Finally, we are in a position to make the following definitions:
\begin{definition}[Second microlocalization of the three-body operators] \label{definition of the second microlocalized algebra} Let $m,r,l,\nu,b \in \mathbb{R}$, then we will define the space of second microlocalized three-body operators by 
\begin{equation*}
\Psi^{m,r,l,b}_{\mathrm{3sc,2}} \left( X ; \overline{\tscf^{-1}( o_{\mathcal{C}^{\tindex}} )} \, \right) \coloneq \Psi_{\mathrm{d3sc,3co}}^{m,r,l,r+l,b}( \Xd )
\end{equation*}
\end{definition}
Notice that the above definition is indeed reasonable in view of (\ref{blow up is insensitive from 3sc to d3sc second microlocalized}), i.e., by construction, the elements of $\Psi^{m,r, l, r+l, b}_{\mathrm{d3sc,3co}}(X)$ are expected to have principal symbols which are conormal to $[ \overline{\mathbb{R}^{n}} ; \mathcal{C}_{\tindex} ] ; \overline{\tscf^{-1}( \sco )} ] )$ of order $(m,r,l,b)$.

\subsection{Further resolution at fiber infinity} 
\label{a further resolution at fiber infinity}
As it turns out, there is one more resolution that we must carry out at fiber infinity in order to include operators with the desired microlocal properties. In a way, such a blow-up is designed to fix a `flaw' present in Vasy's three-body calculus, which causes a few issues. \par

Notably, Vasy needed to consider symbols which depend only on the free variables when restricted to $\overline{ ^{\mathrm{3sc}}T^{\ast} }_{\ff}[ \overline{\mathbb{R}^{n}} ; \mathcal{C}_{\tindex} ]$. This is advantageous since the indicial operators at $\ff$ of the quantizations of such symbols will then be scalar-valued, and thus  commute with any other operator. However, it is easy to see that such operators do not naturally belong to the three-body algebra. Thus in \cite[Chapter 13]{AndrasThesis}, the inclusion of such operators were shown to be very technical, making strong usage of the functional calculus $\psi( P ) \in \Psi_{\mathrm{3scc}}^{-\infty,0,0}( [ \overline{\mathbb{R}^{n}} ; \mathcal{C}_{\tindex} ] )$ (as shown in \cite[Appendix C]{AndrasThesis}). Here the smoothing property of $\psi(P)$ is crucial.  \par

The `further resolution' we consider in this subsection is precisely designed to remove the aforementioned technicality, in the sense that the indicial operators at $\dff$ (defined below in \S \ref{subsection principal symbol}) of elements in the new class of operators will naturally allow for $\mathcal{C}^{\infty}( \mathcal{C}_{\tindex} \times \overline{\mathbb{R}^{n_{\tindex}}} )$ functions. See \S \ref{subsection second microlocalization for the indicial operators dff}, and in particular Remark \ref{further resolved large-parameters microlocalized operators 2} for further explanations. In other words, one now has an appropriate sense of `microlocalization at $\dff$'. This will in particular allow for variable orders to be defined at $\dff$. See also Remark \ref{membership of variable orders at cf and dff thesis}. \par

In view of (\ref{second microlocal diffeomorphism}), the resolution we are interested here can again be understood from either the three-body perspective or the three-cone perspective. We shall consider two obvious identifications
\begin{equation}
\begin{gathered} \label{identification 3co at dff}
 \overline{ ^{\mathrm{3co}}T^{\ast} }_{\mathrm{dff}_{\tindex}} \Xd \cong \mathcal{C}_{\tindex} \times \overline{^\mathrm{b}T^{\ast} \oplus \mathbb{R}^{n_{\tindex}} }  X^{\tindex},  \\
 \overline{^{\mathrm{d3sc}}T^{\ast}}_{\mathrm{dff}_{\tindex}} \Xd \cong \mathcal{C}_{\tindex} \times \overline{ ^{\mathrm{sc}}T^{\ast} \oplus \mathbb{R}^{n_{\tindex}} } X^{\tindex}
\end{gathered}
\end{equation}
which can be understood as in the case of (\ref{a simple identification of the sphere bundle at ff in the 3co sense}). These respectively have inclusion maps
\begin{equation*}
\begin{gathered}
\itcodff: \mathcal{C}_{\tindex} \times \overline{^\mathrm{b}T^{\ast} \oplus  \mathbb{R}^{n_{\tindex}} }  X^{\tindex} \rightarrow \overline{ ^{\mathrm{3co}}T^{\ast} } \Xd, \\ 
\idtscdff : \mathcal{C}_{\tindex} \times \overline{ ^{\mathrm{sc}}T^{\ast} \oplus \mathbb{R}^{n_{\tindex}} } X^{\tindex} \rightarrow \overline{ ^{\mathrm{d3sc}}T^{\ast} } \Xd.
\end{gathered}
\end{equation*}
Thus, $\mathcal{C}_{\tindex} \times {^{\mathrm{b}}S^{\ast}} X^{\tindex}$, $\mathcal{C}_{\tindex} \times { ^{\mathrm{sc}}S^{\ast} X^{\tindex} }$ can be realized naturally as submanifolds of $\overline{ ^{\mathrm{3co}}T^{\ast} }_{\dff} \Xd$, $\overline{^{\mathrm{d3sc}}T^{\ast}}_{\dff}\Xd$ respectively{\ep}much like what was done above in \S \ref{subsection diffeomorphism of the phase spaces}. In fact, upon passing to the blow-up (\ref{second microlocal diffeomorphism}), one easily finds that the lifts of $\overline{ ^{\mathrm{3co}}T^{\ast} }_{\mathrm{dff}_{\tindex}} \Xd$ and $\overline{^{\mathrm{d3sc}}T^{\ast}}_{\mathrm{dff}_{\tindex}} \Xd$ must be diffeomorphic to each other. Subsequently, the lifts of the inclusions $\itccf( \mathcal{C}_{\tindex} \times {^{\mathrm{b}}S^{\ast}} X^{\tindex} )$, $\idtscdff( \mathcal{C}_{\tindex} \times { ^{\mathrm{sc}}S^{\ast} X^{\tindex} } )$ must be diffeomorphic to each other as well. \par

It follows that we have
\begin{align}
\begin{split} 
\label{second microlocal diffeomorphism resol}
& \left[ \overline{^{\mathrm{d3sc}}T^{\ast}} \Xd ; \beta_{\mathrm{d3sc}}^{\ast} ( \overline{\tscf^{-1}( 
{\sco} )} ) , \idtscdff( \mathcal{C}_{\tindex} \times {^{\mathrm{sc}}S^{\ast}} X^{\tindex} ) \right] \\
& \quad \quad \cong
\left[ \overline{ ^{\mathrm{3co}} T^{\ast}} \Xd  ; \itccf (\mathcal{C}_{\tindex} \times {^{\mathrm{co}}S^{\ast} [ \hat{X}^{\tindex} ; \{ 0 \} ]} ); \itcodff( \mathcal{C}_{\tindex} \times { ^{\mathrm{b}}S^{\ast} X^{\tindex} } ) \right],
\end{split}
\end{align}
where we remark that the two resolutions in the first line of (\ref{second microlocal diffeomorphism resol}) occur at disjoint sets, and therefore commute. We will refer to the new face thus created by $\rf$ (the `resolved face'). \par

For notational brevity, in the below we will simply write
\begin{equation}
\label{d3sc,3co,res cotangent bundle}
 \overline{^{\mathrm{d3sc,3co,res}}T^{\ast}} \Xd
 \end{equation}
 to denote (\ref{second microlocal diffeomorphism resol}). In spite of the notation, we will again emphasize that $\overline{^{\mathrm{d3sc,3co,res}}T^{\ast}} \Xd$ is no longer a fiber bundle. ${^{\mathrm{d3sc,3co,res}}S^{\ast}} \Xd $. Moreover, we will write
\begin{equation*}
\text{$^{\mathrm{d3sc,3co,res}}S^{\ast}X$, $\overline{^{\mathrm{d3sc,3co,res}}T^{\ast}}_{\mathrm{dmf}} \Xd$, $\overline{^{\mathrm{d3sc,3co,res}}T^{\ast}}_{\dff} \Xd$, $\dtsccf$, $\tcocf$}
\end{equation*}
respectively for the lifts of either
\begin{equation*}
\text{${ ^{\mathrm{d3sc}}S^{\ast} }X$, $\overline{ ^{\mathrm{d3sc}}T^{\ast} }_{\dmf}X$, $\overline{^{\mathrm{d3sc}}T^{\ast}}_{\dff}X$, $\beta_{\mathrm{d3sc}}^{\ast}( \overline{{ ^{\mathrm{3sc}}\pi_{\ff}^{-1} }( o_{\mathcal{C}_{\tindex}} )} )$, $\overline{ ^{\mathrm{d3sc}}T^{\ast} }_{\cf} X$}
\end{equation*}
to the first line of (\ref{second microlocal diffeomorphism resol}), or equivalently
\begin{equation*}
\text{${ ^{\mathrm{3co}}S^{\ast} }X$, $\overline{ ^{\mathrm{3co}}T^{\ast} }_{\dmf}X$, $\overline{^{\mathrm{3co}}T^{\ast}}_{\dff}X$, $\overline{ ^{\mathrm{3co}}T^{\ast} }_{\cf} X$, $ \itccf ( \mathcal{C}_{\tindex} \times {^{\mathrm{co}}S^{\ast} [ \hat{X}^{\tindex} ; \{ 0 \} ]} )$ }
\end{equation*}
to the second line of (\ref{second microlocal diffeomorphism resol}). Here, the notations $\dtsccf$ and $\tcocf$ are the same as those introduced in the discussion of the boundary faces of $\overline{^{\mathrm{d3sc,3co}}T^{\ast}}\Xd$. However, this overlap will not cause any serious confusion, since we will primarily be interested in $\overline{ ^{\mathrm{d3sc,3co,res}}T^{\ast}}\Xd$ below as our phase space below. \par

We now introduce the space of operators
\begin{equation}
\label{constant orders d3sc 3co res operators}
\Psi_{\mathrm{d3sc,3co,res}}^{m,r,l,\nu,b,s}(X)
\end{equation}
following the constructions of $\Psi_{\mathrm{d3sc,3co}}^{m,r,l, \nu, b}(X)$ in \S \ref{subsection definition of the second microlocalized algebra} (and thus the construction of $\Psi_{\mathrm{3coc}}^{m,r,l,b}(X)$ in \S\S \ref{construction of three-cone overview}--\ref{subsection construction of cf} as well), such that elements of (\ref{constant orders d3sc 3co res operators}) have symbols which belong to 
\begin{equation*}
S^{m,r, l, \nu, b,s} ( \psf \Xd ).
\end{equation*}
Here, the indices $m, r, l, \nu, b, s \in \mathbb{R}$ measure decay at 
\begin{equation*}
\text{${ ^{\mathrm{d3sc,3co,res}}S^{\ast} }\Xd$, $\psf_{\dmf} \Xd$, $\psf_{\dff}\Xd$, $\dtsccf$, $\tcocf$, $\rf$}
\end{equation*}
respectively. We will provide slightly less details for the construction of (\ref{constant orders d3sc 3co res operators}) to avoid overly repeating the discussion of \S \ref{subsection definition of the second microlocalized algebra} . \par

Thus first of all, elements $A \in \Psi^{m,r,l,\nu,b}_{\mathrm{d3sc,3co,res}}(X)$ are continuous linear maps $\mathcal{S}( \mathbb{R}^{n} ) \rightarrow \mathcal{S}( \mathbb{R}^{n} )$, which we will identify as Schwartz kernels valued in $^{\mathrm{3co}}\Omega_{R}X$. Next, let $\phi, \psi \in \mathcal{C}^{\infty}(X)$ be cut-off functions whose supports are disjoint. Then we will define $\phi A \psi \in \Psi_{\mathrm{3coc}}^{-\infty, -\infty, l ,b}(X)$ as in the construction of $\Psi_{\mathrm{d3sc,3co}}^{m,r,l,\nu,b}(X)$ \par 

For the definitions of $\psi A \psi$, we have the following cases:
\begin{itemize}
\item If $\supp \psi$ does not intersect $\cf$, then $\psi A \psi$ is given by the standard quantization (i.e., the formula (\ref{the standard quantization again})), with a symbol belonging to $S^{m,-\infty,l,-\infty,-\infty,s}(\overline{ ^{\mathrm{d3sc,3co,res}}T^{\ast} }X)$. In particular, if $\supp \psi$ does not intersect both $\cf$ and $\dff$, then $\psi A \psi \in \Psi_{\mathrm{sc}}^{m,r}(\overline{\mathbb{R}^{n}})$.
\item If $\supp \psi$ intersects $\cf \cap \dmf$, then $\psi A \psi \coloneq B_{\psi, \mathrm{3co,co,sc}}$ as in (\ref{3cosc quantization -1}) (and also (\ref{d3sc3co 3cocosc})), except that $b_{\psi, \mathrm{3co,co,sc}}$ must now belongs to $S^{m,r,-\infty,\nu,b,-\infty}( \overline{ ^{\mathrm{d3sc,3co,res}}T^{\ast} }X )$.
\item If $\supp \psi$ intersects $\cf \cap \dmf$, then $\psi A \psi$ can be equivalently defined by either 
\begin{equation*}
B_{\psi, \mathrm{3co,b}} + R_{\psi, \mathrm{3co,b}} \  \  \text{or} \ \  B_{\psi, \mathrm{3co,co,b}} + R_{\psi, \mathrm{3co,co,b}}
\end{equation*}
corresponding to (\ref{the 3scb quantization written in terms of t}) and (\ref{on-diagonal part focuing on cf}) respectively (and also (\ref{d3sc3co 3cob}), (\ref{d3sc3co 3cocob}) respectively), except that $b_{\psi, \mathrm{3co,b}}$, $b_{\psi,\mathrm{3co,co,sc}}$ must now belong to $S^{m,r,l,\nu,b,s}(\overline{ ^{\mathrm{d3sc,3co,res}}T^{\ast} }X)$.
\end{itemize}
\par

We can also write down the symbol estimates which characterizes $S^{m,r,l,\nu,b,s}( \overline{ ^{\mathrm{d3sc,3co,res}}T^{\ast} }X )$ explicitly in local coordinates. However, this will be delayed to \S \ref{the presence of variable orders section} below, where we consider more general symbol estimates with variable orders. See Definition \ref{definition of variable orders symbols} and Remark \ref{remark recover constant order symbol estimates from the variable orders one}.

\subsection{Requirements on variable orders}
\label{the presence of variable orders section}
Let us now move onto the consideration of variable orders in the second microlocalized setting, which are typically necessary if one wishes to obtain a Fredholm result for non-elliptic operators. \par

Typically at the phase space level, it would be sufficient to just specify the values of the variable orders at the respective boundary faces where the symbol map measures principal decay. The corresponding spaces of conormal symbols can then be defined independently of the smooth extensions of these variable orders. However, as we shall see below, more constraints will need to be imposed in our setting for a global (i.e., non-symbolic, as captured by the indicial operators) consideration. \par

We will proceed very generally. Thus in \S \ref{subsection construction of variable order operators} below, we will define a space of operators
\begin{equation} \label{general variable orders operators}
\Psi^{ \mathsf{m} , \mathsf{r} , \mathsf{l} , \mathsf{v}, \mathsf{b}, \mathsf{s} }_{\mathrm{d3sc,3co,res},\delta}( \Xd )
\end{equation}
depending on some sufficiently small $\delta > 0$. Here, the standard assumptions are that
\begin{equation}  
\label{not so important variable order condition}
\begin{gathered}
\mathsf{m}  \in \mathcal{C}^{\infty}( { ^{\mathrm{d3sc,3co,res}}S^{\ast}} \Xd ),  \quad \mathsf{r} \in \mathcal{C}^{\infty}( \overline{ ^{\mathrm{d3sc,3co,res}}T^{\ast} }_{\mathrm{dmf}} \Xd ),  \\
 \mathsf{v} \in \mathcal{C}^{\infty}( \dtsccf ), \quad \mathsf{s}   \in \mathcal{C}^{\infty}( \mathrm{rf}_{\tindex} ).
\end{gathered}
\end{equation}
However, for the variable orders $\mathsf{l}$ and $\mathsf{b}$, we will require much stricter, global conditions that they depend only on the free variables, i.e.,
\begin{equation}
\label{important variable order condition}
\mathsf{l}, \mathsf{b}, \in \mathcal{C}^{\infty}( \mathcal{C}_{\tindex} \times \overline{\mathbb{R}^{n_{\tindex}}} ).
\end{equation}
We will often use the same letters to denote arbitrary $\mathcal{C}^{\infty}(\psf X)$ extensions of these functions (if there is no risk of confusion) without further elaborations. 
\begin{remark} 
\label{membership of variable orders at cf and dff thesis}
Notice that the smoothness conditions (\ref{important variable order condition}) are crucial, and are really required due to the blow-ups introduced in the definition of $\overline{^{\mathrm{d3sc,3co,res}}T^{\ast}} \Xd$. Indeed, one could check this quite easily in local coordinates, though it is not hard to even see that
\begin{equation}
\begin{gathered} \label{boundary structures of the blown up dff and zf faces}
\overline{ ^{\mathrm{d3sc,3co,res}}T^{\ast} }_{\dff} \Xd \cong \mathcal{C}_{\tindex} \times [ \overline{ ^{\mathrm{sc}}T^{\ast} \oplus \mathbb{R}^{n_{\tindex}} } X^{\tindex} ; \overline{o \oplus \mathbb{R}^{n_{\tindex}}}X^{\tindex}, {^{\mathrm{sc}}S^{\ast}} X^{\tindex} ], \\
\zf \cong \mathcal{C}_{\tindex} \times [ \overline{ ^{\mathrm{co}}T^{\ast} \oplus \mathbb{R}^{n_{\tindex}} } [ \hat{X}^{\tindex} ; \{ 0 \} ]; {^{\mathrm{co}}S^{\ast}} [ \hat{X}^{\tindex} ; \{ 0 \} ] ],
\end{gathered}
\end{equation}
Now, (\ref{boundary structures of the blown up dff and zf faces}) could also be viewed as blow-ups at the translations of $ \mathcal{C}_{\tindex} \times \mathbb{R}^{n_{\tindex}}$ in the directions of the fibers of ${ ^{\mathrm{sc}}T^{\ast} }X^{\tindex}$ and ${ ^{\mathrm{co}}T^{\ast} }[ \hat{X}^{\tindex} ; \{ 0 \} ]$. Hence, smooth functions of $\mathcal{C}_{\tindex} \times \overline{\mathbb{R}^{n_{\tindex}}}$ can be naturally identified as smooth functions on $\overline{ ^{\mathrm{d3sc,3co,res}}T^{\ast}}_{\dff} \Xd$ and $\zf$ as well. 
\end{remark} 
\begin{remark} 
\label{remark different extension of variable orders l and b}
In fact, it turns out that one needs to consider different extensions of $\vol$ and $\vob$ in the construction of operators with variable orders. Namely, it would not be enough to just extend $\vol, \vob$ to be elements of $\mathcal{C}^{\infty}( \psf X )$. Instead, one must also extend $\vol, \vob$ to be elements of $\mathcal{C}^{\infty}( \overline{\mathbb{R}^{n_{\tindex}}} \times \overline{\mathbb{R}^{n_{\tindex}}} )$. See \S \ref{subsection construction of variable order operators} below.
\end{remark}
\par

We will now define the space of conormal symbols with variable orders that satisfy conditions (\ref{not so important variable order condition}) and (\ref{important variable order condition}). Suppose we let
\begin{equation*}
\rho_{\infty}, \rho_{\mathrm{dmf}}, \rho_{\dff}, \rho_{\dtsccf}, \rho_{\tcocf} ,\rho_{\mathrm{rf}_{\tindex}} \in \mathcal{C}^{\infty}( \overline{ 
^{\mathrm{d3sc,3co,res}}T^{\ast} } \Xd )
\end{equation*}
respectively be some defining functions for 
\begin{equation*}
\text{$^{\mathrm{d3sc,3co}}S^{\ast} \Xd$, $\overline{ ^{\mathrm{d3sc,3co}}T^{\ast}}_{\dmf} \Xd$, $\overline{ ^{\mathrm{d3sc,3co}}T^{\ast}}_{\dff} \Xd$, $\dtsccf$, $\tcocf$, $\rf$.} 
\end{equation*}
Then the idea is to define a space of $\mathcal{C}^{\infty}( T^{\ast} \mathbb{R}^{n} )$ functions satisfying the `minimal' symbolic estimates which are met by
\begin{equation} \label{canonical variable order symbol}
\rho_{\infty}^{- \vom } \rho_{\mathrm{dmf}}^{- \vor } \rho_{\dff}^{- \vol } \rho_{\dtsccf}^{- \vov } \rho_{\tcocf}^{- \vob } \rho_{\mathrm{rf}_{\tindex}}^{ - \vos }.
\end{equation} \par

For $H \in \{ \dmf, \cf, \dff \}$, let $U_{H} \subset \overline{ ^{\mathrm{3coc}}T^{\ast} } \Xd$ be some sufficiently small neighborhood of $\overline{ ^{\mathrm{3co}}T^{\ast} }_{H} \Xd$, and let $\widetilde{U}_{H}$ be the lift of $U_{H}$ to $\overline{^{\mathrm{d3sc,3co,res}}T^{\ast}}\Xd$. Thus in particular, $\widetilde{U}_{\cf}$ and $\widetilde{U}_{\dff}$ could respectively cover $\zf$ and $\rf$ as well.

\begin{definition}[Variable orders symbols]
\label{definition of variable orders symbols}
Let $\vom, \vor, \vov,  \vos \in \mathcal{C}^{\infty}( \psf X )$, $\vol, \vob \in \mathcal{C}^{\infty}( \mathcal{C}_{\tindex} \times \overline{\mathbb{R}^{n_{\tindex}}} )$. Then for $\delta > 0$ sufficiently small, we will define
\begin{equation*}
S^{\vom, \vor, \vol, \vov, \vob, \vos}_{\delta}(\psf X)
\end{equation*}
by those $a \in \mathcal{C}^{\infty}( T^{\ast} \mathbb{R}^{n} )$ for which the following conditions are satisfied:
\begin{itemize}
\item Near every point of $\widetilde{U}_{\cf} \backslash \widetilde{U}_{\dmf}$, we either have 
\begin{align*}
\begin{split}
&  | \partial_{z_{\tindex}}^{\beta_{\tindex}} \partial_{t^{\tindex}}^{j} \partial_{y^{\tindex}}^{\beta^{\tindex}} \partial_{ \tscblz }^{  \gamma_{\tindex} } \partial_{ \tscbut }^{k} \partial_{ \tscbum }^{\gamma^{\tindex}} a | \\
& \qquad \leq C_{\beta_{\tindex} j \beta^{\tindex} \gamma_{\tindex} k \gamma^{\tindex}} \rho_{\infty}^{ - \vom + | \gamma_{\tindex} | + k + |\gamma^{\tindex}| - \delta | ( \beta_{\tindex}, j, \beta^{\tindex}, \gamma_{\tindex}, k, \gamma^{\tindex} ) | }  \rho_{\dff}^{ - \vol + |\beta_{\tindex}| - \delta | ( \beta_{\tindex}, \gamma_{\tindex} ) |  } \\
& \qquad \quad \times \rho_{\dtsccf}^{ - \vov + 2 |\beta_{\tindex}| + k + |\gamma^{\tindex}| - \delta | ( \beta_{\tindex}, j, \beta^{\tindex}, \gamma_{\tindex}, k , \gamma^{\tindex} ) |  } \rho_{\tcocf}^{ - \vob+ 2 |\beta_{\tindex}| - \delta | ( \beta_{\tindex}, \gamma_{\tindex} ) | } \rho_{\rf}^{ - \vos + |\beta_{\tindex}| + k + |\gamma^{\tindex}| - \delta | ( \beta_{\tindex}, j , \beta^{\tindex}, \gamma_{\tindex}, k , \gamma^{\tindex} ) | }
\end{split}
\end{align*}
in some coordinates of the form $( z_{\tindex}, t^{\tindex}, y^{\tindex}, \tscblz, \tscbut, \tscbum )$; or equivalently 
\begin{align*}
\begin{split}
&  | \partial_{z_{\tindex}}^{\beta_{\tindex}} \partial_{ \hat{t}_{\tindex} }^{j} \partial_{y^{\tindex}}^{\beta^{\tindex}} \partial_{ \tcoblz }^{  \gamma_{\tindex} } \partial_{ \tcobut }^{k} \partial_{ \tcobum }^{\gamma^{\tindex}} a | \\
& \qquad \leq C_{\beta_{\tindex} j \beta^{\tindex} \gamma_{\tindex} k \gamma^{\tindex}} \rho_{\infty}^{ - \vom  + | \gamma_{\tindex} | + k + |\gamma^{\tindex}| - \delta | ( \beta_{\tindex}, j, \beta^{\tindex}, \gamma_{\tindex}, k, \gamma^{\tindex} ) | }  \rho_{\dff}^{ - \vol + |\beta_{\tindex}| - \delta | ( \beta_{\tindex}, \gamma_{\tindex} ) |  } \\
& \qquad  \quad \times \rho_{\dtsccf}^{ - \vov + 2 |\beta_{\tindex}| + k + |\gamma^{\tindex}| - \delta | ( \beta_{\tindex}, j, \beta^{\tindex}, \gamma_{\tindex}, k , \gamma^{\tindex} ) |  } \rho_{\tcocf}^{ - \vob + 2 |\beta_{\tindex}| - \delta | ( \beta_{\tindex}, \gamma_{\tindex} ) | } \rho_{\rf}^{ - \vos + |\beta_{\tindex}| + k + |\gamma^{\tindex}| - \delta | ( \beta_{\tindex}, j , \beta^{\tindex}, \gamma_{\tindex}, k , \gamma^{\tindex} ) | }
\end{split}
\end{align*}
in some coordinates of the form $(z_{\tindex}, \hat{t}_{\tindex}, y^{\tindex}, \tcoblz, \tcobut, \tcobum)$.
\item In $\widetilde{U}_{\cf} \backslash \widetilde{U}_{\dff}$, we have
\begin{align*}
\begin{split}
& | \partial_{z_{\tindex}}^{\beta_{\tindex}} \partial_{\hat{z}^{\tindex}}^{\beta^{\tindex}} \partial_{\tcosclz}^{\gamma_{\tindex}} \partial_{\tcoscuz}^{\gamma^{\tindex}} a | \\
& \qquad \leq C_{\beta_{\tindex} \beta^{\tindex} \gamma_{\tindex} \gamma^{\tindex}} \rho_{\infty}^{ 
- \vom + |\gamma_{\tindex}| + |\gamma^{\tindex}| - \delta | ( \beta_{\tindex}, \beta^{\tindex}, \gamma_{\tindex}, \gamma^{\tindex} ) | } \rho_{\dmf}^{ - \vor + |\beta_{\tindex}| + |\beta^{\tindex}| - \delta | ( \beta_{\tindex}, \beta^{\tindex}, \gamma_{\tindex}, \gamma^{\tindex} ) | } \\
& \qquad \quad \times \rho_{\dtsccf}^{- \vov + 2 |\beta_{\tindex}| + |\gamma^{\tindex}| - \delta | ( \beta_{\tindex}, \beta^{\tindex}, \gamma_{\tindex}, \gamma^{\tindex} ) | } \rho_{\tcocf}^{ - \vob + 2|\beta_{\tindex}| - \delta | ( \beta_{\tindex},  \gamma_{\tindex} ) | }
\end{split}
\end{align*}
in the coordinates $( z_{\tindex}, \hat{z}^{\tindex}, \tcolz, \tcoscuz )$.
\item In $\tilde{U}_{\dff} \backslash \tilde{U}_{\cf}$, we have
\begin{align*}
\begin{split}
 | \partial_{z_{\tindex}}^{\beta_{\tindex}} \partial_{z^{\tindex}}^{\beta^{\tindex}} \partial_{\tsclz}^{\gamma_{\tindex}} \partial_{\tscuz}^{\gamma^{\tindex}} a | & \leq C_{\beta_{\tindex} \beta^{\tindex} \gamma_{\tindex} \gamma^{\tindex}} \rho_{\infty}^{ - \vom+ |\gamma_{\tindex}| + |\gamma^{\tindex}| - \delta | ( \beta_{\tindex}, \beta^{\tindex}, \gamma_{\tindex}, \gamma^{\tindex} ) | } \\
& \quad \times \rho_{\dff}^{- \vol + |\beta_{\tindex}| - \delta |( 
\beta_{\tindex}, \gamma_{\tindex} )| } \rho_{\rf}^{ - \vos +  |\beta_{\tindex}| + |\gamma^{\tindex}| - \delta | ( \beta_{\tindex}, \beta^{\tindex}, \gamma_{\tindex}, \gamma^{\tindex} ) | }
\end{split}
\end{align*}
in the coordinates $(z_{\tindex}, z^{\tindex}, \zeta_{\tindex}, \zeta^{\tindex})$.
\item In any region that is away from both $\tilde{U}_{\cf}$ and $\tilde{U}_{\dff}$, we will require that $a$ restricts to an element of $S^{\vom,\vor}( \overline{ ^{\mathrm{sc}}T^{\ast} }  \overline{\mathbb{R}^{n}} )$. The variable orders $\vom$ and $\vor$ are implicitly understood as their restrictions to the region in question, and can naturally be realized as elements of $\mathcal{C}^{\infty}( \overline{ ^{\mathrm{sc}}T^{\ast} } \overline{\mathbb{R}^{n}} )$.
\end{itemize}
\end{definition}
\begin{remark}
\label{remark recover constant order symbol estimates from the variable orders one}
If $\vom = m$, $\vor = r$, $\vol = l$, $\vov = \nu$, $\vob = v$ and $\vos = s$ are all constants in $\mathbb{R}$, then the symbol estimates which characterize $S^{m,r,l,\nu,b,s}(\psf X)$ can be recovered by setting $\delta = 0$ (which is not allowed in the variable orders case) in the estimates appearing in Definition \ref{definition of variable orders symbols} above.
\end{remark}
\begin{remark}
The distinction between $S^{ \mathsf{m}, \mathsf{r}, \mathsf{l}, \mathsf{v}, \mathsf{b}, \mathsf{s} }_{\delta} ( \overline{^{\mathrm{d3sc,3co,res}}T^{\ast}} \Xd )$ and $S^{ m, r, l, \nu, b, s }( \overline{^{\mathrm{d3sc,3co,res}}T^{\ast}} \Xd )$ is that we require a loss of order $\delta > 0$ at ${ ^{\mathrm{d3sc,3co,res}}S^{\ast} \Xd }, { \overline{^{\mathrm{d3sc,3co,res}}T^{\ast}}_{\dmf} \Xd}$, $\dtsccf$ and $\rf$ upon differentiating the symbols in any variables (which is in line with the usual variable order symbolic estimates), but only a loss of order $\delta$ at ${ \overline{^{\mathrm{d3sc,3co,res}}T^{\ast}}_{\dff} \Xd}$ and $\zf$ upon differentiating any free variables. This is necessary due to the additional requirements (\ref{important variable order condition}).
\end{remark}
\begin{remark}
It is easy to see that (\ref{canonical variable order symbol}) indeed belongs to $S^{ \mathsf{m}, \mathsf{r} , \mathsf{l}, \mathsf{v}, \mathsf{b} , \mathsf{s} }_{\delta}( \overline{ 
^{\mathrm{d3sc,3co,res}}T^{\ast} } \Xd )$. Moreover, we have
\begin{equation*}
S^{ \mathsf{m} , \mathsf{r}, \mathsf{l}, \mathsf{v} , \mathsf{b}, \mathsf{s} }_{\delta}( \overline{ 
^{\mathrm{d3sc,3co,res}}T^{\ast} } \Xd ) =   \rho_{\infty}^{- \vom } \rho_{\mathrm{dmf}}^{- \vor } \rho_{\dff}^{- \vol } \rho_{\dtsccf}^{- \vov } \rho_{\tcocf}^{-\vob} \rho_{\mathrm{rf}_{\tindex}}^{-\vos} S^{0,0,0,0,0}_{\delta}( \overline{ 
^{\mathrm{d3sc,3co,res}}T^{\ast} } \Xd ).
\end{equation*}
In fact, the above could also be taken as an equivalent definition of variable order symbols, assuming that the right hand side has already been defined. 
\end{remark}

\begin{remark}
In fact, it is not necessary to have the same small constant $\delta > 0$ corresponding to each boundary face. Thus in the most general case, one can define variable order symbols with respect to six independent, sufficiently small constants, $\delta_1, ..., \delta_{6} > 0$. These two classes of symbols are then equivalent in the sense that by choosing the constants to be small enough, one can always be included in the other. One can eliminate this arbitrariness by requiring that the symbolic estimates hold for all choices of small enough $\delta > 0$, i.e., by intersecting the variable orders spaces over all small $\delta > 0$. We will not pursue such a construction.
\end{remark}

\begin{remark}
It is also not necessary to have all of the orders to be variable dependent, particularly for the purpose of just constructing \emph{a} Fredholm map. For example, and indeed as mentioned in the introduction, typically in the two-body scattering theory, one takes $\mathsf{m} = m$ to be constant. The same can be done in the present, three-body case as well. Moreover, if a variable order corresponding to a specific boundary face of $\psf \Xd$ is in fact constant, then one could also remove the loss of small $\delta > 0$ decay at that face when differentiating. 
\end{remark}

\begin{remark}
\label{the serious product structure in the second microlocalized, resolved case remark}
In Remark \ref{product type description remark three-cone case}, we have already explained how elements of $S^{m,r,l,b}( \overline{ ^{\mathrm{3co}}T^{\ast}} \Xd )$ can be thought of as having `product-type' symbolic structures in the base variables near $\overline{^{\mathrm{3co}}T^{\ast}}_{\dff} \Xd$ and $\overline{^{\mathrm{3co}}T^{\ast}}_{\cf} \Xd$. The `further resolution' introduced in \S \ref{a further resolution at fiber infinity} improves these properties. Indeed, it is a straightforward (albeit cumbersome, and therefore be omitted) exercise to check in local coordinates that the elements $a \in S^{m, r, l, b, s}( \psf \Xd )$ (i.e., just the constant orders cases), upon restricting to small neighborhoods of $\psf_{\dff} \Xd$ and $\psf_{\cf} \Xd$, enjoy suitable product-type symbolic behaviors in both the base and frequency variables.  \par

Concretely, in a neighborhood of $\psf_{\dff} X$, the symbolic structure of $a$ is that of the conormal structure for $\overline{\mathbb{R}^{n_{\tindex}}} \times \overline{\mathbb{R}^{n_{\tindex}}} \times \overline{ ^{\mathrm{sc,b}}T^{\ast} }X^{\tindex}$. Here $\overline{\mathbb{R}^{n_{\tindex}}} \times \overline{\mathbb{R}^{n_{\tindex}}}$ should be thought of as a typical scattering cotangent bundle. Likewise, in a neighborhood of $\psf_{\cf} \Xd$, the symbolic structure of $a$ is that of the product structure for $\overline{\mathbb{R}^{n_{\tindex}}} \times \overline{\mathbb{R}^{n_{\tindex}}} \times \overline{ ^{\mathrm{co}} T^{\ast}} [ \hat{X}^{\tindex} ; \{ 0 \} ]$. The situation becomes slightly more complicated in the presences of variable orders. 
\end{remark}

\subsection{Second microlocalized operators with variable orders} 
\label{subsection construction of variable order operators}
In this subsection, we will construct the space of operators (\ref{general variable orders operators}). This construction will be more involved than that of $\Psi^{m,r,l, \nu, b}_{\mathrm{d3sc,3co}}( \Xd )$ and $\Psi^{m,r,l,\nu,b,s}_{\mathrm{d3sc,3co,res}}( \Xd )$. Indeed, due to the presences of the variable orders $\vol, \vob$, we now have to modify the three-cone operators at the global level (as opposed to just modifying symbols of quantizations, which only occurs `near-diagonally') as well. \par

Let $\vom, \vor, \vov,  \vos \in \mathcal{C}^{\infty}( \psf X )$, $\vol, \vob \in \mathcal{C}^{\infty}( \mathcal{C}_{\tindex} \times \overline{\mathbb{R}^{n_{\tindex}}} )$, and $\delta > 0$ be sufficiently small. Then we will first of all require that elements of 
\begin{equation*}
A \in \Psi^{ \mathsf{m} , \mathsf{r} , \mathsf{l} , \mathsf{v}, \mathsf{b}, \mathsf{s} }_{\mathrm{d3sc,3co,res},\delta}(X)
\end{equation*}
be continuous linear maps 
\begin{equation*}
A : \mathcal{S}( \mathbb{R}^{n} ) \rightarrow \mathcal{S}( \mathbb{R}^{n} ).
\end{equation*}
It follows that we can henceforth identify $A$ with its Schwartz kernel, which we will again assume to be a section of ${ ^{\mathrm{3co}}\Omega_{R} }X$. \par 

Let $\tilde{\psi}_{0}, \tilde{\psi}_{\dff}, \tilde{\psi}_{\cf} \in \mathcal{C}^{\infty}( \Xd )$ be chosen as in \S \ref{overview of three-cone bundle subsection}, i.e., we will assume that $\tilde{\psi}_{\dff} + \tilde{\psi}_{\cf} + \tilde{\psi}_0 = 1$, $\tilde{\psi}_{\dff}$, $\tilde{\psi}_{\cf}$ are supported near $\dff$ and $\cf$ respectively, and $\tilde{\psi}_0$ is supported away from $\dff \cup \cf$. Let also ${\psi}_0, {\psi}_{\dff}, {\psi}_{\cf} \in \mathcal{C}^{\infty}(\Xd)$ be cut-off functions with the same properties as $\tilde{\psi}_{0}$, $\tilde{\psi}_{\dff}$ and $\tilde{\psi}_{\cf}$ respectively, and assume additionally that they are respectively identically $1$ on $\supp \tilde{\psi}_0$, $\supp \tilde{\psi}_{\dff}$ and $\supp \tilde{\psi}_{\cf}$. In particular, $\psi_{\dff}$, $\psi_{\cf}$ can be (and will indeed be chosen to be) cut-off functions at $\dff$ and $\cf$ respectively.

Now, as expected, we will require that
\begin{equation*}
\psi_0 A \psi_0 \in \Psi_{\mathrm{sc},\delta}^{\vom, \vor}( \overline{\mathbb{R}^{n}} ).
\end{equation*}
Here, the variable orders $\vom$ and $\vor$ are implicitly understood as their restrictions to the support of $\psi_0$, and can thus naturally be identified as elements of $\mathcal{C}^{\infty}( \overline{ ^{\mathrm{sc}}T^{\ast} } \overline{\mathbb{R}^{n}} )$. We will also require that
\begin{equation*} 
\text{$( 1 - {\psi}_{\bullet}) A \tilde{\psi}_{\bullet}$ has rapidly decaying smooth kernels}, \quad \bullet = 0, \dff, \cf.
\end{equation*}
Thus in view of (\ref{standard operator decomposition using partition of unity}), it will be enough for the definition of $A$ if we can construct the terms $\psi_{\dff} A \psi_{\dff}$, $\psi_{\cf} A \psi_{\cf}$. We will approach these constructions respectively based on the procedures outlined in \S\S \ref{subsection partial quantization near dff},  \ref{subsection partial quantization near cf} (as opposed to \S\S \ref{subsection construction of dff}, \ref{subsection construction of cf}, which would be necessary in order to incorporate the variable orders). \par 

Consider the case near $\dff$. To this end, let $\phi, \psi \in \mathcal{C}^{\infty}(\Xd)$ be chosen as in the beginning of \S \ref{subsection construction of dff}. Then we will need to define all terms of the forms $\psi A \psi$, $\phi A \psi$. If $\supp \psi$ intersects $\cf \cap \dff$, e.g., if $\psi$ is a cut-off function at some point of 
$\cf \cap \dff$, then we will define
\begin{equation} 
\label{variable orders definition of the operator near dff 1}
\psi A \psi \coloneq B_{\mathrm{3co,b}, \psi} + R_{\mathrm{3co,b}, \psi}.
\end{equation}
Here, the term $B_{\mathrm{3co,b}, \psi}$ will again be defined by the formula (\ref{the 3scb quantization written in terms of t}), with the exception that its symbol $b_{\mathrm{3co,b}, \psi}$ now belongs to $S^{\vom, -\infty , \vol, \vov, \vob, \vos}_{\delta}( \psf \Xd )$. Likewise, if we instead assume that $\supp \psi$ intersects $\cf \cap \dmf$, then we will define 
\begin{equation*}
\psi A \psi \coloneq B_{\mathrm{3sc}, \psi},
\end{equation*}
where $B_{\mathrm{3sc}, \psi}$ is the standard quantization (i.e., in Euclidean coordinates $(z,\zeta)$) of a symbol $b_{\mathrm{3sc}, \psi} \in S^{\vom, -\infty, \vol, -\infty, \vob, \vos}_{\delta}( \psf \Xd )$. Alternatively, we can also view $B_{\mathrm{3co,b},\psi}$ and $B_{\mathrm{3sc}, \psi}$ respectively as partial quantizations through (\ref{B 3co b as partial quantization}) and (\ref{B 3sc as partial quantization}), with their operator-valued symbols given by (\ref{quantization term is a partial quantization B3cob}) and (\ref{quantization term is a partial quantization B3sc}).  \par

Consider now the `off-diagonal' terms of $A$ near $\dff$. These include the term $R_{\mathrm{3co,b}, \psi}$ which appears in (\ref{variable orders definition of the operator near dff 1}), as well as the terms $K_{\mathrm{3co,b}, \phi, \psi} \coloneq \phi A \psi$. We will define $R_{\mathrm{3co,b}, \psi}$, $K_{\mathrm{3co,b}, \phi, \psi}$ by (\ref{R 3co b as partial quantization}) and (\ref{K 3co b as partial quantization}) respectively. In particular, the operator-valued symbols of $R_{\mathrm{3co,b}}$, $K_{\mathrm{3co,b}, \phi, \psi}$ will be denoted by $\hat{R}_{\mathrm{3co,b}, \psi}$, $\hat{K}_{\mathrm{3co,b}, \phi, \psi}$ respectively, which we now proceed to construct. \par

As discussed already in Remark \ref{remark different extension of variable orders l and b}, a major complication which occurs in the considerations of $R_{\mathrm{3co,b}, \psi}$, $K_{\mathrm{3co,b}, \phi, \psi}$ in the variable order setting is that we must now consider different extensions of variable orders, i.e., we must now extend $\vol, \vob, \in \mathcal{C}^{\infty}( \mathcal{C}_{\tindex} \times \overline{\mathbb{R}^{n_{\tindex}}} )$ into elements of $\mathcal{C}^{\infty}( \overline{\mathbb{R}^{n_{\tindex}}} \times \overline{\mathbb{R}^{n_{\tindex}}} )$, instead of elements of $\mathcal{C}^{\infty}(\psf \Xd)$. \par

We first remark that the variable orders $\vom, \vor, \vol, \vos$ are irrelevant in the considerations of $R_{\mathrm{3co,b}, \psi}$ and $K_{\mathrm{3co,b}, \phi, \psi}$, since these terms are by construction `trivial' at the boundary faces of $\psf \Xd$ where $\vom, \vor, \vol, \vos$ measure principal decay. \par

However, $\hat{R}_{\mathrm{3co,b},\psi}$, $\hat{K}_{\mathrm{3co,b}, \phi, \psi}$ will indeed be relevant in measuring principal decay at the (slightly more global, i.e., not in the phase space sense) faces $\dff$ (in the sense that their restrictions as $|z_{\tindex}|^{-1} = x_{\tindex} \rightarrow 0$ are components of the `indicial operator at $\dff$') and $\cf$. \par

Nevertheless, since $\hat{R}_{\mathrm{3co,b}, \psi}$, $\hat{K}_{\mathrm{3co,b}, \phi, \psi}$ are not microlocalized in the interaction variables, it no longer make sense to extend $\vol, \vob$ into smooth functions on $\psf \Xd$, whenever they are discussed in relation to $\hat{R}_{\mathrm{3co,b},\psi}$, $\hat{K}_{\mathrm{3co,b}, \phi, \psi}$. Instead, we will always extend $\vol, \vob$ into elements of $\mathcal{C}^{\infty}( \overline{\mathbb{R}^{n_{\tindex}}} \times \overline{ \mathbb{R}^{n_{\tindex}} } )$ in this context without further elaborations.
\par

We finally present the symbolic conditions which $\hat{R}_{\mathrm{3co,b}, \psi}$, $\hat{K}_{\mathrm{3co,b}, \phi, \psi}$ must satisfy. First, they must be smooth in all of the variables. Moreover, let $x_{\dff} \in \mathcal{C}^{\infty}(\Xd)$ be some global defining functions for $\dff$, and let $x_{\mathcal{C}^{\tindex}} \in \mathcal{C}^{\infty}(X^{\tindex})$ be a boundary defining function. Then we will require that $\hat{R}_{\mathrm{3co,b}, \psi}$ satisfies the estimates
\begin{align}
\label{construction of the second microlocalized operators with variable order R 3co b estimates}
\begin{split}
| \partial_{z_{\tindex}}^{\beta_{\tindex}} \partial_{\tscblz}^{\gamma_{\tindex}} \partial_{t^{\tindex}}^{j} \partial_{y^{\tindex}}^{\beta^{\tindex}} \partial_{(t^{\tindex})'}^{j'} \partial_{(y^{\tindex})'}^{ ( \beta^{\tindex} )' } \hat{R}_{\mathrm{3co,b}, \psi} | \leq {} & C_{ \beta_{\tindex} \gamma_{\tindex} j \beta^{\tindex} j' (\beta^{\tindex})' N LM } x_{\dff}^{ - \vol  -\delta | ( \beta_{\tindex}, \gamma_{\tindex} ) |} x_{\mathcal{C}^{\tindex}}^{ - \vob -\delta | ( \beta_{\tindex}, \gamma_{\tindex} ) |} \\
&  \langle z_{\tindex} \rangle^{ -|\beta_{\tindex}| } \langle \lz \rangle^{-N} \langle y^{\tindex} - (y^{\tindex})' \rangle^{-L} e^{  - M | t^{\tindex} - (t^{\tindex})' | }
\end{split}
\end{align}
for all $N, L, M \in \mathbb{R}$. These estimates can be thought of as the variable orders equivalences of (\ref{quote and quote symbolic estiamtes}) (i.e., they are the same estimates, except now one incurs a loss of order $\delta$ with respect to the weights $x_{\dff}$, $x_{\mathcal{C}^{\tindex}}$ upon differentiating in $(z_{\tindex}, \tscblz)$). \par

\begingroup
\allowdisplaybreaks
Likewise, corresponding respectively to the cases (1)--(3) discussed in \S \ref{subsection partial quantization near dff}, we will require that $\hat{K}_{\mathrm{3co,b}, \phi, \psi}$ satisfies the estimates 
\begin{align*}
| \partial_{z_{\tindex}}^{\beta_{\tindex}} \partial_{\tscblz}^{\gamma_{\tindex}} \partial_{z^{\tindex}}^{\beta^{\tindex}} \partial_{ (z^{\tindex})' }^{ ( \beta^{\tindex} )' } \hat{K}_{\mathrm{3co,b}, \phi, \psi} | \leq {} & C_{ \beta_{\tindex} \beta_{\tindex}' \beta^{\tindex} ( \beta^{\tindex} )' N } x_{\dff}^{- \vol -\delta | ( \beta_{\tindex}, \gamma_{\tindex} ) |} \langle z_{\tindex} \rangle^{-|\beta_{\tindex}|}   \langle \lz \rangle^{-N} \\
| \partial_{z_{\tindex}}^{\beta_{\tindex}} \partial_{\tscblz}^{\gamma_{\tindex}} \partial_{t^{\tindex}}^{j} \partial_{y^{\tindex}}^{\gamma^{\tindex}} \partial_{(z^{\tindex})'}^{(\beta^{\tindex})'} \hat{K}_{\mathrm{3co,b}, \phi, \psi} | \leq {} & C_{  \beta_{\tindex} \gamma_{\tindex} j \gamma^{\tindex} ( \beta^{\tindex} )' NM } x_{\dff}^{- \vol -\delta | ( \beta_{\tindex}, \gamma_{\tindex} ) | } (x_{\mathcal{C}^{\tindex}}')^{M} \langle z_{\tindex} \rangle^{ -|\beta_{\tindex}| }   \langle \lz \rangle^{-N}  \\
| \partial_{z_{\tindex}}^{\beta_{\tindex}} \partial_{\tscblz}^{\gamma_{\tindex}} \partial_{t^{\tindex}}^{j} \partial_{y^{\tindex}}^{\gamma^{\tindex}} \partial_{(t^{\tindex})'}^{j'} \partial_{(y^{\tindex})'}^{(\gamma^{\tindex})'} \hat{K}_{\mathrm{3co,b}, \phi, \psi} | \leq {} & C_{ \beta_{\tindex} \gamma_{\tindex}' j \gamma^{\tindex} j' (\gamma^{\tindex})' NM } \\
& \times x_{\dff}^{-\vol -\delta | ( \beta_{\tindex}, \gamma_{\tindex} ) |} x_{\mathcal{C}^{\tindex}}^{-\vob -\delta | ( \beta_{\tindex}, \gamma_{\tindex} ) |}  \langle z_{\tindex} \rangle^{ -|\beta_{\tindex}| }   \langle \lz \rangle^{-N} e^{  - M|t^{\tindex} - (t^{\tindex})' |}.
\end{align*}
for all $N,M, L \in \mathbb{R}$. These estimates are the variable order equivalences of (\ref{estimates for operator valued symbol near dff constant order K}).  \par 
\endgroup

Next we consider the situation near $\cf$. Let $\phi, \psi \in \mathcal{C}^{\infty}( \Xd )$ be chosen as in the beginning of \S \ref{subsection construction of cf}. We then need to define terms of the forms $\psi A \psi$, $\phi A \psi$. Assume that $\supp \psi$ intersects $\cf \cap \dff$, e.g., if $\psi$ is a cut-off function at some point of $\cf \cap \dff$. Then we will define
\begin{equation}
\label{variable orders definition of the operator near cf 1}
\psi A \psi \coloneq B_{\mathrm{3co,co,b}, \psi} + R_{\mathrm{3co,co,b}, \psi}.
\end{equation}
Here, the term $B_{\mathrm{3co,co,b}, \psi}$ will again be defined by the formula (\ref{the 3cob quantization written in terms of t}), with the exception that its symbol $b_{\mathrm{3co,co,b},\psi}$ now belongs to $S^{\vom, -\infty , \vol, \vov, \vob, \vos}_{\delta}( \psf \Xd )$. Likewise, if we instead assume that $\supp \psi$ is supported away from $\cf \cap \dff$, then we will define
\begin{equation*}
\psi A \psi \coloneq B_{\mathrm{3co,co,sc}, \psi},
\end{equation*}
where $B_{\mathrm{3co,co,sc}, \psi}$ is defined by the formula (\ref{3cosc quantization}), with a symbol $b_{\mathrm{3co,co,sc},\psi}$ that now belongs to $S^{\vom, \vor, -\infty, \vov, \vob, - \infty }( \psf \Xd )$. Alternatively, we can also view $B_{\mathrm{3co,co,b},\psi}$ and $B_{\mathrm{3co,co,sc},\psi}$ respectively as partial quantizations through (\ref{B 3co co b as partial quantization}) and (\ref{B 3co co sc as partial quantization}), with their operator-valued symbols given by (\ref{quantization term is a partial quantization B 3co co b}) and (\ref{quantization term is a partial quantization B 3co co sc}).

Finally, we will consider the `off-diagonal' terms of $A$ near $\cf$. These include the term $\hat{R}_{\mathrm{3co,co,b},\psi}$ which appears in (\ref{variable orders definition of the operator near cf 1}), as well as the terms $K_{\mathrm{3co,co}, \phi, \psi} \coloneq \phi A \psi$. We will define $R_{\mathrm{3co,co,b},\psi}$, $K_{\mathrm{3co,co}, \phi, \psi}$ by (\ref{R 3co co b as partial quantization}) and (\ref{K 3co co as partial quantization}) respectively. In particular, the operator-valued symbols of $R_{\mathrm{3co,co,b}, \psi}$, $K_{\mathrm{3co,co}, \phi, \psi}$ will be denoted by $\hat{R}_{\mathrm{3co,co,b}, \psi}$ and $\hat{K}_{\mathrm{3co,co}, \phi, \psi}$ respectively, which we now proceed to construct. \par

As before, we will let $\vol, \vob$ be extended into elements of $\mathcal{C}^{\infty}( \overline{\mathbb{R}^{n_{\tindex}}} \times \overline{\mathbb{R}^{n_{\tindex}}} )$, and this will henceforth be assumed without further elaboration in any discussion concerning $\hat{R}_{\mathrm{3co,co,b}, \psi}$ and $\hat{K}_{\mathrm{3co,co}, \phi, \psi}$. Now, we will require that $\hat{R}_{\mathrm{3co,co,b}, \psi}$, $\hat{K}_{\mathrm{3co,co}, \phi, \psi}$ be smooth in all the variables. Moreover, $\hat{R}_{\mathrm{3co,co,b}, \psi}$ must satisfy the estimates 
\begin{align}
\label{symbolic requirement for R hat 3co b}
\begin{split}
& | \partial_{z_{\tindex}}^{\beta_{\tindex}} \partial_{ \tcottlz }^{\gamma_{\tindex}} \partial_{\hat{t}_{\tindex} }^{j} \partial_{y^{\tindex}}^{\gamma_{\tindex}}  \partial_{\hat{t}_{\tindex}'}^{j'}  \partial_{ (y^{\tindex})' }^{\gamma_{\tindex}'} \hat{R}_{\mathrm{3co,co,b}, \psi}  | \\
& \qquad \leq C_{\beta_{\tindex} \gamma_{\tindex} j j' \gamma_{\tindex} \gamma_{\tindex}'NML} x_{\cf}^{ - \vob -\delta | ( \beta_{\tindex}, \gamma_{\tindex} ) |} x_{\mathcal{C}^{\tindex}_{0}}^{- \vol -\delta | ( \beta_{\tindex}, \gamma_{\tindex} ) |}  \langle z_{\tindex} \rangle^{-|\beta_{\tindex}| } \langle \tcottlz \rangle^{-N} \langle y^{\tindex} - ( y^{\tindex} )' \rangle^{-L} e^{ -M| \hat{t}_{\tindex} - \hat{t}_{\tindex}' |}
\end{split}
\end{align}
for all $N,M, L \in \mathbb{R}$. These estimates are the variable order equivalences of (\ref{symbolic estimate operator valued symbol R near cf}).
\par 

\begingroup
\allowdisplaybreaks

Likewise, corresponding respectively to the cases (1)--(6) discussed in \S \ref{subsection partial quantization near cf}, we will require that $\hat{K}_{\mathrm{3co,co}, \phi, \psi}$ satisfies the estimates  
\begin{align*}
| \partial_{z_{\tindex}}^{\beta_{\tindex}}  \partial_{\tcottlz}^{ \gamma_{\tindex}} \partial_{ \hat{z}^{\tindex}}^{\beta^{\tindex}} \partial_{( \hat{z}^{\tindex})'}^{ (\beta^{\tindex})' } \hat{K}_{\mathrm{3co,co}, \phi, \psi} | \leq {} & C_{ \beta_{\tindex} \gamma_{\tindex} \beta^{\tindex} ( \beta^{\tindex} )' N } x_{\cf}^{-\vob -\delta |( \beta_{\tindex}, \gamma_{\tindex} )| } \langle z_{\tindex} \rangle^{-|\beta_{\tindex}| }  \langle \tcottlz \rangle^{-N} , \\
| \partial_{z_{\tindex}}^{\beta_{\tindex}}  \partial_{ \tcottlz }^{ \beta_{\tindex}' } \partial_{\hat{t}_{\tindex}}^{j} \partial_{y^{\tindex}}^{\gamma^{\tindex}} \partial_{ (\hat{z}^{\tindex})' }^{ (\beta^{\tindex})' }\hat{K}_{\mathrm{3co,co}, \phi, \psi} | \leq {} & C_{\beta_{\tindex} \gamma_{\tindex} j \gamma^{\tindex} ( \beta^{\tindex} )' NML} \\
& \times x_{\cf}^{ -\vob- \delta | ( \beta_{\tindex}, \gamma_{\tindex} ) | } x_{\mathcal{C}^{\tindex}_{0}}^{M} ( x_{\mathcal{C}^{\tindex}_{\infty}}' )^{L}  \langle z_{\tindex} \rangle^{-|\beta_{\tindex}|}   \langle \tcottlz \rangle^{-N} ,  \\ 
| \partial_{z_{\tindex}}^{\beta_{\tindex}}  \partial_{ \tcottlz }^{ \beta_{\tindex}' } \partial_{\hat{t}_{\tindex}}^{j} \partial_{y^{\tindex}}^{\gamma^{\tindex}} \partial_{ (\hat{z}^{\tindex})' }^{ (\beta^{\tindex})' }\hat{K}_{\mathrm{3co,co}, \phi, \psi}| \leq {} & C_{\beta_{\tindex} \gamma_{\tindex} j \gamma^{\tindex} ( \beta^{\tindex} )' N M}  x_{\cf}^{- \vob - \delta | ( \beta_{\tindex}, \gamma_{\tindex} ) | } (x_{\mathcal{C}^{\tindex}_{0}}')^{M} \langle z_{\tindex} \rangle^{-|\beta_{\tindex}|  }   \langle \tcottlz \rangle^{-N} , \\ 
| \partial_{z_{\tindex}}^{\beta_{\tindex}}  \partial_{ 
\tcottlz }^{\gamma_{\tindex}}  \partial_{ \hat{z}^{\tindex}}^{\beta^{\tindex}} \partial_{( \hat{z}^{\tindex})'}^{ (\beta^{\tindex})' } \hat{K}_{\mathrm{3co,co}, \phi, \psi}| \leq {} & C_{ \beta_{\tindex} \gamma_{\tindex} \beta^{\tindex} ( \beta^{\tindex} )' N M}  x_{\cf}^{ -\vob - \delta | ( \beta_{\tindex}, \gamma_{\tindex} ) | } (x_{\mathcal{C}^{\tindex}_{\infty}}')^{M} \langle z_{\tindex} \rangle^{-|\beta_{\tindex}|  }  \langle \tcottlz \rangle^{-N}, \\
| \partial_{z_{\tindex}}^{\beta_{\tindex}}  \partial_{ \tcottlz }^{\gamma_{\tindex}}  \partial_{ \hat{z}^{\tindex}}^{\beta^{\tindex}} \partial_{( \hat{z}^{\tindex})'}^{ (\beta^{\tindex})' } \hat{K}_{\mathrm{3co,co}, \phi, \psi} |  \leq {} & C_{ \beta_{\tindex} \gamma_{\tindex} \beta^{\tindex} ( \beta^{\tindex} )' N M M' } \\
&   \times  x_{\cf}^{ - \vob - \delta | ( \beta_{\tindex}, \gamma_{\tindex} ) | }  x_{\mathcal{C}^{\tindex}_{\infty}}^{M} (x_{\mathcal{C}^{\tindex}_{\infty}}')^{M'} \langle z_{\tindex} \rangle^{-|\beta_{\tindex}| }  \langle \tcottlz \rangle^{-N} ,  \\
 | \partial_{z_{\tindex}}^{\beta_{\tindex}}  \partial_{ \tcottlz }^{ \gamma_{\tindex} }   \partial_{ \hat{t}_{\tindex} }^{j} \partial_{y^{\tindex}}^{\gamma^{\tindex}} \partial_{ \hat{t}_{\tindex}' }^{j'}  \partial_{ (y^{\tindex})' }^{(\gamma^{\tindex})'} \hat{K}_{\mathrm{3co,co}, \phi, \psi} | \leq {} & C_{\beta_{\tindex} \gamma_{\tindex} j \gamma_{\tindex} j' \gamma_{\tindex}'NM} \\
 & \times x_{\cf}^{ - \vob -\delta | ( \beta_{\tindex}, \gamma_{\tindex} ) |}  x_{\dff}^{ - \vol -\delta | ( \beta_{\tindex}, \gamma_{\tindex} ) |}  \langle z_{\tindex} \rangle^{-|\beta_{\tindex}| }  \langle \tcottlz \rangle^{-N} e^{ -M| \hat{t}_{\tindex} - \hat{t}_{\tindex}' |}.
\end{align*}
for any $N,L,M, M' \in \mathbb{R}$. These estimates are the variable order equivalences of (\ref{symbolic estimate operator valued symbol K near cf}). \par 
\endgroup

\begin{remark}
If $\vom = m$, $\vor = r$, $\vol = l$, $\vov = \nu$, $\vob = v$ and $\vos = s$ are all constants in $\mathbb{R}$, then by setting $\delta = 0$ in the above construction (in particular replacing $S_{\delta}^{m,r,l,\nu,b,s}( \psf \Xd )$ with $S^{m,r,l,\nu,b,s}( \psf \Xd )$, see Remark \ref{remark recover constant order symbol estimates from the variable orders one}), we would have recovered exactly the construction of the operators $\Psi_{\mathrm{d3sc,3co,res}}^{m,r,l,\nu,b,s}(\Xd)$.
\end{remark}

\begin{remark}
\label{remark after the construction of the calculus variable order}
It will also be convenience to package the above constructions slightly more densely. Namely, suppose that we write
\begin{equation*}
A_{\psi_{\dff}} \coloneq \psi_{\dff} A \psi_{\dff}, \quad A_{\psi_{\cf}} \coloneq \psi_{\cf} A \psi_{\cf},
\end{equation*}
where $\psi_{\dff}, \psi_{\cf} \in \mathcal{C}^{\infty}(\Xd)$ are cut-off functions at $\dff$, $\cf$ respectively. Then $A_{\psi_{\dff}}$, $A_{\psi_{\cf}}$ can also be written as partial quantizations of some $\hat{A}_{\psi_{\dff}}$, $\hat{A}_{\psi_{\cf}}$ through (\ref{operator valued symbol near dff 2}) and (\ref{three-cone case writing A psicf as a partial quantization}) respectively. However, unlike in Propositions \ref{membership of operator valued symbol near dff} and \ref{membership of operator valued symbol near cf}. The memberships of $\hat{A}_{\psi_{\dff}}$ and $\hat{A}_{\psi_{\cf}}$ are more difficult to characterize, as they belong to families of highly resolved operators. \par

For example, as we have discussed already in Remark \ref{the serious product structure in the second microlocalized, resolved case remark}, the symbol of $\hat{A}_{\psi_{\dff}}$ can be locally identified as those which are conormal to $\overline{\mathbb{R}^{n_{\tindex}}} \times \overline{\mathbb{R}^{n_{\tindex}}} \times \overline{^{\mathrm{sc,b}}T^{\ast}}X^{\tindex}$. However, away from the (lifted) diagonal in the interaction variables (i.e., the variables in $(X^{\tindex})^2$), $\hat{A}_{\psi_{\dff}}$ becomes residual in the total regularity sense. In other words, the residual terms of $\hat{A}_{\psi_{\dff}}$, e.g., terms of the form $\phi^{\tindex} \hat{A}_{\psi_{\dff}} \psi^{\tindex}$, where $\phi^{\tindex}, \psi^{\tindex} \in \mathcal{C}^{\infty}(X^{\tindex})$ are such that $\supp \phi^{\tindex} \cap \supp \psi^{\tindex} = \emptyset$, no longer respect the aforementioned product structure. This discussion also extends to the operators $\hat{A}_{\psi_{\cf}}$ in the obvious way. For these reasons, we shall reframe from characterizing the memberships of $\hat{A}_{\psi_{\dff}}$, $\hat{A}_{\psi_{\cf}}$ in details.
\end{remark}

\begin{remark}
Let $A \in \Psf^{\vom, \vor, \vol, \vov, \vob, \vos}(\Xd)$. Then it is also easy to see that
\begin{equation} 
\label{a different definition of operator valued symbols in the variable order setting}
\hat{A}_{\psi_{\dff}} \coloneq x_{\dff}^{-\vol} x_{\cf}^{-\vob} (\hat{A_0})_{\psi_{\dff}}, \ \hat{A}_{\psi_{\cf}} \coloneq x_{\dff}^{-\vol} x_{\cf}^{-\vob} (\hat{A_0})_{\psi_{\cf}},
\end{equation}
for some $A _{0} \in \Psf^{\vom, \vor, 0, \vov - \vob, 0, \vos - \vol}( \Xd )$. Here the membership of $A_0$ is due to the fact that 
\begin{equation*}
x_{\dff} \simeq \rho_{\dff} \rho_{\rf}, \quad x_{\cf} \simeq \rho_{\tcocf} \rho_{\dtsccf}.
\end{equation*}
Moreover, we are assuming that $\vol, \vob \in \mathcal{C}^{\infty}( \overline{\mathbb{R}^{n_{\tindex}}} \times \overline{\mathbb{R}^{n_{\tindex}}} )$, though it would be enough if we just extend $\vol, \vob \in \mathcal{C}^{\infty}( \mathcal{C}_{\tindex} \times \overline{\mathbb{R}^{n_{\tindex}}} )$ by dilation in $z_{\tindex}$ (thus the extensions only belong to $\mathcal{C}^{\tindex} ( \overline{\mathbb{R}^{n_{\tindex}}} \backslash \{ 0  \} \times \overline{\mathbb{R}^{n_{\tindex}}} )$). Indeed, by construction, it is obvious that $\hat{A}_{\psi_{\dff}}$, $\hat{A}_{\psi_{\cf}}$ must be supported in a small neighborhood of $\mathcal{C}_{\tindex} \times \overline{\mathbb{R}^{n_{\tindex}}}$ in their parameters. 
\end{remark}

\subsection{Compatibility for the variable orders operators} \label{Variable orders compatibility subsection}
As in the discussion of \S \ref{compatibility of three-cone operators subsection}, there is again the questions of whether or not the above constructions satisfy suitable compatibility conditions. Moreover, by the Kuranishi trick, it is only worth investigating such questions for those terms that are not defined by a quantization. \par 

In fact, for brevity, we will only discuss how compatibility can be shown between the terms $R_{\mathrm{3sc,b}, \psi}$ and $R_{\mathrm{3co,b}, \psi}$. Compatibility for the rest of the terms will follow from a similar calculation. \par 

As kernels, it is more convenient (and indeed illustrative) to write $R_{\mathrm{3co,b}, \psi}$ in coordinates $( z_{\tindex}, z_{\tindex}', x^{\tindex}, y^{\tindex}, (x^{\tindex})', (y^{\tindex})' )$ and $R_{\mathrm{3co,co,b}, \psi}$ in coordinates $( z_{\tindex}, z_{\tindex}', \bbdf, y^{\tindex}, \bbdf', (y^{\tindex})' )$. Recall that the transformation laws between these two coordinates are given by
\begin{equation*}
x^{\tindex} = \Big( \frac{ 1 }{ |z_{\tindex}| \bbdf} \Big)^{1/2}, \quad (x^{\tindex})' = \Big( \frac{ 1 }{|z_{\tindex}'| \bbdf'} \Big)^{1/2}.
\end{equation*} 
Now, let $\varphi \in \mathcal{C}^{\infty}_{c}( [0, \infty) )$ be a cut-off function at $1$, and write
\begin{equation*}
\varphi_{1} \coloneq \varphi\Big( \frac{|z_{\tindex}|}{|z_{\tindex}'|} \Big), \quad \varphi_{2} \coloneq ( 1 - \varphi ) \Big( \frac{|z_{\tindex}|}{|z_{\tindex}'|} \Big).
\end{equation*}
Suppose that we make the above coordinates change to $\varphi_{1} R_{\mathrm{3co,b}, \psi}$, and set
\begin{equation*}
{G}_{\mathrm{3co,b}, \psi} ( z_{\tindex}, z_{\tindex}', \bbdf, y^{\tindex}, \bbdf', (y^{\tindex})' ) \coloneq  \varphi_1 R_{\mathrm{3co,b}, \psi} \Big( z_{\tindex}, z_{\tindex}', \Big( \frac{ 1 }{ |z_{\tindex}| \bbdf} \Big)^{1/2}, y^{\tindex}, \Big( \frac{ 1 }{|z_{\tindex}'| \bbdf'} \Big)^{1/2}, (y^{\tindex})'  \Big).
\end{equation*}
Then we need to show that $G_{\mathrm{3co,b}, \psi}$ satisfies the defining properties of $R_{\mathrm{3co,co,b}, \psi}$ as in (\ref{symbolic requirement for R hat 3co b}). \par

In doing so, let $\hat{R}_{\mathrm{3co,b}, \psi}$ be the operator-valued symbol of $R_{\mathrm{3co,b}, \psi}$, as defined in (\ref{construction of the second microlocalized operators with variable order R 3co b estimates}). Then by relabeling $\tscodlz$ into $\tcoodlz$, we see that
\begin{equation} \label{compatibility variable changed formula}
{G}_{\mathrm{3co,b}, \psi} = \frac{1}{(2\pi)^{n_{\tindex}}} \int_{\mathbb{R}^{n_{\tindex}}} e^{ i (z_{\tindex} - z_{\tindex}') \cdot \tcoodlz } \hat{G}_{\mathrm{3co,b}, \psi}  ( z_{\tindex}, z_{\tindex}', \tcoodlz ) d\tcoodlz,
\end{equation}
where we have defined here that 
\begin{equation*}
\hat{G}_{\mathrm{3co,b}, \psi}( z_{\tindex}, z_{\tindex}', \tcolz ) = \varphi_1 \hat{R}_{\mathrm{3co,b}, \psi} \Big( z_{\tindex}, \tcoodlz , \Big( \frac{ 1 }{ |z_{\tindex}| \bbdf} \Big)^{1/2}, y^{\tindex}, \Big( \frac{ 1 }{|z_{\tindex}'| \bbdf'} \Big)^{1/2}, (y^{\tindex})'  \Big).
\end{equation*}
Motivated by the methods in treating the scattering calculus, we will apply a version of the usual `symbolic left reduction' to (\ref{compatibility variable changed formula}) as a quantization in the partial variables. \par

To explain how this can be carried out, let us note that in more conventional settings, such as in the setting of the scattering calculus, what one really requires is a reasonable Frech\'et structure on the space of symbols, such that a notion of convergence can be discussed in this topology. In the current context, we instead have the operator-valued symbols. Nevertheless, locally we can still find natural Frech\'et structures which can be imposed on these operator-valued symbols. \par

For the current discussion, we are working in the regions where $R_{\mathrm{3co,b}, \psi}$, $R_{\mathrm{3co,co,b}, \psi}$ are relevant. Then locally, we can find a natural Frech\'et structure of orders $(\vol, \vob)$ for the class of operator-valued symbols $\hat{R}_{\mathrm{3co,b}, \psi}$. The topology of this Frech\'et structure is determined by those semi-norms defined by the optimal constants in (\ref{construction of the second microlocalized operators with variable order R 3co b estimates}). \par

It follows that the usual left reduction procedure can be carried out in this topology. This allows us to construct some
\begin{equation} \label{left reduction 3sc,b partial symbolic operator}
\hat{G}_{\mathrm{3co,b}, \psi}^{L} = \hat{G}_{\mathrm{3co,b}, \psi}^{L}( z_{\tindex}, \tcoodlz, \bbdf , y^{\tindex}, \bbdf'  , (y^{\tindex})' )
\end{equation}
which belongs to the aforementioned local Frech\'et space. Moreover, we have
\begin{equation} \label{left reduction 3sc,b partial symbolic operator condtion 2}
\hat{G}_{\mathrm{3co,b}, \psi}^{L} \sim \sum_{ \beta_{\tindex} \in \mathbb{N}_{0}^{n_{\tindex}} } \frac{1}{\beta_{\tindex}!} \left( \partial^{\beta_{\tindex}}_{\tcoodlz} D_{z_{\tindex}'}^{\beta_{\tindex}} \hat{G}_{\mathrm{3co,b}, \psi} ( z_{\tindex}, z_{\tindex}', \tcoodlz ) \right)\big|_{ z_{\tindex}' = z_{\tindex} }
\end{equation}
in the following sense: for every integer $J \geq 1$, the difference
\begin{equation*}
\hat{G}_{\mathrm{3co,b}, \psi}^{L} - \sum_{|\beta_{\tindex}| = 0}^{J-1} \frac{1}{\beta_{\tindex}!} \left( \partial^{\beta_{\tindex}}_{\tcoodlz} D_{z_{\tindex}'}^{\beta_{\tindex}} \hat{G}_{\mathrm{3co,b}, \psi} ( z_{\tindex}, z_{\tindex}', \tcoodlz ) \right)\big|_{ z_{\tindex}' = z_{\tindex} }
\end{equation*} 
belongs to the local Frech\'et space of order $( \vol - J ( 1 - 2\delta ), \vob - J ( 2 - 2 \delta. ) )$. For each $\beta_{\tindex} \in \mathbb{N}_{0}^{n_{\tindex}}$, the above construction also require
\begin{equation*}
 \left( \partial^{\beta_{\tindex}}_{\tcoodlz} D_{z_{\tindex}'}^{\beta_{\tindex}} \hat{G}_{\mathrm{3co,b}, \psi} ( z_{\tindex}, z_{\tindex}', \tcoodlz ) \right)\big|_{ z_{\tindex}' = z_{\tindex} }
\end{equation*}
to belong to the local Frech\'et space of order $( \vol - |\beta_{\tindex}| ( 1 - 2\delta ), \vob - |\beta_{\tindex}| ( 2 - 2\delta ) )$. However, this can be checked easily by direct computations. \par 

More systematically, suppose that we define
\begin{equation*}
\begin{gathered}
G_{\mathrm{3sc,b}, \psi}^{L} = \frac{1}{(2\pi)^{n_{\tindex}}} \int_{\mathbb{R}^{n_{\tindex}}} e^{i ( z_{\tindex} - z_{\tindex}' ) \cdot \tcoodlz } \hat{G}_{\mathrm{3co,b}, \psi}^{L}( z_{\tindex}, \tcoodlz ) d\tcoodlz, \\
G_{\mathrm{3co,b}, \psi, j} \coloneq \sum_{|\beta_{\tindex}| = j } \frac{1}{(2\pi)^{n_{\tindex}}} \int_{\mathbb{R}^{n_{\tindex}}} e^{ i ( z_{\tindex} - z_{\tindex}' ) \cdot \tcottlz } \frac{1}{\beta_{\tindex}!}  \left( \partial^{\beta_{\tindex}}_{\tcoodlz} D_{z_{\tindex}'}^{\beta_{\tindex}} \hat{G}_{\mathrm{3co,b}, \psi, j} ( z_{\tindex}, z_{\tindex}', \tcoodlz ) \right)\big|_{ z_{\tindex}' = z_{\tindex} } d \tcottlz.
\end{gathered}
\end{equation*}
Then we have
\begin{equation*}
G^{L}_{\mathrm{3co,b}, \psi } \sim \sum_{ j = 0 }^{\infty} G_{\mathrm{3co,b}, \psi, j}
\end{equation*}
in the sense that
\begin{equation*}
G_{\mathrm{3co,b}, \psi }^{L} - \sum_{j=0}^{J-1} G_{\mathrm{3co,b}, \psi, j} \in \Psi^{-\infty, -\infty, \mathsf{l} - J( 1 - 2\delta ), - \infty, \mathsf{b} - J(2 - 2 \delta ), -\infty }_{\mathrm{d3sc,3co,res},\delta} ( \Xd ). 
\end{equation*} \par 

This shows that the definition of $\varphi_1 R_{\mathrm{3co,b}, \psi}$ is compatible with the definition of $R_{\mathrm{3co,co,b},\psi}$. On the other hand, we can show that $\varphi_2 R_{\mathrm{3co,b},\psi}$ is rapidly decreasing. Indeed, let us write $R_{\mathrm{3co,b}, \psi, 2} \coloneq \varphi_2 R_{\mathrm{3co,b}, \psi}$. Then on the support of $R_{\mathrm{3co,b}, \psi, 2}$, we can compute that
\begin{equation*}
R_{\mathrm{3co,b}, \psi} = \frac{1}{(2\pi)^{n_{\tindex}}} \int_{\mathbb{R}^{n_{\tindex}}} e^{ i ( z_{\tindex} - z_{\tindex}' ) \cdot \zeta_{\tindex} } |z_{\tindex} - z_{\tindex}'|^{-K}  \Delta_{\zeta_{\tindex}}^{K} \hat{R}_{\mathrm{3co,b},\psi, 2} ( z_{\tindex}, \zeta_{\tindex} ) d\zeta_{\tindex}
\end{equation*}
for any positive integer $K$. The rest of the argument is standard.

\subsection{Second microlocalizing the indicial operators at $\dff$}
\label{subsection second microlocalization for the indicial operators dff} 
Thus far, we have defined the spaces of operators $\Psi_{\mathrm{d3sc,3co,res}}^{m,r,l,\nu,b,s}(\Xd)$ and their variable orders analogues. In \S \ref{Section: Microlocal properties of the second microlocalized algebra} below, we will study the microlocal properties of these operators. In particular, assuming that suitable classicality conditions are satisfied, we will define the indicial operators at $\dff$ and $\cf$. In this subsection and the next, we will describe the structures of the indicial operators.    \par

First, we will consider the indicial operators at $\dff$. As we have already mentioned in \S \ref{parameters-dependent families subsection}, indicial operators for the three-body operators in $\Psi^{m,r,l}_{\mathrm{3scc}}( [ \overline{\mathbb{R}^{n}} ; \mathcal{C}_{\tindex} ] )$ (assuming that they are defined) are families of large-parameter scattering operators in $\Psi^{m,r-l}_{\mathrm{sc,lp}}( X^{\tindex} ; \mathcal{C}_{\tindex} \times {\mathbb{R}^{n_{\tindex}}} )$. \par 

One particular characteristic for such operators is that they are `fully microlocal' in the sense that they can be defined by directly quantizing conormal symbols on
\begin{equation} \label{indicial operators large parameters scattering phase space}
\overline{ ^{\mathrm{d3sc}}T^{\ast} }_{\dff} \Xd \cong \overline{ ^{\mathrm{3sc}}T^{\ast} }_{\ff} [ \overline{\mathbb{R}^{n}} ; \mathcal{C}_{\tindex} ] \cong \mathcal{C}_{\tindex} \times \overline{ ^{\mathrm{sc}}T^{\ast} \oplus \mathbb{R}^{n_{\tindex}}}X^{\tindex}.
\end{equation}
Thus, by restricting the left hand side of (\ref{second microlocal diffeomorphism}) to the lift of $\overline{ ^{\mathrm{d3sc}}T^{\ast} }_{\dff}\Xd$, we expect that the indicial operators at $\dff$ for elements of $\Psi_{\mathrm{d3sc,3co}}^{m,r,l, \nu, b}(\Xd)$ should be microlocalized on 
\begin{equation}
\label{scattering large-parameters second microlocalized blow-up}
\overline{^{\mathrm{d3sc,3co}}T^{\ast}}_{\dff} \Xd  \cong \mathcal{C}_{\tindex} \times [ 
\overline{^{\mathrm{sc}}T^{\ast} \oplus \mathbb{R}^{n_{\tindex}} } X^{\tindex} ; \overline{ o \oplus \mathbb{R}^{n_{\tindex}} }_{\mathcal{C}^{\tindex}} X^{\tindex} ].
\end{equation}
More generally, by restricting the first line of (\ref{second microlocal diffeomorphism resol}) to the lift of $\overline{ ^{\mathrm{d3sc}}T^{\ast}}_{\dff} \Xd$, we also expect that the indicial operators at $\dff$ for elements of the `further-resolved' spaces of operators $\Psi_{\mathrm{d3sc,3co,res}}^{m,r,l, \nu, b, s}(\Xd)$ should be microlocalized on
\begin{equation}
\label{boundary class diffeomorphism 1}
\begin{gathered}
\overline{^{\mathrm{d3sc,3co,res}}T^{\ast}}_{\dff} \Xd \cong \mathcal{C}_{\tindex} \times [ \overline{^{\mathrm{sc}}T^{\ast} \oplus \mathbb{R}^{n_{\tindex}}} X^{\tindex} ; \overline{o \oplus \mathbb{R}^{n_{\tindex}}}_{\mathcal{C}^{\tindex}} X^{\tindex} , {^{\mathrm{sc}}S^{\ast}} X^{\tindex} ].
\end{gathered}
\end{equation}
However, in both cases, the new indicial operators will no longer be `fully microlocal' as in the three-body case. In fact, the resolutions which appear in (\ref{scattering large-parameters second microlocalized blow-up}) and (\ref{boundary class diffeomorphism 1}) suggest that they should also be second microlocal in nature.
\par 

We first consider the operators whose elements are microlocalized on (\ref{scattering large-parameters second microlocalized blow-up}). Since (\ref{scattering large-parameters second microlocalized blow-up}) is a resolution of (\ref{indicial operators large parameters scattering phase space}), the construction of these operators should be understood as a process of second microlocalizing $\Psi_{\mathrm{sc,lp}}( X^{\tindex} ; \mathcal{C}_{\tindex} \times {\mathbb{R}^{n_{\tindex}}} )$ at $\overline{ o \oplus \mathbb{R}^{n_{\tindex}} }_{\mathcal{C}^{\tindex}} X^{\tindex}$. Thus, with the notations we have introduced above, we are interested in constructing spaces of operators
\begin{equation}
\label{second microlocalized large-parameters scattering b space}
\Psi^{m,r,l}_{\mathrm{sc,lp,2}}( X^{\tindex} ; \mathcal{C}_{\tindex} \times {\mathbb{R}^{n_{\tindex}}} ; \overline{o \oplus \mathbb{R}^{n_{\tindex}}}_{\mathcal{C}^{\tindex}} X^{\tindex} )
\end{equation}
such that elements of (\ref{second microlocalized large-parameters scattering b space}) have symbols which belong to
\begin{equation*}
S^{m,r,l}( \overline{ ^{\mathrm{d3sc,3co}} T^{\ast}}_{\dff} \Xd ).
\end{equation*}
Under identification (\ref{scattering large-parameters second microlocalized blow-up}), $m,r, l \in \mathbb{R}$ measure decay at the lifts of the `joint fiber infinity' of $\mathcal{C}_{\tindex} \times \overline{ ^{\mathrm{sc}}T^{\ast} \oplus \mathbb{R}^{n_{\tindex}} }X^{\tindex}$, $\mathcal{C}_{\tindex} \times \overline{ ^{\mathrm{sc}}T^{\ast} \oplus \mathbb{R}^{n_{\tindex}} }_{ \mathcal{C}^{\tindex} } X^{\tindex}$, and $ \overline{ o \oplus \mathbb{R}^{n_{\tindex}} }_{\mathcal{C}^{\tindex}} X^{\tindex}$ respectively.  \par

Now, one can construct (\ref{second microlocalized large-parameters scattering b space}) by adopting the philosophy used for second microlocalizating the scattering or the three-body algebras. Namely, we can consider a `converse perspective' to the problem, which would be to instead resolve the large-parameter conormal b-operators $\Psi^{m,l}_{\mathrm{bc,lp}}( X^{\tindex} ; \mathcal{C}_{\tindex} \times {\mathbb{R}^{n_{\tindex}}} )$ in the current setting. \par

Indeed, it is not hard to see that there exists a diffeomorphism 
\begin{equation}
\label{large-parameters phase space diffeomorphism}
 [ \overline{^{\mathrm{sc}}T^{\ast} \oplus \mathbb{R}^{n_{\tindex}}} X^{\tindex} ; \overline{o \oplus \mathbb{R}^{n_{\tindex}}}_{ \mathcal{C}^{\tindex} } X^{\tindex} ] \cong [ \overline{^{\mathrm{b}}T^{\ast} \oplus \mathbb{R}^{n_{\tindex}}} X^{\tindex} ; {^{\mathrm{b}}S^{\ast}_{ \mathcal{C}^{\tindex} }} X^{\tindex} ].
\end{equation}
Here, the lifts of the `joint fiber infinities' of $\overline{ ^{\mathrm{sc}}T^{\ast} \oplus \mathbb{R}^{n_{\tindex}} } X^{\tindex}$, $\overline{ ^{\mathrm{b}}T^{\ast} \oplus \mathbb{R}^{n_{\tindex}} } X^{\tindex}$ are identified with each other; the lift of $\overline{o \times \mathbb{R}^{n_{\tindex}}} X^{\tindex}$ to the left hand side of (\ref{large-parameters phase space diffeomorphism}) identifies with the lift of $\overline{ ^{\mathrm{b}}T^{\ast} \oplus \mathbb{R}^{n_{\tindex}} }_{ \mathcal{C}^{\tindex} }X^{\tindex}$ to the right hand side of (\ref{large-parameters phase space diffeomorphism}); and the lift of $^{\mathrm{b}}S^{\ast}_{\mathcal{C}^{\tindex}}X^{\tindex}$ to the right hand side of (\ref{large-parameters phase space diffeomorphism}) identifies with the lift of $\overline{ ^{\mathrm{sc}}T^{\ast} \oplus \mathbb{R}^{n_{\tindex}} }_{ \mathcal{C}^{\tindex} }X^{\tindex}$ to the left hand side of (\ref{large-parameters phase space diffeomorphism}).  \par 

Alternatively, we could note that
\begin{equation}
\label{second microlocalized large-parameters scattering b space 1}
\overline{^{\mathrm{3co}}T^{\ast}}_{\dff}\Xd \cong \mathcal{C}_{\tindex} \times  
\overline{^{\mathrm{b}}T^{\ast} \oplus \mathbb{R}^{n_{\tindex}} }X^{\tindex}. 
\end{equation}
Thus, we expect that the indicial operators at $\dff$ for elements $A \in \Psi^{m,r,l,b}_{\mathrm{3coc}}(\Xd)$ (assuming they are defined) should naturally be microlocalized on the phase space (\ref{second microlocalized large-parameters scattering b space 1}), and moreover live in the large-parameter conormal b-algebra. See also Proposition \ref{membership of operator valued symbol near dff} for how this statement can be realized concretely by letting $|z_{\tindex}| \rightarrow \infty$ in $\hat{A}_{\psi_{\dff}}$, where $\psi_{\dff} \in \mathcal{C}^{\infty}(\Xd)$ is some cut-off function at $\dff$. \par

Thus, from the perspective of resolving $\Psi_{\mathrm{3coc}}^{m,r,l,b}(\Xd)$ and its phase space, and by restricting the right hand side of (\ref{second microlocal diffeomorphism}) to the lift of $\overline{ ^{\mathrm{3co}}T^{\ast} }_{\dff} \Xd$, it is also natural that the indicial operators at $\dff$ for elements of $\Psi_{\mathrm{d3sc,3co}}^{m, r ,l ,\nu, b}(X)$ are microlocalized on the Cartesian product between $\mathcal{C}_{\tindex}$ and the right hand side of (\ref{large-parameters phase space diffeomorphism}). \par

It follows that we can apply essentially the same procedures as adopted in \S \ref{subsection definition of the second microlocalized algebra} to construct (\ref{second microlocalized large-parameters scattering b space}). The resulting spaces of operators will thus have characteristics of both the large-parameter scattering operators as well as the large-parameter conormal b-operators, where the parameter space is $\mathcal{C}_{\tindex} \times \overline{\mathbb{R}^{n_{\tindex}}}$ in both cases. For this reason, and following the convention established by Vasy in \cite{AndrasSM}, which was also used in \S \ref{subsection definition of the second microlocalized algebra} above, we can instead write
\begin{equation} \label{second microlocalized large-parameters scattering b space 2}
\Psi^{m,r,l}_{\mathrm{sc,lp,2}}( X^{\tindex} ; \mathcal{C}_{\tindex} \times {\mathbb{R}^{n_{\tindex}}} ; \overline{o \oplus \mathbb{R}^{n_{\tindex}}}_{\mathcal{C}_{\tindex}} X^{\tindex} ) = \Psi_{\mathrm{sc,b,lp}}^{m,r,l}( X^{\tindex} ; \mathcal{C}_{\tindex} \times { \mathbb{R}^{n_{\tindex}} } ).
\end{equation} \par

However, since our primary interest is in the `further-resolved' spaces of operators, possibly with variable orders, as defined respectively in \S \ref{a further resolution at fiber infinity} and \S \ref{subsection construction of variable order operators}, we will omit discussing in details the construction of (\ref{second microlocalized large-parameters scattering b space 2}).   \par

Indeed, although (\ref{second microlocalized large-parameters scattering b space 2}) suffices for a precise characterization of the indicial operators at $\dff$ for elements in $\Psi^{m,r,l,\nu,b}_{\mathrm{d3sc,3co}}(\Xd)$, it is insufficient for characterizing those operators which are microlocalized on the phase space (\ref{boundary class diffeomorphism 1}). Nevertheless, such an analysis can still be carried out through the `converse perspective' introduced above. \par

To this see, let us note that, upon viewing $\psf X$ from the perspective of resolving $\overline{ ^{\mathrm{3co}}T^{\ast} }X$, and then restricting the second line in (\ref{second microlocal diffeomorphism resol}) to the lift of $\overline{ ^{\mathrm{d3sc}}T^{\ast} }_{\dff} \Xd$, we have
\begin{equation}
\label{boundary class diffeomorphism 2}
\psf_{\dff} X \cong \mathcal{C}_{\tindex} \times [ \overline{^{\mathrm{b}}T^{\ast} \oplus \mathbb{R}^{n_{\tindex}}} X^{\tindex} ; {^{\mathrm{b}}S^{\ast}_{\mathcal{C}^{\tindex} }} X^{\tindex}; {^{\mathrm{b}}S^{\ast}X^{\tindex} } ].
\end{equation}
Thus, (\ref{boundary class diffeomorphism 1}) and (\ref{boundary class diffeomorphism 2}) together imply that
\begin{equation}
\label{boundary class diffeomorphism 3}
\mathcal{C}_{\tindex} \times [ \overline{^{\mathrm{sc}}T^{\ast} \oplus \mathbb{R}^{n_{\tindex}}} X^{\tindex} ; \overline{o \oplus \mathbb{R}^{n_{\tindex}}}_{\mathcal{C}^{\tindex}} X^{\tindex} , {^{\mathrm{sc}}S^{\ast}} X^{\tindex} ] \cong \mathcal{C}_{\tindex} \times  [ \overline{^{\mathrm{b}}T^{\ast} \oplus \mathbb{R}^{n_{\tindex}}} X^{\tindex} ; {^{\mathrm{b}}S^{\ast}_{\mathcal{C}^{\tindex} }} X^{\tindex}; {^{\mathrm{b}}S^{\ast}X^{\tindex} } ].
\end{equation}
Here, the lifts of the `joint fiber infinity' of $ \mathcal{C_{\tindex}} \times \overline{ ^{\mathrm{sc}}T^{\ast} \oplus \mathbb{R}^{n_{\tindex}} } X^{\tindex}$ and $\mathcal{C}_{\tindex} \times \overline{ ^{\mathrm{b}}T^{\ast} \oplus \mathbb{R}^{n_{\tindex}} } X^{\tindex}$ are identified with each other; the lift of $\mathcal{C}_{\tindex} \times \overline{o \times \mathbb{R}^{n_{\tindex}}} X^{\tindex}$ to the left hand side of (\ref{boundary class diffeomorphism 3}) identifies with the lift of $\mathcal{C}_{\tindex} \times \overline{ ^{\mathrm{b}}T^{\ast} \oplus \mathbb{R}^{n_{\tindex}} }_{ \mathcal{C}^{\tindex} }X^{\tindex}$ to the right hand side of (\ref{boundary class diffeomorphism 3}); the lift of $\mathcal{C}_{\tindex} \times {^{\mathrm{b}}S^{\ast}_{\mathcal{C}^{\tindex}}X^{\tindex}}$ to the right hand side of (\ref{boundary class diffeomorphism 3}) identifies with the lift of $\mathcal{C}_{\tindex} \times \overline{ ^{\mathrm{sc}}T^{\ast} \oplus \mathbb{R}^{n_{\tindex}} }_{ \mathcal{C}^{\tindex} }X^{\tindex}$ to the left hand side of (\ref{boundary class diffeomorphism 3}); finally, the lifts of $\mathcal{C}_{\tindex} \times {^{\mathrm{sc}}S^{\ast}X^{\tindex}}$ and $\mathcal{C}_{\tindex} \times {^{\mathrm{b}}S^{\ast}X^{\tindex}}$ are identified as well. \par

More globally, the aforementioned boundary faces of (\ref{boundary class diffeomorphism 3}) can also be respectfully identified as the restrictions to $\psf_{\dff} \Xd$ of ${ ^{\mathrm{d3sc,3co,res}}S^{\ast}} \Xd$, $\tcocf$, $\dtsccf$ and $\rf$.

We now focus on the construction of spaces of operators 
\begin{equation} 
\label{further resolved large-parameters microlocalized operators}
\Psi_{\mathrm{sc,b, lp, res}}^{m,r,l,s}( X^{\tindex} ; \mathcal{C}_{\tindex} \times {\mathbb{R}^{n_{\tindex}}} )
\end{equation}
which are microlocalized on (\ref{boundary class diffeomorphism 1}), i.e., elements of (\ref{further resolved large-parameters microlocalized operators}) have symbols which belong to
\begin{equation*}
S^{m,r,l,s}( \psf_{\dff} X ).
\end{equation*}
Under identification (\ref{boundary class diffeomorphism 1}), $m,r, l,s  \in \mathbb{R}$ measure decay at the lifts of the `joint fiber infinity' of $\mathcal{C}_{\tindex} \times \overline{ ^{\mathrm{sc}}T^{\ast} \oplus \mathbb{R}^{n_{\tindex}} }X^{\tindex}$, $\mathcal{C}_{\tindex} \times \overline{ ^{\mathrm{sc}}T^{\ast} \oplus \mathbb{R}^{n_{\tindex}} }_{ \mathcal{C}^{\tindex} } X^{\tindex}$, $\mathcal{C}_{\tindex} \times \overline{ o \oplus \mathbb{R}^{n_{\tindex}} }_{\mathcal{C}^{\tindex}} X^{\tindex}$ and $\mathcal{C}_{\tindex} \times { ^{\mathrm{sc}}S^{\ast} } X^{\tindex}$ respectively. Equivalently under identification (\ref{boundary class diffeomorphism 2}), they measure decay at the lifts of of `joint fiber infinity' of $\mathcal{C}_{\tindex} \times \overline{ ^{\mathrm{b}}T^{\ast} \oplus \mathbb{R}^{n_{\tindex}}} X^{\tindex}$, $\mathcal{C_{\tindex}} \times { ^{\mathrm{b}}S^{\ast}_{\mathcal{C}^{\tindex}} }X^{\tindex}$, $\mathcal{C}_{\tindex} \times \overline{ ^{\mathrm{b}}T^{\ast} \oplus \mathbb{R}^{n_{\tindex}} }_{\mathcal{C}_{\tindex}} X^{\tindex}$ and $\mathcal{C}_{\tindex} \times { ^{\mathrm{b}}S^{\ast} X^{\tindex}}$ respectively.
\par

First, we will require that elements of (\ref{further resolved large-parameters microlocalized operators}) be continuous linear mappings $\hat{A}: \dot{\mathcal{C}}^{\infty}( X^{\tindex} ) \rightarrow \dot{\mathcal{C}}^{\infty}( X^{\tindex} )$. In particular, they can be identified with their distributional kernels, which we will realize as sections of ${ ^{\mathrm{b}}\Omega}_{R} X^{\tindex}$. \par

Next, we will construct $\hat{A}$ following the procedure outlined in \S \ref{parameters-dependent families subsection}. Let $\psi^{\tindex} \in \mathcal{C}^{\infty}( X^{\tindex} )$ be a cut-off at some point of $\partial X^{\tindex}$. Then following (\ref{large-parameters b operator construction 0.1}), we will require that 
\begin{equation}
\label{B in the constant order large parameter indicial operator b calculus -1}
\psi^{\tindex} \hat{A} \psi^{\tindex} \coloneq \hat{B}_{\psi^{\tindex}} + \hat{R}_{\psi^{\tindex}}.
\end{equation}
Here, and in the local coordinates $(y_{\tindex}, t^{\tindex}, y^{\tindex}, \tscblz , \tscbut , \tscbum) \in \mathcal{C}_{\tindex} \times ( {^{\mathrm{b}}}T^{\ast} \oplus \mathbb{R}^{n_{\tindex}} )(X^{\tindex})^{\circ}$, we define
\begin{align} 
\label{B in the constant order large parameter indicial operator b calculus}
\begin{split} 
\hat{B}_{\psi^{\tindex}} \coloneq  {} & \frac{1}{(2\pi)^{n^{\tindex}}} \int_{\mathbb{R}^{n^{\tindex}}}  e^{ - i (t^{\tindex} - (t^{\tindex})') \tscbut + i ( y^{\tindex} - (y^{\tindex})'  ) \cdot \tscbum  }  \\
& \times \varphi(t^{\tindex} - (t^{\tindex})') \varphi( |y^{\tindex} - ( y^{\tindex} )'| )  b_{ \psi^{\tindex} } ( y_{\tindex}, t^{\tindex}, y^{\tindex}, \tscblz , \tscbut , \tscbum  )  d \tscbut d\tscbum | d(t^{\tindex})' d(y^{\tindex})' |
\end{split}
\end{align} 
for some $b_{ \psi^{\tindex} } \in S^{m,r,l,s}( \psf_{\dff}\Xd )$; meanwhile, we will require the kernel of $\hat{R}_{\psi^{\tindex}}$ to be smooth in all of the variables, and satisfy additionally the estimates
\begin{equation}
\label{dff second microlocalizing indicial operator estimate on hat R}
| \partial_{y_{\tindex}}^{\beta_{\tindex}} \partial^{\gamma_{\tindex}}_{\zeta_{\tindex}} \partial_{t^{\tindex}}^{j} \partial_{y^{\tindex}}^{\beta^{\tindex}} \partial_{(t^{\tindex})'}^{j'} \partial_{(y^{\tindex})'}^{(\beta^{\tindex})'} \hat{R}_{\psi^{\tindex}} | \leq C_{\alpha \sigma j \beta^{\tindex} j' (\beta^{\tindex})'} x_{\mathcal{C}^{\tindex}}^{-l} \langle \zeta_{\tindex} \rangle^{-N} \langle y^{\tindex} - (y^{\tindex})' \rangle^{-M} e^{-L| t^{\tindex} - (t^{\tindex})' |}
\end{equation}
for all $N,M, L \in \mathbb{R}$, where $x_{\mathcal{C}^{\tindex}} \in \mathcal{C}^{\infty}( X^{\tindex} )$ is a boundary defining function. \par 

If instead $\psi^{\tindex} \in \mathcal{C}^{\infty}(X^{\tindex})$ is supported away from $\mathcal{C}^{\tindex}$, then we will require $\psi^{\tindex} \hat{A} \psi^{\tindex}$ to be given by a standard quantization in the variables $(z^{\tindex}, \zeta^{\tindex})$ (omitting the parameter $(y_{\tindex}, \zeta_{\tindex})$), with a symbol which belongs to $S^{m,-\infty, -\infty, s}( \mathcal{C}_{\tindex} \times \overline{ ^{\mathrm{b}}T^{\ast} \oplus \mathbb{R}^{n_{\tindex}} } X^{\tindex} )$. Finally, let $\phi^{\tindex} \in \mathcal{C}^{\infty}(X^{\tindex})$ be another cut-off function such that $\supp \phi^{\tindex} \cap \supp \psi^{\tindex} = \emptyset$. Then we will simply require that $\phi^{\tindex} \hat{A} \psi^{\tindex} \in \Psi_{\mathrm{bc,lp}}^{-\infty, l}( X^{\tindex} ; \mathcal{C}_{\tindex} \times {\mathbb{R}^{n_{\tindex}}} )$. \par

 This concludes the construction of (\ref{further resolved large-parameters microlocalized operators}).

\begin{remark} 
\label{further resolved large-parameters microlocalized operators 2}
An important distinction between $\Psi_{\mathrm{d3sc,3co,res}}^{m,r,l, \nu, b, s}(\Xd)$ and $\Psi_{\mathrm{d3sc,3co}}^{m ,r , l ,\nu, b}(\Xd)$, and indeed the motivation for defining the former spaces of operators, is that for any $A \in \Psi_{\mathrm{d3sc,3co,res}}^{m,r,l,\nu,b,s}(\Xd)$, the indicial operator of $A$ at $\dff$, which will be denoted by $\hat{N}_{\dff,l}(A)$ in \S\ref{subsection principal symbol} below, lives in
\begin{equation*}
\Psi_{\mathrm{sc,b,lp,res}}^{m ,r - l, b - l, s-l} ( X^{\tindex} ; \mathcal{C}_{\tindex} \times {\mathbb{R}^{n_{\tindex}}} ).
\end{equation*}
For bounded $\zeta_{\tindex} \in \mathbb{R}^{n_{\tindex}}$ (i.e., the parameter), this allows us to view $\hat{N}_{\dff,l}(A)$ as a smooth family of operators in $\Psi_{\mathrm{sc,b}}^{s-l, r - l, b - l}(X^{\tindex})$. Thus in particular, these operators are in a sense `independent of one another'. This is not the true in the non-resolved case: In the non-resolved case, these operators would have the same principal symbol. This is because in the radial compactification in $(\zeta_{\tindex}, \zeta^{\tindex})$, the closure of $\{ \zeta_{\tindex} = \zeta_{\tindex}^{\ast} \} \times \mathbb{R}^{n^{\tindex}}_{\zeta^{\tindex}}$ meets the joint fiber infinity at a point independent of $\zeta_{\tindex}^{\ast}$.
\end{remark}

We will next define the spaces of operators with variable orders
\begin{equation}
\label{variable order operators scattering large-parameters second microlocalized resolved}
\Psi^{\mathsf{m}, \mathsf{r}, \mathsf{l}, \mathsf{s}}_{\mathrm{sc, b, lp, res},\delta}( X^{\tindex} ; \mathcal{C}_{\tindex} \times {\mathbb{R}^{n_{\tindex}}} )
\end{equation}
depending on some sufficiently small $\delta > 0$. Here, the variable orders are required to satisfy
\begin{equation*}
\begin{gathered}
\mathsf{m} \in \mathcal{C}^{\infty}( \psf_{\dff} \Xd \cap  {^{\mathrm{d3sc, 3co, res}}S^{\ast} } \Xd ), \\
\mathsf{r} \in \mathcal{C}^{\infty}( \overline{^{\mathrm{d3sc, 3co, res}}T^{\ast}}_{\dff} \Xd \cap \dtsccf ), \quad \mathsf{s} \in \mathcal{C}^{\infty}( \overline{^{\mathrm{d3sc, 3co ,res}}T^{\ast}}_{\dff} \Xd \cap \rf ).
\end{gathered}
\end{equation*}
We will also use the same symbols to denote any $\mathcal{C}^{\infty}( \psf_{\dff} \Xd )$ extensions of these functions without further elaboration. On the other hand, $\mathsf{l}$ is required to satisfy the more restrictive, global condition that
\begin{equation}
\label{variable order l large-parameters}
\mathsf{l} \in \mathcal{C}^{\infty}( \mathcal{C}_{\tindex} \times \overline{\mathbb{R}^{n_{\tindex}}} ).
\end{equation}
It is also not hard to see that (\ref{variable order l large-parameters}) implies
\begin{equation} 
\label{variable order l large-parameters realization}
\mathsf{l} \in \mathcal{C}^{\infty}( \psf_{\dff} \Xd \cap \tcocf ).
\end{equation}
Thus in particular, $\vol$ always admits a $\mathcal{C}^{\infty}(  \psf_{\dff} \Xd )$ extension. Such extensions will still be denoted by $\vol$ without further elaboration.
\par

We now consider the spaces of variable orders symbols
\begin{equation}
\label{symbol class large-parameters at dff}
S^{\mathsf{m}, \mathsf{r}, \mathsf{l}, \mathsf{s}}_{\delta}( \overline{^{\mathrm{d3sc,3co,res}}T^{\ast}}_{\dff} \Xd )
\end{equation}
depending on some sufficiently small $\delta > 0$. Abusing notations slightly, let
\begin{equation*}
\rho_{\infty}, \rho_{\dtsccf}, \rho_{\zf}, \rho_{\rf} \in \mathcal{C}^{\infty}( \overline{^{\mathrm{d3sc,3co,res}}T^{\ast}}_{\dff} \Xd )
\end{equation*}
be some boundary defining functions for 
\begin{equation*}
\begin{gathered}
\text{$\psf_{\dff} \Xd \cap {^{\mathrm{d3sc,3co,res}}S^{\ast}}\Xd$,  $\overline{^{\mathrm{d3sc,3co,res}}T^{\ast}}_{\dff} \Xd 
\cap \dtsccf$}, \\
\text{$\overline{^{\mathrm{d3sc,3co,res}}T^{\ast}}_{\dff}\Xd \cap \tcocf$, $\overline{^{\mathrm{3co,res}}T^{\ast}}_{\dff} \Xd \cap \rf$}
\end{gathered}
\end{equation*}
respectively. Then (\ref{symbol class large-parameters at dff}) consists of all functions $a \in \mathcal{C}^{\infty}( \mathcal{C}_{\tindex} \times \mathbb{R}^{n_{\tindex}} \times T^{\ast} \mathbb{R}^{n^{\tindex}} )$ such that:
\begin{itemize}
\item Let $U \subset \mathcal{C}_{\tindex} \times \overline{ ^{\mathrm{b}}T^{\ast} \oplus \mathbb{R}^{n_{\tindex}} }X^{\tindex}$ be a small neighborhood of $\mathcal{C}_{\tindex} \times \overline{ ^{\mathrm{b}}T^{\ast} \oplus \mathbb{R}^{n_{\tindex}} }_{\mathcal{C}^{\tindex}}X^{\tindex}$, and let $\widetilde{U}$ be the lift of $U$ to (\ref{boundary class diffeomorphism 3}). Then near every point of $\widetilde{U}$, we have
\begin{align*}
& | \partial_{y_{\tindex}}^{\beta_{\tindex}} \partial_{t^{\tindex}}^{j} \partial_{y^{\tindex}}^{\beta^{\tindex}} \partial_{ \tscblz }^{  \gamma_{\tindex} } \partial_{ \tscbut }^{k} \partial_{ \tscbum }^{\gamma^{\tindex}} a | \\
& \qquad \leq C_{\beta_{\tindex} j \beta^{\tindex} \gamma_{\tindex} k \gamma^{\tindex}} \rho_{\infty}^{ - \vom + | \gamma_{\tindex} | + k + |\gamma^{\tindex}| - \delta | ( \beta_{\tindex}, j, \beta^{\tindex}, \gamma_{\tindex}, k, \gamma^{\tindex} ) | } \\
& \qquad \quad \times \rho_{\dtsccf}^{ - \vor + k + |\gamma^{\tindex}| - \delta | ( \beta_{\tindex}, j, \beta^{\tindex}, \gamma_{\tindex}, k , \gamma^{\tindex} ) |  } \rho_{\zf}^{ - \vol  - \delta | ( \beta{\tindex}, \gamma_{\tindex} ) | } \rho_{\rf}^{ - \vos + k + |\gamma^{\tindex}| - \delta | ( \beta_{\tindex}, j , \beta^{\tindex}, \gamma_{\tindex}, k , \gamma^{\tindex} ) | }
\end{align*}
in some coordinates of the form $(y_{\tindex}, \tau^{\tindex}, y^{\tindex}, \zeta_{\tindex}, \utaub, \umub )$. 
\item Near any point that is outside of $\widetilde{U}$, we have
\begin{align*}
 | \partial_{y_{\tindex}}^{\beta_{\tindex}} \partial_{z^{\tindex}}^{\beta^{\tindex}} \partial_{\tsclz}^{\gamma_{\tindex}} \partial_{\zeta^{\tindex}}^{\gamma^{\tindex}} a | & \leq C_{\beta_{\tindex} \beta^{\tindex} \gamma_{\tindex} \gamma^{\tindex}} \rho_{\infty}^{ - \vom + |\gamma_{\tindex}| + |\gamma^{\tindex}| - \delta | ( \beta_{\tindex}, \beta^{\tindex}, \gamma_{\tindex}, \gamma^{\tindex} ) | } \rho_{\rf}^{ - \vos  + |\gamma^{\tindex}| - \delta | ( \beta_{\tindex}, \beta^{\tindex}, \gamma_{\tindex}, \gamma^{\tindex} ) | }
\end{align*}
in some coordinates of the form $(y_{\tindex}, z^{\tindex}, \zeta_{\tindex}, \zeta^{\tindex})$.
\end{itemize} \par

The rest of this subsection will be devoted to the construction of (\ref{variable order operators scattering large-parameters second microlocalized resolved}). Suppose that $\hat{A}$ is an element of (\ref{variable order operators scattering large-parameters second microlocalized resolved}). Then $\hat{A}$ is first of all a continuous linear map $\dot{\mathcal{C}}^{\infty}( X^{\tindex} ) \rightarrow \dot{\mathcal{C}}^{\infty}(X^{\tindex})$. Next, let $\psi^{\tindex} \in \mathcal{C}^{\infty}( X^{\tindex} )$ be a cut-off function at some point of $\mathcal{C}^{\tindex}$. Then $\psi^{\tindex} \hat{A} \psi^{\tindex}$ will be required to take the form $\hat{B}_{\psi^{\tindex}} + \hat{R}_{\psi^{\tindex}}$, where $\hat{B}_{\psi^{\tindex}}$ is defined again by (\ref{B in the constant order large parameter indicial operator b calculus}), except that its symbol $b_{\psi^{\tindex}}$ now belongs to $S^{\vom, \vor, \vol, \vos}_{\delta}( \psf_{\dff} \Xd )$. Likewise, if $\psi^{\tindex} \in \mathcal{C}^{\infty}(X^{\tindex})$ is supported away from $\mathcal{C}^{\tindex}$, then $\psi^{\tindex} \hat{A} \psi^{\tindex}$ is given by a standard quantization in the variables $(z^{\tindex}, \zeta^{\tindex})$, with the exception that its symbol now belongs to $S^{\vom, -\infty , -\infty , \vos}_{\delta}( \psf_{\dff} \Xd )$. \par 

Finally, $\hat{R}_{\psi^{\tindex}}$ is required to be smooth in all of the variables, and satisfy additionally the estimates   
\begin{align} \label{R estimate large parameters variable orders}
\begin{split}
& | \partial_{y_{\tindex}}^{\beta_{\tindex}} \partial_{\tscblz}^{\gamma_{\tindex}} \partial_{t^{\tindex}}^{j} \partial_{y^{\tindex}}^{\beta^{\tindex}} \partial_{(t^{\tindex})'}^{j'} \partial_{(y^{\tindex})'}^{ ( \beta^{\tindex} )' } \hat{R}_{\psi^{\tindex}}  | \\
& \qquad \leq C_{ \beta_{\tindex} \gamma_{\tindex} j \beta^{\tindex} j' (\beta^{\tindex})' N LM }   x_{\mathcal{C}^{\tindex}}^{-\vol -\delta | ( \beta_{\tindex}, \gamma_{\tindex} ) |} \langle \lz \rangle^{-N} \langle y^{\tindex} - (y^{\tindex})' \rangle^{-L} e^{  - M | t^{\tindex} - (t^{\tindex})' | }
\end{split}
\end{align} 
for all $N,L, M \in \mathbb{R}$. On the other hand, let $\phi^{\tindex} \in \mathcal{C}^{\infty}(X^{\tindex})$ be another cut-off function such that $\supp \phi^{\tindex} \cap \supp \psi^{\tindex} = \emptyset$, and let $\hat{K}_{\phi^{\tindex}, \psi^{\tindex}}$ denote the kernel of $\phi^{\tindex} \hat{A} \psi^{\tindex}$. Then we will refer again to cases (1)--(3) found in \S \ref{b-calculus subsection} for the characterizing conditions imposed on $\hat{K}_{\phi^{\tindex}, \psi^{\tindex}}$, which depend on the support properties of $\phi^{\tindex}$ and $\psi^{\tindex}$. In fact, these conditions remain the same in cases (1) and (2) (i.e. $\hat{K}_{\phi^{\tindex}, \psi^{\tindex}}$ must be smooth and rapidly decreasing in these cases), while in case (3), we will now require that
\begin{equation*}
| \partial_{y_{\tindex}}^{\beta_{\tindex}} \partial_{\tscblz}^{\gamma_{\tindex}} \partial_{t^{\tindex}}^{j} \partial_{y^{\tindex}}^{\beta^{\tindex}} \partial_{(t^{\tindex})'}^{j'} \partial_{(y^{\tindex})'}^{(\beta^{\tindex})'}  \hat{K}_{\phi^{\tindex}, \psi^{\tindex}}  |  \leq C_{ \beta_{\tindex} \gamma_{\tindex}' j \beta^{\tindex} j' (\beta^{\tindex})' NM } x_{\mathcal{C}^{\tindex}}^{- \vol -\delta | ( \beta_{\tindex}, \gamma_{\tindex} ) |}   \langle \lz \rangle^{-N} e^{  - M|t^{\tindex} - (t^{\tindex})' |}
\end{equation*}
for all $N,M \in \mathbb{R}$. This concludes the construction of (\ref{variable order operators scattering large-parameters second microlocalized resolved}).

\subsection{Second microlocalizing the indicial operators at $\cf$}
\label{subsection second microlocalization for the indicial operators cf}
We now consider second microlocalizing the indicial operators at $\cf$. Recall that we have
\begin{equation*}
\overline{^{\mathrm{3co}}T^{\ast}}_{\mathrm{cf}_{\tindex}} \Xd \cong \mathcal{C}_{\tindex} \times \overline{ ^{\mathrm{co}}T^{\ast} \oplus \mathbb{R}^{n_{\tindex}} } [ \hat{X}^{\tindex} ; \{  0 \} ].
\end{equation*}
Thus, the indicial operators at $\cf$ for elements of $\Psi^{m,r,l,b}_{\mathrm{3coc}}( \Xd )$ (once they are defined) should naturally live in the space of large-parameter conormal cone operators $\Psi_{\mathrm{coc, lp}}^{m,r,l}( [ \hat{X}^{\tindex} ; \{ 0 \} ] ; \mathcal{C}_{\tindex} \times \mathbb{R}^{n_{\tindex}} )$. See also Proposition \ref{membership of operator valued symbol near cf} for how this statement can be realized concretely by letting $|z_{\tindex}| \rightarrow \infty$ in $\hat{A}_{\psi_{\cf}}$, where $\psi_{\cf} \in \mathcal{C}^{\infty}(\Xd)$ is some cut-off function at $\cf$.

\par

Nevertheless, it will still be necessary to resolve $\Psi^{m,r,l}_{\mathrm{coc,lp}}( [ \hat{X}^{\tindex} ; \{ 0 \} ] ; \mathcal{C}_{\tindex} \times \mathbb{R}^{n_{\tindex}} )$ at fiber infinity, since we have
\begin{equation} 
\label{three cone boundary face blow-up at cf}
\tcocf \cong \mathcal{C}_{\tindex} \times [  \overline{ ^{\mathrm{co}}T^{\ast} \oplus \mathbb{R}^{n_{\tindex}} } [ \hat{X}^{\tindex} ; \{  0 \} ] ; {^{\mathrm{co}}S^{\ast}} [ \hat{X}^{\tindex} ; \{ 0 \} ] ].
\end{equation} 
For $m,r,l,s \in \mathbb{R}$, one is then motivated to construct spaces of operators
\begin{equation}
\label{further resolved large-parameters microlocalized operators cf}
\Psi^{m,r,l,s}_{\mathrm{coc,lp,res}}( [ 
\hat{X}^{\tindex} ; \{ 0 \} ] ; \mathcal{C}_{\tindex} \times {\mathbb{R}^{n_{\tindex}}} )
\end{equation}
consisting of those continuous linear maps $\hat{A} : \mathcal{C}^{\infty}( [ \hat{X}^{\tindex} ; \{ 0 \} ] ) \rightarrow \dot{\mathcal{C}}^{\infty}([ \hat{X}^{\tindex} ; \{ 0 \}  
])$ which can be constructed by modifying the construction of $\Psi^{m,r,l}_{\mathrm{coc,lp}}( [ \hat{X}^{\tindex} ; \{ 0 \} ] ; \mathcal{C}_{\tindex} \times {\mathbb{R}^{n_{\tindex}}} )$ in such a way that the symbols of $\hat{A}$ belong to 
\begin{equation*}
S^{m, r , l, s}(  \tcocf  ). 
\end{equation*}
Here, $m$ measures decay at the lift of the `joint fiber infinity' of $\mathcal{C}_{\tindex} \times \overline{ ^{\mathrm{co}}T^{\ast} \oplus \mathbb{R}^{n_{\tindex}} } [ \hat{X}^{\tindex} ; \{ 0 \} ]$; $r$ measures decay at the lift of $\mathcal{C}_{\tindex} \times \overline{ ^{\mathrm{co}}T^{\ast} \oplus \mathbb{R}^{n_{\tindex}} }_{\mathcal{C}^{\tindex}_{\infty}} [ \hat{X}^{\tindex} ; \{ 0 \} ]$; $l$ measures decay at the lift of $\mathcal{C}_{\tindex} \times \overline{ ^{\mathrm{co}}T^{\ast} \oplus \mathbb{R}^{n_{\tindex}} }_{\mathcal{C}^{\tindex}_{0}} [ \hat{X}^{\tindex} ; \{ 0 \} ]$; and $s$ measures decay at the lift of $\mathcal{C}_{\tindex} \times { ^{\mathrm{co}}S^{\ast}} [ \hat{X}^{\tindex} ; \{ 0 \} ]$. While from a more global perspective, these boundary faces of (\ref{three cone boundary face blow-up at cf}) can also be respectively realized as the restrictions to $\tcocf$ of ${^{\mathrm{d3sc,3co,res}}S^{\ast}\Xd}$, $\psf_{\dmf}\Xd$, $\psf_{\dff}\Xd$ and $\dtsccf$.
\par

To this end, let $\psi^{\tindex} \in \mathcal{C}^{\infty}( [ \hat{X}^{\tindex} ; \{ 0 \} ] )$ be a cut-off function at some point of $\mathcal{C}^{\tindex}_{0}$. Then following (\ref{cone algebra b parts}), we will require that 
\begin{equation}
\label{second microlocalizing the indicial operator at cf b part quantization - 1}
\psi^{\tindex} \hat{A} \psi^{\tindex} \coloneq \hat{B}_{\psi^{\tindex}, \mathrm{b}} + \hat{R}_{\psi^{\tindex}, \mathrm{b}},
\end{equation}
where 
\begin{align}
\label{second microlocalizing the indicial operator at cf b part quantization}
\begin{split} 
\hat{B}_{\psi^{\tindex}, \mathrm{b}} \coloneq  {} & \frac{1}{(2\pi)^{n^{\tindex}}} \int_{\mathbb{R}^{n^{\tindex}}}  e^{ - i ( \hat{t}_{\tindex} - (\hat{t}_{\tindex})') \utaucob + i ( y^{\tindex} - (y^{\tindex})'  ) \cdot \umucob  } \varphi(t^{\tindex} - (t^{\tindex})') \varphi( |y^{\tindex} - ( y^{\tindex} )'| )   \\
& \times b_{ \psi^{\tindex}, \mathrm{b} } ( y_{\tindex}, \hat{t}_{\tindex} , y^{\tindex}, \zeta_{\tindex}^{\mathrm{3co}} , \utaucob , \umucob )  d \utaucob d\umucob | d(\hat{t}_{\tindex})' d(y^{\tindex})' |
\end{split}
\end{align} 
for some $b_{\psi^{\tindex}, \mathrm{b}} \in S^{m, -\infty, l, s}( \overline{ ^{\mathrm{d3sc,3co,res}}T^{\ast}}_{\cf} \Xd )$; meanwhile, the kernel of $\hat{R}_{\psi^{\tindex}, \mathrm{b}}$ is required to be smooth in all the variables, and additionally satisfy the estimates 
\begin{equation}
\label{second microlocalizing the indicial operator at cf b part quantization 1}
| \partial_{y_{\tindex}}^{\beta_{\tindex}} \partial^{\gamma_{\tindex}}_{\zeta_{\tindex}^{\mathrm{3co}}} \partial_{\hat{t}_{\tindex}}^{j} \partial_{y^{\tindex}}^{\beta^{\tindex}} \partial_{\hat{t}_{\tindex}'}^{j'} \partial_{(y^{\tindex})'}^{(\beta^{\tindex})'} \hat{R}_{\psi^{\tindex}, \mathrm{b}} | \leq C_{ \beta_{\tindex} \gamma_{\tindex} j \beta^{\tindex} j ' (\beta^{\tindex})' }  x_{\mathcal{C}^{\tindex}_{0}}^{-l} \langle \zeta_{\tindex}^{\mathrm{3co}} \rangle^{-N} \langle y^{\tindex} - (y^{\tindex})' \rangle^{-M} e^{-L| \hat{t}_{\tindex} - \hat{t}_{\tindex}' |}
\end{equation}
for all $N,M, L \in \mathbb{R}$, where $x_{\mathcal{C}^{\tindex}_{0}} \in \mathcal{C}^{\infty}( [ \hat{X}^{\tindex} ; \{ 0 \} ] )$ is a defining function for $\mathcal{C}^{\tindex}_{0}$. 
\par

If instead $\psi^{\tindex} \in \mathcal{C}^{\infty}( [ \hat{X}^{\tindex} ; \{ 0 \} ] )$ is a cut-off function at $\mathcal{C}^{\tindex}_{\infty}$, then we will require that
\begin{equation*}
\psi^{\tindex} \hat{A} \psi^{\tindex} \coloneq \hat{B}_{\psi^{\tindex}, \mathrm{sc}},
\end{equation*}
where 
\begin{equation}
\label{second microlocalizing the indicial operator at cf sc part quantization}
\hat{B}_{\psi^{\tindex}, \mathrm{sc}} \coloneq \frac{1}{(2\pi)^{n^{\tindex}}} \int_{\mathbb{R}^{n^{\tindex}}}  e^{ i( \hat{z}^{\tindex} - (\hat{z}^{\tindex})' ) \cdot \zeta^{\tindex}_{\mathrm{co,sc}} } b_{ \psi^{\tindex}, \mathrm{sc} }( z_{\tindex}, \hat{z}^{\tindex}, \zeta_{\tindex}^{\mathrm{3co}}, \zeta^{\tindex}_{\mathrm{co,sc}} ) d\zeta^{\tindex}_{\mathrm{co,sc}} |d ( \hat{z}^{\tindex} )' |
\end{equation}
for some $b_{\psi^{\tindex}, \mathrm{sc}} \in S^{m, r, -\infty}( \psf_{\cf} \Xd )$. 
\par

Finally, if $\phi^{\tindex} \in \mathcal{C}^{\infty}( [ \hat{X}^{\tindex} ; \{ 0 \} ] )$ is another cut-off function such that $\supp \phi^{\tindex} \cap \supp \psi^{\tindex} = \emptyset$, then we will simply require that $\phi^{\tindex} \hat{A} \psi^{\tindex} \in \Psi_{\mathrm{coc,lp}}^{-\infty, -\infty, l }( [ \hat{X}^{\tindex} ; \{ 0 \} ] ; \mathcal{C}_{\tindex} \times \mathbb{R}^{n_{\tindex}} )$. This concludes the construction of (\ref{further resolved large-parameters microlocalized operators cf}).

\begin{remark} \label{further resolved large-parameters microlocalized operators 2 corner face case}
The analogy of Remark \ref{further resolved large-parameters microlocalized operators 2} applies to the indicial operators at $\cf$ as well. However, in this case, the resolution at the joint fiber infinity of $\mathcal{C}_{\tindex} \times \overline{ ^{\mathrm{co}}T^{\ast} \oplus \mathbb{R}^{n_{\tindex}} } [ \hat{X}^{\tindex} ; \{  0 \} ]$ is no longer a consequence of the `resolved' blow-up, but rather the fiber infinity blow-up from the 3co-perspective, i.e., the blow-up which creates $\dtsccf$.
\end{remark}

We will now allow for variable orders, and define the spaces of operators  
\begin{equation}
\label{large-parameters conic resolved algebra variable orders}
\Psi_{\mathrm{coc,lp,res}, \delta}^{\vom, \vor, \vol, \vos}( [ \hat{X}^{\tindex} ; \{ 0 \} ] ; \mathcal{C}_{\tindex} \times \mathbb{R}^{n_{\tindex}} ),
\end{equation}
depending on some sufficiently small $\delta >0$. Here, the variable orders are required to satisfy
\begin{equation*}
\begin{gathered}
\vom \in \mathcal{C}^{\infty}( \tcocf \cap {^{\mathrm{d3sc,3co,res}}S^{\ast}}\Xd ), \\
\vor \in \mathcal{C}^{\infty}( \tcocf \cap \psf_{\dmf} \Xd ), \quad \vos \in \mathcal{C}^{\infty}( \tcocf \cap \dtsccf ).
\end{gathered}
\end{equation*}
We will also use the same symbols to denote any $\mathcal{C}^{\infty}( \tcocf )$ extensions of these functions without further elaboration. On the other hand, $\vol$ is required to satisfy
\begin{equation}
\label{variable order l large-parameters near cf}
\vol \in \mathcal{C}^{\infty}( \mathcal{C}_{\tindex} \times \overline{\mathbb{R}^{n_{\tindex}}} ).
\end{equation}
However, it is again easy to see that (\ref{variable order l large-parameters near cf}) implies 
\begin{equation*}
\vol \in \mathcal{C}^{\infty}( \tcocf \cap \psf_{\dff} \Xd ).
\end{equation*}
In particular, $\vol$ always admits a $\mathcal{C}^{\infty}(\tcocf)$ extension. Such extensions will still be denoted by $\vol$ without further elaboration.
\par

We also need to define the spaces of symbols with variable orders
\begin{equation} 
\label{symbol class restriction at cf variable orders}
S^{\mathsf{m},\mathsf{r}, \mathsf{l}, \mathsf{s}}_{\delta}( \tcocf )
\end{equation}
depending on some sufficiently small $\delta > 0$. With some abuse of notations, we will let 
\begin{equation*}
\rho_{\infty}, \rho_{\dmf}, \rho_{\dff} , \rho_{\dtsccf} \in \mathcal{C}^{\infty}( \tcocf )
\end{equation*}
be some boundary defining functions for 
\begin{equation*}
\begin{gathered}
\text{$\tcocf \cap { ^{\mathrm{d3sc,3co,res}}S^{\ast} \Xd }$, $\tcocf \cap \psf_{\dmf} \Xd$,} \\
\text{$\tcocf \cap \psf_{\dff} \Xd$, $\tcocf \cap \dtsccf$}
\end{gathered}
\end{equation*}
respectively. Then (\ref{symbol class restriction at cf variable orders}) consists of all functions $a \in \mathcal{C}^{\infty}( \mathcal{C}_{\tindex} \times \mathbb{R}^{n_{\tindex}} \times T^{\ast} \mathbb{R}^{n^{\tindex}} )$ such that:

\begin{itemize}
\item Let $U \subset \overline{ ^{\mathrm{co}}T^{\ast} \oplus \mathbb{R}^{n_{\tindex}} } [ \hat{X}^{\tindex} ; \{ 0 \} ]$ be a neighborhood of $\mathcal{C}_{\tindex} \times \overline{ ^{\mathrm{co}}T^{\ast} \oplus \mathbb{R}^{n_{\tindex}} }_{\mathcal{C}^{\tindex}_{0}} [ \hat{X}^{\tindex} ; \{ 0 \} ]$ excluding $\mathcal{C}_{\tindex} \times \overline{ ^{\mathrm{co}}T^{\ast} \oplus \mathbb{R}^{n_{\tindex}} }_{\mathcal{C}^{\tindex}_{\infty}} [ \hat{X}^{\tindex} ; \{ 0 \} ]$, and let $\widetilde{U}$ be the lift of $U$ to (\ref{three cone boundary face blow-up at cf}). Then near every point of $\widetilde{U}$, we have
\begin{align*}
| \partial_{y_{\tindex}}^{\beta_{\tindex}} \partial_{ \hat{t}_{\tindex} }^{j} \partial_{y^{\tindex}}^{\beta^{\tindex}} \partial_{ \tcoblz }^{  \gamma_{\tindex} } \partial_{ \tcobut }^{k} \partial_{ \tcobum }^{\gamma^{\tindex}} a | \leq {} & C_{\beta_{\tindex} j \beta^{\tindex} \gamma_{\tindex} k \gamma^{\tindex}} \rho_{\infty}^{ - \vom + | \gamma_{\tindex} | + k + |\gamma^{\tindex}| - \delta | ( \beta_{\tindex}, j, \beta^{\tindex}, \gamma_{\tindex}, k, \gamma^{\tindex} ) | }  \\
& \times \rho_{\dff}^{ - \vol - \delta | ( \beta_{\tindex}, \gamma_{\tindex} ) |  }  \rho_{\dtsccf}^{ - \vor + k + |\gamma^{\tindex}| - \delta | ( \beta_{\tindex}, j, \beta^{\tindex}, \gamma_{\tindex}, k , \gamma^{\tindex} ) |  }
\end{align*}
in some coordinates of the form $( y_{\tindex}, \hat{t}_{\tindex}, y^{\tindex}, \tcottlz, \utaucob, \umucob )$.
\item Let $U \subset \overline{ ^{\mathrm{co}}T^{\ast} \oplus \mathbb{R}^{n_{\tindex}} } [ \hat{X}^{\tindex} ; \{ 0 \} ]$ be a neighborhood of $\mathcal{C}_{\tindex} \times \overline{ ^{\mathrm{co}}T^{\ast} \oplus \mathbb{R}^{n_{\tindex}} }_{\mathcal{C}^{\tindex}_{\infty}} [ \hat{X}^{\tindex} ; \{ 0 \} ]$ excluding $\mathcal{C}_{\tindex} \times \overline{ ^{\mathrm{co}}T^{\ast} \oplus \mathbb{R}^{n_{\tindex}} }_{\mathcal{C}^{\tindex}_{0}} [ \hat{X}^{\tindex} ; \{ 0 \} ]$, and let $\widetilde{U}$ be the lift of $U$ to (\ref{three cone boundary face blow-up at cf}). Then near every point of $\widetilde{U}$, we have
\begin{align*}
| \partial_{y_{\tindex}}^{\beta_{\tindex}} \partial_{\hat{z}^{\tindex}}^{\beta^{\tindex}} \partial_{\tcosclz}^{\gamma_{\tindex}} \partial_{\tcoscuz}^{\gamma^{\tindex}} a | \leq {} &  C_{\beta_{\tindex} \beta^{\tindex} \gamma_{\tindex} \gamma^{\tindex}} \rho_{\infty}^{ 
- \vom + |\gamma_{\tindex}| + |\gamma^{\tindex}| - \delta | ( \beta_{\tindex}, \beta^{\tindex}, \gamma_{\tindex}, \gamma^{\tindex} ) | } \\
& \times \rho_{\dmf}^{ - \vor + |\beta_{\tindex}|  - \delta | ( \beta_{\tindex}, \beta^{\tindex}, \gamma_{\tindex}, \gamma^{\tindex} ) | } \rho_{\dtsccf}^{- \vov + |\gamma^{\tindex}| - \delta | ( \beta_{\tindex}, \beta^{\tindex}, \gamma_{\tindex}, \gamma^{\tindex} ) | } 
\end{align*}
in some coordinates of the form $(y_{\tindex}, \hat{z}^{\tindex}, \tcosclz, \utaucosc, \umucosc )$.
\end{itemize}

The rest of this subsection will be devoted to the construction of (\ref{variable order operators scattering large-parameters second microlocalized resolved}). Suppose that $\hat{A}$ is an element of (\ref{variable order operators scattering large-parameters second microlocalized resolved}). Then $\hat{A}$ is first of all a continuous linear map $\dot{\mathcal{C}}^{\infty}( [ \hat{X}^{\tindex} ; \{ 0 \} ] ) \rightarrow \dot{\mathcal{C}}^{\infty}( [ \hat{X}^{\tindex} ; \{ 0 \} ] )$. Next, let $\psi^{\tindex} \in \mathcal{C}^{\infty}( [ \hat{X}^{\tindex} ; \{ 0 \} ] )$ be a cut-off function at some point of $\mathcal{C}^{\tindex}_{0}$. Then $\psi^{\tindex} \hat{A} \psi^{\tindex}$ must take the form $\hat{B}_{\psi^{\tindex}, \mathrm{b}} + \hat{R}_{\psi^{\tindex}, \mathrm{b}}$, where $\hat{B}_{\psi^{\tindex}, \mathrm{b}}$ is defined again by (\ref{second microlocalizing the indicial operator at cf b part quantization}), except that its symbol $b_{\psi^{\tindex}, \mathrm{b}}$ now belongs to $S^{\vom, -\infty, \vol, \vos}_{\delta}( \tcocf )$. Likewise, if $\psi^{\tindex} \in \mathcal{C}^{\infty}( [ \hat{X}^{\tindex} ; \{ 0 \} ] )$ is supported away from $\mathcal{C}^{\tindex}_{0}$, then $\psi^{\tindex} \hat{A} \psi^{\tindex} \coloneq \hat{B}_{\psi^{\tindex}, \mathrm{sc}}$, where $\hat{B}_{\psi^{\tindex}, \mathrm{sc}}$ is defined again by (\ref{second microlocalizing the indicial operator at cf sc part quantization}), except that its symbol $b_{\psi^{\tindex}, \mathrm{sc}}$ now belongs to $S_{\delta}^{\vom, \vor, -\infty, \vos}( \tcocf )$.
\par

Finally, $\hat{R}_{\psi^{\tindex}, \mathrm{b}}$ is required to be smooth in all of the variables, and satisfy additionally the estimates
\begin{align*} 
\begin{split}
& | \partial_{y_{\tindex}}^{\beta_{\tindex}} \partial_{\tcottlz}^{\gamma_{\tindex}} \partial_{ \hat{t}_{\tindex} }^{j} \partial_{y^{\tindex}}^{\beta^{\tindex}} \partial_{ \hat{t}_{\tindex}' }^{j'} \partial_{(y^{\tindex})'}^{ ( \beta^{\tindex} )' } \hat{R}_{\psi^{\tindex}, \mathrm{b}}  | \\
& \qquad \leq C_{ \beta_{\tindex} \gamma_{\tindex} j \beta^{\tindex} j' (\beta^{\tindex})' N LM }  x_{\mathcal{C}^{\tindex}_{0}}^{ - \vol -\delta | ( \beta_{\tindex}, \gamma_{\tindex} ) |} \langle \tcottlz \rangle^{-N} \langle y^{\tindex} - (y^{\tindex})' \rangle^{-L} e^{  - M | \hat{t}_{\tindex} - \hat{t}_{\tindex}' | } 
\end{split}
\end{align*}
for all $N,M, L \in \mathbb{R}$. On the other hand, let $\phi^{\tindex} \in \mathcal{C}^{\infty}( [ \hat{X}^{\tindex} ; \{ 0 \} ] )$ be another cut-off function such that $\supp \phi^{\tindex} \cap \supp \psi^{\tindex} = \emptyset$, and let $\hat{K}_{\phi^{\tindex}, \psi^{\tindex}}$ denote the kernel of $\phi^{\tindex} \hat{A} \psi^{\tindex}$. Then we will refer again to cases (1)--(6) found in \S \ref{subsection the cone calculus} for the characterizing conditions imposed on $\hat{K}_{\phi^{\tindex}, \psi^{\tindex}}$, which depend on the support properties of $\phi^{\tindex}$ and $\psi^{\tindex}$. In fact, these conditions remain the same in cases (1)--(5) (i.e., $\hat{K}_{\phi^{\tindex}, \psi^{\tindex}}$ must be smooth and rapidly decreasing in these cases), while in case (6), we will now require that
\begin{equation*}
| \partial_{y_{\tindex}}^{\beta_{\tindex}} \partial_{\tcoblz}^{\gamma_{\tindex}} \partial_{ \hat{t}_{\tindex} }^{j} \partial_{y^{\tindex}}^{\beta^{\tindex}} \partial_{ \hat{t}_{\tindex}' }^{j'} \partial_{(y^{\tindex})'}^{(\beta^{\tindex})'}  \hat{K}_{\phi^{\tindex}, \psi^{\tindex}} | \leq C_{ \beta_{\tindex} \gamma_{\tindex}' j \beta^{\tindex} j' (\beta^{\tindex})' NM } x_{\mathcal{C}^{\tindex}_{0}}^{-\vol -\delta | ( \beta_{\tindex}, \gamma_{\tindex} ) |}   \langle \tcottlz \rangle^{-N} e^{  - M| \hat{t}_{\tindex} - \hat{t}_{\tindex}' |}
\end{equation*}
for all $N,M \in \mathbb{R}$. This concludes the construction of (\ref{large-parameters conic resolved algebra variable orders}).

\section{Basic microlocal properties}
\label{Section: Microlocal properties of the second microlocalized algebra}

\subsection{Principal symbol maps} 
\label{subsection principal symbol}
We now move onto the discussions of the central tools used to microlocally capture operator decay at infinities. For both the conormal three-cone as well as the second microlocalized operators (which arise by resolving the conormal three-cone oeprators), such tools will come in two flavors: the principal symbol, which is standard, and thus easily understood; and the indicial operators, which are inherently global objects in nature, and therefore warrants a more detailed discussion. \par

We will first consider the principal symbol, which is a universal object in microlocal analysis, and indeed well-understood in many settings. For a typical pseudodifferential algebra, the principal symbol measures leading order differential regularity, or equivalently leading order decay at the fiber infinity from the phase space perspective. However, it is also known that the principal symbol, when suitably interpreted, can be used to measure leading order decays jointly at other boundary faces of the phase space as well, provided certain properties are satisfied (see \cite[\S 2]{PeterLinearWave1} for a brief discussion on this). \par

For examples, in our review of the scattering calculus in \S \ref{subsection the scattering calculus}, we have seen that the principal symbol map captures leading order decay also at the spatial infinity $\overline{^{\mathrm{sc}}T^{\ast}}_{\mathbb{S}^{n-1}} \overline{\mathbb{R}^{n}}$. Thus overall, the principal symbol captures leading order decay \emph{simultaneously} at both spatial and fiber infinities in this setting. Meanwhile, in the context of the conormal b-calculus, i.e., \S \ref{b-calculus subsection}, the principal symbol map captures only leading order differential regularity. \par

In the case of the three-cone operators, there exists a principal symbol map which captures leading order decay at $^{\mathrm{3co}}S^{\ast}\Xd$ and $\overline{ ^{\mathrm{3co}}T^{\ast} }_{\dmf}\Xd$ simultaneously. Roughly speaking, this follows from the fact that the conormal three-cone operators are designed to quantize a Lie algebra of vector fields, i.e., $\mathcal{V}_{\mathrm{3co}}(\Xd)$, which has a scattering structure at $\dmf$.    \par

This motivates us to state the following proposition:

\begin{proposition}[Principal symbol map for the conormal three-cone operators]
\label{proposition principal symbol for the conormal three-cone operators}
Suppose that $m,r,l,b \in \mathbb{R}$. Then there exists a map
\begin{equation*}
{ ^{\mathrm{3co}} \sigma_{m,r,l,b} }: \Psi_{\mathrm{3co}}^{m,r,l,b} ( \Xd ) \rightarrow ( S^{m, r, l , b} / S^{m-1,r-1,l,b}  )( \overline{  { ^{\mathrm{3co}}T^{\ast} } } \Xd ),
\end{equation*}
which gives rise to a short exact sequence
\begin{equation*}
0 \rightarrow \Psi_{\mathrm{3coc}}^{m - 1, r - 1 , l, b}( \Xd ) \rightarrow \Psi_{\mathrm{3coc}}^{m,r,l,b}( \Xd ) \rightarrow_{ { ^{\mathrm{3co}} }\sigma_{m,r,l,b} } ( S^{m,r,l,b} / S^{m-1,r-1,l,b} ) ( \overline{ ^{\mathrm{3co}} T^{\ast} } X ) \rightarrow 0. 
\end{equation*}
Thus ${ ^{\mathrm{3co}}\sigma_{m,r,l,b} }$ descends to an isomorphism 
\begin{equation*}
{ ^{\mathrm{3co}}\sigma_{m,r,l,b} } : ( \Psi^{m,r,l,b}_{\mathrm{3coc}}  / \Psi_{\mathrm{3coc}}^{m-1,r-1, l, b}  ) ( \Xd ) \xrightarrow{\sim}( S^{m,r,l,b} / S^{ m - 1, r - 1 , l , b }  ) ( \overline{ ^{\mathrm{3co}} T^{\ast} } \Xd ).
\end{equation*}
\end{proposition}

Now, since $\dtsccf$ and $\rf$ both arise from blowing up faces of $\psf \Xd$ which intersect $^{\mathrm{3co}}S^{\ast}\Xd$, it should not be surprising that there exists a principal symbol map for the class of further-resolved, second microlocalized operators which captures leading order decay at $^{\mathrm{d3sc,3co,res}}S^{\ast}\Xd$, $\psf_{\dmf} \Xd$, $\dtsccf$ and $\rf$ simultaneously.

\begin{proposition}[Principal symbol map for the further-resolved second microlocalized operators] 
\label{proposition principal symbol map for the further-resolved second microlocalized operators}
Suppose that $\vom,\vor, \vov, \vos \in \mathcal{C}^{\infty}( \psf \Xd)$, $\vol, \vob \in \mathcal{C}^{\infty}( \mathcal{C}_{\tindex} \times \overline{\mathbb{R}^{n_{\tindex}}} )$. Then there exists a map
\begin{align}
\label{symbol map d3sc 3co res}
\begin{split}
 { ^{\mathrm{d3sc,3co,res}}\sigma_{\vom, \vor, \vol, \vov, \vob, \vos, \delta} } & : \Psi^{ \mathsf{m}, \mathsf{r}, \mathsf{l}, \mathsf{v}, \mathsf{b}, \mathsf{s} }_{\mathrm{d3sc,3co,res}, \delta} (\Xd) \\
& \qquad  \longrightarrow ( S^{ \mathsf{m}, \mathsf{r} , \mathsf{l}, \mathsf{v}, \mathsf{b}, \mathsf{s} }_{\delta}  / S_{\delta}^{ \vom - 1 + 2\delta, \vor - 1 + 2\delta, \vol, \vov - 1 + 2\delta, \vob , \vos - 1 + 2\delta } ) ( \overline{^{\mathrm{d3sc,3co,res}}T^{\ast}} \Xd ),
\end{split}
\end{align}
which gives rise to a short exact sequence
\begin{align*}
& 0  \rightarrow  \Psi^{ \vom -1 + 2 \delta, \vor - 1 + 2\delta, \vol , \vov - 1 + 2 \delta,  \vob , \vos - 1 +2 \delta  }_{\mathrm{d3sc,3co,res}, \delta} ( \Xd ) \rightarrow \Psi^{ \vom , \vor , \vol , \vov , \vob , \vos }_{\mathrm{d3sc,3co,res}, \delta}( \Xd ) \\
& \qquad \longrightarrow_{{ ^{\mathrm{d3sc,3co,res}}\sigma_{\vom, \vor, \vol, \vov, \vob, \vos, \delta} } } (  { S^{ \vom , \vor , \vol , \vov , \vob , \vos }_{\delta} } / { S^{ \vom - 1 + 2 \delta, \vor - 1 + 2\delta, \vol , \vov - 1 + 2 \delta, \vob , \vos - 1 + 2 \delta }_{\delta} } ) ( \overline{ ^{\mathrm{d3sc,3co,res}}T^{\ast}} \Xd ) \rightarrow 0.
\end{align*}
Thus ${ ^{\mathrm{d3sc,3co,res}}\sigma_{\vom, \vor, \vol, \vov, \vob, \vos, \delta} } $ descends to an isomorphism
\begin{align*}
{ ^{\mathrm{d3sc,3co,res}}\sigma_{\vom, \vor, \vol, \vov, \vob, \vos, \delta} }  & : ( { 
\Psi^{ \mathsf{m} , \mathsf{r}, \mathsf{l}, \mathsf{v}, \mathsf{b}, \mathsf{s} }_{\mathrm{d3sc,3co,res}, \delta} } / { \Psi^{ \mathsf{m} - 1 + 2 \delta, \mathsf{r} - 1 + 2\delta, \mathsf{l}, \mathsf{v} - 1 + 2 \delta,  \mathsf{b},  \mathsf{s} - 1 + 2 \delta }_{\mathrm{d3sc,3co,res},\delta} } ) ( \Xd ) \\
& \qquad \xlongrightarrow{\sim} (  { S^{ \mathsf{m} , \mathsf{r}, \mathsf{l}, \mathsf{v}, \mathsf{b} , \mathsf{s} }_{\delta} } / { S^{ \mathsf{m} - 1 + 2 \delta, \mathsf{r} - 1 + 2\delta, \mathsf{l}, \mathsf{v} - 1 + 2 \delta, \mathsf{b} , \mathsf{s} - 1 + 2 \delta }_{\delta} } ) ( \overline{ ^{\mathrm{d3sc,3co,res}}T^{\ast}} \Xd ).
\end{align*} \par

Moreover, if any of the variable orders $\vom$, $\vor$, $\vov$, or $\vos$ is actually constant, then the loss of $2\delta$ order corresponding to that index can be removed. In particular, if all of them are constants with $\vom = m$, $\vor = r$, $\vol = l$, $\vov = \nu$, $\vob = b$, $\vos = s$, then the above statements are valid with $\Psi_{\mathrm{d3sc,3co,res}}^{m,r,l,\nu,b,s}(\Xd)$ in places of $\Psi_{\mathrm{d3sc,3co,res}, \delta}^{\vom,\vor,\vol,\vov,\vob,\vos}(\Xd)$.
\end{proposition}
\begin{remark}
\label{remark no need to consider d3sc 3co principal symbol map}
For $m,r,l,\nu,b \in \mathbb{R}$, it is also possible to define the principal symbol map for $\Psi_{\mathrm{d3sc,3co}}^{m,r,l,\nu,b}(\Xd)$, which leads to an isomorphism
\begin{equation*}
( \Psi_{\mathrm{d3sc,3co}}^{m,r,l, s} / \Psi_{\mathrm{d3sc,3co}}^{m-1, r- 1, l, s- 1} )(X) \cong  ( S^{m, r, l, \nu, b} / S^{m-1, r- 1, l, \nu - 1, b} )( \overline{ ^{\mathrm{d3sc,3co}}T^{\ast}}\Xd ).
\end{equation*}
However, we will not be interested in the properties of $\Psi_{\mathrm{d3sc,3co}}^{m,r,l,\nu,b}(\Xd)$ for what remains of this paper, since it is in a sense inadequate at $\dff$. See Remark \ref{further resolved large-parameters microlocalized operators 2}. 
\end{remark}
\begin{proof}[Proofs of Proposition \ref{proposition principal symbol map for the further-resolved second microlocalized operators}]
Let $\psi_{j}, \tilde{\psi}_{j} \in \mathcal{C}^{\infty}(\Xd)$, $j=1,...,J$ be chosen such that:
\begin{itemize}
\item $\sum_{j=1}^{J} \tilde{\psi}_{j} = 1$ near $\partial \Xd$; 
\item $\psi_{j} = 1$ on the support of $\tilde{\psi}_{j}$, $j=1,...,J$;
\item the supports of $\psi_{j}$, $\tilde{\psi}_{j}$ are as small as we need them.
\end{itemize}
Then modulo the addition of an element in $\Psf^{-\infty, -\infty, \vol, -\infty, \vob, -\infty}(\Xd)$, we have
\begin{equation*}
A = \sum_{j=1}^{J} \psi_{j} A \psi_{j} \tilde{\psi}_{j}.
\end{equation*} \par

Now, depending on support of $\psi_{j}$, either we have $\psi_{j} A \psi_{j} \in \Psi_{\mathrm{sc}, \delta}^{\vom, \vor}( \overline{\mathbb{R}^{n}} )$, or they are given by one of the quantizations $B_{\psi_{j}, \mathrm{3co,b}}$, $B_{\psi_{j}, \mathrm{3co,co,b}}$, $B_{\psi_{j}, \mathrm{3co,co,sc}}$, $B_{\psi_{j}, \mathrm{3sc}}$ as defined in the previous sections. Here, we will also emphasize that the terms $B_{\psi_{j}, \mathrm{3co,b}}$, $B_{\psi_{j}, \mathrm{3co,co,b}}$ are interchangeable, as $\psi_{j} A \psi_{j}$ could be written in either forms modulo a term in $\Psf^{-\infty, -\infty, \vol, -\infty, -\vob, -\infty}(\Xd)$, whenever $\supp \psi_{j}$ intersects $\dff \cap \cf$. Moreover, terms of the forms $R_{\psi_{j}, \mathrm{3co,b}}, R_{\psi_{j}, \mathrm{3co,co,b}} \in \Psf^{-\infty, -\infty, \vol, -\infty, \vob, -\infty}(\Xd)$ have been modded out as well.  \par

Let $b_{\psi_{j}} \in S^{\vom, \vor, \vol, \vov, \vob, \vos}_{\delta} ( \psf \Xd )$ denote the symbol of $\psi_{j} A \psi_{j}$. Then the above procedure gives rise to a well-defined `full symbol' of $A$ which depend on the choice of the partition $\{ \psi_{j} \}_{j=1}^{J}$. Nevertheless, this construction can be made invariant by using the Kuranishi trick. We shall omit this calculation, and simply point out that the key observations are of follows: Firstly, the symbolic estimates presented in Definition \ref{definition of variable orders symbols} are (essentially, up to a sign) all written in canonical coordinates. Correspondingly, the quantizations $B_{\psi_{j}, \mathrm{3co,b}}$, $B_{\psi_{j}, \mathrm{3co,b}}$, $B_{\psi_{j}, \mathrm{3co,co,sc}}$ and $B_{\psi_{j}, \mathrm{3sc}}$ are also defined in such coordinates. Secondly, by simultaneously taking one order of derivatives in a spatial variable and its dual momentum variable, we gain at least $1-2\delta$ order of decay at $\psf_{\dmf}\Xd$, $\dtsccf$, $\rf$ and the fiber infinity. \par

The rest of the proof is standard.
\end{proof}

The proof of Proposition \ref{proposition principal symbol for the conormal three-cone operators} is similar.

Finally, principal symbols can also be defined for the classes of second microlocalized, large-parameter operators introduced in \S\S \ref{subsection second microlocalization for the indicial operators dff}--\ref{subsection second microlocalization for the indicial operators cf}.

\begin{proposition}[Principal symbol maps for the indicial operators] 
Suppose that $\vom, \vor, \vos \in \mathcal{C}^{\infty}( \psf_{\dff} \Xd )$, $\vol \in \mathcal{C}^{\infty}( \mathcal{C}_{\tindex} \times \overline{\mathbb{R}^{n_{\tindex}}} )$. Then there exists a map
\begin{align*}
{^{\mathrm{sc,b,lp,res}}\sigma_{\vom,\vor, \vol, \vos, \delta}} & : \Psi_{\mathrm{sc,b,lp,res}, \delta}^{\vom, \vor, \vol, \vos}( X^{\tindex} ; \mathcal{C}_{\tindex} \times {\mathbb{R}^{n_{\tindex}}} ) \\
& \qquad \longrightarrow ( S^{\vom, \vor, \vol, \vos}_{\delta} / S_{\delta}^{\vom - 1 + 2\delta, \vor - 1 + 2 \delta, \vol, \vos - 1 + 2 \delta} ) ( \psf_{\dff} \Xd )
\end{align*}
and a short exact sequence
\begin{align*}
& 0 \rightarrow  \Psi_{\mathrm{sc,b,lp,res},\delta}^{\vom, \vor, \vol, \vos} ( X^{\tindex} ; \mathcal{C}_{\tindex} \times {\mathbb{R}^{n_{\tindex}}} )\rightarrow  \Psi_{\mathrm{sc,b,lp,res},\delta}^{\vom - 1 + 2 \delta, \vor - 1 + 2 \delta, \vol, \vos - 1 + 2\delta}( X^{\tindex} ; \mathcal{C}_{\tindex} \times {\mathbb{R}^{n_{\tindex}}} ) \\
& \qquad \longrightarrow_{{^{\mathrm{sc,b,lp,res}}\sigma_{\vom,\vor, \vol, \vos, \delta}} } ( S^{\vom, \vor, \vol, \vos}_{\delta} / S^{\vom - 1 + 2\delta, \vor - 1 + 2\delta, \vol, \vos - 1 + 2\delta}_{\delta} ) ( \psf_{\dff} \Xd ) \rightarrow 0.
\end{align*}
Thus ${^{\mathrm{sc,b,lp,res}}\sigma_{\vom,\vor, \vol, \vos, \delta}} $ descends to a short exact sequence
\begin{align*}
{ ^{\mathrm{sc,b,lp,res}} \sigma_{\vom, \vor, \vol, \vos, \delta}} & : ( \Psi_{\mathrm{sc,b,lp,res}, \delta}^{\vom, \vor, \vol, \vos} / \Psi_{\mathrm{sc,b,lp,res}, \delta}^{\vom - 1 + 2 \delta, \vor - 1 + 2 \delta , \vol, \vos - 1 + 2 \delta} )( X^{\tindex} ; \mathcal{C}_{\tindex} \times {\mathbb{R}^{n_{\tindex}}} ) \\
& \qquad \xlongrightarrow{\sim} ( S^{\vom, \vor, \vol, \vos}_{\delta} / S^{\vom - 1 + 2 \delta , \vor - 1 + 2\delta , \vol, \vos - 1 + 2\delta}_{\delta} )( \psf_{\dff} \Xd ).
\end{align*} \par

Moreover, if any of the variable orders $\vom$, $\vor$, $\vol$, or $\vos$ is actually constant, then the loss of $2\delta$ order corresponding to that index can be removed. In particular, if all of them are constants with $\vom = m$, $\vor = r$, $\vol = l$, $\vos = s$, then the above results are valid with $\Psi_{\mathrm{sc,b,lp,res}}^{m,r,l,s}(X^{\tindex} ; \mathcal{C}_{\tindex} \times \mathbb{R}^{n_{\tindex}})$ in places of $\Psi_{\mathrm{sc,b,lp,res}, \delta}^{\vom,\vor,\vol,\vos}(X^{\tindex} ; \mathcal{C}_{\tindex} \times \mathbb{R}^{n_{\tindex}})$.

Likewise, suppose that $\vom,\vor, \vos \in \mathcal{C}^{\infty}( \tcocf )$, $\vol \in \mathcal{C}^{\infty}( \mathcal{C}_{\tindex} \times \overline{ \mathbb{R}^{n_{\tindex}} } )$. Then there exists a map
\begin{align*}
{^{\mathrm{co,lp,res}}\sigma_{\vom,\vor, \vol, \vos, \delta}} & : \Psi_{\mathrm{coc,lp,res}, \delta}^{\vom, \vor, \vol, \vos}( [ \hat{X}^{\tindex} ; \{  0\} ]  ; \mathcal{C}_{\tindex} \times {\mathbb{R}^{n_{\tindex}}} ) \rightarrow ( S^{\vom, \vor, \vol, \vos}_{\delta} / S_{\delta}^{\vom - 1 + 2\delta, \vor - 1 + 2 \delta, \vol, \vos - 1 + 2 \delta} ) ( \tcocf )
\end{align*}
and a short exact sequence
\begin{align*}
& 0 \rightarrow  \Psi_{\mathrm{coc,lp,res},\delta}^{\vom, \vor, \vol, \vos} ( [ \hat{X}^{\tindex} ; \{ 0 \} ] ; \mathcal{C}_{\tindex} \times {\mathbb{R}^{n_{\tindex}}} )\rightarrow  \Psi_{\mathrm{coc,lp,res},\delta}^{\vom - 1 + 2 \delta, \vor - 1 + 2 \delta, \vol, \vos - 1 + 2\delta}( [ \hat{X}^{\tindex} ; \{ 0 \} ] ; \mathcal{C}_{\tindex} \times {\mathbb{R}^{n_{\tindex}}} ) \\
& \qquad \qquad \qquad \qquad \qquad  \quad \longrightarrow_{{^{\mathrm{coc,lp,res}}\sigma_{\vom,\vor, \vol, \vos, \delta}} } ( S^{\vom, \vor, \vol, \vos}_{\delta} / S^{\vom - 1 + 2\delta, \vor - 1 + 2\delta, \vol, \vos - 1 + 2\delta}_{\delta} ) ( \tcocf ) \rightarrow 0.
\end{align*}
Thus ${^{\mathrm{coc,lp,res}}\sigma_{\vom,\vor, \vol, \vos, \delta}}$ descends to a short exact sequence
\begin{align*}
{ ^{\mathrm{coc,lp,res}} \sigma_{\vom, \vor, \vol, \vos, \delta}} & : ( \Psi_{\mathrm{coc,lp,res}, \delta}^{\vom, \vor, \vol, \vos} / \Psi_{\mathrm{coc,lp,res}, \delta}^{\vom - 1 + 2 \delta, \vor - 1 + 2 \delta , \vol, \vos - 1 + 2 \delta} )( [ \hat{X}^{\tindex} ; \{ 0 \} ] ; \mathcal{C}_{\tindex} \times {\mathbb{R}^{n_{\tindex}}} ) \\
& \qquad \qquad \qquad \qquad \xlongrightarrow{\sim} ( S^{\vom, \vor, \vol, \vos}_{\delta} / S^{\vom - 1 + 2 \delta , \vor - 1 + 2\delta , \vol, \vos - 1 + 2\delta}_{\delta} )( \tcocf ).
\end{align*}

Moreover, if any of the variable orders $\vom$, $\vor$, $\vol$, or $\vos$ is actually constant, then the loss of $2\delta$ order corresponding to that index can be removed. In particular, if all of them are constants with $\vom = m$, $\vor = r$, $\vol = l$, $\vos = s$, then the above results are valid with $\Psi_{\mathrm{coc,lp,res}}^{m,r,l,s}( [ \hat{X}^{\tindex} ; \{ 0 \} ] ; \mathcal{C}_{\tindex} \times \mathbb{R}^{n_{\tindex}} )$ in places of $\Psi_{\mathrm{coc,lp,res}, \delta}^{\vom, \vor, \vol, \vos}( [ \hat{X}^{\tindex} ; \{ 0 \} ] ; \mathcal{C}_{\tindex} \times \mathbb{R}^{n_{\tindex}} ))$.
\end{proposition}

For the sake of brevity, we will often denote the principal symbol map by $\sigma$ no matter the context{\ep}so long as it is clear which class of operators we are working with.

\subsection{Partial classicality and indicial operators}
\label{subsection partial classicality and indicial operators}
Next we consider indicial operators. In the contexts of either the conormal three-cone or the second microlocalized operators (which arise from resolving the conormal three-cone operators), there are two types of indicial operators we need to consider for a fully global analysis. They are separately used to measure leading order decay at $\dff$ and $\zf$, where the principal symbol fails to do so.  \par

Since conormality does not ensure smoothness (i.e., classicality), it should not be surprsiing that we cannot define indicial operators for arbitrary conormal three-cone or second microlocalized operators. Instead, we will need to assume that the operators in question be \emph{partially classical} at $\dff$ or $\cf$ in some appropriate senses. In fact, we will often make relatively weak assumption on partial classicality following \cite{AndrasSM}. \par

In making our discussion as motivating as possible, let us first consider classicality and indicial operators for the conormal three-cone operators. However, before we state their definitions, let us first observe the following: for any $m,r,l \in \mathbb{R}$, the families of operators
\begin{equation*}
\hat{A}_{\dff} \in \Psi_{\mathrm{bc,lp}}^{m,b-2l}( X^{\tindex} ; \mathcal{C}_{\tindex} \times \mathbb{R}^{n_{\tindex}} ), \quad \hat{A}_{\cf} \in \Psi_{\mathrm{coc,lp}}^{m,r - \frac{b}{2}, l - \frac{b}{2}}( [ \hat{X}^{\tindex} ; \{ 0 \} ] ; \mathcal{C}_{\tindex} \times \mathbb{R}^{n_{\tindex}} )
\end{equation*}
can be extended through dilation in $z_{\tindex}$ so that we have
\begin{gather}
\hat{A}_{\dff}  \in \mathcal{C}^{\infty}( \overline{\mathbb{R}^{n_{\tindex}}} \backslash \{ 0 \} ; \Psi_{\mathrm{bc,lp}}^{m, b- 2l } ( X^{\tindex} ; \mathbb{R}^{n_{\tindex}} ) ), \label{indicial operator subsection oeprator valued symbol dff} \\ 
 \hat{A}_{\cf}  \in \mathcal{C}^{\infty} ( \overline{\mathbb{R}^{n_{\tindex}}} \backslash \{ 0 \} ; \Psi_{\mathrm{coc,lp}}^{m,r - \frac{b}{2}, l - \frac{b}{2}} ( [ \hat{X}^{\tindex} ; \{ 0 \} ] ; \mathcal{C}_{\tindex} \times \mathbb{R}^{n_{\tindex}}  ) ). \label{indicial operator subsection oeprator valued symbol cf} 
\end{gather}
Thus, let $A_{\dff}$, $A_{\cf}$ be the partial quantizations of $x_{\tindex}^{-l} \hat{A}_{\dff}$, $x_{\tindex}^{-b/2} \hat{A}_{\cf}$ respectively, i.e.,
\begin{gather}
A_{\dff} \coloneq \frac{1}{(2\pi)^{n_{\tindex}}} \int_{\mathbb{R}^{n_{\tindex}}} e^{ i ( z_{\tindex} - z_{\tindex}' ) \cdot \zeta_{\tindex} } x_{\tindex}^{-l} \hat{A}_{\dff}( z_{\tindex}, \zeta_{\tindex} )  d\zeta_{\tindex} |dz_{\tindex}'|, \label{indicial operator subsection A dff}  \\
A_{\cf} \coloneq \frac{1}{(2\pi)^{n_{\tindex}}} \int_{\mathbb{R}^{n_{\tindex}}} e^{ i ( z_{\tindex} - z_{\tindex} ) \cdot \zeta_{\tindex}^{\mathrm{3co}} } x_{\tindex}^{-\frac{b}{2}} \hat{A}_{\cf}( z_{\tindex}, \zeta_{\tindex}^{\mathrm{3co}} ) d\zeta_{\tindex}^{\mathrm{3co}} |dz_{\tindex}'|. \label{indicial operator subsection A cf} 
\end{gather}
Then we have 
\begin{equation*}
\psi_{\dff} A_{\dff} \psi_{\dff}, \psi_{\cf} A_{\cf} \psi_{\cf} \in \Psi_{\mathrm{3coc}}^{m,r,l,b}(\Xd),
\end{equation*}
where $\psi_{\dff}, \psi_{\cf} \in \mathcal{C}^{\infty}( \Xd )$ are cut-off functions at $\dff$ and $\cf$ respectively.

\begin{definition}[Partial classicality and indicial operators for the three-cone operators]
\label{indicial operator definition for the three-cone operators}
Let $m,r,l,b \in \mathbb{R}$, and assume that $A \in \Psi_{\mathrm{3coc}}^{m, r,l,b}(\Xd)$. Assume further that $K > 0$. 
\begin{itemize}
\item We say that $A$ is classical at $\dff$ modulo $\Psi_{\mathrm{3coc}}^{m,r,l-K,b}(\Xd)$ if
\begin{equation*}
\psi_{\dff} ( A - A_{\dff} ) \psi_{\dff} \in \Psi_{\mathrm{3coc}}^{m, -\infty , l - K, b}( \Xd )
\end{equation*}
for some $A_{\dff}$ defined as in (\ref{indicial operator subsection A dff}), with an operator-valued symbol $\hat{A}_{\dff}$ as in (\ref{indicial operator subsection oeprator valued symbol dff}). \par

Moreover, let $\hat{A}_{\psi_{\dff}}$ be the operator-valued symbol of $\psi_{\dff} A \psi_{\dff}$ as determined through (\ref{operator valued symbol near dff 1}). Then we will define the indicial operator of $A$ at $\dff$ by
\begin{equation*}
{^{\mathrm{3co}} \hat{N}_{\dff, l}}(A) \coloneq \lim_{x_{\tindex} \rightarrow 0} x_{\tindex}^{l} \hat{A}_{\psi_{\dff}} = \hat{A}_{\dff}.
\end{equation*}
\item We say that $A$ is classical at $\cf$ modulo $\Psi_{\mathrm{3coc}}^{m,r,l,b-K}(\Xd)$ if
\begin{equation*}
\psi_{\cf} ( A - A_{\cf} ) \psi_{\cf} \in \Psi_{\mathrm{3coc}}^{m,r,l,b-K}(\Xd).
\end{equation*}
for some $A_{\cf}$ define as in (\ref{indicial operator subsection A cf}), with an operator-valued symbol $\hat{A}_{\cf}$ as in (\ref{indicial operator subsection oeprator valued symbol cf}). \par

Moreover, let $\hat{A}_{\psi_{\cf}}$ be the operator-valued symbol of $\psi_{\cf} A \psi_{\cf}$ as determined through (\ref{three-cone case writing A psicf as a partial quantization -1}). Then we will define the indicial operator of $A$ at $\cf$ by
\begin{equation*}
{ ^{\mathrm{3co}}\hat{N}_{\cf,b}} (A) \coloneq \lim_{x_{\tindex} \rightarrow 0} x_{\tindex}^{\frac{b}{2}} \hat{A}_{\psi_{\cf}} = \hat{A}_{\cf}.
\end{equation*}
\end{itemize}
\end{definition}
\begin{remark}
Notice that Definition \ref{indicial operator definition for the three-cone operators} require much weaker conditions than the standard notions of `full classicality at $\dff$'. Indeed, it is clear that there exists another natural notion of classicality at $\dff$ for $A \in \Psi_{\mathrm{3coc}}^{m,r,l,b}(\Xd)$ which is considerably stronger. Namely, let $\hat{A}_{\psi_{\dff}}$ be the operator-valued symbol of $\psi_{\dff} A \psi_{\dff}$ as determined by (\ref{operator valued symbol near dff 1}). Then for every integer $j \geq 0$, there exists 
\begin{equation*}
\hat{A}_{\dff,j} \in  \Psi_{\mathrm{bc,lp}}^{m,b-2l}( X^{\tindex} ; \mathcal{C}_{\tindex} \times \mathbb{R}^{n_{\tindex}} )
\end{equation*}
such that
\begin{equation}
\label{asymptotic of the operator-valued symbols three-cone case near dff}
\hat{A}_{\psi_{\dff}} \sim \sum_{j=0}^{\infty} x_{\tindex}^{-l + j} \hat{A}_{\dff,j}
\end{equation}
in the following sense: for every positive integer $J \geq 1$, we have
\begin{equation*}
\hat{A}_{\psi_{\dff}} - \sum_{j=0}^{J-1} x_{\tindex}^{-l+j}  \hat{A}_{\dff,j} \in x_{\tindex}^{-l+J} \mathcal{C}^{\infty} ( \overline{\mathbb{R}^{n_{\tindex}}} \backslash \{ 0 \} ; \Psi_{\mathrm{bc,lp}}^{ m, b - 2l } ( X^{\tindex} ; \mathbb{R}^{n_{\tindex}} ) ). 
\end{equation*}
Notice that such a notion of classicality at $\dff$ is also contained by Definition \ref{indicial operator definition for the three-cone operators}, i.e., if $A$ satisfies the above, stronger notion of classicality at $\dff$, then $A$ must also be classical at $\dff$ modulo $\Psi_{\mathrm{3coc}}^{m,r, l - 1, b}( \Xd )$. Moreover, we have ${ ^{\mathrm{3co}}\hat{N}}_{\dff}(A) = \hat{A}_{\dff,0}$. \par

However, it is not so obvious how the above definition should be extended to $\cf$. Namely, it seems that there are now two natural notions of `full classicality at $\cf$'. Indeed, let $\hat{A}_{\psi_{\cf}}$ be the operator-valued symbol of $\psi_{\cf} A \psi_{\cf}$ as determined by (\ref{three-cone case writing A psicf as a partial quantization -1}). Then on the one hand, by respecting the smooth structure of $\psf \Xd$, we may require that for every integer $j \geq 0$, there exists 
\begin{equation}
\label{natural full asymptotic at cf first case -1}
\hat{A}_{\cf,j} \in \Psi_{\mathrm{coc,lp}}^{m,r - \frac{b}{2}, l - \frac{b}{2}}( [ \hat{X}^{\tindex} ; \{ 0 \} ] ; \mathcal{C}_{\tindex} \times \mathbb{R}^{n_{\tindex}} )
\end{equation}
such that 
\begin{equation} 
\label{natural full asymptotic at cf first case}
\hat{A}_{\psi_{\cf}} \sim \sum_{j=0}^{\infty}  x_{\tindex}^{ - \frac{b}{2} + \frac{j}{2} } \hat{A}_{\cf,j}.
\end{equation}
In other words, we require that one gains $j$ order of decay at $\cf$ for the $j$th term in the asymptotic summation. Notice that this is the case since we have $x_{\tindex} \simeq x_{\dff} x_{\cf}^2$, i.e., $x_{\tindex}$ is a quadratic defining function for $\cf$, while the gain in decay at $\dff$ coming from orders of $x_{\tindex}$ is actually irrelevant because (\ref{natural full asymptotic at cf first case}) is only a meaningful statement near $\cf$. \par

On the other hand, recall from \S \ref{subsection partial quantization near cf} that every differentiation in the $z_{\tindex}$ variables (i.e., the `symbolic' variables of $\hat{A}_{\psi_{\cf}}$) produces one order of gain as $x_{\tindex} \rightarrow 0$, which contains two orders of gain at $\cf$. Thus, from the perspective of viewing $\hat{A}_{\psi_{\cf}}$ as an operator-valued symbol, the local product structure suggests that we should define $A$ to be fully classical at $\cf$ if for every integer $j \geq 0$, there exists operator $\hat{A}_{\dff,j}$ as in (\ref{natural full asymptotic at cf first case -1}), such that
\begin{equation} 
\label{natural full asymptotic at cf first case 1}
\hat{A}_{\psi_{\cf}} \sim \sum_{j=0}^{\infty}  x_{\tindex}^{ - \frac{b}{2} + j } \hat{A}_{\cf,j}.
\end{equation} \par

Here, the asymptotic summations (\ref{natural full asymptotic at cf first case}) and (\ref{natural full asymptotic at cf first case 1}) are understood in the following senses: for every positive integer $J \geq 1$, we have
\begin{equation*}
\begin{gathered}
\hat{A}_{\psi_{\cf}} - \sum_{j=0}^{J-1} \hat{A}_{\cf,j} \in x_{\tindex}^{ - \frac{b}{2} + \frac{j}{2} } \mathcal{C}^{\infty} ( \overline{\mathbb{R}^{n_{\tindex}}} \backslash \{ 0 \} ; \Psi_{\mathrm{coc,lp}}^{m, r - \frac{b}{2}, l - \frac{b}{2} } ( [ \hat{X}^{\tindex} ; \{ 0 \} ] ; \mathbb{R}^{n_{\tindex}} ) ), \\
\hat{A}_{\psi_{\cf}} - \sum_{j=0}^{J-1} \hat{A}_{\cf,j} \in x_{\tindex}^{ - \frac{b}{2} + j } \mathcal{C}^{\infty} ( \overline{\mathbb{R}^{n_{\tindex}}} \backslash \{ 0 \} ; \Psi_{\mathrm{coc,lp}}^{m, r - \frac{b}{2}, l - \frac{b}{2}} ( [ \hat{X}^{\tindex} ; \{ 0 \} ] ; \mathbb{R}^{n_{\tindex}} ) )
\end{gathered}
\end{equation*}
respectively. Thus, both notions of full classicality are again contained in Definition \ref{indicial operator definition for the three-cone operators}. Namely, they imply that $A$ is classical at $\cf$ modulo $\Psi_{\mathrm{3coc}}^{m, r, l, b - 1}(\Xd)$ and $\Psi_{\mathrm{3coc}}^{m,r,l,b-2}(\Xd)$ respectively. Moreover, in both cases we have ${ ^{\mathrm{3sc}} \hat{N}_{\cf} }(A) = \hat{A}_{\cf,0}$. \par

In particular, the above discussion suggests that Definition \ref{indicial operator definition for the three-cone operators} is indeed the more suitable notion of classicality in this context. In practice, one simply check carefully the precise structure of the operator at hand, and then choose $K > 0$ correspondingly.
\end{remark}

In the presence of variable orders, we will proceed via the same philosophy as in Definition \ref{indicial operator definition for the three-cone operators} to define indicial operators for the second microlocalized operators. However, suppose that $\vom, \vor, \vov, \vos \in \mathcal{C}^{\infty}( \psf \Xd )$, $\vol, \vob \in \mathcal{C}^{\infty}( \mathcal{C}_{\tindex} \times \overline{\mathbb{R}^{n_{\tindex}}} )$, and let us take
\begin{gather}
\hat{A}_{\dff} \in \Psi_{\mathrm{sc,bc,lp,res}, \delta}^{\vom, \vov - 2 \vol, \vob - 2 \vol, \vos - \vol}( X^{\tindex} ; \mathcal{C}_{\tindex} \times \mathbb{R}^{n_{\tindex}} ),  \label{hat A dff indicial operator subsection} \\ 
\hat{A}_{\cf} \in \Psi_{\mathrm{coc,lp,res}, \delta}^{ \vom, \vor - \frac{\vob}{2}, \vol - \frac{\vob}{2}, \vov - \vob }( [ \hat{X}^{\tindex} ; \{ 0 \} ] ; \mathcal{C}_{\tindex} \times \mathbb{R}^{n_{\tindex}} ).  \label{hat A cf indicial operator subsection} 
\end{gather}
Then we can no longer extend $\hat{A}_{\dff}$, $\hat{A}_{\cf}$ by dilation in $z_{\tindex}$, and naively expect the resulting families of operators to (even locally near $\dff$ and $\cf$ respectively) have the correct structures as operator-valued symbols for some elements of $\Psf^{\vom, \vor, \vol, \vov, \vob, \vos}(\Xd)$.
 \par

To see this, let us consider a simple example in the constant order case: Let
\begin{equation*}
\hat{A}_{\dff} \in \Psi_{\mathrm{sc,bc,lp,res}}^{0,-\infty , -\infty ,0}( X^{\tindex}; \mathcal{C}_{\tindex} \times \mathbb{R}^{n_{\tindex}}).
\end{equation*}
be chosen such that the support of $\hat{A}_{\dff}$ is contained in the interior of $X^{\tindex}$. Moreover, $\hat{A}_{\dff}$ is given by the standard quantization of some $a_{\dff} \in S^{0, - \infty, -\infty, 0}( \psf_{\dff} \Xd )$ in the Euclidean coordinates $( z^{\tindex}, \zeta^{\tindex} )$. \par

Suppose that we extend $a_{\dff}$ by dilation in $z_{\tindex}$, and denote this extension by $a$ still. Thus $a_{\dff}$ is now defined in the lift of a small neighborhood of $\overline{^{\mathrm{3co}}T^{\ast}}_{\dff} \Xd$ to $\psf \Xd$, which we might call $U$. Then we would like to know whether $a_{\dff}$ can be identified as an element of $S^{0, -\infty, 0, -\infty, -\infty, 0}( \psf \Xd )$ in $U$. \par

But it is clear that this is false. Indeed, notice that due to the support property of $a_{\dff}$, the only boundary faces of $\psf \Xd$ which $U$ intersects are $\psf_{\dff} \Xd$ and $\rf$. Moreover, we can find open subsets of $U$ which are disjoint from $\psf_{\dff} \Xd$, such that in the coordinates $( z_{\tindex}, z^{\tindex}, \zeta_{\tindex}, \zeta^{\tindex} )$, and with $|\zeta_{\tindex}| \geq c > 0$, we have
\begin{equation*}
\rho_{\rf} \simeq \frac{1}{|z_{\tindex}|} , \quad \rho_{\infty} \simeq \frac{|z_{\tindex}|}{ \langle \zeta^{\tindex} \rangle }. 
\end{equation*}
Thus the required symbol estimates are given by
\begin{align*}
 | \partial_{z_{\tindex}}^{\beta_{\tindex}} \partial_{z^{\tindex}}^{\beta^{\tindex}} \partial_{\tsclz}^{\gamma_{\tindex}} \partial_{\tscuz}^{\gamma^{\tindex}} a_{\dff} | \leq {} & C_{\beta_{\tindex} \beta^{\tindex} \gamma_{\tindex} \gamma^{\tindex} } \rho_{\infty}^{|\gamma_{\tindex}| + |\gamma^{\tindex}|} \rho_{\rf}^{|\beta_{\tindex}| + |\gamma^{\tindex}|} \\ 
 \leq {} & C_{\beta_{\tindex} \beta^{\tindex} \gamma_{\tindex} \gamma^{\tindex} } |z_{\tindex}|^{-|\beta_{\tindex}| + |\gamma_{\tindex}|} \langle \zeta^{\tindex} \rangle^{-|\gamma_{\tindex}| - |\gamma^{\tindex}|}.
\end{align*}
However, it is easy to check that, in fact we have
\begin{align*}
 | \partial_{z_{\tindex}}^{\beta_{\tindex}} \partial_{z^{\tindex}}^{\beta^{\tindex}} \partial_{\tsclz}^{\gamma_{\tindex}} \partial_{\tscuz}^{\gamma^{\tindex}} a_{\dff} | \leq {} &  C_{ \beta_{\tindex} \beta^{\tindex} \gamma_{\tindex} \gamma^{\tindex} } | z_{\tindex} |^{-|\beta_{\tindex}|} \langle \zeta^{\tindex} \rangle^{-|\gamma^{\tindex}|}  \\
 \leq {} & C_{ \beta_{\tindex} \beta^{\tindex} \gamma_{\tindex} \gamma^{\tindex} } |z_{\tindex}|^{-|\beta_{\tindex}| + |\gamma_{\tindex}|} \langle \zeta^{\tindex} \rangle^{-|\gamma_{\tindex}| - |\gamma^{\tindex}|} \rho_{\infty}^{-|\gamma_{\tindex}|}
\end{align*}
which degenerates as $\rho_{\infty} \rightarrow 0$.

To resolve this issue, let us note that the aforementioned degeneracy is only relevant at the level of symbols (in the sense that it is relevant only for those terms which are defined by a quantization) away from $\psf_{\dff} \Xd$. This can be checked in local coordinates, and indeed it is also reflected in Remark \ref{the serious product structure in the second microlocalized, resolved case remark}. \par

More concretely, suppose that 
\begin{equation*}
q_{\dff}, q_{\cf} \in \mathcal{C}^{\infty}( \psf \Xd )
\end{equation*}
are cut-off functions at $\psf_{\dff}\Xd$ and $\tcocf$ respectively. Let also
\begin{equation*}
a_{\dff} \in S^{\vom, \vov - 2 \vol, \vob - 2 \vol, \vos - \vol}_{\delta}( \psf_{\dff}\Xd ), \quad a_{\cf} \in S^{\vom, \vor - \frac{\vob}{2}, \vol - \frac{\vob}{2}, \vov - \vob}_{\delta} (\tcocf)
\end{equation*}
be the symbols of $\hat{A}_{\dff}$ and $\hat{A}_{\cf}$ as defined in (\ref{hat A dff indicial operator subsection}) and (\ref{hat A cf indicial operator subsection}) respectively. Then it is easy to see that we have 
\begin{equation*}
x_{\tindex}^{-\vol} q_{\dff} a_{\dff} \in S^{\vom, -\infty, \vol, \vov, \vob, \vos}_{\delta}( \psf \Xd ), \quad x_{\cf}^{-\frac{\vob}{2}} q_{\cf} a_{\cf} \in S^{\vom, \vor, \vol, \vov, \vob, -\infty }_{\delta} ( \psf \Xd ).
\end{equation*}
\par

It follows that we can define an operator $A_{\dff,q_{\dff}}$ as follows: Recall from \S \ref{subsection second microlocalization for the indicial operators dff} that for $\psi^{\tindex} \in \mathcal{C}^{\infty}(X^{\tindex})$ with sufficiently small support, the parts of $\hat{A}_{\dff}$ which are given directly by quantizations are either of the form $(\hat{B}_{\dff})_{\psi^{\tindex}}$ as in (\ref{B in the constant order large parameter indicial operator b calculus}) if $\supp \psi^{\tindex}$ intersects $\mathcal{C}^{\tindex}$, or they are of the form $\psi^{\tindex} \hat{A}_{\dff} \psi^{\tindex}$ if $\supp \psi^{\tindex}$ does not intersect $\mathcal{C}^{\tindex}$. Let $(b_{\dff})_{\psi^{\tindex}}$ denote the symbols of these terms in either cases. Then we will define $(B_{\dff, q_{\dff}})_{\psi^{\tindex}}$ and $\psi^{\tindex} A_{\dff, q_{\dff}} \psi^{\tindex}$ respectively by (\ref{the 3scb quantization written in terms of t}) and the standard Euclidean quantization (i.e., (\ref{scattering quantization})), with the exception that their symbols are now replaced by $x_{\tindex}^{-\vol} q_{\dff} (b_{\dff})_{\psi^{\tindex}}$. \par

On the other hand, let $\phi^{\tindex} \in \mathcal{C}^{\infty}( X^{\tindex} )$ be such that $\supp \psi^{\tindex} \cap \supp \phi^{\tindex} = \emptyset$, and assume that $\supp \phi^{\tindex}$ is small as well. Then the `off-diagonal' parts of $\hat{A}_{\dff}$ either take the form $(\hat{R}_{\dff})_{\psi^{\tindex}}$ (i.e., the second term which appears in (\ref{B in the constant order large parameter indicial operator b calculus -1}) and is defined by (\ref{dff second microlocalizing indicial operator estimate on hat R})), or they are given by $\phi^{\tindex} \hat{A}_{\dff} \psi^{\tindex}$, and we will define
\begin{align}
(R_{\dff})_{\psi^{\tindex}} & \coloneq \frac{1}{(2\pi)^{n_{\tindex}}} \int_{\mathbb{R}^{n_{\tindex}}} e^{ i ( z_{\tindex} - z_{\tindex}' ) \cdot \zeta_{\tindex} } x_{\tindex}^{-\vol} ( \hat{R}_{\dff} )_{\psi^{\tindex}}( z_{\tindex}, \zeta_{\tindex} ) d \zeta_{\tindex} |dz_{\tindex}'|, \label{indicial operator subsection part of Adff R} \\
\phi^{\tindex}( A_{\dff, q_{\dff}} ) \psi^{\tindex} & \coloneq \frac{1}{(2\pi)^{n_{\tindex}}} \int_{\mathbb{R}^{n_{\tindex}}} e^{ i ( z_{\tindex} - z_{\tindex}' ) \cdot \zeta_{\tindex} } x_{\tindex}^{- \vol} (\phi^{\tindex} \hat{A}_{\dff} \psi^{\tindex})  ( z_{\tindex}, \zeta_{\tindex} ) d \zeta_{\tindex} |dz_{\tindex}'|. \label{indicial operator subsection part of Adff K}
\end{align} 
Hence in fact, $(K_{\dff})_{\phi^{\tindex}, \psi^{\tindex}} \coloneq  \phi^{\tindex}( A_{\dff, q_{\dff}} ) \psi^{\tindex}$ is independent of $q_{\dff}$.
\par

It follows from the standard partition argument that if we set
\begin{equation*}
\psi^{\tindex} A_{\dff, q_{\dff}} \psi^{\tindex} \coloneq (B_{\dff, q_{\dff}})_{\psi^{\tindex}} + (R_{\dff})_{\psi^{\tindex}}
\end{equation*}
when $\supp \psi^{\tindex}$ intersects $\mathcal{C}^{\tindex}$,  then we can realize 
\begin{equation*}
\psi_{\dff} A_{\dff, q_{\dff}} \psi_{\dff} \in \Psf^{\vom, -\infty , \vol, \vov, \vob, \vos }(\Xd),
\end{equation*}
where $\psi_{\dff} \in \mathcal{C}^{\infty}(\Xd)$ is any cut-off function at $\dff$.

The analogous construction can also be made at $\cf$ as well. Indeed, recall from \S \ref{subsection second microlocalization for the indicial operators cf} that for $\psi^{\tindex} \in \mathcal{C}^{\infty}( [ \hat{X}^{\tindex} ; \{ 0 \} ] )$ with sufficiently small support, the parts of $\hat{A}_{\cf}$ which are given directly by quantizations are either of the form $(\hat{B}_{\cf})_{\psi^{\tindex},\mathrm{b}}$ as in (\ref{second microlocalizing the indicial operator at cf b part quantization}) if $\supp \psi^{\tindex}$ intersects $\mathcal{C}^{\tindex}_{0}$, or they are of the form $( \hat{B}_{\cf} )_{\psi^{\tindex}, \mathrm{sc}}$ as in (\ref{second microlocalizing the indicial operator at cf sc part quantization}) if $\supp \psi^{\tindex}$ does not intersect $\mathcal{C}^{\infty}_{0}$. Let $(b_{\cf})_{\psi^{\tindex}}$ denote the symbols of these terms in either cases. Then we will define $(B_{\cf, q_{\cf}})_{\psi^{\tindex}, \mathrm{b}}$ and $( B_{\cf, q_{\cf}} )_{\psi^{\tindex}, \mathrm{sc}}$ respectively by (\ref{the 3cob quantization written in terms of t}) and (\ref{3cosc quantization}), with the exception that their symbols are now replaced by $x_{\tindex}^{-\vob/2} q_{\cf} ( b_{\cf} )_{\psi^{\tindex}}$. \par

On the other hand, let $\phi^{\tindex} \in \mathcal{C}^{\infty}( [ \hat{X}^{\tindex} ; \{ 0 \} ] )$ be such that $\supp \psi^{\tindex} \cap \supp \phi^{\tindex} = \emptyset$, and assume that $\supp \phi^{\tindex}$ is small as well. Then the `off-diagonal' parts of $\hat{A}_{\cf}$ either take the form $(\hat{R}_{\cf})_{\psi^{\tindex}, \mathrm{b}}$ (i.e., the second term which appears in (\ref{second microlocalizing the indicial operator at cf b part quantization - 1}) and is defined by (\ref{second microlocalizing the indicial operator at cf b part quantization 1})), or they are given by $\phi^{\tindex} \hat{A}_{\cf} \psi^{\tindex}$, and we will define
\begin{align*}
(R_{\cf})_{\psi^{\tindex}, \mathrm{b}} & \coloneq \frac{1}{(2\pi)^{n_{\tindex}}} \int_{\mathbb{R}^{n_{\tindex}}} e^{ i ( z_{\tindex} - z_{\tindex}' ) \cdot \zeta_{\tindex}^{\mathrm{3co}} } x_{\tindex}^{-\vob/2} ( \hat{R}_{\cf} )_{\psi^{\tindex}, \mathrm{b}}( z_{\tindex}, \zeta_{\tindex}^{\mathrm{3co}} ) d\zeta_{\tindex}^{\mathrm{3co}} |dz_{\tindex}'|,  \\
\phi^{\tindex}( A_{\cf, q_{\cf}} ) \psi^{\tindex} & \coloneq \frac{1}{(2\pi)^{n_{\tindex}}} \int_{\mathbb{R}^{n_{\tindex}}} e^{ i ( z_{\tindex} - z_{\tindex}' ) \cdot \zeta_{\tindex}^{\mathrm{3co}} } x_{\tindex}^{-\vob/2} ( \phi^{\tindex} \hat{A}_{\cf} \psi^{\tindex} ) ( z_{\tindex}, \zeta_{\tindex}^{\mathrm{3co}} ) d \zeta_{\tindex}^{\mathrm{3co}} | d z_{\tindex}' |.
\end{align*}
Here, $(K_{\cf})_{\phi^{\tindex}, \psi^{\tindex}} \coloneq \phi^{\tindex}( A_{\cf, q_{\cf}} ) \psi^{\tindex}$ is again independent of $q_{\cf}$. \par

Finally, if we set 
\begin{equation*}
\psi^{\tindex} A_{\cf, q_{\cf}} \psi^{\tindex} \coloneq ( B_{\cf, q_{\cf}} )_{\psi^{\tindex}, \mathrm{b}} + (R_{\cf})_{\psi^{\tindex}, \mathrm{b}}
\end{equation*}
when $\supp \psi^{\tindex}$ intersects $\mathcal{C}^{\tindex}_{0}$, and
\begin{equation*}
\psi^{\tindex} A_{\cf,q_{\cf}} \psi^{\tindex} \coloneq ( B_{\cf,q_{\cf}} )_{\psi^{\tindex}, \mathrm{sc}}
\end{equation*}
otherwise, then the standard partition argument allows us to realize
\begin{equation*}
\psi_{\cf} A_{\cf,q_{\cf}} \psi_{\cf} \in \Psf^{\vom, \vor, \vol, \vov, \vob, -\infty }(\Xd),
\end{equation*}
where $\psi_{\cf} \in \mathcal{C}^{\infty}(\Xd)$ is any cut-off function at $\cf$. \par

By using the above constructions, we can now make the following definition:
\begin{definition}[Partial classicality and indicial operators for the further-resolved second microlocalized operators]
\label{definition of partial classicality}
Let $\vom, \vor, \vov, \vos \in \mathcal{C}^{\infty}( \psf \Xd )$, $\vol, \vob \in \mathcal{C}^{\infty}( \mathcal{C}_{\tindex} \times \overline{\mathbb{R}^{n_{\tindex}}} )$ and $A \in \Psi^{ \mathsf{m}, \mathsf{r}, \mathsf{l}, \mathsf{v}, \mathsf{b}, \mathsf{s} }_{\mathrm{d3sc,3co,res}, \delta} (\Xd)$. Assume further that $K > 0$. Then:
\begin{itemize}
\item We say that $A$ is classical at $\dff$ modulo $\Psf^{\vom, \vor, \vol - K, \vov, \vob, \vos}(\Xd)$ if
\begin{equation*}
\psi_{\dff} ( A - A_{\dff, q_{\dff}} ) \psi_{\dff} \in \Psf^{\vom, -\infty , \vol - K, \vov, \vob, \vos}(\Xd),
\end{equation*}
where $A_{\dff, q_{\dff}}$ is constructed as in the above discussion with respect to some $\hat{A}_{\dff}$ as in (\ref{hat A dff indicial operator subsection}) and $q_{\dff} \in \mathcal{C}^{\infty}(\psf \Xd)$ which cuts off at $\psf_{\dff} \Xd$. \par 

 Moreover, let $\hat{A}_{\psi_{\dff}}$ be the operator-valued symbol of $\psi_{\dff} A \psi_{\dff}$ as determined through (\ref{membership of operator valued symbol near dff}). Then we will define the indicial operator of $A$ at $\dff$ by
\begin{equation*}
{^{\mathrm{d3sc,3co,res}} \hat{N}_{\dff, \vol}}(A) \coloneq \lim_{x_{\tindex} \rightarrow 0} x_{\tindex}^{\vol} \hat{A}_{\psi_{\dff}} = \hat{A}_{\dff}.
\end{equation*}
\item We say that $A$ is classical at $\cf$ modulo $\Psf^{\vom, \vor, \vol, \vov, \vob - K, \vos}(\Xd)$ if
\begin{equation*}
\psi_{\cf} ( A - A_{\cf, q_{\cf}} ) \psi_{\cf} \in \Psf^{\vom, \vor, \vol, \vov, \vob - K, \vos}(\Xd),
\end{equation*}
where $A_{\cf, q_{\cf}}$ is constructed as in the above discussion with respect to some $\hat{A}_{\cf}$ as in (\ref{hat A dff indicial operator subsection}) and $q_{\cf} \in \mathcal{C}^{\infty}( \psf \Xd )$ which cuts off at $\tcocf$. \par

Moreover, let $\hat{A}_{\psi_{\cf}}$ be the operator-valued symbol of $\psi_{\cf} A \psi_{\cf}$ as determined through (\ref{membership of operator valued symbol near dff}). Then we will define the indicial operator of $A$ at $\cf$ by
\begin{equation*}
{^{\mathrm{d3sc,3co,res}} \hat{N}_{\cf, \vob}}(A) \coloneq \lim_{x_{\tindex} \rightarrow 0} x_{\tindex}^{\frac{\vob}{2}} \hat{A}_{\psi_{\cf}} = \hat{A}_{\cf}.
\end{equation*}
\end{itemize}

The same statements hold if $\vom = m$, $\vor = r$, $\vol = l$, $\vov = \nu$, $\vob = b$, $\vos = s$ are all constants, in which case we also set $\delta = 0$.

\end{definition}

\begin{remark}
\label{alternate characterization of partial classicality at dff}
Equivalently, $A$ is classical at $\dff$ modulo $\Psf^{\vom,\vor, \vol - K, \vov, \vob, \vos}(\Xd)$ if for some $\hat{A}_{\dff}$ as in (\ref{hat A dff indicial operator subsection}) the following conditions are satisfied:

\begin{itemize}
\item Suppose that $\psi^{\tindex} \in \mathcal{C}^{\infty}( X^{\tindex} )$ has sufficiently small support. Let $(b_{\psi_{\dff}})_{\psi^{\tindex}}$ denote the symbol of $\psi^{\tindex} A_{\psi_{\dff}} \psi^{\tindex}$ where $A_{\psi_{\dff}} = \psi_{\dff} A \psi_{\dff}$ (i.e., $(b_{\psi_{\dff}})_{\psi^{\tindex}}$ is the symbol of $B_{\psi_{\dff} \psi^{\tindex}, \mathrm{3co,b}}$ as defined in (\ref{variable orders definition of the operator near dff 1}) if $\supp \psi^{\tindex}$ intersects $\mathcal{C}^{\tindex}$). Then there exists some $q_{\dff} \in \mathcal{C}^{\infty}( \psf \Xd )$ which cuts off at $\psf_{\dff} \Xd$, such that 
\begin{equation}
\label{indicial operator vanishes condition 1}
( b_{\psi_{\dff}} )_{\psi^{\tindex}} - x_{\tindex}^{-\vol} q_{\dff} ( b_{\dff})_{\psi^{\tindex}}  \in S^{\vom, -\infty , \vol - K, \vov, \vob, \vos}_{\delta}( \psf \Xd ),
\end{equation}
where $(b_{\dff})_{\psi^{\tindex}}$ is defined as in the above discussion.
\item Suppose that $\psi^{\tindex} \in \mathcal{C}^{\infty}(X^{\tindex})$ has sufficiently small support such that $\supp \psi^{\tindex}$ does not intersect $\mathcal{C}^{\tindex}$. Let $(R_{\psi_{\dff}})_{\psi^{\tindex}}\coloneq R_{\psi_{\dff} \psi^{\tindex}, \mathrm{3co,b}}$ as defined in (\ref{variable orders definition of the operator near dff 1}) and $(R_{\dff})_{\psi^{\tindex}}$ be the operator defined in (\ref{indicial operator subsection part of Adff R}). Then we have
\begin{equation}
\label{indicial operator vanishes condition 2}
( R_{\psi_{\dff}} )_{\psi^{\tindex}} - \psi_{\dff} (R_{\dff})_{\psi^{\tindex}} \psi_{\dff} \in \Psf^{-\infty, -\infty, \vol - K, -\infty, \vob, -\infty}( \Xd ).
\end{equation}
\item Suppose that $\phi^{\tindex}, \psi^{\tindex} \in \mathcal{C}^{\infty}( X^{\tindex} )$ have sufficiently small supports, which moreover satisfy $\supp \phi^{\tindex} \cap \supp \psi^{\tindex} = \emptyset$. Let $(K_{\psi_{\dff}})_{\phi^{\tindex}, \psi^{\tindex}} \coloneq \phi^{\tindex} A_{\psi_{\dff}} \psi^{\tindex}$ and $(K_{\dff})_{\phi^{\tindex}, \psi^{\tindex}}$ be the operator defined in (\ref{indicial operator subsection part of Adff K}). Then we have
\begin{equation}
\label{indicial operator vanishes condition 3}
 (K_{\psi_{\dff}})_{\phi^{\tindex}, \psi^{\tindex}} - \psi_{\dff} ( K_{\dff} )_{\phi^{\tindex}, \psi^{\tindex}} \psi_{\dff}  \in \Psf^{-\infty, -\infty, \vol - K, -\infty, \vob, -\infty}( \Xd ).
\end{equation}
\end{itemize} \par

The analogous characterization, which we shall omit, is available at $\cf$ as well.
\end{remark}

\begin{remark}
We also remark that the peculiar choices of indices which appear in (\ref{hat A dff indicial operator subsection}) and (\ref{hat A cf indicial operator subsection}) are required because we have factored out $x_{\tindex} = |z_{\tindex}|^{-1}$ in both cases, which is a total boundary of $\Xd$ near $\mathcal{C}_{\tindex}$. In fact, by local calculations, it is easy to see that
\begin{equation*}
x_{\tindex} \simeq \rho_{\dff} \rho_{\dmf} \rho_{\mathrm{d3sccf}_{\tindex}}^2 \rho_{\mathrm{3cocf}}^2 \rho_{\rf}.
\end{equation*}
\end{remark}

For brevity, henceforth we will omit writing the subscripts which appear in the indicial operators, as well as the orders on which they depend. More precisely, if $A \in \Psi_{\mathrm{3co}}^{m,r,l,b}(\Xd)$, then we will write ${\hat{N}_{\dff} }(A) = { ^{\mathrm{3co}}\hat{N}_{\dff,l} }(A)$, ${ \hat{N}_{\cf} }(A) = { ^{\mathrm{3co}}\hat{N}_{\cf,b} }(A)$. Likewise, if $A \in \Psi_{\mathrm{3co}}^{m,r,l,b}(\Xd)$, then we will write ${\hat{N}_{\dff} }(A) = { ^{\mathrm{d3sc,3co,res}}\hat{N}_{\dff,l} }(A)$, ${ \hat{N}_{\cf} }(A) = { ^{\mathrm{d3sc,3co,res}}\hat{N}_{\cf,b} }(A)$.

The point of considering the indicial operators at $\dff$ and $\cf$ is that they capture principal order decay at these faces.
\begin{proposition}
\label{proposition vanishing of indicial operator implies lower order membership}
Let $\vom, \vor, \vov, \vos \in \mathcal{C}^{\infty}( \psf \Xd )$, $\vol, \vob \in \mathcal{C}^{\infty}( \mathcal{C}_{\tindex} \times \overline{\mathbb{R}^{n_{\tindex}}} )$, and assume that $A \in \Psi^{ \mathsf{m}, \mathsf{r}, \mathsf{l}, \mathsf{v}, \mathsf{b}, \mathsf{s} }_{\mathrm{d3sc,3co,res}, \delta} (\Xd)$. Assume further that $K > 0$.
\begin{enumerate}
\item Suppose that $A$ is classical at $\dff$ modulo $\Psf^{\vom, \vor, \vol - K, \vov, \vob, \vos}(\Xd)$ with $\hat{N}_{\dff}(A) = 0$. Then we have $A \in \Psf^{ \vom, \vor, \vol - K, \vov, \vob, \vos}(\Xd)$.
\item Suppose that $A$ is classical at $\cf$ modulo $\Psf^{\vom, \vor, \vol, \vov, \vob - K, \vos}(\Xd)$ with $\hat{N}_{\cf}(A) = 0$. Then we have $A \in \Psf^{\vom, \vor, \vol, \vov, \vob - K, \vos}(\Xd)$.
\end{enumerate}

The same statements hold if $\vom = m$, $\vor = r$, $\vol = l$, $\vov = \nu$, $\vob = b$, $\vos = s$ are all constants, in which case we also set $\delta = 0$.

\end{proposition}

\begin{proof}
We will only prove part (1) as the proof of part (2) is similar. \par

Assume that $A$ is classical as stated. Then the conditions outlined in Remark \ref{alternate characterization of partial classicality at dff} are satisfied. Assume additionally that $\hat{A}_{\dff} = 0$. Then we first have $(b_{\dff})_{\psi^{\tindex}} = 0$ for all $\psi^{\tindex} \in \mathcal{C}^{\infty}(X^{\tindex})$ with sufficiently small support. Next, we know that $(\hat{R}_{\dff})_{\psi^{\tindex}} = 0$ identically if $\supp \psi^{\tindex}$ intersects $\mathcal{C}^{\tindex}$, and so by (\ref{indicial operator subsection part of Adff R}) we must have $(R_{\dff})_{\psi^{\tindex}} = 0$ as well. Finally, if $\phi^{\tindex} \in \mathcal{C}^{\infty}(X^{\tindex})$ has sufficiently support and is such that $\supp \phi^{\tindex} \cap \supp \psi^{\tindex} = \emptyset$, then we know that $(\hat{K}_{\dff})_{\phi^{\tindex}, \psi^{\tindex}} = 0$ identically. Moreover, $(K_{\psi_{\dff}})_{\phi^{\tindex}, \psi^{\tindex}} =0$ by (\ref{indicial operator subsection part of Adff K}). \par

 It follows from (\ref{indicial operator vanishes condition 1})--(\ref{indicial operator vanishes condition 3}) that we have 
\begin{equation*}
\begin{gathered}
(b_{\psi_{\dff}})_{\psi^{\tindex}} \in S_{\delta}^{\vom, -\infty, \vol - K, \vov, \vob, \vos}( \psf \Xd ), \\
(R_{\psi_{\dff}})_{\psi^{\tindex}}, (K_{\psi_{\dff}})_{\phi^{\tindex}, \psi^{\tindex}} \in \Psf^{-\infty, -\infty, \vol - K, -\infty, \vob, -\infty}(\Xd)
\end{gathered}
\end{equation*}
whenever these terms are defined. Equivalently, the above argument shows that $A_{\dff, q_{\dff}} = 0$ by construction. Either way, we have
\begin{equation*}
A_{\psi_{\dff}} \in \Psf^{\vom, -\infty, \vol - K, \vov, \vob, \vos}( \Xd ),
\end{equation*}
from which the required membership of $A$ can be concluded as well.
\end{proof}

We also have the converse statement to Proposition \ref{proposition vanishing of indicial operator implies lower order membership}.

\begin{lemma}
\label{lemma constructing operator from prescribed indicial operator}
Suppose that $\vom, \vor, \vov, \vos \in \mathcal{C}^{\infty}( \psf \Xd )$ and $\vol, \vob \in \mathcal{C}^{\infty}( \mathcal{C}_{\tindex} \times \overline{\mathbb{R}^{n_{\tindex}}} )$. 
\begin{enumerate}
\item For every 
\begin{equation*}
\hat{A}_{\dff} \in \Psi^{\mathsf{m}, \mathsf{v} - 2 \mathsf{l}, \mathsf{b} - 2 \mathsf{l}, \mathsf{s} - \mathsf{l}}_{\mathrm{sc,b,lp,res},\delta}( 
X^{\tindex} ; \mathcal{C}_{\tindex} \times {\mathbb{R}^{n_{\tindex}}} ),
\end{equation*}
there exists $A \in \Psf^{\vom, -\infty, \vol, \vov, \vob, \vos}(\Xd)$ supported near $\dff$, such that 
\begin{equation*}
\text{$A$ is classical at $\dff$ modulo $\Psf^{\vom, \vor, -\infty, \vov, \vob, \vos}(\Xd)$ and $\hat{N}_{\dff}(A) = \hat{A}_{\dff}$.}
\end{equation*} 
\item For every 
\begin{equation*}
\hat{A}_{\cf} \in \Psi_{\mathrm{coc,lp,res}}^{\mathsf{m}, \mathsf{r} - \mathsf{b}/2, \mathsf{l} - \mathsf{b}/2, \mathsf{v} - \mathsf{b}/2 }( [ \hat{X}^{\tindex} ; \{ 0 \} ] ; \mathcal{C}_{\tindex} \times {\mathbb{R}^{n_{\tindex}}} ),
\end{equation*}
there exists $A \in \Psi^{\mathsf{m}, \mathsf{r}, \mathsf{l}, \mathsf{v}, \mathsf{b}, \mathsf{s}}_{\mathrm{d3sc,3co,res},\delta}( \Xd )$ supported near $\cf$, such that 
\begin{equation*}
\text{$A$ is classical at $\cf$ modulo $\Psi^{\mathsf{m}, \mathsf{r}, \mathsf{l}, \mathsf{v}, -\infty, \vos }_{\mathrm{d3sc,3co,res},\delta}(\Xd)$ and $\hat{N}_{\cf}(A) = \hat{A}_{\cf}$.}
\end{equation*}
\end{enumerate}
In fact, the operators in $\Psf^{\vom, -\infty, \vol, \vov, \vob, \vos}(\Xd)$, $\Psi^{\mathsf{m}, \mathsf{r}, \mathsf{l}, \mathsf{v}, -\infty, \vos }_{\mathrm{d3sc,3co,res},\delta}(\Xd)$ above can be the zero operator.
\end{lemma}
\begin{proof}
We simply take $A = \psi_{\dff} A_{\dff, q_{\dff}} \psi_{\dff}$ in case (1) and $A = \psi_{\cf} A_{\cf, q_{\cf}} \psi_{\cf}$ in case (2), where $A_{\dff, q_{\dff}}$, $A_{\cf, q_{\cf}}$ are constructed as in the discussion before Definition \ref{definition of partial classicality}.
\end{proof}

\subsection{Composition and adjoint}
\label{subsection composition and adjoint}
In this subsection, we will study compositions within the class of operators $\Psf^{\vom, \vor, \vol, \vov, \vob, \vos}(\Xd)$. Our main result is the following:

\begin{proposition}
\label{Composition proposition}
Let
\begin{equation*}
\begin{gathered}
\vom_{j}, \vor_{j}, \vov_{j}, \vos_{j} \in \mathcal{C}^{\infty}( \psf \Xd ), \quad \vol_{j}, \vob_{j} \in \mathcal{C}^{\infty}( \mathcal{C}_{\tindex} \times \overline{\mathbb{R}^{n_{\tindex}}} ), \\
A_{j} \in \Psi^{ \mathsf{m}_{j}, \mathsf{r}_{j}, \mathsf{l}_{j}, \mathsf{v}_{j}, \mathsf{b}_{j}, \mathsf{s}_{j} }_{\mathrm{d3sc,3co,res}, \delta} (\Xd), \quad j =1,2.
\end{gathered}
\end{equation*}
Then we have 
\begin{equation} 
\label{composition regularity result d3sc,3co,res}
A_{1} A_{2} \in \Psi_{\mathrm{d3sc,3co,res},\delta}^{ \mathsf{m}_{1} + \mathsf{m}_2, \mathsf{r}_{1} + \mathsf{r}_{2}, \mathsf{l}_{1} + \mathsf{l}_{2}, \mathsf{v}_{1} + \mathsf{v}_{2}, \mathsf{b}_{1} + \mathsf{b}_{2}, \mathsf{s}_{1} + \mathsf{s}_{2} }(\Xd),
\end{equation}
and principal symbol is multiplicative in the sense that 
\begin{equation} \label{multiplicative property for the principal symbol d3sc,3co,res}
{\sigma}( A_{1} A_{2} )  = {\sigma}(A_1) {\sigma}( A_2 ).
\end{equation} \par

Moreover, composition behaves well for the indicial operators too:
\begin{enumerate}
\item Let
\begin{equation*}
\begin{gathered}
\vom_{j}, \vor_{j}, \vos_{j} \in \mathcal{C}^{\infty}( \psf_{\dff} \Xd ), \quad \vol_{j} \in \mathcal{C}^{\infty}( \mathcal{C}_{\tindex} \times \overline{\mathbb{R}^{n_{\tindex}}} ), \\ 
\hat{A}_{\dff,j} \in \Psi^{\mathsf{m}_{j}, \mathsf{r}_{j}, \mathsf{l}_{j}, \mathsf{s}_{j}}_{\mathrm{sc,b, lp, res},\delta}( X^{\tindex} ; \mathcal{C}_{\tindex} \times {\mathbb{R}^{n_{\tindex}}} ), \quad j =1,2.
\end{gathered}
\end{equation*}
Then we have
\begin{equation} \label{composition symbol classes at dff}
\hat{A}_{\dff,1}\hat{A}_{\dff,2} \in \Psi^{\mathsf{m}_{1} + \mathsf{m}_2, \mathsf{r}_{1} + \mathsf{r}_2, \mathsf{l}_{1} + \mathsf{l}_2, \mathsf{s}_{1} + \mathsf{s}_2}_{\mathrm{sc,b,lp,res},\delta}( X^{\tindex} ; \mathcal{C}_{\tindex} \times {\mathbb{R}^{n_{\tindex}}} ),
\end{equation}
and principal symbol is multiplicative in the sense that
\begin{equation*}
\sigma( \hat{A}_{\dff,1} \hat{A}_{\dff,2} ) = \sigma( \hat{A}_{\dff,1} ) \sigma( \hat{A}_{\dff,2} ).
\end{equation*}
\item Let
\begin{equation*}
\begin{gathered}
\vom_{j}, \vor_{j}, \vos_{j} \in \mathcal{C}^{\infty}( \tcocf ), \quad \vol_{j} \in \mathcal{C}^{\infty}( \mathcal{C}_{\tindex} \times \overline{\mathbb{R}^{n_{\tindex}}} ), \\ 
\hat{A}_{\cf,j} \in
\Psi^{\mathsf{m}_j,\mathsf{r}_j,\mathsf{l}_j,\mathsf{s}_j}_{\mathrm{coc,lp,res},\delta}( [ 
\hat{X}^{\tindex} ; \{ 0 \} ] ; \mathcal{C}_{\tindex} \times {\mathbb{R}^{n_{\tindex}}} ), \quad j =1,2.
\end{gathered}
\end{equation*}
Then we have
\begin{equation} \label{composition symbol classes at cf}
\hat{A}_{\cf,1} \hat{A}_{\cf,2} \in \Psi^{\mathsf{m}_1 + \mathsf{m}_2 ,\mathsf{r}_1 + \mathsf{r}_2,\mathsf{l}_1 +\mathsf{l}_2 ,\mathsf{s}_1 + \mathsf{s}_2 }_{\mathrm{coc,lp,res},\delta}( [ 
\hat{X}^{\tindex} ; \{ 0 \} ] ; \mathcal{C}_{\tindex} \times {\mathbb{R}^{n_{\tindex}}} ),
\end{equation}
and principal symbol is multiplicative in the sense that
\begin{equation*}
\sigma( \hat{A}_{\cf,1} \hat{A}_{\cf,2} ) = \sigma( \hat{A}_{\cf,1} ) \sigma( \hat{A}_{\cf,2} ).
\end{equation*}
\end{enumerate} \par

Additionally, we have the following multiplicative results for the indicial operators:
\begin{enumerate}
    \item Suppose that 
    \begin{equation*}
    \text{$A_{j} \in \Psi^{m_{j}, \mathsf{r}_{j}, \mathsf{l}_{j}, \mathsf{v}_{j},  \mathsf{b}_{j}, \mathsf{s}_{j}}_{\mathrm{d3sc,3co,res},\delta}(\Xd)$ is classical at $\dff$ modulo $\Psi^{ \mathsf{m}_{j}, \mathsf{r}_{j}, \mathsf{l}_{j} - K , \mathsf{v}_{j} , \mathsf{b}_{j}, \mathsf{s}_{j}}_{\mathrm{d3sc,3co,res},\delta}( \Xd )$}
    \end{equation*}
for $j =1,2$ and some $K > 0$. Then 
    \begin{equation*}
    \text{$A_{1} A_{2}$ is classical at $\dff$ modulo $\Psi^{ \mathsf{m}_{1} + \mathsf{m}_{2}, \mathsf{r}_{1} + \mathsf{r}_{2}, \mathsf{l}_{1} + \mathsf{l}_{2} - K, \mathsf{v}_{1} + \mathsf{v}_{2},  \mathsf{b}_{1} + \mathsf{b}_{2}, \mathsf{s}_{1} + \mathsf{s}_{2} }_{\mathrm{d3sc,3co,res},\delta}(\Xd)$,}
    \end{equation*}
     and we have
\begin{equation} \label{multiplicative property for the indicial opeartor 3co,res, dff}
\hat{N}_{\dff} (A_{1} A_{2}) = \hat{N}_{\dff} (A_1) \hat{N}_{\dff} (A_2). 
\end{equation}
     \item Suppose that 
     \begin{equation*}
     \text{$A_{j} \in \Psi^{m_{j}, \mathsf{r}_{j}, \mathsf{l}_{j}, \mathsf{v}_{j},  \mathsf{b}_{j}, \mathsf{s}_{j}}_{\mathrm{d3sc,3co,res},\delta}(\Xd)$ is classical at $\cf$ modulo $\Psi^{ \mathsf{m}_{j}, \mathsf{r}_{j}, \mathsf{l}_{j} , \mathsf{v}_{j} , \mathsf{b}_{j} - K, \mathsf{s}_{j}}_{\mathrm{d3sc,3co,res},\delta}( \Xd )$}
     \end{equation*}
     for $j = 1,2$ and some $K > 0$. Then 
     \begin{equation*}
     \text{$A_{1} A_{2}$ is classical at $\cf$ modulo $\Psi^{ \mathsf{m}_{1} + \mathsf{m}_{2}, \mathsf{r}_{1} + \mathsf{r}_{2}, \mathsf{l}_{1} + \mathsf{l}_{2}, \mathsf{v}_{1} + \mathsf{v}_{2},  \mathsf{b}_{1} + \mathsf{b}_{2} - K, \mathsf{s}_{1} + \mathsf{s}_{2} }_{\mathrm{d3sc,3co,res},\delta}(\Xd)$,}
     \end{equation*}
     and we have
\begin{equation} \label{multiplicative property for the indicial operator 3co,res, cf}
\hat{N}_{\cf} (A_{1} A_{2}) = \hat{N}_{\cf} (A_1) \hat{N}_{\cf} (A_2). 
\end{equation}
\end{enumerate}

The same statements hold if $\vom = m$, $\vor = r$, $\vol = l$, $\vov = \nu$, $\vob = b$, $\vos = s$ are all constants, in which case we also set $\delta = 0$.
\end{proposition}

Now, recall that by construction, we can write any $A \in \Psf^{\vom, \vor, \vol, \vov, \vob, \vos}(\Xd)$ as a scattering operator near $\dmf$. Thus composition, as well as multiplicative property for the principal symbol map in this region will follow naturally from those of the scattering calculus. \par

Meanwhile, in a small neighborhood of either $\dff$ or $\cf$, we have constructed $A$ so that it takes the form of the partial quantization of some operator-valued symbol in the free variables. Thus in these regions, we can understand $A$ as being `partially scattering' in the free variables. Then composition can be understood partially as in the scattering case.

Nevertheless, there are minor issues in that one still has to consider how some of the usual techniques in scattering calculus, including asymptotic summations and symbol reductions, can be carried over to this context. In fact, this has already been considered in \S \ref{Variable orders compatibility subsection}. The discussion in that subsection then shows that there exists a Frech\'et structure for the local spaces of operator-valued symbols, and subsequently a Frech\'et structure for $\Psf^{\vom, \vor, \vol, \vov, \vob, \vos}(\Xd)$ as well. 
\par

By using the aforementioned Frech\'et structure for $\Psf^{\vom, \vor, \vol, \vov, \vob, \vos}(\Xd)$, we now make precise some of the notations we have used in \S \ref{Variable orders compatibility subsection}, where we considered asymptotic summations of operator-valued symbols.

\begin{lemma}[Asymptotic summations for operator-valued symbols]
\label{operator-valued symbols asymptotic definition} 
Suppose that $\vom, \vor, \vov, \vos \in \mathcal{C}^{\infty}( \psf \Xd )$ and $\vol, \vob \in \mathcal{C}^{\infty}( \mathcal{C}_{\tindex} \times \overline{\mathbb{R}^{n_{\tindex}}} )$.
\begin{enumerate}
\item For $j=1,2,3,...$, let
\begin{equation*}
A_{j} \in \Psf^{\vom, \vor, \vol - j ( 1 - 2 \delta ), \vov, \vob, \vos}(\Xd), \quad A_{\psi_{\dff},j} \coloneq \psi_{\dff} A_{j} \psi_{\dff},
\end{equation*}
where $\psi_{\dff} \in \mathcal{C}^{\infty}(\Xd)$ is any cut-off function at $\dff$. Moreover, let $\hat{A}_{\psi_{\dff},j}$ denote the operator-valued symbol of $A_{\psi_{\dff},j}$. Then there exists 
\begin{equation*}
\text{$A_{\psi_{\dff}} \in \Psf^{\vom, \vor, \vol, \vov, \vob, \vos}(\Xd)$ supported near $\dff$}
\end{equation*}
such that if $\hat{A}_{\psi_{\dff}}$ denotes the operator-valued symbol of $A_{\psi_{\dff}}$, then we have
\begin{equation}
\label{asymptotic summations lemma 1}
\hat{A}_{\psi_{\dff}} \sim \sum_{j=0}^{\infty} \hat{A}_{\psi_{\dff}, j}
\end{equation}
in the sense that
\begin{equation} 
\label{asymptotic summations lemma 2}
A_{\psi_{\dff}} - \sum_{j=0}^{N-1} A_{\psi_{\dff},j} \in \Psf^{\vom, -\infty, \vol - N ( 1 - 2\delta ), \vov, \vob, \vos}(\Xd).
\end{equation}
\item For $j=1,2,3,...$, let
\begin{equation*}
A_{j} \in \Psf^{\vom, \vor, \vol, \vov, \vob - j (1 - 2\delta), \vos}(\Xd), \quad A_{\cf,j} \coloneq \psi_{\cf} A_{j} \psi_{\cf},
\end{equation*}
where $\psi_{\cf} \in \mathcal{C}^{\infty}(\Xd)$ is any cut-off function at $\cf$. Moreover, let $\hat{A}_{\cf,j}$ denote the operator-valued symbol of $A_{\psi_{\cf},j}$. Then there exists 
\begin{equation*}
\text{$A \in \Psf^{\vom,\vor,\vol,\vov,\vob,\vos}(\Xd)$ supported near $\cf$}
\end{equation*}
such that if $\hat{A}_{\psi_{\cf}}$ denotes the operator-valued symbol of $A_{\psi_{\cf}}$, then we have
\begin{equation}
\label{asymptotic summations lemma 3}
\hat{A}_{\psi_{\cf}} \sim \sum_{j=0}^{\infty} \hat{A}_{\psi_{\cf},j}
\end{equation}
in the sense that
\begin{equation}
\label{asymptotic summations lemma 4}
A_{\psi_{\cf}} - \sum_{j=0}^{N-1} A_{\psi_{\cf}, j} \in \Psf^{\vom, \vor, \vol, \vov, \vob - N ( 1 - 2 \delta ), \vos} (\Xd).
\end{equation}
\end{enumerate}
\end{lemma}

\begin{proof}[Proof of Proposition \ref{Composition proposition}]
First of all, it is clear that
\begin{equation*}
A_{1} A_{2} : \mathcal{S}( \mathbb{R}^{n} ) \rightarrow \mathcal{S}( \mathbb{R}^{n} )
\end{equation*}
is a continuous linear map. \par

Now, let $\psi_{\bullet}, \tilde{\psi}_{\bullet} \in \mathcal{C}^{\infty}( \Xd )$, $\bullet = 0, \dff, \cf$ be chosen as in the beginning of \S \ref{subsection construction of variable order operators}, i.e., $\tilde{\psi}_{\dff}$ and $\tilde{\psi}_{\cf}$ are cut-off functions at $\dff$ and $\cf$ respectively, while $\tilde{\psi}_{0}$ is supported away from both $\dff$ and $\cf$. Moreover, each $\psi_{\bullet}$ has the same support requirement as that of $\tilde{\psi}_{\bullet}$, with the additional condition that $\psi_{\bullet} = 1$ on the support of $\tilde{\psi}_{\bullet}$. Then we need to show that each $\psi_{\bullet} A_1 A_2 \psi_{\bullet}$ has the correct structure, and that
\begin{equation}
\label{this term needs to be rapidly decreasing composition subsection} 
\text{$( 1 - {\psi}_{\bullet}) A_1 A_2 \tilde{\psi}_{\bullet}$ has rapidly decaying smooth kernels}, \quad \bullet = 0, \dff, \cf.
\end{equation}
\par

To show (\ref{this term needs to be rapidly decreasing composition subsection}), we simply choose $\phi_{\bullet} \in \mathcal{C}^{\infty}(\Xd)$ such that $\phi_{\bullet} = 1$ on $\supp \tilde{\psi}_{\bullet}$ and $\phi_{\bullet} = 1$ on $\supp \hat{\psi}_{\bullet}$. Then we can write
\begin{equation}
\label{composition calculation 1}
( 1 - \psi_{\bullet} ) A_{1} A_{2} \tilde{\psi}_{\bullet} = ( 1 - \psi_{\bullet} )A_{1} \phi_{\bullet} A_{2} \tilde{\psi}_{\bullet} + ( 1 - \psi_{\bullet} ) A_{1} ( 1 - \phi_{\bullet} ) A_2 \tilde{\psi}_{\bullet}
\end{equation}
from which (\ref{this term needs to be rapidly decreasing composition subsection}) follows. Indeed, the two terms on the right hand side of (\ref{composition calculation 1}) are both rapidly decaying and smooth, since $ ( 1 - \psi_{\bullet} ) A_{1} \phi_{\bullet}$ and $(1- \phi_{\bullet} ) A_{2} \tilde{\psi}_{\bullet}$ have this property. \par

Next, let $\varphi_{\bullet} \in \mathcal{C}^{\infty}(\Xd)$ be chosen such that $\varphi_{\bullet} = 1$ on $\supp \psi_{\bullet}$. Then we have
\begin{equation}
\label{composition calculation 2}
\psi_{\bullet} A_1 A_2 \psi_{\bullet} = \psi_{\bullet} \varphi_{\bullet} A_1  \varphi_{\bullet}^2  A_2 \varphi_{\bullet}  \psi_{\bullet} + \psi_{\bullet} A_1 (  1 - \varphi_{\bullet}^2 ) A_2 \psi_{\bullet}.
\end{equation}
Note that each $A_{1} ( 1- \varphi_{\bullet}^2 ) A_{2} {\psi}_{\bullet}$ has a rapidly decreasing smooth kernel since $( 1 - \varphi_{\bullet}^2 ) A_{2} {\psi}_{\bullet}$ has this property. Moreover, suppose that $\bullet = 0$. Then by construction, we have 
\begin{equation*}
\psi_{0}  \varphi_{0} A_1 \varphi_{0}^2 A_2 \varphi_{0}  \psi_{0} = \psi_{0} A_{ \varphi_{0}, \mathrm{sc}, 1} A_{ \varphi_0, \mathrm{sc},2} \psi_{0}
\end{equation*}
where $A_{ \varphi_0 , \mathrm{sc},j} \in \Psi^{ \mathsf{m}_{j}, \mathsf{r}_{j} }_{\mathrm{sc},\delta}( \overline{\mathbb{R}^{n}} )$, $j=1,2$. Thus by the standard composition results for scattering operators, we have 
\begin{equation} 
\label{d3sc,3co,res scattering components composition}
\psi_{0} A_{1} A_{2} \psi_{0} \in \Psi_{\mathrm{sc},\delta}^{\vom_1 + \vom_2, \vor_ 1 + \vor_2}( \overline{\mathbb{R}^{n}} )
\end{equation}
as required.

If instead we take $\bullet = \dff$, then we have 
\begin{equation} \label{composition calculation 3}
\psi_{\dff} \varphi_{\dff} A_{1} \varphi_{\dff}^2  A_{2} {\psi}_{\dff} \psi_{\dff} = \psi_{\dff} A_{ \varphi_{\dff} , 1} A_{ \varphi_{\dff} ,2} \psi_{\dff}.
\end{equation}
Here each $A_{ \varphi_{\dff} , j} = \varphi_{\dff} A \varphi_{\dff}$, $j=1,2$, is given by 
\begin{equation*}
A_{ \varphi_{\dff} ,j} = \frac{1}{(2\pi)^{n_{\tindex}}} \int_{\mathbb{R}^{n_{\tindex}}} e^{ i ( z_{\tindex} - z_{\tindex}' ) \cdot \zeta_{\tindex} } \hat{A}_{ \varphi_{\dff} , j} ( z_{\tindex}, \zeta_{\tindex} ) d \zeta_{\tindex} | dz_{\tindex}' |,
\end{equation*}
where $\hat{A}_{ \varphi_{\dff} , j}$ is the operator-valued symbol defined as in \S \ref{subsection construction of variable order operators}. See also Remark \ref{remark after the construction of the calculus variable order}. \par

Thus, we are naturally motivated to proceed as we did in the setting of compositions for scattering operators, albeit only partially in the free variables (in other words, we wish to carry out the usual procedure as if the operator-valued symbols were merely complex valued), in which case we need, in addition to the concept of asymptotic summation as introduced in Lemma \ref{operator-valued symbols asymptotic definition}, also suitable symbol reduction formuale for the operator-valued symbols. \par

For this purpose, we will also state the following lemma:

\begin{lemma}[Symbol reductions for operator-valued symbols]
\label{composition subsection symbol reduction formulae lemma}
Suppose that $\vom, \vor, \vov, \vos \in \mathcal{C}^{\infty}( \psf \Xd )$, $\vol, \vob \in \mathcal{C}^{\infty}( \mathcal{C}_{\tindex} \times \overline{\mathbb{R}^{n_{\tindex}}} )$ and let $A \in \Psf^{\vom, \vor, \vol, \vov, \vob, \vos}(\Xd)$. 
\begin{enumerate}
\item Let $\psi_{\dff} \in \mathcal{C}^{\infty}(\Xd)$ be a cut-off function at $\dff$, and let $\hat{A}_{\psi_{\dff}}$ be the operator-valued symbol of $A_{\psi_{\dff}} = \psi_{\dff} A \psi_{\dff}$. Then there exists $\hat{A}_{\psi_{\dff}}^{R}$ which lives in the same local Frech\'et space as $\hat{A}_{\psi_{\dff}}$, such that
\begin{equation}
\label{composition subsection right quantization formula near dff}
A_{\psi_{\dff}} = \frac{1}{(2\pi)} \int_{\mathbb{R}^{n_{\tindex}}} e^{i ( z_{\tindex} - z_{\tindex}' ) \cdot \zeta_{\tindex} } \hat{A}^{R}_{\psi_{\dff}} ( z_{\tindex}', \zeta_{\tindex} ) d\zeta_{\tindex} |dz_{\tindex}'|.
\end{equation} 
We will refer to (\ref{composition subsection right quantization formula near dff}) as the right quantization of $\hat{A}_{\psi_{\dff}}^{R}$. Moreover, it holds that 
\begin{equation}
\label{composition subsection right quantization formula near dff 1}
\hat{A}_{\psi_{\dff}}^{R}( z_{\tindex}, \zeta_{\tindex} )  \sim \sum_{\beta_{\tindex} \in \mathbb{N}_{0}^{n_{\tindex}} } \frac{(-1)^{|\beta_{\tindex}|}}{\beta_{\tindex}!} \partial_{\zeta_{\tindex}}^{\beta_{\tindex}} D_{z_{\tindex}}^{\beta_{\tindex}} \hat{A}_{\psi_{\dff}} ( z_{\tindex}, \zeta_{\tindex} ) 
\end{equation}
in the sense of Lemma \ref{operator-valued symbols asymptotic definition}, part (1). \par

If instead $A_{\psi_{\dff}}$ is defined by (\ref{composition subsection right quantization formula near dff}), then there exists $\hat{A}_{\psi_{\dff}}^{L}$ such that $\hat{A}_{\psi_{\dff}}^{L}$ is the operator-valued symbol of $A_{\psi_{\dff}}$, and we have
\begin{equation}
\label{composition subsection right quantization formula near dff 2}
\hat{A}_{\psi_{\dff}}^{L}( z_{\tindex}, \zeta_{\tindex} )  \sim \sum_{\beta_{\tindex} \in \mathbb{N}_{0}^{n_{\tindex}} } \frac{1}{\beta_{\tindex}!} \partial_{\zeta_{\tindex}}^{\beta_{\tindex}} D_{z_{\tindex}}^{\beta_{\tindex}} \hat{A}_{\psi_{\dff}} ( z_{\tindex}, \zeta_{\tindex} ).
\end{equation}
In particular, we must have $\hat{A}_{\psi_{\dff}} = \hat{A}_{\psi_{\dff}}^{L}$. \par

We will refer to $\hat{A}_{\psi_{\dff}}^{R}$ as the right operator-valued symbol of $A_{\psi_{\dff}}$ and $\hat{A}_{\psi_{\dff}}^{L}$ the left operator-valued symbol of $A_{\psi_{\dff}}$. If such distinctions are not made, then it will always be assumed that we are speaking of the left operator-valued symbol.
\item Let $\psi_{\cf} \in \mathcal{C}^{\infty}(\Xd)$ be a cut-off function at $\cf$, and let $\hat{A}_{\psi_{\cf}}$ be the operator-valued symbol of $A_{\psi_{\cf}} = \psi_{\cf} A \psi_{\cf}$. Then there exists $\hat{A}_{\psi_{\cf}}^{R}$ which lives in the same local Frech\'et space as $\hat{A}_{\psi_{\cf}}$, such that
\begin{equation}
\label{composition subsection right quantization formula near cf}
A_{\psi_{\cf}} = \frac{1}{(2\pi)} \int_{\mathbb{R}^{n_{\tindex}}} e^{i ( z_{\tindex} - z_{\tindex}' ) \cdot \zeta_{\tindex}^{\mathrm{3co}} } \hat{A}^{R}_{\psi_{\cf}} ( z_{\tindex}', \zeta_{\tindex}^{\mathrm{3co}} ) d\zeta_{\tindex}^{\mathrm{3co}} |dz_{\tindex}'|.
\end{equation} 
We will refer to (\ref{composition subsection right quantization formula near cf}) as the right quantization of $\hat{A}^{R}_{\psi_{\cf}}$. Moreover, it holds that 
\begin{equation*}
\hat{A}_{\psi_{\cf}}^{R}( z_{\tindex}, \zeta_{\tindex}^{\mathrm{3co}} )  \sim \sum_{\beta_{\tindex} \in \mathbb{N}_{0}^{n_{\tindex}} } \frac{(-1)^{|\beta_{\tindex}|}}{\beta_{\tindex}!}  \partial_{\zeta_{\tindex}^{\mathrm{3co}}}^{\beta_{\tindex}} D_{z_{\tindex}}^{\beta_{\tindex}} \hat{A}_{\psi_{\dff}} ( z_{\tindex}, \zeta_{\tindex} )
\end{equation*}
in the sense of Lemma \ref{operator-valued symbols asymptotic definition}, part (2). \par

If instead $A_{\psi_{\cf}}$ is defined by (\ref{composition subsection right quantization formula near dff}), then there exists $\hat{A}_{\psi_{\cf}}^{L}$ such that $\hat{A}_{\psi_{\cf}}^{L}$ is the operator-valued symbol of $A_{\psi_{\cf}}$, and we have 
\begin{equation*}
\hat{A}_{\psi_{\cf}}^{L}( z_{\tindex}, \zeta_{\tindex}^{\mathrm{3co}} )  \sim \sum_{\beta_{\tindex} \in \mathbb{N}_{0}^{n_{\tindex}} } \frac{1}{\beta_{\tindex}!}  \partial_{\zeta_{\tindex}^{\mathrm{3co}}}^{\beta_{\tindex}} D_{z_{\tindex}}^{\beta_{\tindex}} \hat{A}_{\psi_{\dff}} ( z_{\tindex}, \zeta_{\tindex}^{\mathrm{3co}} ).
\end{equation*}
In particular, we must have $\hat{A}_{\psi_{\cf}} = \hat{A}_{\psi_{\cf}}^{L}$. \par

We will refer to $\hat{A}_{\psi_{\cf}}^{R}$ as the right operator-valued symbol of $A_{\psi_{\cf}}$ and $\hat{A}_{\psi_{\cf}}^{L}$ the left operator-valued symbol of $A_{\psi_{\cf}}$. If such distinctions are not made, then it will always be assumed that we are speaking of the left operator-valued symbol.
\end{enumerate}
\end{lemma}

By using Lemma \ref{composition subsection symbol reduction formulae lemma}, we also have the following familiar formulae: 

\begin{lemma}
\label{composition subsection composition reduction lemma}
Let the assumptions of Proposition \ref{Composition proposition} be satisfied and set $A_3 \coloneq A_1 A_2$.
\begin{enumerate}
\item Let $\psi_{\dff}, \varphi_{\dff} \in \mathcal{C}^{\infty}(\Xd)$ be cut-off functions at $\dff$ such that $\varphi_{\dff} = 1$ on the support of $\psi_{\dff}$. Moreover, let 
\begin{equation*}
\begin{gathered}
\text{$\hat{A}_{\varphi_{\dff},j}$ be the operator-valued symbols of $A_{\varphi_{\dff},j} = \varphi_{\dff} A_{j} \varphi_{\dff}$, $j = 1,2$,} \\
\text{$\hat{A}_{\psi_{\dff},3}$ be the operator-valued symbol of $A_{\psi_{\dff},3} = \psi_{\dff} A_{3} \psi_{\dff}$.}
\end{gathered}
\end{equation*}
 Then we have
\begin{equation}
\label{composition subsection asymptotic of operator-valued symbol near dff}
\hat{A}_{\psi_{\dff},3} ( z_{\tindex}, \zeta_{\tindex} ) \sim \sum_{ \beta_{\tindex} \in \mathbb{N}^{n_{\tindex}}_{0}  } \frac{1}{\beta_{\tindex}!} (\partial_{\zeta_{\tindex}}^{\beta_{\tindex}} \hat{A}_{\varphi_{\dff},1} ) (z_{\tindex}, \zeta_{\tindex} ) ( D_{z_{\tindex}}^{\beta_{\tindex}} \hat{A}_{\varphi_{\dff},2} )( z_{\tindex}, \zeta_{\tindex} )
\end{equation}
in the sense of Lemma \ref{operator-valued symbols asymptotic definition}, part (1).
\item Let $\psi_{\cf}, \varphi_{\cf} \in \mathcal{C}^{\infty}(\Xd)$ be cut-off functions at $\cf$ such that $\varphi_{\cf} = 1$ on the support of $\psi_{\cf}$. Moreover, let
\begin{equation*}
\begin{gathered}
\text{$\hat{A}_{\varphi_{\cf},j}$ be the operator-valued symbols of $A_{\varphi_{\cf},j} \coloneq \varphi_{\cf} A_{j} \varphi_{\cf}$, $j = 1,2$,} \\
\text{$\hat{A}_{\psi_{\cf},3}$ be the operator-valued symbol of $A_{\psi_{\cf},3} \coloneq \psi_{\cf} A_{3} \psi_{\cf}$.}
\end{gathered}
\end{equation*}
Then we have
\begin{equation*}
\hat{A}_{\psi_{\cf},3} ( z_{\tindex}, \zeta_{\tindex}^{\mathrm{3co}} ) \sim \sum_{ \beta_{\tindex} \in \mathbb{N}^{n_{\tindex}}_{0}  } \frac{1}{\beta_{\tindex}!} (\partial_{\zeta_{\tindex}}^{\beta_{\tindex}} \hat{A}_{\varphi_{\dff},1} ) (z_{\tindex}, \zeta_{\tindex}^{\mathrm{3co}} ) ( D_{z_{\tindex}}^{\beta_{\tindex}} \hat{A}_{\varphi_{\cf},2} )( z_{\tindex}, \zeta_{\tindex}^{\mathrm{3co}} )
\end{equation*}
in the sense of Lemma \ref{operator-valued symbols asymptotic definition}, part (2).
\end{enumerate}
\end{lemma}
\begin{proof}[Proof of Lemma \ref{composition subsection composition reduction lemma}]
We will only prove part (1) as the proof of part (2) is similar. \par

By formula (\ref{composition calculation 2}), our question is reduced to the consideration of 
\begin{equation*}
\psi_{\dff} \varphi_{\dff} A_{1} \varphi_{\dff}^2 A_2 \varphi_{\dff} \psi_{\dff} = \psi_{\dff} A_{\varphi_{\dff}, 1} A_{\varphi_{\dff}, 2} \psi_{\dff}.
\end{equation*}
Let $\hat{A}_{\varphi_{\dff}, 2}^{R}$ be the right operator-valued symbol of $A_{\varphi_{\dff}, 2}$. Then by standard Fourier analysis (which will be applied in the free variable only), we have
\begin{align}
\begin{split}
\label{composition double symbol reduction formula}
& \psi_{\dff} A_{\varphi_{\dff}, 1} A_{\varphi_{\dff}, 2} \psi_{\dff} \\
& \qquad = \frac{1}{(2\pi)^{n_{\tindex}}} \int_{\mathbb{R}^{n_{\tindex}}} e^{ i ( z_{\tindex} - z_{\tindex}' ) \cdot \zeta_{\tindex} } (\psi_{\dff} \hat{A}_{\varphi_{\dff}, 1} ) ( z_{\tindex}, \zeta_{\tindex} ) ( \hat{A}_{\varphi_{\dff}, 2 }^{R} \psi_{\dff} ) ( z_{\tindex}', \zeta_{\tindex} ) d\zeta_{\tindex} |dz_{\tindex}'|.
\end{split}
\end{align}
In other words, (\ref{composition double symbol reduction formula}) is given by the quantization of a `double operator-valued symbol' in the free variables. \par

It follows from the standard procedure (see for instance \cite[\S 5.3.3]{AndrasSM}) that the method of symbol reduction can also be applied for the double symbols. Thus by using (\ref{composition subsection right quantization formula near dff 1}) and (\ref{composition subsection right quantization formula near dff 2}), we have
\begin{equation*}
\hat{A}_{\psi_{\dff},3}(z_{\tindex}, \zeta_{\tindex}) \sim  \sum_{ \beta_{\tindex} \in \mathbb{N}^{n_{\tindex}}_{0}  } \frac{1}{\beta_{\tindex}!} \psi_{\dff} ( \partial_{\zeta_{\tindex}}^{\beta_{\tindex}}  \hat{A}_{\varphi_{\dff},1} ) (z_{\tindex}, \zeta_{\tindex} ) \big( D_{z_{\tindex}}^{\beta_{\tindex}} ( \hat{A}_{\varphi_{\dff},2} \psi_{\dff} ) \big)( z_{\tindex}, \zeta_{\tindex} ),
\end{equation*}
from which the required asymptotics follow easily since $\psi_{\dff} = 1$ near $\dff$.
\end{proof}

In particular, Lemma \ref{composition subsection composition reduction lemma} implies that
\begin{equation*}
\psi_{\dff} A_1 A_2 \psi_{\dff}, \psi_{\cf} A_1 A_2 \psi_{\cf} \in \Psf^{\vom_{1} + \vom_{2}, \vor_{1} + \vor_{2}, \vol_{1} + \vol_{2}, \vov_{1} + \vov_{2} , \vob_{1} + \vob_{2}, \vos_{1} + \vos_{2}} (\Xd).
\end{equation*}
By combining this with the above discussion, we can conclude that (\ref{composition regularity result d3sc,3co,res}) must hold. Moreover, the lemma also shows that compositions preserve partial classicality as stated.

Meanwhile, composition for the indicial operators (\ref{composition symbol classes at dff}) and (\ref{composition symbol classes at cf}) will follow essentially from the same arguments as compositions in the large-parameter b- and cone calculi respectively. The only exception is that we now have to consider decay at the new faces. However, since the operators are microlocal at these faces (in the sense that the principal symbol maps capture leading order decay at these faces), compositions are easily seen to be well-behaved by symbol reductions. 
\par

To show (\ref{multiplicative property for the principal symbol d3sc,3co,res}), let $\psi_{j}, \tilde{\psi_{j}} \in \mathcal{C}^{\infty}(\Xd)$ be chosen as in the proof of Proposition \ref{proposition principal symbol map for the further-resolved second microlocalized operators} (thus in particular, $\psi_j = 1$ on the support of $\tilde{\psi}_j$) for $j = 1, ... , N$. Then we have
\begin{equation}
\label{composition symbolic product formula operator}
A_{1} A_2 = \sum_{j,k = 1}^{N} \tilde{\psi}_j \psi_{j} A_1 \psi_{j}  \psi_{k} A_2 \psi_{k} \tilde{\psi}_{k}
\end{equation} 
modulo the addition of an element in $\Psf^{-\infty, -\infty, \vol, -\infty, \vob, -\infty}(\Xd)$. If $\supp \psi_j \cap \supp \psi_{k} \neq \emptyset $, then for $i =1,2$, either we have $\psi_{j} A_{i} \psi_{j} \in \Psi_{\mathrm{sc},\delta}^{\vom_{i}, \vor_{i}}( \overline{\mathbb{R}^{n}} )$, or $\psi_{j} A_{i} \psi_{j}$ can be written as (still modding out $\Psf^{-\infty, -\infty, \vol, -\infty, \vob, -\infty}(\Xd)$) one of the quantizations $(B_i)_{\psi_{j}, \mathrm{3co,b}}$, $(B_{i})_{\psi_{j}, \mathrm{3co,co,b}}$, $(B_{i})_{\psi_{j}, \mathrm{3co,co,sc}}$ as defined in the earlier sections (with slight abuse of notations in the obvious sense). \par

Let $b_{i, \psi_{j}}$ be the quantization of $\psi_{j} A_1 \psi_{j}$ in all cases. Then by the standard symbol reduction calculations, combined with the same reasons outlined in the proof of Proposition \ref{proposition principal symbol map for the further-resolved second microlocalized operators}, we see that the full symbol $c_{\psi_{j}, \psi_{k}}$ of $\psi_{j} A_{1} \psi_{j} \psi_{k} A_2 \psi_{k} $ must be given by an asymptotic summation
\begin{equation*}
c_{\psi_{j}, \psi_{k}} \sim \sum_{ i = 0}^{\infty}  c_{\psi_{j}, \psi_{k}, i}
\end{equation*}
in the sense that 
\begin{equation*}
c_{\psi_{j}, \psi_{k}} - \sum_{i=0}^{I - 1} c_{\psi_{j}, \psi_{k}, i} \in S^{\vom - I ( 1 - 2 \delta ), \vor - I ( 1 - 2 \delta ), \vol, \vov - I ( 1 - 2 \delta), \vob, \vos - I ( 1 - 2 \delta ) }_{\delta} ( \psf \Xd ),
\end{equation*}
where, depending on the support of $\psi_{j}$ and $\psi_{k}$, we have defined $c_{\psi_{j}, \psi_{k}, i}$ by one of
\begin{equation*}
\begin{gathered}
\sum_{|\beta| = i} \frac{1}{\beta!} ( \partial_{( \zeta_{\tindex}, - \utaub , \umub )} ^{\beta} b_{1, \psi_{j}} ) ( D_{( z_{\tindex}, t^{\tindex}, y^{\tindex} )}^{\beta} b_{2, \psi_{k}}  ), \quad  \sum_{|\beta| = i} \frac{1}{\beta!} ( \partial_{( \zeta_{\tindex}^{\mathrm{3co}}, - \utaucob , \umucob )} ^{\beta} b_{1, \psi_{j}} ) ( D_{( z_{\tindex}, \hat{t}_{\tindex}, y^{\tindex} )}^{\beta} b_{2, \psi_{k}}  ), \\
\sum_{|\beta| = i} \frac{1}{\beta!}  ( \partial_{( \zeta_{\tindex}^{\mathrm{3co}}, \zeta^{\tindex}_{\mathrm{co,b}} )}^{\beta} b_{1, \psi_{j}} ) ( D_{( z_{\tindex}, \hat{z}^{\tindex} )}^{\beta} b_{2, \psi_{k}} ), \quad  \sum_{|\beta| = i} \frac{1}{\beta!} ( \partial_{ (z_{\tindex}, z^{\tindex} ) }^{\beta} b_{1, \psi_j} ) ( D_{ (\zeta_{\tindex}, \zeta^{\tindex} )}^{\beta} b_{2, \psi_{k}} ), 
\end{gathered}
\end{equation*}
corresponding respectively to the symbolic estimates required in Definition \ref{definition of variable orders symbols}. 
\par

As in the proof of Proposition \ref{proposition principal symbol map for the further-resolved second microlocalized operators}, it follows that the key observation now is that the combined differentiations in any position variable and its dual momentum variable of an element in $S^{\mathsf{m},\mathsf{r},\mathsf{l},\mathsf{v},\mathsf{b},\mathsf{s}}_{\delta}( \overline{ ^{\mathrm{d3sc,3co,res}}T^{\ast}}\Xd )$ produce at least $1- 2\delta$ order of decay at $\psf_{\dmf} \Xd$, $\dtsccf$, $\rf$ and the fiber infinity. Consequentially, we have
\begin{equation*}
c_{\psi_{j}, \psi_{k}, i} \in S^{\vom - i(1 - 2 \delta), \vor - i( 1 - 2\delta ), \vol, \vov - i( 1 - 2\delta ), \vob, \vos - i( 1 - 2\delta )}( \psf \Xd ).
\end{equation*}
Thus, it follows from (\ref{composition symbolic product formula operator}) that the principal symbol $\sigma(A_1 A_2)$ is given by
\begin{equation*}
\sum_{j,k=1}^{N} \tilde{\psi}_j b_{1, \psi_j} b_{2, \psi_j} \tilde{\psi}_k =  \Big( \sum_{j=1}^{N} \tilde{\psi}_j b_{1, \psi_{j}}  \Big) \Big( \sum_{k=1}^{N}  b_{1, \psi_{k}}  \tilde{\psi}_k \Big) = \sigma(A_1) \sigma(A_2) 
\end{equation*}
as required. 
\par

Finally, we will show (\ref{multiplicative property for the indicial opeartor 3co,res, dff}). This follows directly from (\ref{composition subsection asymptotic of operator-valued symbol near dff}), and we have
\begin{align*}
\hat{N}_{\dff}(A_{1} A_{2}) & = \lim_{x_{\tindex} \rightarrow 0} x_{\tindex}^{\mathsf{l}_1 + \mathsf{l}_2} \hat{A}_{\psi_{\dff},3} ( x_{\tindex}, y_{\tindex}, \tscblz ) \\
& = \lim_{x_{\tindex} \rightarrow 0} x_{\tindex}^{\mathsf{l}_1} \hat{A}_{\varphi_{\dff},1}( x_{\tindex}, y_{\tindex}, \tscblz ) \lim_{x_{\tindex} \rightarrow 0} x_{\tindex}^{\mathsf{l_2}} \hat{A}^{R}_{\varphi_{\dff}, 2}( x_{\tindex}, y_{\tindex}, \tscblz )  \\
& = \lim_{x_{\tindex} \rightarrow 0} x_{\tindex}^{\mathsf{l}_1} \hat{A}_{\varphi_{\dff},1}( x_{\tindex}, y_{\tindex}, \tscblz ) \lim_{x_{\tindex} \rightarrow 0} x_{\tindex}^{\mathsf{l}_2} \hat{A}_{\varphi_{\dff},2}( x_{\tindex}, y_{\tindex}, \tscblz )  \\
& = \hat{N}_{\dff}(A_{1}) \hat{N}_{\dff}(A_{2}). 
\end{align*}
The case of (\ref{multiplicative property for the indicial operator 3co,res, cf}) follows analogously. 
\end{proof}

\begin{remark}[Uniqueness of left/right reductions] 
\label{uniqueness of left and right reductions}
If $A \in \Psi^{ \mathsf{m}, \mathsf{r}, \mathsf{l}, \mathsf{v}, \mathsf{b} , \mathsf{s} }_{\mathrm{d3sc,3co,res},\delta}(\Xd)$, then by Lemma \ref{operator-valued symbols asymptotic definition}, locally we can always write $A_{\dff}$ as the left/right partial quantizations of operator-valued symbols $\hat{A}_{\psi_{\dff}}^{L}/\hat{A}_{\psi_{\dff}}^{R}$. It follows by taking the Fourier transform that these operator-valued symbols must be unique. This holds analogously at $\cf$.  
\end{remark}

Next we will discuss adjoints. Fix a smooth $\mathrm{3co}$-density $\nu_{\mathrm{3co}} \in \mathcal{C}^{\infty}( \Xd ; {^{\mathrm{3co}}\Omega}\Xd )$ such that $\nu_{\mathrm{3co}} > 0$ identically, and consider the space of $L^{2}$ functions with respect to $\nu_{\mathrm{3co}}$:
\begin{equation} \label{L^{2} space three-cone}
L^{2}_{\mathrm{3co}}( \Xd ) \coloneq L^{2}( \Xd , \nu_{\mathrm{3co}} ). 
\end{equation}
For every $A \in \Psi_{\mathrm{d3sc,3co,res},\delta}^{\vom,\vor,\vol,\vov,\vob,\vos}(\Xd)$, we let $A^{\ast}$ denote the adjoint of $A$ with respect to $L^{2}_{\mathrm{3co}}(\Xd)$. \par

Suppose we write $A$ as a sum
\begin{equation}
\label{adjoint decomposition of a generic operator}
A = A_{\psi_{0}, \mathrm{sc}} \tilde{\psi}_0 + A_{\psi_{\dff}} \tilde{\psi}_{\dff} + A_{\psi_{\cf}} \tilde{\psi}_{\cf} + K,
\end{equation}
where $K$ has rapidly decreasing smooth kernel, $A_{\psi_{0}, \mathrm{sc}} \in \Psi_{\mathrm{sc},\delta}^{\vom, \vol}( \overline{\mathbb{R}^{n}} )$, while $A_{\psi_{\dff}} = \psi_{\dff} A \psi_{\dff}$, $A_{\psi_{\cf}} = \psi_{\cf} A \psi_{\cf}$ are defined by partial quantizations. Then it would be enough if we can take the adjoint of each term in (\ref{adjoint decomposition of a generic operator}). \par

Let $\ast_{\mathrm{3co}}$ denote the adjoint taken with respect to the aforementioned $\mathrm{3co}$-density $\nu_{\mathrm{3co}}$. Then it is clear that $K^{\ast_{\mathrm{3co}}}$ will be another rapidly decreasing smooth kernel. Moreover, since any three-cone density will restrict to a scattering density away from $\dff \cup \cf$, we know that $A_{\psi_{0}, \mathrm{sc}}^{\ast_{\mathrm{3co}}}$ must also be another scattering operator, 
\par

It remains to consider the adjoints of $A_{\psi_{\dff}}$ and $A_{\psi_{\cf}}$, which only need be done locally near $\dff$ and $\cf$ respectively. Recall that $\nu_{\mathrm{3co}}$ is locally near $\dff$ equal to the density
\begin{equation*}
| dz_{\tindex} | \nu_{\mathrm{b}}
\end{equation*}
where $\nu_{\mathrm{b}} \in \mathcal{C}^{\infty}( X^{\tindex} ; {^{\mathrm{b}}\Omega} X^{\tindex} )$ is some strictly positive b-density. Thus, if $u$ and $v$ are supported in a sufficiently small neighborhood of $\dff$, then we have
\begin{equation} \label{L2 inner product three-cone near dff}
\langle  u, v \rangle_{L^{2}_{\mathrm{3co}}(\Xd)} = \int_{\mathbb{R}^{n_{\tindex}}} \langle u , v \rangle_{L^{2}_{\mathrm{b}}(X^{\tindex})} dz_{\tindex}
\end{equation}
where $L^{2}_{\mathrm{b}}( X^{\tindex} ) = L^{2}( X^{\tindex} , \nu_{\mathrm{b}} )$. Subsequently, the relationship between $A_{\psi_{\dff}}$ and $A_{\psi_{\dff}}^{\ast_{\mathrm{3co}}}$ can be understood via the usual calculation in the scattering setting{\ep}much like our consideration of composition above. \par

Likewise, $\nu_{\mathrm{3co}}$ is locally near $\cf$ equal to the density
\begin{equation*}
|dz_{\tindex}| \nu_{\mathrm{co}}
\end{equation*}
where $\nu_{\mathrm{co}} \in \mathcal{C}^{\infty} ( [ \hat{X}^{\tindex} ; \{ 0 \} ] ; {^{\mathrm{co}}\Omega} [ \hat{X}^{\tindex} ; \{ 0 \} ] )$ is some strictly positive $\mathrm{co}$-density. Thus, if $u$ and $v$ are supported in a sufficiently small neighborhood of $\cf$, then we have
\begin{equation*}
\langle u , v \rangle_{L^{2}_{\mathrm{3co}}(\Xd)} = \int_{\mathbb{R}^{n_{\tindex}}} \langle u , v \rangle_{L^{2}_{\mathrm{co}}( [ \hat{X}^{\tindex} ; \{ 0 \} ] )} dz_{\tindex},
\end{equation*}
where $L^{2}_{\mathrm{co}}( [ \hat{X}^{\tindex} ; \{ 0 \} ] ) = L^{2}( [ \hat{X}^{\tindex} ; \{ 0 \} ] , \nu_{\mathrm{co}} )$. \par

It follows that we have:
\begin{proposition}
\label{proposition symbol and indicial operators for adjoint}
Suppose that $\vom, \vor, \vov, \vos \in \mathcal{C}^{\infty}( \psf \Xd )$, $\vol, \vob \in \mathcal{C}^{\infty}( \mathcal{C}_{\tindex} \times \overline{\mathbb{R}^{n_{\tindex}}} )$ and let $A \in \Psf^{\vom, \vor, \vol, \vov, \vob, \vos}(\Xd)$. Then we have
\begin{equation*}
A^{\ast_{\mathrm{3co}}} \in \Psf^{\vom, \vor, \vol, \vov, \vob, \vos}(\Xd), \  A^{\ast} \in \Psf^{\vom, \vor, \vol, \vov, \vob, \vos}(\Xd),
\end{equation*}
where $A^{\ast}$ denotes the adjoint of $A$ taken with respect to $L^{2}$. Moreover, we have
\begin{equation}
\label{formula for adjoint symbol three-cone case}
\sigma(A^{\ast_{\mathrm{3co}}}) = \overline{ \sigma (A) }.
\end{equation}
Additionally, assume that $K > 0$. Then:
\begin{enumerate}
\item If $A$ is classical at $\dff$ modulo $\Psf^{\vom, \vor, \vol - K, \vov, \vob, \vos}(\Xd)$, then so is $A^{\ast_{\mathrm{3co}}}$, and we have
\begin{equation}
\label{indicial operator adjoint formula dff}
\hat{N}_{\dff}(A^{\ast_{\mathrm{3co}}}) = \hat{N}_{\dff}(A)^{\ast_{\mathrm{b}}},
\end{equation}
where $\ast_{\mathrm{b}}$ denotes adjoint taken with respect to $L^{2}_{\mathrm{b}}( X^{\tindex} )$.
\item If $A$ is classical at $\cf$ modulo $\Psf^{\vom, \vor, \vol, \vov, \vob - K, \vos}(\Xd)$, then so is $A^{\ast_{\mathrm{3co}}}$, and we have
\begin{equation}
\label{indicial operator adjoint formula cf}
\hat{N}_{\cf}(A^{\ast_{\mathrm{3co}}}) = \hat{N}_{\cf}(A)^{\ast_{\mathrm{co}}},
\end{equation}
where $\ast_{\mathrm{co}}$ denotes adjoint taken with respect to $L^{2}_{\mathrm{co}}( [ \hat{X}^{\tindex} ; \{ 0 \} ] )$.
\end{enumerate}

The same statements hold if $\vom = m$, $\vor = r$, $\vol = l$, $\vov = \nu$, $\vob = b$, $\vos = s$ are all constants, in which case we also set $\delta = 0$.
\end{proposition}
\begin{proof}
Note that the membership of $A^{\ast}$ follows from the fact that $\nu_{\mathrm{3co}}$ differs from the Euclidean density via the multiplication of a strictly positive $\mathcal{C}^{\infty}(\Xd)$ function. Moreover, the proof of (\ref{formula for adjoint symbol three-cone case}) is standard and will thus be omitted. \par

To prove (\ref{indicial operator adjoint formula dff}), notice that only the term $A_{\psi_{\dff}}$ in (\ref{adjoint decomposition of a generic operator}) matters. Thus, we will start with (\ref{L2 inner product three-cone near dff}) and then follow through the standard calculation, from which we easily see that 
\begin{equation} \label{expression of adjoint near dff}
A^{\ast_{\mathrm{3co}}}_{\psi_{\dff}} = \frac{1}{(2\pi)^{n_{\tindex}}} \int_{\mathbb{R}^{2n_{\tindex}}} e^{i ( z_{\tindex} - z_{\tindex}' ) \cdot \tscblz } \hat{A}^{\ast_{\mathrm{b}}}_{\psi_{\dff}}( z_{\tindex}', \tscblz )   d\zeta_{\tindex} |dz_{\tindex}'|,
\end{equation}
where $\hat{A}_{\psi_{\dff}}$ is the left operator-valued symbol of $A_{\psi_{\dff}}$. Then Lemma \ref{operator-valued symbols asymptotic definition}, part (1) shows that $A^{\ast_{\mathrm{3co}}}_{\psi_{\dff}}$ can be written as the left partial quantization of 
\begin{equation*}
\widehat{A_{\psi_{\dff}}^{\ast_{\mathrm{3co}}}} (z_{\tindex}, \tscblz ) \sim \sum_{\beta_{\tindex} 
\in \mathbb{N}_{0}^{n_{\tindex}} } \frac{1}{\beta_{\tindex}!} \partial_{\tscblz}^{\beta_{\tindex}}D_{z_{\tindex}}^{\beta_{\tindex}} \hat{A}_{\psi_{\dff}}^{\ast_{\mathrm{b}}} ( z_{\tindex}, \tscblz )
\end{equation*}
from which the classicality of $A_{\psi_{\dff}}^{\ast_{\mathrm{3co}}}$ is also clear. In particular, letting $x_{\tindex} \rightarrow 0$ results in the formula (\ref{indicial operator adjoint formula dff}). A similar calculation works for the case at $\cf$, proving formula (\ref{indicial operator adjoint formula cf}).
\end{proof}

We finally remark that the obvious analogues of the results in this subsection, including Propositions \ref{Composition proposition} and \ref{proposition symbol and indicial operators for adjoint}, can be proved in the context of (constant orders) conormal three-cone operators as well, with their proofs being largely the same. The precise statements will be omitted in order to not burden the length of this paper further,

\subsection{Operator wavefront set}
\label{Operator wavefront set subsection}

Having introduced the principal symbol and the indicial operators, we finally move onto the discussion of microlocalization. \par

Our discussions in this subsection and the next will mostly be focused on the setting of further-resolved, second microlocalized operators $\Psf^{\vom, \vor, \vol, \vob, \vov, \vos}(\Xd)$ (which also includes the constant orders cases by setting $\delta = 0$ as well).  \par

We will introduce (or rather recall) the symbolic wavefront and elliptic/characteristic sets as in the standard (e.g., scattering) settings. For the non-symbolic parts of the operators (i.e., the parts of the operators which can be written locally as partial quantizations, for which the indicial operators are relevant). We will need to define the wavefront and elliptic/characteristic sets differently.  \par

In this subsection, we will only define the operator wave front sets. Such wavefront sets are designed to capture microlocally (i.e., in phase spaces) where an operator $A$ is `trivial' in an appropriate sense. At the boundary faces of $\overline{^{\mathrm{d3sc,3co,res}}T^{\ast}}\Xd$ where the principal symbol map measures leading order decay, this notion of triviality can be made pointwise. Meanwhile, the indicial operators (which are non-local, i.e., they are global operators over each fiber of $\dff$ or $\cf$) are required to measure leading order decay at the other faces. Therefore, the corresponding notions of triviality must be non-local as well. 

\begin{definition}[Operator wavefront sets]
\label{definition of the operator wavefront set}
Suppose that $\vom, \vor, \vov, \vos \in \mathcal{C}^{\infty}( \psf \Xd )$, $\vol, \vob \in \mathcal{C}^{\infty}( \mathcal{C}_{\tindex} \times \overline{\mathbb{R}^{n_{\tindex}}} )$ and let $A \in \Psf^{\vom, \vor, \vol, \vov, \vob, \vos}(\Xd)$.  Then we will define
\begin{equation*}
\WFs(A)
\end{equation*}
as a subset of 
\begin{equation}
 \label{phase space of the symbols}
^{\mathrm{d3sc,3co,res}}S^{\ast} \Xd \cup  \overline{^{\mathrm{d3sc,3co,res}}T^{\ast}}_{\dmf} \Xd \cup \dtsccf  \cup \rf.
\end{equation}
A point $\beta$ belonging to (\ref{phase space of the symbols}) does not live in $\WFs(A)$ if there exists an open neighborhood $U$ of $\beta$ in (\ref{phase space of the symbols}), such that for all $\psi \in \mathcal{C}^{\infty}(\Xd)$ with $\supp \psi$ small, $\supp \psi \cap U \neq \emptyset$, if $\psi A \psi$ is written as some quantization of a symbol $b_{\psi}$ modulo an operator in $\Psf^{-\infty, -\infty, \vol, -\infty, \vob, -\infty}(\Xd)$ (as in the previous sections), then $b_{\psi}$ vanishes to infinite orders at $U$. \par

Moreover, we will define $\WFdff(A)$ and $\WFcf(A)$ as subsets $\mathcal{C}_{\tindex} \times \overline{\mathbb{R}^{n_{\tindex}}}$, such that:
\begin{enumerate}

\item A point $\beta_{\tindex} \in \mathcal{C}_{\tindex} \times \overline{\mathbb{R}^{n_{\tindex}}}$ does not live in $\WFdff(A)$ if there exists an open neighborhood $U_{\tindex}$ of $\beta_{\tindex}$ in $\mathcal{C}_{\tindex} \times \overline{\mathbb{R}^{n_{\tindex}}}$ such that if for a cut-off function $\psi_{\dff} \in \mathcal{C}^{\infty}( \Xd )$ at $\dff$, the operator-valued symbol $\hat{A}_{\psi_{\dff}}$ vanishes to infinite orders  at $U_{\tindex}$. 

\item A point $\beta_{\tindex} \in \mathcal{C}_{\tindex} \times \overline{\mathbb{R}^{n_{\tindex}}}$ does not live in $\WFcf(A)$ if there exists an open neighborhood $U_{\tindex}$ of $\beta_{\tindex}$ in $\mathcal{C}_{\tindex} \times \overline{\mathbb{R}^{n_{\tindex}}}$ such that if for a cut-off function $\psi_{\cf} \in \mathcal{C}^{\infty}( \Xd )$ at $\cf$, the operator-valued symbol $\hat{A}_{\psi_{\cf}}$ vanishes to infinite orders  at $U_{\tindex}$. 
\end{enumerate} \par 

The same statements hold if $\vom = m$, $\vor = r$, $\vol = l$, $\vov = \nu$, $\vob = b$, $\vos = s$ are all constants, in which case we also set $\delta = 0$.

If instead $A \in \Psi_{\mathrm{3coc}}^{m,r,l,b}(\Xd)$, then we can define $\mathrm{WF}_{\sigma}'(A)$ essentially verbatim, except that $\mathrm{WF}_{\sigma}'(A)$ must now be a subset of ${^{\mathrm{3co}}S^{\ast}}\Xd \cup \overline{^{\mathrm{3co}}T^{\ast}}_{\dmf}\Xd$.
\end{definition}

It is easy to see that the operator wavefront sets defined above satisfy the standard properties. In particular, assuming that $A_{j} \in \Psi_{\mathrm{d3sc,3co,res},\delta}^{\vom_{j} ,\vor_{j} ,\vol_{j} ,\vov_{j} ,\vob_{j} , \vos_{j}}(\Xd)$, $j=1,2$, and that the variable orders satisfy the assumptions of Proposition \ref{Composition proposition}. Then we have
\begin{equation*}
\begin{gathered}
\mathrm{WF}'_{\bullet}( A_{1} + A_{2} ) \subset \mathrm{WF}'_{\bullet}( A_1 ) \cup \mathrm{WF}'_{\bullet}( A_2 ), \quad \mathrm{WF}'_{\bullet}( A_1 A_2 ) \subset \mathrm{WF}'_{\bullet}(A_1) \cap \mathrm{WF}'_{\bullet}(A_2),
\end{gathered} 
\end{equation*}
where $\bullet = \sigma, \dff$ or $\cf$. These claims are standard if $\bullet = \sigma$. If instead $\bullet = \dff$ or $\bullet = \cf$, then they follow from writing $A_j$, $j=1,2$ as partial quantizations, and then applying the asymptotic summation formulae from Lemma \ref{composition subsection composition reduction lemma}. \par 

If $A \in \Psf^{\vom,\vor,\vol,\vov,\vob,\vos}(\Xd)$, then the point of considering the wavefront sets is that
\begin{equation*}
\begin{gathered}
\WFs(A) = \emptyset \Leftrightarrow A \in \Psi_{\mathrm{d3sc,3co,res},\delta}^{-\infty, -\infty, \mathsf{l}, - \infty, \mathsf{b}, -\infty}(\Xd), \\
\WFdff(A) = \emptyset \Leftrightarrow A \in \Psi_{\mathrm{d3sc,3co,res},\delta}^{\mathsf{m}, \mathsf{r}, -\infty, \mathsf{v}, \mathsf{b}, \mathsf{s}}(\Xd), \\
\WFcf(A) = \emptyset \Leftrightarrow A \in \Psi_{\mathrm{d3sc,3co,res},\delta}^{\mathsf{m}, \mathsf{r}, \mathsf{l} , \mathsf{v}, -\infty, \mathsf{s}}(\Xd).
\end{gathered}
\end{equation*} 
More generally, for the symbolic wavefront set $\mathrm{WF}'_{\sigma}(A)$, we also have
\begin{equation*}
\begin{gathered}
\mathrm{WF}_{\sigma}'(A) \cap {^{\mathrm{d3sc,3co,res}}S^{\ast}}\Xd = \emptyset  \Leftrightarrow A \in \Psi_{\mathrm{d3sc,3co,res},\delta}^{-\infty, \vor, \mathsf{l}, \vov, \mathsf{b}, \vos }(\Xd), \\
\mathrm{WF}_{\sigma}'(A) \cap \psf_{\dmf}\Xd = \emptyset  \Leftrightarrow A \in \Psi_{\mathrm{d3sc,3co,res},\delta}^{\vom, -\infty , \mathsf{l}, \vov, \mathsf{b}, \vos }(\Xd), \\
\mathrm{WF}_{\sigma}'(A) \cap \dtsccf = \emptyset  \Leftrightarrow A \in \Psi_{\mathrm{d3sc,3co,res},\delta}^{\vom , \vor, \mathsf{l}, -\infty , \mathsf{b}, \vos }(\Xd), \\
\mathrm{WF}_{\sigma}'(A) \cap \rf = \emptyset \Leftrightarrow A \in \Psf^{\vom, \vor, \vol, \vov, \vos, -\infty}(\Xd). 
\end{gathered}
\end{equation*} \par

We also have a uniform version of the symbolic wavefront set, which will be useful when we consider microlocal propagation estimates in \S\S \ref{principal type propagation section}--\ref{global radial point estimate section}. It is also required to state Lemma \ref{uniform elliptic regularity estimate lemma} below. Here, we shall follow the exposition of \cite[Chapter 9]{PeterNotes}.
\begin{definition}[Uniform operator wavefront set]
\label{definition uniform operator wavefront set}
Let $\{ A_{s} \} \in L^{\infty}( (0,1]_s ; \Psi_{\mathrm{d3sc,3co,res}}^{\vom, \vor, \vol, \vov, \vob, \vos} (\Xd) )$. Then we will define
\begin{equation*}
\WFsL( \{ A_s \}_s )
\end{equation*}
as a subset of (\ref{phase space of the symbols}). A point $\beta$ belonging to (\ref{phase space of the symbols}) does not live in $\WFsL( \{ A_s \} )$ if there some $B \in \Psi_{\mathrm{d3sc,3co,res},\delta}^{0,0,0,0,0,0}(\Xd)$ which is elliptic at $\beta$, such that
\begin{equation*}
\{ BA_{s} \} \subset L^{\infty}( (0,1]_s ; \Psi_{\mathrm{d3sc,3co,res}}^{-\infty, -\infty, \vol, -\infty, \vos, -\infty} ).
\end{equation*}
\end{definition}

\subsection{Elliptic and characteristic sets} 
\label{Microlocal elliptic parametrix subsection}
Next we introduce the elliptic/characteristic sets.

\begin{definition}[Elliptic/characteristic sets] 
\label{symbolic version of elliptic and characterstic sets}
Suppose that $\vom, \vor, \vov, \vos \in \mathcal{C}^{\infty}( \psf \Xd )$, $\vol, \vob \in \mathcal{C}^{\infty}( \mathcal{C}_{\tindex} \times \overline{\mathbb{R}^{n_{\tindex}}} )$ and let $A \in \Psf^{\vom, \vor, \vol, \vov, \vob, \vos}(\Xd)$.  Then we will define 
\begin{equation*}
\mathrm{Ell}_{\sigma, \delta}^{ \mathsf{m},\mathsf{r}, \mathsf{l} ,  \mathsf{v}, \mathsf{b} , \mathsf{s}}(A)
\end{equation*}
as a subset of (\ref{phase space of the symbols}). A point $\beta$ belonging to (\ref{phase space of the symbols}) lives in $\mathrm{Ell}_{\sigma, \delta}^{\vom, \vor, \vol, \vov, \vob, \vos}(A)$ if there exists a neighborhood $U \subset {\overline{ ^{\mathrm{d3sc,3co,res}}T^{\ast}} \Xd} $ of $\beta$ and $b \in S^{ - \mathsf{m}, - \mathsf{r}, -\mathsf{l}, - \mathsf{v}, - \mathsf{b}, - \mathsf{s} }_{\delta}( \overline{ ^{\mathrm{d3sc,3co,res}}T^{\ast}} \Xd )$ such that if $a \in S_{\delta}^{\vom, \vor, \vol, \vov, \vob, \vos}( \psf \Xd )$ is a representative of $\sigma(A)$, then we have
\begin{equation*}
ab - 1 \in S^{-1 + 2 \delta, -1 + 2 \delta, 0, - 1 + 2\delta, 0, - 1 + 2 \delta}_{\delta}( \overline{ ^{\mathrm{d3sc,3co,res}}T^{\ast}} \Xd ) \ \text{on} \ U
\end{equation*}
in the sense that $ab - 1$ satisfies the $ S^{-1 + 2 \delta, -1 + 2 \delta, 0, - 1 + 2\delta, 0, - 1 + 2 \delta}_{\delta}( \psf \Xd )$ estimates on $U \cap T^{\ast} \mathbb{R}^{n}$. We define $\mathrm{Char}_{\sigma, \delta}^{\mathsf{m},\mathsf{r}, \mathsf{l} ,  \mathsf{v}, \mathsf{b} , \mathsf{s}}(A)$ to be the complement of $\mathrm{Ell}_{\sigma, \delta}^{\mathsf{m},\mathsf{r}, \mathsf{l} ,  \mathsf{v}, \mathsf{b} , \mathsf{s}} (A)$ in (\ref{phase space of the symbols}). \par

Suppose that $K > 0$. Then: 
\begin{enumerate}
\item If $A$ is classical at $\dff$ modulo $\Psf^{\vom, \vor, \vol - K, \vov, \vob, \vos}(\Xd)$, then we will define $\mathrm{Ell}^{\vol}_{\dff}(A)$ by all $\beta_{\tindex} \in \mathcal{C}_{\tindex} \times \overline{\mathbb{R}^{n_{\tindex}}}$ for which there exists a  neighborhood $U_{\tindex} \subset \mathcal{C}_{\tindex} \times \overline{\mathbb{R}^{n_{\tindex}}}$ of $\beta_{\tindex}$ such that $\hat{N}_{\dff}(A)$ is invertible on $U_{\tindex}$ with inverse in 
\begin{equation}
\label{elliptic set at dff inverse membership}
\Psi_{\mathrm{sc,b,lp,res},\delta}^{-\vom, -\vov + 2 \vol, -\vob + 2 \vol, - \vos + \vol}( X^{\tindex} ; \mathcal{C}_{\tindex} \times \mathbb{R}^{n_{\tindex}} ),
\end{equation}
i.e., $\hat{N}_{\dff}(A)^{-1}$ can be identified as the restriction to $U_{\tindex}$ of some element in (\ref{elliptic set at dff inverse membership}). We define $\mathrm{Char}^{\vol}_{\dff}(A)$ to be the complement of $\mathrm{Ell}^{\vol}_{\dff}(A)$ in $\mathcal{C}_{\tindex} \times \overline{\mathbb{R}^{n_{\tindex}}}$.
\item If $A$ is classical at $\cf$ modulo $\Psf^{\vom, \vor, \vol, \vov, \vob - K, \vos}(\Xd)$, then we will define $\mathrm{Ell}^{\vob}_{\cf}(A)$ by all $\beta_{\tindex} \in \mathcal{C}_{\tindex} \times \overline{\mathbb{R}^{n_{\tindex}}}$ for which there exists a  neighborhood $U_{\tindex} \subset \mathcal{C}_{\tindex} \times \overline{\mathbb{R}^{n_{\tindex}}}$ of $\beta_{\tindex}$ such that $\hat{N}_{\cf}(A)$ is invertible on $U_{\tindex}$ with inverse in 
\begin{equation}
\label{elliptic set at cf inverse membership}
\Psi_{\mathrm{coc,lp,res}, \delta}^{ - \mathsf{m}, - \mathsf{r} + \frac{b}{2}, - \mathsf{l} + \frac{b}{2}, - \mathsf{v} + \vob }( [ \hat{X}^{\tindex} ; \{ 0 \} ] ; \mathcal{C}_{\tindex} \times \mathbb{R}^{n_{\tindex}} ), 
\end{equation}
i.e., $\hat{N}_{\cf}(A)^{-1}$ can be identified as the restriction to $U_{\tindex}$ of some element in (\ref{elliptic set at cf inverse membership}). We define $\mathrm{Char}^{\vob}_{\cf}(A)$ to be the complement of $\mathrm{Ell}^{\vob}_{\cf}(A)$ in $\mathcal{C}_{\tindex} \times \overline{\mathbb{R}^{n_{\tindex}}}$.
\end{enumerate}

The same statements hold if $\vom = m$, $\vor = r$, $\vol = l$, $\vov = \nu$, $\vob = b$, $\vos = s$ are all constants, in which case we also set $\delta = 0$.

If we instead assume that $A \in \Psi_{\mathrm{3occ}}^{m,r,l,b}(\Xd)$, then we will define $\mathrm{Ell}_{\sigma}^{m,r,l,b}(A)$ as a subset of ${^{\mathrm{3co}}S^{\ast}}\Xd \cup \overline{^{\mathrm{3co}}T^{\ast}}\Xd$. Moreover, a point $\beta$ belongs to $\mathrm{Ell}_{\sigma}^{m,r,l,b}(A)$ if there exists a neighborhood $U \subset \overline{^{\mathrm{3co}}T^{\ast}}\Xd$ of $\beta$ and $b \in S^{-m,-r,-l,-b}( \overline{^{\mathrm{3co}}T^{\ast}}\Xd )$ so that if $a \in S^{m,r,l,b}( \overline{^{\mathrm{3co}}T^{\ast}}\Xd )$ is a representative of $\sigma(A)$, then we have 
\begin{equation*}
ab - 1 \in S^{-1, -1, 0,0}( \overline{ ^{\mathrm{3co}}T^{\ast} }\Xd ) \ \text{on} \ U
\end{equation*}
in the sense that $ab - 1$ satisfies the $S^{-1,-1,0,0}( \overline{^{\mathrm{3co}}T^{\ast}} \Xd )$ estimates on $U \cap T^{\ast} \mathbb{R}^{n}$. Finally, we define $\mathrm{Char}_{\sigma}^{m,r,l,b}(A)$ to be the complement of $\mathrm{Ell}_{\sigma}^{m,r,l,b}(A)$ in ${^{\mathrm{3co}}S^{\ast}}\Xd \cup \overline{^{\mathrm{3co}}T^{\ast}}\Xd$.
\end{definition}

For brevity, we will henceforth omit writing the indices, subscripts as well as dependencies on the small parameter $\delta > 0 $ in the notations for elliptic and characteristic set above. More precisely, if $A \in \Psf^{\vom, \vor, \vol, \vov, \vob, \vos}(\Xd)$, then we will simplify notations to write $\mathrm{Ell}_{\sigma}(A) \coloneq \mathrm{Ell}_{\sigma, \delta}^{\vom, \vor, \vol, \vov, \vob, \vos}(A)$ (and also $\mathrm{Ell}_{\sigma}(A) \coloneq \mathrm{Ell}_{\sigma}^{m,r,l,b}(A)$ if $A \in \Psi_{\mathrm{3coc}}^{m,r,l,b}(\Xd)$), as well as
\begin{equation*}
\begin{gathered}
\text{$\mathrm{Ell}_{\dff}(A) \coloneq \mathrm{Ell}_{\dff}^{\vol}(A)$,  $\mathrm{Char}_{\dff}(A) \coloneq \mathrm{Char}_{\dff}^{\vol}(A)$, and} \\
\text{$\mathrm{Ell}_{\cf}(A) \coloneq \mathrm{Ell}_{\cf}^{\vob}(A)$,  $\mathrm{Char}_{\cf}(A) \coloneq \mathrm{Char}_{\cf}^{\vob}(A)$.}
\end{gathered}
\end{equation*}

\begin{remark}
Since invertibility is an open condition, an equivalent condition for $\beta_{\tindex} \in \mathcal{C}_{\tindex} \times \mathbb{R}^{n_{\tindex}}$, $\beta_{\tindex} \in \mathrm{Ell}_{\dff}(A)$ where $A \in \Psf^{\vom, \vor, \vol, \vov, \vob,\vos}(\Xd)$ is that $\hat{N}_{\dff}(A)$ be invertible at $\beta_{\tindex}$ with inverse in $\Psi_{\mathrm{sc,b}, \delta}^{ -\vos + \vol, -\vov + 2 \vol, -\vob + 2 \vol }( X^{\tindex} )$. However, if we instead assume that $\beta_{\tindex} \in \mathcal{C}_{\tindex} \times \partial \overline{\mathbb{R}^{n_{\tindex}}}$, then the formulation provided in Definition \ref{symbolic version of elliptic and characterstic sets} becomes necessary. The same statement is also true if instead $\beta_{\tindex} \in \mathcal{C}_{\tindex} \times \mathbb{R}^{n_{\tindex}}$ and $\beta_{\tindex} \in \mathrm{Ell}_{\cf}(A)$, except that we now require $\hat{N}_{\cf}(A)$ to be invertible at $\beta_{\tindex}$ with inverse in $\Psi_{\mathrm{coc},\delta}^{-\vov+\vob, -\vor + \vob/2, -\vol + \vob/2}( [ \hat{X}^{\tindex} ; \{ 0 \} ] )$.
\end{remark}

\begin{remark}
In fact, conditions (1) and (2) given in Definition \ref{symbolic version of elliptic and characterstic sets} are in general too strong. Consider for definiteness condition (1) only. Then it is possible for an operator in $\Psi_{\mathrm{sc,b}, \delta}^{ \vos - \vol, \vov - 2 \vol, \vob - 2\vol }( X^{\tindex} )$ to be invertible without an inverse in $\Psi_{\mathrm{sc,b},\delta}^{-\vos + \vol, - \vov + 2 \vol, -\vob + 2 \vol}(X^{\tindex})$. In order to account for these operators, we need to instead consider the `large calculus', which we have elected not to construct in this paper. Nevertheless, our construction does include a large class of non-trivial operators. For example, if $\hat{N}_{\dff}(A)$ is a scattering operator, then its inverse, if it exists, must lie in the scattering calculus as well. 
\end{remark}

The elliptic sets also satisfy the standard properties. Most notably, we always have
\begin{equation} \label{standard properties of elliptic set under composition}
\mathrm{Ell}_{\bullet} ( AB ) = \mathrm{Ell}_{\bullet}(A) \cap \mathrm{Ell}_{\bullet}(B)
\end{equation}
for $\bullet = \sigma, \dff$ or $\cf$. In every case, (\ref{standard properties of elliptic set under composition}) follows easily from the multiplicative properties of the principal symbol map and indicial operators.
\par

We now introduce suitable notions of microlocal elliptic parametrices.
\begin{proposition}[Microlocal elliptic parametrices] 
\label{Proposition microlocal ellipticity parametrix}
Let $\vom, \vor, \vov, \vos \in \mathcal{C}^{\infty}( \psf \Xd )$, $\vol, \vob \in \mathcal{C}^{\infty}( \mathcal{C}_{\tindex} \times \overline{\mathbb{R}^{n_{\tindex}}} )$ and $A \in \Psf^{\vom, \vor, \vol, \vov, \vob, \vos}(\Xd)$. Then:
\begin{enumerate}
\item If $ K \subset \Ellss(A)$ is compact, then there exists $B \in \Psi^{ - \mathsf{m}, - \mathsf{r},  - \mathsf{l}, -\mathsf{v}, -\mathsf{b} , - \mathsf{s} }_{\mathrm{d3sc,3co,res,\delta}}(\Xd)$ such that
\begin{equation} 
\label{standard elliptic parametrix}
\WFs(AB- I) \cap K = \emptyset, \quad \WFs(BA - I) \cap K = \emptyset.
\end{equation} 
\item Suppose that $A$ is classical at $\dff$ modulo $\Psf^{\vom, \vor, \vol - K', \vov, \vob, \vos}(\Xd)$, $K' > 0$. Then if $K_{\tindex} \subset \Elldff(A)$ is compact, there exists $B \in \Psi^{ - \mathsf{m}, - \mathsf{r},  - \mathsf{l}, - \mathsf{v}, -\mathsf{b} , - \mathsf{s} }_{\mathrm{d3sc,3co,res,\delta}}(\Xd)$ such that
\begin{equation*}
\WFdff(AB - I) \cap K_{\tindex} = \emptyset, \quad \WFdff(BA - I) \cap K_{\tindex} = \emptyset.
\end{equation*}
\item Suppose that $A$ is classical at $\cf$ modulo $\Psf^{\vom, \vor, \vol, \vov, \vob - K', \vos}(\Xd)$, $K' > 0$. Then if $K_{\tindex} \subset \Ellcf(A)$ is compact, there exists $B \in \Psi^{ - \mathsf{m}, - \mathsf{r},  - \mathsf{l}, - \mathsf{v}, -\mathsf{b} , - \mathsf{s} }_{\mathrm{d3sc,3co,res,\delta}}(\Xd)$ such that
\begin{equation*}
\WFcf(AB - I) \cap K_{\tindex} = \emptyset, \quad  \WFcf(BA - I) \cap K_{\tindex} = \emptyset.
\end{equation*}
\end{enumerate}
\end{proposition}
\begin{proof}
Part (1) is truly standard. Thus we will focus on the proof of part (2), though even here the proof is essentially standard still. \par

 Indeed, by definition there exists a neighborhood $U_{\tindex} \subset \mathcal{C}_{\tindex} \times \overline{\mathbb{R}^{n_{\tindex}}}$ of $K_{\tindex}$ such that $\hat{N}_{\dff}(A)$ is invertible on $U_{\tindex}$. Moreover, let $q_{\tindex} \in \mathcal{C}^{\infty}( \mathcal{C}_{\tindex} \times \overline{\mathbb{R}^{n_{\tindex}}} )$ be such that $q_{\tindex} = 1$ on $K_{\tindex}$ and is supported in $U_{\tindex}$. Then we have 
\begin{equation*}
q_{\tindex} \hat{N}_{\dff}(A)^{-1} \in \Psi_{\mathrm{sc,b,lp, res},\delta}^{ - \vom, - \vov + 2 \vol, - \vob + 2 \vol, -\vos + \vol }( X^{\tindex} ; \mathcal{C}_{\tindex} \times \overline{\mathbb{R}^{n_{\tindex}}} ).
\end{equation*} \par

Now, by Lemma \ref{lemma constructing operator from prescribed indicial operator}, we can find some $B_0 \in \Psi_{\mathrm{d3sc,3co,res},\delta}^{-\vom, -\vor, - \vol, - \vov, -\vob, -\vos}(\Xd)$ such that 
\begin{equation*}
\hat{N}_{\dff}( B_0 ) = q_{\tindex}  \hat{N}_{\dff}(A)^{-1}, \quad \text{$B_0$ is classical at $\dff$ modulo $\Psf^{-\vom, -\vor, -\infty, -\vov, -\vob, -\vos}(\Xd)$}. 
\end{equation*}
Let $q_{\tindex,0} \in \mathcal{C}^{\infty}( \mathcal{C}_{\tindex} \times \overline{\mathbb{R}^{n_{\tindex}}} )$ be identically $1$ on $K_{\tindex}$, $q_{\tindex,0} ( 1 - q_{\tindex} ) = 0$. Let $Q_{0} \in \Psi_{\mathrm{d3sc,3co,res}}^{0,0,0,0,0,0}(\Xd)$ be chosen such that $\hat{N}_{\dff}(Q_0) = q_{\tindex,0}$ and $Q_0$ is classical at $\dff$ modulo $\Psf^{0,0,-\infty,0,0,0}(\Xd)$. Then we will define
\begin{equation*}
E_{0,L} \coloneq Q_0 ( B_0 A - I ) \in \Psi_{\mathrm{d3sc,3co,res},\delta}^{0,0,0,0,0,0}(\Xd), \quad E_{0,R} \coloneq ( A B_0 - I ) Q_0 \in \Psi_{\mathrm{d3sc,3co,res},\delta}^{0,0,0,0,0,0}(\Xd)
\end{equation*}
with vanishing indicial operators at $\dff$. Thus, we in fact have
\begin{equation*}
E_{0,L} \in \Psi_{\mathrm{d3sc,3co,res},\delta}^{0,0, - 1+ 2\delta, 0,0,0}(\Xd), \quad E_{0,R} \in \Psi_{\mathrm{d3sc,3co,res},\delta}^{0,0,-1+2\delta, 0,0,0}(\Xd).
\end{equation*} \par

The rest of the argument follows from the standard asymptotic summation argument (cf. \cite[Proposition 5.16]{AndrasBook}), with the exception that the asymptotic summations in question must now be understood in the sense Lemma \ref{composition subsection symbol reduction formulae lemma}, part (1). The analogous calculation also works at $\cf$. This concludes the proof of the proposition.
\end{proof}
We also have the following special case which is not covered by Proposition \ref{Proposition microlocal ellipticity parametrix}. It will be useful when we discuss Sobolev spaces in \S \ref{subsection Sobolev spaces under second microlocalization}.

\begin{lemma}
\label{special lemma elliptic parametrix}
Suppose that $\vol, \vob \in \mathcal{C}^{\infty}( \mathcal{C}_{\tindex} \times \overline{\mathbb{R}^{n_{\tindex}}} )$. Let $x_{\dff}, x_{\cf} \in \mathcal{C}^{\infty}(\Xd)$ be global defining functions for $\dff$, $\cf$ respectively and $\Lambda \in \Psf^{0,0,\vol,\vol,\vob,\vob}(\Xd)$ be some quantization (depending on a partition) of $x_{\dff}^{-\vol} x_{\cf}^{-\vob}$. Then there exists $\Gamma \in \Psf^{0,0,-\vol, -\vol, -\vob, -\vob}(\Xd)$ such that  
\begin{equation*}
\mathrm{WF}_{\bullet}'(\Lambda \Gamma- I) = \emptyset, \quad \mathrm{WF}_{\bullet}'( \Gamma  \Lambda - I)  = \emptyset, \quad \bullet = \sigma, \dff, \cf.
\end{equation*}
\end{lemma}
\begin{proof}
Let $\Gamma_0 \in \Psf^{0,0,-\vol,-\vol, -\vob, -\vob}(\Xd)$ be defined by some quantization of $x_{\dff}^{\vol} x_{\cf}^{\vob}$. Then it is easy to see that
\begin{equation*}
E_{R,0} \coloneq \Lambda \Gamma _0- I, E_{L,0} \coloneq \Gamma_0 \Lambda - I \in \Psi_{\mathrm{d3sc,3co,res},\delta}^{-1 + 2 \delta, - 1 + 2\delta, - 1 + 2 \delta, -1 + 2\delta, -1 + 2\delta, - 1 + 2\delta}(\Xd).
\end{equation*}
Following the standard argument, the above membership can be improved to the existence of some $\Gamma_{R}, \Gamma_{L} \in \Psf^{0,0,-\vol, -\vol, -\vob, -\vob}(\Xd)$ such that
\begin{equation*}
\begin{gathered}
\Lambda \Gamma_{R}  - I, \Gamma_{L} \Lambda - I \in \Psi_{\mathrm{d3sc,3co,res},\delta}^{-\infty, -\infty, -\infty, -\infty, -\infty, -\infty}(\Xd), \quad \Gamma_{R} - \Gamma_{L} \in  \Psi_{\mathrm{d3sc,3co,res},\delta}^{-\infty, -\infty, -\infty, -\infty, -\infty, -\infty}(\Xd).
\end{gathered}
\end{equation*}
The construction of $\Gamma_{R}$, $\Gamma_{L}$ relies on an asymptotic summation procedure. To be precise, 
\begin{equation*}
\Gamma_{R} \coloneq \Gamma_{0}( I - E_{R}' ), \quad E_{R}' \sim \sum_{j=1}^{\infty} (-1)^{j} E_{R,0}^{j},
\end{equation*}
which is now computed modulo $\Psf^{-\infty,-\infty,-\infty,-\infty,-\infty,-\infty}(\Xd)$ in the sense that
\begin{equation*}
E_{R}' - \sum_{j=1}^{J-1} (-1)^{j} E_{R,0}^{j} \in \Psf^{- J ( 1  - 2\delta ), - J ( 1  - 2\delta ), - J ( 1  - 2\delta ), - J ( 1  - 2\delta ), - J ( 1  - 2\delta ), - J ( 1  - 2\delta )}(\Xd)
\end{equation*}
for every integer $J > 1$. \par

The existence of $E_R'$ is not covered by either parts of Lemma \ref{operator-valued symbols asymptotic definition} (the detailed proofs of which were also omitted to keep this paper under reasonable length), but is nevertheless standard, and can be proved using a patching argument and the local Frech\'et structures. \par 

A similar argument allows us to construct $\Gamma_{L}$ as well. 
\end{proof}

\subsection{Sobolev spaces}
\label{subsection Sobolev spaces under second microlocalization}
We now define Sobolev spaces in the three-cone as well the further-resolved, second microlocalized setting. In analogy to how Sobolev spaces were defined under second microlocalization in the two-body setting (see \S \ref{b-calculus subsection}), Sobolev spaces in the second microlocalized, three-body setting can be approached from either the three-cone perspective or the three-body perspective, where the main difference between the two perspectives lies in the choice of a scale of $L^2$ spaces. \par

Indeed, recall that from the three-cone perspective, we have already defined $L^{2}_{\mathrm{3co}}(\Xd)$, which are the $L^2$ functions with respect to a fixed strictly positive density $\nu_{\mathrm{3co}}$. However, from the three-body perspective, there is another, more natural notion of $L^2$ functions, namely
\begin{equation*}
L^{2}_{\mathrm{sc}} ( \overline{\mathbb{R}^{n}} ) = L^{2}( \overline{ \mathbb{R}^{n} } , \nu_{\mathrm{sc}}   ),
\end{equation*}
where $\nu_{\mathrm{sc}}$ is a canonical scattering density on $\overline{\mathbb{R}^{n}}$, which will indeed just be taken as the Euclidean density in this case. Thus we will henceforth omit the subscript and write
\begin{equation*}
L^{2} = L^{2}_{\mathrm{sc}}( \overline{\mathbb{R}^{n}} ).
\end{equation*} \par 

We now show that:

\begin{proposition}
\label{proposition boundedness of zero order operators}
If $A \in \Psi^{0,0,0,0,0,0}_{\mathrm{d3sc,3co,res}, \delta}(\Xd)$, then $A$ is a bounded $L^{2}_{\mathrm{3co}}(\Xd) \rightarrow L^{2}_{\mathrm{3co}}(\Xd)$. 
\end{proposition}
\begin{proof}
Let $\{ \tilde{\psi}_{j} \}_{j = 1}^{J}$ be a family of $\mathcal{C}^{\infty}(\Xd)$ such that $\sum_{j=1}^{J} \tilde{\psi}_j = 1$. For every $j = 1, ..., J$, let also $\psi_{j} \in \mathcal{C}^{\infty}(\Xd)$ be chosen with small support such that $\psi_{j} = 1$ on the support of $\psi_{j}$. Then by the usual observation, we can write $A = \sum_{j=1}^{J} B_{\psi_{j}} \tilde{\psi}_{j} + K$, where each $B_{\psi_j}$ is given by a suitable quantization and is supported on $\supp \psi_{j}$, while $K \in \Psi^{-\infty, -\infty, 0, -\infty, 0, -\infty}_{\mathrm{d3sc, 3co,res}, \delta}(\Xd)$. \par 

Now by Schur's lemma, it is easy to see that $K : L^{2}_{\mathrm{3co}} \rightarrow L^{2}_{\mathrm{3co}}$ is bounded. Meanwhile, on the support of each $\psi_{j}$, $B_{\psi_{j}}$ is written as a standard quantization in suitable (spatial) coordinates, concretely one of $(z_{\tindex}, z^{\tindex})$, $( z_{\tindex}, t^{\tindex}, y^{\tindex} )$, $(z_{\tindex}, \hat{t}_{\tindex}, y^{\tindex} )$, $( z_{\tindex}, \hat{z}^{\tindex} )$. \par

 In the same order, $\nu_{\mathrm{3co}}$ can be written in these coordinates as the multiple by some smooth, strictly positive $\mathcal{C}^{\infty}(\Xd)$ function of one of the densities $|dz_{\tindex} dz^{\tindex}|$, $|dz_{\tindex} d t^{\tindex} d y^{\tindex}|$, $|dz_{\tindex}  d \hat{t}_{\tindex} dy^{\tindex} |$ or $|dz_{\tindex} d\hat{z}^{\tindex}|$. It follows that the boundedness of $B_{\psi_{j}} \tilde{\psi_j}$ on $L^{2}_{\mathrm{3co}}(\Xd)$ can be inferred from H\"ormander's square root trick.
\end{proof}

It is striaghtforward to check that $\nu_{\mathrm{3co}}$ differs from $\nu_{\mathrm{sc}}$ by the multiplication of a strictly positive smooth function on $\Xd$. The above lemma thherefore implies the following corollary.

\begin{corollary}
\label{boundedness of order zero second microlocal operators on L^2}
If $A \in \Psi^{0,0,0,0,0,0}_{\mathrm{d3sc,3co,res}, \delta}(\Xd)$, then $A$ is a bounded $L^{2} \rightarrow L^{2}$. 
\end{corollary}

\begin{remark}
Since $\Psi_{\mathrm{3co}}^{0,0,0,0}(\Xd)$, $\Psi_{\mathrm{d3sc,3co,res}}^{0,0,0,0,0,0}(\Xd)$ are both contained in $\Psf^{0,0,0,0,0}(\Xd)$, the results of Proposition \ref{proposition boundedness of zero order operators} and Corollary \ref{boundedness of order zero second microlocal operators on L^2} are valid for these classes of operators as well.
\end{remark}

The Sobolev spaces we consider here will be defined with respect to $L^{2}$. This reflects the fact that we are really studying the \emph{scattering problem} under a second microlocalization, and the three-cone structure is but a convenient tool for us to understand the mechanism of this second microlocalziation.

Let first $m \geq 0$. Then we will define
\begin{equation} \label{Sobolev space with respect to positive m}
H^{m, 0 , 0, 0 }_{\mathrm{3co}}( \Xd ) \coloneq \{ u \in L^{2}  : \Lambda u \in L^{2} \},
\end{equation}
where $\Lambda \in \Psi^{m,0,0, 0}_{\mathrm{3coc}}(\Xd)$ is a fixed operator which is elliptic in the regularity sense (meaning that $\mathrm{Ell}_{\sigma}(\Lambda)$ at least contains ${^{\mathrm{3co}}S^{\ast}}\Xd$). Here (\ref{Sobolev space with respect to positive m}) is endowed with the norm
\begin{equation*}
\| u \|_{H^{m,0,0, 0}_{\mathrm{3co}}}^2 :  = \| u \|_{L^{2}}^2 + \| \Lambda u \|_{L^{2}}^2.
\end{equation*}
In the cases of $m \leq 0$, we define the Sobolev spaces by duality in $L^2$, i.e.,
\begin{equation*}
H^{m,0,0,0}_{\mathrm{3co}}( \Xd ) \coloneq \big( H^{-m,0,0,0}_{\mathrm{3co}} ( \Xd ) \big)^{\ast}.
\end{equation*} \par

More generally, for $r,l,b \in \mathbb{R}$, we will define the weighted spaces
\begin{equation} \label{definition of general Sobolev space 3co}
H^{m,r,l,b}_{\mathrm{3co}}( \Xd ) \coloneq x_{\dmf}^{r} x_{\dff}^{l} x_{\cf}^{b} H^{m,0,0,0}_{\mathrm{3co}}(\Xd),
\end{equation}
where $x_{\dmf}, x_{\dff}, x_{\cf} \in \mathcal{C}^{\infty}(\Xd)$ are defining function for $\dmf$, $\dff$ and $\cf$ respectively. The natural norm on (\ref{definition of general Sobolev space 3co}) is then given by
\begin{equation*}
\| u \|_{ H^{m,r,l,b}_{\mathrm{3co}} } \coloneq \| x_{\dmf}^{-r} x_{\dff}^{-l} x_{\cf}^{-b} u \|_{H^{m,0,0,0}_{\mathrm{3co}}}.
\end{equation*} \par

An obvious (and indeed standard) observation is that if
\begin{equation*}
m' \leq m, \ r' \leq r, \ l' \leq l, \ b' \leq b,
\end{equation*}
then we have
\begin{equation}
\label{three-cone embedding}
H_{\mathrm{3co}}^{m, r, l, b}(X) \subset H_{\mathrm{3co}}^{m',r',l',b'}(\Xd)
\end{equation}
with continuous inclusions.

Moreover, it is a standard consequence of Corollary \ref{boundedness of order zero second microlocal operators on L^2} and an elliptic regularity argument that we have $H^{0,0,0,0}_{\mathrm{3co}}( \Xd ) = L^{2}_{\mathrm{3co}}(\Xd)$. In fact, the choice of the operator $\Lambda$ in (\ref{Sobolev space with respect to positive m}) is not crucial so long as $\Lambda$ is elliptic in the regularity sense. In other words, the above definition would also work for any other $\Lambda' \in \Psi^{m,0,0,0}_{\mathrm{3coc}}(\Xd)$ that is elliptic in the regularity sense. The norms defined with respect to $\Lambda$ and $\Lambda'$ are then equivalent. It should be obvious that (\ref{definition of general Sobolev space 3co}) is independent of the choices of $x_{\dmf}$, $x_{\dff}$, $x_{\cf}$ as well. Lastly, $H_{\mathrm{3co}}^{m,r,l,b}(\Xd)$ is complete.
\par

The freedom of choosing $A$ also allows us characterize $H^{m,0,0,0}_{\mathrm{3co}}(\Xd)$ in terms of local charts: Let $\{ {\psi}_{j} \}_{j=1}^{J}$ be a smooth partition of unity on $X$. Then we have
\begin{equation}
\label{local characterization for 3co Sobolev spaces}
H^{m,0,0,0}_{\mathrm{3co}} = \{ u \in \mathcal{S}' : \psi_j u \in H^{m}( \mathbb{R}^{n}_{w_j} )  \}, \quad \| u \|_{H^{m,0,0,0}_{\mathrm{3co}}}^2 \simeq \sum_{j=1}^{J} \| {\psi}_{j} u \|_{H^{m}(\mathbb{R}^{n}_{w_j})}^2,
\end{equation}
where for each $j =1 , ..., J$, $H^{m}(\mathbb{R}^{n}_{w_j})$ is the standard Sobolev space on $\mathbb{R}^{n}_{w_j}$ with
\begin{equation*}
\| {\psi}_j u \|_{H^{m}(\mathbb{R}^{n}_{w_j})} \coloneq \| \langle D_{w_j} \rangle^{m} {\psi}_{j} u \|_{L^2(\mathbb{R}^{n}_{w_j})}.
\end{equation*}
Here the coordinates $w_j$ could take any one of the forms $z$, $(z_{\tindex}, t^{\tindex}, y^{\tindex})$, $(z_{\tindex}, \hat{t}_{\ff}, y^{\tindex})$, $(z_{\tindex}, \hat{z}^{\tindex})$, depending on where ${\psi}_j$ is supported.   \par

We next consider the effects of second microlocalization and the resolution introduced in \S \ref{a further resolution at fiber infinity}.  Assume first that $m,r,\nu,s \in \mathbb{R}$, and let $M \in \mathbb{R}$ be such that 
\begin{equation*}
M \leq \min \{ m, \nu, s \}.
\end{equation*}
Then we will define
\begin{equation*} 
\begin{gathered}
H^{m, 0, 0 , \nu, 0 ,s}_{\mathrm{d3sc,3co,res}}( \Xd ) \coloneq \{ u \in H^{ M , 0 , 0 , 0 }_{\mathrm{3co}}( \Xd ) : \Lambda u \in L^{2} \}, \\
\| u \|_{H^{m,0,0 ,\nu,0,s}_{\mathrm{d3sc,3co,res}}}^2 \coloneq \| u \|_{ H^{M,0,0,0}_{\mathrm{3co}} }^2 + \| \Lambda u \|_{L^2}^2
\end{gathered}
\end{equation*}
for some fixed $\Lambda \in \Psi^{m,0,0,\nu,0,s}_{\mathrm{d3sc,3co,res}}( \Xd )$ which is elliptic at $\dtsccf$, $\rf$ and the fiber infinity (again, in the sense that $\mathrm{Ell}_{\sigma}(\Lambda)$ at least contain these faces). \par

For general $l,b \in \mathbb{R}$, we will define the weighted spaces
\begin{equation} 
\label{constant order d3sc b res Sobolev spaces full}
H^{m,r,l,\nu,b,s}_{\mathrm{d3sc,3co,res}} ( \Xd ) = x_{\dmf}^{r} x_{\dff}^{l} x_{\cf}^{b} H_{\mathrm{d3sc,3co,res}}^{m, 0, 0, \nu - b, 0, s - l}(\Xd)
\end{equation}
with the norm
\begin{equation*}
\| u \|_{H_{\mathrm{d3sc,3co,res}}^{m,r,l,\nu,b,s}} \coloneq \| x
_{\dmf}^{-r} x_{\dff}^{-l} x_{\cf}^{-b} u \|_{H_{\mathrm{d3sc,3co,res}}^{m,0,0,\nu-b,0,s-l}}.
\end{equation*}
Here $x_{\dmf}$, $x_{\dff}$ and $x_{\cf}$ are defining functions as before. The weighting properties on the right hand side of (\ref{constant order d3sc b res Sobolev spaces full}) are so stated because we can arrange for
\begin{equation}
\label{Sobolev spaces subsection weights compare}
x_{\dff} \simeq \rho_{\dff} \rho_{\rf}, \ 
x_{\cf} \simeq \rho_{\mathrm{3cocf}_{\tindex}} \rho_{\mathrm{d3sccf}_{\tindex}}
\end{equation}
as smooth functions on $\psf \Xd$, where $\rho_{\dff}$, $\rho_{\rf}$, $\rho_{\mathrm{3cocf}_{\tindex}}$, $\rho_{\mathrm{d3sccf}_{\tindex}}$ are global defining functions for $\overline{ ^{\mathrm{d3sc,3co,res}}T^{\ast} }_{\dff} \Xd$, $\rf$, $ \mathrm{3cocf}_{\tindex}$ and $\dtsccf$ respectively. \par

Furthermore, in the presence of variable orders $\vom, \vor, \vov, \vos \in \mathcal{C}^{\infty}( \psf \Xd )$, we will first choose $M ,N,K, S \in \mathbb{R}$ such that 
\begin{equation*} 
\label{lower bounds for the admissible variable orders}
\mathsf{m} \geq M, \ \mathsf{r} \geq N, \ \mathsf{v} \geq K, \ \mathsf{s} \geq S.
\end{equation*}
Then for some fixed $\Lambda \in \Psi^{ \mathsf{m}, \mathsf{r}, 0, \mathsf{v}, 0 , \mathsf{s}}_{ \dtsccb, \delta}( \Xd )$ which is fully elliptic in the symbolic sense (i.e., $\mathrm{Ell}_{\sigma}(\Lambda)$ is the full set (\ref{phase space of the symbols})), we will define
\begin{equation*}
\begin{gathered}
H_{\mathrm{d3sc,3co,res}}^{\vom, \vor, 0, \vov, 0, \vos}(\Xd) \coloneq \{ u \in H_{\mathrm{d3sc,3co,res}}^{M,N,0,K,0,S}(\Xd) : \Lambda u \in L^2 \}, \\
\| u \|_{H_{\mathrm{d3sc,3co,res}}^{ \mathsf{m}, \mathsf{r}, 0 , \mathsf{v}, 0 , \mathsf{s} }}^2 \coloneq \| u \|_{H^{M,N, 0 ,K, 0 ,S}_{\mathrm{d3sc,3co,res}}}^2 + \| \Lambda u \|_{ L^{2}}^2.
\end{gathered}
\end{equation*} \par

Finally, in the presence of general $\vol, \vob \in \mathcal{C}^{\infty}( \mathcal{C}_{\tindex} \times \overline{\mathbb{R}^{n_{\tindex}}} )$, we will first let
\begin{equation*} \label{definition of the Sobolev space Lambda operator}
\tilde{\Lambda} \in \Psi^{0,0,  \vol, \vob, \vob , \vol }_{\dtsccb, \delta}(\Xd) 
\end{equation*}
be a fixed quantization of $x_{\dff}^{-\mathsf{l}} x_{\cf}^{-\mathsf{b}}$. Then for some $N,M,L,K,F,S \in \mathbb{R}$ such that
\begin{equation}
\label{conditions on the indices d3sc 3co res spaces}
M \geq \vom, \  N \geq r, \ \vol \geq L, \ \vov \geq K, \ \vob \geq F, \ \vos \geq S, 
\end{equation}
we will define
\begin{equation*}
\begin{gathered}
H^{\mathsf{m}, \mathsf{r}, \mathsf{l}, \mathsf{v}, \mathsf{b}, \mathsf{s}}_{\mathrm{d3sc,3co,res}}( \Xd ) \coloneq \{  u \in H_{\mathrm{d3sc,3co,res}}^{M,N,L,K, F,S}(\Xd) : \tilde{\Lambda} u \in H^{ \mathsf{m}, \mathsf{r}, 0, \mathsf{v} - \vob, 0, \mathsf{s} - \vol }_{\mathrm{d3sc,3co,res}}( \Xd ) \}, \\
\| u \|_{ H_{\mathrm{d3sc,3co,res}}^{ \vom, \vor, \vol, \vov, \vob, \vos } }^2 \coloneq \| u \|^2_{H_{\mathrm{d3sc,3co,res}}^{M,N,L,K,F,S}} + \| \tilde{\Lambda} u \|_{H_{\mathrm{d3sc,3co,res}}^{\vom, \vor, 0, \vov - \vob, 0, \vos - \vol} }^2.
\end{gathered}
\end{equation*}
We also record here that $H_{\mathrm{d3sc,3co,res}}^{\vom, \vor, \vol, \vov, \vob, \vos}(\Xd)$ is complete, the proof of which will be omitted.

We now consider boundedness of the further-resolved, second microlocalized operators on the various Sobolev spaces defined above, as well as some related consequences. \par

Due to the convoluted ways in which the above Sobolev spaces are defined, the proofs of boundedness on these spaces are considerably more involved than in the standard setting{\ep}and so are many other expected resulted related to the Sobolev spaces in this context. To keep the length of this paper under control, we will invest much efforts into the discussion of boundedness, but leave a few other properties to the readers. \par

We first consider the case with constant orders.

\begin{proposition}
\label{first proposition boundedness on Sobolev spaces}
Let $m,m',r,r',l,l' , b', b \in \mathbb{R}$. 
\begin{enumerate}
\item If $A \in \Psf^{m,r,l,m,b,m}(\Xd)$, then the map
\begin{equation}
\label{Sobolev boundedness 1}
A: H_{\mathrm{3co}}^{m',r',l',b'}(\Xd) \rightarrow H_{\mathrm{3co}}^{m' - m, r' - r, l' - l, s' - s}(\Xd).
\end{equation}
is bounded.
\item The specific choices of the elliptic operator $\Lambda \in \Psi_{\mathrm{d3sc,3co,res}}^{m,0,0,\nu,0,s}(\Xd)$ and $M \in \mathbb{R}$ used in the definition of $H_{\mathrm{d3sc,3co,res}}^{m,0,0,\nu,0,s}(\Xd)$ are not crucial{\ep}different choices give rises to the same space and equivalent norms.
\end{enumerate}
\end{proposition}
\begin{proof}
Part (1) follows from the local characterization (\ref{local characterization for 3co Sobolev spaces}) if $A \in \Psi_{\mathrm{3co}}^{m,0,0,0}(\Xd)$, but the proof can be made valid even if $A \in \Psf^{m,0,0,m,0,m}(\Xd)$. We omit the details here. \par

Part (2) follows from Corollary \ref{boundedness of order zero second microlocal operators on L^2} and the boundedness of (\ref{Sobolev boundedness 1}). To see this, assume that $\Lambda'$ and $M'$ are different choices. Let 
\begin{equation*}
\Gamma' \in \Psf^{-m,0,0,-\nu,0,-s}(\Xd) \subset \Psf^{-\min\{ m, \nu, s \}, 0 ,0, -\min\{ m, \nu, s \}, 0, -\min\{ m, \nu, s \}}(\Xd)
\end{equation*}
be a parametrix for $\Lambda'$ in the regularity sense that $I - \Gamma' \Lambda' \in \Psf^{-\infty, 0,0,-\infty,0,-\infty}(\Xd)$. Then
\begin{align*}
\| u \|_{H_{\mathrm{3co}}^{M,0,0,0}}^2 + \| \Lambda u \|_{L^2}^2 \leq {} & \| \Gamma' \Lambda' u \|_{H_{\mathrm{3co}}^{M,0,0,0}}^2 + \| \Lambda \Gamma' \Lambda' u \|_{L^{2}}^2 \\
& + \| ( I - \Gamma' \Lambda' ) u \|_{H_{\mathrm{3co}}^{M,0,0,0}}^2 + \| \Lambda ( I - \Gamma' \Lambda' ) u \|_{L^{2}}^2 \\
\leq {} & C ( \| \Lambda' u \|_{H_{\mathrm{3co}}^{M - \min \{ m, \nu, s \},0 ,0 ,0}}^2 + \| \Lambda' u \|_{L^{2}}^2 + \| u \|_{H_{\mathrm{3co}}^{M',0,0,0}}^2 )\\
\leq {} & C ( \| \Lambda' u \|_{L^2}^2 + \| u \|_{H_{\mathrm{3co}}^{M',0,0,0}}^2 ).
\end{align*}
This calculation also works in the opposite direction, which proves that the norms defined with respect to $\Lambda$, $M$ and $\Lambda'$, $M'$ are equivalent. 
\end{proof}

\begin{remark}
Though not yet phrased as such, the argument used in the proof of Proposition \ref{first proposition boundedness on Sobolev spaces}, part (2) is essentially elliptic regularity. However, we will delay the formal statements for microlocal elliptic regularity estimates until Proposition \ref{proposition microlocal elliptic regularity estimate chapter 1 thesis} below. 
\end{remark}

Next, we will show that

\begin{proposition}
\label{second proposition boundedness on Sobolev spaces}
Let $m,m',r,r',l,l', \nu, \nu' , b, b', s,s' \in \mathbb{R}$. 
\begin{enumerate}
\item If $A \in \Psf^{m,r,l,\nu,b,s}(\Xd)$, then the map
\begin{equation}
\label{Sobolev boundedness 1.5}
A : H_{\mathrm{d3sc,3co,res}}^{m',r',l',\nu',b',s'}(\Xd) \rightarrow H_{\mathrm{d3sc,3co,res}}^{m' - m, r' - r, l' - l, \nu' - \nu, b'-b, s'-s}(\Xd).
\end{equation} 
is bounded.
\item The specific choices of the elliptic operator $\Lambda \in \Psf^{\vom, \vor, 0, \vov, 0, \vos}(\Xd)$ and $M,N, K, S \in \mathbb{R}$ used in the definition of $H_{\mathrm{d3sc,3co,res}}^{\vom, \vor, 0 , \vov, 0, \vos}(\Xd)$ are not crucial{\ep}different choices give rises to the same space and equivalent norms.

\end{enumerate}
\end{proposition}

In order to prove part (2) of Proposition \ref{second proposition boundedness on Sobolev spaces}, we also need to show that:

\begin{lemma}
\label{lemma small inclusion Sobolev}
Let $\vom, \vor, \vov, \vos \in \mathcal{C}^{\infty}(\psf \Xd)$ and $\vom, \vor, \vov, \vos \leq 0$. Then we have 
\begin{equation*}
\text{$L^{2} \subset H_{\mathrm{d3sc,3co,res}}^{\vom, \vor, 0, \vov, 0, \vos }(\Xd)$ with continuous inclusion.}
\end{equation*}
\end{lemma}
\begin{proof}[Proof of the lemma]
Consider first the constant orders case. Thus assume that $m,r,\nu,s \leq 0$. Then by (\ref{three-cone embedding}) and Corollary \ref{boundedness of order zero second microlocal operators on L^2}, we can deduce that 
\begin{align*}
\| u \|_{H_{\mathrm{d3sc,3co,res}}^{m,r,0,\nu,0,s}}^2 & = \| x_{\dmf}^{-r} u \|_{H_{\mathrm{d3sc,3co,res}}^{m,0,0,\nu,0,s}}^2 \\
& \simeq \| u \|_{H_{\mathrm{3co}}^{M,r,0,0}}^2 + \| \Lambda_0 x_{\dmf}^{-r}  u  \|_{L^{2}}^2 \leq C \| u \|_{L^{2}}^2,
\end{align*}
where $M \leq \vom \leq 0$, $\Lambda_0 \in \Psf^{m, 0, 0, \nu  ,0 ,s }(\Xd)$, and thus
\begin{equation*}
\Lambda_0 x_{\dmf}^{-r}  \in \Psf^{m,r,0,\nu,0,s}(\Xd) \subset \Psf^{0,0,0,0,0,0}(\Xd)
\end{equation*} 
by assumption. \par

In the general, variable orders case, we have 
\begin{align*}
\| u \|_{H_{\mathrm{d3sc,3co,res}}^{\vom, \vor, 0, \vov, 0, \vos}}^2 & = \| u \|_{H_{\mathrm{d3sc,3co,res}}^{M,N,0,K,0,S}}^2 + \| \Lambda u \|_{L^2}^{2} \leq C \| u \|_{L^{2}}^2
\end{align*}
where $M, N, K,S \leq 0$, $\Lambda \in \Psf^{\vom, \vor, 0, \vov, 0, \vos}(X) \subset \Psf^{0,0,0,0,0,0}(\Xd)$ by assumptions. Thus, the above estimate follows from the constant orders case and Corollary \ref{boundedness of order zero second microlocal operators on L^2} once again.
\end{proof}

\begin{proof}[Proof of Proposition \ref{second proposition boundedness on Sobolev spaces}]
To prove part (1), we will first show that
\begin{equation}
\label{Sobolev boundedness constant order d3sc, 3co,res}
A_0 : H_{\mathrm{d3sc,3co,res}}^{m', 0, 0, \nu', 0, s'}(\Xd) \rightarrow H_{\mathrm{d3sc,3co,res}}^{m' - m, 0, 0, \nu' - \nu, 0, s' - s }(\Xd)
\end{equation}
is bounded for all $A_0 \in \Psi_{\mathrm{d3sc,3co,res},\delta}^{m,0,0,\nu,0,s}(\Xd)$. Assuming this is established, then the boundedness of (\ref{Sobolev boundedness 1.5}) follows directly from the observation that
\begin{equation*}
x_{\dmf}^{r'-r} x_{\dff}^{l'-l} x_{\cf}^{b'-b} A  x_{\dmf}^{r'} x_{\dff}^{l'} x_{\cf}^{b'} \in \Psi_{\mathrm{d3sc,3co,res},\delta}^{m,r, 0 , \nu - b, 0, s - l }(\Xd)
\end{equation*}
for all $A \in \Psf^{m,r,l,\mu,b,s}(\Xd)$. \par

To this end, note that by Proposition \ref{second proposition boundedness on Sobolev spaces}, part (2), we know that
\begin{equation*}
\| A_0 u \|_{H_{\mathrm{d3sc,3co,res}}^{m'-m, 0, 0, \nu' - \nu, 0, s'-s}}^2 \simeq \| A_0 u  \|_{H_{\mathrm{3co}}^{M,0,0,0}}^2 + \| \Lambda A_0 u \|_{L^{2}}^2,
\end{equation*}
where $\Lambda \in \Psi_{\mathrm{d3sc,3co,res}}^{m'-m, 0, 0, \nu' - \nu, 0, s'-s}(\Xd)$ is any operator that is elliptic at $\dtsccf$, $\rf$ and the fiber infinity, while $M$ is any constant which satisfies $M \leq \min \{ m' - m, \nu' - \nu, s' - s \}$.

Let $\mu \coloneq \max \{ m, \nu, s \}$. Then we have $A_0 \in \Psf^{\mu,0,0, \mu ,0, \mu }(\Xd)$. Moreover, for every $M' \leq \min \{ m', \nu', s' \}$, we can find some sufficiently negative $M$ such that $M + \mu \leq M'$. Thus by Proposition \ref{first proposition boundedness on Sobolev spaces}, part (1), we can estimate
\begin{equation}
\label{sobolev mapping estiamte 1}
\| A_0 u \|_{H_{\mathrm{3co}}^{M,0,0,0}}^2 \leq C \| u \|_{H_{\mathrm{3co}}^{M + \mu ,0,0,0}}^2 \leq C \| u \|_{H_{\mathrm{3co}}^{M',0,0,0}}^2 \leq C \| u \|_{H_{\mathrm{d3sc,3co,res}}^{m', 0,0,\nu',0,s'}}^2. 
\end{equation}
On the other hand, let $\Lambda' \in \Psf^{m', 0,0,\nu',0,s'}(\Xd)$ be elliptic at $\dtsccf$, $\rf$ and the fiber infinity, and let $\Gamma'$ be a parametrix of $\Lambda'$ such that
\begin{equation*}
\Lambda A_0 \Gamma' \in \Psf^{0,0,0,0,0,0}(\Xd), \quad \Lambda A_0 ( I - \Gamma' \Lambda' ) \in \Psf^{-\infty, 0, 0, -\infty,0 ,-\infty}(\Xd).
\end{equation*}
Then by Corollary \ref{boundedness of order zero second microlocal operators on L^2} and Proposition \ref{first proposition boundedness on Sobolev spaces}, part (1) once again, we have
\begin{align*}
\| \Lambda A_0 \|_{L^2}^2  & \leq \| \Lambda A_0 \Gamma' \Lambda' u \|_{L^2}^2 + \| \Lambda A_0 ( I - \Gamma' \Lambda' ) u \|_{L^2}^2  \\
& \leq C ( \| \Lambda' u \|_{L^2}^2  + \| u \|_{H^{M',0,0,0}_{\mathrm{3co}}}^2 ) \simeq \| u \|_{H^{m',0,0,\nu',0,s'}_{\mathrm{d3sc,3co,res}}}^2.
\end{align*}
Thus the boundedness of (\ref{Sobolev boundedness constant order d3sc, 3co,res}) is proved.  \par

For part (2), we can follow the argument used in the proof of Proposition \ref{first proposition boundedness on Sobolev spaces}, part (1). Suppose that $\Lambda \in \Psf^{\vom, \vor, 0, \vov, 0, \vos}(\Xd)$ and $M,N,K,S$ are used to define the norm of $H_{\mathrm{d3sc,3co,res}}^{\vom, \vor, 0, \vov, 0, \vos}(\Xd)$, and that $\Lambda'$ and $M',N',K',S' \in \mathbb{R}$ are different choices. Moreover, suppose that we write 
\begin{equation*}
m_0 \coloneq \min \vom, \ r_0 \coloneq \min \vor, \ \nu_0 \coloneq \min \vov, \ s_0 \coloneq \min \vos.
\end{equation*}
Then by an ellipticity argument (similar to the proof of Proposition \ref{first proposition boundedness on Sobolev spaces}, part (2)), we have
\begin{align*}
\| u \|_{H_{\mathrm{3co}}^{M,N,0,K,0,S}}^2 + \| \Lambda u \|_{L^2}^2 & \leq C ( \| \Lambda' u \|_{H_{\mathrm{d3sc,3co,res}}^{M - m_0, N - r_0, 0, K - \nu_0 0, S - s_0}}^2 + \| u \|_{H^{M',N',0,K',0,S'}_{\mathrm{d3sc,3co,res}}}^2 + \| \Lambda' u \|_{L^2}^2  ) \\
& \leq C ( \|  \Lambda' u \|_{L^{2}}^2 + \| u \|_{H_{\mathrm{d3sc,3co,res}}^{M',N',0,K',0,S'}}^2 ),
\end{align*}
where we have used that $M - \vom \leq 0$, $N - \vor \leq 0$, $K - \vov \leq 0$, $S - \vos \leq 0$ and Lemma \ref{lemma small inclusion Sobolev}. 
\end{proof}

Finally, we will consider boundedness in the presence of variable orders. We first consider the case when $\vol = \vob = 0$.

\begin{corollary}
\label{second corollary boundedness on Sobolev spaces}
If $\vom, \vom', \vor, \vor',  \vov, \vov', \vos, \vos' \in \mathcal{C}^{\infty}( \psf \Xd )$ and $A \in \Psf^{\vom, \vor, 0, \vov, 0, \vos}(\Xd)$, then
\begin{equation*}
A:  H_{\mathrm{d3sc,3co,res}}^{\vom', \vor', 0, \vov', 0, \vos'}(\Xd) \rightarrow H_{\mathrm{d3sc,3co,res}}^{\vom' - \vom, \vor' - \vor, 0, \vov' - \vov, 0, \vos' - \vos}(\Xd)
\end{equation*}
is a bounded linear map.
\end{corollary}
\begin{proof}
The proof of this corollary will be omitted since it is similar to the argument used in the proof of Proposition \ref{second proposition boundedness on Sobolev spaces}, part (1).
\end{proof}

By using Proposition \ref{second proposition boundedness on Sobolev spaces} and the above corollary, we can also make the following improvements to Lemma \ref{lemma small inclusion Sobolev}:

\begin{lemma}
\label{lemma big inclusion Sobolev}
Let $\vom, \vor,  \vov, \vos \in \mathcal{C}^{\infty}( \psf \Xd )$, $\vol, \vob \in \mathcal{C}^{\infty}(\mathcal{C}_{\tindex} \times \overline{\mathbb{R}^{n_{\tindex}}})$.
\begin{enumerate}
\item If $\vom, \vor, \vol, \vov, \vob, \vos \leq 0$, then we have
\begin{equation*}
\text{$L^2 \subset H_{\mathrm{d3sc,3co,res}}^{\vom, \vor, \vol, \vov, \vob, \vos}(\Xd)$ with continuous inclusion.}
\end{equation*}
\item If $\vob, \vol \leq 0$, then we have
\begin{equation*}
\text{$H_{\mathrm{d3sc,3co,res}}^{\vom, \vor, 0 , \vov, 0 , \vos} (\Xd) \subset H_{\mathrm{d3sc,3co,res}}^{\vom, \vor, \vol, \vov, \vob, \vos}(\Xd)$ with continuous inclusion.}
\end{equation*}
\end{enumerate}
\end{lemma}
\begin{proof}
We first prove part (1). The case of constant orders is an easy consequence of Lemma \ref{lemma small inclusion Sobolev}. In the variable orders case, note that by using Proposition \ref{second proposition boundedness on Sobolev spaces}, part (2), we have
\begin{align*}
\| u \|_{H_{\mathrm{d3sc,3co,res}}^{\vom, \vor, \vol, \vov, \vob ,\vos}}^2  & = \| u \|_{H_{\mathrm{d3sc,3co,res}}^{M,N,L,K,F,S}}^2 + \| \tilde{\Lambda} u \|_{H_{\mathrm{d3sc,3co,res}}^{\vom, \vor, 0, \vov - \vob, 0, \vos - \vol}}^2  \\
& \simeq \| u \|_{H_{\mathrm{d3sc,3co,res}}^{M,N,L,K,F,S}}^2 + \| \tilde{\Lambda} u \|_{H_{\mathrm{d3sc,3co,res}}^{\tilde{M}, \tilde{N},0, \tilde{K}, 0, \tilde{S} }} + \| \Lambda \tilde{\Lambda} u \|_{L^2}^2,
\end{align*}
where $M,N,K,K,F,S \leq 0$ must for now be fixed, but $\tilde{M}, \tilde{N}, \tilde{K}, \tilde{S} < 0$ can be taken arbitrarily negative. Moreover, we must have
\begin{equation*}
\tilde{\Lambda} \in \Psf^{0, 0, \vol, \vob, \vob , \vol}(\Xd) \subset \Psf^{0,0,0,0,0,0}(\Xd), \quad \Lambda \tilde{\Lambda} \in \Psf^{\vom, \vor, \vol, \vov, \vob, \vos}(\Xd) \subset \Psf^{0,0,0,0,0,0}(\Xd).
\end{equation*}
Thus the required claim follows from Corollary \ref{boundedness of order zero second microlocal operators on L^2} and the result in the constant orders case.  \par

For part (2), we first write
\begin{equation*}
\| u \|_{H_{\mathrm{d3sc,3co,res}}^{\vom, \vor, \vol, \vov, \vob, \vos}}^2 = \| x_{\dff}^{-L} x_{\cf}^{-K} u \|_{H_{\mathrm{d3sc,3co,res}}^{M,N,0,K - F, 0 ,S - L}}^2 + \| \tilde{\Lambda} u \|_{H_{\mathrm{d3sc,3co,res}}^{\vom, \vor, 0, \vov - \vob, 0, \vos - \vol}}^2,
\end{equation*}
where $M,N,L,K,F,S \leq 0$ are again fixed. Since $L \leq \vol \leq 0$, $F \leq \vob \leq 0$, we must have
\begin{equation*}
\tilde{\Lambda} \in \Psf^{0, 0, \vol, \vob, \vob, \vol}(\Xd) \subset \Psf^{0,0,0,\vob, 0, \vol}(\Xd),  \quad  x_{\dff}^{-L} x_{\cf}^{-K} \in \Psi_{\mathrm{d3sc,3co,res}}^{0,0,L,F,F,L}(\Xd) \subset \Psf^{0,0,0,F, 0, L}(\Xd).
\end{equation*}
Thus by Corollary \ref{second corollary boundedness on Sobolev spaces} and Proposition \ref{second proposition boundedness on Sobolev spaces}, part (2), we have
\begin{equation*}
\| u \|_{H_{\mathrm{d3sc,3co,res}}^{\vom, \vor, \vol, \vov, \vob, \vos}}^2 \leq C ( \| u \|_{H_{\mathrm{d3sc,3co,res}}^{M,N,0,K,0,S}}^2 + \| u \|_{H_{\mathrm{d3sc,3co,res}}^{\vom, \vor, 0, \vov, 0, \vos}}^2 ) \leq C \| u \|_{H_{\mathrm{d3sc,3co,res}}^{\vom, \vor, 0, \vov, 0, \vos}}^2
\end{equation*}
as required.
\end{proof}

A consequence of Lemma \ref{lemma big inclusion Sobolev} is the following improvement of Proposition \ref{second proposition boundedness on Sobolev spaces}, part (2):

\begin{corollary}
\label{big big independence of things sobolev corollary}
The specific choices of the quantization $\Lambda \in \Psf^{0,0, \vol , \vob, \vob,  \vol}(\Xd)$ of $x_{\dff}^{-\vol} x_{\cf}^{-\vob}$ and $M,N, L, K, F, S \in \mathbb{R}$ used in the definition of $H_{\mathrm{d3sc,3co,res}}^{\vom, \vor, \vol , \vov, \vob, \vos}(\Xd)$ are not crucial{\ep}different choices give rises to the same space and equivalent norms.
\end{corollary}

\begin{proof}
First, recall that by assumption, we have
\begin{equation*}
\| u \|_{H_{\mathrm{d3sc,3co,res}}^{\vom, \vor, \vol, \vov, \vob, \vos}}^2 = \| u \|_{H_{\mathrm{d3sc,3co,res}}^{M,N,L,K,F,S}}^2 + \| \tilde{\Lambda} u \|_{H_{\mathrm{d3sc,3co,res}}^{\vom, \vor, 0, \vov - \vob, 0, \vos - \vol}}^2.
\end{equation*} \par 

Next, let $l_0 \coloneq \min \vol$, $b_0 \coloneq \min \vob$. Suppose that $\tilde{\Lambda}' \in \Psf^{0,0,\vol,\vob,\vob,\vol}(\Xd)$ is another quantization of $x_{\dff}^{-\vol} x_{\cf}^{-\vob}$, and that $M',N',K',F',S' \in \mathbb{R}$ satisfy condition (\ref{conditions on the indices d3sc 3co res spaces}) as well. Let 
\begin{equation*}
\tilde{\Gamma}' \in \Psf^{0,0,-\vol, -\vob, -\vob, -\vol}(\Xd) \subset \Psf^{0,0, - l_0, -\vob, -b_0, -\vol}(\Xd)
\end{equation*}
be an elliptic parametrix of $\tilde{\Lambda}'$ in the sense of Lemma \ref{special lemma elliptic parametrix}. Then by Proposition \ref{second proposition boundedness on Sobolev spaces}, part (1), Lemma \ref{lemma big inclusion Sobolev}, part (2), and an elliptic regularity argument, we have
\begin{align*}
\| u \|_{H_{\mathrm{d3sc,3co,res}}^{M,N,L,K,F,S}}^2 \leq {} & C ( \| \tilde{\Lambda}' u \|_{H_{\mathrm{d3sc,3co,res}}^{M, N, L - l_0, K - \vob, F - b_0, S - \vol}}^2 + \| u \|_{H_{\mathrm{d3sc,3co,res}}^{M',N',L',K',F',S'}}^2 ) \\
\leq {} & C ( \| \tilde{\Lambda}' u \|_{H_{\mathrm{d3sc,3co,res}}^{M, N, 0, K - \vob, 0, S - \vol}}^2 + \| u \|_{H_{\mathrm{d3sc,3co,res}}^{M',N',L',K',F',S'}}^2 ).
\end{align*} \par

Let $\Lambda \in \Psf^{\vom, \vor, 0,  \vov - \vob, 0, \vos - \vol}(\Xd)$ be elliptic in the symbolic sense. Then by Lemma \ref{lemma small inclusion Sobolev}, we have
\begin{align*}
\| \tilde{\Lambda}' u \|_{H_{\mathrm{d3sc,3co,res}}^{M,N,0,K - \vob, 0, S - \vol}}^2 \leq {} & C ( \| \Lambda \tilde{\Lambda}' u \|_{H_{\mathrm{d3sc,3co,res}}^{M - \vom, N - \vor, 0, K - \vov, 0 , S - \vos }}^2  + \| \tilde{\Lambda}' u \|_{H_{\mathrm{d3sc,3co,res}}^{\vom, \vor, 0, \vov - \vob, 0, \vos - \vol}}^2 ) \\
\leq {} & C ( \| \Lambda \tilde{\Lambda}' u \|_{L^2}^2 + \| \tilde{\Lambda}' u \|_{H_{\mathrm{d3sc,3co,res}}^{\vom, \vor, 0, \vov - \vob, 0, \vos - \vol}} ) \leq C \| \tilde{\Lambda}' u \|_{H_{\mathrm{d3sc,3co,res}}^{\vom, \vor, 0, \vov - \vob, 0, \vos - \vol}}^2. 
\end{align*}
On the other hand, it is easy to see that
\begin{equation*}
\| \tilde{\Lambda} u  \|_{H_{\mathrm{d3sc,3co,res}}^{\vom, \vor, 0, \vov - \vob, 0, \vos - \vol}}^2 \leq C ( \| \tilde{\Lambda}' u \|_{H_{\mathrm{d3sc,3co,res}}^{\vom, \vor, 0, \vov - \vob, 0, \vos - \vol}}^2 + \| u \|_{H_{\mathrm{d3sc,3co,res}}^{M',N', L', K', F',S'}}^2 ).
\end{equation*}
Thus overall, we have proved that
\begin{equation*}
\| u \|_{H_{\mathrm{d3sc,3co,res}}^{M,N,L,K,F,S}}^2 + \| \tilde{\Lambda} u \|_{H_{\mathrm{d3sc,3co,res}}^{\vom, \vor, 0, \vov - \vob, 0, \vos - \vol}}^2 \leq C ( \| u \|_{H_{\mathrm{d3sc,3co,res}}^{M',N',L',K',F',S'}}^2 + \| \tilde{\Lambda}' u \|_{H_{\mathrm{d3sc,3co,res}}^{\vom, \vor, 0, \vov - \vob, 0, \vos - \vol}}^2 ).
\end{equation*} \par

Since the above argument also works in the opposite direction, we conclude that the norms defined by $\tilde{\Lambda}$, $M,N,L,K,F,S$ and $\tilde{\Lambda}'$, $M',N',L',K',F',S'$ must be equivalent.
\end{proof}

Finally, we will consider boundedness when $\vol, \vob \neq 0$.

\begin{proposition}[Boundedness on Sobolev spaces]
\label{big proposition sobolev spaces}
Let
\begin{equation*}
\vom, \vom', \vor, \vor',  \vov, \vov', \vos, \vos' \in \mathcal{C}^{\infty}( \psf \Xd ), \quad \vol, \vol', \vob, \vob' \in \mathcal{C}^{\infty}(\mathcal{C}_{\tindex} \times \overline{\mathbb{R}^{n_{\tindex}}}).
\end{equation*}
If $A \in \Psf^{\vom, \vor, \vol, \vov, \vob, \vos}(\Xd)$, then
\begin{equation} 
\label{possible Sobolev second microlocal bounding 1}
A :H^{ \mathsf{m}', \mathsf{r}', \mathsf{l}', \mathsf{v}', \mathsf{b}', \mathsf{s}' }_{\mathrm{d3sc,3co,res}}(\Xd) \rightarrow H^{ \mathsf{m}' - \mathsf{m}, \mathsf{r}' - \mathsf{r}, \mathsf{l}' - \mathsf{l}, \mathsf{v}' - \mathsf{v}, \mathsf{b}' - \mathsf{b}, \mathsf{s}' - \mathsf{s} }_{\mathrm{d3sc,3co,res}}(\Xd).
\end{equation}
is a bounded linear map.
\end{proposition}

\begin{proof}
Let $\tilde{\Lambda} \in \Psf^{0,0,\vol'-\vol, \vob' - \vob, \vob' - \vob, \vol' - \vol}(\Xd)$ be some quantization of $x_{\dff}^{-\vol'+\vol} x_{\cf}^{-\vob' + \vob}$. For any sufficiently negative $M,N,L,K,F,S \in \mathbb{R}$, we have proved that
\begin{equation*}
\| Au  \|_{H_{\mathrm{d3sc,3co,res}}^{\vom' - \vom, \vor' - \vor, \vol' - \vol, \vov' -\vov, \vob' - \vob, \vos' - \vos}}^2 \simeq \| A u  \|_{H_{\mathrm{d3sc,3co,res}}^{M,N, L, K,F,S}}^2 + \| \tilde{\Lambda} A u \|_{H_{\mathrm{d3sc,3co,res}}^{\vom' - \vom, \vor' - \vor, 0, (\vov' - \vob') - (\vov - \vob), 0, (\vos ' - \vol') - (\vos - \vol) }}^2.
\end{equation*}

By Corollary \ref{big big independence of things sobolev corollary}, and by using the same argument as in the proof of (\ref{sobolev mapping estiamte 1}),  it is easy to see that we can arrange the above constants so that
\begin{equation*}
\| Au \|_{H_{\mathrm{d3sc,3co,res}}^{M,N,L,K,F,S}}^2 \leq C \| u \|_{H_{\mathrm{d3sc,3co,res}}^{M',N',L',K',F',S'}}^2 \leq C \| u \|_{H_{\mathrm{d3sc,3co,res}}^{\vom',\vor',\vol',\vov',\vob',\vos'}}^2
\end{equation*}
for some arbitrarily negative $M', N', L' , K', F',S' \in \mathbb{R}$. \par

Meanwhile, let $\tilde{\Lambda}' \in \Psf^{0,0,\vol',\vob',\vob',\vol'}(\Xd)$ be some quantization of $x_{\dff}^{-\vol'} x_{\cf}^{-\vob'}$, and let $\tilde{\Gamma}'$ be a parametrix of $\tilde{\Lambda}'$ in the sense of Lemma \ref{special lemma elliptic parametrix}. Then we have
\begin{equation*}
\tilde{\Lambda} A \tilde{\Gamma}' \in \Psf^{\vom, \vor, 0, \vov - \vob, 0, \vos - \vol }(\Xd), \quad \tilde{\Lambda} A ( I -  \tilde{\Gamma}' \tilde{\Lambda}' ) \in \Psf^{-\infty, -\infty, -\infty, -\infty, -\infty, -\infty}(\Xd).
\end{equation*}
It follows that we can estimate
\begin{align*}
\| \tilde{\Lambda} A u \|_{H_{\mathrm{d3sc,3co,res}}^{\vom' - \vom, \vor' - \vor, 0, ( \vov' - \vob ) - ( \vov - \vob ), 0, ( \vos' - \vol' ) - ( \vos - \vol )}}^2 & \leq \|  \tilde{\Lambda} A \tilde{\Gamma}' \tilde{\Lambda}' u \|_{H_{\mathrm{d3sc,3co,res}}^{\vom' - \vom, \vor' - \vor, 0, ( \vov' - \vob ) - ( \vov - \vob ), 0, ( \vos' - \vol' ) - ( \vos - \vol )}}^2 \\
& \quad + \| \tilde{\Lambda} A ( I - \tilde{\Gamma}' \tilde{\Lambda}' ) u \|_{H_{\mathrm{d3sc,3co,res}}^{\vom' - \vom, \vor' - \vor, 0, ( \vov' - \vob ) - ( \vov - \vob ), 0, ( \vos' - \vol' ) - ( \vos - \vol )}}^2 \\
& \leq C ( \| \tilde{\Lambda}' u \|_{H_{\mathrm{d3sc,3co,res}}^{\vom', \vor', \vol', 0, \vov' - \vob', 0, \vos' - \vol'}}^2 + \| u \|_{H_{\mathrm{d3sc,3co,res}}^{M',N',L', K',F',S'}}^2 ) \\
& \leq C \| u \|_{H_{\mathrm{d3sc,3co,res}}^{\vom', \vor', \vol', \vov', \vob', \vos'}}^2,
\end{align*}
which proves the boundedness of (\ref{possible Sobolev second microlocal bounding 1}).
\end{proof}

\begin{corollary}[Inclusions of Sobolev spaces]
Let
\begin{equation*}
\vom, \vom', \vor, \vor',  \vov, \vov', \vos, \vos' \in \mathcal{C}^{\infty}( \psf \Xd ), \quad \vol, \vol', \vob, \vob' \in \mathcal{C}^{\infty}(\mathcal{C}_{\tindex} \times \overline{\mathbb{R}^{n_{\tindex}}}).
\end{equation*}
be such that
\begin{equation*}
\vom' \leq \vom, \ \vor' \leq \vor, \ \vol' \leq \vol, \ \vov' \leq \vov, \ \vob' \leq \vob, \ \vos' \leq \vos.
\end{equation*}
Then we have
\begin{equation*}
\text{$H_{\mathrm{d3sc,3co,res}}^{\vom, \vor, \vol, \vov, \vob, \vos}(X) \subset H_{\mathrm{d3sc,3co,res}}^{\vom', \vor', \vol', \vov', \vob', \vos'}(\Xd)$ with continuous inclusion. }
\end{equation*}
\end{corollary}
\begin{proof}
Let 
\begin{equation*}
\begin{gathered}
\Lambda \in \Psf^{\vom' - \vom, \vor' - \vor, 0 , (\vov' - \vob') - (\vov - \vob ), 0 , ( \vos' - \vol' ) - ( \vos - \vol ) }(\Xd) \subset \Psf^{0,0,0,- \vob' + \vob,0, - \vol' + \vol}(\Xd), \\
\tilde{\Lambda} \in \Psf^{0,0, \vol' - \vol, \vob' - \vob, \vob' - \vob, \vol' - \vol}(\Xd)  \subset \Psf^{0,0,0,0,0,0}(\Xd) \ \text{some quantization of $x_{\dff}^{-\vol'+\vol} x_{\cf}^{-\vob'+\vob}$}
\end{gathered}
\end{equation*}
be elliptic in the symbolic sense and in the sense of Lemma \ref{special lemma elliptic parametrix} respectively, and notice that
\begin{equation*}
\Lambda \tilde{\Lambda} \in \Psf^{0,0, \vol' - \vol, 0, \vob' - \vob, 0}(\Xd) \subset \Psf^{0,0, 0, 0, 0, 0}(\Xd) 
\end{equation*}
as well. Then by the standard elliptic regularity arguments and Proposition \ref{big proposition sobolev spaces}, we have
\begin{align*}
\| u \|_{H_{\mathrm{d3sc,3co,res}}^{\vom', \vor', \vol', \vov', \vob', \vos'}}^2 \leq {} & C ( \| \tilde{\Lambda} u \|_{H_{\mathrm{d3sc,3co,res}}^{\vom', \vor', \vol, \vov' - \vob' + \vob, \vob, \vos' - \vol' + \vol}}^2 + \| u \|_{H_{\mathrm{d3sc,3co,res}}^{\vom , \vor , \vol , \vov  , \vob , \vos }}^2  ) \\
\leq {} & C ( \| \Lambda \tilde{\Lambda} u \|_{H_{\mathrm{d3sc,3co,res}}^{\vom , \vor , \vol,  \vov , \vob, \vos }}^2 + \| \tilde{\Lambda} u \|_{H_{\mathrm{d3sc,3co,res}}^{\vom , \vor  , \vol,  \vov  , \vob, \vos }}^2 + \| u \|_{H_{\mathrm{d3sc,3co,res}}^{\vom , \vor , \vol, \vov, \vob, \vos}}^2 ) \leq C \| u \|_{H_{\mathrm{d3sc,3co,res}}^{\vom , \vor , \vol, \vov, \vob, \vos}}^2
\end{align*}
as required. 
\end{proof}

As mentioned, we will leave the following expected results for the readers to check:

\begin{proposition}[Dual spaces of Sobolev spaces]
\label{prop: dual spaces of the Sobolev spaces}
Let
\begin{equation*}
m,r,l, b \in \mathbb{R}, \quad \vom, \vor, \vov, \vos \in \mathcal{C}^{\infty}( \psf \Xd ), \quad \vol, \vob \in \mathcal{C}^{\infty}( \mathcal{C}_{\tindex} \times \overline{\mathbb{R}^{n_{\tindex}}} ).
\end{equation*}
Then we have
\begin{equation*}
\big( H_{\mathrm{3co}}^{m,r,l,b}(\Xd) \big)^{\ast} = H_{\mathrm{3co}}^{ - m, - r,  - l,   - b}(\Xd), \quad 
\big( H_{\mathrm{d3sc,3co,res}}^{\vom, \vor, \vol, \vov, \vob, \vos}(\Xd) \big)^{\ast} = H_{\mathrm{d3sc,3co,res}}^{ - \vom, - \vor, - \vol,  - \vov, - \vob, - \vos}(\Xd)
\end{equation*}
with equivalence of norms, where the dualities are taken with respect to $L^2$.
\end{proposition}

\begin{proposition}[Compact inclusions of Sobolev spaces]
Let
\begin{equation*}
\vom, \vom', \vor, \vor',  \vov, \vov', \vos, \vos' \in \mathcal{C}^{\infty}( \psf \Xd ), \quad \vol, \vol', \vob, \vob' \in \mathcal{C}^{\infty}(\mathcal{C}_{\tindex} \times \overline{\mathbb{R}^{n_{\tindex}}}).
\end{equation*}
be such that
\begin{equation*}
\vom' < \vom, \ \vor' < \vor, \ \vol' < \vol, \ \vov' < \vov, \ \vob'  <  \vob, \ \vos'  < \vos.
\end{equation*}
Then we have
\begin{equation*}
\text{$H_{\mathrm{d3sc,3co,res}}^{\vom, \vor, \vol, \vov, \vob, \vos}(X) \subset H_{\mathrm{d3sc,3co,res}}^{\vom', \vor', \vol', \vov', \vob', \vos'}(\Xd)$ with compact inclusion. }
\end{equation*}

\end{proposition}

\begin{remark}
Equivalently, for $m \in \mathbb{R}$, we can also write
\begin{equation}
\label{alternate characterization of 3co Soboelv spaces negative}
\begin{gathered}
H_{\mathrm{3co}}^{m,0,0,0}(\Xd) =\{ u \in H_{\mathrm{3co}}^{M,0,0,0} (\Xd) : \Lambda u \in L^{2}  \}, \quad \| u \|_{H_{\mathrm{3co}}^{m,0,0,0}}^2 \simeq \| u \|_{H_{\mathrm{3co}}^{M,0}}^2 + \| \Lambda u \|_{L^2}^2,
\end{gathered}
\end{equation}
where $M \leq 0$ and $\Lambda \in \Psi_{\mathrm{3co}}^{m,0,0,0}(\Xd)$ is elliptic in the regularity sense. Moreover, the specific choices of $M$ and $\Lambda$ are not crucial.
\end{remark}

\begin{remark}
We also remark that
\begin{equation*}
\label{equivalence between the three-cone Sobolev space and second microlocalized Soboelv space}
H_{\mathrm{3co}}^{m,r,l,b}(\Xd) = H_{\mathrm{d3sc,3co,res}}^{m,r,l, m + b, b, m + l} (\Xd)
\end{equation*}
with equivalence of norms. To see this, it suffices to assume that $r=l=b=0$. Then the equality follows easily from (\ref{alternate characterization of 3co Soboelv spaces negative}) since $\Psi_{\mathrm{3coc}}^{m,0,0,0}(\Xd) \subset \Psi_{\mathrm{d3sc,3co,res}}^{m,0,0,m,0,m}(\Xd)$. Moreover, if $\Lambda$ is elliptic in the regularity sense, then it must also be elliptic at $\dtsccf$, $\rf$ and the fiber infinity as an element of $\Psi_{\mathrm{d3sc,3co,res}}^{m,r,0,m,0,m}(\Xd)$. \par

\end{remark}

\subsection{Microlocal elliptic regularity estimates} 
\label{subsection;microlocal elliptic regularity estimates}
Finally, we will state several microlocal elliptic regularity estimates in the further-resolved, second microlocalized setting. 
\begin{proposition}[Microlocal elliptic estimates] 
\label{proposition microlocal elliptic regularity estimate chapter 1 thesis}
Suppose that
\begin{equation*}
\vom, \vom', \vor, \vor',  \vov, \vov', \vos, \vos' \in \mathcal{C}^{\infty}( \psf \Xd ), \quad \vol, \vol', \vob, \vob' \in \mathcal{C}^{\infty}(\mathcal{C}_{\tindex} \times \overline{\mathbb{R}^{n_{\tindex}}})
\end{equation*}
and $A \in \Psf^{\vom, \vor, \vol, \vov, \vob, \vos}(\Xd)$. Assume further that $Q, Q' \in \Psf^{0,0,0,0,0,0}(\Xd)$. Then:

\begin{enumerate}
\item If $\WFs(Q) \subset \Ellss(A) \cap \Ellss(Q')$, then for any choice of $M,N,K,S \in \mathbb{R}$, there exists a constant $C > 0$ such that
\begin{equation}
\label{elliptic estimate 1}
\| Qu \|_{H^{ \vom', \vor',  \vol', \vov' , \vob',  \vos' }_{\mathrm{d3sc,3co,res}}} \leq C ( \| Q' Au \|_{H^{  \mathsf{m}' - \vom,  \mathsf{r}' - \vor,  \mathsf{l}' - \vol , \mathsf{v}' - \vov ,  \mathsf{b}' - \vob, \mathsf{s}' - \vos}_{\mathrm{d3sc,3co,res}}} + \| u \|_{H^{M, N, \vol', K,  \vob', S}_{\mathrm{d3sc,3co,res}}}  )
\end{equation}
in the strong sense that if the right hand side of (\ref{elliptic estimate 1}) is finite, then so is the left hand side, and the estimate holds.
\item If $\WFdff(Q) \subset \Elldff(A) \cap \Elldff(Q')$, then for any choice of $L \in \mathbb{R}$, there exists a constant $C > 0$ such that
\begin{equation}
\label{elliptic estimate 2}
\| Qu \|_{H^{ \mathsf{m}', \mathsf{r}',  \mathsf{l}', \mathsf{v}',  \mathsf{b}',   \mathsf{s}'}_{\mathrm{d3sc,3co,res}}} \leq C ( \| Q' Au \|_{H^{  \mathsf{m}' - \vom,  \mathsf{r}' - \vor, \mathsf{l}' - \vol , \mathsf{v}' - \vov,  \mathsf{b}' - \vob, \mathsf{s}' - \vos}_{\mathrm{d3sc,3co,res}}} + \| u \|_{H^{ \mathsf{m}', \mathsf{r}',  L, \mathsf{v}', \mathsf{b}',   \mathsf{s}'}_{\mathrm{d3sc,3co,res}}}  )
\end{equation}
in the strong sense that if the right hand side of (\ref{elliptic estimate 2}) is finite, then so is the left hand side, and the estimate holds.
\item If $\WFcf(Q) \subset \Ellcf(A) \cap \Ellcf(Q')$, then for any choice of $F \in \mathbb{R}$, there exists a constant $C > 0$ such that 
\begin{equation}
\label{elliptic estimate 3}
\| Qu \|_{H^{ \mathsf{m}', \mathsf{r}', \mathsf{l}', \mathsf{v}', \mathsf{b}', \mathsf{s}'}_{\mathrm{d3sc,3co,res}}} \leq C ( \| Q' Au \|_{H^{  \mathsf{m}' - \vom,  \mathsf{r}' - \vor, \mathsf{l}' - \vol , \mathsf{v}' - \vov,  \mathsf{b}' - \vob, \mathsf{s}' - \vos}_{\mathrm{d3sc,3co,res}}} + \| u \|_{H^{ \mathsf{m}', \mathsf{r}',   \vol' , \mathsf{v}', F,   \mathsf{s}'}_{\mathrm{d3sc,3co,res}}}  )
\end{equation}
in the strong sense that if the right hand side of (\ref{elliptic estimate 3}) is finite, then so is the left hand side, and the estimate holds.
\end{enumerate}
\end{proposition}
\begin{proof}
In every case the proof makes use of Proposition \ref{Proposition microlocal ellipticity parametrix} and is standard (see for instance \cite[Corollary 5.5]{AndrasBook}). The detailed discussion will therefore be omitted.
\end{proof}

We will also state a uniform version of the microlocal elliptic regularity estimate in the symbolic sense. The proof is again omitted. See for instance \cite[Chapter 8]{PeterNotes} for the proof in the case on a smooth, compact manifold.
\begin{lemma}[Uniform microlocal elliptic estimate]
\label{uniform elliptic regularity estimate lemma}
Let $\{ A_s \} \in L^{\infty} ( (0,1]_{s}  ; \Psi_{\mathrm{d3sc,3co,res}}^{\vom, \vor, \vol, \vov, \vob, \vos} (\Xd) )$ and $u \in \mathcal{S}'$. Suppose that $B \in \Psi^{0,0,0,0,0,0}_{\mathrm{d3sc,3co,res},\delta}(\Xd)$ satisfies $\WFsL( \{ A_s \} ) \subset \Ellss(B)$. Then for any choice of $M,N,K,S \in \mathbb{R}$, there exists a constant $C > 0$, which is independent of $s \in (0,1]$, such that
\begin{equation} \label{uniform elliptic regularity estimate symbolic}
\| A_su \|_{H^{ \mathsf{m}' - \vom,   \mathsf{r}' - \vor,  \mathsf{l}' - \vol, \mathsf{v}' - \vov, \mathsf{b}' - \vob,  \mathsf{s}' - \vos}_{\mathrm{d3sc,3co,res}}} \leq C ( \|  B u \|_{H^{ \mathsf{m}', \mathsf{r}', \mathsf{l}' ,  \mathsf{v}',  \mathsf{b}',  \mathsf{s}'}_{\mathrm{d3sc,3co,res}}} + \| u \|_{H^{M, N,  \mathsf{l}' - \vol, K, \mathsf{b}' - \vob, S}_{\mathrm{d3sc,3co,res}}}  )
\end{equation}
in the strong sense that if the right hand side of (\ref{uniform elliptic regularity estimate symbolic}) is finite, then so is the left hand side, and the estimate holds.
\end{lemma}

Here, the statement of Lemma \ref{uniform elliptic regularity estimate lemma} requires the definition of uniform wavefront set, which we had defined in Definition \ref {definition uniform operator wavefront set}. We emphasize again that Lemma \ref{uniform elliptic regularity estimate lemma} will only be relevant for the proofs of the propagation estimates in \S\S \ref{principal type propagation section}--\ref{global radial point estimate section} below.

\section{The microlocal structure for the three-body Helmholtz operator} \label{section Hamiltonian structure of the Helmhotz operator}
    
    Starting from this section, we will begin analyzing microlocally the three-body Helmholtz operator $P$. Since $P \in \Psi^{2,-2,0}_{\mathrm{3scc}}( \tscX )$, we will first work in the three-body phase space $\overline{^{\mathrm{3sc}}T^{\ast}} \tscX$ (instead of directly the second-microlocal framework). The central objects to be looked at in this setting are thus the three-body principal symbol and indicial operators of $P$. The former object can be written in Euclidean coordinates $(z_{\tindex}, z^{\tindex}, \zeta_{\tindex}, \zeta^{\tindex})$ as
    \begin{equation} \label{principal symbol of P in Euclidean coordinates}
    {^{\mathrm{3sc}}\sigma(P)} = |\zeta_{\tindex}|^2 + |\zeta^{\tindex}|^{2} - \lambda^{2}.
    \end{equation}
However, while (\ref{principal symbol of P in Euclidean coordinates}) is a global expression, it is often useful to also consider it locally in a neighborhood of $^{\mathrm{3sc}}T^{\ast}_{\mf \cap  \ff} \tscX$, in which case one could also write
\begin{equation*} \label{the three-body principal symbol of P}
    {^{\mathrm{3sc}}\sigma}(P) = |\ltau|^2 + |\lmu|_{h_{\tindex}}^2 + |\utau|^2 + |\umu|_{h^{\tindex}}^2 - \lambda^2
\end{equation*}
in coordinates $(\bff, y_{\tindex}, y^{\tindex}, \ltau, \lmu, \utau, \umu) \in {^{\mathrm{3sc}}T^{\ast}_{\mf} \tscX}$. \par

Likewise, the three-body indicial operator can be written globally over $\mathcal{C}_{\tindex} \times \mathbb{R}^{n_{\tindex}}_{\zeta_{\tindex}}$ as
\begin{equation*} 
    {^{\mathrm{3sc}}\hat{N}}_{\ff}( P ) = \Delta_{z^{\tindex}} + V^{\tindex} - ( \lambda^{2} - |\zeta_{\tindex}|^2 ),
\end{equation*}
though it is often more useful to work with local coordinates $( y_{\tindex}, z^{\tindex}, \ltau, \lmu, \zeta^{\tindex} ) \in \mathcal{C}_{\tindex} \times \mathbb{R}^{n_{\tindex}}$, in which case we also write
\begin{equation} \label{the three-body indicial operator of P}
    {^{\mathrm{3sc}}\hat{N}}_{\ff}( P ) = \Delta_{z^{\tindex}} + V^{\tindex} - ( \lambda^2 -  |\ltau|^{2} - |\lmu|_{h_{\tindex}}^2 ).
\end{equation}

\subsection{Characteristic sets of $P$ in the three-body framework} 
\label{subsection characteristic variety of P in the three-body framework}
It is easy to see that the symbolic three-body characteristic variety of $P$, $\mathrm{Char}_{\mathrm{3sc}, \sigma}(P)$, is again contained away from fiber infinity. Moreover, it is the same as the two-body characteristic variety of $\Delta - \lambda^{2}$ away from ${^{\mathrm{3sc}}T^{\ast}_{\ff}} \tscX$, while in a small neighborhood of ${^{\mathrm{3sc}}T^{\ast}_{\ff} \tscX}$, it can be written in local coordinates as
\begin{equation*}
 \{ ( \bff, y_{\tindex}, y^{\tindex}, \ltau, \lmu, \utau, \umu ) \in {^{\mathrm{3sc}}T^{\ast}_{\mf} \tscX} :  |\ltau|^2 + |\lmu|_{h_{\tindex}}^2 + |\utau|^2 + |\umu|_{h^{\tindex}}^2 = \lambda^2   \}.
\end{equation*}  
For brevity, we will henceforth denote
\begin{equation*}
\Sigma_{\mf} \coloneq \mathrm{Char}_{\mathrm{3sc}, \sigma}( P )
\end{equation*}
where `mf' indicates that $\Sigma_{\mf}$ is compactly contained in ${ ^{\mathrm{3sc}}T^{\ast}_{\mf} } \tscX$. In fact, let 
\begin{equation*}
\beta_{\mathrm{3sc}} : \overline{^{\mathrm{3sc}}T^{\ast}} \tscX \rightarrow \overline{ ^{\mathrm{sc}}T^{\ast}} \Xo
\end{equation*}
be the natural blow-down map. Then we have
\begin{equation} \label{the main face characteristic variety as a lift}
\Sigma_{\mathrm{mf}} = \beta_{\mathrm{3sc}}^{\ast}( \Sigma_{\mathrm{sc}} ),
\end{equation}
where $\Sigma_{\mathrm{sc}} \subset \overline{ ^{\mathrm{sc}}T^{\ast} }_{\partial \Xo} \Xo$ is the characteristic set of $\Delta - \lambda^{2}$ as in the two-body case.
	 \par

    On the other hand,  we know that $\text{Ell}_{\mathrm{3sc},\ff}(P)$ is given by those elements of $\mathcal{C}_{\tindex} \times \mathbb{R}^{n_{\tindex}}$ such that (\ref{the three-body indicial operator of P}) is invertible with inverse in $\Psi_{\text{sc}}^{-2,0}( X^{\tindex} )$. It is well-known that $\Delta_{z^{\tindex}} + V^{\tindex} - \sigma, \sigma \in \mathbb{R}$ is invertible if and only if $\sigma < 0$ and $\sigma \notin \mathrm{Spec}_{p}( \Delta_{z^{\tindex}} + V^{\tindex} )$. Thus, if we assume for simplicity that there is no bound states, i.e., $\text{Spec}_{p}( {^{\mathrm{3sc}}\hat{N}_{\ff}} (P) ) = \emptyset$, then by standard result we know that $^{\mathrm{3sc}}\hat{N}_{\ff}(P)$ must be invertible if and only if $|\tau_{\alpha}|^{2} + |\mu_{\alpha}|_{h_{\alpha}}^{2} > \lambda^2$. Hence
    \begin{equation} \label{ff-characterstic variety of P}
    \Sigma_{\ff} \coloneq  \mathrm{Char}_{\mathrm{3sc},\ff}(P) = \{ ( y_{\tindex}, \ltau, \lmu ) \in \mathcal{C}_{\tindex} \times \mathbb{R}^{n_{\tindex}} :  |\tau_{\alpha}|^{2} + |\mu_{\alpha}|^{2} \leq \lambda^{2} \}
    \end{equation} 
    will be the three-body characteristic set of $P$ at $\ff$. 
    \par 
    Following \cite[Chapter 11]{AndrasThesis}, it will be useful to split (\ref{ff-characterstic variety of P}) into two components, namely a `normal' part
    \begin{equation*}
    \Sigma_{\mathrm{n}} \coloneq \{ ( y_{\tindex}, \ltau, \mu )  \in \mathcal{C}_{\tindex} \times \mathbb{R}^{n_{\tindex}} :  |\tau_{\alpha}|^{2} + |\mu_{\alpha}|_{h_{\alpha}}^{2} < \lambda^{2}  \}
    \end{equation*}
    and a `tangential' part
    \begin{equation} \label{tangential part of the characteristic variety}
    \Sigma_{\mathrm{t}} \coloneq \{ ( y_{\tindex}, \ltau, \lmu ) \in \mathcal{C}_{\tindex} \times \mathbb{R}^{n_{\tindex}} : |\tau_{\alpha}|^{2} + |\mu_{\alpha}|_{h_{\alpha}}^{2} = \lambda^{2}  \}.
    \end{equation}
    Clearly $\Sigma_{\text{ff}_{\alpha}} = \Sigma_{\mathrm{n}} \cup \Sigma_{\mathrm{t}}$ is a disjoint union. 
    \subsection{Rescaled Hamiltonian in the three-body framework}
    We expect microlocal regularity of solutions for $Pu = f$ to propagate along the three-body bicharacterstic flows of $P$, provided sufficient regularity of $f$ is assumed. In the cae of $V$ being a two-body, short-ranged potential, this is often done by studying the \emph{Legendrian} perspective to the flows of $P$, which can be understood as projections of the usual, interior flowlines to the boundary of $^{\mathrm{sc}}T^{\ast}\Xo$. See \cite{MelroseMaciej96} for further details.  \par

    In Euclidean coordinates $(z_{\alpha}, z^{\alpha}, \zeta_{\alpha}, \zeta^{\alpha})$, the Hamiltonian vector field of $p$ is given by
    \begin{equation} \label{Euclidean Hamiltonian vector field}
    H_{p} = 2 \zeta_{\alpha} \cdot \partial_{z_{\alpha}} + 2 \zeta^{\alpha} \cdot \partial_{z^{\alpha}}.
    \end{equation}
    If we switch coordinates to $( x_{\alpha}, y_{\alpha}, z^{\alpha}, \tau_{\alpha}, \mu_{\alpha}, \zeta^{\alpha} )$, then $H_{p}$ instead takes the form
    \begin{equation}  \label{Hamiltonian in interior front face coordinates}
    H_{p} = H_{ \intt } + 2 \zeta^{\alpha} \cdot \partial_{z^{\alpha}}.
    \end{equation}
    where
    \begin{equation} \label{Hamiltonian in interior front face coordinates 0.1}
    H_{ \intt } =  2 \tau_{\alpha} x_{\alpha} ( x_{\alpha} \partial_{x_{\alpha}} + R_{\mu_{\alpha}} ) - 2 x_{\alpha} |\mu_{\alpha}|_{h_{\alpha}}^2 \partial_{\tau_{\alpha}} + x_{\alpha} H_{ |\lmu|^{2}_{h_{\tindex}} }.
    \end{equation}
Here $R_{\lmu} = \lmu \cdot \partial_{\lmu}$ and $H_{|\lmu|_{h_{\tindex}}^2} =  \partial_{\lmu} |\lmu|_{h_{\tindex}}^2 \cdot \partial_{y_{\tindex}} - \partial_{y^{\tindex}} | \lmu |_{h_{\tindex}}^2 \cdot \partial_{\lmu}$. \par

Next, we want to write (\ref{Hamiltonian in interior front face coordinates}) in the coordinates $( \bff, y_{\alpha}, x^{\alpha}, y^{\alpha}, \tau_{\alpha}, \mu_{\alpha}, \tau^{\alpha}, \mu^{\alpha})$.  Upon doing so, the $\partial_{z^{\alpha}}$ terms will also be dependent on $\partial_{\bdf}$ after applying the transformation laws. Thus in these coordinates, and with some abuse of notation, we can write  
    \begin{equation} \label{Hamiltonian in interior front face coordinates 0.2}
    H_{p} = H_{ \intt } + H_{ \intn } - 2 \tau^{\alpha} x_{\alpha} \partial_{\bff}
    \end{equation}
where, as in (\ref{Hamiltonian in interior front face coordinates}), we are denoting
    \begin{equation} \label{Hamiltonian form near the boundary of front face}
    H_{ \intn } =  2 \tau^{\alpha} x^{\alpha} ( x^{\alpha} \partial_{x^{\alpha}} + R_{\mu^{\alpha}} ) - 2 x^{\alpha} |\mu^{\alpha}|_{h^{\alpha}}^2 \partial_{\tau^{\alpha}} + x^{\alpha} H_{|\umu|_{h^{\tindex}}^2}.
    \end{equation}
Here $R_{\umu} = \umu \cdot \partial_{\umu}$ and $H_{|\umu|_{h^{\tindex}}^2} = \partial_{\umu} |\umu|_{h^{\tindex}}^2 \cdot \partial_{y^{\tindex}} - \partial_{y^{\tindex}} |\umu|_{h^{\tindex}}^2 \cdot \partial_{\umu}$, and the abuse of notation is that the first term in (\ref{Hamiltonian in interior front face coordinates 0.2}) still denotes (\ref{Hamiltonian in interior front face coordinates 0.1}), except that we now understand
    \begin{equation*}
    2 \tau_{\tindex} x_{\tindex}^2 \partial_{x_{\tindex}} =  2\tau_{\tindex} \bdf^2 x^{\tindex} \partial_{\bff}
    \end{equation*}
    which is a result of the variable change. \par

    Following the two body literature, it is customary to rescale $H_{p}$ to first order by the global boundary defining function of $\overline{\mathbb{R}^{n}}$, i.e., the introduction of
    \begin{equation} \label{two-body hamiltonian}
    \mathsf{H}_{p} \coloneq x^{-1} H_{p}|_{x = 0}
    \end{equation}
    where $x = |z|^{-1}$. We will also do this in the free and interactive variables separately:
    \begin{equation*}
    \mathsf{H}_{ \intt } \coloneq  x_{\tindex}^{-1} H_{ \intt } |_{x_{\tindex} = 0}, \quad  \mathsf{H}_{ \intn } \coloneq (x^{\tindex})^{-1} H_{ \intn } |_{x^{\tindex} = 0}.
    \end{equation*} 
    However, since $x \simeq x_{\tindex}$ near $^{\mathrm{3sc}}T^{\ast}_{\ff} \tscX$, from (\ref{Hamiltonian in interior front face coordinates}) and (\ref{Hamiltonian form near the boundary of front face}), we can see that $\mathsf{H}_{p}$ will not be a smooth vector field over ${^{\mathrm{3sc}}T^{\ast}_{\mf}} \tscX$, as there will be a singularity of order one as $\bff \rightarrow 0$ (i.e., at ${^{\mathrm{3sc}}T}^{\ast}_{\ff} \tscX$). This suggests that we should instead consider the alternate rescaling  
    \begin{equation} \label{definotion of main face Hamiltonian}
    \mathsf{H}_{p,\text{mf}} \coloneq (x^{\tindex})^{-1} H_{p} |_{x^{\tindex} = 0}.
     \end{equation}
     Notice that 
    \begin{equation} \label{rescalled Hamiltonian flow at main face}
(x^{\tindex})^{-1} H_{p} = 2 \utau x^{\tindex} \partial_{x^{\tindex}} + \scuH + \bff \sclH + 2 ( \bff \ltau - \utau ) \bff \partial_{ \bff },
    \end{equation}
   so it follows that $\sH_{p,\mf}$ must restrict to a $\mathcal{V}_{\mathrm{b}}( ^{\mathrm{3sc}}T^{\ast}_{\mf} \tscX )$ vector field. In particular, the integral curves of $\mathsf{H}_{p,\text{mf}}$ over $^{\mathrm{3sc}}T^{\ast}_{\text{mf}} \tscX$ are defined for all time.
    \subsection{Radial points in the three-body framework}
    We next investigate the set of points in $\Sigma_{\mf}$ for which 
    \begin{equation} \label{stationary point for three-body hamiltonian flow}
    \mathsf{H}_{p,\text{mf}} = 0.
    \end{equation}
 These stationary points are useful, since we shall see that any integral curve of $\mathsf{H}_{p,\text{mf}}$ would always converge to such a point in either forward or backward time asymptotics. \par

    First, recall that by using arguments similar to those in the two-body settings, we have
    \begin{gather}
        \mathsf{H}_{ \intt } = 0   \Leftrightarrow \tau_{\tindex} = \pm ( \lambda^{2} - |\utau|^2 - |\umu|_{h^{\tindex}}^2  )^{1/2}, \ \mu_{\tindex} = 0,  \label{stationary condition for free Hamiltonian} \\
            \mathsf{H}_{ \intn } = 0 \Leftrightarrow \tau^{\alpha} = \pm ( \lambda^{2} - |\ltau|^2 - |\lmu|_{h_{\tindex}}^2 )^{1/2}, \ \mu^{\tindex} = 0.  \label{stationary condition for interactive Hamiltonian}
    \end{gather}
   Notice that if $\mathsf{H}_{p,\text{mf}} = 0$, then the right hand side of (\ref{stationary condition for interactive Hamiltonian}) must be true, since it is possible that $\bff \mathsf{H}_{ \intt } = 0$ if either the right hand side of (\ref{stationary condition for free Hamiltonian}) is satisfied or $\bff = 0$. \par

    If we insist that $\bff \neq 0$ (i.e., if we insist that we are away from $ ^{\mathrm{3sc}}T^{\ast}_{\mf} \tscX $), then even if both (\ref{stationary condition for free Hamiltonian}) and (\ref{stationary condition for interactive Hamiltonian}) are satisfied, we would still have the surviving term 
    \begin{equation} \label{surviving term in Hamiltonian vector field}
    2 \bff ( \tau_{\tindex} \bff - 2 \tau^{\tindex} ) \partial_{\bff}
    \end{equation}
    in (\ref{rescalled Hamiltonian flow at main face}). For (\ref{surviving term in Hamiltonian vector field}) to vanish, we would need to have
    \begin{equation} \label{stationary condition away from front face based on rho 1}
    \bff = \frac{\tau^{\alpha}}{\tau_{\alpha}} > 0,
    \end{equation}
    thus $\tau_{\alpha}$ and $\tau^{\alpha}$ must have the same signs. Hence, we see that there are stationary points for $\mathsf{H}_{p,\text{mf}}$ in $\Sigma_{\text{mf}}$ defined by $\mu_{\tindex} = 0$, $\mu^{\tindex} = 0$, where $\tau_{\tindex} \neq 0$ and $\tau^{\tindex} \neq 0$ have the same signs, and $\bff$ is determined by (\ref{stationary condition away from front face based on rho 1}). In fact, by using the constraint $|\tau_{\tindex}|^{2} + | \tau^{\tindex} |^{2} = \lambda^{2} $, one could also parameterize $\tau_{\tindex}$ and $\tau^{\tindex}$ by 
    \begin{equation} \label{large time condition for tau away from front face}
    \tau_{\tindex} = \pm \frac{1}{ ( 1 + \bff^2  )^{1/2} } \lambda, \quad \tau^{\tindex} = \pm \frac{ \bff }{ ( 1  + \bff^2  )^{1/2} } \lambda.
    \end{equation}
    Conversely, if we assume that $\lmu = 0$, $\umu = 0$, and also (\ref{stationary condition away from front face based on rho 1}), (\ref{large time condition for tau away from front face}) are satisfied, then $\sH_{p, \mf}$ must vanish as well. \par

    Let 
    \begin{equation*}
    \mathcal{R}_{\text{sc},\pm} \subset { ^{\mathrm{sc}}T^{\ast}_{\partial \Xo} \Xo }
    \end{equation*}
    be the usual radial points in the two-body scattering theory, which is defined by the set of all such points such that 
    \begin{equation*}
    \mathsf{H}_{p} = 0,
    \end{equation*}
    where $\sH_{p}$ is defined in (\ref{definotion of main face Hamiltonian}). For $\bff \neq 0$, these are precisely those points satisfying (\ref{stationary point for three-body hamiltonian flow}). Hence in the three-body setting, it would be natural to consider the lift
    \begin{equation*} \label{two-body radial set in three-body setting}
    \mathcal{R}_{\text{2sc,mf},\pm} \coloneq \beta_{\text{3sc}}^{\ast} ( \mathcal{R}_{\text{sc},\pm} ),
    \end{equation*}
    where $\beta_{\mathrm{3sc}} : { ^{\mathrm{3sc}}T^{\ast} \tscX } \rightarrow { ^{\mathrm{sc}}T^{\ast}\Xo }$ is the natural blow-down map. Then in a small coordinate neighborhood of $^{\mathrm{3sc}}T^{\ast}_{\ff} \tscX$, $\mathcal{R}_{\mathrm{2sc}, \mf, \pm}$ can be characterized by
    \begin{equation} \label{writing down the 3sc radial set in local coordinates}
    \mathcal{R}_{\text{2sc,mf},\pm} = \Big\{ \tau_{\alpha} = \pm \frac{1}{ ( 1 + \bff^2  )^{1/2} } \lambda, \, \mu_{\alpha} = 0, \,  \tau^{\alpha} = \pm \frac{ \bff }{ ( 1  + \bff^2  )^{1/2} } \lambda, \, \mu^{\alpha} = 0  \Big\}.
    \end{equation}
    \par

    Now, even if we let $\bff \rightarrow 0$, in which cases points in $\mathcal{R}_{\text{2sc,mf},\pm}$ must take the form
    \begin{equation} \label{main face radial points restricted to the front face}
    \tau_{\tindex} = \pm \lambda, \ \mu_{\tindex} = 0, \ \tau^{\tindex} = 0, \ \mu^{\tindex} = 0,
    \end{equation}
    we would still not have included all the stationary points of $\mathsf{H}_{p,\text{mf}}$. Indeed, if we instead impose the condition that $\bff = 0$ to begin with, then there are also stationary points for $\mathsf{H}_{p,\text{mf}}$ in $\Sigma_{\text{mf}}$ determined by the conditions
    \begin{equation*}
     \bff = 0, \ \tau^{\alpha} = \pm ( \lambda^{2} - |\ltau|^2 - |\lmu|_{h_{\tindex}}^2  )^{1/2}, \ \mu^{\alpha} = 0.
    \end{equation*}
    Thus, there is a new kind of radial points in this case, which we denote by
    \begin{equation} \label{local coordinates representation of Rnpm}
    \mathcal{R}_{\text{n},\pm} \coloneq \{  \bff = 0, \, \tau^{\alpha} = \pm ( \lambda^{2} - |\ltau|^2 - |\lmu|_{h_{\tindex}}^2  ), \, \mu^{\alpha} = 0 \},
    \end{equation} 
    where the subscript `$\mathrm{n}$' again stands for `normal' (which should really be transversal, though we follow the convention established in \cite[Chapter 11]{AndrasThesis} here and reserve the subscript `t' for `tangential') to reflect on the direction of integral curves when they meet $\mathcal{R}_{\text{n},\pm}$. 
    \par
  Finally, we will also introduce radial points in the tangential directions, i.e.,
    \begin{equation*}
    \mathcal{R}_{\text{t},\pm} \coloneq \{ \tau_{\tindex} = \pm \lambda,  \, \mu_{\tindex} = 0 \} \subset \mathcal{C}_{\tindex} \times \mathbb{R}^{n_{\tindex}}.
    \end{equation*}
    Notice that this is the direct analogy of the two-body radial sets if one views $\mathcal{C}_{\tindex} \times \overline{\mathbb{R}^{n_{\tindex}}}$ as the base infinity of the scattering cotangent bundle $\overline{ ^{\mathrm{sc}}T^{\ast} }X_{\tindex} \cong X_{\tindex} \times \overline{\mathbb{R}^{n_{\tindex}}}$.

    \subsection{Characterization for the flow of $\sH_{p,\mf}$}
    \label{subsection: characterization for the flow}
    Let us now compute the flow of $\mathsf{H}_{p,\text{mf}}$ on ${^{\mathrm{3sc}}T^{\ast}_{\text{mf}}} \tscX$. Since we only have the form (\ref{Hamiltonian form near the boundary of front face}) near ${^{\mathrm{3sc}}T^{\ast}_{\text{ff}_{\alpha}}} \tscX$, we will only consider the parts of the integral curves $\gamma$ which are contained in a small neighborhood of ${^{\mathrm{3sc}}T^{\ast}_{\text{ff}_{\tindex}} } \tscX$. In fact, we are most interested in those $\gamma$ whose trajectories will have non-empty intersection with ${^{\mathrm{3sc}}T^{\ast}_{\text{ff}_{\tindex}} } \tscX$ (the ones which do not intersect ${^{\mathrm{3sc}}T^{\ast}_{\text{ff}_{\tindex}} } \tscX$ can be identified as integral curves of $\sH_{p}$ as in the two-body case).  \par 
    \par 
    First, let us note that the flow of $\bff$ is determined by 
    \begin{equation} \label{flow equation for rho}
    \frac{d \bff}{dt} =  2 \bff ( \tau_{\tindex} \bff - \tau^{\tindex} ) = 2 \tau_{\tindex} \bff^2 - 2 \tau^{\tindex} \bff.
    \end{equation}
    Thus, we conclude that
    \begin{equation} \label{limiting behavour of integral curves}
    \begin{gathered}
    \text{the only possibilities for an integral curve $\gamma$ to intersect ${^{\mathrm{3sc}}T^{\ast}_{\partial \text{ff}_{\tindex}}} \tscX$ are either} \\ 
    \text{$\bdf(t) = 0$ identically or $\lim_{t \rightarrow \pm \infty} \bdf (t) = 0$,}
    \end{gathered}
    \end{equation}
    since otherwise we contradict uniqueness for the initial value problem. \par

    Generally, it is possible for an integral curve $\gamma(t)$ of $\mathsf{H}_{p,\text{mf}}$ to stay in a fixed small neighborhood of ${^{\mathrm{3sc}}T^{\ast}_{\text{ff}_{\alpha}} } \tscX$ for either
    \begin{equation*}
    t \in (a, \infty], \ a \in [-\infty, \infty) \ \text{or} \ t \in [-\infty, b), \ b \in (-\infty, \infty].
    \end{equation*}
    Hence, we will only consider the restrictions of $\gamma$ to such intervals. Often in our analysis, we are only interested in the cases of very large or small $t$, thus it will be convenient to assume that $a$ (resp. $b$) is very large (resp. small) as well. For definiteness, in the discussion below we will only look at the case $t \in (a, \infty]$ (i.e., large $t$) unless otherwise mentioned. 
    \par

    Now, let us observe that the evolution in the variables $(y^{\tindex}, \tau^{\tindex}, \mu^{\tindex})$ is independent of all the other variables. In particular, these are just the flow of $\mathsf{H}_{ \intn }$, so that the usual `two-body argument' applies. See \cite[\S 3]{MelroseMaciej96} for the discussion in the two-body setting. Here, we just recall that the flow equations for these variables when $\mu^{\tindex} \neq 0$ are
    \begin{equation} \label{flow equations subsystem} 
    \begin{gathered}
    \frac{dy^{\tindex}}{dt} = \frac{\partial |\hat{\mu}^{\tindex}|_{h^{\tindex}}^2}{\partial \mu^{\tindex}} | \mu^{\tindex} |_{h^{\tindex}}, \ \frac{d \hat{\mu}^{\tindex}}{dt} = - \frac{\partial |\hat{\mu}^{\tindex}|_{h^{\tindex}}^2}{\partial y^{\tindex}} |\mu^{\tindex}|_{h^{\tindex}}, \\
    \frac{d\tau^{\tindex}}{dt} = -  2| \mu^{\tindex} |_{h^{\tindex}}^2, \ \frac{d|\mu^{\tindex}|_{h^{\tindex}}}{dt} = 2 \tau^{\tindex}  |\mu^{\tindex}|_{h^{\tindex}}^2
    \end{gathered}
    \end{equation}
    where $\hat{\mu}^{\tindex} \coloneq \mu^{\tindex}/ |\mu^{\tindex}|_{h^{\tindex}}$. Thus, with the reparametrisation of time
    \begin{equation*}
    s^{\tindex}(t) \coloneq 2 \int_{0}^{t} |\mu^{\tindex}|_{h^{\tindex}} \hspace*{0.5mm} dt', \quad t \in (a, \infty)
    \end{equation*}
    we conclude that
    \begin{equation} \label{equations 1 for subsystem integral flows}
    \begin{gathered}
    \tau^{\tindex}  = ( \lambda^2 - |\ltau|^2 - |\lmu|_{h_{\tindex}}^2 )^{1/2} \cos( s^{\tindex} + s^{\tindex}_0 ), \\ 
    \mu^{\tindex}  = ( \lambda^{2} - |\ltau|^2 - |\lmu|_{h_{\tindex}}^2 )^{1/2} \sin(s^{\tindex} + s^{\tindex}_{0}) \hat{\mu}^{\tindex}
    \end{gathered}
    \end{equation}
    and
    \begin{equation} \label{equations 2 for subsystem integral flows}
    (y^{\tindex}, \hat{\mu}^{\tindex}) = \exp{ ( (s^{\tindex} + s^{\tindex}_{0}) H_{ |\umu|_{h^{\tindex}}^2 /2} ) }(y^{\tindex}_{0}, \hat{\mu}^{\tindex}_{0}) 
    \end{equation}
    are the solutions. Here the range of $s^{\tindex}$ is given for $s^{\tindex}_{a} \coloneq s^{\tindex}(a)$ by 
    \begin{equation*}
    s^{\tindex}_{0} \in [0, \pi] \ \text{and} \ s^{\tindex} \in [ s^{\tindex}_{a} , \pi - s^{\tindex}_{0}] \subseteq [ - s^{\tindex}_{0}, \pi - s^{\tindex}_{0} ]
    \end{equation*}
    while $( y^{\tindex}_{0}, \hat{\mu}^{\tindex}_{0})$ are constants. For $\mu^{\tindex} = 0$, there are also stationary conditions for the coordinates $(y^{\tindex}, \tau^{\tindex}, \mu^{\tindex})$ given by (\ref{stationary condition for free Hamiltonian}), which would define partially the coordinate representation of an integral curve of $\mathsf{H}_{p,\text{mf}}$. Notice here that $( \lambda^2 - |\ltau|^2 - |\lmu|_{h_{\tindex}}^2 )^{1/2}$ is conserved in time.  \par

    Evolution in the variables $( y_{\tindex}, \tau_{\tindex}, \mu_{\tindex} )$ is similar but slightly involved, requiring more careful considerations. The flow equations for $\mu_{\tindex} \neq 0$ are now
    \begin{equation} \label{flow equations tangential}
    \begin{gathered}
    \frac{d y_{\tindex}}{d t} = \bff \frac{ \partial |\hat{\mu}_{\tindex}|_{h_{\tindex}^2 }}{\partial \mu_{\tindex}} |\mu_{\tindex}|_{h_{\tindex}}, \ \frac{d\hat{\mu}_{\tindex}}{dt} = - \bff \frac{\partial |\hat{\mu}_{\tindex}|_{h_{\tindex}}^2}{\partial y_{\tindex}} |\mu_{\tindex}|_{h_{\tindex}}, \\
    \frac{d \tau_{\tindex}}{dt} = - 2 \bff  |\mu_{\tindex}|_{h_{\tindex}}^2, \ \frac{d|\mu_{\tindex}|_{h_{\tindex}}}{dt} = 2 \bff \tau_{\tindex}  | \mu_{\tindex} |_{h_{\tindex}}^2
    \end{gathered}
    \end{equation}
    where $\hat{\mu}_{\tindex} = \mu_{\tindex}/ |\mu_{\tindex}|_{h_{\tindex}}$. Assuming also $\bdf > 0$, then just as before, equations (\ref{flow equations tangential}) suggest that we should introduce the reparameterization
    \begin{equation} \label{variable change along C}
    s_{\alpha}(t) \coloneq 2 \int_{0}^{t} \bdf |\mu_{\tindex}|_{h_{\tindex}} \hspace*{0.5mm} dt', \quad t \in (a,  \infty ).
    \end{equation}
    Such a reparameterization will reduce (\ref{flow equations tangential}) to a system of equations which are similar to those in (\ref{flow equations subsystem}). Namely, we will have
    \begin{equation}  \label{Hamiltonian flow along C}
    \begin{gathered}
    \tau_{\tindex}  = ( \lambda^2 - |\ltau|^2 - |\umu|_{h^{\tindex}}^2 )^{1/2} \cos( s_{\tindex} + s_{\tindex,0}  ), \\
    \mu_{\tindex}  = ( \lambda^{2} - |\utau|^2 - |\umu|_{h^{\tindex}}^2 )^{1/2} \sin(s_{\tindex} + s_{\tindex,0} ) \hat{\mu}_{\tindex}
    \end{gathered}
    \end{equation}
    and 
    \begin{equation*}
    (y_{\tindex}, \hat{\mu}_{\tindex}) = \exp{ ( (s_{\tindex} + s_{\tindex,0} ) H_{ |\lmu|_{h_{\tindex}}^2 /2} ) }( y_{\tindex,0}, \hat{\mu}_{\tindex,0} ),
    \end{equation*}
    but they are only valid for even more restricted values of $s_{\tindex}$, i.e., with $s_{\tindex,a} \coloneq s_{\tindex}(a)$ that
    \begin{equation} \label{range characterisation for tangential flow}
    s_{\tindex,0} \in [0,\pi] \ \text{and} \ s_{\tindex} \in [s_{\tindex, a}, s_{\tindex,+} ] \subseteq [ - s_{\tindex,0}, \pi - s_{\tindex,0} ],
    \end{equation}
    while $(  y_{\tindex,0}, \hat{\mu}_{\tindex,0} )$ are again constants. Here $s_{\alpha,+} \coloneq \lim_{t \rightarrow \infty} s_{\tindex}(t)$ is generally undetermined. Likewise, if $\mu_{\tindex} = 0$, then the stationary conditions (\ref{stationary condition for interactive Hamiltonian}) would partially define the coordinate representation of an integral curve of $\mathsf{H}_{p,\text{mf}}$. \par 
    To determine the range of $s_{\alpha}$, it is useful to also introduce 
    \begin{equation}
    \tilde{t} \coloneq \int_{0}^{t} \bdf dt', \quad t \in[a , \infty)
    \end{equation}
    for the case $\gamma \not \subset {^{\mathrm{3sc}}T^{\ast}_{\partial \text{ff}_{\tindex}}} \tscX$. Notice from (\ref{definotion of main face Hamiltonian}) that we actually have
    \begin{equation} \label{reparametrised 2sc Hamiltonian vector field}
    \mathsf{H}_{p,\text{mf}} = \bdf \widetilde{\mathsf{H}}_{p}
    \end{equation}
    where
    \begin{equation*}
    \widetilde{\mathsf{H}}_{p} \coloneq  (1 + \bff^2)^{-1/2} \mathsf{H}_{p}.
    \end{equation*}
Since $( 1 + \bff^{2} )^{-1/2}$ is a smooth, non-vanishing function near $^{\mathrm{3sc}}T^{\ast}_{\ff} \tscX$, we can conclude that integral curves of $\widetilde{\mathsf{H}}_{p}$ are locally time-reparameterisation of the integral curves of $\mathsf{H}_{p}$, and in particular must have the same trajectories. Observe also that equation (\ref{reparametrised 2sc Hamiltonian vector field}) implies $\mathsf{H}_{p}$ is singular at ${^{\mathrm{3sc}}T^{\ast}_{\text{ff}_{\tindex}}} \tscX$. Hence the integral curves of $\mathsf{H}_{p}$ (and therefore those of $\widetilde{\mathsf{H}}_{p}$) will in general not be defined for all times.  \par

    The local flow for $\widetilde{\mathsf{H}}_{p}$ can be used to study the intersection of $\gamma$ with $^{\mathrm{3sc}}T^{\ast}_{ \partial \text{ff}_{\tindex}}[ \overline{\mathbb{R}^{n}} ; \mathcal{C} ]$ more effectively, since by parameterizing time via $\tilde{t}$, we can rewrite (\ref{flow equation for rho}) as 
    \begin{equation} \label{reparametrised time equation fo rho}
    \frac{d \bdf }{ d  \tilde{t} } =2 \tau_{\tindex} \bdf  -2 \tau^{\tindex}  ,
    \end{equation}
    which we can solve directly to obtain
    \begin{equation} \label{solution to first order equation for rho}
    \bdf = \ibdf e^{ 2 \int_{0}^{\tilde{t}} \tau_{\alpha} d \tilde{t}' } - 2 e^{2 \int_{0}^{ \tilde{t} } \tau_{\tindex} d \tilde{t}' } \int_{0}^{ \tilde{t} } e^{-2\int_{0}^{ \tilde{t}' }  \tau_{\tindex}  d \tilde{t}'' } \tau^{ \tindex }  d \tilde{t}',
    \end{equation}
    where $ \ibdf \coloneq \bdf(0) $ is the initial value. \par

    We can also relate $s_{\tindex}$ with $\tilde{t}$ via
    \begin{equation} \label{2s Hamiltonian flow time parameterisation}
    s_{\alpha} \coloneq  2 \int_{0}^{ \tilde{t} } |\mu_{\alpha}|_{h_{\alpha}} \hspace{0.5mm} d \tilde{t}'
    \end{equation}
    for $ \tilde{t} \in [0, T_{+}]$, where we have let
    \begin{equation} \label{definition of T_{+}}
    T_{+} \coloneq \lim_{t \rightarrow \infty} \tilde{t}.
    \end{equation}
    Thus the maximal value for which $s_{\alpha}$ could take will depend on whether or not $T_{+} = \infty$. In particular, it holds that $s_{\tindex,+} = \pi - s_{\tindex,0}$ in (\ref{range characterisation for tangential flow}) if and only if $T_{+} = \infty$. \par 
    For the fixed $a$ chosen above, we will also define 
    \begin{equation} \label{definition of T_{a}}
    T_{a} \coloneq \lim_{t \rightarrow a+} \tilde{t}.
    \end{equation}
    Hence the range of $\tilde{t}$ from $[a, \infty]$ is $[T_{a}, T_{+}]$.
    \par

    We now determine the precise conditions required for an integral curve $\gamma$ of $\sH_{p,\mf}$ in $\Sigma_{\mf}$ to have non-empty intersection with $^{\mathrm{3sc}}T^{\ast}_{ \partial \text{ff}_{\tindex}} \tscX$, provided that $\gamma \not\subset {^{\mathrm{3sc}}T^{\ast}_{ \partial \text{ff}_{\tindex}}} \tscX$.
    \begin{lemma} \label{lemma characterisation of flow intersecting front face}
    Let $\gamma$ be an integral curve of $\mathsf{H}_{p,\mathrm{mf}}$ in $\Sigma_{\mathrm{mf}}$ such that $\bff \leq \delta$ uniformly over $t \in (a,\infty)$ for some $\delta > 0$ small and possibly $a = - \infty$. Assume that
    \begin{equation} \label{assumption for intersection with the front faces}
    \lim_{t \rightarrow \infty} \gamma(t) \in {^{\mathrm{3sc}}T^{\ast}_{ \partial \mathrm{ff}_{\tindex}}} \tscX, \quad \gamma \not\subset {^{\mathrm{3sc}}T^{\ast}_{\partial \mathrm{ff}_{\tindex}}} \tscX.
    \end{equation}
    We will split our consideration into two cases: 
    \begin{itemize}
    \item Suppose that $\bff > 0$, and it holds that
       \begin{equation} \label{zeo energy condition 1 for integral curve to intersect ff}
    \text{$\tau^{\tindex} = 0$, $\mu^{\tindex} = 0$ for all $t \in (a ,\infty)$.} 
    \end{equation}
    Then we must have
    \begin{equation} \label{zerp energy condition 2 for integral curve to intersect ff}
    \begin{gathered}
    \text{$T_{+} = \infty$, $\tau_{\tindex} \neq \lambda$ and $\lim_{t \rightarrow \infty} \tau_{\tindex} = - \lambda$,} \\
    \text{with the possibility that $\ltau = - \lambda$ identically.} 
    \end{gathered}
    \end{equation} 
    Conversely, if (\ref{zeo energy condition 1 for integral curve to intersect ff}) and (\ref{zerp energy condition 2 for integral curve to intersect ff}) hold, then (\ref{assumption for intersection with the front faces}) must hold as well. \vspace{2mm}
    \item Suppose that $\bff > 0$, and (\ref{zeo energy condition 1 for integral curve to intersect ff}) does not hold. Then we must have
    \begin{equation} \label{condition for integral curve to intersect ff}
    \tau^{\tindex} = ( \lambda^{2} - |\ltau|^2 - |\lmu|_{h_{\tindex}}^2 )^{1/2}, \ \mu^{\tindex} = 0.
    \end{equation} 
    Moreover, suppose that $\utau \neq 0$. Then with $\rho_{\tindex,0} = \rho_{\tindex}(0)$, we have
    \begin{equation} \label{integral curve initial value condition for rho}
    \rho_{\tindex,0} = 2 \tau^{\tindex} \int_{0}^{T_{+}} e^{ -2 \int_{0}^{\tilde{t}'} \ltau d\tilde{t}'' } d\tilde{t}', \ T_{+} < \infty.
    \end{equation}
    Conversely, if (\ref{condition for integral curve to intersect ff}) and (\ref{integral curve initial value condition for rho}) hold with $\utau \neq 0$, then (\ref{assumption for intersection with the front faces}) must hold as well.
    \end{itemize}
    \end{lemma}
    \begin{proof}
    Throughout this proof, we will work mostly in time variable $\tilde{t} \in [ T_{a}, T_{+} ]$, where $T_{+}$, $T_{a}$ are respectively defined in (\ref{definition of T_{+}}) and (\ref{definition of T_{a}}). Furthermore, since we are only interested in the behaviour of large $\tilde{t}$, we will assume without loss of generality that $T_{a} \geq 0$. Notice that this corresponds to the choice of $a \geq 0$. \par

    We start by looking at the extreme case (\ref{zeo energy condition 1 for integral curve to intersect ff}). Thus in addition to (\ref{assumption for intersection with the front faces}), we will assume that $\tau^{\tindex} = 0$, $\mu^{\tindex} = 0$ for all $\tilde{t} \in [T_{a}, T_{+}]$. Then equation (\ref{reparametrised time equation fo rho}) is reduced to 
    \begin{equation} \label{reduced equation extreme case interactive variables zero energy}
    \bdf = \ibdf e^{2 \int_{0}^{\tilde{t}} \tau_{\tindex}  d\tilde{t}'}.  
    \end{equation} 
    If $\gamma$ intersects ${^{\mathrm{3sc}}T^{\ast}_{\partial \text{ff}_{\tindex}} } \tscX$ but is not entirely contained in ${^{\mathrm{3sc}}T^{\ast}_{\partial \text{ff}_{\tindex}} } \tscX$ (i.e., when (\ref{assumption for intersection with the front faces}) is satisfied), then by (\ref{limiting behavour of integral curves}) and the definition of $T_{+}$, we must have (\ref{reduced equation extreme case interactive variables zero energy}) going to $0$ as $\tilde{t} \rightarrow T_{+}$. In other words, we require that
    \begin{equation} \label{requirement for rho convergence to 0}
    \int_{0}^{T_{+}} \tau_{\tindex} d\tilde{t}' = -\infty.
    \end{equation}
    Now, by equations (\ref{Hamiltonian flow along C}) and the remarks which follows, we know that the only possibilities for $\tau_{\tindex}$ in this case are either $\tau_{\tindex} = \pm \lambda$ for all times, or $\lim_{\tilde{t} \rightarrow T_{+}} \tau_{\tindex} < \lambda$, with $\lim_{\tilde{t} \rightarrow T_{+}} \tau_{\tindex} = - \lambda$ if and only if $T_{+} = \infty$. From (\ref{requirement for rho convergence to 0}), we can conclude that $\tau_{\tindex} = \lambda$ is not a possibility. In fact, since $\tau_{\tindex}$ is bounded by $\lambda$, we must have
    \begin{equation*}
    - \lambda T_{+} \leq \int_{0}^{T_{+}} \tau_{\tindex} d\tilde{t}' \leq \lambda T_{+}.
    \end{equation*}
    Thus, the only possibility for (\ref{requirement for rho convergence to 0}) to be true would be if $T_{+} = \infty$, in which case $\lim_{\tilde{t} \rightarrow T_{+}} \tau_{\tindex} = - \lambda$, including the possibility of $\tau_{\tindex} = - \lambda$ identically. This proves one direction. Conversely, if (\ref{zeo energy condition 1 for integral curve to intersect ff}) and (\ref{zerp energy condition 2 for integral curve to intersect ff}) are true, then by (\ref{requirement for rho convergence to 0}) followed by (\ref{reduced equation extreme case interactive variables zero energy}), we easily conclude that $\lim_{\tilde{t} \rightarrow T_{+}} \bdf = 0$. Thus, (\ref{assumption for intersection with the front faces}) is satisfied as well. \par

    Next we consider the case where (\ref{zeo energy condition 1 for integral curve to intersect ff}) does not hold. We will show that (\ref{assumption for intersection with the front faces}) implies (\ref{condition for integral curve to intersect ff}). Arguing by contradiction, we will assume that $\lim_{\tilde{t} \rightarrow T_{+}} \bdf = 0$ but $\tau^{\tindex} \neq ( \lambda^{2} - |\ltau|^2 - |\lmu|_{h_{\tindex}}^2 )^{1/2}$. Then by (\ref{equations 1 for subsystem integral flows}), we must have $\lim_{\tilde{t} \rightarrow T_{+}} \tau^{\tindex} = - (  \lambda^{2} - |\ltau|^2 - |\lmu|_{h_{\tindex}}^2 )^{1/2} \neq 0$. Then by using (\ref{reparametrised time equation fo rho}), we can conclude that for any $\epsilon > 0$, there exists $\tilde{T}_{0} > 0$ such that for all $\tilde{t} \in [ \tilde{T}_{0}, T_{+} ]$, we have
    \begin{equation*}
    \frac{d \bdf }{ d \tilde{t} } \geq 2 ( \lambda^{2} - |\ltau|^2 - |\lmu|_{h_{\tindex}}^2 )^{1/2} - \epsilon > 0
    \end{equation*}
    for $\epsilon$ small enough. Hence $\bdf$ must be increasing for all $\tilde{t} \in [ \tilde{T}_{0}, T_{+} ]$ and we would not have $\lim_{\tilde{t} \rightarrow T_{+}} \bdf = 0$, a contradiction. \par

    To show (\ref{integral curve initial value condition for rho}), we just note that since $\utau$ is conserved, expression (\ref{solution to first order equation for rho}) gets reduced to 
     \begin{equation*}
    \bdf = e^{2 \int_{0}^{\tilde{t}} \tau_{\tindex} \hspace{0.5mm} d\tilde{t}' } \big( \ibdf - 2 \tau^{\tindex} \int_{0}^{\tilde{t}} e^{-2 \int_{0}^{\tilde{t}'} \tau_{\tindex} \hspace{0.5mm} d\tilde{t}'' } d\tilde{t}'   \big).
    \end{equation*}
    Since we know that $\bff$ can only reach $0$ at $\tilde{t} = T_{+}$, we conclude that this is only possible if 
    \begin{equation*}
    \rho_{\tindex,0} = 2 \tau^{\tindex} \int_{0}^{ T_{+} } e^{-2 \int_{0}^{\tilde{t}'} \tau_{\tindex} d\tilde{t}'' } \hspace{0.5mm} d\tilde{t}' 
    \end{equation*}
    as required. Notice that this indeed excludes the possibility that $T_{+} = \infty$, since otherwise
    \begin{equation*}
    2 \utau \int_{0}^{\tilde{t}} e^{ - 2 \int_{0}^{ \tilde{t}' } \ltau d \tilde{t}'' } d \tilde{t}'
    \end{equation*}
    must diverge to positive infinity, thereby making $\bff$ negative. The converse is obvious. \par 
    This concludes the proof of the lemma.
        \end{proof} 
        Finally, if $\gamma \subset {^{\mathrm{3sc}}T^{\ast}_{\partial \text{ff}_{\tindex}} } \tscX$, then the flow will stay in ${^{\mathrm{3sc}}T^{\ast}_{\partial \text{ff}_{\tindex}} } \tscX$ for all time, with $(y_{\tindex}, \tau_{\tindex}, \mu_{\tindex})$ being constants, while $(y^{\tindex}, \tau^{\tindex}, \mu^{\tindex})$ are determined by equations (\ref{equations 1 for subsystem integral flows}), (\ref{equations 2 for subsystem integral flows}), which in large ($+$) and small ($-$) asymptotics converge to $\pm ( \lambda^{2} - |\ltau|^2 - |\lmu|_{h_{\tindex}}^2 )^{1/2}$, $\mu^{\tindex} = 0$. \par

    We now have a summary of all the integral curves of $\mathsf{H}_{p, \text{mf}}$ near ${^{\mathrm{3sc}}T^{\ast}_{\partial \text{ff}_{\tindex}} \tscX}$.

    \begin{proposition} \label{precise description of integral flow near ff}
    Let $\gamma$ be an integral curve of $\mathsf{H}_{p, \mf }$ in $\Sigma_{\mf}$.

  (1)  Assume $\gamma$ is such that $\bff \leq \delta$ uniformly over $t \in (a, \infty) $ for some $\delta > 0$ small and possibly $a = - \infty$. Then the evolution of $(y^{\tindex}, \tau^{\tindex}, \mu^{\tindex})$ is determined completely by $\mathsf{H}_{ \intn }$, and we have
    \begin{equation} \label{how does the interactive variables flow in the dynamical picture away from tcocf}
    \begin{gathered}
    \text{either $\lim_{t \rightarrow \infty} \tau^{\tindex} = - ( \lambda^2 - |\ltau|^2 - |\lmu|_{h_{\tindex}}^2 )^{1/2}$, $\lim_{t \rightarrow \infty} \mu^{\tindex} = 0$, or} \\ 
    \text{$\tau^{\tindex} = \pm ( \lambda^{2} - |\ltau|^2 - |\lmu|_{h_{\tindex}}^2 )^{1/2}$, $\mu^{\tindex} = 0$ for all $t \in (a, \infty)$.}
    \end{gathered}
    \end{equation} \par

 Moreover, if $\lim_{t \rightarrow \infty} \gamma(t) \in {^{\mathrm{3sc}}T^{\ast}_{ \partial \ff } \tscX}$, $\gamma \not\subset {^{\mathrm{3sc}}T^{\ast}_{ \partial \ff } \tscX}$, then
    \begin{equation*}
    \text{$\tau^{\tindex} = ( \lambda^{2} - | \ltau |^2 - | \lmu |_{h_{\tindex}}^2 )^{1/2}$, $\mu^{\tindex} = 0$ for all $t \in (a,\infty)$,}
    \end{equation*}
 and the evolution of $ (y_{\alpha}, \tau_{\alpha}, \mu_{\alpha})$ is determined completely by $\sH_{ \intt }$. The latter dynamic is better understood in cases:
    \begin{itemize}
    \item[{(1.1)}]  If $| \ltau |^2 + |\lmu|_{h_{\tindex}}^2 = \lambda^2$ and $\tau_{\alpha} \neq \lambda$, then $\lim_{t \rightarrow \infty} \tau_{\tindex} =  - \lambda$ and $\lim_{t \rightarrow \infty} \mu_{\tindex} = 0$, with the possibility of $\ltau = -\lambda$ identically.
    \item[{(1.2)}]  If $ | \ltau |^2 + | \lmu |_{h_{\tindex}}^2 < \lambda^2$, then $\lim_{t \rightarrow \infty} \tau_{\tindex} = \tau_{\tindex,+}$ and $\lim_{t \rightarrow \infty} \mu_{\tindex} = \mu_{\tindex,+} $, where
    \begin{equation*}
    \text{$\tau_{\tindex,+} > - ( \lambda^{2} - |\utau|^2 - |\umu|_{h^{\tindex}}^2 )^{1/2}$, $\mu_{\tindex,+} \neq 0$.}
    \end{equation*}
    \end{itemize}
       These are the only possible cases for $\lim_{t \rightarrow \infty} \gamma(t) \in {^{\mathrm{3sc}}T^{\ast}_{\partial \ff}} \tscX$, $\gamma \not\subset {^{\mathrm{3sc}}T^{\ast}_{\partial \ff}} \tscX$. In particular, they occur if and only if $\lim_{t \rightarrow \infty} \gamma(t) \in \mathcal{R}_{\mathrm{n},-}$. \par

    If we instead assume that $\gamma \cap {^{\mathrm{3sc}}T^{\ast}_{\partial \ff} \tscX} = \emptyset$, then $\lim_{t \rightarrow \infty} \gamma(t) \in \mathcal{R}_{\mathrm{2sc},\mf,-}$.
 \par

   (2) The situation is analogous if $\gamma$ is chosen such that $\bdf \leq \delta$ for $t \in (-\infty, b)$, with the possibility that $b = \infty$. In these cases, the evolution of $(y^{\tindex}, \tau^{\tindex}, \mu^{\tindex})$ is still determined completely by $\mathsf{H}_{\intn}$, only now we have
    \begin{equation} \label{how does the interactive variables flow in the dynamical picture away from tcocf backward direction}
    \begin{gathered}
    \text{either $\lim_{t \rightarrow - \infty} \tau^{\tindex} = ( \lambda^2 - |\ltau|^2 - |\lmu|_{h_{\tindex}}^2 )^{1/2}$, $\lim_{t \rightarrow \infty} \mu^{\tindex} = 0$, or} \\ 
    \text{$\tau^{\tindex} = \pm ( \lambda^{2} - |\ltau|^2 - |\lmu|_{h_{\tindex}}^2 )^{1/2}$, $\mu^{\tindex} = 0$ for all $t \in (-\infty, b)$.}
    \end{gathered}
    \end{equation} \par

    Moreover, if $\lim_{t \rightarrow -\infty} \gamma(t) \in {^{\mathrm{3sc}}T^{\ast}_{ \partial \ff } \tscX}$, $\gamma \not\subset {^{\mathrm{3sc}}T^{\ast}_{ \partial \ff }\tscX}$, then
    \begin{equation*}
    \text{$\tau^{\tindex} = -( \lambda^{2} - |\ltau|^2 - |\lmu|_{h_{\tindex}}^2 )^{1/2}$, $\mu^{\tindex} = 0$ for all $t \in (-\infty,b)$,}
    \end{equation*} 
and the evolution of $(y_{\tindex}, \tau_{\tindex}, \mu_{\tindex})$ is determined completely by $\mathsf{H}_{ \intt }$. More precisely:
    \begin{itemize}
    \item[{(2.1)}]  If $\intt = \lambda^2$ and $\tau_{\alpha} \neq -\lambda$, then $\lim_{t \rightarrow \infty} \tau_{\tindex} =  \lambda$ and $\lim_{t \rightarrow \infty} \mu_{\tindex} = 0$, with the possibility of $\ltau = \lambda$ identically.
    \item[{(2.2)}]  If $\intt < \lambda^2$, then $\lim_{t \rightarrow \infty} \tau_{\tindex} = \tau_{\tindex,-}$ and $\lim_{t \rightarrow \infty} \mu_{\tindex} = \mu_{\tindex,-} $, where
    \begin{equation*}
    \text{$\tau_{\tindex,-} < ( \lambda^{2} - |\utau|^2 - |\umu|_
    {h^{\tindex}}^2 )^{1/2}$, $\mu_{\tindex,-} \neq 0$.}
    \end{equation*}
    \end{itemize} 
        These are the only possible cases for $\lim_{t \rightarrow -\infty} \gamma(t) \in {^{\mathrm{3sc}}T^{\ast}_{\partial \mathrm{ff}_{\tindex}}} \tscX$, $\gamma \not\subset {^{\mathrm{3sc}}T^{\ast}_{\partial \ff }} \tscX$. In particular, they occur if and only if $\lim_{t \rightarrow -\infty} \gamma(t) \in \mathcal{R}_{\mathrm{n},+}.$ \par

            If we instead assume that $\gamma \cap {^{\mathrm{3sc}}T^{\ast}_{\partial \ff} \tscX} = \emptyset$, then $\lim_{t \rightarrow -\infty} \gamma(t) \in \mathcal{R}_{\mathrm{2sc},\mf,+}$.    \par

            Otherwise, if $\gamma \subset {^{\mathrm{3sc}}T^{\ast}_{\partial \ff } } \tscX$, then $\bdf = 0$ and $(y_{\tindex}, \tau_{\tindex}, \mu_{\tindex})$ are constants along $\gamma$. In particular, we can always take $a = -\infty$, $b = \infty$. \par

    Finally, outside of any fixed neighborhood of ${^{\mathrm{3sc}}T^{\ast}_{\partial \ff } } \tscX$, the integral curves of $\mathsf{H}_{p,\mf }$ are exactly the same as those of $\mathsf{H}_{p}$.
    \end{proposition}

\subsection{Degeneracy for the flow of $\sH_{p,\mf}$}
Recall that we have a natural projection
    \begin{equation*}
    {^{\mathrm{3sc}}\pi_{\ff}^{\perp}} : {^{\mathrm{3sc}}T^{\ast}_{\ff} \tscX } \rightarrow \mathcal{C}_{\tindex} \times \mathbb{R}^{n_{\tindex}}.
    \end{equation*}
    We now note that
    \begin{equation} \label{pull-back of tangential elliptic set}
    \Sigma_{\text{mf}} \cap ({ ^{\mathrm{3sc}}\pi_{\ff}^{\perp} })^{-1} ( \Sigma_{\mathrm{t}} ) \subset \mathcal{R}_{\text{n},\pm},
    \end{equation}
    which can also be more concretely written in local coordinates as 
    \begin{equation*}
    \{ (   y_{\tindex}, y^{\tindex}, \ltau, \lmu, \utau, \umu ) \in {^{\mathrm{3sc}}T^{\ast}_{\partial \ff} \tscX} :  | \ltau |^2 + | \lmu |_{h_{\tindex}}^2 = \lambda^2, \utau = 0,  \umu = 0  \},
    \end{equation*}
is a set of points behaving in a degenerate way with respect to the flow of $\mathsf{H}_{p,\text{mf}}$, even in the radial sense. 
Indeed, since $\sH_{p,\mf}$ restricts to $\sH_{ \intn }$ at ${^{\mathrm{3sc}}T^{\ast}_{\partial \ff} }\tscX$, the degeneracy one encounters at (\ref{pull-back of tangential elliptic set}) is actually the same degeneracy one encounters in the two-body setting at zero energy. In particular, we have discussed (in \S \ref{motivation subsection}) that the usual combination of propagation of regularity and radial point estimates will become degenerate in this region (since there is no non-trivial flow one could use to propagate regularity). \par

The reason why we consider second microlocalization is precisely to overcome this challenge. Indeed, recall that we are blowing up $\overline{^{\mathrm{3sc}}T^{\ast}} \tscX$ at ${^{\mathrm{3sc}}\pi_{\ff}^{-1}}( o_{\mathcal{C}^{\tindex}} )$. Suppose we restrict first to $^{\mathrm{3sc}}T^{\ast}_{\ff} \tscX$. Then the blow-up in question, at least away from the fiber infinity, is equivalent to blowing up a set of points given in coordinates by $\{  x^{\tindex} = 0, \utau = 0, \umu = 0 \}$. In other words, we are exactly introducing polar coordinates at the zero section of ${ ^{\mathrm{sc}}T^{\ast}_{\partial X^{\tindex}} }X^{\tindex}$, which we view as a submanifold of ${ ^{\mathrm{3sc}}T^{\ast}_{\ff} } \tscX = \mathcal{C}_{\tindex} \times \mathbb{R}^{n_{\tindex}} \times {^{\mathrm{sc}}T^{\ast}X^{\tindex}}$. This will allow us to employ the methods of \cite[\S 5]{AndrasSM} to account for the degeneracy at (\ref{pull-back of tangential elliptic set}).

\subsection{Characteristic sets of $P$ in the second microlocal framework}
We now move onto the consideration of second microlocalization, and how the discussions of \S\S \ref{subsection characteristic variety of P in the three-body framework}--\ref{subsection: characterization for the flow} can be carried onto this setting. In particular, we will be interested in the dynamic of the Hamiltonian flow under finer rescaling. \par

We first return to the discussion on characteristic sets. First of all, it is easy to check via local calculations that 
\begin{equation*}
 P \in \Psi_{\mathrm{3coc}}^{2,0,0,0}(X), \  P \in \Psi_{\mathrm{d3sc,3co,res}}^{2,0,0,0,0,2}( \Xd ).
\end{equation*}
Let us write
\begin{equation*}
\bcv \coloneq \mathrm{Char}_{\sigma,\delta}( P )
\end{equation*}
for the characteristic set of $P$ as an operator in $\Psi_{\mathrm{d3sc,3co,res}}^{2,0,0,0,0,2}(\Xd)$ in the symbolic sense (see Definition \ref{symbolic version of elliptic and characterstic sets}). If $p$ denotes the principal symbol of $P$ in this sense, then $\bcv$ is defined at the `symbolic' boundary faces of $\psf \Xd$ by those points at which $p$ vanishes. Clearly, $\bcv$ is contained away from both the fiber infinity as well as $\rf$. Thus, it will be enough if we restrict our attention to $\psf_{\dmf}\Xd$ and $\dtsccf$. Moreover, if
\begin{equation*}
\beta_{\mathrm{d3sc,3co,res}}: \psf \Xd \rightarrow \overline{ ^{\mathrm{3sc}}T^{\ast}} \tscX
\end{equation*}
denotes the natural blow-down map, then we have
\begin{equation*}
\bcv \cap \psf_{\dmf}\Xd = \beta_{\mathrm{d3sc,3co,res}}^{\ast}( \Sigma_{\mf} ),
\end{equation*} 
much as in the case of (\ref{the main face characteristic variety as a lift}). \par
Now, in the second microlocalized setting, we will also need to consider the characteristic set of $P$ at $\cf$. But since $p = \intt + \intn - \lambda^2$ as a smooth function on ${^{\mathrm{3sc}}T^{\ast}} \tscX$ near $^{\mathrm{3sc}}T^{\ast}_{\ff} \tscX$, and since $\tcocf$ occurs by blowing up the set 
\begin{equation*}
\{\bff = x^{\tindex} = 0, \utau = 0, \umu = 0 \} \subset {^\mathrm{3sc}T^{\ast}_{\mf \cap  \ff } \tscX },
\end{equation*}
which is followed by another blow-up at the lift of $^{\mathrm{3sc}}T^{\ast}_{\mf \cap \ff} \tscX$. We must have
\begin{equation} \label{indicial operator of P at cf formula}
\hat{N}_{\cf}(P) = p|_{\tcocf} = \intt - \lambda^2.
\end{equation}
Thus, we can trivially conclude that
\begin{equation*}
\Sigma_{\cf} \coloneq \mathrm{Char}_{\cf}(P) = \Sigma_{\mathrm{t}},
\end{equation*}
where $\Sigma_{\mathrm{t}}$ is defined as in (\ref{tangential part of the characteristic variety}). \par

Finally, we consider the characteristic set of $P$ at $\dff$. Note that we can still write
\begin{equation*}
\hat{N}_{\dff}(P) = \hat{N}_{\ff}(P) = \Delta_{z^{\tindex}} + V^{\tindex} - ( \lambda^{2} - | \ltau |^2 - | \lmu |_{h_{\tindex}}^2  )
\end{equation*}
over the interior of $\dff$ (which is the same as $\ff$), except that we now view $\hat{N}_{\dff}(P)$ as a family of operators in $\Psi_{\mathrm{sc,b}}^{2, 0, 0}( X^{\tindex} )$ rather than $\Psi_{\mathrm{sc}}^{2,0}(X^{\tindex})$. As such, much of the analysis from \S \ref{subsection characteristic variety of P in the three-body framework} remains valid even in this case, and we can conclude that $\hat{N}_{\dff}(P)$ is not invertible if and only if $( y_{\tindex}, \ltau, \lmu ) \in \Sigma_{\ff}$. The only issue which remains is to determine whether or not these inverses belong to $\Psi_{\mathrm{sc,b}}^{-2, 0,0}( X^{\tindex} )$. However, this is automatic since we already know that they belong to $\Psi_{\mathrm{sc}}^{-2,0}(X^{\tindex})$. Thus, we will define
\begin{equation*}
\Sigma_{\dff} \coloneq \mathrm{Char}_{\dff}(P) = \Sigma_{\ff} = \Sigma_{\mathrm{t}} \cup \Sigma_{\mathrm{n}}.
\end{equation*}
In addition to the no bound state condition, we will now additionally assume that 
\begin{equation*}
\text{$\hat{N}_{\dff}(P)$ has no half-bound state}
\end{equation*}
in the sense that $\Delta_{z^{\tindex}} + V^{\tindex}$, which can be shown to be a mapping $H_{\mathrm{b}}^{\infty, b }(X^{\tindex}) \rightarrow H_{\mathrm{b}}^{\infty, b + 2}( X^{\tindex} )$ for all $|b+1| \leq (n^{\tindex}-2)/2$, has trivial kernel{\ep}an assumption that is independent of $b$ in this range. See \cite[Equation (1.4)]{AndrasSM}.

\subsection{Rescaled Hamiltonian in the second microlocal framework}
\label{subsection rescaled hamiltonian in the second microlocal framework}
Now, the Hamiltonian vector filed $H_{p}$ should be considered under the rescaling 
\begin{equation} \label{second microlocalized rescaled Hamiltonian vector field with general boundary defining functions}
\rho_{\dmf}^{-1} \rho_{\dtsccf}^{-1} \rho_{\tcocf}^{-2} H_{p},
\end{equation}
where $\rho_{\dmf}, \rho_{\dtsccf}, \rho_{\tcocf}$ are respectively defining functions of $\psf_{\dmf}\Xd, \dtsccf$ and $\tcocf$, so as to ensure that (\ref{second microlocalized rescaled Hamiltonian vector field with general boundary defining functions}) locally (near $\bcv$) restricts to a smooth vector field that is tangent to all boundary faces of $\psf \Xd$. This is a fact which requires to be proved. However, it can be easily checked if we write down $H_p$ in local coordinates. \par

We will first work in a small neighborhood of $\tcocf$, which will henceforth be assumed to always be the case until stated otherwise. Recall (from \S \ref{subsection definition of the three-cone bundle and the variable changes}) that in a neighborhood of $\psf_{\dmf} \Xd$ that is away from $\dtsccf$, we can use the coordinates
\begin{equation*}
\begin{gathered}
( \bff, \hat{x}^{\tindex}, y_{\tindex}, y^{\tindex}, \ltau, \lmu, \utaures, \umures ), \ \text{where} \\
\hat{x}^{\tindex} \simeq \rho_{\dmf},  \ ( \intnres )^{-1/2} \simeq \rho_{\dtsccf}, \ \bff \simeq \rho_{\tcocf}.
\end{gathered}
\end{equation*}
Thus $\rho_{\dmf}^{-1} \rho_{\dtsccf}^{-1} \rho_{\tcocf}^{-2} H_{p} \simeq (\hat{x}^{\tindex})^{-1} \bff^{-2} H_p$ (in the sense that their integral curves agree up to a reparametrization of time), where we can write 
\begin{align} \label{section overview microlocal Hamiltonian vector field 1}
\begin{split}
( \hat{x}^{\tindex} )^{-1} \bff^{-2} H_p & = 2 ( 2 \utaures - \ltau ) \hat{x}^{\tindex} \partial_{\hat{x}^{\tindex}} + 2  ( \ltau - \utaures ) \bff \partial_{\bff} \\
& \quad + 2  ( | \utaures |^2 - | \umures |^2_{h^{\tindex}} - \ltau \utaures ) \partial_{ \utaures } + 2  ( 2 \utaures - \ltau  ) R_{\umures} \\
& \quad +  H_{|\umures|_{h^{\tindex}}^2}  +  \sHto.
\end{split}
\end{align} 
Here $R_{\umures} = \umures \cdot \partial_{\umures}$ and $H_{|\umures|_{h^{\tindex}}^2} = \partial_{\umures} |\umures|_{h^{\tindex}}^2 \cdot \partial_{y^{\tindex}} - \partial_{y^{\tindex}} |\umures |_{h^{\tindex}}^2 \cdot \partial_{\umures} $. \par
\begin{remark}
In fact, although this will not be used directly in this paper, we will point out that the restriction of (\ref{section overview microlocal Hamiltonian vector field 1}) to $\psf_{\dmf} \Xd \cap \tcocf$ can be written as the perturbation of a familiar structure. Indeed, suppose we write
\begin{equation} \label{definition of the standardly rescaled Hamiltonian in the res variables}
\sH_{ \intnres } \coloneq 2 \utaures R_{\umures} - 2 | \umures |_{h^{\tindex}}^2 \partial_{\utaures} +  H_{| \umures |_{h^{\tindex}}^2 }
\end{equation}
in analogy with (\ref{Hamiltonian form near the boundary of front face}). Then upon setting $\hat{x}^{\tindex} = \bff = 0$,  we find that (\ref{section overview microlocal Hamiltonian vector field 1}) becomes
\begin{equation*}
\sH_{\intnres} + \sH_{\intt} + 2 ( \utaures - \ltau ) ( \utaures \partial_{\utaures} + R_{\umures} ).
\end{equation*}
It follows that the variables $(y_{\tindex}, \ltau, \lmu)$ evolve as they do in the free case. Meanwhile, the motions of $( y^{\tindex}, \utaures, \umures )$ are governed by a perturbation of the free Hamiltonian (in the interaction variables) by a smooth multiple (the structure of which is also interesting, see (\ref{writing down the two-body radial set in the second microlocal setting}) below) of the radial vector field in $(\utaures, \umures)$.
\par
\end{remark}
On the other hand, in a neighborhood of $\psf_{\dff}\Xd$ that is away from $\dtsccf$, we can use the coordinates
\begin{equation*}
\begin{gathered}
( x^{\tindex}, \hbff, y_{\tindex}, y^{\tindex}, \ltau, \lmu, \utaub, \umub ), \ \text{where} \\
\hbff \simeq \rho_{\dff},  \ ( \fint )^{-1/2} \simeq \rho_{\dtsccf}, \ x^{\tindex} \simeq \rho_{\tcocf}.
\end{gathered}
\end{equation*}
Thus $\rho_{\dmf}^{-1} \rho_{\dtsccf}^{-1} \rho_{\tcocf}^{-2} H_{p} \simeq (x^{\tindex})^{-2} H_p$, where
\begin{align*} \label{section overview microlocal Hamiltonian vector field 2}
\begin{split}
(x^{\tindex})^{-2} H_{p} & = 2 ( \hbff \ltau - 2 \utaub ) \hbff \partial_{\hbff} + 2  \utaub  x^{\tindex} \partial_{x^{\tindex}} \\
& \quad - 2  ( \fint ) \partial_{\utaub} + H_{ | \umub |_{h^{\tindex}}^2 } +  \hbff \sHto.
\end{split}
\end{align*}
Here $H_{|\umub|_{h^{\tindex}}^2 } = \partial_{\umub} |\umub|_{h^{\tindex}}^2 \cdot \partial_{y^{\tindex}} - \partial_{y^{\tindex}} |\umub|_{h^{\tindex}}^2 \cdot \partial_{\umub}$. \par

We next consider the structure of $H_p$ near $\dtsccf \cap \tcocf$. In such regions, it would be convenient to introduce polar coordinates, i.e.,
\begin{equation*}
\begin{gathered}
\hat{\tau}^{\tindex}_{\mathrm{sf}} = \frac{\utaures}{ ( \intnres )^{1/2} }, \ \hat{\mu}^{\tindex}_{\mathrm{sf}} = \frac{\umures}{ ( \intnres )^{1/2} }, \\
\hat{\tau}^{\tindex}_{\mathrm{b}} = \frac{\utaub}{ ( \fint )^{1/2} }, \ \hat{\mu}^{\tindex}_{\mathrm{b}} = \frac{\umub}{ ( \fint )^{1/2} },
\end{gathered}
\end{equation*}
as well as
\begin{equation*}
\rhoresf = \frac{1}{ ( \intnres )^{1/2} }, \ \rhobf = \frac{1}{ ( \intnb )^{1/2} }.
\end{equation*} \par
Suppose we are near $\psf_{\dmf}\Xd$, then with the standard abuse of notations, we can introduce coordinates
\begin{equation*}
\begin{gathered}
( \hat{x}^{\tindex},  (\intn)^{1/2} , y_{\tindex}, y^{\tindex}, \ltau, \lmu, \hutaures, \humures, \rhoresf ), \ \text{where} \\
\hat{x}^{\tindex} \simeq \rho_{\dmf},  \ ( \intn )^{1/2}  \simeq \rho_{\tcocf}, \ \rhoresf \simeq \rho_{\dtsccf}.
\end{gathered}
\end{equation*}
Thus $\rho_{\dmf}^{-1} \rho_{\dtsccf}^{-1} \rho_{\tcocf}^{-2} H_{p} \simeq (\hat{x}^{\tindex})^{-1} ( \intn )^{-1} \rhoresf^{-1} H_p$, where
\begin{align}  \label{second microlocal dynamic modification cal 3}
\begin{split}
& (\hat{x}^{\tindex})^{-1} ( \intn )^{-1} \rhoresf^{-1} H_{p} \\
& \qquad = 2  ( 2 \hutaures - \rhoresf \ltau ) \hat{x}^{\tindex} \partial_{\hat{x}^{\tindex}} + 2  (  \rhoresf \ltau -  \hutaures  ) \rhoresf \partial_{\rhoresf} \\
& \qquad \quad + \sH_{ \intnres, \infty} + \rhoresf \sHto.
\end{split}
\end{align} 
Here, we are writing 
\begin{equation*}
\sH_{\intnres, \infty} \coloneq  \rhoresf \sH_{ \intnres }|_{\rhoresf = 0}
\end{equation*}
to denote the restriction of $\sH_{\intnres}$ (which is defined in (\ref{definition of the standardly rescaled Hamiltonian in the res variables})) to $\rhoresf = 0$ in polar coordinates $( \hutaures, \humures, \rhoresf )$. In particular, if $\utaures$ is dominating, then we can write
\begin{equation} \label{res variable rescaled Hamiltonian at fiber infinity long time}
\sH_{\intnres, \infty}  = 2 \hutaures R_{\humures} + H_{ | \humures |_{h^{\tindex}}^2 },
\end{equation}
where $R_{\humures} = \humures \cdot \partial_{\humures}$ and $H_{| \humures |_{h^{\tindex}}^2} = \partial_{\humures} |\humures|_{h^{\tindex}}^2 \cdot \partial_{y^{\tindex}} - \partial_{y^{\tindex}} |\humures|_{h^{\tindex}}^2 \cdot \partial_{ \humures }$. For the brevity of exposition, we shall omit calculating the local expressions for $\sH_{\intnres,\infty}$ if instead one of $\humuresj$ for $j=1,...,n^{\tindex}-1$ is dominating. \par

Likewise, if we are near $\psf_{\dff} \Xd$, then we can use the coordinates (and slightly abusing notations as before)
\begin{equation*}
\begin{gathered}
(  (\intn)^{1/2}, \hbff ,  y_{\tindex}, y^{\tindex}, \ltau, \lmu, \hutaub, \humub, \rhobf ), \ \text{where} \\
\hbff \simeq \rho_{\dff}, \ ( \intn )^{1/2} \simeq \rho_{\tcocf}, \ \rhobf \simeq \rho_{\dtsccf}.
\end{gathered}
\end{equation*}
Thus $\rho_{\dmf}^{-1} \rho_{\dtsccf}^{-1} \rho_{\tcocf}^{-2} H_{p} \simeq (\intn)^{-1} \rhobf^{-1} H_p$ , where
\begin{align}  \label{second microlocal dynamic modification cal 5}
\begin{split}
( \intn )^{-1} \rhobf^{-1} H_{p} & = 2 ( \rhobf \hbff \ltau - 2 \hutaub ) \hbff \partial_{\hbff} + 2 \hutaub \rhobf \partial_{\rhobf} \\
& \quad  +  {^{\mathrm{b}}\sH}_{\intnb, \infty} + \rhobf \hbff \sHto.
\end{split}
\end{align} 
Here we are writing
\begin{equation*}
^{\mathrm{b}}\sH_{\intnb} \coloneq  - 2 ( \intnb ) \partial_{\utaub} + H_{| \umub |_{h^{\tindex}}^2 },
\end{equation*}
where the additional superscript `b' indicates that $ {^{\mathrm{b}}\sH_{\intnb}}$ should be viewed as the restriction to spatial infinity of a rescaled Hamiltonian vector field of $\intnb$ in the b-calculus. Then 
\begin{equation*}
{^{\mathrm{b}}\sH_{\intnb,\infty}} \coloneq \rhobf {^{\mathrm{b}}\sH_{\intnb}}|_{\rhobf = 0}
\end{equation*}
denotes the restriction of $\rhobf {^{\mathrm{b}}\sH_{\intnb}}$ to $\rhobf = 0$ in polar coordinates $(\hutaub, \humub, \rhobf)$. Moreover, if $\hutaub$ is dominating, then we can write
\begin{equation}  \label{second microlocal dynamic modification cal 5.1}
{^{\mathrm{b}}\sH_{\intnb,\infty}}  = 2 \hutaub R_{\humub} + H_{ |\humub|_{h^{\tindex}}^2 },  
\end{equation} 
where $R_{\humub} = \humub \cdot \partial_{\humub}$ and $H_{ | \humub |_{h^{\tindex}}^2 } = \partial_{ \humub } | \humub |_{h^{\tindex}}^2 \cdot \partial_{y^{\tindex}} - \partial_{y^{\tindex}} | \humub |_{h^{\tindex}}^2 \cdot \partial_{ \humub } $. If instead we assume that one of $\hat{\mu}^{\tindex}_{\mathrm{b},j}$, $j =1, ..., n^{\tindex}-1$ is dominating, then a similar calculation, which will again be omitted for the brevity of exposition, can be carried out as well. \par

Notice that the above formulae near $\dtsccf \cap \tcocf$ are valid for $\intn < \infty$. Thus in particular, they must also be valid in a neighborhood of $\Sigma_{\sigma} \cap \dtsccf$. Indeed, this is because $\intn = \lambda^{2} - \intt \leq \lambda^2$ when restricted to $\Sigma_{\sigma}$.

Finally, it will be instructive to present the expressions for $H_{p}$ in regions which contain $\bcv \cap \dtsccf$ but do not intersect $\tcocf$. Moreover, locally we will write down $H_{p}$ as a vector field on ${^{\mathrm{d3sc}}T^{\ast} \Xd}$ away from the lift of ${^\mathrm{3sc}\pi_{\ff}^{-1} (o_{\mathcal{C}^{\tindex}})}$. In other words, the blow-up which requires second microlocalization  needs not be considered. \par

To this end, assume that we are near ${^{\mathrm{d3sc}}T^{\ast}_{\dmf}\Xd}$. Then we can use the coordinates
\begin{equation*}
( \bff, \hat{x}^{\tindex}, y_{\tindex}, y^{\tindex}, \ltau, \lmu, \utau, \umu ), \ \text{where} \ \hat{x}^{\tindex} \simeq \rho_{\dmf}, \, \bdf \simeq \rho_{\dtsccf}.
\end{equation*}
Thus $\rho_{\dmf}^{-1} \rho_{\dtsccf}^{-1} \rho_{\tcocf}^{-2} H_{p} \simeq ( \hat{x}^{\tindex} )^{-1} \bff^{-1} H_p$. In fact, the rescaling in (\ref{rescalled Hamiltonian flow at main face}) remains to be valid in this case, and we easily find that
\begin{align} \label{section overview microlocal Hamiltonian vector field 3}
\begin{split}
(\hat{x}^{\tindex})^{-1} \bff^{-1} H_{p} & = 2 ( 2 \utau - \bff \ltau ) \hat{x}^{\tindex} \partial_{\hat{x}^{\tindex}} +  2 ( \bff \ltau - \utau ) \bff \partial_{ \bff } \\
& \quad + \sHno + \bff \sHto .
\end{split}
\end{align}
On the other hand, in a neighborhood of ${^{\mathrm{d3sc}}T^{\ast}_{\dff} \Xd}$, we can use the coordinates
\begin{equation*}
( x^{\tindex}, \hbff, y_{\tindex}, y^{\tindex}, \ltau, \lmu, \utau, \umu ), \ \text{where} \ \hbff \simeq \rho_{\dff}, \, x^{\tindex} \simeq \rho_{\dtsccf}.
\end{equation*}
Thus $\rho_{\dmf}^{-1} \rho_{\dtsccf}^{-1} \rho_{\tcocf}^{-2} H_{p} \simeq (x^{\tindex})^{-1} H_p$, where 
\begin{align} \label{section overview microlocal Hamiltonian vector field 4}
\begin{split}
(x^{\tindex})^{-1} H_p & = 2 \hbff ( x^{\tindex} \hbff \ltau - 2 \utau ) \partial_{\hbff} +  2 \utau x^{\tindex} \partial_{x^{\tindex}} \\
& \quad + \sHno + \hbff x^{\tindex} \sHto.
\end{split}
\end{align} 
\subsection{Radial points in the second microlocal framework}
\label{subsection radial points in the second microlocal framework}
We now proceed to compute the stationary points of $\rho_{\dmf}^{-1} \rho_{\dtsccf}^{-1} \rho_{\tcocf}^{-2} H_{p}$ within $\bcv$, which we will interpret as radial points in the second microlocalized setting.

First of all, calculations (\ref{second microlocal dynamic modification cal 3})--(\ref{section overview microlocal Hamiltonian vector field 4}) imply that the roles of the `normal' radial sets $\mathcal{R}_{\mathrm{n},\pm}$ should now be replaced by  a combined roles of two new sets. Namely
\begin{equation*}
\begin{gathered}
\mathcal{R}_{\mathrm{n}, \dmf, \pm} \subset \dtsccf \cap \psf_{\dmf} \Xd, \\
\mathcal{R}_{\mathrm{n}, \dff, \pm} \subset \dtsccf \cap \psf_{\dff} \Xd,
\end{gathered}
\end{equation*}
where by (\ref{second microlocal dynamic modification cal 3}) and (\ref{second microlocal dynamic modification cal 5}), we define
\begin{equation*}
\begin{gathered}
\mathcal{R}_{n, \dmf, \pm}   \coloneq \{ \hat{x}^{\tindex} =  \rhoresf = 0, \hutaures = \pm1, \humures = 0, \intn = \lambda^{2} - \intt \}, \\
\mathcal{R}_{n, \dff, \pm} \coloneq \{ \hbff = \rhobf = 0, \hutaub = \pm1, \humub = 0, \intn = \lambda^2 - \intt  \}.
\end{gathered}
\end{equation*}
Notice that by (\ref{section overview microlocal Hamiltonian vector field 3}) and (\ref{section overview microlocal Hamiltonian vector field 4}), we can also write
\begin{equation} \label{Local d3sc coordintes representation of Rndmfpm}
\begin{gathered}
\mathcal{R}_{\mathrm{n}, \dmf, \pm} \backslash \tcocf  = \{  \hat{x}^{\tindex} = \bff = 0, \utau = \pm ( \lambda^{2} - |\ltau|^2 - |\lmu|_{h_{\tindex}}^2 )^{1/2}, \umu = 0  \}, \\
\mathcal{R}_{\mathrm{n}, \dff, \pm} \backslash \tcocf = \{ \hbff =  x^{\tindex} = 0, \utau = \pm ( \lambda^2 - |\ltau|^2 - |\lmu|_{h_{\tindex}}^2 )^{1/2}, \umu = 0 \},
\end{gathered}
\end{equation}
which should be compared to the definitions of $\mathcal{R}_{\mathrm{n}, \pm}$ in (\ref{local coordinates representation of Rnpm}) above.

\par

Furthermore, as in the setting with no second microlocalization, we also have the lifts of the two-body, or `free' radial sets $\mathcal{R}_{\mathrm{sc},\pm}$ onto $\psf_{\dmf} \Xd$, which will be denoted by
\begin{equation*}
\brpm \subset \psf_{\dmf}\Xd.
\end{equation*}
Clearly, $\brpm$ can also be identified as the lifts of $\mathcal{R}_{\mathrm{2sc},\mf,\pm}$. Locally in a neighborhood of $\psf_{\dmf} \Xd \cap \tcocf$ , we can write
\begin{equation} \label{writing down the two-body radial set in the second microlocal setting}
\brpm = \Big\{ \ltau = \pm \frac{1}{ ( 1 + \bff^{2} )^{1/2} } \lambda, \,  \lmu = 0, \, \utaures = \ltau,  \, \umures = 0  \Big\},
\end{equation}
while $\brpm$ is still characterized by (\ref{writing down the 3sc radial set in local coordinates}) everywhere else.
\par

Finally, there exists a new kind of radial set, which can be defined in local coordinates by
\begin{equation*}
\mathcal{R}_{0,\pm} \coloneq \{ \bff = \hat{x}^{\tindex} = 0, \, \ltau = \pm \lambda,  \, \lmu = 0, \, \utaures = 0, \,  \umures = 0 \}.
\end{equation*}
Thus $\mathcal{R}_{0, \pm}$ is a subset of $\tcocf \cap \psf_{\dmf} \Xd $ which does not intersect $\dtsccf$.   \par

The nature (as equilibria) of these radial points can be easily determined via the standard dynamical methods. Here we shall omit these calculations, and just state:
\begin{proposition}
Let $\rho_{\dmf}, \rho_{\dtsccf}, \rho_{\tcocf}$ be respectively boundary defining functions for $\psf_{\dmf}\Xd, \dtsccf$ and $\tcocf$. Then the vector field
\begin{equation}
\label{second microlocally rescaled vector field in Proposition}
\rho_{\dmf}^{-1} \rho_{\dtsccf}^{-1} \rho_{\tcocf}^{-2} H_{p}
\end{equation}
is tangent to every boundary face of $\psf \Xd$ near $\bcv$. In particular, (\ref{second microlocally rescaled vector field in Proposition}) defines a complete flow over $\bcv$, with the following equilibria:
\begin{itemize}
\item $\mathcal{R}_{\mathrm{n,dmf},\pm} \subset \dtsccf \cap \psf_{\dmf} \Xd$, which are saddles;
\item $\mathcal{R}_{\mathrm{n},\dff,\pm} \subset \dtsccf \cap \psf_{\dff} \Xd$, which are saddles; 
\item $\mathcal{R}_{0,\pm} \subset \tcocf^{\circ} \cap \psf_{\dmf} \Xd$, which are saddles;
\item $\brpm \subset \psf_{\dmf} \Xd$, which are respectively source (corresponding to the $+$ sign) and sink (corresponding to the $-$ sign). 
\end{itemize} 
\end{proposition}
\subsection{Characterization for the flow of $\rho_{\dmf}^{-1} \rho_{\dtsccf}^{-1} \rho_{\tcocf}^{-2} H_{p}$}\label{subsection characterization for the flow of the second microlocal dynamic}
We proceed to characterize the flow of $\rho_{\dmf}^{-1} \rho_{\dtsccf}^{-1} \rho_{\tcocf}^{-2} H_{p}$ in $\bcv$ near those radial sets which are saddles. This will be important when we consider radial point estimates in the sections below. In particular, since we will state propagation estimates in purely dynamical terms, it will be necessary to understand how the flow of (\ref{second microlocally rescaled vector field in Proposition}) behaves near the saddle points.  \par

Notice that such an analysis will not be necessary near the sources and sink $\brpm$, since the local behaviors near $\brpm$ are automatically understood, i.e. all local integral curves of (\ref{second microlocally rescaled vector field in Proposition}) are attracted to $\brm$ repelled from $\brp$. \par

Before we state our results, we note that it will be convenient to represent the evolution in the interaction variables near $\dtsccf$ in a uniform way. Indeed, suppose we write
\begin{equation*}
\hat{\tau}^{\tindex} = \frac{\utau}{( \intn )^{1/2}}, \ \hat{\mu}^{\tindex} = \frac{ \umu }{ ( \intn )^{1/2} }.
\end{equation*}
Then by scaling we must have $\hat{\tau}^{\tindex} = \hutaures = \hutaub$,  $\hat{\mu}^{\tindex} = \humures = \humub$. Moreover, we can check that
\begin{equation} \label{uniform interactive variable flow Hamiltonian scaled}
 \sH_{\intnres, \infty} = {^{\mathrm{b}}\sH_{\intnb, \infty}}.
\end{equation}
depends only on derivatives in the directions of $( \hutau, \humu )$.

\begin{proposition} \label{proposition characterization of the integral curves which converge to dtsccf}
Let $\gamma$ be an integral curve of $\rho_{\dmf}^{-1} \rho_{\dtsccf}^{-1} \rho_{\tcocf}^{-2}  H_p$ in $\Sigma_{\sigma}$ such that $\gamma \subset \psf_{\dmf} \Xd$.

(1) Assume $\gamma$ is such that $\rho_{\dtsccf} \leq \delta$ uniformly over $t \in (a, \infty)$ for some $\delta > 0$ small and possibly $a = -\infty$. Then the evolution of $( y^{\tindex}, \hat{\tau}^{\tindex}, \hat{\mu}^{\tindex} )$ is determined by (\ref{uniform interactive variable flow Hamiltonian scaled}), and we have
    \begin{equation*}
    \begin{gathered}
    \text{either $\lim_{t \rightarrow \infty} \hat{\tau}^{\tindex} = - 1$, $\lim_{t \rightarrow \infty} \humu = 0$, or} \\ 
    \text{$\hat{\tau}^{\tindex} = \pm 1$, $\hat{\mu}^{\tindex} = 0$ for all $t \in (a, \infty)$,}
    \end{gathered}
    \end{equation*}
with $(\intn)^{1/2} \in [0, \lambda]$ being constant throughout the flow. In particular, it is impossible that $\lim_{t \rightarrow \infty} \gamma(t) \in \dtsccf \cap \tcocf$ if $\gamma \not\subset \tcocf$. For $(\intn)^{1/2} > 0$, the evolution can also be understood in terms of the variables $( y^{\tindex}, \tau^{\tindex}, \mu^{\tindex} )$ and (\ref{how does the interactive variables flow in the dynamical picture away from tcocf}). \par

Moreover, if $\lim_{t \rightarrow \infty} \gamma(t) \in \dtsccf$, $\gamma \not\subset \dtsccf$, then 
\begin{equation*}
\text{$\hat{\tau}^{\tindex} = 1$, $\hat{\mu}^{\tindex} = 0$ for all $t \in (a,\infty)$,}
\end{equation*}
and the evolution of $ (y_{\alpha}, \tau_{\alpha}, \mu_{\alpha})$ is determined completely by $\sH_{ \intt }$. The latter dynamic is better understood in cases:
    \begin{enumerate}
    \item[{(1.1)}] If $| \ltau |^2 + |\lmu|_{h_{\tindex}}^2 = \lambda^2$ and $\tau_{\alpha} \neq \lambda$, then $\lim_{t \rightarrow \infty} \tau_{\tindex} =  - \lambda$ and $\lim_{t \rightarrow \infty} \mu_{\tindex} = 0$, with the possibility of $\ltau = -\lambda$ identically.
    \item[{(1.2)}]  If $ | \ltau |^2 + | \lmu |_{h_{\tindex}}^2 < \lambda^2$, then $\lim_{t \rightarrow \infty} \tau_{\tindex} = \tau_{\tindex,+}$ and $\lim_{t \rightarrow \infty} \mu_{\tindex} = \mu_{\tindex,+} $, where
    \begin{equation*}
    \text{$\tau_{\tindex,+} > - ( \lambda^{2} - |\utau|^2 - |\umu|_{h^{\tindex}}^2 )^{1/2}$,   $\mu_{\tindex,+} \neq 0$.}
    \end{equation*}
    \end{enumerate} 
        These are the only possible cases for $\lim_{t \rightarrow \infty} \gamma(t) \in \dtsccf$ if $\gamma \not\subset \dtsccf$. In particular, they occur if and only if $\lim_{t \rightarrow \infty} \gamma(t) \in \mathcal{R}_{\mathrm{n},\dmf,+}$. \par

        If we instead assume that $\gamma \cap \dtsccf = \emptyset$, $ \gamma \not\subset \tcocf$, then $\lim_{t \rightarrow \infty} \gamma(t) \in \brm$. If $\gamma \subset \tcocf$, then $\gamma$ necessarily converges to $\mathcal{R}_{\mathrm{n},\dmf,+}$ as $t \rightarrow \infty$ if $\delta > 0$ is very small. \par

      (2) The situation is analogous if $\gamma$ is chosen such that $\rho_{\dtsccf} \leq \delta$ for $t \in (-\infty, b)$, with the possibility that $b = \infty$.  In this case, the evolution of $(y^{\tindex}, \hat{\tau}^{\tindex}, \hat{\mu}^{\tindex})$ is still determined completely by (\ref{uniform interactive variable flow Hamiltonian scaled}), only now we have
    \begin{equation*}
    \begin{gathered}
    \text{either $\lim_{t \rightarrow - \infty} \hat{\tau}^{\tindex} = 1 $, $\lim_{t \rightarrow \infty} \hat{\mu}^{\tindex} = 0$, or} \\ 
    \text{$\hutau = \pm  1 \ \humu= 0$ for all $t \in (-\infty, b)$,}
    \end{gathered}
    \end{equation*}
    with $(\intn)^{1/2} \in [0, \lambda]$ being constant throughout the flow. In particular, it is impossible that $\lim_{t \rightarrow -\infty} \gamma(t) \in \dtsccf \cap \tcocf$ if $\gamma \not\subset \tcocf$. For $(\intn)^{1/2} > 0$, the evolution can also be understood in terms of the variables $( y^{\tindex}, \tau^{\tindex}, \mu^{\tindex} )$ and (\ref{how does the interactive variables flow in the dynamical picture away from tcocf backward direction}). \par

    Moreover, if $\lim_{t \rightarrow -\infty} \gamma(t) \in \dtsccf$, $\gamma \not\subset \dtsccf$, then 
    \begin{equation*}
    \text{$\hutau = -1$, $\humu = 0$ for all $t \in (-\infty,b)$,}
    \end{equation*}
    and the evolution of $(y_{\tindex}, \tau_{\tindex}, \mu_{\tindex})$ is determined completely by $\mathsf{H}_{ \intt }$. More precisely:
        \begin{itemize}
    \item[{(2.1)}]  If $\intt = \lambda^2$ and $\tau_{\alpha} \neq -\lambda$, then $\lim_{t \rightarrow \infty} \tau_{\tindex} =  \lambda$ and $\lim_{t \rightarrow \infty} \mu_{\tindex} = 0$, with the possibility of $\ltau = \lambda$ identically.
    \item[{(2.2)}]  If $\intt < \lambda^2$, then $\lim_{t \rightarrow \infty} \tau_{\tindex} = \tau_{\tindex,-}$ and $\lim_{t \rightarrow \infty} \mu_{\tindex} = \mu_{\tindex,-} $, where
    \begin{equation*}
    \text{$\tau_{\tindex,-}  < ( \lambda^{2} - |\utau|^2 - |\umu|_
    {h^{\tindex}}^2 )^{1/2}$, $\mu_{\tindex,-} \neq 0$.}
    \end{equation*}
    \end{itemize} 
        These are the only possible cases for $\lim_{t \rightarrow -\infty} \gamma(t) \in \dtsccf$ if $\gamma \not\subset \dtsccf$. In particular, they occur if and only if $\lim_{t \rightarrow -\infty} \gamma(t) \in \mathcal{R}_{\mathrm{n},\mf,-}$.\par

    If we instead assume that $\gamma \cap \dtsccf = \emptyset$, $ \gamma \not\subset \tcocf$, then $\lim_{t \rightarrow -\infty} \gamma(t) \in \brp$. If $\gamma \subset \tcocf$, then $\gamma$ can only converge to $\mathcal{R}_{\mathrm{n},\dmf,-}$ as $t \rightarrow -\infty$ if $\delta > 0$ is very small.
    \end{proposition}
    \begin{proof}
    This proposition can be proved by essentially following the proof of Proposition \ref{precise description of integral flow near ff} verbatim. Indeed, we simply need to observe that the structure of (\ref{second microlocal dynamic modification cal 3}), when restricted to $\psf \Xd$, is almost identical to that of $\sH_{p,\mf}$, except that the evolution in the interaction variables must now be rescaled. 
    \end{proof}
    \begin{proposition} \label{proposition characterization of the integral curves which live on dtsccf}
    Let $\gamma$ be an integral curve of $\rho_{\dmf}^{-1} \rho_{\dtsccf}^{-1} \rho_{\tcocf}^{-2}  H_p$ in $\bcv$ such that $\gamma \subset \dtsccf$. Then the free variables $(y_{\tindex}, \ltau, \lmu)$ must be constants over $\gamma$. Moreover, if $\gamma$ is not a stationary point, then there are only the following possibilities for the trajectory of $\gamma$:
    \begin{enumerate}
\item $\gamma \subset \dtsccf \cap \psf_{\dmf} \Xd$, with $\lim_{t \rightarrow -\infty} \gamma(t) \in \mathcal{R}_{\mathrm{n},\dmf, +}$ and $\lim_{t \rightarrow \infty} \gamma(t) \in \mathcal{R}_{\mathrm{n}, \dmf,-}$, where the flows of the interaction variables are determined by (\ref{uniform interactive variable flow Hamiltonian scaled});
\item $\gamma \subset \dtsccf \cap \psf_{\dff} \Xd$, with $\lim_{t \rightarrow -\infty} \gamma(t) \in \mathcal{R}_{\mathrm{n},\dff, +}$ and $\lim_{t \rightarrow \infty}\gamma(t) \in \mathcal{R}_{\mathrm{n},\dff,-}$, where the flows of the interaction variables are determined by (\ref{uniform interactive variable flow Hamiltonian scaled}); 
\item $\lim_{t \rightarrow -\infty} \gamma(t) \in \mathcal{R}_{\mathrm{n}, \dmf, +}$ and $\lim_{ t \rightarrow \infty } \gamma(t) \in \mathcal{R}_{\mathrm{n},\dff,+}$, with $\hutau = 1, \humu = 0$ over $\gamma$;
\item $\lim_{t \rightarrow -\infty} \gamma(t) \in \mathcal{R}_{\mathrm{n},\dff, -}$ and $\lim_{t \rightarrow \infty} \gamma(t) \in \mathcal{R}_{\mathrm{n},\dmf,-}$, with $\hutau = -1, \humu = 0$ over $\gamma$.
\end{enumerate}
\end{proposition}
\begin{proof}
The proof of this result is also obvious, since by using (\ref{second microlocal dynamic modification cal 3}) and (\ref{second microlocal dynamic modification cal 5}), we can solve for the flow at $\dtsccf$ directly. Thus the proposed claim follows from enumerating over all the possible solutions.
\end{proof}

\begin{proposition} \label{proposition characterization of the integral curves which converge to R0}
Let $\gamma$ be an integral curve of $\rho_{\dmf}^{-1} \rho_{\dtsccf}^{-1} \rho_{\tcocf}^{-2}  H_p$ in $\bcv$. Assume that $\gamma$ is not a stationary point. Then:
\begin{enumerate} 
\item If $\lim_{t \rightarrow \infty} \gamma(t) \in \mathcal{R}_{0,-}$ and $\gamma \subset \psf_{\dmf}\Xd$, then $\gamma$ identifies with the lift of an integral curve $\tilde{\gamma}$ of $\sH_{p,\mf}$ in $\Sigma_{\mf}$ such that $\tilde{\gamma} \subset {^{\mathrm{3sc}}T^{\ast}_{\mf} \tscX}$, $\lim_{t \rightarrow \infty} \tilde{\gamma}(t) \in {^{\mathrm{3sc}}T^{\ast}_{\partial \ff} \tscX}$, and $\intt = \lambda^2$ identically along $\tilde{\gamma}$.  
\item If $\lim_{t \rightarrow -\infty} \gamma(t) \in \mathcal{R}_{0,+}$ and $\gamma \subset \psf_{\dmf}\Xd$, then $\gamma$ identifies with the lift of an integral curve $\tilde{\gamma}$ of $\sH_{p,\mf}$ in $\Sigma_{\mf}$ such that $\tilde{\gamma} \subset {^{\mathrm{3sc}}T^{\ast}_{\mf} \tscX}$, $\lim_{t \rightarrow -\infty} \tilde{\gamma}(t) \in {^{\mathrm{3sc}}T^{\ast}_{\partial \ff} \tscX}$, and $\intt = \lambda^2$ identically along $\tilde{\gamma}$. 
\item If either $\lim_{t \rightarrow \infty} \gamma(t) \in \mathcal{R}_{0,-}$ or $\lim_{t \rightarrow -\infty} \gamma(t) \in \mathcal{R}_{0,+}$ holds, and assume that $\gamma \subset \tcocf$, then we must have $\lim_{t \rightarrow -\infty} \gamma(t) \in \mathcal{R}_{0,+}$, $\lim_{t \rightarrow \infty} \gamma(t) \in \mathcal{R}_{0,-}$ and $\utaures = 0$, $\umures = 0$ identically over $\gamma$.
\item If $\lim_{t \rightarrow \infty} \gamma(t) \in \mathcal{R}_{0,+}$, then we must have $\gamma \subset \tcocf$, $\ltau = \lambda$, $\lmu = 0$ identically over $\gamma$, and $\intnres \rightarrow 0$ strictly monotonically as $t \rightarrow \infty$ along $\gamma$ in a small neighborhood of $\mathcal{R}_{0,+}$ . 
\item If $\lim_{t \rightarrow -\infty} \gamma(t) \in \mathcal{R}_{0,-}$, then we must have $\gamma \subset \tcocf$, $\ltau = -\lambda$,  $\lmu = 0$ identically over $\gamma$, and $\intnres \rightarrow 0$ strictly monotonically as $t \rightarrow -\infty$ along $\gamma$ in a small neighborhood of $\mathcal{R}_{0,-}$ . 
\end{enumerate}
In particular, in cases (1) and (2), we must have $\utaures = 0$, $\umures = 0$ identically over $\gamma$. \par

Moreover, these are the only possible cases for $\gamma$ to converge to $\mathcal{R}_{0,\pm}$ in forward or backwards time asymptotics.
\end{proposition}
\begin{proof}
First, it is clear that if $\lim_{t \rightarrow - \infty} \gamma(t) \in \mathcal{R}_{0,-}$ or $\lim_{t \rightarrow \infty} \gamma(t) \in \mathcal{R}_{0,+}$, then we must have $\ltau = -\lambda$, $\mu = 0$ resp. $\ltau = \lambda$, $\lmu = 0$ identically over $\gamma$, for otherwise there would be contradictions with the evolution of the free variables $(y_{\tindex}, \ltau, \lmu)$, which is entirely determined by $\sH_{\intt}$ (by the structure of (\ref{section overview microlocal Hamiltonian vector field 1}), which is the only relevant local expression  at $\bcv \cap \tcocf$). Moreover, since
\begin{align*}
( \hat{x}^{\tindex} )^{-1} \bff^{-2} H_p ( \intnres ) & = 4 ( \utaures - \ltau ) ( \intnres ), \\
( \hat{x}^{\tindex} )^{-1} \bff^{-2} H_p \bff & = 2 ( \ltau - \utaures ) \bff, 
\end{align*}
and since $\utaures \simeq 0$, $\ltau \simeq \pm \lambda$ in a small neighborhood of $\mathcal{R}_{0,\pm}$, we can conclude that in the forward direction of the flow, $\intnres$ must be strictly decreasing near $\mathcal{R}_{0,+}$ and strictly increasing near $\mathcal{R}_{0,-}$, while $\bff$, which is a local defining function for $\tcocf$, must be strictly increasing near $\mathcal{R}_{0, +}$ and strictly decreasing near $\mathcal{R}_{0,-}$. In particular, it is impossible that $\gamma \not\subset \tcocf$. Thus we have shown that (4) and (5) are the only possible trajectories of $\gamma$ if $\lim_{t \rightarrow \infty} \gamma(t) \in \mathcal{R}_{0,+}$ resp. $\lim_{t \rightarrow -\infty} \gamma(t) \in \mathcal{R}_{0,-}$. \par

Next, we will consider those $\gamma$ such that $\lim_{t \rightarrow \infty} \gamma(t) \in \mathcal{R}_{0,-}$. Assume that $\gamma \subset \tcocf$. Then by the above analysis, $\intnres$ can only increase in the forward direction of $\gamma$. Thus, it will be impossible for $\gamma$ to reach $\mathcal{R}_{0,-}$ unless $\utaures = 0$, $\umures = 0$ over $\gamma$. In this case, any non-trivial evolution can only occur in the free variables $(y_{\tindex}, \ltau, \lmu)$. Running their full courses, we then conclude it must be that $\lim_{t \rightarrow -\infty} \gamma(t) \in \mathcal{R}_{0,-}$ and $\lim_{t \rightarrow \infty} \gamma(t) \in \mathcal{R}_{0,+}$. Notice that the analogous argument allows us to obtain the same conclusion if we instead assume that $\lim_{t \rightarrow - \infty} \gamma(t) \in \mathcal{R}_{0,+}$ and $\gamma \subset \tcocf$. This proves (3). \par

Still assuming that $\lim_{t \rightarrow \infty} \gamma(t) \in \mathcal{R}_{0,+}$, except that now we have $\gamma \subset \psf_{\dmf} \Xd$. Then $\gamma$ can always be considered as the lift of $\tilde{\gamma} \subset {^{\mathrm{3sc}}T^{\ast}_{\mf}X}$, with $\tilde{\gamma}$ being some integral curve of $\sH_{p,\mf}$ in $\Sigma_{\mf}$. Thus, (1) follows immediately from Proposition \ref{precise description of integral flow near ff}. The same argument works to prove (2). 
\end{proof}

\section{Specification of the variable orders} \label{variable order construction section}
In this section, we will construct explicit examples of variable orders which will be used in the Fredholm analysis of $P$. We will also explain how such constructions will be sufficient in constructing the proposed Fredholm map. For simplicity, we will only consider the case when there is a single $\mathcal{C}_{\tindex}$, i.e., $\mathcal{C} = \mathcal{C}_{\tindex}$. It will be obvious our construction is done in such a way that it can be naturally extended to the cases where there are multiple $\mathcal{C}_{\tindex}$, $\tindex \in \ind$

\subsection{The dynamical conditions} 
\label{variable order dynamical conditions subsection}

In this subsection, we will outline the crucial dynamical conditions which will be required for our variable orders. Another set of smoothness conditions, which is in a broader sense optional, but will nevertheless be imposed for convenience, will be discussed separately in  \S \ref{variable order smooth square root subsection} below. \par

As in the standard requirements for variable order operators in the second microlocalized setting, we will require that $\vol_{+}, \vob_{+} \in \mathcal{C}^{\infty}( \psf \Xd )$ be chosen such that their restrictions to $\psf_{\dff} \Xd$ and $\tcocf$ can be identified as $\mathcal{C}^{\infty}( \mathcal{C}_{\tindex} \times \overline{\mathbb{R}^{n_{\tindex}}} )$ functions (i.e., the restrictions depend only on the free variables). Recall that this also implies that
\begin{equation*}
\begin{gathered}
{\vol_{+}}{|}_{ \psf_{\dff} \Xd } = {\vol_{+}}|_{\tcocf}, \\
{\vob_+}|_{ \psf_{\dff} \Xd } = {\vob_{+}}|_{\tcocf}
\end{gathered}
\end{equation*}
must be the same $\mathcal{C}^{\infty}( \mathcal{C}
_{\tindex} \times \overline{\mathbb{R}^{n_{\tindex}}} )$ functions. \par
Now, we will impose the dynamical condition that 
\begin{equation}  \label{var con 1.4}
\text{$\vol_{+}$ is non-increasing along the flow of $\rho_{\dmf}^{-1} \rho_{\dtsccf}^{-1} \rho_{\tcocf}^{-2} H_{p}$ in $\Sigma_{\sigma}$.}
\end{equation}
Likewise, when interpreted as a $\mathcal{C}^{\infty}( \mathcal{C}_{\tindex} \times \mathbb{R}^{n_{\tindex}} )$ function via restriction to $\psf_{\dff} \Xd$ (or $\tcocf$), we require that
\begin{equation} \label{var con 1.5}
\begin{gathered}
\text{$\vol_{+}$ is non-increasing along the flow of $\sH_{\intt}$ in $\Sigma_{\mathrm{t}}$}, \\
\text{$\vol_{+} > 0$ at $\mathcal{R}_{\mathrm{t},+}$ and $\vol_{+} < 0$ at $\mathcal{R}_{\mathrm{t},-}$.} 
\end{gathered}
\end{equation}
One then define $\vob_{+}$ to be dependent on $\vol_{+}$, i.e., we will assume that
\begin{equation} \label{var con 3}
\text{$\cob_{+} = \vob_{+} - 2 \vol_{+}$ is a constant, and $| \cob_{+} + 1 | < \frac{ n^{\tindex} - 2 }{2}$}. 
\end{equation} 
In particular, when interpreted as a $\mathcal{C}^{\infty}( \mathcal{C}_{\tindex} \times \mathbb{R}^{n_{\tindex}} )$ function as above, we must also have
\begin{equation} \label{var con 4}
\begin{gathered}
\text{$\vob_{+}$ is non-increasing along the flow of $\rho_{\dmf}^{-1} \rho_{\dtsccf}^{-1} \rho_{\tcocf}^{-2} H_{p}$ in $\Sigma_{\sigma}$, and} \\
\text{$\vob_{+}$ is non-increasing along the flow of $\sH_{\intt}$ in $\Sigma_{\mathrm{t}}$.}
\end{gathered}
\end{equation} 
We do not concern ourselves with the threshold properties of $\vob_{+}$.
\par

Moving onto the dynamical requirements for $\vor_{+} \in \mathcal{C}^{\infty}( \psf \Xd )$. First, notice that it would be enough to describe such conditions on $\bcv$, as one could argue via elliptic regularity otherwise. Moreover, since $\Sigma_{\sigma}$ intersects only $\psf_{\dmf} \Xd$ and $\dtsccf$, we only need to describe the conditions on these faces. \par

In fact, we will impose somewhat stronger conditions on $\dtsccf$, namely
\begin{equation} \label{var con 5}
\begin{gathered}
\text{$\vor_{+} - \vol_{+}$ is non-increasing along the flow of $\rho_{\dmf}^{-1} \rho_{\dtsccf}^{-1} \rho_{\tcocf}^{-2} H_{p}$ in $\bcv \cap \dtsccf$}. 
\end{gathered}
\end{equation}
We also have the corresponding threshold conditions:
\begin{equation} \label{var con 6}
\text{$\vor_{+} - \vol_{+} > -\frac{1}{2}$ at $\mathcal{R}_{\mathrm{n},\dmf,+} \cup \mathcal{R}_{\mathrm{n},\dff,+}$}, \quad \text{$\vor_{+} - \vol_{+} < - \frac{1}{2}$ at $\mathcal{R}_{\mathrm{n},\dmf,-} \cup \mathcal{R}_{\mathrm{n}, \dff,-}$.}
\end{equation} 
On the other hand, globally we just impose the standard requirement that
\begin{equation}  \label{var con 7}
\begin{gathered}
\text{$\vor_{+}$ is non-increasing along the flow of $\rho_{\dmf}^{-1} \rho_{\dtsccf}^{-1} \rho_{\tcocf}^{-2} H_{p}$ in $\bcv$},
\end{gathered}
\end{equation}
although this is slightly redundant in the sense that monotonicity for $\vor_{+}$ needs only to hold on $\Sigma_{\sigma} \cap \psf_{\dmf} \Xd$, as this property holds automatically on $\dtsccf$ by (\ref{var con 5}) and the monotonicity of $\vol_{+}$. Somewhat less standard are the threshold conditions at $\mathcal{R}_{0,\pm}$, where we require that
\begin{equation} \label{var con 8}
\text{$\vor_{+} - \vob_{+} < \frac{1}{2}$ at $\mathcal{R}_{0,+}$ and $\vor_{+} - \vob_{+} > \frac{1}{2}$ at $\mathcal{R}_{0,-}$}.
\end{equation}
Finally, on the lifts of the global, two-body radial sets $\mathcal{R}_{\mathrm{2sc},\dmf,\pm}$, we will require that
\begin{equation} \label{var con 9}
\text{$\vor_{+} > -\frac{1}{2}$ at $\mathcal{R}_{\mathrm{2sc},\dmf, +}$ and $\vor_{+} < - \frac{1}{2}$ at $\mathcal{R}_{\mathrm{2sc},\dmf, -}$}
\end{equation}
which is as expected.
 \par
In fact, from a minimalistic prospective, one could argue that the conditions at $\dtsccf$ are too strong away from $\tcocf$. Indeed, in such regions it is unnecessary to consider second microlocalization, and the analysis can be done completely within the three-body calculus, as we have shown dynamically in the previous section (however, one still needs to be careful with having a variable order at $\overline{^{\mathrm{3sc}}T^{\ast}}_{\ff}\tscX$, which is in general not allowed unless further modifications are made). Thus, if we additionally assume that
\begin{equation} \label{var con 10}
\begin{gathered}
\text{$\vol_{+}$ restricts to a $\mathcal{C}^{\infty}( ^{\mathrm{3sc}}T^{\ast} \tscX )$ function on $\Sigma_{\mf}$,} \\
\text{$\vor_{+}$ restricts to a $\mathcal{C}^{\infty}( ^{\mathrm{3sc}}T^{\ast} \tscX )$ function on $\Sigma_{\mf} \backslash \pi_{\ff}^{-1}(o_{\mathcal{C}^{\tindex}})$,}
\end{gathered}
\end{equation}
where we recall that $\pi_{\ff}$ is the natural projection ${^{\mathrm{3sc}}T^{\ast}_{\ff} \tscX} \rightarrow {^{\mathrm{sc}}T^{\ast} X^{\tindex}}$, $o_{\mathcal{C}^{\tindex}}$ is the zero section in $^{\mathrm{sc}}T^{\ast}_{\mathcal{C}^{\tindex}}X^{\tindex}$, and $\Sigma_{\mf}$ is the characteristic variety of $P$ in the three-body calculus. Then we can relax condition (\ref{var con 5}) in such regions, and instead require that
\begin{equation} \label{var con 11}
\text{$\vor_{+} - \vol_{+}$ is non-increasing along the flow of $\sH_{p,\mf}$ in $\Sigma_{\mf} \cap {^{\mathrm{3sc}}T^{\ast}_{\ff} \tscX} \backslash \pi_{\ff}^{-1}( o_{\mathcal{C}^{\tindex}} )$}.
\end{equation}
Similarly, we can also relax conditions (\ref{var con 6}) into
\begin{equation} \label{var con 12}
\text{$\vor_{+} - \vol_{+} > - \frac{1}{2}$ at $\mathcal{R}_{\mathrm{n},+}$ and $\vor_{+} - \vol_{+} < - \frac{1}{2}$ at $\mathcal{R}_{\mathrm{n},-}$}.
\end{equation}
\begin{remark}
Although conditions (\ref{var con 9})--(\ref{var con 12}) are indeed very natural, they are certainly not necessary for the analysis of this paper to work. Namely, although it is easy to construct variable orders $\vor_{+}$ for which these conditions are satisfied (as we shall see below shortly), it would be enough if they just satisfy conditions (\ref{var con 5})--(\ref{var con 6}). Notice that by smoothness, the latter conditions certainly imply (\ref{var con 9})--(\ref{var con 12}). We present here the alternatives only to illustrate the redundancy of second microlocalization away from ${\pi_{\ff}^{-1}(o_{\mathcal{C}^{\tindex}})}$.
\end{remark}
Now, (\ref{var con 1.4})--(\ref{var con 9}) covers the list of conditions for half of the variable orders. Recall that the other half is defined by
\begin{equation} \label{var con 7}
\vor_{-} = - \vor_{+} - 1, \ \vol_{-} = - \vol_{+}, \ \vob_{-} = - \vob_{+} - 2.
\end{equation}
The variable orders in (\ref{var con 7}) satisfy analogous conditions akin to (\ref{var con 1.4})--(\ref{var con 9}). Namely
\begin{equation*}
\text{$\vol_{-}$ is non-decreasing along flow of $\rho_{\dmf}^{-1} \rho_{\dtsccf}^{-1} \rho_{\tcocf}^{-2} H_{p}$ in $\bcv$,}
\end{equation*}
and, when interpreted as a $\mathcal{C}^{\infty}( \mathcal{C}_{\tindex} \times \overline{\mathbb{R}^{n_{\tindex}}} )$ function via restrictions,
\begin{equation*}
\begin{gathered}
\text{$\vol_{-}$ is non-decreasing along the flow of $\sH_{\intt}$ in $\Sigma_{\mathrm{t}}$,} \\
\text{$\vol_{-}  < 0$ at $\mathcal{R}_{\mathrm{t},+}$ and $\vol_{-} > 0$ at $\mathcal{R}_{\mathrm{t}.-}$.}
\end{gathered}
\end{equation*}
Moreover, by defining
\begin{equation*}
b_{-} = \vob_{-} - \vol_{-} = - \vob_{+} + \vol_{+} - 2,
\end{equation*}
we also have
\begin{equation*}
| b_{-} + 1 | = | - \vob_{+} + \vol_{+} - 1 | < \frac{ n^{\tindex} - 2 }{2}.
\end{equation*}
Likewise, when restricted to $\dtsccf$, we have
\begin{equation*}
\begin{gathered}
\text{$\vor_{-} - \vol_{-}$ is non-decreasing along the flow of $\rho_{\dmf}^{-1} \rho_{\dtsccf}^{-1} \rho_{\tcocf}^{-2} H_{p}$ in $\Sigma_{\sigma} \cap \dtsccf$, } \\
\text{$ \vor_{-} - \vol_{-} < - \frac{1}{2}$ at $\mathcal{R}_{\mathrm{n},\dmf, +} \cup \mathcal{R}_{\mathrm{n}, \dff, +}$}, \quad \text{$\vor_{-} - \vol_{-} > - \frac{1}{2}$ at $\mathcal{R}_{\mathrm{n},\dmf,-} \cup \mathcal{R}_{\mathrm{n}, \dff, -}$},
\end{gathered}
\end{equation*}
while globally, we have
\begin{equation*}
\text{$\vor_{-}$ is non-decreasing along the flow of $\rho_{\dmf}^{-1} \rho_{\dtsccf}^{-1} \rho_{\tcocf}^{-2} H_{p}$ in $\bcv$.}
\end{equation*}
Lastly, the remaining threshold conditions are given by
\begin{equation*}
\begin{gathered}
\text{$\vor_{-} - \vob_{-} > \frac{1}{2}$ at $\mathcal{R}_{0,+}$, $\vor_{-} - \vob_{-} < \frac{1}{2}$ at $\mathcal{R}_{0,+}$, and} \\
\text{$\vor_{-} < - \frac{1}{2}$ at $\mathcal{R}_{\mathrm{2sc},\dmf,+}$, $\vor_{-} > - \frac{1}{2}$ at $\mathcal{R}_{\mathrm{2sc},\dmf. -}$.}
\end{gathered}
\end{equation*} \par

We also have the analogies of (\ref{var con 9})--(\ref{var con 12}) in this setting. Namely 
\begin{equation*} 
\begin{gathered}
\text{$\vol_{-}$ restricts to a $\mathcal{C}^{\infty}( ^{\mathrm{3sc}}T^{\ast} \tscX )$ function on $\Sigma_{\mf}$,} \\
\text{$\vor_{-}$ restricts to a $\mathcal{C}^{\infty}( ^{\mathrm{3sc}}T^{\ast} \tscX )$ function on $\Sigma_{\mf} \backslash \pi_{\ff}^{-1}(o_{\mathcal{C}^{\tindex}})$,}
\end{gathered}
\end{equation*}
and
\begin{equation*} 
\begin{gathered}
\text{$\vor_{-} - \vol_{-}$ is non-decreasing along the flow of $\sH_{p,\mf}$ in $\Sigma_{\mf} \cap {^{\mathrm{3sc}}T^{\ast}_{\ff} \tscX} \backslash \pi_{\ff}^{-1}( o_{\mathcal{C}^{\tindex}} )$}, \\
\text{$\vor_{-} - \vol_{-} < - \frac{1}{2}$ at $\mathcal{R}_{\mathrm{n},+}$,} \quad \text{$\vor_{-} - \vol_{-} > - \frac{1}{2}$ at $\mathcal{R}_{\mathrm{n},-}$}.
\end{gathered}
\end{equation*}
\par
The point of these constructions is as follows: Let $m_{-} = - m_{+} + 2$, $s_{-} = - s_{+} + 2$ where $m_{+}, s_{+} \in \mathbb{R}$ are arbitrary. Then by Proposition \ref{prop: dual spaces of the Sobolev spaces}, we have
\begin{equation*}
\begin{gathered}
H_{\mathrm{d3sc,3co,res}}^{ m_{-}, \vor_{-}, \vol_{-}, \vor_{-} + \vol_{-}, \vob_{-}, s_{-} } = \left( H_{\mathrm{d3sc,3co,res}}^{ m_{+} - 2, \vor_{+} + 1, \vol_{+}, \vor_{+} + \vol_{+} + 1, \vob_{+} + 2, s_{+} - 2 } \right)^{\ast}, \\
H_{\mathrm{d3sc,3co,res}}^{ m_{-} - 2, \vor_{-} + 1, \vol_{-}, \vor_{-} + \vol_{-} + 1, \vob_{-} + 2, s_{-} - 2 } = \left( H_{\mathrm{d3sc,3co,res}}^{m_{+}, \vor_{+}, \vol_{+}, \vor_{+} + \vol_{+}, \vob_{+}, s_{+} } \right)^{\ast}.
\end{gathered}
\end{equation*}
Thus, the estimates
\begin{align} \label{variable order section semi-Fredholm estimates}
\begin{split}
\| u \|_{ H_{\mathrm{d3sc,3co,res}}^{ m_{ \pm }, \vor_{ \pm }, \vol_{ \pm }, \vor_{ \pm } + \vol_{ \pm }, \vob_{ \pm }, s_{ \pm }  } } & \leq C ( \| P u \|_{ H_{\mathrm{d3sc,3co,res}}^{ m_{ \pm } - 2, \vor_{ \pm } + 1, \vol_{ \pm }, \vor_{ \pm } + \vol_{\pm  } + 1, \vob_{ \pm } + 2, s_{ \pm } - 2 } } \\
& \quad + \| u \|_{H_{\mathrm{d3sc,3co,res}}^{M, \vor_{\pm} - \delta, \vol_{\pm} - \delta, \vor_{\pm} + \vol_{\pm} - \delta, \vob_{\pm} - \delta, S} } )
\end{split}
\end{align}
are exactly dual to each other. In particular, we would have shown that the maps (\ref{introduction; main maps}) is indeed Fredholm, provided both of the semi-Fredholm estimates in (\ref{variable order section semi-Fredholm estimates}) are satisfied.

\subsection{Explicit construction}
\label{variable order explicit construction subsection}
We now construct explicit examples of variable orders satisfying the aforementioned dynamical conditions. It is enough to do this corresponding to the variable orders with plus signs, i.e., we will construct $\vor, \vol$ such that in the above discussion, we can take
\begin{equation*}
\vol_{+} = \vol,  \ \vor_{+} = \vor.
\end{equation*} \par

Before we proceed, let us remark once again that the variable orders constructed in this subsection will not be the ones employed in the rest of this paper, for there is another set of smoothness conditions which will be imposed in \S \ref{variable order smooth square root subsection} below. We separate the discussion since the construction in this subsection is already very technical. Moreover, the modifications in \S \ref{variable order smooth square root subsection} will be built directly from the construction in this subsection. \par

Returning to the current discussion. First, we will construct a $\mathcal{C}^{\infty}(\psf \Xd)$ function $\vol$ satisfying conditions (\ref{var con 1.5}), which also restricts to a $\mathcal{C}^{\infty}( \mathcal{C}_{\tindex} \times \overline{\mathbb{R}^{n}} )$ function at $\tcocf$ and $\psf_{\dff} \Xd$. Let $\psi_1, \psi_{2} \in \mathcal{C}^{\infty}( \mathbb{R} )$ be chosen such that $\psi_{1}$ is a cut-off at $0$, $\psi_{2}$ is supported on $( -\infty , F]$, with $\psi_{2} = 1$ on $( - \infty, F/2]$ where $F > 0$ will be large. We can arrange $\psi_{2}$ to be identically $1$ on the support of $\psi_{1}$. We also write $\varphi_j = \psi_{j}(p)$ for $j=1,2$, where $p$ is the principal symbol of $P$, and
\begin{equation*}
\tilde{\varphi}_{j} = \psi_{j}( | \ltau |^{2} + |\lmu|_{h_{\tindex}}^{2} - \lambda^{2}), \quad j =1,2,
\end{equation*}
which is of course only locally valid. Furthermore, let $\varphi_{3} \in \mathcal{C}^{\infty}( \psf \Xd )$ be a cut-off function at $\psf_{\dff} \Xd$. Then for $\beta > 0$ we can define
\begin{equation} \label{example variable order l}
\vol = \beta \varphi_{1} \tau + \beta ( 1 - \varphi_{1} ) \tilde{\varphi}_{2} \varphi_{3} \ltau + \beta( 1 - \varphi_1 ) \varphi_{2} (1 - \varphi_{3} ) \tau,
\end{equation}
where $\tau$ is the function dual to $dx/x^2$ for $x = |z|^{-1}$. In local coordinates, it is easy to see
\begin{equation} \label{variable order section expression for tau near cf}
\tau = \frac{1}{( 1 + \bff^{2} )^{1/2}} \ltau + \frac{ \bff }{ ( 1 + \bff^{2} )^{1/2} } \utau
\end{equation}
as a function on $ {^{\mathrm{3sc}}T^{\ast}_{\mf}} \Xo$ near ${ ^{\mathrm{3sc}}T^{\ast}_{\ff} \Xo }$. Thus, the restriction of $\tau$ to $^{\mathrm{3sc}}T^{\ast}_{\ff} \Xo$, and subsequently to $\psf_{\cf} \Xd$ and $\psf_{\dff} \Xd$, is simply given by $\ltau$, and thus depend only on the free variables. \par

Notice that (\ref{example variable order l}) is indeed a $\mathcal{C}^{\infty}( \psf \Xd )$ function. This fact holds quite obviously for its first and third component. For the second component, one just need to observe that although $\tilde{\varphi}_{2} \ltau$ is independent of the interaction variables (it is bounded in the free variables, so there is no issue there), there is no singularity as the fiber variables get large in a small neighborhood of $\psf_{\dff} \Xd$ (which is the case on the support of $\varphi_3$) by nature of the blow-up creating the resolved face $\rf$.
\par

Now, $\vol$ restricts to a $\mathcal{C}^{\infty}_{c}( \mathcal{C}_{\tindex} \times \mathbb{R}^{n_{\tindex}} )$ function at $\tcocf$ and $\psf_{\dff} \Xd$. To see this, we simply note that since the restriction of $p$ to $\tcocf$ is $\intt - \lambda^{2}$ (i.e., by letting $\intn \rightarrow 0$), the restriction of $\vol$ to $\tcocf$ must be
\begin{align*}
\vol|_{\tcocf} & = \beta\tilde{\varphi}_{1} \tilde{\varphi}_2 \ltau + \beta ( 1 - \tilde{\varphi}_1 ) \tilde{\varphi}_2 \varphi_{3} \ltau + \beta ( 1 - \tilde{\varphi}_1 ) \tilde{\varphi_{2}} ( 1-  \varphi_{3} ) \ltau \\
& = \beta \tilde{\varphi}_1 \tilde{\varphi}_2 \ltau + \beta ( 1 - \tilde{\varphi}_1 ) \tilde{\varphi}_2 \ltau \\
& = \beta \tilde{\varphi}_2 \ltau.
\end{align*} 
On the other hand, suppose that the constant $F > 0$ in the definition of $\psi_{2}$ is chosen to be large enough. Then one even has $\tilde{\varphi}_{2}$ being identically $1$ on the support of $\varphi_{1}$, whenever the former expression is valid. This in particular holds if we restrict our attention to $\psf_{\dff} \Xd$. Thus we have
\begin{equation*}
\vol|_{\psf_{\dff} \Xd} = \beta \varphi_{1} \tilde{\varphi}_{2} \ltau + \beta  ( 1 - \varphi_{1} ) \tilde{\varphi_{2}} \ltau = \beta \tilde{\varphi}_2 \ltau,
\end{equation*}
thereby showing the required restriction properties. \par

Finally, we show that assumptions (\ref{var con 1.5}) are satisfied. We have already shown that the restriction is $\beta \tilde{\varphi}_2 \ltau$. By construction, $\tilde{\varphi}_2$ is identically $1$ on $\Sigma_{\mathrm{t}}$. Thus it suffices to compute
\begin{equation*}
\sH_{\intt} ( \beta \ltau )  = - 2 \beta | \lmu |_{h_{\tindex}}^2 \leq 0
\end{equation*}
which proves monotonicity. The threshold conditions are satisfied since $\ltau = \pm \lambda$ at $\mathcal{R}_{\mathrm{t},\pm}$.  \par

Next, we move onto the construction of a $\mathcal{C}^{\infty}( \psf \Xd )$ function $\vor$ satisfying the remaining conditions. This will be done with respect to the function $\vol$ constructed above. Note that the behavior of $\vor$ is unimportant at $\tcocf$ and $\psf_{\dff} \Xd$, so that by means of a cut-off, it suffices to construct $\vor$ at the `symbolic faces' only. In fact, since $\Sigma_{\sigma}$ is supported away from fiber infinity and $\rf$, it will be enough if we restrict our attention to just $\psf_{\dmf}\Xd$ and $\dtsccf$. \par

Let $\varphi_1$ be the cut-off near $\Sigma_{\sigma}$ constructed above. Let $\varphi_{4} \in \mathcal{C}^{\infty}( \overline{ ^{\mathrm{3sc}}T^{\ast} } \tscX )$ be a cut-off at $\overline{^{\mathrm{3sc}}T^{\ast}}_{\ff} \tscX$ such that $\varphi_{4} = 1$ in a neighborhood of $\overline{ ^{\mathrm{3sc}}T^{\ast}}_{\ff} \tscX$. Furthermore, let $\psi_{3} \in \mathcal{C}^{\infty}_{c}( [0, \infty) )$ be a cut-off at $0$ such that $\psi_{3} = 1$ in a neighborhood of $0$, $\psi_{3}' \leq 0$, and set
\begin{equation*}
 \varphi_{5} = \psi_{3}( \intnres ).
\end{equation*}
Then assuming support near $\tcocf$, we can explicitly compute that
\begin{equation*} \label{variable order construction 0.25}
( \hat{x}^{\tindex} )^{-1} \bff^{-2}   H_{p} \varphi_{5} = - ( \ltau - \utaures )  (\intnres)  \psi_3'( | \utaures |^2 + | \umures |_{h^{\tindex}}^2 )  .
\end{equation*}
Thus, the signs of $( \hat{x}^{\tindex} )^{-1} \bff^{-2}  H_{p} \varphi_5$ are determined near $\mathcal{R}_{0,\pm}$, i.e.,
\begin{equation} \label{variable order construction 0.3}
( \hat{x}^{\tindex} )^{-1} \bff^{-2} H_{p} \varphi_5 \geq 0 \ \text{near $\mathcal{R}_{0,+}$}, \quad (\hat{x}^{\tindex})^{-1} \bff^{-2} H_{p} \varphi_5 \leq 0 \ \text{near $\mathcal{R}_{0,-}$}.
\end{equation}
Finally, we will let $\varphi_{6}$ be a cut-off function at $\mathcal{R}_{\mathrm{2sc},\dmf, \pm}$ such that 
\begin{equation} \label{variable order construction 0.4}
\rho_{\dmf}^{-1}   \rho_{\tcocf}^{-2} H_{p} \varphi_{6} \leq 0 \ \text{near $\mathcal{R}_{\mathrm{2sc},\dmf, +}$}, \quad \rho_{\dmf}^{-1} \rho_{\tcocf}^{-2} H_{p} \varphi_{6} \geq 0 \ \text{near $\mathcal{R}_{\mathrm{2sc},\dmf, -}$}.
\end{equation}
Note that such a cut-off exists by standard dynamical constructions since $\mathcal{R}_{\mathrm{2sc},\dmf,\pm}$ are global sources and sinks (alternatively, see \S \ref{global radial point estimate section} for explicit construction of quadratic defining functions for $\mathcal{R}_{\mathrm{2sc},\dmf, \pm}$). Then we will require that
\begin{equation}  \label{variable order example construction of full r hat}
\vor = - \frac{1}{2} + \varphi_{1}  \tilde{\vor} + \vol,
\end{equation} 
where $\tilde{\vor} \in \mathcal{C}^{\infty}( \psf \Xd)$ is defined by
\begin{equation} \label{example variable order r0}
\tilde{\vor} =   \beta \tau  \varphi_6  - \tilde{\beta} \tau \varphi_4 \varphi_5 + \varphi_4 ( 1 - \varphi_5 ) ( 1 - \varphi_6 ) \vor^{\tindex}
\end{equation}
for some large $ \tilde{\beta} > 0$ such that $\beta > \tilde{\beta}$. Here, for $\beta^{\tindex} > 0$ we set
\begin{equation*}
\vor^{\tindex} =  \beta^{\tindex} \frac{\utau}{ ( \intn )^{1/2} }
\end{equation*}
away from $\tcocf$. This can be naturally extended into a neighborhood of $\tcocf$, except for at the points $\{ \utaures = 0, \umures = 0 \}$, by prescribing
\begin{equation*}
\begin{gathered}
\vor^{\tindex} = \beta^{\tindex} \frac{\utaures}{ ( \intnres )^{1/2} } \ \text{near $\psf_{\dmf}\Xd$, and} \\
\vor^{\tindex} = \beta^{\tindex} \frac{\utaub}{ ( \intnb )^{1/2} } \ \text{near $\psf_{\dff} \Xd$.}
\end{gathered}
\end{equation*}
We also remark that
\begin{equation*}
\varphi_4 ( 1 - \varphi_5 ) ( 1 - \varphi_6 ) \vor^{\tindex} = \varphi_4 ( 1 - \varphi_5 - \varphi_6 ) \vor^{\tindex}
\end{equation*}
if the support of $\psi_3$ is small enough (so indeed $\varphi_5$ and $\varphi_6$ have disjoint supports).
\par

We proceed to investigate the dynamical properties of $\vor$. Since we do this only on the characteristic set, we can assume $\varphi_{1} = 1$ in the calculations below. We first consider monotonicity. Note that
\begin{align*}
H_{p} \vor = &  - 2 \beta x | \mu |_{h}^2 - 2 \beta x | \mu |_{h}^{2} \varphi_6 + 2 \tilde{\beta} x | \mu |_{h}^2 \varphi_4 \varphi_5  + \varphi_4 ( 1 - \varphi_5 - \varphi_6 ) H_{p} \vor^{\tindex} \\
&  +  ( - \tilde{\beta} \tau - \vor^{\tindex} ) \varphi_4 H_{p} \varphi_5 + ( \beta  \tau - \vor^{\tindex} \varphi_4 ) H_{p} \varphi_6 + ( - \tilde{\beta} \tau \varphi_5 + ( 1 - \varphi_5 - \varphi_6 ) \vor^{\tindex} ) H_{p} \varphi_4.
\end{align*}
We will also split $-2 \beta x |\mu|_{h}^2$ into three parts in the below. \par

It is easy to see that
\begin{equation}  \label{variable order construction 0.5}
\rho_{\dmf}^{-1} \rho_{\dtsccf}^{-1} \rho_{\tcocf}^{-2} H_{p} \vor^{\tindex} \leq 0
\end{equation}
wherever it is defined. Indeed, away from $\tcocf$, $x^{\tindex} \simeq \rho_{\dmf} \rho_{\dtsccf}$, and we have
\begin{equation*}
(x^{\tindex})^{-1} H_{p} \vor^{\tindex} = - 2 \beta^{\tindex} \frac{ | \umu |_{h^{\tindex}}^2 }{ ( \intn )^{1/2} } \leq 0;
\end{equation*}
while in a neighborhood of $\tcocf$, we have 
\begin{equation*}
\begin{gathered}
(\hat{x}^{\tindex})^{-1} \rhoresf^{-1} \bff^{-2} H_{p} \vor^{\tindex} = - 2 \beta^{\tindex}  |\umures|_{h^{\tindex}}^2, \, \hat{x}^{\tindex} \simeq \rho_{\dmf}, \, \bff \simeq \rho_{\tcocf} \ \text{near $\psf_{\dmf}\Xd$} \\
\rhobf^{-1} (x^{\tindex})^{-2} H_{p} \vor^{\tindex} = - 2 \beta^{\tindex}  |\umub|_{h^{\tindex}}^2, \, x^{\tindex} \simeq \rho_{\dff}, \ \text{near $\psf_{\dff} \Xd$,}
\end{gathered}
\end{equation*}
where $\rhoresf = ( \intnres )^{-1/2}$, $\rhobf = ( \intnb )^{-1/2}$ can be taken as local defining functions for $\tcocf$ whenever they are valid. Hence (\ref{variable order construction 0.5}) must hold.  \par

By construction, we also have
\begin{align}  
\rho_{\dmf}^{-1} \rho_{\dtsccf}^{-1} \rho_{\tcocf}^{-2}  \big( ( - \tilde{\beta} \tau - \vor^{\tindex} )  \varphi_4  H_{p} \varphi_{5} - \frac{2}{3} \beta  x | \mu |_{h}^2 \big)  & \leq 0,  \label{variable order construction 1.1}  \\
  \rho_{\dmf}^{-1} \rho_{\dtsccf}^{-1} \rho_{\tcocf}^{-2}( \beta \tau - \vor^{\tindex} \varphi_4 )  H_{p} \varphi_6 & \leq 0, \label{variable order construction 1.2}
\end{align}
where (\ref{variable order construction 1.2}) follows directly from (\ref{variable order construction 0.4}) and the facts that $\tau > 0$ at $\mathcal{R}_{\mathrm{2sc},\dmf,+}$, $\tau < 0$ at $\mathcal{R}_{\mathrm{2sc},\dmf,-}$. Thus if $\beta$ is large enough, then these inequalities must also hold for $\beta \tau - \vor^{\tindex} \varphi_4$. \par

To show (\ref{variable order construction 1.1}), it is enough to compute near the support of $\rho_{\dmf}^{-1} \rho_{\dtsccf}^{-1} \rho_{\tcocf}^{-2} \varphi_4 H_{p}\varphi_5$, where $\rho_{\dmf} \rho_{\dtsccf} \rho_{\tcocf}^2 \simeq x \simeq \hat{x}^{\tindex} \bff^2$. Then (\ref{variable order construction 1.1}) is equivalent to
\begin{equation} \label{variable order construction 1.3}
( - \tilde{\beta} \tau - \vor^{\tindex} )  \varphi_4 (\hat{x}^{\tindex})^{-1} \bff^{-2}  H_{p} \varphi_{5} - \frac{2}{3} \beta | \mu |_{h}^2 \leq 0.
\end{equation}
Near the support of $\varphi_4 \varphi_5$, we can write
\begin{equation}  \label{variable order construction 1.4}
| \mu |_{h}^{2} =  \frac{\bff^2}{ ( 1 + \bff^2 )^{1/2} } ( \ltau - \utaures )^2 + | \lmu |_{h_{\tindex}}^2 +  \bff^{2} | \umures |_{h^{\tindex}}^2.
\end{equation}
Thus if $|\lmu|_{h_{\tindex}} > 0$, then $-2\beta |\mu|_{h}^2/3$ can be made dominant for large enough $\beta$, in which case (\ref{variable order construction 1.3}) is actually strictly negative. If $|\lmu|_{h_{\tindex}} = 0$, then on the characteristic variety we must be also near $\mathcal{R}_{0,\pm}$. Then by (\ref{variable order section expression for tau near cf}), (\ref{variable order construction 0.3}) and the facts $\ltau > 0$ near $\mathcal{R}_{0,+}$, $\ltau < 0$ near $\mathcal{R}_{0,-}$, we can conclude that $( - \tilde{\beta} \tau - \vor^{\tindex} )  \varphi_4 (\hat{x}^{\tindex})^{-1} \bff^{-2}  H_{p} \varphi_{5} \leq 0$ near $\mathcal{R}_{0,\pm}$, without the need for $- 2\beta |\mu|_{h}^2/3$.
\par

Likewise, we can show that
\begin{equation*}
2 \rho_{\dmf}^{-1} \rho_{\dtsccf}^{-1} \rho_{\tcocf}^{-2}  \big( - \frac{1}{3} \beta x | \mu |_{h}^2 + \tilde{\beta} x | \mu |_{h}^2 \varphi_4 \varphi_5 \big) \leq 0,
\end{equation*}
for near the support of $\varphi_4 \varphi_5$, we can use the middle term in (\ref{variable order construction 1.4}) to have $\beta |\mu|_{h}^2/3$ dominate $\tilde{\beta} | \mu|_{h}^2 \varphi_4 \varphi_5$, assuming that $\beta$ is large relative to $\tilde{\beta}$. \par

Finally, we will show that
\begin{equation*}  
\rho_{\dmf}^{-1} \rho_{\dtsccf}^{-1} \rho_{\tcocf}^{-2} \big( ( - \tilde{\beta} \tau \varphi_5 + ( 1 - \varphi_5 - \varphi_6 ) \vor^{\tindex} ) H_{p} \varphi_4 - \frac{2}{3} \beta x |\mu|_{h}^2 \big) \leq 0,
\end{equation*}
or rather, if we just look at a neighborhood of the support of $\rho_{\dmf}^{-1} \rho_{\dtsccf}^{-1} \rho_{\tcocf}^{-2} H_{p} \varphi_4$, then as before it would be enough to show that
\begin{equation} \label{variable order construction 1.5}
( - \tilde{\beta} \tau \varphi_5 + ( 1 - \varphi_5 - \varphi_6 ) \vor^{\tindex} ) (x^{\tindex})^{-1} H_{p} \varphi_4 - \frac{2}{3} \beta |\mu|_{h}^2 \leq 0.
\end{equation}
Recall that $\mathcal{R}_{\mathrm{2sc},\dmf,\pm}$ are the lifts of $\mathcal{R}_{\mathrm{2sc}}$ (i.e., the standard two-body radial sets in $^{\mathrm{sc}}T^{\ast}_{\partial \Xo} \Xo$) to $\psf \Xd$, so they are simply reduced to $\mathcal{R}_{\mathrm{sc},\pm}$ on the support of $H_{p} \varphi_4$ (where the three-body structure is irrelevant). We know that $ - \tilde{\beta} \tau \varphi_5 + ( 1 - \varphi_5 - \varphi_6 ) \vor^{\tindex}$ is supported away from $\mathcal{R}_{\mathrm{sc},\pm}$ where $\varphi_4$ vanishes. But since we are also on the characteristic variety, we must have $|\mu|_{h}^2 > 0$ in such regions. Thus the strict negativity of $-  2\beta |\mu|_{h}^2/3$ can be used to dominate (\ref{variable order construction 1.5}) if $\beta$ is large enough. This concludes the proof of monotonicity for $\vor$ in the second microlocal framework. \par

Next, we show that the threshold conditions for $\vor$ are also satisfied. This is obvious at $\dtsccf$, since we have 
\begin{equation*}
\begin{gathered}
\vor - \vol = -1/2 + \vor^{\tindex} \ \text{at} \ \dtsccf, \ \text{where} \ \vor^{\tindex}  = \pm \beta^{\tindex} \ \text{at} \ \mathcal{R}_{\mathrm{n},\dmf, \pm} \cup \mathcal{R}_{\mathrm{n},\dff, \pm}.
\end{gathered}
\end{equation*}
Thus (\ref{var con 6}) must be true. To show that the thresholds at $\mathcal{R}_{\mathrm{2sc},\dmf,\pm}$ and $\mathcal{R}_{0,\pm}$ are also satisfied, we just note that
\begin{equation*}
\begin{gathered}
\vor  = - \frac{1}{2}  + 2 \vol \ \text{at} \ \mathcal{R}_{\mathrm{2sc},\dmf, \pm}, \ \vor - \vob =  \frac{1}{2} - ( b_0 + 1 ) - \tilde{\beta} \tau - \vol \ \text{at} \ \mathcal{R}_{0,\pm}, \ \text{where} \\
\vol = \pm \beta \lambda \ \text{at} \ \mathcal{R}_{\mathrm{2sc},\dmf,\pm} \cup \mathcal{R}_{0,\pm}, \quad  \tau = \pm \lambda \ \text{at} \ \mathcal{R}_{0,\pm}.
\end{gathered}
\end{equation*}
Thus (\ref{var con 9}) must also be true. \par

To get (\ref{var con 8}), it suffices to take one of $\beta, \tilde{\beta}$ large enough so that $- \tilde{\beta} \tau - \vol$ dominates $-(b_{0} + 1)$, from which we can ensure that (\ref{var con 8}) must be true as well. This shows that the threshold conditions within the second microlocalized framework are all satisfied. \par

It is also easy to see that $\vol,\vor$ satisfy the additional smoothness assumptions (\ref{var con 10}). Moreover, the corresponding monotonicity and threshold conditions away from $\tcocf$, namely (\ref{var con 11}) and (\ref{var con 12}), are all satisfied as well. Therefore, this concludes the construction of a set of variable orders satisfying all of the required conditions.

\subsection{Modification for smooth square roots} 
\label{variable order smooth square root subsection}
Finally, we modify the constructions in \S \ref{variable order explicit construction subsection} so that, in addition to the dynamic requirements imposed in \S \ref{variable order dynamical conditions subsection}, we have
\begin{equation} \label{variable order smooth square root cal 1}
\begin{gathered}
\text{$(- \rho_{\dmf}^{-1} \rho_{\dtsccf}^{-1} \rho_{\tcocf}^{-2} H_{p} \vor )^{1/2}$, $(- \rho_{\dmf}^{-1} \rho_{\dtsccf}^{-1} \rho_{\tcocf}^{-2} H_{p} \vol )^{1/2}$} \\
\text{are $\mathcal{C}^{\infty}( \psf \Xd )$ functions in a neighborhood of $\bcv$.}
\end{gathered}
\end{equation}
Moreover, if $\vol$ is interpreted as a $\mathcal{C}^{\infty}( \mathcal{C}_{\tindex} \times \overline{\mathbb{R}^{n_{\tindex}}} )$ function via restriction, then we require
\begin{equation} \label{variable order smooth square root cal 2}
\text{$(-H_{\intt} \vol)^{1/2}$ is a $\mathcal{C}^{\infty}( \mathcal{C}_{\tindex} \times \overline{\mathbb{R}^{n_{\tindex}}} )$ function in a neighborhood of $\Sigma_{\mathrm{t}}$}.
\end{equation}
Lastly, we will require that
\begin{equation} \label{variable order smooth square root cal 3}
\begin{gathered}
\text{$(- \rho_{\dmf}^{-1} \rho_{\dtsccf}^{-1} \rho_{\tcocf}^{-2} H_{p} ( \vor - \vol ) )^{1/2}$ is a $\mathcal{C}^{\infty}( \psf \Xd )$ function} \\
\text{in a neighborhood of $\bcv \cap \dtsccf$},
\end{gathered}
\end{equation} 
and, if $\vor$, $\vol$ are interpreted as $\mathcal{C}^{\infty}( ^{\mathrm{3sc}}T^{\ast}[ \overline{\mathbb{R}^{n}} ; \mathcal{C}_{\tindex} ] )$ functions near $\Sigma_{\mf} \backslash \pi_{\ff}^{-1}( o_{\mathcal{C}^{\tindex}} )$, then
\begin{equation} \label{variable order smooth square root cal 4}
\begin{gathered}
\text{$( - \sH_{p, \mf}( \vor - \vol ) )^{1/2}$ is a $\mathcal{C}^{\infty}( { ^{\mathrm{3sc}}T^{\ast} [ \overline{\mathbb{R}^{n}} ; \mathcal{C}_{\tindex} ]  } )$ function } \\
\text{in a neighborhood of $\Sigma_{\mf} \cap {^{\mathrm{3sc}}T^{\ast}_{\ff} \tscX} \backslash \pi_{\ff}^{-1}( o_{\mathcal{C}^{\tindex}} )$ }.
\end{gathered}
\end{equation}
\par

Notice that conditions (\ref{variable order smooth square root cal 1})--(\ref{variable order smooth square root cal 4}) are indeed not already satisfied by the construction in \S \ref{variable order explicit construction subsection}, since for instance the function $|\mu|_{h}$ is not smooth at $\mu = 0$. Such an issue is also present in the two-body case, where the standard choice of variable order is $-1/2 + \tau$ (restricted to a neighborhood of the characteristic variety by means of a cut-off). In that case, an easy solution is to instead consider a function of the form $\phi(\tau)$, where $\phi \in \mathcal{C}^{\infty}(\mathbb{R})$ satisfies $\phi' \geq 0$ and $\phi(t) = \pm \epsilon_{\pm}$ constants in a small neighborhood of $t = \pm \lambda_{\pm}$ for some $\epsilon_{\pm} > 0$ small. \par

We will apply a similar strategy in the present context. Thus, let $\phi_{1}, \phi_{2} \in \mathcal{C}^{\infty}( \mathbb{R} )$ be such that $\phi_1(t) = \pm \epsilon_{1,{\pm}}$ in a small neighborhood of $t = \pm \lambda$, $\phi_{2}(t) = \pm \epsilon_{2, \pm}$ in a small neighborhood of $t = \pm 1$, and $\phi_{1}', \phi_{2}' \geq 0$. Moreover, let $\hat{\vol}$ be the function (\ref{example variable order l}). Then we modify $\vor$, $\vol$ by
\begin{equation*}
\vol = \beta \phi_{1} ( \beta^{-1} \hat{\vol} ) = \beta \phi_{1} ( \varphi_1 \tau + ( 1 - \varphi_1 ) \tilde{\varphi}_2 \varphi_3 \ltau + \beta ( 1 - \varphi_1 ) \varphi_2 ( 1 - \varphi_3 ) \tau  ),
\end{equation*}
and once again $\vor = - 1/2 + \varphi_1 \tilde{\vor} + \vol$, except that we now set
\begin{equation*}
\tilde{\vor} = \beta \phi_{1}( \tau ) \varphi_6 - \tilde{\beta} \phi_{1}( \tau ) \varphi_4 \varphi_5 + \varphi_4 ( 1 - \varphi_5 ) ( 1 - \varphi_6 ) \beta^{\tindex} \phi_2( (\beta^{\tindex})^{-1} \vor^{\tindex} ).
\end{equation*} \par

With these modifications, it is easy to check that the dynamical conditions required in \S \ref{variable order dynamical conditions subsection} are still satisfied. Indeed, the compositions of $\tau $ with $\phi_1$ and $\vor^{\tindex}$ with $\phi_2$ do not change monotonicity properties along the flows. Moreover, their values at the radial sets are also preserved by constructions. One can then follow through the calculations in \S \ref{variable order explicit construction subsection} quite easily. We leave to the readers to make these straightforward verifications. \par

Lastly, the obvious analogues to (\ref{variable order smooth square root cal 1})--(\ref{variable order smooth square root cal 3}) must also hold in the case where the variables orders are $\vor_{-}$, $\vol_{-}$, as determined by formulae (\ref{var con 7}) (recall that we were writing $\vor = \vor_{+}$, $\vol = \vol_{+}$ in \S \ref{variable order explicit construction subsection}). To be precise, we must also have
\begin{equation}  
\begin{gathered}
\text{$(\rho_{\dmf}^{-1} \rho_{\dtsccf}^{-1} \rho_{\tcocf}^{-2} H_{p} \vor_{-} )^{1/2}$, $(\rho_{\dmf}^{-1} \rho_{\dtsccf}^{-1} \rho_{\tcocf}^{-2} H_{p} \vol_{-} )^{1/2}$}. \\
\text{are $\mathcal{C}^{\infty}( \psf \Xd )$ functions in a neighborhood of $\bcv$.}
\end{gathered}
\end{equation}
If $\vol_{-}$ is interpreted as a $\mathcal{C}^{\infty}( \mathcal{C}_{\tindex} \times \overline{\mathbb{R}^{n_{\tindex}}} )$ function via restriction, then
\begin{equation}
\text{$(H_{\intt} \vol_{-} )^{1/2}$ is a  $\mathcal{C}^{\infty}( \mathcal{C}_{\tindex} \times \mathbb{R}^{n_{\tindex}} )$ function in a neighborhood of $\Sigma_{\mathrm{t}}$}. 
\end{equation}
Moreover, we require that
\begin{equation}
\begin{gathered}
\text{$( \rho_{\dmf}^{-1} \rho_{\dtsccf}^{-1} \rho_{\tcocf}^{-2} H_{p} ( \vor_{-} - \vol_{-} ) )^{1/2}$ is a $\mathcal{C}^{\infty}( \psf \Xd )$ function} \\ 
\text{in a neighborhood of $\bcv \cap \dtsccf$}.
\end{gathered}
\end{equation}
Finally, if $\vor_{-}$, $\vol_{-}$ are interpreted as $\mathcal{C}^{\infty}( ^{\mathrm{3sc}}T^{\ast} [ \overline{\mathbb{R}^{n}} ; \mathcal{C}_{\tindex} ]  )$ functions near $\Sigma_{\mf} \backslash \pi_{\ff}^{-1}( o_{\mathcal{C}^{\tindex}} )$, then
\begin{equation}
\begin{gathered}
\text{$( \sH_{p,\mf} ( \vor_{-} - \vol_{-} ) )^{1/2}$ is a $\mathcal{C}^{\infty}( { ^{\mathrm{3sc}}T^{\ast} } [ \overline{\mathbb{R}^{n}} ; \mathcal{C}_{\tindex} ] )$ function} \\
\text{ in a neighborhood of $\Sigma_{\mf} \cap {^{\mathrm{3sc}}T^{\ast}_{\ff} \tscX} \backslash \pi_{\ff}^{-1}( o_{\mathcal{C}^{\tindex}} )$ }.
\end{gathered}
\end{equation} \par

This concludes our discussion on the modifications.
\section{Scattering adjoint and commutator}
\subsection{Indicial operator of scattering adjoint}
In the consideration of indicial operators at $\cf$, we are morally working directly in the three-cone framework (i.e., we look at second microlocalization from the perspective of blowing up the $\mathrm{3co}$-calculus). It is therefore unsurprising that adjoints behave most naturally with respect to three-cone densities, as opposed to scattering densities. \par

Indeed, as we have already shown in Proposition \ref{proposition symbol and indicial operators for adjoint}, one has 
\begin{equation} \label{equation for adjoint second paper indicial operator at cf}
\sigma(A^{\ast_{\mathrm{3co}}}) = \overline{\sigma(A)}, \  \hat{N}_{\cf}( A^{\ast_{\mathrm{3co}}} ) = \hat{N}_{\cf}(A)^{\ast_{\mathrm{co}}}, \ \hat{N}_{\dff}( A^{\ast_{\mathrm{3co}}} ) = \hat{N}_{\dff}(A)^{\ast_{\mathrm{b}}}
\end{equation}
for any $A \in \Psf^{\vom, \vor, \vol, \vov, \vob, \vos}$ with sufficiently nice classicality properties. Here, the subscripts `3co', `co' and `b' abbreviate adjoints taken respectively with respect to some fixed positive 3co-density, and families of (with parameters in $\mathcal{C}_{\tindex}$, i.e., the base of the fibration defining $\cf$ resp. $\dff$) positive co- and b-densities (though this is less natural since there are no canonical global choices of such objects). \par

In fact, the relationship between these densities can be made concrete: Away from $\cf$, any 3co density $\nu_{\mathrm{3co}}$ simply reduces to a scattering density (since the three-cone structure, by design, must reduces to the three-body structure away from $\cf$), while in a neighborhood of $\cf$, we can write $\nu_{\mathrm{3co}}$ in local frames as 
\begin{equation} \label{positive density determine from 3co to co 1}
\nu_{\mathrm{3co}} = f_{\dff} \Big| \frac{dx_{\tindex}}{x_{\tindex}^{n_{\tindex}+1}}  dy_{\tindex} \frac{d\hbff}{\hbff} dy^{\tindex} \Big|, \quad  \nu_{\mathrm{3co}} = f_{\dmf} \Big| \frac{dx_{\tindex}}{x_{\tindex}^{n_{\tindex}+1}} dy_{\tindex} \frac{d \hat{x}^{\tindex}}{ (\hat{x}^{\tindex})^{n^{\tindex} + 1} } dy^{\tindex} \Big|
\end{equation}
respectively near $\dff$ and $\dmf$ for some positive functions $f_{\dff}, f_{\dmf} \in \mathcal{C}^{\infty}(\Xd)$. Then the corresponding family of co-densities required for (\ref{equation for adjoint second paper indicial operator at cf}) to hold is found by sending
\begin{equation} \label{positive density determine from 3co to co 2}
\begin{gathered}
 \Big| \frac{dx_{\tindex}}{x_{\tindex}^{n_{\tindex}+1}}  dy_{\tindex} \frac{d\hbff}{\hbff} dy^{\tindex} \Big| \mapsto \Big| \frac{d\hbff}{\hbff} dy^{\tindex} \Big|, \\
   \Big| \frac{dx_{\tindex}}{x_{\tindex}^{n_{\tindex} + 1}} dy_{\tindex} \frac{d \hat{x}^{\tindex}}{ (\hat{x}^{n^{\tindex}})^{n^{\tindex} + 1} } dy^{\tindex} \Big| \mapsto \Big| \frac{d \hat{x}^{\tindex} }{ ( \hat{x}^{\tindex} )^{n^{\tindex}+1} } dy^{\tindex} \Big|,
   \end{gathered}
\end{equation}
and also restricting $f_{\dff}, f_{\dmf}$ to $\cf = \mathcal{C}_{\tindex} \times [ \hat{X}^{\tindex} ; \{ 0 \} ]$. Likewise, by a variable change, we find that the first expression in (\ref{positive density determine from 3co to co 1}) can also be written as 
\begin{equation} \label{positive density determine from 3co to co 3}
\nu_{\mathrm{3co}} =  2 f_{\dff}  \Big| \frac{dx_{\tindex}}{x_{\tindex}^{n_{\tindex}+1}}  dy_{\tindex} \frac{d x^{\tindex}}{x^{\tindex}} dy^{\tindex} \Big|. 
\end{equation}
Then the b-densities in (\ref{equation for adjoint second paper indicial operator at cf}) is found by first sending
\begin{equation}  \label{positive density determine from 3co to co 4}
\Big| \frac{dx_{\tindex}}{x_{\tindex}^{n_{\tindex}+1}}  dy_{\tindex} \frac{d x^{\tindex}}{x^{\tindex}} dy^{\tindex} \Big| \mapsto \Big| \frac{dx^{\tindex}}{x^{\tindex}} dy^{\tindex} \Big|,
\end{equation}
and then restricting $2 f_{\dff}$ to $\dff = \mathcal{C}_{\tindex} \times X^{\tindex}$. This will give the structures of the densities near $\mathcal{C}_{\tindex} \times \mathcal{C}^{\tindex}$. To find their structures over the interior of $\mathcal{C}_{\tindex} \times (X^{\tindex})^{\circ}$, we can take compact regions near $\partial \Xd$ whose intersections with $\partial \Xd$ is only $\dff^{\circ}$ (which we remark can also be identified also with $\ff^{\circ}$), and write in local frame
\begin{equation} \label{positive density determine from 3co to co 5}
\nu_{\mathrm{3co}} = f_{0} \Big| \frac{d x_{\tindex}}{ x_{\tindex}^{n_{\tindex} + 1} } dy_{\tindex} dz^{\tindex} \Big|. 
\end{equation}
Then, sending
\begin{equation} \label{positive density determine from 3co to co 6}
\Big| \frac{d x_{\tindex}}{ x_{\tindex}^{n_{\tindex} + 1} } dy_{\tindex} dz^{\tindex}  \Big| \mapsto | dz^{\tindex} |,
\end{equation}
and also restricting $f_0$ to $\dff^{\circ} = \mathcal{C}_{\tindex} \times (X^{\tindex})^{\circ}$ gives the required structures in the interior.
\par

For the purpose of getting positive commutator estimates measured in $H_{\mathrm{d3sc,3co,res}}^{m,\vor,\vol,\vor+\vol,\vob,s}$, which is defined with respect to $L^{2}$, we will need to consider $A^{\ast}$, which denotes the adjoint of $A$ taken with respect to the canonical scattering density $\nu_{\mathrm{sc}}$ (which is simply the Euclidean metric in this case). The natural question which arises is then whether or not we have an analogy for (\ref{equation for adjoint second paper indicial operator at cf}). We will show that this requires some modifications.

\begin{lemma} \label{scattering adjoint indicial operator lemma}
Suppose that $A \in \Psf^{\vom, \vor, \vol, \vov, \vob, \vos}$, and let $A^{\ast}$ denote the adjoint of $A$ taken with respect to the canonical scattering density $\nu_{\mathrm{sc}}$. Then $A^{\ast}  \in \Psf^{\vom, \vor, \vol, \vov, \vob, \vos}$. Moreover, we have
\begin{align} 
\sigma(A^{\ast}) & = \overline{ \sigma(A) }, \label{Euclidean adjoint formula under indicial operator at cf -1}
\\ 
\hat{N}_{\cf}( A^{\ast} ) & = \hbff^{-n^{\tindex}/2} \hat{N}_{\cf}(A)^{\ast_{\mathrm{b}}} \hbff^{n^{\tindex}/2} \label{Euclidean adjoint formula under indicial operator at cf}, \\
\hat{N}_{\dff}( A^{\ast} ) & = \hat{N}_{\dff}( A )^{\ast}, \label{Euclidean adjoint formula under indicial operator at cf 1}
\end{align}
where $\hat{N}_{\cf}(A)^{\ast_{\mathrm{b}}}$, $\hat{N}_{\dff}(A)^{\ast}$ denote the adjoints of $\hat{N}_{\cf}(A)$ resp. $\hat{N}_{\dff}(A)$ taken with respect to the canonical b-density $\nu_{\mathrm{b}}([ \hat{X}^{\tindex} ; \{ 0 \} ])$ resp. scattering density $\nu_{\mathrm{sc}}(X^{\tindex})$. 
\end{lemma}
Note that (\ref{Euclidean adjoint formula under indicial operator at cf}) is a globally valid (i.e., even as $\hat{x}^{\tindex} = \hbff^{-1} \rightarrow 0$) expression on $[ \hat{X}^{\tindex} ; \{ 0 \} ]^{\circ}$.
\begin{proof}
We will prove (\ref{Euclidean adjoint formula under indicial operator at cf -1})--(\ref{Euclidean adjoint formula under indicial operator at cf 1}) by using (\ref{equation for adjoint second paper indicial operator at cf}). The idea is to identify a specific positive three-cone density $\nu_{\mathrm{3co}}$ that is naturally related to the canonical scattering density $\nu_{\mathrm{sc}}$. Moreover, since the three-cone structure differs from the three-body structure only in a neighborhood of the corner face, one only needs to modify $\nu_{\mathrm{sc}}$ near $\cf$. \par

Thus, recall first that we can write
\begin{equation*}
\nu_{\mathrm{sc}}   = \Big| \frac{dx_{\tindex}}{x_{\tindex}^{n_{\tindex} + 1}} dy_{\tindex} \frac{dx^{\tindex}}{ ( x^{\tindex} )^{n^{\tindex}+1} }  dy^{\tindex}\Big|
\end{equation*}
in a neighborhood of $\mathcal{C}_{\tindex}$. Then by a direct calculation, we have
\begin{equation}  \label{3co and co inner product scaling 1}
\begin{gathered}
\nu_{\mathrm{sc}} = 2^{-1} x_{\tindex}^{-n^{\tindex}/2} \hbff^{n^{\tindex}/2}  \Big| \frac{dx_{\tindex}}{x_{\tindex}^{n_{\tindex}+1}} dy_{\tindex} \frac{d\hbff}{\hbff} dy^{\tindex} \Big|, \\
 \nu_{\mathrm{sc}} =  2^{-1} x_{\tindex}^{-n^{\tindex}/2} (\hat{x}^{\tindex})^{n^{\tindex}/2} \Big| \frac{dx_{\tindex}}{x_{\tindex}^{n_{\tindex}+1}} dy_{\tindex} \frac{d \hat{x}^{\tindex} }{ ( \hat{x}^{\tindex} )^{n^{\tindex}+1} } dy^{\tindex} \Big|
\end{gathered}
\end{equation}
respectively near $\dff$ and $\dmf$ in a neighborhood of $\cf$. If we now define
\begin{equation} \label{relationship between inner product 3co}
\nu_{\mathrm{3co}} = ( \varphi_{\cf}  f + ( 1 - \varphi_{\cf} ) ) \nu_{\mathrm{sc}}, \quad f =  x_{\tindex}^{n^{\tindex}/2} ( \varphi_{\dff} \hbff^{-n^{\tindex}/2} + \varphi_{\dmf} (\hat{x}^{\tindex})^{-n^{\tindex}/2} ),
\end{equation}
where each $\varphi_{\bullet} \in \mathcal{C}^{\infty}(\Xd)$ is a cut-off at $\bullet$ for $\bullet = \dmf, \dff, \cf$ and also $\varphi_{\dff} + \varphi_{\dmf} = 1$, then by (\ref{3co and co inner product scaling 1}), we know that $\nu_{\mathrm{3co}}$ must be a well-defined three-cone density. \par

By (\ref{relationship between inner product 3co}), we have 
\begin{equation} \label{relationship between inner product 3co 1}
A^{\ast} = F A^{\ast_{\mathrm{3co}}} F^{-1}, \quad F = \varphi_{\cf} f + ( 1 - \varphi_{\cf} ).
\end{equation}
Notice that (\ref{relationship between inner product 3co 1}) already implies the required membership of $A^{\ast}$. Moreover, (\ref{Euclidean adjoint formula under indicial operator at cf -1}) follows easily by the first equality in (\ref{equation for adjoint second paper indicial operator at cf}) and the symbol calculus. \par

To prove (\ref{Euclidean adjoint formula under indicial operator at cf}), note that by the second equality in (\ref{equation for adjoint second paper indicial operator at cf}) and the multiplicative property of the indicial operator map, we have
\begin{equation} \label{3co and co inner product scaling -1}
\hat{N}_{\cf}( A^{\ast} ) =  (x_{\tindex}^{-n^{\tindex}/2} f) \hat{N}_{\cf}(A)^{\ast_{\mathrm{co}}} ( x_{\tindex}^{-n^{\tindex}/2} f )^{-1},
\end{equation}
where for the sake of brevity, we understand everything implicitly as being restricted to $\cf^{\circ}$. Thus what we really mean is
\begin{equation} \label{3co and co inner product scaling}
x_{\tindex}^{-n^{\tindex}/2}f     = \hbff^{-n^{\tindex}/2} ( \varphi_{\dff} + \varphi_{\dmf} ( \hat{x}^{\tindex} )^{-n^{\tindex}} ),
\end{equation}
where $\varphi_{\dff}, \varphi_{\dmf}$ are understood as being restricted to $\cf$. Moreover, $\hat{N}_{\cf}(A)^{\ast_{\mathrm{co}}}$ denote the adjoints of $\hat{N}_{\cf}(A)$ taken with respect to the positive co-densities $\nu_{\mathrm{co}}$ on $[ \hat{X}^{\tindex} ; \{ 0 \} ]$, as determined by $\nu_{\mathrm{3co}}$ and the rules (\ref{positive density determine from 3co to co 1}), (\ref{positive density determine from 3co to co 2}). By using (\ref{relationship between inner product 3co}), we therefore know that the aforementioned co-densities can be written explicitly as
\begin{equation*}
\nu_{\mathrm{co}} = 2^{-1 }\varphi_{\dff} \Big| \frac{d \hbff}{\hbff} d y^{\tindex} \Big| + 2^{-1} \varphi_{\dmf} \Big| \frac{d \hat{x}^{\tindex}}{ ( \hat{x}^{\tindex} )^{n^{\tindex} + 1} } dy^{\tindex} \Big|.
\end{equation*}
Thus we can find, with
\begin{equation*}
\nu_{\mathrm{b}}([\hat{X}^{\tindex} ; \{ 0 \}]) = \Big| \frac{d\hbff}{\hbff} dy^{\tindex} \Big| = \Big| \frac{ d \hat{x}^{\tindex} }{ \hat{x}^{\tindex} } dy^{\tindex} \Big|,
\end{equation*}
i.e., the canonical b-density on $[ \hat{X}^{\tindex}; \{ 0 \} ]$, we have 
\begin{equation*}
\nu_{\mathrm{co}} = 2^{-1} ( \varphi_{\dff} + \varphi_{\dmf} ( \hat{x}^{\tindex} )^{-n^{\tindex}} ) \nu_{\mathrm{b}}([ \hat{X}^{\tindex} ; \{ 0 \} ]).
\end{equation*}
This implies 
\begin{equation*}
\hat{N}_{\cf}(A)^{\ast_{\mathrm{co}}} = ( \varphi_{\dff} + \varphi_{\dmf} (\hat{x}^{\tindex})^{-n^{\tindex}} )^{-1} \hat{N}_{\cf}(A)^{\ast_{\mathrm{b}}} ( \varphi_{\dff} + \varphi_{\dmf} (\hat{x}^{\tindex})^{-n^{\tindex}} ),
\end{equation*}
so that by combining the above with (\ref{3co and co inner product scaling -1}), (\ref{3co and co inner product scaling}), we can conclude that (\ref{Euclidean adjoint formula under indicial operator at cf}) is achieved. \par

It remains to prove (\ref{Euclidean adjoint formula under indicial operator at cf 1}). Starting from (\ref{relationship between inner product 3co}), one could also write 
\begin{equation} \label{3co and co inner product scaling 3}
\varphi_{\cf} f =  \varphi_{\cf} ( \varphi_{\dff} (x^{\tindex})^{n^{\tindex}} + \varphi_{\dmf} x_{\tindex}^{n^{\tindex}/2} (\hat{x}^{\tindex})^{-n^{\tindex}/2} ).
\end{equation}
Thus by using (\ref{equation for adjoint second paper indicial operator at cf}) once more, this time the third equality, together with the multiplicative property of the indicial operator map and (\ref{relationship between inner product 3co 1}), we have
\begin{equation} \label{3co and co inner product scaling 4}
\hat{N}_{\dff}( A^{\ast} ) = ( \varphi_{\cf} (x^{\tindex})^{n^{\tindex}} + ( 1 - \varphi_{\cf} )  )\hat{N}_{\dff}(A)^{\ast_{\mathrm{b}}} ( \varphi_{\cf} (x^{\tindex})^{n^{\tindex}} + ( 1 - \varphi_{\cf} )  )^{-1} ,
\end{equation}
where everything is again understood as being restricted to $\dff^{\circ}$. Moreover, $\hat{N}_{\dff}(A)^{\ast_{\mathrm{b}}}$ denote the adjoints of $\hat{N}_{\dff}(A)$ taken with respect to the b-densities $\nu_{\mathrm{b}}(X^{\tindex})$ determined by $\nu_{\mathrm{3co}}$ and the rules (\ref{positive density determine from 3co to co 3})--(\ref{positive density determine from 3co to co 6}). Explicitly, by using (\ref{3co and co inner product scaling 1}), (\ref{3co and co inner product scaling 3}), we find that
\begin{equation*}
\nu_{\mathrm{b}}(X^{\tindex}) =  \varphi_{\cf} \big| \frac{dx^{\tindex}}{x^{\tindex}} dy^{\tindex} \big| + ( 1 - \varphi_{\cf} ) | dz^{\tindex} | = ( \varphi_{\cf} (x^{\tindex})^{n^{\tindex}} + ( 1 - \varphi_{\cf} ) ) |dz^{\tindex}|.
\end{equation*}
Hence we have
\begin{equation*}
\hat{N}_{\dff}(A)^{\ast_{\mathrm{b}}} = ( \varphi_{\cf} (x^{\tindex})^{n^{\tindex}} + ( 1 - \varphi_{\cf} )  )^{-1} \hat{N}_{\dff}(A)^{\ast} ( \varphi_{\cf} (x^{\tindex})^{n^{\tindex}} + ( 1 - \varphi_{\cf} )  ),
\end{equation*}
so that (\ref{Euclidean adjoint formula under indicial operator at cf 1}) follows by substituting the above into (\ref{3co and co inner product scaling 4}).
\end{proof}

\subsection{Commutator formulae} \label{Subsection commutator formulae}
We now state and prove the crucial commutator formulae which will be the foundation for what remains of this paper. \par

Recall from the symbol calculus (i.e., the multiplicative property (\ref{multiplicative property for the principal symbol d3sc,3co,res})) that if $A_j \in \Psf^{\vom_j, \vor_j, \vol_j, \vov_j, \vob_j, \vos_j}$ for $j =1,2$, then
\begin{equation} \label{second paper commutator class belong}
i[ A_1, A_2 ] \in \Psf^{ \vom_1 + \vom_1 - 1 +2 \delta , \vor_1 + \vor_2 - 1 + 2\delta, \vol_1 + \vol_2, \vov_1 + \vov_2 - 1 + 2\delta, \vob_1 + \vob_2, \vos_1 + \vos_2 - 1 +2 \delta }.
\end{equation}
Moreover, by a standard computation in coordinates, we have
\begin{equation} \label{second paper commutator formula}
\sigma( i [ A_1, A_2 ] ) - H_{\sigma(A_1)} \sigma(A_2) \in S^{\vom_1 + \vom_1 - 2 + 4  \delta , \vor_1 + \vor_2 - 2 + 4\delta, \vol_1 + \vol_2, \vov_1 + \vov_2 - 2 + 4\delta, \vob_1 + \vob_2, \vos_1 + \vos_2 - 2 + 4 \delta }_{\delta}.
\end{equation}
Here, the losses of $2\delta$ in (\ref{second paper commutator class belong}) can be relaxed if the orders at their corresponding faces are actually constants. If $A_1, A_2$ are sufficiently classical at $\cf$, and if we additionally require that
\begin{equation*}
\hat{N}_{\cf}( i [ A_1, A_2 ] ) = 0, 
\end{equation*}
then we can improve (\ref{second paper commutator class belong}) into
\begin{equation*}
i [ A_1, A_2 ] \in \Psf^{ \vom_1 + \vom_2 - 1 +2 \delta , \vor_1 + \vor_2 - 1 + 2\delta, \vol_1 + \vol_2, \vov_1 + \vov_2 - 1 + 2\delta, \vob_1 + \vob_2 - 2 + 4 \delta, \vos_1 + \vos_2 - 1 +2 \delta}. 
\end{equation*}
\par

However, as it turns out, we cannot rely on the symbol calculus alone for proving microlocal propagation estimates, even in the case where we only wish to propagate microlocal regularity in the symbolic sense. To clarify why this is the case, let us recall that in a simplified setting, the general idea of a positive commutator estimate (which is the standard method used to prove microlocal propagation) is to construct operators $A$, $B$, $B_1$ and $E$ such that
\begin{equation}  \label{bad commutator calculation symbolic sense example}
i [ P, A^{\ast} A ] = - B^{\ast} B - B_{1}^{\ast} B_1 + E + R, 
\end{equation} 
where $B_{1}^{\ast}B_1$ is a term that will be dropped in an estimate, $B$ is elliptic at a region where we wish to conclude microlocal regularity, while a priori control needs to be assumed on the wavefront set of $E_0$. Here, the notions of ellipticity, microlocal regularity and wavefront sets are all purposely made arbitrary, i.e., they can be measured either in the symbolic sense, or more globally in the senses at $\cf$, $\dff$. Typically in a microlocal propagation estimate, it is reasonable to focus on one sense of regularity at a time. The remainder term $R$ must then be of lower order in this chosen sense of regularity, and are of the same orders of regularity as $B^{\ast}B$ and $E$ in the other senses. \par

Suppose we focus on propagating microlocal regularity in the symbolic sense, which we recall is the simultaneous measurement of regularity in phase space at $\psf_{\dmf}\Xd$, $\dtsccf$, $\rf$ and fiber infinity. Then for the purpose of proving estimates with respect to $H_{\mathrm{d3sc,3co,res}}^{m, \vor, \vol, \vor+ \vol, \vob, s}$, in a positive commutator estimate we must choose $A \in \Psf^{-\infty, \vor + 1, \vol, \vor + \vol + 1/2 , \vob + 1, -\infty}$, where the orders at fiber infinity and $\rf$ are not essential since $P$ is elliptic at these faces. \par

It is most natural to use the symbol calculus for this calculation. However, since $\hat{N}_{\cf}(P)$ is scalar valued (see (\ref{indicial operator of P at cf formula})), we must always have $\hat{N}_{\cf} ( i [ P, A^{\ast}A ] ) = 0$. Thus by the classicality properties of $P$ (see the discussion below just before Lemma \ref{commutator formula lemma}), with the above choice of $A$, we in fact expect that
\begin{equation} \label{bad commutator membership symbolic sense example}
i[P,A^{\ast} A] \in \Psf^{-\infty, 2 \vor + 2 \delta, 2 \vol, 2 \vor + 2 \vol + 2 \delta, 2 \vob + 4 \delta, -\infty}, 
\end{equation}
where the presence of an additional $4\delta$ loss at $\tcocf$ cannot be removed and is due to the presence of the variable order $\vob$. Moreover, it turns out that with the standard construction of $A$, we often have 
\begin{equation*}
B \in \Psf^{-\infty, \vor, \vol, \vor + \vol, \vob, -\infty}, \ E \in \Psf^{-\infty, 2 \vor, 2 \vol, 2 \vor + 2 \vol, 2 \vob, -\infty}.
\end{equation*}
On the other hand, (\ref{bad commutator membership symbolic sense example}) forces us to only have
\begin{equation} \label{bad commutator calculation remainder term example}
R \in \Psf^{-\infty, 2 \vor - 1 + 4 \delta, 2 \vol, 2 \vor + 2 \vol - 1 + 4 \delta, 2 \vob + 4 \delta, -\infty},
\end{equation}
since $R$ merely arises as the remainder term in the symbol calculus and cannot be controlled precisely. In particular, with $B$, $E$ and $R$ chosen as above, identity (\ref{bad commutator calculation symbolic sense example}) would be insufficient for a microlocal propagation estimate. \par

Indeed, while the memberships of $B$ and $E$ suggest that  microlocal regularity in the symbolic sense should propagate at the level of $H_{\mathrm{d3sc,3co,res}}^{\ast, \vor, \vol, \vor + \vol, \vob, \ast}${\ep}which is what we expected; the membership of $R$ implies this is impossible, for we must assume a priori regularity of at least $H_{\mathrm{d3sc,3co,res}}^{\ast, \vor, \vol, \vor + \vol, \vob + 2 \delta}$. Thus, microlocal propagation cannot be proved in this way. \par

To overcome this problem, we must instead consider commutator formulae which account for decay both in the symbolic sense and in the indicial operator sense at $\cf$, to the extent that we can understand exactly which terms contribute to this loss of order $4\delta$ at $\tcocf$ in $R$. Note that we do not consider such a formula at $\dff$, since $\hat{N}_{\dff} ( i [ P, A^{\ast}A ] )$ will generally be non-vanishing for those $A$ that we are interested in.\par

For the estimates considered in \S \S \ref{tangential propagation section}, \ref{transversal propagation section} and \ref{decay at the corner face section}, we will only need to assume that $A$ is supported in a small neighborhood of $\cf$ (i.e., $A = \varphi_{\cf} A \varphi_{\cf}$ for some cut-off function $\varphi_{\cf} \in \mathcal{C}^{\infty}( \Xd )$ at $\cf$). Thus, only the behavior of $P$ near $\cf$ will be relevant. \par

In fact, suppose we impose over the interior of $[ \hat{X}^{\tindex} ; \{ 0 \} ]$ the natural coordinates $( \hbff, y^{\tindex} )$. Then it is not hard to see that in the coordinates $( x_{\tindex}, y_{\tindex}, \hbff, y^{\tindex} )$, we have
\begin{equation} \label{writing P locally near cf}
P = (x_{\tindex} D_{x_{\tindex}} )^2 + x_{\tindex}^2 \Delta_{h_{\tindex}} + i(n_{\tindex} - 1) x_{\tindex} ( x_{\tindex}^2 D_{x_{\tindex}} ) +  P_{1} + P_{2} + V - \lambda^2.
\end{equation}
Here, we are writing
\begin{equation*}
P_{1} \coloneq x_{\tindex} \hat{N}_{\cf}( \Delta_{z^{\tindex}} ) + 2 x_{\tindex} ( x_{\tindex}^2 D_{x_{\tindex}} ) ( \hbff D_{\hbff} ), \
P_{2} \coloneq - i x_{\tindex}^2 ( \hbff D_{\hbff} ) + x_{\tindex}^2 ( \hbff D_{\hbff} )^2,
\end{equation*}
where
\begin{equation} \label{the cf indicial operator of the interactive laplacian}
\hat{N}_{\cf}( \Delta_{z^{\tindex}} ) = 4 ( \hbff^{-1/2} \hbff D_{\hbff} )^{2} - 2 i ( n^{\tindex} - 1 ) \hbff^{-1} ( \hbff D_{\hbff} ) + \hbff^{-1} \Delta_{h^{\tindex}},
\end{equation}
while one also has
\begin{equation*}
\Delta_{z_{\tindex}} = (x_{\tindex} D_{x_{\tindex}} )^2 + x_{\tindex}^2 \Delta_{h_{\tindex}} + i(n_{\tindex} - 1) x_{\tindex} ( x_{\tindex}^2 D_{x_{\tindex}} ).
\end{equation*}
Clearly, a similar formula holds if we instead use $( x_{\tindex}, y_{\tindex},  \hat{x}^{\tindex}, y^{\tindex} )$ as coordinates. \par

Notice that (as operators locally restricted near $\cf$), we have
\begin{equation*}
\Delta_{z_{\tindex}} - \lambda^{2} \in \Psi_{\mathrm{3coc}}^{2,0,0,0}, \ P_{1}  \in \Psi_{\mathrm{3coc}}^{2, -1, 0, -2}, \ P_{2} \in \Psi_{\mathrm{3coc}}^{2,-2,-2,-4}, \ V \in \Psi_{\mathrm{3coc}}^{0, -2 - \delta, 0 , -2 - \delta }.
\end{equation*}
Moreover, $\Delta_{z_{\tindex}} - \lambda^2$ is homogenous of degree zero in $z_{\tindex}$. Thus we can conclude that $P$ is classical at $\cf$ modulo $\Psi^{2, 0, 0, -2}_{\mathrm{3coc}}$ as an operator in $\Psi_{\mathrm{3coc}}^{2,0,0,0}$. It is also easy to check that (still restricted near $\cf$) we have
\begin{equation} 
\label{the cf indicial operator of the interactive laplacian 1.1}
\begin{gathered}
\Delta_{z_{\tindex}} - \lambda^2 \in \Psi_{\mathrm{d3sc,3co,res}}^{2, 0, 0, 0, 0 ,0}, \ P_1 \in \Psi_{\mathrm{d3sc,3co,res}}^{2, -1, 0, 0, -2, 2 }, \\
P_2 \in \Psi_{\mathrm{d3sc,3co,res}}^{2, -2, -2, -,2 , -4, 0}, \ V \in \Psi_{\mathrm{d3sc,3co,res}}^{0, -2 - \delta, 0, - 2 - \delta, - 2 - \delta, 0}.
\end{gathered}
\end{equation}
Thus $P$ is classical at $\cf$ modulo $\Psi_{\mathrm{d3sc,3co,res}}^{2, 0, 0, 0, -2 , 0 }$ as an operator in $\Psi_{\mathrm{d3sc,3co,res}}^{2, 0, 0, 0, 0, 2}$.

However, in \S \ref{global radial point estimate section}, we will consider radial point estimates at $\brpm$. Since these are the lifts of the two-body radial sets, they will no longer be contained in a small neighborhood of the corner faces. Thus, the support condition on $A$ must in general be relaxed.

Before we proceed with the main results of this subsection, let us also use this opportunity to present an improvement of Proposition \ref{proposition vanishing of indicial operator implies lower order membership} in a special case. This will be required in \S \S \ref{principal type propagation section}--\ref{global radial point estimate section} below. First, we introduce a definition. We say that a symbol $a \in S^{\vom, \vor, \vol \vov, \vob, \vos}$ is classical if $a$ belongs to
\begin{equation*}
S_{\mathrm{cl}}^{ \vom, \vor, \vol, \vov, \vob, \vos }( \psf \Xd ) \coloneq  \rho_{\infty}^{- \vom } \rho_{\mathrm{dmf}}^{- \vor } \rho_{\dff}^{- \vol } \rho_{\dtsccf}^{- \vov } \rho_{\tcocf}^{-\vob} \rho_{\mathrm{rf}_{\tindex}}^{-\vos} \mathcal{C}^{\infty}( \psf \Xd ).
\end{equation*}
We would like to consider elements of $\Psf^{\vom, \vor, \vol, \vov,  \vob, \vos}$ which are defined by some quantization of symbols in $S^{\vom, \vor, \vol, \vov, \vob, \vos}_{\mathrm{cl}}$. \par

However, note that this is not an invariant definition, in the sense that it depends on a choice of quantization. In general, upon changing the quantization, we would need to consider symbols with a polyhomogenous expansion as well (by the Kuranishi trick combined with a left symbol reduction procedure in various coordinates).  \par

Note also that if $A \in \Psf^{\vom, \vor, \vol, \vov, \vob, \vos}$ is given by the quantization of a classical symbol, then upon Taylor expanding the symbol of $A$ at $\tcocf$, we can see that $A$ must be classical at $\cf$ modulo $\Psf^{\vom, \vor, \vol, \vov, \vob - 1, \vos}$ as well.

\begin{lemma} 
\label{Lemma: improvement the vanishing of principal symbol and indicial operator implies something}
Suppose that $A \in \Psf^{\vom, \vor, \vol, \vov, \vob, \vos}$ is defined by some quantization of a classical symbol. If $\sigma(A) = 0$, $\hat{N}_{\cf}(A) = 0$, then $
A \in \Psf^{\vom - 1 + 2 \delta, \vor - 1 + 2 \delta, \vol, \vov - 1 + 2 \delta, \vob - 1, \vos - 1 + 2\delta}.
$
\end{lemma}
\begin{proof}
By assumption, if we localize $A$ and write it as a specific, suitable left quantization of $a$, then $a = a_0 + a_1$, where $a_0 \in S_{\mathrm{cl}}^{\vom, \vor, \vol, \vov, \vob, \vos}$, and $a_1$ is given by an asymptotic summation of some lower orders, polyhomogenous terms (see the comments above). In particular, we can check directly that $a_1 \in S_{\delta}^{\vom - 1 + 2\delta, \vor - 1 + 2\delta, \vol, \vov - 1 + 2\delta, \vob, \vos - 1 + 2\delta} $, and moreover $x_{\tindex}^{\vob/2} a_1$ restricts to a polyhomogenous symbol on $\tcocf$. \par

Now, since $\sigma(A) = 0$, we know that $a \in S_{\delta}^{\vom - 1 + 2\delta, \vor  - 1 + 2\delta, \vol, \vov - 1 + 2\delta, \vob, \vos - 1 + 2\delta}$. On the other hand, $\hat{N}_{\cf}(A)$ is given by some left quantization of the restriction of $x_{\tindex}^{\vob/2}a$ to $\tcocf^{\circ}$. Since by assumption we must have $\hat{N}_{\cf}(A) = 0$, it follows that $x_{\tindex}^{\vob/2}a = 0$ identically on $\tcocf^{\circ}$. Consequentially, we know that $a \in S^{\vom - 1 + 2\delta, \vor - 1 + 2\delta, \vol, \vov - 1 + 2\delta, \vob, \vos}$. Since this works locally every near the diagonal, the claim follows.
\end{proof}

\begin{lemma} 
\label{commutator formula lemma}
Let $\vor, \vol, \vob$ be chosen as in \S \ref{variable order construction section}. Suppose that $A \in \Psf^{-\infty, \vor + 1/2, \vol, \vor + \vol + 1/2, \vob + 1 , -\infty}$ is classical at $\cf$ modulo $\Psf^{-\infty, \vor + 1/2, \vol, \vor + \vol + 1/2, \vob + 1 - K, -\infty}$, $K > 0$. Then there exists operators
 \begin{equation}  \label{tangential discussion 0.8}
 B \in \Psf^{-\infty, \vor, \vol - 1/2, \vor + \vol - 1/2, \vob, -\infty}, \ E \in \Psf^{-\infty, 2 \vor + 2 \delta, 2 \vol, 2 \vor + 2 \vol + 2 \delta, 2 \vob, -\infty},
 \end{equation}
 such that $B$ is classical at $\cf$ modulo $\Psf^{-\infty, \vor, \vol -1/2, \vor + \vol - 1/2, \vob - K, -\infty}$, $E$ is classical at $\cf$ modulo $\Psf^{-\infty, 2\vor + 2 \delta, 2 \vol, 2 \vor + 2 \vol + 2 \delta, 2 \vob - K, -\infty}$, and for any boundary defining function $x \in \mathcal{C}^{\infty}(\Xo)$, we have
\begin{equation}\label{tangential discussion 0.85}
 i [ P,  A ^{\ast} A] =  B^{\ast} ( \log x ) B + E + R,
\end{equation}
where $R \in  \Psf^{-\infty, 2 \vor - 1 + 4 \delta, 2 \vol , 2 \vor + 2 \vol - 1 + 4 \delta, 2 \vob - 2\delta, - \infty}$. 
Moreover, we can compute that
\begin{equation} \label{tangential discussion 0.9}
\hat{N}_{\cf}( B )  = ( - \mathsf{H}_{ \inttcf } \vob )^{1/2} \hat{N}_{\cf}( A ), \ \sigma( B ) = (-H_{p} \vob)^{1/2} \sigma(A)
\end{equation} 
as well as
\begin{align} 
\begin{split}
 \hat{N}_{\cf}(E) & =  i[  \hat{N}_{\cf}( \Delta_{z^{\tindex}} ) + 2 \ltau \hbff D_{\hbff}, \hat{N}_{\cf}( A^{\ast} A ) ] \label{tangential discussion 1} \\
& \quad + \mathsf{H}_{\inttcf} \hat{N}_{\cf}( A^{\ast}A ) - ( 2\vob + 2 ) \ltaucf \hat{N}_{\cf}( A^{\ast}A ), 
\end{split} \\
\sigma(E) & = H_{p} \sigma(A^{\ast} A) + (\log x ) ( H_{p} \vob ) \sigma( A^{\ast}A ).  \label{tangential discussion 1.001}
\end{align} \par

In the case where $A$ is supported near $\cf$, we can take $B,G$ to be supported near $\cf$ as well. Moreover,   the above formulae hold even if $x$ is valid only in a neighborhood of $\mathcal{C}_{\tindex}$.

Moreover, if $A$ is given by some quantization of a classical symbol, then so is $B$.
\end{lemma}

\begin{remark}
Lemma \ref{commutator formula lemma} is only a special case. One could certainly show that more general formulae hold as well, where the operators in the commutator are chosen more freely. However, even the statements of such results appear unnecessarily complicated for our purpose. Thus, we have elected to present only what is necessary below.
\end{remark}

Since $x \simeq \rho_{\dmf} \rho_{\dff} \rho_{\dtsccf}^{2} \rho_{\tcocf}^{2}  \rho_{\rf}$, we immediately see that $\log x \in \Psf^{0,2 \delta, 2 \delta, 4 \delta, 4 \delta, 2 \delta}$. This is clearly too strong in the sense that we only needed to remove the loss of $4\delta$ at $\tcocf$ in the remainder term (\ref{bad commutator calculation remainder term example}) appearing in (\ref{bad commutator calculation symbolic sense example}). Though this seems harmless for now, in the propagation estimates below, it will become obvious that one nevertheless has to compensate for this by imposing stricter conditions on the variable orders than those required in \S \ref{variable order construction section}. \par

We will avoid this by showing the following improvement:

\begin{corollary} \label{commutator corollary}
Under the assumptions of Lemma \ref{commutator formula lemma}, we can also choose
 \begin{equation*}
 \tilde{E} \in \Psf^{-\infty, 2 \vor + 2 \delta, 2 \vol, 2 \vor + 2 \vol + 2 \delta, 2 \vob, -\infty}
 \end{equation*}
 such that if $x_{\cf} \in \mathcal{C}^{\infty}( \Xd )$ is a defining function for $\cf$, then we have
\begin{equation*} 
 i [ P,  A ^{\ast} A] =  2B^{\ast} ( \log x_{\cf} ) B + \tilde{E} + R,
\end{equation*}
where $B,R$ are chosen as in (\ref{tangential discussion 0.85}). Moreover, we can compute that
\begin{equation*}
\sigma(\tilde{E}) = H_{p} \sigma( A^{\ast}A ) + 2 ( \log x_{\cf} ) ( H_{p} \vob ) \sigma(A^{\ast}A), 
\end{equation*}
while conditions (\ref{tangential discussion 0.9}) remain to be satisfied. 
\end{corollary}

Thus, through the above corollary, we have removed only the logarithmic loss with respect to $x_{\cf} \simeq \rho_{\dtsccf} \rho_{\tcocf}$. Unfortunately, it is not so clear at this stage how one could further separate the decay between $\dtsccf$ and $\tcocf$.
\begin{proof}[Proof of Corollary \ref{commutator corollary}]
Starting from formula (\ref{tangential discussion 0.85}), it suffices to choose the defining functions such that $x = x_{\dff} x_{\dff} x_{\cf}^2$, where $x_{\cf}$ is as in the statement of the corollary, while $x_{\dmf}, x_{\dff} \in \mathcal{C}^{\infty}( \Xd )$ are respectively defining functions for $\dmf$ and $\dff$. Then we have
\begin{equation*}
i [ P, A^{\ast}A ] =2 B^{\ast} ( \log x_{\cf} ) B + B^{\ast} ( \log x_{\dmf} + \log x_{\dff} ) B + E + \tilde{R}.
\end{equation*}
Thus, all we need to do now is to define $\tilde{E} = B^{\ast} ( \log x_{\dmf} + \log x_{\dff} ) B + E$.
\end{proof}

In proving Lemma \ref{commutator formula lemma}, the following will also be useful. 
\begin{lemma} 
\label{Lemma choice operator that has symbol and indicial operator at cf}
Suppose that $\vor \in \mathcal{C}^{\infty}( \psf \Xd )$, $\vol, \vob \in \mathcal{C}^{\infty}( \mathcal{C}_{\tindex} \times \overline{\mathbb{R}^{n_{\tindex}}} )$. Let
\begin{equation*}
\hat{A}_{\cf} \in \mathcal{C}^{\infty}_{c} \big( \mathcal{C}_{\tindex} \times \mathbb{R}^{n_{\tindex}} ; \Psi_{\mathrm{coc}, \delta}^{\vor + \vol - \vob, \vor - \vob/2, \vol - \vob/2}( [ \hat{X}^{\tindex} ; \{ 0 \} ] ) \big).
\end{equation*}
Moreover, let 
\begin{equation*}
a \in S^{\vom, \vor, \vol, \vov, \vob, \vos}( \psf \Xd ), \ a_{\cf} \in \mathcal{C}^{\infty}_{c} \big( \mathcal{C}_{\tindex} \times \mathbb{R}^{n_{\tindex}} ; S_{\delta}^{\vor + \vol - \vob, \vor - \vob/2, \vol - \vob/2} ( \overline{^{\mathrm{co}}T^{\ast}} [ \hat{X}^{\tindex} ; \{ 0 \} ] ) \big)
\end{equation*}
be chosen such that
\begin{equation*}
q_{\cf} ( x_{\tindex}^{\vob/2} a - a_{\cf} ) \in S^{-\infty, \vor, \vol, \vor + \vol, \vob - K, - \infty}_{\delta} ( \psf \Xd ), \quad K > 0, 
\end{equation*}
where $q_{\cf} \in \mathcal{C}^{\infty}( \psf \Xd )$ is some cut-off function at $\tcocf$. Assume additionally that
\begin{equation*}
\text{$\hat{A}_{\cf}$ is given by some quantization of $a_{\cf}$ near the diagonal.}
\end{equation*}
Then we can find $A \in \Psf^{-\infty, \vor, \vol, \vor + \vol, \vob, -\infty }$ which is classical at $\cf$ modulo $\Psf^{-\infty, \vor, \vol, \vor + \vol, \vob - K, -\infty}$ such that $\hat{N}_{\cf}(A) = \hat{A}_{\cf}$, $\sigma(A) = a$. In fact, if $\hat{A}_{\cf}$ is defined by some quantization of $a_{\cf}$, then $A$ is defined by some quantization of $a$. 
\end{lemma}
\begin{proof}
Let us write $\hat{A}_{\cf} = \hat{B}_{\cf} + \hat{R}_{\cf}$ such that $\hat{B}_{\cf}$ is given by some quantization of $a_{\cf}$ (i.e., it is a near diagonal part of $\hat{A}_{\cf}$) and $\hat{R}_{\cf}$ is a family of operators in $\Psi_{\mathrm{coc}}^{-\infty, -\infty, \vol - \vob/2}$ (i.e., it is an off-diagonal part of $\hat{A}_{\cf}$). Then we can choose $B$ carefully so that it is some quantization of $a$ such that $\hat{N}_{\cf}(B) = \hat{B}_{\cf}$. In particular, we can write $\hat{A}_{\cf} = \hat{N}_{\cf}(B) + \hat{R}_{\cf}$. Next, let $R$ be defined by partially quantizing some dilation of $\hat{R}_{\cf}$ so that $\hat{N}_{\cf}(R) = \hat{R}_{\cf}$. Then we can finally define $A = B + R$ which has the required properties. If $\hat{A}_{\cf}$ is defined just by some quantization of $a_{\cf}$, then we can assume that $\hat{R}_{\cf} = 0$. Hence we can choose $A = B$, which is given just by some quantization of $a$.
\end{proof}
\begin{proof}[Proof of Lemma \ref{commutator formula lemma}]
We first prove the claims in the case where $A$ is supported near $\cf$, i.e., $A = \varphi_{\cf} A \varphi_{\cf}$, where $\varphi_{\cf} \in \mathcal{C}^{\infty}(\Xd)$ is a cut-off function at $\cf$. Then by using (\ref{writing P locally near cf}), we know that the commutator in question can be written as
\begin{equation*}
i [ P, A^{\ast}A ] = i[ \Delta_{z_{\tindex}}, A^{\ast}A  ] + i [  P_{1},  A^{\ast}A ] + i [ P_{2}, A^{\ast}A ] + i[ V, A^{\ast} A  ].
\end{equation*}
It is easy to see from (\ref{the cf indicial operator of the interactive laplacian 1.1}) that when restricted near $\cf$, we have
\begin{equation}  \label{commutator new cal 0.5}
\begin{gathered}
i [ P_1, A^{\ast}A  ]  \in \Psf^{-\infty, 2 \vor - 1 + 2 \delta, 2 \vol, 2 \vor + 2 \vol + 2 \delta, 2 \vob, - \infty}, \\
 i [ P_2, A^{\ast}A ] \in \Psf^{-\infty, 2 \vor - 1 + 2 \delta, 2 \vol - 2, 2 \vor + 2 \vol - 2 + 2 \delta, 2\vob - 2, -\infty}, \\
 i [ V, A^{\ast} A ] \in \Psf^{-\infty, 2 \vor - 2, 2 \vol, 2 \vor + 2 \vol - 2, 2 \vob - 2\delta, - \infty}.
\end{gathered}
\end{equation}
Thus in particular, the indicial operators of $i [ P_2, A^{\ast}A ]$ of order $2\vob$ at $\cf$ simply vanishes. Meanwhile, since $\hat{N}_{\cf}( P_1) = \hat{N}_{\cf}( \Delta_{z^{\tindex}} ) + 2 \ltau ( \hbff D_{\hbff} )$, it is clear that
\begin{equation} \label{commutator new cal 1}
\hat{N}_{\cf}( i [ P_1, A^{\ast}A ] )  = i [ \hat{N}_{\cf}( \Delta_{z^{\tindex}} ) + 2 \ltau ( \hbff D_{\hbff} ), \hat{N}_{\cf}( A^{\ast}A ) ].
\end{equation} \par

We next apply Lemma \ref{composition subsection composition reduction lemma} to the study of $i[ \Delta_{z_{\tindex}}, A^{\ast}A ]$. In this exceptionally simple case, the `operator-valued symbol' of $\Delta_{z_{\tindex}}$ at $\cf$ is simply the scalar valued function $|\zeta_{\tindex}^{\mathrm{3co}}|^2$, i.e., $\Delta_{z_{\tindex}}$ can be written as the partial (and thus full) quantization of $|\zeta_{\tindex}^{\mathrm{3co}}|^2$. For brevity, let us henceforth write 
\begin{equation*}
G \coloneq A^{\ast}A.
\end{equation*}
Then $G$ is again supported near $\cf$. Let $\psi_{\cf}, \phi_{\cf} \in \mathcal{C}^{\infty}(\Xd)$ be cut-off functions at $\cf$ such that $\psi_{\cf} = 1$ on $\supp \phi_{\cf}$, $\phi_{\cf} = 1$ on $\supp G$ (thus $\psi_{\cf} = 1$ on $\supp G$). Then we have
\begin{equation*}
[ \Delta_{z_{\tindex}}, G ] = \psi_{\cf} [ \Delta_{z_{\tindex}}, G ] \psi_{\cf} + ( 1 - \psi_{\cf} ) \Delta_{z_{\tindex}} \phi_{\cf} G + G \phi_{\cf} \Delta_{z_{\tindex}} ( 1 - \psi_{\cf} ),
\end{equation*}
where we note that
\begin{equation*}
( 1 - \psi_{\cf} ) \Delta_{z_{\tindex}} \phi_{\cf} G + G \phi_{\cf} \Delta_{z_{\tindex}} ( 1 - \psi_{\cf} ) \in \Psf^{-\infty, -\infty, -\infty, -\infty, -\infty, -\infty}.
\end{equation*}
Now, let $\varphi_{\cf} \in \mathcal{C}^{\infty}(\Xd)$ be chosen such that $\varphi_{\cf} = 1$ on $\supp \psi_{\cf}$ (thus $\varphi_{\cf} = 1$ on $\supp G$). Moreover, for brevity, let $\hat{G} = \hat{G}_{\psi_{\cf}}$ denote the operator-valued symbol of $G = \varphi_{\cf} G \varphi_{\cf}$ (in the notation of \S \S \ref{the conormal three-cone operators section}--\ref{Section: Microlocal properties of the second microlocalized algebra}). Then by Lemma \ref{composition subsection composition reduction lemma}, we can conclude that
\begin{equation*}
i[ \Delta_{z_{\tindex}}, G ] = C + \tilde{R}, 
\end{equation*}
where $C \in \Psf^{- \infty, 2 \vor - 1 + 2 \delta, 2 \vol - 1 + 2 \delta, 2 \vor + 2\vol  - 1 + 4 \delta, 2\vob + 4 \delta, -\infty}$ is the partial quantization of $H_{|\zeta_{\tindex}^{\mathrm{3co}}|^2} \hat{G}$, where $H_{|\zeta_{\tindex}^{\mathrm{3co}}|^2}$ is the Hamiltonian vector field of $|\zeta_{\tindex}^{\mathrm{3co}}|^2$ with $(z_{\tindex}, \zeta_{\tindex}^{\mathrm{3co}})$ being viewed as the cannonical coordinates (since $G$ is supported near $\cf$, it is also unnecessary to specify a cut-off at $\cf$ in the quantization of $H_{|\zeta_{\tindex}^{\mathrm{3co}}|^2} \hat{G}$); while the remainder term
\begin{equation*}
\tilde{R} \in \Psf^{-\infty, 2 \vor - 1 + 4 \delta, 2 \vol - 2 + 4 \delta, 2 \vor + 2 \vol -3 + 8 \delta, 2 \vob - 2 + 8 \delta, - \infty}
\end{equation*} 
decays better in every possible sense.
\par
In fact, suppose we change the free coordinates from $(z_{\tindex}, \zeta_{\tindex}^{\mathrm{3co}} )$ to $( x_{\tindex}, y_{\tindex}, \ltau^{\mathrm{3co}}, \lmu^{\mathrm{3co}} )$ such that $\ltau^{\mathrm{3co}}$ is dual to $x_{\tindex}^{-2} dx_{\tindex}$, $\mu_{\tindex,j}^{\mathrm{3co}}$ is dual to $x_{\tindex}^{-1} d{y_{\tindex,j}}$ for $j = 1,..., n_{\tindex} - 1$. Then we can write $|\zeta_{\tindex}^{\mathrm{3co}}|^{2} = |\ltau^{\mathrm{3co}}|^{2} + | \lmu^{\mathrm{3co}} |_{h_{\tindex}}^2$. Moreover, suppose that $\vob_{\tindex} \in \mathcal{C}^{\infty}_{c}( \mathcal{C}_{\tindex} \times \mathbb{R}^{n_{\tindex}} )$ denotes the restriction of $\vob$ to $\tcocf$ (notice that $\vob_{\tindex}$ can be chosen to have compact support since we only needed to specify it in a neighborhood of $\Sigma_{\cf}$) and write $\hat{G}= x_{\tindex}^{-\vob_{\tindex} - 1} \hat{G}_0$. Then we have
\begin{align}  \label{commutator new cal 2}
\begin{split}
H_{|\ltau^{\mathrm{3co}}|^{2} + | \lmu^{\mathrm{3co}} |_{h_{\tindex}}^2} \hat{G} = & - ( \log x_{\tindex} ) ( H_{|\ltau^{\mathrm{3co}}|^{2} + | \lmu^{\mathrm{3co}} |_{h_{\tindex}}^2} \vob_{\tindex} ) \hat{G} \\
&  - (  2 \vob_{\tindex} + 2 ) \ltau^{\mathrm{3co}} x_{\tindex}^{-\vob_{\tindex}} \hat{G}_{0} + x_{\tindex}^{-\vob_{\tindex}-1} H_{|\ltau^{\mathrm{3co}}|^{2}  + | \lmu^{\mathrm{3co}} |_{h_{\tindex}}^2} \hat{G}_{0},
\end{split}
\end{align}
where we recall that $x_{\tindex}$ decays to second order at $\cf$. Correspondingly, we can write
\begin{equation*}
C = ( \log x_{\tindex} ) \tilde{C}_{1} + C_0,
\end{equation*}
where we respectively let
\begin{equation*}
\tilde{C}_1 \in \Psf^{-\infty, 2 \vor - 1 , 2 \vol - 1, 2 \vor + 2 \vol - 1, 2 \vob, -\infty}, \ C_0 \in \Psf^{-\infty, 2 \vor + 2 \delta, 2 \vol - 1 + 2 \delta, 2 \vor + 2 \vol - 1 + 2 \delta, 2 \vob, -\infty}
\end{equation*}
be the partial quantizations of $ (- H_{|\ltau^{\mathrm{3co}}|^{2} + | \lmu^{\mathrm{3co}} |_{h_{\tindex}}^2} \vob_{\tindex} ) \hat{G}$ and the second line of (\ref{commutator new cal 2}). \par

It is clear that the membership of $- ( 2 \vob_{\tindex} + 2) \tau_{\tindex}^{\mathrm{3co}} x_{\tindex}^{-\vob_{\tindex}} \hat{G}_0$ holds as suggested. To obtain the membership of $x_{\tindex}^{-\vob_{\tindex} - 1} H_{|\ltau^{\mathrm{3co}}|^{2}  + | \lmu^{\mathrm{3co}} |_{h_{\tindex}}^2} \hat{G}_0$ (for which only the decay at $\cf$ needs to be checked), let us write
\begin{align*}
\hat{G}_0 = x_{\tindex}^{\vob_{\tindex}+1} \hat{G} & = x_{\tindex}^{\vob_{\tindex} + 1} \hat{G}_{\cf, q_{\cf}} + x_{\tindex}^{\vob_{\tindex} + 1} ( \hat{G} - \hat{G}_{\cf, q_{\cf}} ).
\end{align*}
Here, $\hat{G}_{\cf, q_{\cf}}$ is the operator-valued symbol of $G_{\cf, q_{\cf}} \in \Psf^{-\infty, 2 \vor + 1, 2 \vol, 2 \vor + 2 \vol + 1, 2 \vob + 2 , -\infty }$, where $G_{\cf, q_{\cf}}$ is constructed as in \S \ref{subsection partial classicality and indicial operators}  depending on a cut-off function $q_{\cf} \in \mathcal{C}^{\infty}( \psf \Xd )$ at $\tcocf$. \par

In fact, without loss of generality (i.e., by choosing $\vob_{\tindex}$ as the extension of $\vob$ in the construction of $G_{\cf, q_{\cf}}$), we can assume that $x_{\tindex}^{\vob_{\tindex} + 1} \hat{G}_{\cf, q_{\cf}}$ is smooth up to $\{ x_{\tindex} = 0 \}$. Thus, $x_{\tindex}^{-\vob_{\tindex} - 1} H_{|\ltau^{\mathrm{3co}}|^{2}  + | \lmu^{\mathrm{3co}} |_{h_{\tindex}}^2} ( x_{\tindex}^{\vob_{\tindex} + 1} \hat{G}_{\cf, q_{\cf}} )$ partially quantizes to an element of $\Psf^{-\infty, 2 \vor, 2 \vol, 2 \vor + 2 \vol, 2 \vob, -\infty}$. \par

Meanwhile, by assumption, we know that $x_{\tindex}^{\vob_{\tindex} + 1} ( \hat{G} - \hat{G}_{\cf, q_{\cf}} )$ decays to higher order at $\cf$, and thus $x_{\tindex}^{-\vob_{\tindex} - 1} H_{|\ltau^{\mathrm{3co}}|^{2}  + | \lmu^{\mathrm{3co}} |_{h_{\tindex}}^2} (  x_{\tindex}^{\vob_{\tindex} + 1} ( \hat{G} - \hat{G}_{\cf, q_{\cf}} ) )$ must partially quantize to an element of the correct operator class as well.

   \par

Now, suppose we relabel $( \ltau^{\mathrm{3co}}, \lmu^{\mathrm{3co}} ) = ( \ltau, \lmu )$ at $x_{\tindex} = 0$. Then we have
\begin{equation*}
\hat{N}_{\cf}( \tilde{C}_1 ) =   ( - \sH_{\intt} \vob_{\tindex} ) \hat{N}_{\cf}( G ).
\end{equation*}
Moreover, observe that
\begin{equation} \label{special commutator estimate section restriction of Hpb}
(x_{\tindex}^{-1} H_{p} \vob )|_{\tcocf} = \sH_{\intt} \vob_{\tindex}.
\end{equation}
Thus by Lemma \ref{Lemma choice operator that has symbol and indicial operator at cf}, we can choose $C_{1} \in \Psf^{-\infty, 2 \vor - 1, 2 \vol - 1, 2 \vor + 2 \vol - 1, 2 \vob, -\infty}$ such that
\begin{equation*}
\hat{N}_{\cf} ( C_1 ) = \hat{N}_{\cf} ( \tilde{C}_1 ), \ \sigma( C_1 ) = ( - H_{p} \vob ) \sigma(G),
\end{equation*}
and that $C_1$ is classical at $\cf$ modulo $\Psf^{-\infty, 2 \vor - 1, 2 \vol - 1 , 2 \vor + 2 \vol - 1, 2 \vob - K, -\infty}$. On the other hand, we clearly have
\begin{equation} 
\label{commutator new cal 3}
\hat{N}_{\cf}( C_0 ) = - (  2 \vob + 2 ) \ltau \hat{N}_{\cf}( G ) + \sH_{|\ltau |^{2}  + | \lmu  |_{h_{\tindex}}^2} \hat{N}_{\cf}( G ).
\end{equation}
Thus, suppose we put 
\begin{equation*}
E \coloneq i [ P_1, G ] + i[ P_2, G ] +  C_0  + ( \log x_{\tindex} ) ( \tilde{C}_1 - C_1 ).
\end{equation*}
Then by construction we must have $ \tilde{C}_1 -  C_1 \in \Psf^{-\infty, 2 \vor - 1 , 2 \vol - 1 , 2 \vor + 2 \vol - 1 , 2 \vob - K, -\infty}$. In particular, we have shown that $E$ indeed belongs to the correct class of operators. By using (\ref{commutator new cal 1}) and (\ref{commutator new cal 3}), we can conclude that (\ref{tangential discussion 1}) holds as well.  \par

Next we prove (\ref{tangential discussion 1.001}). Since $i[ P_j, G ]$, $j =1,2$ and $C_0$ all belong to classes of operators contained in $\Psf^{-\infty, 2 \vor + 2 \delta, 2 \vol, 2 \vor + 2 \vol + 2 \delta, 2 \vob, -\infty}$, we can use their respective symbol calculi for this calculation. Thus, let $p_j$ be the principal symbol of $P_j$ and $g$ be the principal symbol of $G$. Write also $g = x_{\tindex}^{-\vob_{\tindex}} g_0$. Then we have
\begin{align} \label{commutator new cal 4}
\begin{split}
& \sigma( i [ P_1, G ] + i [ P_2, G ] + i [ V, G ]+ C_0 ) = H_{p_1} g + H_{p_2} g \\
& \qquad -( 2 \vob_{\tindex} + 2 ) \ltau^{\mathrm{3co}} x_{\tindex}^{-\vob_{\tindex}} g_{0} + x_{\tindex}^{-\vob_{\tindex} - 1} H_{ |\ltau^{\mathrm{3co}}|^2 + | \lmu^{\mathrm{3co}} |_{h_{\tindex}}^{2} } g_0.
\end{split}
\end{align}  \par

Computing the principal symbol of $( \log x_{\tindex} )  ( \tilde{C}_1 - C_1 )$ requires more care as $\tilde{C}_1$, $C_1$ belong to $\Psf^{-\infty, 2 \vor, 2 \vol - 1, 2 \vor + 2 \vol - 1, 2 \vob, - \infty}$ only. However, notice that we only need to be concerned with the near-diagonal regions, where $\tilde{C}_1$ can be written as some quantization of 
\begin{equation*}
( - H_{|\ltau^{\mathrm{3co}}|^{2} + | \lmu^{\mathrm{3co}} |_{h_{\tindex}}^2} \vob_{\tindex} )\tilde{g}, \quad  \tilde{g} - g \in S^{-\infty, 2 \vor + 2\delta, 2 \vol, 2 \vor + 2 \vol + 2 \delta, 2 \vob + 2 , -\infty}
\end{equation*}
by construction. We can then choose $C_1$ to be the same quantization of $( - H_{p}\vob) \tilde{g}$, such that the principal symbol of $\tilde{C}_1 - C_1$ becomes
\begin{align*}
 -(H_{|\ltau^{\mathrm{3co}}|^{2} + | \lmu^{\mathrm{3co}} |_{h_{\tindex}}^2} \vob_{\tindex} ) \tilde{g} + (H_{p} \vob) \tilde{g} = &  -(H_{|\ltau^{\mathrm{3co}}|^{2} + | \lmu^{\mathrm{3co}} |_{h_{\tindex}}^2} \vob_{\tindex} ) g + (H_{p} \vob) g  \\
 &  -(H_{|\ltau^{\mathrm{3co}}|^{2} + | \lmu^{\mathrm{3co}} |_{h_{\tindex}}^2} \vob_{\tindex} - H_p \vob ) (  \tilde{g} - g ),
\end{align*}
where the last term in the above can be modded out by (\ref{special commutator estimate section restriction of Hpb}) and the fact that $( \ltau^{\mathrm{3co}}, \lmu^{\mathrm{3co}} ) = (\ltau, \lmu)$ at $\tcocf$. By combining this with (\ref{commutator new cal 4}), we have thus shown that
\begin{align*}
\sigma(E) - ( \log x_{\tindex} ) (H_{p} \vob) g & = H_{p_1} g + H_{p_2} g - (\log x_{\tindex})( H_{|\ltau^{\mathrm{3co}}|^{2} + | \lmu^{\mathrm{3co}} |_{h_{\tindex}}^2} \vob_{\tindex} ) g \\
& \quad - ( 2 \vob_{\tindex} + 2 ) \ltau^{\mathrm{3co}} x_{\tindex}^{-\vob_{\tindex}} g_0 + x_{\tindex}^{-\vob_{\tindex} - 1}  H_{|\ltau^{\mathrm{3co}}|^{2} + | \lmu^{\mathrm{3co}} |_{h_{\tindex}}^2}  g_{0} \\
& = H_{p_1} g + H_{p_2} g + H_{|\ltau^{\mathrm{3co}}|^{2} + | \lmu^{\mathrm{3co}} |_{h_{\tindex}}^2}  ( x_{\tindex}^{-\vob_{\tindex} - 1} g_0 )
\end{align*}
which is equal to $H_{p} g$ as required. \par

Thus far, we have shown that (\ref{tangential discussion 0.85}) holds with $(\log x_{\tindex}) C_1, \tilde{R}$ in places of $B^{\ast} ( \log x )B$ and $R$ respectively. We now show that $( \log x_{\tindex} ) C_{1}$ can be replaced by $ B^{\ast} ( \log x_{\tindex} )B$, where $B$ is supported near $\cf$ and satisfy conditions (\ref{tangential discussion 0.8}) and (\ref{tangential discussion 0.9}), and $B$ is classical at $\cf$ modulo $\Psf^{-\infty, \vor, \vol - 1/2, \vor + \vol - 1/2, \vob - K, -\infty}$. These conditions imply $\hat{N}_{\cf}(B^{\ast}B) = \hat{N}_{\cf}(C_1)$, $\sigma(B^{\ast}B) = \sigma(C_1)$. Such a $B$ can be chosen by Lemma \ref{Lemma choice operator that has symbol and indicial operator at cf}. Moreover, if $A$ is chosen to be just some quantization, then so must be the case of $\hat{N}_{\cf}(A)$. Thus we can choose $B$ to be a quantization as well. Now, we can already replace $(\log x_{\tindex}) C_{1}$ by $ ( \log x_{\tindex} )B^{\ast}B$, with an error that can be absorbed into $\tilde{R}$. We will also absorb $i [ V, A^{\ast} A ]$ into the error term.   \par

To show that $( \log x_{\tindex} ) B^{\ast} B$ can be further replaced by $B^{\ast} ( \log x_{\tindex} ) B$, it suffices to note that
\begin{equation*}
( \log x_{\tindex} ) B^{\ast} - B^{\ast} ( \log x_{\tindex} ) \in \Psf^{-\infty, \vor - 3/2 + 2\delta, \vol - 3/2 + 2\delta, \vor + \vol -5/2 + 2 \delta, \vob - 2 + 2 \delta, -\infty}.
\end{equation*}
Indeed, this is again because $\log x_{\tindex}$ depends only on the free variables. Thus, the operator-valued symbols of $B^{\ast}$ and $\log x_{\tindex}$ commute at the principal level. Denoting the new error term by $R$ now proves (\ref{tangential discussion 0.85}) in the case where $A$ is supported near $\cf$ and $x = x_{\tindex}$.
\par

In the case of a general $A$, we can write $A = A_1 + A_2$, where $A_{1}$ is supported near $\cf$ and $A_2 \in \Psf^{-\infty, 2 \vor + 1, 2 \vol, -\infty, -\infty, -\infty}$. By applying the above to $A_{1}^{\ast}A_1$, we then conclude that
\begin{equation*}
i [ P, A_{1}^{\ast} A_1 ] = B^{\ast}_{1} (\log x_{\tindex}) B_{1} + E_{1,1} + R_{1,1},
\end{equation*}
where $B_1, E_{1,1}$ and $R_{1,1}$ are constructed as above. We will then write, say
\begin{equation*}
B_{1}^{\ast} ( \log x_{\tindex} ) B_{1} =  B_{1}^{\ast} ( \log x ) \big( \frac{\log x_{\tindex}}{\log x} \big) B_1,
\end{equation*}
where we note that $\log x_{\tindex} / \log x$ is a strictly positive $\mathcal{C}^{\infty}(\Xo)$ function near $\mathcal{C}_{\tindex}$ which converges to $1$ at $\partial \Xo$. Thus we can replace it by $1$ provided the error term is modified appropriately. The same logic can be applied in the principal symbol calculation. So noting that $\hat{N}_{\cf}(A^{\ast}A) = \hat{N}_{\cf}(A_{1}^{\ast} A_1)$, we have
\begin{align*} 
\begin{split}
 \hat{N}_{\cf}( E_{1,1} ) & =  i[  \hat{N}_{\cf}( \Delta_{z^{\tindex}} ) + 2 \ltau \hbff D_{\hbff}, \hat{N}_{\cf}( A^{\ast} A ) ] \\
& \quad + \mathsf{H}_{\inttcf} \hat{N}_{\cf}( A^{\ast}A ) - ( 2\vob + 2 ) \ltaucf \hat{N}_{\cf}( A^{\ast} A ),  \\
\sigma( E_{1,1} ) & = H_{p} \sigma(A_1^{\ast} A_1) + (\log x ) ( H_{p} \vob ) \sigma( A_1^{\ast} A_1 ),
\end{split}
\end{align*} 
where we used that $\hat{N}_{\cf}(A) = \hat{N}_{\cf}(A_1)$.  We also have
\begin{equation*}
\hat{N}_{\cf}( B_1 ) = ( - \sH_{\intt} \vob )^{1/2} \hat{N}_{\cf}( A ), \ \sigma(B_1)  = ( -H_p \vob )^{1/2} \sigma(A_1)
\end{equation*} 
by construction.
\par
It remains to define $B_{2} \in \Psf^{-\infty, \vor, \vol - 1/2, -\infty , - \infty , -\infty}$ such that $\sigma(B_2) = ( - H_{p} \vob )^{1/2} \sigma(A_2)$. For all other $i,j =1,2$ not all equal to $1$, we also define $E_{i,j} \in \Psf^{-\infty, 2 \vor + 2 \delta, 2 \vol, -\infty,  -\infty, -\infty}$ such that $\sigma(E_{i,j}) = H_{p} \sigma( A_{i}^{\ast} A_{j}) + ( \log x ) ( H_{p} \vob ) \sigma(A_{i}^{\ast} A_{j})$. Then by the symbol calculus, we plainly have
\begin{equation*}
i [ P, A_{i} A_{j} ] = B_{i} ( \log x ) B_{j} + E_{i,j} + R_{i,j},
\end{equation*}
where $R_{i,j} \in \Psf^{-\infty, 2 \vor - 1 + 4 \delta, 2 \vol, -\infty , -\infty, -\infty }$. It follows that if we set
\begin{equation*}
B = B_{1} + B_{2}, \ E = E_{1,1} + E_{1,2} + E_{2,1} + E_{2,2}, \ R = R_{1,1} + R_{1,2} + R_{2,1} + R_{2,2},
\end{equation*}
then the required conditions must all hold.
\end{proof}

\section{Real principal type propagation of regularity} 
\label{principal type propagation section}
Having established the necessary tools and background, in this section, we finally begin our discussion on microlocal propagation. We first look at principal type propagation of regularity estimate, which can be proved with respect to any variable orders $\vor_{\pm}, \vol_{\pm}$ and $\vob_{\pm}$ satisfying the conditions specified in \S \ref{variable order construction section}. However, for definiteness, we will only be focusing on the estimate for $\vor_{+}, \vol_{+}$ and $\vob_{+}$. Modifications for the case of $\vor_{-}, \vol_{-}$ and $\vob_{-}$ are then straightforward. See the beginning of \S \ref{almost semi-Fredholm estimates with symbolic decay section} for details. \par

\begin{proposition} \label{principal type propagation second microlocal version local}
Let $\vor = \vor_{+}$, $\vol = \vol_{+}$, $\vob = \vob_{+}$ be variable orders satisfying the conditions specified in \S \ref{variable order construction section}. Let $\gamma$ be an integral curve segment of $\rho_{\dmf}^{-1} \rho_{\dtsccf}^{-1} \rho_{\tcocf}^{-2} H_{p}$ such that $\gamma(T) = \beta$ for some $T>0$. Moreover, suppose that $E \in \Psf^{-\infty, 0,0,0,0,-\infty}$ satisfy $\gamma(0) \in \Ellss(E)$. Then we can find an arbitrarily small neighborhood $U$ of $\beta$ for which the following implication holds: If we choose $B \in \Psf^{-\infty, 0, -\infty , 0 ,0 ,-\infty}$ with $\WFs(B) \subset U$, then for every $u \in \mathcal{S}'$ and any choice of $N, M, L, K, S \in \mathbb{R}$, there exists a constant $C > 0$ such that
\begin{equation} \label{principal type propagation of regularity semi-global estimate}
\| B u \|_{ H_{\mathrm{d3sc,3co,res}}^{\ast, \vor, \vol, \vor + \vol, \vob, \ast} }  \leq C ( \| Pu \|_{H_{\mathrm{d3sc,3co,res}}^{\ast, \vor + 1, \vol , \vor + \vol + 1, \vob + 2, \ast}}   + \| E u \|_{H_{\mathrm{d3sc,3co,res}}^{\ast, \vor, \vol , \vor + \vol, \vob, \ast}} + \| u \|_{ H_{\mathrm{d3sc,3co,res}}^{ N, M, \vol , K, \vob, S } } )
\end{equation}
in the strong sense that if the right hand of (\ref{principal type propagation of regularity semi-global estimate}) is finite, then so is the left hand side, and the estimate holds.
\end{proposition} 
\begin{proof}
Although we will provide full argument, we note that the proof is standard, with the situation away from the corners being identical to \cite[Proposition 5.24]{AndrasBook}. The point of our repetition here is to demonstrate the usage of Corollary \ref{commutator corollary} when one is near $\tcocf$,  \par

Typically, one starts by proving a local version of the proposition, focusing on a segment $\gamma ([ 0, t_0 ])$, where $\gamma$ is an integral curve of $\rho_{\dmf}^{-1} \rho_{\dtsccf}^{-1} \rho_{\tcocf}^{-2} H_{p}$, and $t_0 > 0$ is small. By a rescaling, we can also assume that $t_0 = 1$. Notice that the proposed statement is null if $\gamma$ is stationary. Thus assuming otherwise, then we must have $\rho_{\dmf}^{-1} \rho_{\dtsccf}^{-1} \rho_{\tcocf}^{-2} H_{p} \neq 0$ at $\gamma(0)$. It follows from the standard ODE theory that we can find local coordinates $q = ( q_{1}, q' )$ centered at $\gamma(0)$ such that $
\rho_{\dmf}^{-1} \rho_{\dtsccf}^{-1} \rho_{\tcocf}^{-2} H_{p}  = \partial_{q_{1}}$. \par

Next, we let $\psi_{1} \in \mathcal{C}^{\infty}(\mathbb{R})$ be such that $\psi_{1} = e^{-F/t}$ for $t \geq 0$ and $\psi_{1} = 0$ for $t \leq 0$. Let also $\psi_{2} \in \mathcal{C}^{\infty}( \mathbb{R} )$ be such that $\psi_{2} = 0$ on $(-\infty, - \epsilon )$, $\psi_{2}' \geq 0$, and $\psi_{2} = 1$ identically on $[ \epsilon  , \infty )$ where $\epsilon > 0$ is small. Finally, let $\psi_{3} \in \mathcal{C}^{\infty}_{c}( [ 0 ,\infty) ) $ be a cut-off at $0$. Then we will define
\begin{equation*}
\varphi_{1} = \psi_{1}( 1 + \epsilon - q_1 ), \ \varphi_{2} = \psi_{2}( q_{1} ), \ \varphi_{3} = \psi_{3} ( |q'|^2  )
\end{equation*}
and set
\begin{equation*}  
a_0 = \rho_{\dmf}^{-\vor - 1/2} \rho_{\dff}^{-\vol} \rho_{\dtsccf}^{-\vor - \vol - 1/2} \rho_{\tcocf}^{-\vob - 1} \varphi_{1} \varphi_{2} \varphi_{3},
\end{equation*}
which is supported on a small neighborhood of $\gamma( [0,1] )$, and the support of $a_0$ contains a slightly smaller neighborhood of $\gamma([0,1])$. \par

We remark that in general, $\gamma$ can intersect at most three boundary faces of $\psf \Xd$. Thus at least one of the defining functions in the definition of $a_0$ can be dropped. We keep all four here in order to have a globally valid proof, but one should always keep in mind of this localization. To ensure that the logarithmic terms below can be controlled, we also force the defining functions to be bounded by $1$. \par

We now compute 
\begin{equation*}
H_{p} a_0^2 = -\tilde{b}_0^2 + 2 ( \log x_{\cf} ) b_{1,0}^2 - f_0 + e_0.
\end{equation*}
for some symbols $b_0$, $b_{1,0}$, $f_0$ and $e_0$, and $x_{\cf} \in \mathcal{C}^{\infty}(\Xd)$ is a boundary defining function for $\cf$. Before explaining what the definition of these terms are, let us first note that in the computation of $H_p a_0^2$, one must encounter a sum of logarithmic terms
\begin{align*}
- 2 \big( ( H_p \vor )( \log \rho_{\dmf} ) + ( H_p \vol ) ( \log \rho_{\dff} ) + (H_{p} ( \vor + \vol ) ) ( \log \rho_{\dtsccf} ) + ( H_p \vob ) ( \log \rho_{\tcocf} ) \big) a_0^2.
\end{align*}
Such a sum can be written as the negative of a sum of squares. By the symbol calculus, one could thus quantize it into the negative of a non-negative operator, which can be dropped in a positive commutator estimate. However, notice that the presence of $\log \rho_{\tcocf}$ in the sum forces a loss of arbitrarily small order (say $4\delta$) at $\tcocf$. The discussion at the beginning of \S \ref{Subsection commutator formulae} then suggests that simply using the symbol calculus will not be sufficient, at least when $\gamma$ lives on the boundary of $\tcocf$. \par

To get around this issue, we will use Corollary \ref{commutator corollary}. For this, we will need to arrange the logarithmic terms slightly differently. Suppose that $x_{\cf} = \rho_{\dtsccf} \rho_{\tcocf}$ (note that this can at least be arranged locally, which is really all we need in an estimate), then noting that $H_{p} \vob = 2 H_{p} \vol$ for $\vob - 2 \vol$ is a constant, we will set
\begin{equation*}
b_{1,0} = ( - H_p \vob )^{1/2} a_0, \, f_0 = 2 \big( (H_p \vor) (\log \rho_{\dmf}) + (H_p \vol) ( \log \rho_{\dff} ) + ( H_p (\vor - \vol) ) ( \log \rho_{\dtsccf} ) \big) a_0^2.
\end{equation*}
Here the idea is that $b_{1,0}$ should quantize into the operator $B$ in Lemma \ref{commutator formula lemma}. Though this is not made explicitly in the above, we also emphasize that $\rho_{\dmf}^{-1} \rho_{\dtsccf}^{-1} \rho_{\cf}^{-2} H_{p}$ vanishes to order one at both $\psf_{\dff} \Xd$ and $\dtsccf$, so that
\begin{equation*}
b_{1,0} \in S^{-\infty, \vor, \vol - 1/2, \vor + \vol - 1/2, \vob, -\infty}, \, f_0 \in S^{-\infty, 2 \vor + 2 \delta, 2 \vol - 1 + 2 \delta, 2 \vor + 2 \vol - 1 + 4 \delta, 2 \vob, -\infty}.
\end{equation*}
Furthermore, as usual we define
\begin{equation*}
\begin{gathered}
\tilde{b}_0^2 =  \rho_{\dmf}^{-2 \vor} \rho_{\dff}^{-2 \vol} \rho_{\dtsccf}^{-2 \vor - 2 \vol} \rho_{\tcocf}^{-2 \vob} \Big( \frac{2F}{ ( 1 + \epsilon + q_1 )^2 } + f' \Big) \varphi_1^2 \varphi_2^2 \varphi_3^2, \\
e_0 = 2 \rho_{\dmf}^{-2 \vor} \rho_{\dff}^{-2 \vol} \rho_{\dtsccf}^{-2 \vor - 2 \vol} \rho_{\tcocf}^{-2 \vob} \psi_2'(q_1) \varphi_2 \varphi_1^2 \varphi_3^2,
\end{gathered}
\end{equation*}
where $\tilde{b}_{0}^2$ can indeed be written as a square of symbol if $F$ is large enough, while
\begin{align*}
f' & = \rho_{\dmf}^{-1} \rho_{\dtsccf}^{-1} \rho_{\tcocf}^{-2} \big( (2 \vor + 1) (\rho_{\dmf}^{-1} H_p \rho_{\dmf} ) + 2 \vol ( \rho_{\dff}^{-1} H_p \rho_{\dff} ) \\
& \quad + ( 2 \vor + 2 \vol + 1 ) ( \rho_{\dtsccf} H_{p} \rho_{\dtsccf} ) + ( 2 \vob + 2 ) ( \rho_{\tcocf}^{-1} H_{p} \rho_{\tcocf} ) \big)
\end{align*}
comes from differentiating the weights, and we note that each $ \rho_{\dmf}^{-1} \rho_{\dtsccf}^{-1} \rho_{\tcocf}^{-2}  (\rho_{\bullet} H_{p} \rho_{\bullet}) $ is smooth since $\rho_{\dmf}^{-1} \rho_{\dtsccf}^{-1} \rho_{\tcocf}^{-2} H_{p}$ is a b-vector field. Note that the support of $e_0$ is contained in a small neighborhood of $\gamma(0)$. \par

It remains to consider regularization. Concretely, we can let $x$ be the canonical boundary defining function for $\Xo$, and set
\begin{equation*}
\phi_{s} = 1 + \frac{s}{x}, \, h_{s} =  \frac{s/x}{1+s/x}, \quad s \in [0,1].
\end{equation*}
For large $K \geq 0$, we can then compute that 
\begin{equation*}
H_{p} \phi_{s}^{-2K} = 4 K x h_{s} \tau \phi_{s}^{-2K}.
\end{equation*}
Let $a_{s} = \phi_{s}^{-K} a_s$,  $b_{1,s} = \phi_{s}^{-K}b_{1,0}$,  $f_{s} = \phi_{s}^{-2K} f_0$, $e_s = \phi_{s}^{-2K} e_0$, as well as 
\begin{align*}
b_s^2  & =  \rho_{\dmf}^{-2 \vor} \rho_{\dff}^{-2 \vol} \rho_{\dtsccf}^{-2 \vor - 2 \vol} \rho_{\tcocf}^{-2 \vob} \phi_s^{-2K} \\
& \quad \times \Big( \frac{2F}{ ( 1 + \epsilon + q_1 )^2 } + f' - \frac{ 4Kx h_s }{ \rho_{\dmf} \rho_{\dtsccf} \rho_{\tcocf}^2 }  - \delta_0 \phi_{s}^{-2K} \varphi_1^2 \varphi_2^2 \varphi_3^2  \Big) \varphi_1^2 \varphi_2^2 \varphi_3^2
\end{align*}
for any $\delta_0 > 0$. Here we remind the readers that $x \simeq \rho_{\dmf} \rho_{\dff} \rho_{\dtsccf}^2 \rho_{\tcocf}^2 \rho_{\rf}$, thus the terms in the large bracket above are indeed smooth. Moreover, the supports of $\{ a_s \}, \{ b_{s} \}$ uniformly contain a small neighborhood of $\gamma([0,1])$, with them being uniformly supported on a slightly larger neighborhood of $\gamma([0,1])$; the family $\{ e_{s} \}$ is also uniformly supported on a small neighborhood of $\gamma(0)$. Then we have
\begin{equation} \label{principal type propagation symbolic identity full}
H_p a_s^2 + \delta_0 \rho_{\dmf}^{2 \vor + 2} \rho_{\dff}^{2\vol} \rho_{\dtsccf}^{2 \vor + 2 \vol + 2} \rho_{\tcocf}^{2 \vob + 4} a_{s}^4 = - b_s^2 + 2 ( \log x_{\cf} ) b_{1,s}^2 - f_{s} + e_s.
\end{equation} \par
Let 
\begin{equation*}
\begin{gathered}
A_s \in L^{\infty}( (0,1]_{s} ; \Psf^{-\infty, 2 \vor + 1, 2 \vol, 2 \vor + 2 \vol + 1, 2 \vob + 2, -\infty} ), \, B_{s} \in L^{\infty}( (0,1]_{s} ; \Psf^{-\infty, \vor, \vol, \vor + \vol, \vob, -\infty} ), \\
F_{s} \in L^{\infty}( (0,1]_{s} ; \Psf^{-\infty, 2 \vor + 2 \delta, 2 \vol - 1 + 2 \delta, 2 \vor + 2 \vol - 1  + 4\delta, 2 \vob, -\infty } ), \, E_{s} \in L^{\infty}( (0,1]_s ; \Psf^{-\infty, 2 \vor, 2 \vol, 2 \vor + 2 \vol, 2 \vob, -\infty} ).
\end{gathered}
\end{equation*}
be some quantizations of $a_s$, $b_s$, $f_s$ and $e_s$ respectively, such that each $F_{s}$ is arranged to be non-negative with respect to $L^2$. Let also
\begin{equation*}
B_{1,s} \in L^{\infty}( (0,1]_s ; \Psf^{-\infty, \vor, \vol - 1/2, \vor + \vol - 1/2, \vob, -\infty} )
\end{equation*}
be such that
\begin{equation} \label{always these conditions for B1s}
\sigma(B_{1,s}) = ( - H_p \vob )^{1/2} a_s, \ \hat{N}_{\cf}( B_{1,s} ) = ( - \sH_{\intt} \vob )^{1/2} \hat{N}_{\cf}(A_s).
\end{equation}
Then by Corollary \ref{commutator corollary}, we have
\begin{equation*}
i[ P, A_{s}^{\ast}A_s ] - 2 B_{1,s}^{\ast}( \log x_{\cf} ) B_{1,s} \in L^{\infty}( [0,1)_{s} ; \Psf^{-\infty, 2 \vor + 2 \delta, 2 \vol, 2 \vor + 2 \vol + 2 \delta, 2 \vob, -\infty} ),
\end{equation*}
with principal symbol $H_{p} a_{s}^2 + 2 ( \log x_{\cf} )( H_{p} \vob )a_{s}^2$. Thus by (\ref{principal type propagation symbolic identity full}) and the definition of $b_{1,s}^2$, we can use the symbol calculus to find that
\begin{equation} \label{principal type propagation operator identity full}
i[ P, A_{s}^{\ast} A_{s} ] + \delta_{0} ( \Lambda A_{s}^{\ast} A_{s} )^{\ast} ( \Lambda A_{s}^{\ast} A_{s} ) = - B_{s}^{\ast} B_{s} + 2 B_{1,s}^{\ast} (\log x_{\cf} ) B_{1,s} - F_{s} + E_{s} + R_{s}
\end{equation}
where $\Lambda \in \Psf^{-\infty, - \vor - 1, - \vol, - \vor - \vol - 1, - \vob - 2, -\infty}$ is some quantization of $ \rho_{\dmf}^{ \vor + 1} \rho_{\dff}^{\vol} \rho_{\dtsccf}^{\vor + \vol + 1} \rho_{\tcocf}^{\vob + 2}$, and
\begin{equation*}
R_{s} \in L^{\infty}( (0,1]_{s} ; \Psf^{-\infty, 2 \vor - 1 + 4 \delta, 2 \vol, 2 \vor + 2 \vol - 1 + 4 \delta, 2 \vob, -\infty} )
\end{equation*}
has no unsavory $\delta$ losses at $\tcocf$. \par

The rest of the argument is truly standard: We shall pair (\ref{principal type propagation operator identity full}) with $u \in \mathcal{S}'$ in $L^2$. Then by dropping some non-negative terms, we get that
\begin{equation} \label{principal type cal 1}
\| B_{s} u \|_{L^{2}}^2 + \delta_0 \| \Lambda A_{s}^{\ast} A_{s} \|_{L^2}^2 \leq - 2 \mathrm{Im} \langle Pu, A_{s}^{\ast} A_s u \rangle_{L^2} + \langle E_{s}u, u \rangle_{L^2} + \langle R_{s} u , u \rangle_{L^2}.
\end{equation}
Let us choose $G,\tilde{E} \in \Psf^{-\infty, 0,0,0,0,-\infty}$ such that 
\begin{equation} \label{principal type cal 1.5}
\begin{gathered}
\WFsL( \{ A_{s} \} ) \subset \Ellss( G ), \, \, \WFsL(\{ E_s \}) \subset \Ellss( \tilde{E} ), \\
 \WFs( I - G ) \cap \WFsL( \{ A_s \} ) = \WFs( I - \tilde{E} ) \cap \WFs( \{ E_{s} \} ) = \emptyset, \\
\end{gathered}
\end{equation}
and $\WFs( G )$ is contained in a small neighborhood of $\gamma([0,1])$, $\WFs(\tilde{E})$ is contained in a small neighborhood of $\gamma(0)$. Noting also that $\Lambda$ must be elliptic (in the symbolic sense) by construction. Then by elliptic regularity and Young's inequality, we have
\begin{equation} \label{principal type cal 2}
2  | \mathrm{Im} \langle Pu, A_s^{\ast} A_s \rangle_{L^{2}} | \leq C \delta_0^{-1} \| P u \|_{ \so^{\ast, \vor + 1, \vol , \vor + \vol +1, \vob + 2, \ast } }^2  + \delta_0 \| \Lambda A_s^{\ast} A_s u \|_{L^{2}}^2 + C \| u \|_{ \so^{N, M, \vol , K, \vob, S } }^2.
\end{equation}
as well as
\begin{equation} \label{principal type cal 3}
\begin{gathered}
| \langle E_{s}u , u  \rangle_{L^2} | \leq C ( \| \tilde{E} u \|_{H_{\mathrm{d3sc,3co,res}}^{\ast, \vor, \vol, \vor + \vol, \vob, \ast}}^2 + \| u \|_{H_{\mathrm{d3sc,3co,res}}^{N,M,\vol,K,\vob,S}}^2 ), \\
| \langle R_s u ,u  \rangle_{L^2}  | \leq C ( \| G u \|_{H_{\mathrm{d3sc,3co,res}}^{\ast, \vor - 1/2 + 2 \delta, \vol, \vor + \vol - 1/2 + 2 \delta, \vob, \ast }}^2 + \| u \|_{H_{\mathrm{d3sc,3co,res}}^{N,M, \vol, K, \vob, S}}^2  ).
\end{gathered}
\end{equation}
By substituting (\ref{principal type cal 2}) and (\ref{principal type cal 3}) back into (\ref{principal type cal 1}), we now find
\begin{align}  \label{principal type cal 4}
\begin{split}
\| B_{s} u \|_{L^2} & \leq  C ( \| P u \|_{H_{\mathrm{d3sc,3co,res}}^{\ast, \vor + 1 , \vol, \vor + \vol + 1, \vob +2 , \ast}} \\
& \quad + \| \tilde{E} u \|_{H_{\mathrm{d3sc,3co,res}}^{\ast, \vor, \vol, \vor + \vol, \vob, \ast}} + \| G u \|_{H_{\mathrm{d3sc,3co,res}}^{\ast, \vor - 1/2 + 2\delta, \vol, \vor + \vol - 1/2 + 2 \delta, \vob, \ast}} + \| u \|_{H_{\mathrm{d3sc,3co,res}}^{N,M,\vol,K,\vob,S}} )
\end{split}
\end{align}
with the constant $C > 0$ being independent of $s$. Since the unit ball in $L^2$ is compact, $B_{s}u$ has a weakly convergent subsequence with limit $v \in L^2$. On the other hand, $B_{s} u \rightarrow B_0 u$ weakly. Hence $v = B_{0} u$. Taking this limit, and let $U$ (as in the statement of the proposition) be defined such that $U \subset \WFs(B_0)$. Then by making another application of elliptic regularity, namely
\begin{equation*}
\| B u \|_{H_{\mathrm{d3sc,3co,res}}^{\ast, \vor, \vol, \vor + \vol, \vob, \ast}} \leq C( \| B_0 u \|_{L^2} + \| u \|_{H_{\mathrm{d3sc,3co,res}}^{N,M,\vol, K, \vob, S}} ),
\end{equation*}
we can conclude from (\ref{principal type cal 4}) that
\begin{align} \label{principal type cal 5}
\begin{split}
\| B u \|_{L^2} & \leq  C ( \| P u \|_{H_{\mathrm{d3sc,3co,res}}^{\ast, \vor + 1 , \vol, \vor + \vol + 1, \vob +2 , \ast}} \\
& \quad + \| \tilde{E} u \|_{H_{\mathrm{d3sc,3co,res}}^{\ast, \vor, \vol, \vor + \vol, \vob, \ast}} + \| G u \|_{H_{\mathrm{d3sc,3co,res}}^{\ast, \vor - 1/2 + 2\delta, \vol, \vor + \vol - 1/2 + 2 \delta, \vob, \ast}} + \| u \|_{H_{\mathrm{d3sc,3co,res}}^{N,M,\vol,K,\vob,S}} ).
\end{split}
\end{align} \par

Finally, it is clear that the above argument can be iterated for as many times as we like. Notice that by construction, $\WFsL( \{ A_{s} \} )$ contains a small neighborhood of $\gamma([0,1])$, while $\WFsL(\{ \tilde{E}_{s} \})$ contains a small neighborhood of $\gamma(0)$. Thus after finitely many iterations, we can get
\begin{equation*} 
\| B u \|_{ H_{\mathrm{d3sc,3co,res}}^{\ast, \vor, \vol, \vor + \vol, \vob, \ast} }  \leq C ( \| Pu \|_{H_{\mathrm{d3sc,3co,res}}^{\ast, \vor + 1, \vol , \vor + \vol + 1, \vob + 2, \ast}}   + \| \tilde{E}' u \|_{H_{\mathrm{d3sc,3co,res}}^{\ast, \vor, \vol , \vor + \vol, \vob, \ast}} + \| u \|_{ H_{\mathrm{d3sc,3co,res}}^{ N, M, \vol , K, \vob, S } } )
\end{equation*}
where $\tilde{E}' \in \Psf^{-\infty, 0,0,0,0, -\infty}$ is such that $\Ellss(\tilde{E}')$ contains a slightly larger neighborhood of $\gamma(0)$ and $\WFs( I - \tilde{E}' )$ is disjoint from a slightly smaller neighborhood of $\gamma(0)$. The required claim now follows from elliptic regularity estimate. \par

Thus far, we have proved a local version of the required estimate (\ref{principal type propagation of regularity semi-global estimate}), focusing on a single $\gamma$. The global version, where the length of $\gamma$ is not restricted, can then be patched together by compactness. This concludes the proof of the proposition.
\end{proof}
As usual (see for instance \cite[Chapter 9, \S 6]{PeterNotes}), it is also convenient to state a semi-global version of Proposition \ref{principal type propagation second microlocal version local}:
\begin{proposition}  \label{principal type propagation second microlocal version}
Let $\vor = \vor_{+}$, $\vol = \vol_{+}$, $\vob = \vob_{+}$ be variable orders satisfying the conditions specified in \S \ref{variable order construction section}. Suppose that $B, E \in \Psf^{-\infty, 0,0,0,0,-\infty}$ satisfy $\WFs(B) \subset \Ellss(G)$. Moreover, assume that 
\begin{equation*}
\begin{gathered}
\text{all backward integral curves of $\rho_{\dmf}^{-1} \rho_{\dtsccf}^{-1} \rho_{\tcocf}^{-2} H_{p}$} \\
\text{ starting from $\WFs(B) \cap \bcv$ enter $\Ellss(E)$ in finite time}. 
\end{gathered}
\end{equation*}
Then for every $u \in \mathcal{S}'$ and any choice of $N, M, L, K, S \in \mathbb{R}$, there exists a constant $C > 0$ such that
\begin{equation} \label{principal type propagation of regularity semi-global estimate}
\| B u \|_{ H_{\mathrm{d3sc,3co,res}}^{\ast, \vor, \vol, \vor + \vol, \vob, \ast} }  \leq C ( \| Pu \|_{H_{\mathrm{d3sc,3co,res}}^{\ast, \vor + 1, \vol , \vor + \vol + 1, \vob + 2, \ast}}   + \| E u \|_{H_{\mathrm{d3sc,3co,res}}^{\ast, \vor, \vol , \vor + \vol, \vob, \ast}} + \| u \|_{ H_{\mathrm{d3sc,3co,res}}^{ N, M, \vol , K, \vob, S } } )
\end{equation}
in the strong sense that if the right hand of (\ref{principal type propagation of regularity semi-global estimate}) is finite, then so is the left hand side, and the estimate holds.
\end{proposition}

We note that Proposition \ref{principal type propagation second microlocal version} follows easily from Proposition \ref{principal type propagation second microlocal version local}, a compactness argument, and elliptic regularity estimate. The proof is therefore omitted.

\begin{remark} \label{remark unnecessary to cover full details}
Once a formula such as (\ref{principal type propagation symbolic identity full}) has been arranged, and assuming  appropriate conditions such as (\ref{principal type cal 1.5}) are satisfied, then one can typically arrive at an estimate of the form (\ref{principal type cal 4}) by going through calculations which are exactly the same as the ones above (though one might not always be able to iterate the argument). This is in particular true for many of the radial point estimates we will consider below. In order to not burden the length of this paper further, henceforth we shall refer to the aforementioned calculations simply as ``the standard procedure'', and omit much of the details whenever we can.
\end{remark}

\begin{remark}
The presence of $\delta_0 > 0$ in (\ref{principal type propagation symbolic identity full}) is clearly redundant, since one has the freedom of choosing $F > 0$ large. We insert such a quantity here since this will be necessary in the radial point estimates below. Thus in doing so, our argument here can get carried over without any modification.
\end{remark}

As discussed in \S \ref{variable order construction section}, the second microlocal structure can be somewhat relaxed when one is away from $\tcocf$. In the case of principal type propagation, this is reflected in that regularity can be shown to propagates along the flow of $\sH_{p,\mf}$, as opposed to $\rho_{\dmf}^{-1} \rho_{\dtsccf}^{-1} \rho_{\tcocf}^{-2} H_{p}$, when one is away from $\tcocf$. Moreover, one could localize microlocally by using the three-body calculus. Then we have the follow estimate, the proof of which will be omitted. 
\begin{proposition} \label{principal type propagation three-body version}
Let $\vor = \vor_{+}$, $\vol = \vol_{+}$, $\vob = \vob_{+}$ be the variable orders constructed in \S \ref{variable order construction section}. Suppose that $B,G,E \in \Psi^{-\infty, 0 ,0}_{\mathrm{3sc}}$ satisfy $\mathrm{WF}'_{\mathrm{3sc},\sigma}(B) \subset \mathrm{Ell}_{\mathrm{3sc},\sigma}(E)$ and
\begin{equation*}
 \mathrm{WF}_{\mathrm{3sc},\sigma}'(B) \cap  \pi_{\ff}^{-1}(o_{\mathcal{C}^{\tindex}}) = \mathrm{WF}_{\mathrm{3sc},\sigma}'(G) \cap \pi_{\ff}^{-1}(o_{\mathcal{C}^{\tindex}}) = \mathrm{WF}'_{\mathrm{3sc},\sigma}(E) \cap \pi_{\ff}^{-1}(o_{\mathcal{C}^{\tindex}}) = \emptyset.
\end{equation*}
 Assume that
 \begin{equation*}
 \begin{gathered}
\text{all backward integral curves of $\sH_{p,\mf}$} \\
\text{starting from $\mathrm{WF}_{\mathrm{3sc},\sigma}'(B) \cap \Sigma_{\mf}$ enter $\mathrm{Ell}_{\mathrm{3sc},\sigma}(E)$ in finite time.} 
\end{gathered}
\end{equation*} 
Then for every $u \in \mathcal{S}'$ and any choice of $N, M, K, F, S \in \mathbb{R}$, there exists a constant $C > 0$ such that
\begin{equation*}
\| B u \|_{ H_{\mathrm{d3sc,3co,res}}^{\ast, \vor, \vol, \vor + \vol, \ast, \ast} }  \leq C ( \| Pu \|_{H_{\mathrm{d3sc,3co,res}}^{\ast, \vor + 1, \vol , \vor + \vol + 1, \ast , \ast}}   + \| E u \|_{H_{\mathrm{d3sc,3co,res}}^{\ast, \vor, \vol , \vor + \vol, \ast , \ast}} + \| u \|_{ H_{\mathrm{d3sc,3co,res}}^{ N, M, \vol , K, F, S } } )
\end{equation*}
\end{proposition}

\section{Radial point estimates in the tangential directions} 
\label{section: radial point estimates in the tangential directions}
We will now begin our discussion on radial point estimates. In this section, we will consider radial point estimates in the tangential directions, namely those radial points whose dynamical significances are contained in $\tcocf$. Here, the terminology of `tangential propagation' is slightly abusive{\ep}it is due to the condition that $(y_{\tindex}, \ltau, \lmu) \in \Sigma_{\mathrm{t}}$ when restricted to $\Sigma_{\sigma} \cap \tcocf$. In particular, one should not confused it with the notion of tangential propagation discussed in \cite[Chapter 15]{AndrasThesis}. Notice that we have excluded the discussions on $\mathcal{R}_{\mathrm{2sc},\dmf,\pm}$ in this section, which are of a more global dynamical nature, even though they have non-empty intersections with $\tcocf$. \par

As pointed out in Remark \ref{remark unnecessary to cover full details}, we will omit much of the unnecessary details in this section, and focus primarily on constructing the commutants, as well as the dynamical properties implied by these constructions. 
\label{tangential propagation section}

\subsection{Radial point estimates at $\mathcal{R}_{\mathrm{n},\dmf,\pm} \cap \tcocf$} \label{subsection first radial point estimate at dmf}
Let us first consider radial point estimates at $\mathcal{R}_{\mathrm{n}, \dmf, \pm} \cap \tcocf$. To this end, notice that $\mathcal{R}_{\mathrm{n},\dmf, \pm} \cap \tcocf$ fibers over $\Sigma_{\mathrm{t}}$. For each $\beta_{\tindex} \in \Sigma_{\mathrm{t}}$ such that
\begin{equation} \label{local coordinates for free beta}
\beta_{\tindex} = ( y_{\tindex,0}, \tau_{\tindex,0}, \mu_{\tindex,0}  ), \quad |\tau_{\tindex,0}|^{2} + | \mu_{\tindex,0} |^{2}_{h_{\tindex}}|_{y_{\tindex} = \, y_{\tindex,0}} = \lambda^{2},
\end{equation}
we define
\begin{equation}
\label{eq; definition of R n dmf at point}
\mathcal{R}_{\mathrm{n},\dmf, \pm} ( \beta_{\tindex} ) = \mathcal{R}_{\mathrm{n},\dmf,\pm} \cap \{ y_{\tindex} =  y_{\tindex,0}, \ltau = \tau_{\tindex,0}, \lmu = \mu_{\tindex,0} \}.
\end{equation}
Notice that $\mathcal{R}_{\mathrm{n},\dmf, \pm} ( \beta_{\tindex} ) \subset \mathcal{R}_{\mathrm{n},\dmf, \pm}  \cap \tcocf$ for all $\beta_{\tindex} \in \Sigma_{\mathrm{t}}$. \par

We will prove an estimate for every such $\mathcal{R}_{\mathrm{n},\dmf,\pm}(\beta_{\tindex})$.

\begin{proposition}[Below threshold estimate at $\mathcal{R}_{\mathrm{n,dmf},-}  \cap \tcocf$] \label{Proposition Rndm-}
Let $\vor = \vor_{+}$, $\vol = \vol_{+}$ and $\vob = \vob_{+}$ be variable orders satisfying the conditions specified in \S \ref{variable order construction section}. Suppose that $\beta_{\tindex} \in \Sigma_{\mathrm{t}}$. Moreover, let $E \in \Psf^{-\infty, 0, -\infty , 0 ,0 ,-\infty}$ be chosen such that 
\begin{equation*}
\begin{gathered}
\text{all backward integral curves of $\rho_{\dmf}^{-1} \rho_{\dtsccf}^{-1} \rho_{\tcocf}^{-2} H_{p}$ starting from $\mathcal{R}_{\mathrm{n},\dmf, -}(\beta_{\tindex})$} \\
\text{are either stationary or enter $\Ellss(E)$ in finite time. }
\end{gathered}
\end{equation*}
Then we can find an arbitrarily small neighborhood $U$ of $\mathcal{R}_{\mathrm{n}, \dmf,-}(\beta_{\tindex}) $ for which the following holds: If we choose $B \in \Psf^{-\infty, 0, -\infty , 0 ,0 ,-\infty}$ with $\WFs(B) \subset U$, then for every $u \in \mathcal{S}'$ and any choice of $N,M,L,K,S \in \mathbb{R}$, there exists $C > 0$ such that
\begin{equation} \label{semi-fredhom estimate at Rndmf-}
\| B u \|_{ H_{\mathrm{d3sc,3co,res}}^{\ast, \vor, \ast, \vor + \vol, \vob, \ast} }  \leq C ( \| Pu \|_{H_{\mathrm{d3sc,3co,res}}^{\ast, \vor + 1, \ast, \vor + \vol + 1, \vob + 2, \ast}}   + \| E u \|_{H_{\mathrm{d3sc,3co,res}}^{\ast, \vor, \ast, \vor + \vol, \vob, \ast}} + \| u \|_{ H_{\mathrm{d3sc,3co,res}}^{ N, M, L, K, \vob, S } } )
\end{equation}
in the strong sense that if the right hand of (\ref{semi-fredhom estimate at Rndmf-}) is finite, then so is the left hand side, and the estimate holds.
\end{proposition}
\begin{proof}
Suppose we make the change of variables from $( \hat{x}^{\tindex}, \bff, y_{\tindex}, y^{\tindex}, \ltau, \lmu, \utaures, \umures )$ to
\begin{equation} \label{Rndmf- cal0.1}
\hat{x}^{\tindex}, \,  ( \intn )^{1/2}, \,  y_{\tindex}, \,  y^{\tindex}, \, \ltau,  \, \lmu, \, \rhoresf, \,  \humures, 
\end{equation}
where we are writing $\rhoresf = ( \intnres )^{-1/2}$ and
\begin{equation} \label{Rndmf- cal0.2}
\hutaures = \frac{\utaures}{ ( \intnres )^{1/2} }, \  \humures = \frac{\umures}{ ( \intnres )^{1/2} }.
\end{equation}
We remind the readers here that in these coordinates, $\hat{x}^{\tindex}$ defines $\psf_{\dmf}\Xd$, $(\intn)^{1/2}$ defines $\tcocf$, $\rhoresf$ defines $\dtsccf$, and 
\begin{align*}
& \mathcal{R}_{\mathrm{n},\dmf,-}(\beta_{\tindex}) \\
& \qquad  = \{ \hat{x}^{\tindex} =  ( \intn )^{1/2} = \rhoresf = 0, \hutaures = -1,  \humures = 0, \intt = \lambda^2 \} .
\end{align*} 
Moreover, recall from (\ref{second microlocal dynamic modification cal 3}) and (\ref{definition of the standardly rescaled Hamiltonian in the res variables}) that we have
\begin{align*}
(\hat{x}^{\tindex})^{-1} ( \intn )^{-1} \rhoresf^{-1} H_{p} & = 2  ( 2 \hutaures - \rhoresf \ltau ) \hat{x}^{\tindex} \partial_{\hat{x}^{\tindex}} - 2  ( \hutaures -  \rhoresf \ltau ) \rhoresf \partial_{\rhoresf} \\
&  \quad + 2 \hutaures R_{\humures} + H_{ | \humures |_{h^{\tindex}}^2 } + \rhoresf \sHto,
\end{align*}
where $R_{\humures} = \humures \cdot \partial_{\humures}$ and $H_{| \humures |_{h^{\tindex}}^2} = \partial_{\humures} |\humures|_{h^{\tindex}}^2 \cdot \partial_{y^{\tindex}} - \partial_{y^{\tindex}} |\humures|_{h^{\tindex}}^2 \cdot \partial_{ \humures }$. \par
Now, consider the vector field
\begin{equation*}
 2 ( 1 + \rhoresf \ltau )  \partial_{\rhoresf} + \sHto \in \mathcal{V}( [0,\infty)_{\rhoresf} \times \mathcal{C}_{\tindex} \times \mathbb{R}^{n_{\tindex}} ),
\end{equation*}
which is inward pointing to the boundary of $[0 ,\infty)_{\rhoresf} \times \mathcal{C}_{\tindex} \times \mathbb{R}^{n_{\tindex}}$. It follows that we can find local coordinates $q_{\mathrm{n}} = ( q_{\mathrm{n},1}, q_{\mathrm{n}}' )$ centered at $\{ 0 \} \times \{ \beta_{\tindex} \}$ such that $q_{\mathrm{n},1}$ is a boundary defining function for $[0,\infty)_{\rhoresf} \times \mathcal{C}_{\tindex} \times \mathbb{R}^{n_{\tindex}}$, and that
\begin{equation*}
2 ( 1 + \rhoresf \ltau ) \partial_{\rhoresf} + \sHto = \partial_{q_{\mathrm{n},1}}.
\end{equation*}
In particular, we now have
\begin{align} \label{Rndmf- cal1}
\begin{split}
(\hat{x}^{\tindex})^{-1} ( \intn )^{-1} \rhoresf^{-1} H_{p} &  = 2 ( 2 \hutaures -  \rhoresf \ltau ) \hat{x}^{\tindex} \partial_{\hat{x}^{\tindex}} - 2  ( \hutaures  + 1 ) \rhoresf\partial_{\rhoresf}  \\
& \quad + 2 \hutaures R_{\humures} + H_{ | \humures |_{h^{\tindex}}^2 } +  \rhoresf \partial_{q_{\mathrm{n},1}}
\end{split}
\end{align}
in a small neighborhood of $\mathcal{R}_{\mathrm{n},\dmf,-}(\beta_{\tindex})$.
 \par

Let $\psi \in \mathcal{C}^{\infty}_{c}( [0, \infty) )$ be chosen such that
\begin{equation} \label{tangential propagation psi condition}
\begin{gathered}
\text{$\psi$ a cut-off at $0$, $\psi = 1$ in a small neighborhood of $0$,} \\
\text{$\psi \leq 0$, and ${(\psi' \psi)}^{1/2} \in \mathcal{C}^{\infty}_{c}( [0,\infty) )$.}
\end{gathered}
\end{equation}
Then we will set
\begin{equation*}
\begin{gathered}
\varphi_{1} = \psi( \hat{x}^{\tindex}  ), \, \varphi_{2} = \psi( \intn ), \, \varphi_{3} = \psi( \epsilon^{-1} | q_{\mathrm{n}}' |^2 + q_{\mathrm{n},1}  ), \, \varphi_{4} = \psi( ( \hutaures + 1 )^2 + | \humures |_{h^{\tindex}}^2 ),
\end{gathered}
\end{equation*}
where $\epsilon > 0$ is arbitrarily small, but will be henceforth fixed. Consider the symbol
\begin{equation*} \label{Rndmf- cal1.1}
a_0 = ( \hat{x}^{\tindex} )^{-\vor - 1/2} ( \intn )^{ -(\vob - 1)/2 }  \rhoresf^{- \vor - \vol - 1/2} \varphi_1 \varphi_2 \varphi_3 \varphi_4,
\end{equation*}
and notice that it is supported in an arbitrarily small neighborhood of $\mathcal{R}_{\mathrm{n,dmf},-}( \beta_{\tindex} )$ if in addition to the smallness of $\epsilon$, we take the support of $\psi$ to be arbitrarily small as well.
 \par

We now display some crucial calculations. Notice that we have
\begin{align}  \label{Rndmf- cal1.1.1}
\begin{split}
H_{p}( \hat{x}^{\tindex} )^{-2\vor - 1} = & - 2 ( H_{p} \vor ) ( \log \hat{x}^{\tindex} ) ( \hat{x}^{\tindex} )^{-2\vor - 1} \\
&  - ( 4 \vor + 2 ) ( 2 \hutaures - \rhoresf \ltau ) ( \intn ) \rhoresf  ( \hat{x}^{\tindex} )^{-2\vor},
\end{split}
\end{align}
and
\begin{equation}  \label{Rndmf- cal1.1.2}
H_{p}( \intn )^{ -\vob - 1 } =  - 2 ( H_{p} \vob ) ( \log ( \intn )^{1/2} ) ( \intn )^{ -\vob - 1 },
\end{equation}
as well as 
\begin{align}  \label{Rndmf- cal1.1.3}
\begin{split}
 H_{p} \rhoresf^{-2 \vor - 2 \vol - 1}  = &  - 2 ( H_{p} ( \vor + \vol ) ) ( \log  \rhoresf ) \rhoresf^{-2 \vor - 2\vol - 1} \\
&  + ( 4 \vor + 4 \vol + 2 )( \hutaures - \rhoresf \ltau ) \hat{x}^{\tindex} ( \intn ) \rhoresf^{-2\vor - 2\vol}.
\end{split}
\end{align}
Moreover, since $( q_{\mathrm{n},1}, q_{\mathrm{n}}' )$ depends only on $( \rhoresf, y_{\tindex}, \ltau, \lmu )$, by (\ref{Rndmf- cal1}) we also have
\begin{equation*}  \label{Rndmf- cal2}
(\hat{x}^{\tindex})^{-1} ( \intn )^{-1} \rhoresf^{-1} H_{p} ( |q_{\mathrm{n}}'|^2 + q_{\mathrm{n},1} )  = \rhoresf  ( 1 + O ( \hutaures + 1 ) ).
\end{equation*} 
Lastly, we have
\begin{equation*} \label{Rndmf- cal2.0.1}
H_{p}( ( \hutaures + 1 )^2 + | \humures |_{h^{\tindex}}^2 ) = -4 \hat{x}^{\tindex} ( \intn ) \rhoresf | \humures |_{h^{\tindex}}^2.
\end{equation*}
\par

Putting the above together, we can write
\begin{equation*}
H_{p} a_0^2 = - \tilde{b}_0^2 + 2 ( \log \bff ) b_{1,0}^2 - b_{2,0}^2  - f_0   + e_{1,0} + e_{2,0},
\end{equation*}
for some symbols $\tilde{b}_0$, $b_{1,0}$, $b_{2,0}$, $f_0$, $e_{1,0}$ and $e_{2,0}$, which we now proceed to define. \par

We first set
\begin{equation} \label{Rndmf- cal2.1} 
\tilde{b}_0^2 =  ( \hat{x}^{\tindex} )^{-2 \vor}  ( \intn )^{-\vob} \rhoresf^{-2 \vor - 2\vol} \big( ( 4 \vor - 4 \vol + 2 ) \hutaures + 4 \vol \rhoresf \ltau  \big) \varphi_1^2 \varphi_2^2 \varphi_3^2 \varphi_4^2,
\end{equation}
where the square root can be taken due to the threshold condition 
\begin{equation} \label{below threshold ndmf+}
\vor - \vol < - \frac{1}{2} \ \text{at} \ \mathcal{R}_{\mathrm{n,dmf},-}
\end{equation}
and the fact that
\begin{equation*} 
( 4 \vor - 4 \vol + 2 ) \hutaures - 4 \vol \rhoresf \ltau \simeq - 4 \vor + 4 \vol - 2 
\end{equation*}
on the support of $\tilde{b}_0^2$. \par

Meanwhile, if we gather the terms in $H_{p}a^{2}_0$ which contains logarithmic factors into a single sum, then we get
\begin{equation} \label{Rndmf- cal2.2} 
 - 2 \big(  (H_{p} \vor ) ( \log \hat{x}^{\tindex} ) + (   H_{p} \vob ) ( \log ( \intn )^{1/2} ) + ( H_{p} ( \vor + \vol ) ) ( \log \rhoresf ) \big) a^2_0.
\end{equation}
Noting that $x_{\tindex} = \hat{x}^{\tindex} ( \intn ) \rhoresf$, we can also consider the term given by
\begin{equation}  \label{Rndmf- cal2.2.0.1} 
 2 ( H_{p} \vob ) ( \log \bff ) a^2_0 =  2\big( ( H_{p} \vob ) ( \log (\intn)^{1/2} ) + (H_{p} \vob ) ( \log \rhoresf ) \big) a_0^2.
\end{equation}
Keeping in mind that $\vob - 2 \vol$ is a constant, and thus $H_{p} \vob = 2 H_{p} \vol$, it is therefore clear that (\ref{Rndmf- cal2.2}) can be written as $2 (\log \bff ) b_{1,0}^2 - f_0$, where 
\begin{equation} \label{Rndmf- cal2.2.0.0.1} 
b_{1,0}^2 = -  (H_{p} \vob) a^2_0, \, f_0 = 2 \big( ( H_{p} \vor ) ( \log \hat{x}^{\tindex} )  + ( H_{p} ( \vor - \vol ) ) ( \log \rhoresf ) \big) a^2_0.
\end{equation}
Here we remark that $b_{1,0}^2$, $f_0$ can be written as a square of symbol resp. a sum of squares of symbols due to the construction of the the variable orders. \par

We also have the terms coming from differentiating $\varphi_{3}^2$, from which we get
\begin{equation*} 
b_{2,0}^2  = - 2 ( \hat{x}^{\tindex} )^{-2\vor} ( \intn )^{-\vob} \rhoresf^{-2\vor - 2\vol + 1} \varphi_1^2 \varphi_2^2 \varphi_4^2  \varphi_3 \psi'(\epsilon^{-1} | q_{\mathrm{n}}' |^2 + q_{\mathrm{n},1}) ( 1 + O( \hutaures + 1 ) ).
\end{equation*}
This can again be written as a square of symbol due to (\ref{tangential propagation psi condition}) and the fact that $O( \hutaures + 1 )$ can be made arbitrarily small on the support of $b_{2,0}^2$. Lastly, differentiating $\varphi_1$ and $\varphi_4$ will produce terms for which we need a priori control of. These are respectively given by
\begin{equation} \label{Rndmf- cal2.2.1}
\begin{gathered}
e_{1,0} = - 4 ( \hat{x}^{\tindex} )^{-2 \vor + 1} ( \intn )^{-\vob} \rhoresf^{-2\vor - 2\vol} ( \rhoresf \ltau -  2 \hutaures  ) \psi'( \hat{x}^{\tindex} ) \varphi_1 \varphi_2^2 \varphi_3^2 \varphi_4^2, \\
e_{2,0}  = -8 ( \hat{x}^{\tindex} )^{-2 \vor} ( \intn )^{-\vob} \rhoresf^{-2\vor - 2\vol} |\humures|_{h^{\tindex}}^2  \psi'(( \hutaures + 1 )^2 + | \humures |_{h^{\tindex}}^2 ) \varphi_4 \varphi_1^2 \varphi_2^2 \varphi_3^2.
\end{gathered}
\end{equation} \par

Next we consider regularization. Let us set
\begin{equation} \label{regularization Rndmf-}
\phi_{s} = 1 + \frac{s}{x_{\tindex}}, \, h_s = \frac{ s / x_{\tindex} }{ 1 + s / x_{\tindex} }, \quad s \in [0,1].
\end{equation}
Thus $\phi_{s}$ is a regularization is every possible sense. For large $K > 0$, we can then compute
\begin{equation}  \label{Rndmf- cal2.2.2} 
H_p \phi_{s}^{-2K} = 4 K \hat{x}^{\tindex} ( \intn ) \rhoresf^2 h_s \ltau \phi_{s}^{-2K}. 
\end{equation}
Let $a_{s} = \phi_{s}^{K} a_{0}$,  $b_{j,s} = \phi_{s}^{-K} b_{0,s}$, $j = 1, 2$, $f_{s} = \phi_{s}^{-2K} f_{0}$,  $e_{k,s} = \phi_{s}^{-2K} e_{k,0}$, $k = 1,2$, as well as
\begin{align} \label{Rndmf- cal2.3} 
\begin{split}
b_{s}^{2} & = ( \hat{x}^{\tindex} )^{-2\vor} ( \intn )^{-\vob} \rhoresf^{-2\vor - 2 \vol} \phi_{s}^{-2K} \\
& \quad \times \big( ( 4 \vor - 4 \vol + 2 ) \hutaures + \rhoresf ( 4 \vol - 4K h_s ) \ltau - \delta_0 \phi_{s}^{-2K} \varphi_1^2 \varphi_2^2 \varphi_3^2 \varphi_4^2 \big) \varphi_1^2 \varphi_2^2 \varphi_3^2 \varphi_4^2
\end{split}
\end{align}
for some sufficiently small $\delta_0 > 0$. Then we in fact have
\begin{align*}
 H_{p} a_{s}^2 + \delta_0 ( \hat{x}^{\tindex} )^{2 \vor + 2} (\intn)^{ \vob + 2 } \rhoresf^{2 \vor + 2 \vol + 2} a_{s}^4  = & - b_{s}^2 + 2 ( \log \bff ) b_{1,s}^2 \\
 &  - b_{2,s}^2 - f_s + e_{1,s} + e_{2,s}.
\end{align*} \par

Here, we shall remark that $b_{s}^2$ is still non-negative, and its square root is a symbol, since by taking $\delta_0 > 0$ to be sufficiently small, the last term in the wide parenthesis from the second line of (\ref{Rndmf- cal2.3}) becomes negligible. For every fixed $K$, one could also take the support of $\psi$ to be small enough such that $\rhoresf$ is bounded by another arbitrarily small constant, thereby making the middle term there small as well. However, this construction will depend on $K$, which is the amount of a priori regularity assumption allowed. Nevertheless, this will not matter since it will be enough for our consideration here to be local.
 \par

Let $A_{s}$, $B_{s}$, $B_{2,s}$, $F_{s}$, $E_{k,s}$, $k=1,2$ be some quantizations of $a_{s}$, $b_{s}$, $b_{2,s}$, $f_{s}$ and $e_{k,s}$, $k=1,2$ respectively, such that each $F_{s}$ is arranged to be non-negative with respect to $L^2$. Let also $B_{1,s}  \in L^{\infty}( (0, 1]_{s} ; \Psf^{- \infty, \vor , - \infty, \vor + \vol, \vob, -\infty} )$ be chosen such that conditions (\ref{always these conditions for B1s}) are satisfied, and $\Lambda$ be some quantization of an elliptic extension of $(\hat{x}^{\tindex})^{\vor + 1} ( \intn )^{(\vob + 2)/2} \rhoresf^{\vor + \vol + 1}$ from the support of $a_{0}$. Then by Corollary \ref{commutator corollary}, we have
\begin{align} \label{Rndmf- cal2.2.9.1}
\begin{split}
i[ P,A_{s}^{\ast} A_{s} ] + \delta_{0} ( \Lambda A_{s}^{\ast} A_{s} )^{\ast} ( \Lambda A_{s}^{\ast} A_{s} ) = & - B_{s}^{\ast} B_{s} + 2 B_{1,s}^{\ast} (\log \bff) B_{1,s}  \\
&- B_{2,s}^{\ast} B_{2,s} - F_{s}  + E_{1,s} + E_{2,s} + R_{s}
\end{split}
\end{align}
for some $R_{s} \in L^{\infty}( (0, 1]_{s} ; \Psf^{-\infty, 2 \vor -1 + 2 \delta, - \infty, 2 \vor + 2\vol - 1 + 2 \delta, 2\vob, -\infty} )$. \par

The rest of the argument is standard. Let $ \tB, \tilde{E} \in \Psfo^{-\infty,0,-\infty,0,0,-\infty}$ be chosen such that 
\begin{equation*}  \label{Rndmf- cal2.3.1}  
\begin{gathered}
 \WFsL( \{  A_{s} \} ) \subset \Ellss( \tB ), \, \WFsL( \{ E_{k, s} \} ) \subset \Ellss( \tilde{E} ), \ k =1,2, \\
\WFs( I - \tB ) \cap \WFsL( \{ A_s \} ) = \WFs( I - \tilde{E} ) \cap \WFsL( \{ E_{s} \} )  = \emptyset.
\end{gathered}
\end{equation*}
Let $U$ be chosen such that $U \subset \Ellss(B_0)$. Then the standard procedure gives us
\begin{align} \label{local estimate at Rndmf-}
\begin{split}
\| B u \|_{ H_{\mathrm{d3sc,3co,res}}^{\ast, \vor, \ast, \vor + \vol, \vob, \ast} } & \leq  C ( \|  P u \|_{ H_{\mathrm{d3sc,3co,res}}^{\ast, \vor + 1, \ast, \vor + \vol  + 1, \vob + 2, \ast } } \\
& \quad + \| \tilde{E} u \|_{ H_{\mathrm{d3sc,3co,res}}^{\ast, \vor, \ast, \vor + \vol, \vob, \ast} } + \| \tB u \|_{H_{\mathrm{d3sc,3co,res}}^{\ast, \vor - 1 + 2\delta, \ast, \vor + \vol - 1 + 2 \delta, \vob, \ast}} + \| u \|_{ H_{\mathrm{d3sc,3co,res}}^{N, M, L, K, \vob, S } } ).
\end{split}
\end{align} 
In fact, if $\WFs( \tB )$ is taken small enough, then this argument can be iterated finitely many times, from which we can drop the $\tB u$ term in the above. \par

It remains to show that the dynamical condition required in the statement of the proposition is satisfied with $\tilde{E}$ in place of $E$. To this end, suppose that $\gamma$ is an integral curve of $\rho_{\dmf}^{-1} \rho_{\dtsccf}^{-1} \rho_{\tcocf}^{-2} H_{p}$ such that $\lim_{t \rightarrow \infty} \gamma(t) \in \mathcal{R}_{\mathrm{n,dmf},-}(\beta_{\tindex})$. Then it follows from Proposition \ref{proposition characterization of the integral curves which converge to dtsccf}, case~(1) that we cannot have $\lim_{t \rightarrow \infty} \gamma(t) \in \dtsccf$, $\gamma \not\subset \dtsccf$.  \par

Thus we must have $\gamma \subset \dtsccf$, in which case only cases~(1) and (4) of Proposition \ref{proposition characterization of the integral curves which live on dtsccf} may occur. In case~(1), we must have 
\begin{equation*}
\gamma \subset \psf_{\dmf} \Xd \cap \dtsccf \cap \tcocf.
\end{equation*}
Moreover, the trajectory of $\gamma$ is determined completely by (\ref{uniform interactive variable flow Hamiltonian scaled}), which is just the rescaled two-body flow in the interaction variables, with the free variables held constant at $\beta_{\tindex}$. In particular, $\gamma$ must enter $\WFs(E_{1,0})$ in finite backward time. \par

In case~(4), $\gamma$ travels from $\mathcal{R}_{\mathrm{n},\dff,+}$ to $\mathcal{R}_{\mathrm{n}, -}$ with $\hutaures = -1$, $\humures = 0$ for large time. In this case, it is obvious that $\gamma$ enter $\WFs(E_{2,0})$ in finite backward time. Since $\Ellss(\tilde{E})$ contains both $\WFs(E_{1,0})$ and $\WFs(E_{2,0})$, we have obtained the desired dynamical condition in a slightly local case. One can then bridge this gap to the case of a general $E$ by means of elliptic regularity and principal type propagation of regularity estimates. 
\end{proof}

\begin{proposition}[Above threshold estimate at $\mathcal{R}_{\mathrm{n,dmf},+}  \cap  \tcocf$] \label{Proposition Rndm+}
Let $\vor = \vor_{+}$, $\vol = \vol_{+}$ and $\vob = \vob_{+}$ be variable orders satisfying the conditions specified in \S \ref{variable order construction section}. Suppose that $\beta_{\tindex} \in \Sigma_{\mathrm{t}}$. Moreover, let $E \in \Psf^{-\infty, 0, -\infty , 0 ,0 ,-\infty}$ be chosen such that 
\begin{equation*}
\begin{gathered}
\text{all backward integral curves of $\rho_{\dmf}^{-1} \rho_{\dtsccf}^{-1} \rho_{\tcocf}^{-2} H_{p}$ starting from $\mathcal{R}_{\mathrm{n},\dmf, +}(\beta_{\tindex})$} \\
\text{are either stationary or enter $\Ellss(E)$ in finite time. }
\end{gathered}
\end{equation*}
Then we can find an arbitrarily small neighborhood $U$ of $\mathcal{R}_{\mathrm{n}, \dmf,+}(\beta_{\tindex})$ for which the following holds: If we choose $B \in \Psf^{-\infty, 0, -\infty , 0 ,0 ,-\infty}$ with $\WFs(B) \subset U$, then for every $u \in \mathcal{S}'$ and any choice of $N,M,L,K,S \in \mathbb{R}$, there exists $C > 0$ such that
\begin{equation} \label{semi-fredhom estimate at Rndmf+}
\| B u \|_{ H_{\mathrm{d3sc,3co,res}}^{\ast, \vor, \ast, \vor + \vol, \vob, \ast} }  \leq C ( \| Pu \|_{H_{\mathrm{d3sc,3co,res}}^{\ast, \vor + 1, \ast, \vor + \vol + 1, \vob + 2, \ast}}   + \| E u \|_{H_{\mathrm{d3sc,3co,res}}^{\ast, \vor, \ast, \vor + \vol, \vob, \ast}}  + \| u \|_{ H_{\mathrm{d3sc,3co,res}}^{ N, M, L, K, \vob, S } } )
\end{equation}
in the strong sense that if the right hand of (\ref{semi-fredhom estimate at Rndmf+}) is finite, then so is the left hand side, and the estimate holds.
\end{proposition}
\begin{proof}
As in the proof of Proposition \ref{Proposition Rndm-}, we will again change variables to (\ref{Rndmf- cal0.1}), with conventions (\ref{Rndmf- cal0.2}) being used, though this time we have
\begin{align*}
& \mathcal{R}_{\mathrm{n},\dmf,+}(\beta_{\tindex}) \\
& \qquad = \{ \hat{x}^{\tindex} =  ( \intn )^{1/2} = \rhoresf = 0, \hutaures = 1, \humures = 0, \intt = \lambda^2 \}.
\end{align*}
Now, consider the vector field
\begin{equation*}
 - 2 ( 1 - \rhoresf \ltau )  \partial_{\rhoresf} + \sHto \in \mathcal{V}( [0,\infty)_{\rhoresf} \times \mathcal{C}_{\tindex} \times \mathbb{R}^{n_{\tindex}} ),
\end{equation*}
which is outward pointing to the boundary of $[0 ,\infty)_{\rhoresf} \times \mathcal{C}_{\tindex} \times \mathbb{R}^{n_{\tindex}}$. Then we can find local coordinates $q_{\mathrm{n}} = ( q_{\mathrm{n},1}, q_{\mathrm{n}}' )$ centered at $\{ 0 \} \times \{ \beta_{\tindex} \}$ such that $q_{\mathrm{n},1}$ is a boundary defining function for $[0,\infty)_{\rhoresf} \times \mathcal{C}_{\tindex} \times \mathbb{R}^{n_{\tindex}}$, and that
\begin{equation*}
- 2 (1 - \rhoresf \ltau ) \partial_{\rhoresf} + \sHto = -\partial_{q_{\mathrm{n},1}}.
\end{equation*}
In particular, we now have
\begin{align} \label{Rndmf+ cal1}
\begin{split}
(\hat{x}^{\tindex})^{-1} ( \intn )^{-1} \rhoresf^{-1} H_{p} & = 2 ( 2 \hutaures -  \rhoresf \ltau ) \hat{x}^{\tindex} \partial_{\hat{x}^{\tindex}} - 2  ( \hutaures  - 1 ) \rhoresf\partial_{\rhoresf}  \\
&   \quad + 2 \hutaures R_{\humures} + H_{ | \humures |_{h^{\tindex}}^2 }   -  \rhoresf \partial_{q_{\mathrm{n},1}}
\end{split}
\end{align} 
in a small neighborhood of $\mathcal{R}_{\mathrm{n},\dmf,+}(\beta_{\tindex})$. \par

Let $\psi \in \mathcal{C}^{\infty}_{c}( [0, \infty) )$ be defined such that (\ref{tangential propagation psi condition}) are satisfied, though we will now also extend $\psi$ into a $\mathcal{C}^{\infty}( \mathbb{R} )$ function via a constant extension (i.e., letting it be identically $1$) to $(-\infty,0)$ (this will only be relevant in the definition of $\varphi_4$ below). Then we will set
\begin{equation*}
\begin{gathered}
\varphi_{1} = \psi( \hat{x}^{\tindex}  ), \, \varphi_{2} = \psi( \intn ), \, \varphi_3 = \psi ( q_{\mathrm{n},1} ), \\
 \varphi_{4} = \psi( \epsilon^{-1} | q_{\mathrm{n}}' |^2 - q_{\mathrm{n},1}  ), \, \varphi_{5} = \psi( ( \hutaures + 1 )^2 + | \humures |_{h^{\tindex}}^2 ),
\end{gathered}
\end{equation*}
where $\epsilon > 0$ is arbitrarily small, but will be henceforth fixed. Consider the symbol
\begin{equation*}
a_0 = ( \hat{x}^{\tindex} )^{- \vor - 1/1} ( \intn )^{ - \vob/2 - 1/2 }  \rhoresf^{- \vor -  \vol - 1/2} \varphi_1 \varphi_2 \varphi_3 \varphi_4 \varphi_5,
\end{equation*}
and notice that it can be made to be supported in an arbitrarily small neighborhood of $\mathcal{R}_{\mathrm{n},\dmf,+}(\beta_{\tindex})$ as in the below threshold case.
 \par

We proceed to introduce some important calculations as before. Firstly, formulae (\ref{Rndmf- cal1.1.1})--(\ref{Rndmf- cal1.1.3}) can continue to be made uses of. However, we now also have 
\begin{equation*}
\begin{gathered}
(\hat{x}^{\tindex})^{-1} ( \intn )^{-1} \rhoresf^{-1} H_{p} q_{\mathrm{n,1}} = - \rhoresf ( 1 + O( \hutaures - 1 ) ) , \\ 
(\hat{x}^{\tindex})^{-1} ( \intn )^{-1} \rhoresf^{-1} H_{p} ( |q_{\mathrm{n}}'|^2 - q_{\mathrm{n},1} )  = \rhoresf  ( 1 + O ( \hutaures - 1 ) ),
\end{gathered}
\end{equation*} 
and
\begin{equation*}
H_{p}( ( \hutaures - 1 )^2 + | \humures |_{h^{\tindex}}^2 ) =  4 \hat{x}^{\tindex} ( \intn ) \rhoresf | \humures |_{h^{\tindex}}^2.
\end{equation*} \par

Putting the above together, we can write
\begin{equation*}
H_{p} a_0^2 = - \tilde{b}_{0}^2 + 2 ( \log \bff ) b_{1,0}^2 - b_{2,0}^2 - b_{3,0}^2 - b_{4,0}^2 - f_0 + e_{0},
\end{equation*}
for some symbols $\tilde{b}_{0}$, $b_{1,0}$, $b_{2,0}$, $b_{3,0}$, $b_{4,0}$, $f_{0}$ and $e_0$, which we now proceed to define. \par

We first set
\begin{equation}  \label{Rndmf+ cal1} 
\tilde{b}_0^2 =  ( \hat{x}^{\tindex} )^{-2 \vor}  ( \intn )^{-\vob} \rhoresf^{-2 \vor - 2\vol} \big( ( 4 \vor - 4 \vol + 2 ) \hutaures - 4 \vol \rhoresf \ltau  \big) \varphi_1^2 \varphi_2^2 \varphi_3^2 \varphi_4^2 \varphi_5^2.
\end{equation}
Notice that we can arrange the support of $\psi$ to be small enough such that
\begin{equation*}
( 4 \vor - 4 \vol + 2 ) \hutaures - 4 \vol \rhoresf \ltau \simeq  4 \vor - 4 \vol + 2
\end{equation*}
on the support of $\tilde{b}_0^2$. Thus, by the threshold condition
\begin{equation}  \label{Rndmf+ cal1.1} 
\vor - \vol > - \frac{1}{2} \ \text{at} \ \mathcal{R}_{\mathrm{n,dmf},+},
\end{equation}
we know that $\tilde{b}_{0}^2$ can indeed be written as a square of symbol. Likewise, the terms involving logarithmic factors can be written as $2 ( \log \bff ) b_{1,0}^2 - f_0$, where $b_{1,0}^2 =  - (H_{p}\vob)  a_0^{2}$ and $f_0$ is defined exactly as in (\ref{Rndmf- cal2.2.0.0.1}). \par

Now, unlike the proof of Proposition \ref{Proposition Rndm-}, we instead need to set
\begin{equation*}  \label{Rndmf+ cal3} 
\begin{gathered}
b_{2,0}^2 = - 4 ( \hat{x}^{\tindex} )^{-2 \vor + 1} ( \intn )^{-\vob} \rhoresf^{-2\vor - 2\vol} ( \rhoresf \ltau -  2 \hutaures  ) \psi'( \hat{x}^{\tindex} ) \varphi_1 \varphi_2^2 \varphi_3^2 \varphi_4^2 \varphi_5^2, \\
b_{3,0}^2  = - 8 ( \hat{x}^{\tindex} )^{-2 \vor + 1} ( \intn )^{-\vob} \rhoresf^{-2\vor - 2\vol}  |\humures|_
{h^{\tindex}}^2   \varphi_1^2 \varphi_2^2 \varphi_3^2 \varphi_4^2 \varphi_5 \psi'( ( \hutaures + 1 )^2 + | \humures |_{h^{\tindex}}^2 ),
\end{gathered}
\end{equation*} 
and also 
\begin{equation*}
b_{4,0}^2 = - 2 ( \hat{x}^{\tindex} )^{-2\vor} ( \intn )^{-\vob} \rhoresf^{-2\vor -2\vol + 1} \varphi_1^2 \varphi_2^2 \varphi_3^2 \varphi_5^2 \varphi_4 \psi'( \epsilon^{-1} |q_{n}'|^2 - q_{\mathrm{n},1} ) ( 1 + O( \hutaures  - 1) ).
\end{equation*}
It is clear that $b_{3,0}^2$ can be written as a square of symbol. To show the same for $b_{2,0}^2$, we need only to observe that $\rhoresf \ltau - 2 \hutaures  \simeq -2$ on the support of $b_{3,0}^2$. Similarly, to show that $b_{4,0}^2$ can be written as a square of symbol, we can assume that $O( \hutaures - 1 )$ is arbitrarily small on the support of $b_{4,0}^2$. Finally, the term which requires a priori control is given by
\begin{equation*}
e_{0} = - 2 ( \hat{x}^{\tindex} )^{-2\vor} ( \intn )^{-\vob} \rhoresf^{-2\vor - 2\vol + 1} \varphi_1^2 \varphi_2^2 \varphi_4^2 \varphi_5^2 \varphi_3 \psi'( q_{\mathrm{n},1} ) ( 1 + O( \hutaures - 1 ) ).
\end{equation*} \par

Let $\phi_{s}$, $h_s$ for $s \in (0,1]$ again be defined as in (\ref{regularization Rndmf-}) and (\ref{Rndmf- cal2.2.2}). For large $K > 0$, we also set $a_{s} = \phi_{s}^{-K} a_{0}$, $b_{j,s} = \phi_{s}^{-K}$,  $j = 1,2,3,4$, $f_{s} = \phi_{s}^{-2K} f_0$, $e_{s} = \phi_{s}^{-2K} e_{0}$, as well as
\begin{align*}
\begin{split}
b_{s}^{2} & = ( \hat{x}^{\tindex} )^{-2\vor} ( \intn )^{-\vob} \rhoresf^{-2\vor - 2 \vol} \phi_{s}^{-2K} \\
& \quad \times \big( ( 4 \vor - 4 \vol + 2 ) \hutaures - \rhoresf ( 4 + 4K h_s ) \ltau - \delta_0 \phi_{s}^{-2K} \varphi_1^2 \varphi_2^2 \varphi_3^2 \varphi_4^2 \varphi_5^2 \big) \varphi_1^2 \varphi_2^2 \varphi_3^2 \varphi_4^2 \varphi_5^2
\end{split}
\end{align*}
for some sufficiently small $\delta_0 > 0$. Then we have
\begin{align*}
 H_{p} a_{s}^2 + \delta_0 ( \hat{x}^{\tindex} )^{2 \vor + 2} (\intn)^{ \vob + 2 } \rhoresf^{2 \vor + 2 \vol + 2} a_{s}^4  = & - b_{s}^2 + 2 ( \log \bff ) b_{1,s}^2 \\
 &  - b_{2,s}^2 - b_{3,s}^2 - b_{4,s}^2 - f_{s} + e_{s}.
\end{align*} 
Let $A_{s}$, $B_{s}$, $E_{s}$ be some quantizations of $a_{s}$, $b_{s}$ and $e_{s}$ respectively. Furthermore, let $\tB,\tilde{E} \in \Psf^{-\infty, 0, -\infty, 0, 0, -\infty}$ be chosen such that
\begin{equation*} 
\begin{gathered}
 \WFsL \{  A_{s} \} ) \subset \Ellss( \tB ), \, \WFsL( \{ E_{s} \}  ) \subset \Ellss( \tilde{E} ), \\
\WFs( I - \tB ) \cap \WFsL( \{ A_s \}  )  = \WFs( I - \tilde{E} ) \cap \WFsL( \{ E_{s} \} ) = \emptyset.
\end{gathered}
\end{equation*}
Let $U$ be chosen such that $U \subset \Ellss(B_0)$. Then the standard procedure gives us 
\begin{align} \label{semi-fredhom estimate at Rndmf+ need it}
\begin{split}
 \| B u \|_{ H_{\mathrm{d3sc,3co,res}}^{\ast, \vor, \ast, \vor + \vol, \vob, \ast} } & \leq C ( \| Pu \|_{H_{\mathrm{d3sc,3co,res}}^{\ast, \vor + 1, \ast, \vor + \vol + 1, \vob + 2, \ast}} \\
& \quad + \| \tilde{E} u \|_{H_{\mathrm{d3sc,3co,res}}^{\ast, \vor, \ast, \vor + \vol, \vob, \ast}} + \| \tB u \|_{H_{\mathrm{d3sc,3co,res}}^{\ast, \vor - 1/2 + \delta, \ast, \vor + \vol - 1/2 + \delta, \vob, \ast}} + \| u \|_{ H_{\mathrm{d3sc,3co,res}}^{ N, M, L, K, \vob, S } } ). 
\end{split}
\end{align} \par

Now, since the order at $\dff$ in (\ref{semi-fredhom estimate at Rndmf+ need it}) can be chosen arbitrarily, we may as well choose it to be (or something smaller than) $\vol-1/2 + \delta$. Then the threshold condition (\ref{Rndmf+ cal1.1}) again holds. In particular, the argument can be iterated for as many times as required, from which we conclude that the $\tB u$ term above can be dropped. \par

It remains to show that the dynamical condition required in the statement of the proposition is satisfied with $\tilde{E}$ in place of $E$. To this end, suppose that $\gamma$ is an integral curve of $\rho_{\dmf}^{-1} \rho_{\dtsccf}^{-1} \rho_{\tcocf}^{-2} H_{p}$ such that $\lim_{t \rightarrow \infty} \gamma(t) \in \mathcal{R}_{\mathrm{n},\dmf,+}$. Then it follows from Proposition \ref{proposition characterization of the integral curves which live on dtsccf} that we cannot have $\gamma \subset \dtsccf$. \par

Thus we must have $\lim_{t \rightarrow \infty} \gamma(t) \in \dtsccf \cap \tcocf$, $\gamma \not \subset \dtsccf$. By Proposition \ref{proposition characterization of the integral curves which converge to dtsccf}, we know that $\gamma \subset \psf_{\dmf} \Xd \cap \tcocf$ with $\hutaures = 1$, $\humures = 0$ being constants over $\gamma$. Thus it follows from (\ref{Rndmf+ cal1}) that the trajectory of $\gamma$ is locally determined by the flow of $\partial_{q_{\mathrm{n},1}}$. In particular, $\gamma$ must enter $\WFs(E_0)$ and subsequently $\Ellss( \tilde{E} )$ in finite backward time. The case of a general $E$ can then by achieved by means of elliptic regularity and principal type propagation of regularity estimates.
\end{proof}

\subsection{Radial point estimates at $\mathcal{R}_{\mathrm{n},\dff,\pm} \cap \tcocf $} \label{Radial point estimate at dff subsection}
We now move onto the discussion of radial point estimates at $\mathcal{R}_{\mathrm{n},\dff, \pm} \cap \tcocf$. As before, we can again do this individually for every choice of $\beta_{\tindex} \in \Sigma_{\mathrm{t}}$, i.e., assuming (\ref{local coordinates for free beta}) is true, we let
\begin{equation}
\label{eq; definition of R n dff at point}
\mathcal{R}_{\mathrm{n}, \dff, \pm}( \beta_{\tindex} ) = \mathcal{R}_{ \mathrm{n}, \dff, \pm } \cap \{ y_{\tindex} =  y_{\tindex,0}, \ltau = \tau_{\tindex,0}, \lmu = \mu_{\tindex,0} \}.
\end{equation}
Notice that $\mathcal{R}_{\mathrm{n}, \dff, \pm}( \beta_{\tindex} ) \subset \mathcal{R}_{\mathrm{n}, \dff, \pm} \cap \tcocf$ for all $\beta_{\tindex} \in \Sigma_{\mathrm{t}}$. \par

There exists an estimate for every such $\mathcal{R}_{\mathrm{n}, \dff, \pm}(\beta_{\tindex})$.

\begin{proposition}[Below threshold estimate at $\mathcal{R}_{\mathrm{n},\dff, -} \cap \tcocf$] \label{below threshold Rndff- proposition}
Let $\vor = \vor_{+}$, $\vol = \vol_{+}$ and $\vob = \vob_{+}$ be variable orders satisfying the conditions specified in \S \ref{variable order construction section}. Suppose that $\beta_{\tindex} \in \Sigma_{\mathrm{t}}$. Moreover, let $E \in \Psf^{-\infty, -\infty , 0 , 0 ,0 ,-\infty}$ be chosen such that 
\begin{equation*}
\begin{gathered}
\text{all backward integral curves of $\rho_{\dmf}^{-1} \rho_{\dtsccf}^{-1} \rho_{\tcocf}^{-2} H_{p}$ starting from $\mathcal{R}_{\mathrm{n},\dff, -}(\beta_{\tindex})$} \\
\text{are either stationary or enter $\Ellss(E)$ in finite time. }
\end{gathered}
\end{equation*}
Then we can find an arbitrarily small neighborhood $U$ of $\mathcal{R}_{\mathrm{n}, \dff,-}(\beta_{\tindex})$ for which the following holds: If we choose $B \in \Psf^{-\infty, -\infty , 0 , 0 ,0 ,-\infty}$ with $\WFs(B) \subset U$, then for every $u \in \mathcal{S}'$ and any choice of $N,M,L,K,S \in \mathbb{R}$, there exists $C > 0$ such that
\begin{equation} \label{semi-fredhom estimate at Rndff-}
\| B u \|_{ H_{\mathrm{d3sc,3co,res}}^{\ast, \ast, \vol , \vor + \vol, \vob, \ast} }  \leq C ( \| Pu \|_{H_{\mathrm{d3sc,3co,res}}^{\ast, \ast , \vol , \vor + \vol + 1, \vob + 2, \ast}} + \| E u \|_{H_{\mathrm{d3sc,3co,res}}^{\ast, \ast, \vol , \vor + \vol, \vob, \ast}} + \| u \|_{ H_{\mathrm{d3sc,3co,res}}^{ N, M, \vol, K, \vob, S } } )
\end{equation}
in the strong sense that if the right hand of (\ref{semi-fredhom estimate at Rndff-}) is finite, then so is the left hand side, and the estimate holds.
\end{proposition}
\begin{proof}
Let us change variables from $ ( \hbff, x^{\tindex}, y_{\tindex}, y^{\tindex}, \ltau, \lmu, \utaub, \umub )$ to
\begin{equation} \label{Rndff- cal1}
\hbff, (\intn)^{1/2}, \, y_{\tindex}, \, y^{\tindex}, \, \ltau, \, \lmu, \, \rhobf, \, \humub,
\end{equation}
where we are writing $\rhobf = ( \intnb )^{-1/2}$ and
\begin{equation} \label{Rndff- cal2}
\hutaub = \frac{ \utaub }{ ( \intnb )^{1/2} }, \ \humub = \frac{ \umub }{ ( \intnb )^{1/2} }.
\end{equation}
We remind the readers here that in these coordinates, $\hbff$ defines $\psf_{\dff}\Xd$, $(\intn)^{1/2}$ defines $\tcocf$, $\rhobf$ defines $\dtsccf$, and 
\begin{align*}
&  \mathcal{R}_{\mathrm{n},\dff,-}(\beta_{\tindex}) \\
& \qquad = \{ \hbff =  ( \intn )^{1/2} = \rhobf = 0, \hutaub = -1,  \humub = 0, \intt = \lambda^2 \}.
\end{align*} 
Moreover, recall from (\ref{second microlocal dynamic modification cal 5}) and (\ref{second microlocal dynamic modification cal 5.1}) that we have
\begin{align*}
( \intn )^{-1} \rhobf^{-1} H_{p} & = 2 ( \rhobf \hbff \ltau - 2 \hutaub ) \hbff \partial_{\hbff} + 2 \hutaub \rhobf \partial_{\rhobf} \\
&  \quad + 2 \hutaub R_{\humub} + H_{ |\humub|_{h^{\tindex}}^2 }   + \rhobf \hbff \sHto,
\end{align*}
where $R_{\humub} = \humub \cdot \partial_{\humub}$ and $H_{ | \humub |_{h^{\tindex}}^2 } = \partial_{ \humub } | \humub |_{h^{\tindex}}^2 \cdot \partial_{y^{\tindex}} - \partial_{y^{\tindex}} | \humub |_{h^{\tindex}}^2 \cdot \partial_{ \humub } $. \par

Let $\psi \in \mathcal{C}^{\infty}_{c} ( [0,\infty) )$ be defined such that (\ref{tangential propagation psi condition}) are satisfied. Recall that we are assuming $\beta_{\tindex} = ( y_{\tindex,0}, \tau_{\tindex,0}, \mu_{\tindex,0} )$ in local coordinates. Then we will define
\begin{equation*}
\begin{gathered}
\varphi_{1} = \psi( \hbff ), \, \varphi_{2} = \psi( \intn ),  \\
\varphi_3 = \psi( | y_{\tindex} - y_{\tindex,0} |^2 + | \ltau - \tau_{\tindex,0} |^2 + | \lmu - \mu_{\tindex,0} |_{h_{\tindex}}^2  ), \,  \varphi_4 = \psi(  (\hutaub + 1 )^2 + | \humub |_{h^{\tindex}}^2 ),
\end{gathered}
\end{equation*}
as well as
\begin{equation*}
a_0 = \hbff^{- \vol} ( \intn )^{ - \vob/2 - 1/2 } \rhobf^{ - \vor - \vol - 1/2} \varphi_1 \varphi_2 \varphi_3 \varphi_4.
\end{equation*} \par

Let us simplify the calculation by implicitly assuming that $a_0$ is supported near $\dtsccf$. Note that this is acceptable since $\dtsccf$ is the only `symbolic face' which $\mathcal{R}_{\mathrm{n},\dff,-}$ intersects. Our assumption therefore amounts to the omission of a cut-off $\psi(\rhobf)$, which become symbolically residual on the support of $a_0$ under the derivative of $H_p$. In particular, we can assume that $a_0$ is supported in an arbitrarily small neighborhood of $\mathcal{R}_{\mathrm{n},\dff,-}(\beta_{\tindex})$. Note that we implicitly pay the price for this by accounting for a term which decays to lower orders at $\dtsccf$ in the symbol calculus procedure below.
\par

Next, we compute that
\begin{equation*}
H_{p} \hbff^{-2\vol} = - 2 ( H_{p} \vol ) ( \log \hbff ) \hbff^{-2 \vol }  - 4 \vol  ( \rhobf \hbff \ltau - 2 \hutaub ) ( \intn ) \rhobf \hbff^{-2\vol},
\end{equation*}
and again, as in (\ref{Rndmf- cal1.1.2}), we have
\begin{equation*} 
H_{p}( \intn )^{ -\vob - 1 } =  - 2 ( H_{p} \vob ) ( \log ( \intn )^{1/2} ) ( \intn )^{ -\vob - 1 }
\end{equation*}
as well as
\begin{align*}
H_{p} \rhobf^{ -2\vor - 2\vol - 1 } & = -2 ( H_{p} ( \vor + \vol )  ) ( \log \rhobf ) \rhobf^{ -2 \vor - 2\vol - 1 }  - ( 4 \vor + 4 \vol + 2 ) \hutaub ( \intn ) \rhobf^{ -2\vor - 2\vol }.
\end{align*}
Moreover, we also have
\begin{equation*}
H_{p}  ( ( \hutaub + 1 )^2 + | \humub |_{h^{\tindex}}^2 ) = - 4 ( \intn ) \rhobf | \humub |_{h^{\tindex}}^2.
\end{equation*} \par

Now, we can write
\begin{equation*}
H_{p} a_0^2 = - \tilde{b}_0^2  +  2( \log x^{\tindex}) b_{1,0}^2 - b_{2,0}^2 - f_0+ e_0 + \tilde{r}_0
\end{equation*}
for some symbols $\tilde{b}_{0}$, $b_{1,0}$, $b_{2,0}$, $b_{3,0}$, $e_0$ and $\tilde{r}_0$, which we now proceed to define. \par

We first set
\begin{equation}  \label{Rndff- cal4}
\tilde{b}_0^2 =  \hbff^{-2\vol} ( \intn )^{-\vob} \rhobf^{ -2 \vor - 2\vol } \big( ( 4 \vor - 4 \vol + 2 ) \hutaub + 4 \vol \rhobf \hbff \ltau \big) \varphi_1^2 \varphi_2^2 \varphi_3^2 \varphi_4^2.
\end{equation}
Notice that $\tilde{b}_0^2$ can be written as a square of symbol since we can arrange the support of $\psi$ to be small enough such that 
\begin{equation*}
( 4 \vor - 4 \vol + 2 ) \hutaub + 4 \vol \rhobf \hbff \ltau \simeq - 4 \vor + 4 \vol - 2
\end{equation*}
on the support of $\tilde{b}_{0}^2$, which is positive by the threshold condition
\begin{equation} \label{threshold condition Rndff-}
\vor - \vol < - \frac{1}{2} \ \text{at} \ \mathcal{R}_{\mathrm{n}, \dff ,-}.
\end{equation} \par

On the other hand, the sum of terms containing logarithmic factors is
\begin{equation} \label{Rndff- cal5}
- 2 \big( (H_{p} \vol) ( \log \hbff ) + ( H_{p} \vob ) ( \log (\intt)^{1/2} ) + ( H_{p}( \vor + \vol ) ) ( \log \rhobf) \big) a^2_0.
\end{equation}
We can then write (\ref{Rndff- cal5}) as $2( \log x^{\tindex} ) b_{1,0}^2 - f_0$, where
\begin{equation}  \label{Rndff- cal5.1}
b_{1,0}^2 = - ( H_{p} \vob ) a_0^2, \, f_0 = 2 \big( (H_{p} \vol) ( \log \hbff ) + (H_{p}(\vor - \vol) ) ( \log \rhobf )   \big) a_0^2.
\end{equation}
Here we have again used that $H_{p} \vob = 2 H_{p} \vol$. Note also that $b_{1,0}^2$, $f_0$ can be written as a square of symbol resp. a sum of squares of symbols by construction of the variable orders. \par

Additionally, differentiating $\varphi_1$ produces the term
\begin{equation*}
b_{2,0}^2 = - 4 ( \rhobf \hbff \ltau - 2 \hutaub ) \hbff^{-2\vol + 1} ( \intn )^{-\vob}  \rhobf^{ -2\vor - 2 \vol } \varphi_2^2 \varphi_3^2 \varphi_4^2 \varphi_1  \psi'( \hbff ),
\end{equation*}
and this can be written as a square of symbol since $\rhobf \hbff \ltau - 2 \hutaub \simeq 2$ on the support of $b_{2,0}^2$. Meanwhile, we will require a priori control on the support of 
\begin{equation*}
e_0 = - 8 \hbff^{-2\vol} ( \intn )^{-\vob} \rhobf^{-2\vor - 2\vol} \varphi_1^2 \varphi_2^2 \varphi_3^2 \varphi_4
 \psi'( ( \hutaub + 1 )^2 + | \humub |_{h^{\tindex}}^2 ) | \humub |_{h^{\tindex}}^2.
 \end{equation*}
which comes from differentiating the $\varphi_4$ term. Lastly, differentiating the $\varphi_3$ term, we get
\begin{align} \label{Rndff- cal6}
\begin{split}
\tilde{r}_{0} & = \hbff^{-2\vol} ( \intn )^{-\vob - 1} \rhobf^{-2\vor - 2\vol - 1} \varphi_1^2 \varphi_2^2 \varphi_4^2 H_{p} \varphi_3^2 \\
& = 2 \hbff^{-2\vol + 1} ( \intn )^{-\vob } \rhobf^{-2\vor - 2\vol + 1} \varphi_1^2 \varphi_2^2 \varphi_4^2 \varphi_3 \sHto \varphi_3,
\end{split}
\end{align}
which, as we will see below, will be irrelevant in a symbolic sense.
\par

Let $\phi_{s}$, $h_{s}$ for $s \in (0,1]$ again be defined as in (\ref{regularization Rndmf-}) and (\ref{Rndmf- cal2.2.2}), though we now have
\begin{equation*}
H_p \phi_{s}^{-2K} = 4 K \hbff ( \intn ) \rhobf^2 h_{s} \ltau \phi_{s}^{-2K}.
\end{equation*}
Then for large $K > 0$, we will set 
\begin{equation*}
a_{s} = \phi_{s}^{-K} a_{0}, \, b_{j,s} = \phi_{s}^{-K} b_{j,0}, \, j =1,2, \, f_{s} = \phi_{s}^{-2K} f_0, \, e_{s} = \phi_{s}^{-2K} e_{0}, \, \tilde{r}_{s} = \phi_{s}^{-2K} \tilde{r}_0,
\end{equation*}
as well as
\begin{align*}
b_{s}^2 & = \hbff^{-2\vol} ( \intn )^{-\vob} \rhobf^{-2\vor - 2\vol} \\
& \quad \times  \big( ( 4 \vor - 4 \vol + 2 ) \hutaub + ( 4 \vol - 4K h_{s} ) \rhobf \hbff \ltau - \delta_0 \phi_{s}^{-2K} \varphi_1^2 \varphi_2^2 \varphi_3^2 \varphi_4^2  \big) \varphi_1^2 \varphi_2^2 \varphi_3^2 \varphi_4^2
\end{align*}
for some sufficiently small $\delta_0 > 0$. Now we have
\begin{equation*}
H_{p}a_s^2 + \delta_0 \hbff^{2 \vol} ( \intn )^{\vob + 2} \rhobf^{2\vor + 2\vol +2} a_{s}^4 = - b_{s}^2 + 2 ( \log x^{\tindex} ) b_{1,s}^2 - b_{2,s}^2 - f_{s} + e_{s} + \tilde{r}_{s}. 
\end{equation*} 
Notice again that $b_{s}^{2}$ can be written as a square of symbol for fixed $K$ by positivity. The support of $\psi$ will therefore depend on the amount of a priori regularization. However, this will only be a local issue, and in particular will be harmless.
\par

Let $A_{s}$, $B_{s}$, $B_{2,s}$, $F_{s}$, $E_{s}$, $\tilde{R}_{s}$ be some quantizations of $a_{s}$, $b_{s}$, $b_{2,s}$, $f_{s}$, $e_{s}$ and $\tilde{r}_{s}$ respectively such that each $F_{s}$ is non-negative with respect to $L^{2}$. Notice that
\begin{equation*}
\tilde{R}_{s} \in L^{\infty}( (0,1]_{s} ;  \Psf^{-\infty, -\infty, 2 \vol -1 , 2 \vor + 2 \vol - 1, 2 \vob, -\infty} ).
\end{equation*}
Thus $\tilde{R}_{s}$ belongs to a class of operators which vanishes in the symbol calculus below, which can be absorbed into the family $R_{s}$ below. \par

Furthermore, let $B_{1,s} \in L^{\infty}( (0,1]_s ; \Psf^{-\infty, -\infty, \vol - 1/2, \vor + \vol - 1/2, \vob, -\infty} )$ be chosen such that conditions (\ref{always these conditions for B1s}) are satisfied, and let $\Lambda $ be some quantization of an elliptic extension of $\hbff^{2\vol} ( \intn )^{\vob + 2} \rhobf^{ 2 \vor + 2 \vol + 2 }a_{s}^4$ from the support of $a_{0}$. Then by Corollary \ref{commutator corollary}, we have
\begin{equation*}
i [ P, A_{s} ] + \delta_0 ( \Lambda A_{s}^{\ast} A_{s} )^{\ast} ( \Lambda A_{s}^{\ast} A_{s} ) = - B^{\ast}_{s} B_{s} + 2 B_{1,s}^{\ast} ( \log x^{\tindex} )B_{1,s} - B_{2,s}^{\ast} B_{2,s} - F_s + E_{s} + R_{s}
\end{equation*}
for some $R_{s}  \in L^{\infty} (  (0,1]_{s} ;  \Psf^{-\infty, -\infty, 2 \vol, 2 \vor + 2 \vol - 1 + 2 \delta, 2 \vob, - \infty} )$.  \par

The rest of the argument is once again standard. Let $\tB, \tilde{E} \in \Psfo^{-\infty, -\infty , 0 ,0,0,-\infty}$ be chosen such that 
\begin{equation}  \label{Rndff- cal7}
\begin{gathered}
 \WFsL( \{  A_{s} \} ) \subset \Ellss( \tB ), \, \WFsL( \{ E_{s} \} ) \subset \Ellss( \tilde{E} ), \\
\WFs( I - \tB ) \cap \WFsL( \{ A_s \} ) = \WFs( I - \tilde{E} ) \cap \WFsL( \{ E_s \}  )  = \emptyset,
\end{gathered}
\end{equation}
Let $U$ be chosen such that $U \subset \Ellss(B_0)$. Then the standard procedure gives us 
\begin{align} \label{Rndff- cal7.1}
\begin{split}
\| B u \|_{ H_{\mathrm{d3sc,3co,res}}^{\ast, \ast, \vol , \vor + \vol, \vob, \ast} } & \leq C ( \| Pu \|_{H_{\mathrm{d3sc,3co,res}}^{\ast, \ast , \vol , \vor + \vol + 1, \vob + 2, \ast}} \\
& \quad + \| \tilde{E} u \|_{H_{\mathrm{d3sc,3co,res}}^{\ast, \ast, \vol , \vor + \vol, \vob, \ast}} + \| \tB u \|_{H_{\mathrm{d3sc,3co,res}}^{\ast,\ast,\vol, \vor + \vol - 1/2 + \delta, \ast}} + \| u \|_{ H_{\mathrm{d3sc,3co,res}}^{ N, M, \vol, K, \vob, S } } ).
\end{split}
\end{align} 
If $\WFs( \tB )$ is taken small enough, then this argument can be iterated finitely many times, from which we can drop the $\tB u$ term in the above. \par

It remains to show that the above estimate is compatible with the required dynamical condition as in the statement of the proposition. To this end, let $\gamma$ be an integral curve of $\rho_{\dmf}^{-1} \rho_{\dtsccf}^{-1} \rho_{\tcocf}^{-2} H_{p}$ such that $\lim_{t \rightarrow \infty} \gamma(t) \in \mathcal{R}_{\mathrm{n}, \dff, -}$. Then by Proposition \ref{proposition characterization of the integral curves which live on dtsccf}, case~(2), we can conclude that 
\begin{equation*}
\gamma \subset \psf_{\dff} \Xd \cap \dtsccf \cap \tcocf.
\end{equation*}
Moreover, the trajectory of $\gamma$ is determined completely by (\ref{uniform interactive variable flow Hamiltonian scaled}), which is just the rescaled two-body flow in the interaction variables, with the free variables held constant at $\beta_{\tindex}$. In particular, $\gamma$ must enter $\WFs(E_0)$, and subsequently $\Ellss(\tilde{E})$, in finite backward time. The case of a general $E$ can then be achieved by means of elliptic regularity and principal type propagation of regularity estimates.  
\end{proof}

\begin{proposition}[Above threshold estimate at $\mathcal{R}_{\mathrm{n},\dff, +} \cap \tcocf$] \label{Propositive estimate at Rndff+}
Let $\vor = \vor_{+}$, $\vol = \vol_{+}$ and $\vob = \vob_{+}$ be variable orders satisfying the conditions specified in \S \ref{variable order construction section}. Suppose that $\beta_{\tindex} \in \Sigma_{\mathrm{t}}$. Moreover, let $G, E \in \Psf^{-\infty, -\infty , 0 , 0 ,0 ,-\infty}$ be chosen such that
\begin{equation*}
\mathcal{R}_{\mathrm{n},\dff,+}(\beta_{\tindex}) \subset \Ellss(G), 
\end{equation*}
with $\WFs(G)$ contained in a given small neighborhood of $\mathcal{R}_{\mathrm{n},\dff,+}(\beta_{\tindex})$, and
\begin{equation*}
\begin{gathered}
\text{all backward integral curves of $\rho_{\dmf}^{-1} \rho_{\dtsccf}^{-1} \rho_{\tcocf}^{-2} H_{p}$ starting from $\mathcal{R}_{\mathrm{n},\dff, +}(\beta_{\tindex})$} \\
\text{are either stationary or enter $\Ellss(E)$ in finite time.}
\end{gathered}
\end{equation*}
Then we can find an arbitrarily small neighborhood $U$ of $\mathcal{R}_{\mathrm{n}, \dff, +}(\beta_{\tindex})$ for which the following holds: If we choose $B \in \Psf^{-\infty, -\infty , 0 , 0 ,0 ,-\infty}$ with $\WFs(B) \subset U$, then for every $u \in \mathcal{S}'$ and any choice of $N,M,L,K,S \in \mathbb{R}$, there exists $C > 0$ such that
\begin{align}  \label{semi-fredhom estimate at Rndff+}
\begin{split}
 \| B u \|_{ H_{\mathrm{d3sc,3co,res}}^{\ast, \ast, \vol, \vor + \vol, \vob, \ast} } & \leq  C ( \| P u \|_{ H_{\mathrm{d3sc,3co,res}}^{\ast, \ast, \vol, \vor + \vol  + 1, \vob + 2, \ast } } \\
& \quad + \| E u \|_{ H_{\mathrm{d3sc,3co,res}}^{\ast, \ast, \vol, \vor + \vol, \vob, \ast} } + \| G u \|_{ H_{\mathrm{d3sc,3co,res}}^{\ast, \ast, \vol, \vor + \vol - 1/2 + \delta, \vob, \ast } } + \| u \|_{ H_{\mathrm{d3sc,3co,res}}^{N, M, \vol, K, \vob, S } } ).
\end{split}
\end{align}
in the strong sense that if the right hand of (\ref{semi-fredhom estimate at Rndff+}) is finite, then so is the left hand side, and the estimate holds.
\end{proposition}
\begin{proof}
Consider again coordinates (\ref{Rndff- cal1}), with conventions (\ref{Rndff- cal2}) being used, and assume that $\beta = ( y_{\tindex,0}, \tau_{\tindex,0}, \mu_{\tindex,0} )$. Let $\psi \in \mathcal{C}^{\infty}_{c}( [0,\infty) )$ be such that (\ref{tangential propagation psi condition}) are satisfied. Then we set
\begin{equation*}
\begin{gathered}
\varphi_{1} = \psi( \hbff ), \,  \varphi_{2} = \psi( \intn ),  \\
\varphi_3 = \psi( | y_{\tindex} - y_{\tindex,0} |^2 + | \ltau - \tau_{\tindex,0} |^2 + | \lmu - \mu_{\tindex,0} |_{h_{\tindex}}^2  ), \, \varphi_4 = \psi(  (\hutaub - 1 )^2 + | \humub |_{h^{\tindex}}^2 )
\end{gathered}
\end{equation*}
as well as
\begin{equation*}
a_{0} = \hbff^{-\vol} ( \intn )^{-\vob/2 - 1/2} \rhobf^{-\vor - \vol - 1/2} \varphi_1 \varphi_2 \varphi_3 \varphi_4,
\end{equation*} 
where we implicitly assume that $a_0$ is supported in a neighborhood of $\dtsccf$.
\par

We will compute
\begin{equation} \label{Rndff+ cal1}
H_{p} a_0^2 = - \tilde{b}_{0}^2 + 2 ( \log x^{\tindex} ) b_{1,0}^2 - b_{2,0}^2 - f_0 + e_{0} + \tilde{r}_0,
\end{equation}
where $\tilde{b}_0$, $\tilde{r}_0$ are symbols defined in exactly the same ways as in (\ref{Rndff- cal4}) and (\ref{Rndff- cal6}) respectively, except in showing the positivity of $\tilde{b}_0^2$, we now use
\begin{equation*}
( 4 \vor - 4 \vol + 2 ) \hutaub + 4 \vol \rhobf \hbff \ltau \simeq  4 \vor - 4 \vol + 2
\end{equation*}
on the support of $\tilde{b}_{0}^2$, which is again positive by the threshold condition
\begin{equation} \label{threshold condition Rndff+1}
\vor - \vol > - \frac{1}{2} \ \text{at} \ \mathcal{R}_{\mathrm{n}, \dff ,+}.
\end{equation}
Moreover, we still let $b_{1,0}^2$, $f_0$ be defined as in (\ref{Rndff- cal5.1}). However, one has to now switch the roles for the rest of the terms in (\ref{Rndff+ cal1}). Namely, since
\begin{equation*}
H_{p}  ( ( \hutaub - 1 )^2 + | \humub |_{h^{\tindex}}^2 ) = 4 ( \intn ) \rhobf | \humub |_{h^{\tindex}}^2,
\end{equation*}
we shall instead set
\begin{equation*}
b_{2,0}^2 =  - 8 \hbff^{-2\vol} ( \intn )^{-\vob} \rhobf^{-2\vor - 2\vol} \varphi_1^2 \varphi_2^2 \varphi_3^2 \varphi_4
 \psi'( ( \hutaub - 1 )^2 + | \humub |_{h^{\tindex}}^2 ) | \humub |_{h^{\tindex}}^2.
 \end{equation*}
On the other hand, the term which require a priori control for becomes
\begin{equation*}
e_0 = 4 ( \rhobf \hbff \ltau - 2 \hutaub ) \hbff^{-2\vol + 1} ( \intn )^{-\vob}  \rhobf^{ -2\vor - 2 \vol } \varphi_2^2 \varphi_3^2 \varphi_4^2 \varphi_1  \psi'( \hbff ).
\end{equation*} \par

From here, we can again go through the standard procedure, involving regularization, which is followed by quantizations, and where Corollary \ref{commutator corollary} must also be applied. This will produce operators $B_{s}$, $A_{s}$ and $E_{s}$ for $s \in (0,1]$, which are bounded families of operators. \par

We can then choose $U$ such that $U \subset \Ellss(B_0).$ Moreover, let $\ttB, \tilde{E} \in \Psfo^{-\infty, -\infty, 0, 0, 0 -\infty}$ be chosen such that conditions (\ref{Rndff- cal7}) are satisfied with $\ttB$ in places of $\tB$. Then we can show that an estimate of the form (\ref{Rndff- cal7.1}) holds, where we use elliptic regularity to obtain the $Gu$ term on the right hand side. However, notice that we cannot in general iterate this estimate freely, since the above threshold requirement (\ref{threshold condition Rndff+1}) might no longer be satisfied. \par

It remains to show that the above estimate is compatible with the required dynamical condition as in the statement of the proposition. To this end, let $\gamma$ be an integral curve of $\rho_{\dmf}^{-1} \rho_{\dtsccf}^{-1} \rho_{\tcocf}^{-2} H_{p}$ such that $\lim_{t \rightarrow \infty} \gamma(t) \in \mathcal{R}_{\mathrm{n},\dff, +}(\beta_{\tindex})$. Then by using Proposition \ref{proposition characterization of the integral curves which live on dtsccf}, case~(3), we know that the only possibly is for $\gamma$ to travel from $\mathcal{R}_{\mathrm{n}, \dmf, +}$ to $\mathcal{R}_{\mathrm{n},\dff, +}$, with $\hutaub = 1$, $\humub = 1$ for large time. In particular, $\gamma$ must enter $\WFs(E_0)$ and subsequently $\Ellss(\tilde{E})$ in finite backward time. The case of a general $E$ now follows from elliptic regularity and principal type propagation of regularity estimates.
\end{proof}

\subsection{Radial point estimates at $\mathcal{R}_{0,\pm}$} 
We next consider radial point estimates at $\mathcal{R}_{0,\pm}$.
\begin{proposition}[Below threshold estimate at $\mathcal{R}_{0,+}$] \label{radial point estimate at R0p}
Let $\vor = \vor_{+}$, $\vob = \vob_{+}$ be variable orders satisfying the conditions specified in \S \ref{variable order construction section}. Let $E \in \Psfo^{-\infty, 0 , -\infty, -\infty, 0, -\infty}$ be chosen such that
\begin{equation*}
\begin{gathered}
\text{all backward integral curves of $\rho_{\dmf}^{-1} \rho_{\dtsccf}^{-1} \rho_{\tcocf}^{-2} H_{p}$ starting from $\mathcal{R}_{0,+}$} \\
\text{are either stationary or enter $\Ellss(E)$ in finite time.}
\end{gathered}
\end{equation*}
Then we can find some arbitrarily small neighborhood $U$ of $\mathcal{R}_{0,+}$ for which the following holds: If we choose $B \in \Psf^{-\infty, -\infty , 0 , 0 ,0 ,-\infty}$ with $\WFs(B) \subset U$, then for every $u \in \mathcal{S}'$ and any choice of $N,M,L,K,S \in \mathbb{R}$, there exists $C > 0$ such that
\begin{equation} \label{semi-fredhom estimate at R0p}
\| B u \|_{ H_{\mathrm{d3sc,3co,res}}^{\ast, \vor, \ast, \ast, \vob, \ast} } \leq C ( \| Pu \|_{H_{\mathrm{d3sc,3co,res}}^{\ast, \vor + 1, \ast, \ast, \vob + 2, \ast}} + \| E u \|_{H_{\mathrm{d3sc,3co,res}}^{\ast, \vor, \ast, \ast, \vob, \ast}} + \| u \|_{ H_{\mathrm{d3sc,3co,res}}^{ N,M,L,K, \vob, S } } )
\end{equation}
in the strong sense that if the right hand of (\ref{semi-fredhom estimate at R0p}) is finite, then so is the left hand side, and the estimate holds.
\end{proposition}
\begin{proof}
We will work in coordinates $( \hat{x}^{\tindex}, \bff, y_{\tindex}, y^{\tindex}, \ltau, \lmu, \utaures, \umures )$, where we recall that $\hat{x}^{\tindex}$ defines $\psf_{\dmf}\Xd$ and $\bff$ defines $\tcocf$. Let $\psi \in \mathcal{C}^{\infty}_{c}( [0 , \infty) )$ be such that conditions (\ref{tangential propagation psi condition}) are satisfied. We now set
\begin{equation*}
\varphi_{1} = \psi( \bff ), \ \varphi_{2} = \psi ( |\utaures|^2 + | \umures |_{h^{\tindex}}^2 ), \ \varphi_3 = \psi( ( \ltau - \lambda )^2 + | \lmu |_{h_{\tindex}}^2 ).
\end{equation*}
and consider the symbol defined by
\begin{equation*}
a_{0} = ( \hat{x}^{\tindex} )^{- \vor - 1/2} \bff^{- \vob - 1} \varphi_1 \varphi_2 \varphi_3.
\end{equation*} 
Much as in the proofs of Propositions \ref{below threshold Rndff- proposition} and \ref{Propositive estimate at Rndff+},  we will assume implicitly that $a_0$ is supported in a small neighborhood of $\psf_{\dmf} \Xd$, which amounts to the omission of multiplying by $a_0$ another cut-off $\psi(\hat{x}^{\tindex})$. However, this omission will be harmless in terms of the symbol calculus, since the only `symbolic face' which $\mathcal{R}_{0,+}$ intersects is $\psf_{\dmf}\Xd$. In particular, we can view $a_0$ as being supported in a small neighborhood of $\mathcal{R}_{0,+}$. \par
As usual, we proceed to make the computation
\begin{equation*}
H_{p}a_0^2 = - \tilde{b}_{0}^2 + 2 (\log \bff ) b_{1,0}^2 - b_{2,0}^2 - b_{3,0}^2 - b_{4,0}^2 + e_{0},
\end{equation*}
where
\begin{equation} \label{R0p plus radial point cal 1}
\tilde{b}_{0}^2  = ( \hat{x}^{\tindex} )^{- 2 \vor } \bff^{-2 \vob} \ ( ( 8 \vor - 4 \vob ) \utaures - ( 4\vor - 4 \vob - 2 ) \ltau  )  \varphi_{1}^2 \varphi_2^2 \varphi_3^2
\end{equation}
as well as
\begin{equation*}
\begin{gathered}
b_{3,0}^2 = -  8 \lambda (\hat{x}^{\tindex})^{-2\vor} \bff^{-2\vob} |\lmu|_{h_{\tindex}}^2 \psi'( (\ltau - \lambda)^2 + |\lmu|_{h_{\tindex}}^2 ) \varphi_3 \varphi_1^2 \varphi_2^2, \\
b_{4,0}^2 = - 4 ( \hat{x}^{\tindex} )^{-2 \vor} \bff^{-2\vob + 1 } ( \ltau - \utaures ) \psi'(\bff) \varphi_1 \varphi_{2}^2 \varphi_3^2, \\
e_{0} =  - 8 (\hat{x}^{\tindex})^{-2 \vor} \bff^{-2 \vob} ( \ltau - \utaures ) ( \intnres ) \psi'( \intnres ) \varphi_2 \varphi_1^2 \varphi_3^2.
\end{gathered}
\end{equation*}
Moreover, the sum of terms containing logarithmic factors can be written as
\begin{equation} \label{R0p plus radial point cal 1.5}
-2 \big( ( H_{p} \vor ) ( \log \hat{x}^{\tindex} ) + ( H_{p} \vob ) ( \log \bff ) \big) a_0^2 = 2 ( \log \bff ) b_{1,0}^2 - b_{2,0}^2
\end{equation}
where $b_{1,0}^2 = - ( H_{p} \vob ) a_0^2$ and $b_{2,0}^2 = - 2 ( H_{p} \vor )( \log \hat{x}^{\tindex} ) a_0^2$ can be written as squares of symbols by construction of the variable orders. Likewise, $\tilde{b}_{0}^2$ can be written as a square of symbol since we can arrange the support of $\psi$ to be small enough such that $( 8 \vor - 4 \vob ) \utaures \simeq 0$, $\ltau \simeq \lambda$ on the support of $\tilde{b}_0^2$. Then we have
\begin{equation*}
( 8 \vor - 4 \vob ) \utaures - ( 4 \vor - 4 \vob - 2 ) \ltau \simeq - ( 4 \vor - 4 \vob - 2 ) \lambda,
\end{equation*}
which is positive by the threshold condition
\begin{equation*} \label{threshold condition Rndff+}
\vor - \vob <  \frac{1}{2} \ \text{at} \ \mathcal{R}_{0,+}.
\end{equation*}
Lastly, $b_{3,0}^2$ and $b_{4,0}^2$ can be written as squares of symbols by conditions (\ref{tangential propagation psi condition}), and because $\ltau - \utaures \simeq  \lambda$ on their respective supports. \par
Next we consider regularization. Let now
\begin{equation} \label{R0p plus radial point cal 2}
\phi_{s} = 1 + \frac{s}{x^{\tindex}}, \, h_{s} = \frac{s/x^{\tindex}}{1 + s/x^{\tindex}},  \quad s \in [0,1],
\end{equation}
which regularizes at both $\psf_{\dmf} \Xd$ and $\tcocf$. For large $K > 0$, we then have
\begin{equation}  \label{R0p plus radial point cal 3}
H_{p} \phi_{s}^{-2K}= 4 K \hat{x}^{\tindex} \bff^2 h_s \utaures \phi_{s}^{-2K}.
\end{equation}
Thus, let $a_{s} = \phi_{s}^{-2K} a_{0}$, $e_{s} = \phi_{s}^{-2K}e_0$,  $b_{j,s} = \phi_{s}^{-K} b_{0,s}$, $j = 1,2,3,4$,
and also
\begin{align} \label{R0p plus radial point cal 4}
\begin{split}
b_{s}^2 & = ( \hat{x}^{\tindex} )^{- 2 \vor } \bff^{-2 \vob} \phi_{s}^{-2K}  \varphi_{1}^2 \varphi_2^2 \varphi_3^2 \\
& \quad \times ( ( 8 \vor - 4 \vob - 4K h_s ) \utaures - ( 4\vor - 4 \vob - 2 ) \ltau  - \delta_0  \phi_{s}^{-2K} \varphi_1^2 \varphi_2^2 \varphi_3^2 )
\end{split}
\end{align}
for some sufficiently small $\delta_0 > 0$, then we have
\begin{equation*}
H_{p} a_s^2 + \delta_0 ( \hat{x}^{\tindex} )^{2 \vor + 2} \bff^{2 \vob + 4} a_{s}^4 = - b_{s}^2 + 2 (\log \bff ) b_{1,s}^2 - b_{2,s}^2 - b_{3,s}^2 - b_{4,s}^2 + e_{s}.
\end{equation*} \par

Notice that if $\delta_0$ is small enough, then (\ref{R0p plus radial point cal 4}) can be made non-negative provided $(8 \vor - 4 \vob - 4 K h_{s}) \utaures$ is small enough. This is again ensured if the support of $\psi$ is small. Thus in this instance, the support of $a_0$ will be dependent on $K$, though this will not matter since it suffices for our consideration here to be local. \par

Let $A_{s}$, $B_{s}$, $E_{s}$ be some quantizations of $a_{s}$, $b_{s}$ and $e_{s}$ respectively, and let also $\tB, \tilde{E} \in \Psfo^{-\infty, 0, -\infty, -\infty, 0, -\infty }$ be chosen such that
\begin{equation} \label{R0p plus radial point cal 6}
\begin{gathered}
 \WFsL( \{  A_{s} \}  ) \subset \Ellss( \tB ), \, \WFsL( \{ E_s \}  ) \subset \Ellss ( \tilde{E} ), \\
\WFs( I - \tB ) \cap \WFsL( \{ A_s \}  ) = \WFs( I - \tilde{E} ) \cap \WFsL( \{ E_s \} ) = \emptyset.
\end{gathered}
\end{equation}
Let $U$ be chosen such that $U \subset \Ellss(B_0)$. Then the standard procedure, where Corollary \ref{commutator corollary} is involved, will give us
\begin{align*}  
\begin{split}
\| B u \|_{ H_{\mathrm{d3sc,3co,res}}^{\ast, \vor, \ast, \ast, \vob, \ast} } & \leq C ( \|  Pu \|_{H_{\mathrm{d3sc,3co,res}}^{\ast, \vor + 1, \ast, \ast, \vob + 2, \ast}}  \\
& \quad + \| E u \|_{H_{\mathrm{d3sc,3co,res}}^{\ast, \vor, \ast, \ast, \vob, \ast}} + \| \tB u \|_{H_{\mathrm{d3sc,3co,res}}^{\ast, \vor -1/2 + \delta, \ast, \ast, \vob, \ast}} + \| u \|_{ H_{\mathrm{d3sc,3co,res}}^{ \ast, N, \ast, \ast, \vob, \ast } } ).
\end{split}
\end{align*}
If $\WFs( \tB )$ is taken small enough, then this argument can be iterated finitely many times, from which we can drop the $\tB u$ term in the above. \par

It remains to show that the above estimate is compatible with the required dynamical condition as in the statement of the proposition. To this end, let $\gamma$ be an integral curve of $\rho_{\dmf}^{-1} \rho_{\dtsccf}^{-1} \rho_{\tcocf}^{-2} H_{p}$ such that $\lim_{t \rightarrow \infty} \gamma(t) \in \mathcal{R}_{0,+}$. Then by Proposition \ref{proposition characterization of the integral curves which converge to R0}, case~(4), we know that $\gamma \subset \tcocf$, $\ltau = \lambda$, $\lmu = 0$ held constant, and $\intnres \rightarrow 0$ as $ t \rightarrow \infty$ along $\gamma$. Thus, it is obvious that $\gamma$ must enter $\WFs( E_0 )$ and subsequently $\Ellss( E_0 )$ in finite backward time. The case of a general $E$ now follows from elliptic regularity and principal type propagation of regularity estimates. 
\end{proof}

\begin{proposition}[Above threshold estimate at $\mathcal{R}_{0,-}$] \label{radial point estimate at R0m}
Let $\vor = \vor_{+}$, $\vob = \vob_{+}$ be variable orders satisfying the conditions specified in \S \ref{variable order construction section}. Let $G, E \in \Psfo^{-\infty, 0 , -\infty, -\infty, 0, -\infty}$ be chosen such that
\begin{equation*}
\mathcal{R}_{0,+} \subset \Ellss(G),
\end{equation*}
with $\WFs(G)$ contained in a given small neighborhood of $\mathcal{R}_{0,+}$, and
\begin{equation*}
\begin{gathered}
\text{all backward integral curves of $\rho_{\dmf}^{-1} \rho_{\dtsccf}^{-1} \rho_{\tcocf}^{-2} H_{p}$ starting from $\mathcal{R}_{0,-}$} \\
\text{are either stationary or enter $\Ellss(E)$ in finite time.}
\end{gathered}
\end{equation*}
Then we can find some arbitrarily small neighborhood $U$ of $\mathcal{R}_{0,-}$ for which the following holds: If we choose $B \in \Psf^{-\infty, -\infty , 0 , 0 ,0 ,-\infty}$ with $\WFs(B) \subset U$, then for every $u \in \mathcal{S}'$ and any choice of $N,M,L,K,S \in \mathbb{R}$, there exists $C > 0$ such that
\begin{align} \label{semi-fredholm estimate at R0m weak}
\begin{split}
\| B u \|_{ H_{\mathrm{d3sc,3co,res}}^{\ast, \vor, \ast, \ast , \vob, \ast} } & \leq  C ( \|  P u \|_{ H_{\mathrm{d3sc,3co,res}}^{\ast, \vor + 1, \ast, \ast, \vob + 2, \ast } } \\
& \quad + \| E u \|_{ H_{\mathrm{d3sc,3co,res}}^{\ast, \vor , \ast, \ast, \vob, \ast} } + \| \tB u \|_{ H_{\mathrm{d3sc,3co,res}}^{\ast, \vor - 1/2 + \delta, \ast, \ast, \vob, \ast } } + \| u \|_{ H_{\mathrm{d3sc,3co,res}}^{N, M, L, K, \vob, S } } ).
\end{split}
\end{align}
in the strong sense that if the right hand of (\ref{semi-fredholm estimate at R0m weak}) is finite, then so is the left hand side, and the estimate holds. \par
However, in this case, we could in fact improve upon (\ref{semi-fredholm estimate at R0m weak}), and get an estimate
\begin{equation} \label{semi-fredhom estimate at R0m}
\| B u \|_{ H_{\mathrm{d3sc,3co,res}}^{\ast, \vor, \ast, \ast, \vob, \ast} } \leq C ( \| Pu \|_{H_{\mathrm{d3sc,3co,res}}^{\ast, \vor + 1, \ast, \ast, \vob + 2, \ast}} + \| E u \|_{H_{\mathrm{d3sc,3co,res}}^{\ast, \vor, \ast, \ast, \vob, \ast}} + \| u \|_{ H_{\mathrm{d3sc,3co,res}}^{ N,M,L,K, \vob, S } } )
\end{equation}
that is again in the strong sense. This improvement is not done via an iteration argument as in the below threshold case. Instead, an interpolation argument is required.
\end{proposition}
\begin{proof}
We will consider essentially the same commutant as in the proof of Proposition \ref{radial point estimate at R0p}, except that the cut-off functions are made to support $a_0$ in a neighborhood of $\mathcal{R}_{0,-}$. Thus, let $\psi \in \mathcal{C}^{\infty}_{c}( [0 , \infty) )$ be such that conditions (\ref{tangential propagation psi condition}) are satisfied.  We now set
\begin{equation*}
\varphi_{1} = \psi( \bff ), \ \varphi_{2} = \psi ( |\utaures|^2 + | \umures |_{h^{\tindex}}^2 ), \ \varphi_3 = \psi( ( \ltau - \lambda )^2 + | \lmu |_{h_{\tindex}}^2 ),
\end{equation*}
i.e. only the definition of $\varphi_{3}$ has been changed. We consider the symbol defined by
\begin{equation*}
a_{0} = ( \hat{x}^{\tindex} )^{-2 \vor - 1} \bff^{-2 \vob - 2} \varphi_1 \varphi_2 \varphi_3,
\end{equation*}
with the implicit understanding that it is supported near $\psf_{\dmf} \Xd$. We then compute
\begin{equation*}
H_{p}a_0^2 = - \tilde{b}_{0}^2 + 2( \log \bff ) b_{1,0}^2 - b_{2,0}^2 - b_{3,0}^2 + e_{1,0} + e_{2,0},
\end{equation*}
where $\tilde{b}_{0}^2$, $b_{1,0}^2$ and $b_{2,0}^2$ are defined exactly as in (\ref{R0p plus radial point cal 1}) and (\ref{R0p plus radial point cal 1.5}), where we now arrange the support of $\psi$ to be small enough, such that
\begin{equation*}
( 8 \vor - 4 \vob ) \utaures - ( 4 \vor - 4 \vob - 2 ) \ltau \simeq ( 4 \vor - 4\vob - 2 ) \lambda
\end{equation*}
on the support of $\tilde{b}_0^2$. In particular, the positivity of $\tilde{b}_0^2$ is guaranteed by the threshold 
\begin{equation*}
\vor - \vob >  \frac{1}{2} \ \text{at} \ \mathcal{R}_{0,+},
\end{equation*}
thus $\tilde{b}_0^2$ can be written as a square of symbol. Additionally, we also set
\begin{equation*}
\begin{gathered}
b_{3,0}^2 = 8 (\hat{x}^{\tindex})^{-2 \vor} \bff^{-2 \vob} ( \intnres ) ( \ltau - \utaures ) \psi'( \intnres ) \varphi_2 \varphi_1^2 \varphi_3^2, \\
e_{1,0} = 4 ( \hat{x}^{\tindex} )^{-2 \vor} \bff^{-2\vob + 1 } ( \ltau - \utaures ) \psi'(\bff) \varphi_1 \varphi_{2}^2 \varphi_3^2,  \\
e_{2,0} = - 8 \lambda (\hat{x}^{\tindex})^{-2\vor} \bff^{-2\vob} |\lmu|_{h_{\tindex}}^2 \psi'( (\ltau + \lambda)^2 + |\lmu|_{h_{\tindex}}^2 ) \varphi_3 \varphi_1^2 \varphi_2^2,
\end{gathered}
\end{equation*}
where we used again $ \ltau - \utaures \simeq - \lambda$ on the support of $b_{3,0}^2$ to see that it can be written as a square of symbol as well. \par

Consider again regularization. Let $\phi_{s}$, $h_{s}$, $s \in (0,1]$ be as in (\ref{R0p plus radial point cal 2}), then for large $K > 0$, we shall set $a_{s} = \phi_{s}^{-K} a_{0}$,  $b_{j,s} = \phi_{s}^{-K} b_{j,0}$,  $j = 1,2,3$,  $e_{k,s} = \phi_{s}^{-2K}e_{j,0}$,  $k = 1,2$, and let $b_{s}^2$ be defined as in (\ref{R0p plus radial point cal 4}). In particular, if the support of $\psi$ is taken to be sufficiently small, then $b_s^2$ is non-negative for the same reason that (\ref{R0p plus radial point cal 4}) is non-negative. Moreover, we have
\begin{equation} \label{R0m cal 0.5}
H_{p} a_s^2 + \delta_0 ( \hat{x}^{\tindex} )^{2 \vor + 2} \bff^{2 \vob + 4} a_{s}^4 = - b_{s}^2 + 2 (\log \bff ) b_{1,s}^2 - b_{2,s}^2 - b_{3,s}^2 + e_{1,s} + e_{2,s}.
\end{equation}  \par

At this stage, we can follows through the standard procedure once again. To be precise, let $A_{s}$, $B_{s}$, $E_{k,s}$, $k =1,2$ be some quantizations of $a_{s}$, $b_{s}$, $e_{k,s}$, $k = 1,2$ respectively, followed by choosing $\ttB, \tilde{E}  \in \Psfo^{-\infty,0,-\infty,-\infty,0,-\infty}$ such that
\begin{equation} \label{R0m cal 1}
\begin{gathered}
 \WFs( \{  A_{s} \} ) \subset \Ellss( \ttB ), \WFsL( \{ E_{k,s} \}  ) \subset \Ellss( \tilde{E} ), \ k =1,2, \\
\WFs( I - \ttB ) \cap \WFsL( \{ A_s \} ) = \WFs( I - \tilde{E} ) \cap \WFs( \{ E \} )  = \emptyset.
\end{gathered}
\end{equation}
Let $U$ be chosen such that $U \subset \Ellss(B_0)$. Then the standard procedure gives us 
\begin{align} \label{R0m cal 2}
\begin{split}
\| B u \|_{ H_{\mathrm{d3sc,3co,res}}^{\ast, \vor, \ast, \ast , \vob, \ast} } & \leq  C ( \| P u \|_{ H_{\mathrm{d3sc,3co,res}}^{\ast, \vor + 1, \ast, \ast, \vob + 2, \ast } } \\
& \quad + \| \ttB u \|_{ H_{\mathrm{d3sc,3co,res}}^{\ast, \vor , \ast, \ast, \vob, \ast} } + \| \tilde{B} u \|_{ H_{\mathrm{d3sc,3co,res}}^{\ast, \ast - 1/2 + \delta, \ast, \ast, \vob, \ast } } + \| u \|_{ H_{\mathrm{d3sc,3co,res}}^{N, M, L, K, \vob, S } } ),
\end{split}
\end{align}
which is estimate (\ref{semi-fredholm estimate at R0m weak}) in a local sense. One can then replace $\ttB u$ by $\tB u$ in the above via elliptic regularity estimate provided $\WFs(\ttB)$ is small (by choice). \par

It remains to show that the above estimate is compatible with the required dynamical condition as in the statement of the proposition. To this end, let $\gamma$ be an integral curve of $\rho_{\dmf}^{-1} \rho_{\dtsccf}^{-1} \rho_{\tcocf}^{-2} H_{p}$ such that $\lim_{t \rightarrow \infty} \gamma(t) \in \mathcal{R}_{0,-}$. Then the only possible trajectories which $\gamma$ could follow come from Proposition \ref{proposition characterization of the integral curves which converge to R0}, cases~(1) and (3). \par

In case (1), we must have $\gamma \subset \psf_{\dmf} \Xd$, where $\utaures = 0$, $\umures = 0$ and $\intt = \lambda^2$ are held constant over $\gamma$. Moreover, by Proposition \ref{precise description of integral flow near ff}, case~(1), we know that $\lim_{t \rightarrow \infty} \ltau = -\lambda$ along $\gamma$. Thus, it is obvious that $\gamma$ enters $\WFs(E_{1,0})$ in finite backward time. In fact, the same conditions almost characterizes case (2) as well, except that we now have $\gamma \subset \tcocf$. Therefore, in this case it is also obvious that $\gamma$ must enter $\WFs(E_{2,0})$ in finite backward time. \par
Since $\Ellss(\tilde{E})$ contains both $\WFs(E_{1,0})$ and $\WFs(E_{2,0})$, it follows that in either cases, $\gamma$ must enter $\Ellss(\tilde{E})$ in finite backward time. The case of a general $E$ now follows from elliptic regularity and principal type propagation of regularity estimates.
  \par

Now, in the below threshold case, we were able to improve from (\ref{semi-fredholm estimate at R0m weak}) to (\ref{semi-fredhom estimate at R0m}) via iterations. However, as in the proof of Proposition \ref{Propositive estimate at Rndff+}, in general this cannot be done in the above threshold case, since even starting from (\ref{semi-fredholm estimate at R0m weak}), one would require the above threshold condition 
\begin{equation*}
\vor - \vob - \frac{1}{2} + \delta < \frac{1}{2} \ \text{at} \ \mathcal{R}_{0,-},
\end{equation*}
to be satisfied, which, after finitely many iterations, will necessarily cease to hold. However, we note also that, unlike in the typical radial point estimate, the amount the regularization that we are allowed to have here is unlimited. Thus, we can nevertheless prove (\ref{semi-fredhom estimate at R0m}) by using the following twist of the standard procedure. Unfortunately, this forces us to invoke again full details of the standard procedure.  \par

Let $A_{s}$, $B_{s}$, $B_{j,s}$, $j=2,3$, $E_{k,s}$, $k=1,2$ be some quantizations of $a_{s}$, $b_{s}$, $b_{j,s}$, $j=2,3$, $e_{k,s}$, $k=1,2$ respectively, and let $B_{1,s} \in L^{\infty}( ( 0, 1 ]_{s} ; \Psf^{-\infty, \vor, -\infty, -\infty, \vob, -\infty} )$ be chosen such that conditions (\ref{always these conditions for B1s}) are satisfied. Let also $\Lambda$ be some quantization of an elliptic extension of $(\hat{x}^{\tindex})^{\vor + 1} \bff^{\vob + 2}$ from the support of $a_0$. Then by using (\ref{R0m cal 0.5}), we have
\begin{align} \label{R0m cal 0}
\begin{split}
i [ P, A_s ] + \delta_0 ( \Lambda A_{s}^{\ast} A_{s} )^{\ast} ( \Lambda A_{s}^{\ast} A_{s} ) = & - B_{s}^{\ast} B_{s} +  2 B_{1,s}^{\ast} (\log \bff) B_{1,s} \\
& - B_{2,s}^{\ast} B_{2,s} - B_{3,s}^{\ast} B_{3,s} + E_{1,s} + E_{2,s} + R_{s}
\end{split}
\end{align}
for some $ R_{s}  \in L^{\infty}( (0,1]_{s} ; \Psf^{-\infty, \vor - 1/2 + \delta, -\infty, -\infty, \vob , -\infty } )$ by Corollary \ref{commutator corollary}. By pairing (\ref{R0m cal 0}) with $u \in \mathcal{S}'$ and dropping some of the non-negative terms, it follows that
\begin{align} \label{R0m plus radial point cal 4}
\begin{split}
& \| B_{s} u \|_{L^{2}}^2 + \delta_0 \| \Lambda A_{s}^{\ast} A_s u \|_{L^2}^2 \\
& \qquad \leq - 2 \mathrm{Im} \langle Pu, A_s^{\ast} A_{s} u \rangle_{L^2}   +  \langle E_{1,s}u ,u \rangle_{L^2} + \langle  E_{2,s}u , u \rangle_{L^2} + \langle R_{s}u ,u \rangle_{L^2} .
\end{split}
\end{align} \par

Following the standard procedure, we again have
\begin{equation*}
 2 | \mathrm{Im}  \langle Pu, A_s^{\ast} A_s \rangle_{L^{2}} | \leq C \delta_0^{-1} \| P u \|_{ \so^{\ast, \vor + 1, \ast , \ast, \vob + 2, \ast } }^2  + \delta_0 \| \Lambda A_s^{\ast} A_s u \|_{L^{2}}^2 + C \| u \|_{ \so^{N, M, L , K, \vob, S } }^2.
\end{equation*}
On the other hand, Let  $\ttB, \tilde{E} \in \Psfo^{-\infty, 0,-\infty,-\infty,0,-\infty}$ be chosen such that conditions (\ref{R0m cal 1}) are still satisfied, then we also have
\begin{equation*}
| \langle E_{j,s} u , u \rangle_{L^2} | \leq C ( \| \tilde{E} u \|_{H_{\mathrm{d3sc,3co,res}}^{\ast, \vor, \ast, \ast, \vob, \ast }}^2 + \| u \|_{H_{\mathrm{d3sc,3co,res}}^{N, M, L, K, \vob, S}}^2 ), \quad j =1,2,
\end{equation*}
so by substituting the above estimates back into (\ref{R0m plus radial point cal 4}), we now have
\begin{equation*}
 \| B_{s}u \|_{L^2}^{2}  \leq C ( \| P u \|_{H_{\mathrm{d3sc,3co,res}}^{\ast, \vor + 1, \ast, \ast, \vob + 2, \ast}} + \| \tilde{E} u \|_{H_{\mathrm{d3sc,3co,res}}^{\ast, \vor, \ast, \ast, \vob, \ast}}^2 + | \langle R_{s} u , u \rangle_{L^2} | + \| u \|_{H_{\mathrm{d3sc,3co,res}}^{N,M,L,K, \vob, S}} ).
\end{equation*} \par

It remains to estimate the $ \langle R_{s}u , u \rangle_{L^2} $ term, which is where the twist is required. We will essentially still do this through the usual method, except we now compute
\begin{equation*}
| \langle R_{s}u , u  \rangle_{L^2} | = | \langle \phi_{s}^{K} R_{s} \phi_{s}^{K} \phi_{s}^{-K}u , \phi_{s}^{-K} u \rangle_{L^2} |  \leq C ( \| \ttB \phi_{s}^{-K} u \|_{H_{\mathrm{d3sc,3co,res}}^{\ast, \vor -1/2 + \delta, \ast, \ast, \vob, \ast}}^2 + \| u \|_{H_{\mathrm{d3sc,3co,res}}^{ N,M,L,K, \vob, S}}^2 )
\end{equation*}
Indeed, this can be done since $\phi_{s}^{K} R_{s} \phi_{s}^{K}$ still belongs to $L^{\infty}( (0,1) ; \Psf^{-\infty, \vor - 1/2 + \delta, -\infty, -\infty, \vob , -\infty }  )$, and that multiplication by $\phi_{s}^{K}$ does not change the uniform wavefront set of $R_{s}$. In fact, we could have chosen $B_{s}$ to take the form $B_{0} \phi_{s}^{-K}$. Thus by elliptic estimate, we have
\begin{align} \label{R0m plus radial point cal 4.5}
\begin{split}
 \| B_0 \phi_{s}^{-K} u \|_{H_{\mathrm{d3sc,3co,res}}^{\ast, \vor, \ast, \ast, \vob, \ast}} & \leq C ( \| Pu \|_{H_{\mathrm{d3sc,3co,res}}^{\ast, \vor+1, \ast, \ast, \vob + 2, \ast}} \\
& \quad + \| \tilde{E} u \|_{H_{\mathrm{d3sc,3co,res}}^{\ast, \vor, \ast, \ast, \vob, \ast}} + \|  \ttB \phi_{s}^{-K} u \|_{H_{\mathrm{d3sc,3co,res}}^{\ast, \vor - 1/2 + \delta, \ast, \ast, \vob, \ast}} + \| u \|_{H_{\mathrm{d3sc,3co,res}}^{N,M,L,K,\vob, S}} ). 
\end{split}
\end{align} \par
Notice that so far we only needed to assume that $u \in H_{\mathrm{d3sc,3co,res}}^{N, M, L, L, \vob, S}$ and $\tilde{E} u \in H_{\mathrm{d3sc,3co,res}}^{\ast, \vor, \ast, \ast, \vob, \ast}$, so it remains to remove the $\ttB \phi_{s}^{-K}$ term with these regularity assumptions. For this, we will write $\ttB$ as a sum of two terms $\tilde{G}_{0}, \tilde{G}_{1} \in \Psfo^{-\infty, 0, -\infty, -\infty, 0, -\infty}$ modulo an error that is symbolically residual, so that
\begin{align} \label{R0m plus radial point cal 5}
\begin{split}
 \| \tilde{G} \phi_{s}^{-K} u \|_{H_{\mathrm{d3sc,3co,res}}^{\ast, \vor - 1/2 + \delta, \ast ,\ast , \vob, \ast}} & \leq  \| \tilde{G}_0 \phi_{s}^{-K} u \|_{H_{\mathrm{d3sc,3co,res}}^{\ast, \vor - 1/2 + \delta, \ast ,\ast , \vob, \ast}}\\
& \quad + \| G_1 \phi_{s}^{-K} u \|_{H_{\mathrm{d3sc,3co,res}}^{\ast, \vor - 1/2 + \delta, \ast ,\ast , \vob, \ast}} + \| u \|_{H_{\mathrm{d3sc,3co,res}}^{N,M,L,K, \vob, S}}.
\end{split}
\end{align}
Moreover, we can choose $\tilde{G}_0$, $\tilde{G}_1$ to be such that $\mathcal{R}_{0, - } \subset \Ellss( \tilde{G}_0 ) \subset \WFs( \tilde{G}_0 ) \subset \Ellss( B )$ and $\WFs( \tilde{G}_1 ) \cap \mathcal{R}_{0,-} = \emptyset$. Notice that the latter condition can only be satisfied since $\mathcal{R}_{0,-}$ is isolated from the other radial sets (hence this argument does not apply at a fiber of $\mathcal{R}_{\mathrm{n},\dff}$). \par 
One can now estimate the $\tilde{G}_0$ term by interpolation and elliptic regularity. This yields
\begin{equation} \label{R0m plus radial point cal 6}
\| \tilde{G}_0 \phi_{s}^{-K} u \|_{H_{\mathrm{d3sc,3co,res}}^{\ast, \vor - 1/2 + \delta, \ast, \ast, \vob, \ast}} \leq C \epsilon \| B_0 \phi_{s}^{-K} \|_{H_{\mathrm{d3sc,3co,res}}^{\ast, \vor, \ast, \ast, \vob, \ast}} + C_{\epsilon} \| u \|_{H_{\mathrm{d3sc,3co,res}}^{-N,-M,-L,-K, \vob, -S}}. 
\end{equation}
On the other hand, we can use principal type propagation of regularity to estimate the $\tilde{G}_1$ term. Indeed, for every point in $\WFs( \tilde{G}_1 )$ and some integral curve segment of $\rho_{\dmf}^{-1} \rho_{\dtsccf}^{-1} \rho_{\tcocf}^{-2} H_{p}$ ending at this point, microlocal regularity can be propagated backwards either into $\Ellss(B_0)$, or alternatively into an arbitrarily small (by making $\WFs(\tilde{G}_0)$ sufficiently small) neighborhood of $\Ellss(\tilde{E})$. \par
Thus, suppose we choose $\tilde{E}' \in \Psf^{-\infty, -\infty, 0,0,0,-\infty}$ such that $\WFs( \tilde{E} ) \subset \Ellss( \tilde{E}' )$, and that every integral curve described above enters $\Ellss( \tilde{E}' )$ in finite backward time, then principal type propagation of regularity estimates ensure that 
\begin{equation} \label{R0m plus radial point cal 7}
\| G_1 \phi_{s}^{-K} u \|_{H_{\mathrm{d3sc,3co,res}}^{\ast, \vor - 1/2 + \delta, \ast, \ast, \vob, \ast}} \leq C ( \| Pu \|_{H_{\mathrm{d3sc,3co,res}}^{\ast, \vor + 1/2 + \delta, \ast, \ast, \vob + 2, \ast}} + \| \tilde{E}' u \|_{H_{\mathrm{d3sc,3co,res}}^{\ast, \vor - 1/2 + \delta, \ast, \ast, \vob, \ast}} ).
\end{equation}
We can now back substitute (\ref{R0m plus radial point cal 6}), (\ref{R0m plus radial point cal 7}) into (\ref{R0m plus radial point cal 5}) and then (\ref{R0m plus radial point cal 4.5}), which, for $\epsilon > 0$ small enough, finally gives us
\begin{align*}
\frac{1}{2}\| B_0 \phi_{s}^{-K} u \|_{H_{\mathrm{d3sc,3co,res}}^{\ast, \vor, \ast, \ast, \vob, \ast}} & \leq ( 1 - C \epsilon) \| B_0 \phi_{s}^{-K} u \|_{H_{\mathrm{d3sc,3co,res}}^{\ast, \vor, \ast, \ast, \vob, \ast}} \\
& \quad + C_{\epsilon} (  \| P u \|_{H_{\mathrm{d3sc,3co,res}}^{\ast, \vor + 1, \ast, \ast, \vob + 2, \ast }} + \| \tilde{E}' u \|_{H_{\mathrm{d3sc,3co,res}}^{\ast, \vor, \ast ,\ast, \vob, \ast}} + \| u \|_{H_{\mathrm{d3sc,3co,res}}^{N,M,L,K, \vob, S}} ).
\end{align*} \par
The rest of the argument is now again standard. 
\end{proof}

\section{Radial point estimates in the transversal directions} \label{transversal propagation section}
In \S \S \ref{subsection first radial point estimate at dmf},  \ref{Radial point estimate at dff subsection}, we discussed radial point estimates for $\mathcal{R}_{\mathrm{n},\dmf, \pm}$, $\mathcal{R}_{\mathrm{n},\dff,\pm}$ when they are restricted to $\tcocf$. In this section, we will state the analogous estimates when $\mathcal{R}_{\mathrm{n},\dmf, \pm}$, $\mathcal{R}_{\mathrm{n},\dff,\pm}$ are restricted away from $\tcocf$. \par

It should be obvious (to those readers who have examined \S\S \ref{subsection first radial point estimate at dmf}, \ref{Radial point estimate at dff subsection}) that the results of \S\S \ref{subsection; radial estimates at dmf away from 3cocf}, \ref{subsection; radial estimates at dff away from 3cocf} can be proved using essentially the same procedures as those presented in the proofs of Propositions \ref{Proposition Rndm-}--\ref{Propositive estimate at Rndff+}. Thus, most of the proofs in this section will be omitted.

In fact, the contents of Propositions \ref{subsection; radial estimates at dmf away from 3cocf}--\ref{transversal above threshold Rndff+} can be easily absorbed into the statements of Propositions \ref{Proposition Rndm-}--\ref{Propositive estimate at Rndff+} directly. Nevertheless, we have decided to state the propositions of this section separately to emphasize on the their dyamical natures, i.e., they are indeed directly analogous to the propagation estimates in the `normal directions' from \cite[Chapter 14]{AndrasThesis}. In particular, such a discussion leads naturally to the brief presentation of \S \ref{radial point estimate 3sc no second microlocal}, which is in some sense the more natural way to view transversal propagation in this context, where the introduction of second microlocalization is actually unnecessary.

\subsection{Radial point estimates at $\mathcal{R}_{\mathrm{n},\dmf,\pm} \backslash \tcocf$}
\label{subsection; radial estimates at dmf away from 3cocf}
We first state the radial point estimates at $\mathcal{R}_{\mathrm{n,dmf},\pm} \backslash \tcocf$. For every $\beta_{\tindex} \in \Sigma_{\mathrm{n}}$, we will define $\mathcal{R}_{\mathrm{n,dmf}, \pm}( \beta_{\tindex} ) \subset \mathcal{R}_{\mathrm{n}, \dmf, \pm} \backslash \tcocf$ again by (\ref{eq; definition of R n dmf at point}). Then we have a radial point estimate for every such $\mathcal{R}_{\mathrm{n},\dmf,\pm}(\beta_{\tindex})$.

\begin{proposition}[Below threshold estimate at $\mathcal{R}_{\mathrm{n,dmf},-}  \backslash \tcocf$] \label{transversal below threshold at ndmf-}
Let $\vor = \vor_{+}$, $\vol = \vol_{+}$ be variable orders satisfying the conditions specified in \S \ref{variable order construction section}. Suppose that $\beta_{\tindex} \in \Sigma_{\mathrm{n}}$. Moreover, let $ E \in \Psfo^{-\infty, 0, -\infty, 0, -\infty, -\infty}$ be chosen such that
\begin{equation*}
\begin{gathered}
\text{all backward integral curves of $\rho_{\dmf}^{-1} \rho_{\dtsccf}^{-1} \rho_{\tcocf}^{-2} H_{p}$ starting from $\mathcal{R}_{\mathrm{n},\dmf, -}(\beta_{\tindex})$} \\
\text{are either stationary or enter $\Ellss(E)$ in finite time. }
\end{gathered}
\end{equation*}
Then we can find an arbitrarily small neighborhood $U$ of $\mathcal{R}_{\mathrm{n}, \dmf,-}(\beta_{\tindex})$ for which the following holds: If we choose $B \in \Psf^{-\infty, 0, -\infty , 0 ,-\infty ,-\infty}$ with $\WFs(B) \subset U$, then for every $u \in \mathcal{S}'$ and any choice of $N,M,L,K, F, S \in \mathbb{R}$, there exists $C > 0$ such that
\begin{equation} \label{RPS away from tcocf ndmf- actual estimate}
\| B u \|_{ H_{\mathrm{d3sc,3co,res}}^{\ast, \vor, \ast, \vor + \vol, \ast, \ast} }  \leq C ( \| Pu \|_{H_{\mathrm{d3sc,3co,res}}^{\ast, \vor + 1, \ast, \vor + \vol + 1, \ast, \ast}}   + \| E u \|_{H_{\mathrm{d3sc,3co,res}}^{\ast, \vor, \ast, \vor + \vol, \ast, \ast}} + \| u \|_{ H_{\mathrm{d3sc,3co,res}}^{ N, M, L, K, F, S } } )
\end{equation}
in the strong sense that if the right hand of (\ref{RPS away from tcocf ndmf- actual estimate}) is finite, then so is the left hand side, and the estimate holds.
\end{proposition}

\begin{proposition}[Above threshold estimate at $\mathcal{R}_{\mathrm{n,dmf},+}  \backslash \tcocf$] \label{transversal below threshold at ndmf+}
Let $\vor = \vor_{+}$, $\vol = \vol_{+}$ be variable orders satisfying the conditions specified in \S \ref{variable order construction section}. Suppose that $\beta_{\tindex} \in \Sigma_{\mathrm{n}}$. Moreover, let $ E \in \Psfo^{-\infty, 0, -\infty, 0, -\infty, -\infty}$ be chosen such that
\begin{equation*}
\begin{gathered}
\text{all backward integral curves of $\rho_{\dmf}^{-1} \rho_{\dtsccf}^{-1} \rho_{\tcocf}^{-2} H_{p}$ starting from $\mathcal{R}_{\mathrm{n},\dmf, +}(\beta_{\tindex})$} \\
\text{are either stationary or enter $\Ellss(E)$ in finite time. }
\end{gathered}
\end{equation*}
Then we can find an arbitrarily small neighborhood $U$ of $\mathcal{R}_{\mathrm{n}, \dmf, + }(\beta_{\tindex})$ for which the following holds: If we choose $B \in \Psf^{-\infty, 0, -\infty , 0 ,-\infty ,-\infty}$ with $\WFs(B) \subset U$, then for every $u \in \mathcal{S}'$ and any choice of $N,M,L,K, F, S \in \mathbb{R}$, there exists $C > 0$ such that
\begin{equation} \label{RPS away from tcocf ndmf- actual estimate}
\| B u \|_{ H_{\mathrm{d3sc,3co,res}}^{\ast, \vor, \ast, \vor + \vol, \ast, \ast} }  \leq C ( \| Pu \|_{H_{\mathrm{d3sc,3co,res}}^{\ast, \vor + 1, \ast, \vor + \vol + 1, \ast, \ast}}   + \| E u \|_{H_{\mathrm{d3sc,3co,res}}^{\ast, \vor, \ast, \vor + \vol, \ast, \ast}} + \| u \|_{ H_{\mathrm{d3sc,3co,res}}^{ N, M, L, K, F, S } } )
\end{equation}
in the strong sense that if the right hand of (\ref{RPS away from tcocf ndmf- actual estimate}) is finite, then so is the left hand side, and the estimate holds.
\end{proposition}

\begin{proof}[Proofs of Propositions \ref{transversal below threshold at ndmf-} and \ref{transversal below threshold at ndmf+}]
The proofs of these propositions follow directly from mildly modifying the proofs of Propositions \ref{Proposition Rndm-} and \ref{Proposition Rndm+} respectively, which follows from the fact that (\ref{section overview microlocal Hamiltonian vector field 1}) holds even at $\mathcal{R}_{\mathrm{n}, \dmf, \pm} \backslash \tcocf$. 
\end{proof}

\subsection{Radial point estimates at $\mathcal{R}_{\mathrm{n},\dff,\pm} \backslash \tcocf$}  
\label{subsection; radial estimates at dff away from 3cocf}
Next we consider radial point estimates at $\mathcal{R}_{\mathrm{n}, \dff, \pm} \backslash \tcocf$. Let $\beta_{\tindex} \in \Sigma_{\mathrm{n}}$, and define $\mathcal{R}_{\mathrm{n}, \dff, \pm}(\beta_{\tindex}) \subset \mathcal{R}_{\mathrm{n}, \dff, \pm} \backslash \tcocf$  by (\ref{eq; definition of R n dff at point}) as before. Then we have a radial point estimate for each $\mathcal{R}_{\mathrm{n}, \dff, \pm}(\beta_{\tindex})$.
\begin{proposition}[Below threshold estimate at $\mathcal{R}_{\mathrm{n},\dff, -} \backslash \tcocf$] 
\label{transversal radial Rndff-}
Let $\vor = \vor_{+}$, $\vol = \vol_{+}$ be variable orders satisfying the conditions specified in \S \ref{variable order construction section}. Suppose that $\beta_{\tindex} \in \Sigma_{\mathrm{n}}$. Moreover, let $ E \in \Psfo^{-\infty, -\infty, 0, 0, -\infty, -\infty}$ be chosen such that
\begin{equation*}
\begin{gathered}
\text{all backward integral curves of $\rho_{\dmf}^{-1} \rho_{\dtsccf}^{-1} \rho_{\tcocf}^{-2} H_{p}$ starting from $\mathcal{R}_{\mathrm{n},\dff, -}(\beta_{\tindex})$} \\
\text{are either stationary or enter $\Ellss(E)$ in finite time. }
\end{gathered}
\end{equation*}
Then we can find an arbitrarily small neighborhood $U$ of $\mathcal{R}_{\mathrm{n}, \dff,-}(\beta_{\tindex})$ for which the following holds: If we choose $B \in \Psf^{-\infty, -\infty , 0, 0 ,-\infty ,-\infty}$ with $\WFs(B) \subset U$, then for every $u \in \mathcal{S}'$ and any choice of $N,M,K, F, S \in \mathbb{R}$, there exists $C > 0$ such that
\begin{equation} \label{RPS away from tcocf ndmf- actual estimate}
\| B u \|_{ H_{\mathrm{d3sc,3co,res}}^{\ast, \ast, \vol, \vor + \vol, \ast, \ast} }  \leq C ( \| Pu \|_{H_{\mathrm{d3sc,3co,res}}^{\ast, \ast, \vol, \vor + \vol + 1, \ast, \ast}}   + \| E u \|_{H_{\mathrm{d3sc,3co,res}}^{\ast, \ast, \vol, \vor + \vol, \ast, \ast}} + \| u \|_{ H_{\mathrm{d3sc,3co,res}}^{ N, M, \vol, K, F, S } } )
\end{equation}
in the strong sense that if the right hand of (\ref{RPS away from tcocf ndmf- actual estimate}) is finite, then so is the left hand side, and the estimate holds.
\end{proposition}

\begin{proposition}[Above threshold estimate at $\mathcal{R}_{\mathrm{n},\dff, +} \backslash \tcocf$] \label{transversal above threshold Rndff+}
Let $\vor = \vor_{+}$, $\vol = \vol_{+}$ be variable orders satisfying the conditions specified in \S \ref{variable order construction section}. Suppose that $\beta_{\tindex} \in \Sigma_{\mathrm{n}}$. Moreover, let $G, E \in \Psf^{-\infty, -\infty , 0 , 0 ,0 ,-\infty}$ be chosen such that
\begin{equation*}
\mathcal{R}_{\mathrm{n},\dff,+}(\beta_{\tindex}) \backslash \tcocf \subset \Ellss(G), 
\end{equation*}
with $\WFs(G)$ contained in a given small neighborhood of $\mathcal{R}_{\mathrm{n},\dff,+}(\beta_{\tindex})$, and
\begin{equation*}
\begin{gathered}
\text{all backward integral curves of $\rho_{\dmf}^{-1} \rho_{\dtsccf}^{-1} \rho_{\tcocf}^{-2} H_{p}$ starting from $\mathcal{R}_{\mathrm{n},\dff, +}(\beta_{\tindex})$} \\
\text{are either stationary or enter $\Ellss(E)$ in finite time.}
\end{gathered}
\end{equation*}
Then we can find an arbitrarily small neighborhood $U$ of $\mathcal{R}_{\mathrm{n}, \dff, +}(\beta_{\tindex})$ for which the following holds: If we choose $B \in \Psf^{-\infty, -\infty , 0 , 0 ,-\infty ,-\infty}$ with $\WFs(B) \subset U$, then for every $u \in \mathcal{S}'$ and any choice of $N,M,K,S \in \mathbb{R}$, there exists $C > 0$ such that
\begin{align}  \label{semi-fredhom estimate at Rndff+}
\begin{split}
 \| B u \|_{ H_{\mathrm{d3sc,3co,res}}^{\ast, \ast, \vol, \vor + \vol, \ast, \ast} } & \leq  C ( \| P u \|_{ H_{\mathrm{d3sc,3co,res}}^{\ast, \ast, \vol, \vor + \vol  + 1, \ast, \ast } } \\
& \quad + \| E u \|_{ H_{\mathrm{d3sc,3co,res}}^{\ast, \ast, \vol, \vor + \vol, \ast, \ast} } + \| G u \|_{ H_{\mathrm{d3sc,3co,res}}^{\ast, \ast, \vol, \vor + \vol - 1/2 + \delta, \ast, \ast } } + \| u \|_{ H_{\mathrm{d3sc,3co,res}}^{N, M, \vol, K, F , S } } ).
\end{split}
\end{align}
in the strong sense that if the right hand of (\ref{semi-fredhom estimate at Rndff+}) is finite, then so is the left hand side, and the estimate holds.
\end{proposition}

\begin{proof}[Proofs of Propositions \ref{transversal radial Rndff-} and \ref{transversal above threshold Rndff+}]
The proofs of these propositions follow again from modifying the proofs of Propositions \ref{below threshold Rndff- proposition} and \ref{Propositive estimate at Rndff+} respectively, which can be done since (\ref{section overview microlocal Hamiltonian vector field 1}) remains valid at $\mathcal{R}_{\mathrm{n}, \dmf, \pm} \backslash \tcocf$. 
\end{proof}

\subsection{Radial point estimates at $\mathcal{R}_{\mathrm{n}, \pm} \backslash \pi_{\ff}^{-1}(o_{\mathcal{C}^{\tindex}})$}
\label{radial point estimate 3sc no second microlocal}
Much as in the discussion at the end of \S \ref{principal type propagation section}, we can also relax the second microlocal structure in the transversal radial point estimates away from $\tcocf$. These estimates can thus be stated with respect to $\mathcal{R}_{\mathrm{n},\pm}$. In particular, only the three-body structure is relevant. We recall that the dynamical roles of these sets are replaced by the combined effects of $\mathcal{R}_{\mathrm{n}, \dmf, \pm}$, $\mathcal{R}_{\mathrm{n}, \dff, \pm}$ in the second microlocal framework. \par

For $\beta_{\tindex} = ( y_{\tindex,0}, \tau_{\tindex,0}, \mu_{\tindex,0} ) \in \Sigma_{\mathrm{n}}$, we will again write
\begin{equation*}
\mathcal{R}_{\mathrm{n},\pm}(\beta_{\tindex}) = \mathcal{R}_{\mathrm{n}, \pm} \cap \{ y_{\tindex} = y_{\tindex,0}, \tau_{\tindex} = \tau_{\tindex,0}, \mu_{\tindex} = \mu_{\tindex,0} \} \subset \mathcal{R}_{\mathrm{n},\pm} \backslash { \pi_{\ff}^{-1}(o_{\mathcal{C}^{\tindex}}) }.
\end{equation*}
Thus we have the following results:

\begin{proposition}[Below threshold estimate at $\mathcal{R}_{\mathrm{n}, -} \backslash \pi_{\ff}^{-1}(o_{\mathcal{C}^{\tindex}})$] 
\label{below threshold Rnm}
Let $\vor = \vor_{+}$, $\vol = \vol_{+}$ be variable orders satisfying the conditions specified in \S \ref{variable order construction section}. Suppose that $\beta_{\tindex} \in \Sigma_{\mathrm{n}}$. Moreover, let $ E \in \Psi^{-\infty, 0, 0}_{\mathrm{3sc}}$ be chosen such that $\mathrm{WF}_{\mathrm{3sc},\sigma}'(E) \cap \pi_{\ff}^{-1}(o_{\mathcal{C}^{\tindex}}) = \emptyset$, and
\begin{equation*}
\begin{gathered}
\text{all backward integral curves of $\rho_{\dmf}^{-1} \rho_{\dtsccf}^{-1} \rho_{\tcocf}^{-2} H_{p}$ starting from $\mathcal{R}_{\mathrm{n},-}(\beta_{\tindex})$} \\
\text{are either stationary or enter $\Ellss(E)$ in finite time. }
\end{gathered}
\end{equation*}
Then we can find an arbitrarily small neighborhood $U$ of $\mathcal{R}_{\mathrm{n},-}(\beta_{\tindex})$ for which the following holds: If we choose $B \in \Psi^{-\infty, 0,0}_{\mathrm{3co}}$ with $\mathrm{WF}_{\mathrm{3sc},\sigma}'(B) \subset U$, then for every $u \in \mathcal{S}'$ and any choice of $N,M,L,K, F, S \in \mathbb{R}$, there exists $C > 0$ such that
\begin{equation} \label{RPS away from tcocf ndmf- actual estimate}
\| B u \|_{ H_{\mathrm{d3sc,3co,res}}^{\ast, \vor, \ast, \vor + \vol, \ast, \ast} }  \leq C ( \| Pu \|_{H_{\mathrm{d3sc,3co,res}}^{\ast, \vor + 1, \ast, \vor + \vol + 1, \ast, \ast}}   + \| E u \|_{H_{\mathrm{d3sc,3co,res}}^{\ast, \vor, \ast, \vor + \vol, \ast, \ast}} + \| u \|_{ H_{\mathrm{d3sc,3co,res}}^{ N, M, L, K, F, S } } )
\end{equation}
in the strong sense that if the right hand of (\ref{RPS away from tcocf ndmf- actual estimate}) is finite, then so is the left hand side, and the estimate holds.
\end{proposition}

\begin{proposition}[Above threshold estimate at $\mathcal{R}_{\mathrm{n}, +} \backslash \pi_{\ff}^{-1}(o_{\mathcal{C}^{\tindex}})$]  
\label{above threshold Rnp}
Let $\vor = \vor_{+}$, $\vol = \vol_{+}$ be variable orders satisfying the conditions specified in \S \ref{variable order construction section}. Suppose that $\beta_{\tindex} \in \Sigma_{\mathrm{n}}$. Moreover, let $G,E \in \Psi^{-\infty, 0,0}_{\mathrm{3sc}}$ be chosen such that
\begin{equation*}
\mathrm{WF}_{\mathrm{3sc},\sigma}'(G) \cap \pi_{\ff}^{-1}( o_{\mathcal{C}^{\tindex}} ) = \mathrm{WF}_{\mathrm{3sc},\sigma}'(E) \cap \pi_{\ff}^{-1}( o_{\mathcal{C}^{\tindex}} ) = \emptyset, \quad \mathcal{R}_{\mathrm{n}, +}(\beta_{\tindex}) \backslash \pi_{\ff}^{-1}(o_{\mathcal{C}^{\tindex}}) \subset \mathrm{Ell}_{\mathrm{3sc},\sigma}(G), 
\end{equation*}
with $\mathrm{WF}_{\mathrm{3sc},\sigma}'(G)$ contained in a given small neighborhood of $\mathcal{R}_{\mathrm{n},+}(\beta_{\tindex})$, and
\begin{equation*}
\begin{gathered}
\text{all backward integral curves of $\rho_{\dmf}^{-1} \rho_{\dtsccf}^{-1} \rho_{\tcocf}^{-2} H_{p}$ starting from $\mathcal{R}_{\mathrm{n}, +}(\beta_{\tindex}) $} \\
\text{are either stationary or enter $\Ellss(E)$ in finite time.}
\end{gathered}
\end{equation*}
Then we can find an arbitrarily small neighborhood $U$ of $\mathcal{R}_{\mathrm{n}, +}(\beta_{\tindex}) $ for which the following holds: If we choose $B \in \Psi^{-\infty,0,0}_{\mathrm{3sc}}$ with $\mathrm{WF}_{\mathrm{3sc},\sigma}'(B) \subset U$, then for every $u \in \mathcal{S}'$ and any choice of $N,M,L,K,F, S \in \mathbb{R}$, there exists $C > 0$ such that
\begin{align} \label{RPS away from tcocf ndmf+ actual estimate}
\begin{split}
 \| B u \|_{ H_{\mathrm{d3sc,3co,res}}^{\ast, \ast, \vol, \vor + \vol, \ast, \ast} } & \leq  C ( \| P u \|_{ H_{\mathrm{d3sc,3co,res}}^{\ast, \ast, \vol, \vor + \vol  + 1, \ast, \ast } } \\
& \quad + \| E u \|_{ H_{\mathrm{d3sc,3co,res}}^{\ast, \ast, \vol, \vor + \vol, \ast, \ast} } + \| G u \|_{ H_{\mathrm{d3sc,3co,res}}^{\ast, \ast, \vol, \vor + \vol - 1/2 + \delta, \ast, \ast } } + \| u \|_{ H_{\mathrm{d3sc,3co,res}}^{N, M, \vol, K, F, S } } ).
\end{split}
\end{align}
in the strong sense that if the right hand of (\ref{RPS away from tcocf ndmf+ actual estimate}) is finite, then so is the left hand side, and the estimate holds.
\end{proposition}
Although the proofs of Propositions \ref{below threshold Rnm}, \ref{above threshold Rnp} will again be omitted{\ep}much like what was done for Proposition \ref{principal type propagation three-body version}, we remark that their proofs, if written, will be analogous to that of Propositions \ref{transversal below threshold at ndmf-}, \ref{transversal below threshold at ndmf+}. \par

Indeed, recall from (\ref{Local d3sc coordintes representation of Rndmfpm}) that $\mathcal{R}_{\mathrm{n,dmf}, \pm} \backslash \tcocf$ can also be realized as a subset of ${^{\mathrm{d3sc}}T^{\ast}\Xd}$, so that the proofs of Propositions \ref{transversal below threshold at ndmf-}, \ref{transversal below threshold at ndmf+} can also be phrased through the coordinates in ${ ^{\mathrm{d3sc}}T^{\ast}\Xd }$. By comparing (\ref{Local d3sc coordintes representation of Rndmfpm}) with the coordinates representations of $\mathcal{R}_{\mathrm{n}, \pm} \backslash \pi_{\ff}^{-1}(o_{\mathcal{C}^{\tindex}})$ in (\ref{local coordinates representation of Rnpm}), it should thus be convincing that the proofs of \ref{below threshold Rnm}, \ref{above threshold Rnp} also follow respectively from that of Propositions \ref{transversal below threshold at ndmf-}, \ref{transversal below threshold at ndmf+}. In fact, the dynamical roles of these radial sets in their respective flows are extremely similar.

\section{Radial point estimates at $\brpm$} 
\label{global radial point estimate section}
Finally, we shall consider radial point estimates at $\brpm$. Recall that $\brm$ is a sink while $\brp$ is a source. Thus, one could say that these are the only `global' radial points, in the sense that they are not saddles, i.e., microlocal regularity \emph{everywhere} eventually flow into $\brm$, while they must all came from $\brp$. \par

Since $\brpm$ simply reduce to the standard two-body radial sets away from $\tcocf$, proving radial point estimate at $\brpm$ amounts to, in a sense the usual argument away from $\tcocf$. One then needs to simply incorporate the necessary modification near $\tcocf$ so that the argument becomes globally valid, which will be our strategy below.

\begin{proposition}[Below threshold estimate at $\brm$] \label{global below threshold}
Let $\vor = \vor_{+}$, $\vob = \vob_{+}$ be variable orders satisfying the conditions specified in \S \ref{variable order construction section}. Let $E \in \Psfo^{-\infty, 0 , -\infty, -\infty, 0, -\infty}$ be chosen such that
\begin{equation*}
\begin{gathered}
\text{all backward integral curves of $\rho_{\dmf}^{-1} \rho_{\dtsccf}^{-1} \rho_{\tcocf}^{-2} H_{p}$ starting from $\brm$} \\
\text{are either stationary or enter $\Ellss(E)$ in finite time. }
\end{gathered}
\end{equation*}
Then we can find an arbitrarily small neighborhood $U$ of $\brm$ for which the following holds: If we choose $B \in \Psf^{-\infty, 0, -\infty , 0 ,-\infty ,-\infty}$ with $\WFs(B) \subset U$, then for every $u \in \mathcal{S}'$ and any choice of $N,M,L,K, S \in \mathbb{R}$, there exists $C > 0$ such that
\begin{equation} \label{semi-fredhom estimate at brm}
\| B u \|_{ H_{\mathrm{d3sc,3co,res}}^{\ast, \vor, \ast, \ast, \vob, \ast} } \leq C ( \| Pu \|_{H_{\mathrm{d3sc,3co,res}}^{\ast, \vor + 1, \ast, \ast, \vob + 2, \ast}} + \| E u \|_{H_{\mathrm{d3sc,3co,res}}^{\ast, \vor, \ast, \ast, \vob, \ast}} + \| u \|_{ H_{\mathrm{d3sc,3co,res}}^{ N,M,L,K, \vob, S } } )
\end{equation}
in the strong sense that if the right hand of (\ref{semi-fredhom estimate at brm}) is finite, then so is the left hand side, and the estimate holds.

\end{proposition}
\begin{proof}
Let $\psi \in \mathcal{C}^{\infty}_{c}( [0,\infty) )$ be such that conditions (\ref{tangential propagation psi condition}) are satisfied. Then we can consider a $\mathcal{C}^{\infty} ( \tscX )$ extension of $\bff$ defined by
\begin{equation}  \label{global radial m 0.5}
\tbff = (1 - \psi) ( \bff ) + \psi ( \bff ) \bff
\end{equation}
so that $\tbff$ is a global defining function for $\ff$. Suppose that $x = |z|^{-1}$. Then the condition $ x = \tbff \rho_{\mf}$ also determines a global defining function $\rho_{\mf} \in \mathcal{C}(   \tscX )$ for $\mf$. It follows that $\hat{\rho}_{\mf} = \rho_{\mf} / \tbff$ can be identified as a $\mathcal{C}^{\infty} (\Xd)$ function in a small neighborhood of $\dmf$, for which it is a defining function for $\dmf$.  \par

Let $\tilde{\psi} \in \mathcal{C}^{\infty}_{c}( [0,\infty) )$ be another cut-off at $0$ such that $\tilde{\psi} = 1$ on the support of $\psi$. Then we will define
\begin{equation*}
\rho_{ \brm } = \tilde{\psi} ( \hat{\rho}_{\mf} ) \big( \psi( \bff ) \big( ( \ltau - \utaures )^2 + | \umures |_{h^{\tindex}}^2 \big) + ( \tau + \lambda )^2 \big).
\end{equation*}
Here, $\tau$ is dual to $dx/x^2$, while $|\mu|_{h}$ is the rescaled metric norm on $\partial \Xo$, where each $\mu_{j}$ is dual to $dy_{j}/x$, $j=1,..., n-1$. Notice that, as the notation suggests, $\rho_{\brm}$ defines $\brm$ quadratically on the intersection between $\psf_{\dmf} \Xd$ and $\bcv$. In particular
\begin{equation} \label{global radial m 1}
\brm = \{ \rho_{\brm} = 0 \} \cap \psf_{\dmf} \Xd \cap \bcv.
\end{equation}
Indeed, recall that we can write, at least in a neighborhood of $\brm \cap \tcocf$, that
\begin{equation*}
\begin{gathered}
\tau = \frac{1}{ ( 1 + \bff^2 )^{1/2} } \ltau + \frac{ \bff^2 }{ ( 1 + \bff^2 ) } \utaures, \\
| \mu |_{h}^2 = \frac{\bff}{ 1 + \bff^2 } ( \ltau - \utaures )^2 + | \lmu |_{h_{\tindex}}^2 + \bff^2 | \umures |_{h^{\tindex}}^2.
\end{gathered}
\end{equation*}
Thus for $\bff > 0$, the conditions $\tau = -\lambda$ in $\bcv$ implies precisely that
\begin{equation*}
\ltau = \utaures, \, \ltau = -\lambda, \, |\mu_{\tindex}| = 0, \, | \umures |_{h^{\tindex}} = 0,
\end{equation*}
though for $\bff = 0$, it implies only that of 
\begin{equation*}
\ltau = -\lambda, \, | \lmu |_{h_{\tindex}} = 0,
\end{equation*}
the latter being separately compensated by the vanishing of $( \ltau - \utaures )^2 + | \umures |_{h^{\tindex}}^2$. \par

Next, suppose we set
\begin{equation*}
\, \varphi_{1} = \psi( \rho_{\brm} ), \,  \varphi_{2} = \psi(p).
\end{equation*}
Then we can consider the symbol defined by
\begin{equation*}
a_0 = \hat{\rho}_{\mf}^{-\vor - 1/2} \tbff^{-\vob - 1} \varphi_1 \varphi_2.
\end{equation*}
Here, we are again assuming that $a_0$ is supported in a neighborhood of $\psf_{\dmf}\Xd$, which is allowed since it is the only `symbolic face' which $\brm$ intersects. Explicitly, this amounts to the omission of an additional cut-off $\psi(\hat{\rho}_{\mf})$, the derivative of which is symbolically residual on the support of $a_0$. In particular, we can assume that $a_0$ is supported in an arbitrarily small neighborhood of $\brm$ by (\ref{global radial m 1}). \par

It is straightforward to verify that
\begin{equation*}
H_{p} \tbff = \hat{\rho}_{\mf} \tbff^{3} f_0 f_1 \tilde{F}, \, H_{p} \hat{\rho}_{\mf} = 2 \hat{\rho}_{\mf}^2 \tbff^2 ( \tau - F ),
\end{equation*}
where $f_0$, $f_1$ and $\tilde{F}$ are determined by
\begin{equation*}
x_{\tindex} = f_0 x, \, \bff = f_{1} \tbff , \, \tilde{F} =  2\big( \psi'(\bff) ( \bff - 1 ) + \psi(\bff) \big) ( \ltau - \utaures ).
\end{equation*}
Here, $f_0$, $f_{1}$ are strictly positive smooth functions which are only locally defined, while $\tilde{F}$ can be made non-negative if $\bff$ is uniformly bounded by $1$. These conditions can be satisfied if the support of $\psi$ is taken to be sufficiently small. Thus, if we set $F = f_{0} f_{1} \tilde{F}$, then we have
\begin{equation} \label{global radial m 2.05}
\begin{gathered}
H_{p} \tbff^{-2 \vob - 2} = - 2 ( H_p \vob ) ( \log \tbff ) \tbff^{-2 \vob - 2} - (2 \vob + 2) F \hat{\rho}_{\mf} \tbff^{-2\vob}, \\
H_{p} \hat{\rho}_{\mf}^{-2\vor - 1} = - 2 ( H_p \vor ) ( \log \hat{\rho}_{\mf} ) \hat{\rho}_{\mf}^{-2\vor - 1} - ( 4 \vor + 2 ) ( \tau - F ) \hat{\rho}_{\mf}^{-2\vor} \tbff^{2}. 
\end{gathered}
\end{equation}
 \par

We can compute
\begin{equation}  \label{global radial m 2.1}
H_{p} a_0^2 = - \tilde{b}_0^2 + (\log) b_{1,0}^2 - \tilde{b}_{2,0}^2  + e_0
\end{equation} 
for some symbols $\tilde{b}_0$, ${b}_{1,0}$, $b_{2,0}$ and $e_0$, which we now proceed to define. We will set
\begin{equation} \label{global radial m 2.2}
\tilde{b}_0^2 = \hat{\rho}_{\mf}^{-2 \vor} \tbff^{-2\vob} \big( ( 4 \vor + 2 ) \tau + ( 2 \vob - 4 \vor ) F \big) \varphi_1^2 \varphi_2^2 \varphi_3^2. 
\end{equation} 
Note that $\tilde{b}_0^2$ can indeed be written as a square of symbol, since we can arrange the support of $\psi$ to be small enough such that $\tau \simeq - \lambda$ and $F \simeq 0$ on the support of $\tilde{b}_0^2$. Then we have
\begin{equation*}
( 4 \vor + 2 ) \tau + ( 2 \vob - 4 \vor )F \simeq -( 4 \vor + 2 ) \lambda,
\end{equation*}
which is positive by the threshold condition
\begin{equation*} \label{threshold condition brm}
\vor <  - \frac{1}{2} \ \text{at} \ \brm.
\end{equation*}
Next, we note that the sum of the logarithmic terms can be written as
\begin{equation}  \label{global radial m 2.3}
-  2 \big( (H_{p} \vor) ( \log \hat{\rho}_{\mf} ) + (H_{p} \vob) ( \log \tbff ) \big) a_0^2 = 2 (\log \tbff ) b_{1,0}^2 - b_{2,0}^2,
\end{equation}
where we have defined $b_{1,0}^2 = - ( H_{p} \vob ) a^2_0$ and $b_{2,0}^2 = 2 ( H_{p} \vor ) ( \log \hat{\rho}_{\mf} ) a_0^2$. We remark that these can be written as squares of symbols by construction of the variable orders as well as the fact that $\tbff \leq 1$. We also set 
\begin{align} \label{global radial m 2.3.1}
\begin{split}
e_0 & = 2 \lambda \hat{\rho}_{\mf}^{-2\vor - 1} \tbff^{-2 \vob - 2} \varphi_2^2 \varphi_{3}^2 \varphi_1 \psi'( \rho_{\brm} ) H_{p} \rho_{\brm} \\
& = 2 \hat{\rho}_{\mf}^{-2 \vor} \tbff^{-2\vob} \varphi_2^2 \varphi_{3}^2 \varphi_1 \psi'( \rho_{\brm} ) \sH_{p}\rho_{\brm}.
\end{split}
\end{align}
Note that we have not computed $e_0$ entirely. Indeed, this is not necessary, as all we need is $e_0$ be supported in a punctured neighborhood of $\brm$, so that we may assume a priori control there in the propagation estimate.  \par

We now consider regularization. We will set
\begin{equation}  \label{global radial m 2.1}
\phi_{s} = 1 + \frac{s}{x}, \, f_{s} =  \frac{s/x}{1+s/x}, \quad s \in [0,1].
\end{equation}
For large $K \geq 0$, we can then compute
\begin{equation*}
H_{p} \phi_{s}^{-2K} = 4 K \hat{\rho}_{\mf} \tbff^2 f_{s} \tau \phi_{s}^{-2K}.
\end{equation*}
Now, let $a_{s} = \phi_{s}^{-K} a_0$, $b_{2,s} = \phi_{s}^{-K} b_{2,0}$, $e_{s} = \phi_{s}^{-2K} e_{0}$, as well as
\begin{equation}  \label{global radial m 2.2}
b_{s}^{2} = \hat{\rho}_{\mf}^{-2 \vor} \tbff^{-2\vob} \phi_{s}^{-2K} \big(  ( 4 \vor + 2 - 4 K f_{s} ) \tau + ( 2 \vob - 4 \vor ) F  - \delta_0 \phi_{s}^{-2K} \varphi_1^2 \varphi_2^2 \varphi_3^2 \big) \varphi_1^2 \varphi_2^2 \varphi_3^2
\end{equation}
for some sufficiently small $\delta_0 > 0$. Then we have
\begin{equation} \label{global radial m 2.5}
H_{p} a_{s}^2 + \delta_0 \hat{\rho}_{\mf}^{2 \vor + 2} \tbff^{2 \vob + 2} a_{s}^4 = - b_{s}^2 + 2 (\log \tbff) b_{1,s}^2 - b_{2,s}^2 + e_{s}.
\end{equation}
Here, note that we have
\begin{equation*}
\vor < - \frac{1}{2} + K \ \text{at $\brm$}
\end{equation*}
for any choice of $K \geq 0$. Thus we may conclude for $\delta_0 > 0$ small enough that $b_{s}^2$ can indeed be written as a square of symbol as well. 
\par

Let $A_{s}$,  $B_{s}$, $B_{2,s}$, $E_{s}$ be some quantizations of $a_s$, $b_{s}$, $b_{2,s}$ and $e_{s}$ respectively. Let also $B_{1,s} \in L^{\infty}( (0,1] ; \Psf^{-\infty, \vor, -\infty , -\infty  , \vob, -\infty} )$ be chosen such that conditions (\ref{always these conditions for B1s}) are satisfied. Furthermore, let $\Lambda$ be some quantization of an elliptic extension of $\hat{\rho}_{\mf}^{\vor + 1} \tbff^{\vob+2}$ from the support of $a_0$. Then by (\ref{global radial m 2.5}) we have
\begin{equation*}
 i [ P,  A_{s}^{\ast} A_{s}   ] + \delta_{0}  ( \Lambda A_{s}^{\ast} A_{s}  ) ( \Lambda  A_{s}^{\ast} A_s )  = -  B_{s}^{\ast} B_{s} + 2 B_{1,s}^{\ast} ( \log \tbff ) B_{1,s}   - B_{2,s}^{\ast} B_{2,s}  + E_{s} + R_{s},
\end{equation*}
where the membership $R_{s} \in L^{\infty}( (0,1]_{s} ; \Psf^{-\infty, 2\vor - 1 + 2 \delta  , -\infty, -\infty, \vob , -\infty } )$ is ensured by Corollary \ref{commutator corollary}. \par

The rest of the argument is again standard. Thus, suppose that $ G,\tilde{E} \in \Psfo^{-\infty, 0, -\infty, -\infty, 0, -\infty}$ are chosen such that
\begin{equation}    \label{global radial m 2.6}
\begin{gathered}
\WFsL( \{  A_{s} \} ) \subset \Ellss( G), \, \WFsL( \{ E_{s} \} ) \subset \Ellss( \tilde{E} ) \\
\WFs( I - G ) \cap \WFsL( \{ A_s \} ) = \WFs( I - \tilde{E} ) \cap \WFsL( \{ E_s \} )  = \emptyset,
\end{gathered}
\end{equation}
then by the standard procedure, we have
\begin{align*}
\| B u \|_{ H_{\mathrm{d3sc,3co,res}}^{\ast, \vor, \ast, \ast, \vob, \ast} } & \leq C ( \| Pu \|_{H_{\mathrm{d3sc,3co,res}}^{\ast, \vor + 1, \ast, \ast, \vob + 2, \ast}} \\
& \quad + \| \tilde{E} u \|_{H_{\mathrm{d3sc,3co,res}}^{\ast, \vor, \ast, \ast, \vob, \ast}} + \| G u \|_{H_{\mathrm{d3sc,3co,res}}^{\ast, \vor, \ast, \ast, \vob, \ast} }^2 + \| u \|_{ H_{\mathrm{d3sc,3co,res}}^{ N,M,L,K, \vob, S } } ),
\end{align*}
and after finitely many iterations the $Gu$ term can be dropped. Thus, by elliptic regularity estimate, we have
\begin{equation*}  
\| B u \|_{ H_{\mathrm{d3sc,3co,res}}^{\ast, \vor, \ast, \ast, \vob, \ast} } \leq C ( \| Pu \|_{H_{\mathrm{d3sc,3co,res}}^{\ast, \vor + 1, \ast, \ast, \vob + 2, \ast}}  + \| E u \|_{H_{\mathrm{d3sc,3co,res}}^{\ast, \vor, \ast, \ast, \vob, \ast}}+ \| u \|_{ H_{\mathrm{d3sc,3co,res}}^{ N,M,L,K, \vob, S } } ).
\end{equation*} \par

Notice that since $\brm$ is a sink,  we have proved a local estimate with the following dynamical property: Assume that $\gamma$ is an integral curve of $\rho_{\dmf}^{-1} \rho_{\dtsccf}^{-1} \rho_{\tcocf}^{-2} H_{p}$ such that $\lim_{t \rightarrow \infty} \gamma(t) \in \brm$, then since $\WFs(E_0)$ is contained in a punctured neighborhood of $\brm$, $\gamma$ must enter $\WFs(E_0)$ and subsequently $\WFs(\tilde{E})$ in finite backward time. The statement for a general $E$ thus follows from elliptic regularity and principal type propagation of regularity estimates.
\end{proof}

\begin{proposition}[Above threshold estimate at $\brp$] \label{Global radial set estimate +}
Let $\vor = \vor_{+}$, $\vob = \vob_{+}$ be variable orders satisfying the conditions specified in \S \ref{variable order construction section}. Let $G \in \Psf^{-\infty, 0 , -\infty , -\infty ,0 ,-\infty}$ be chosen such that $\brm \subset \Ellss(G)$, with $\WFs(G)$ contained in a given small neighborhood of $\brp$. Then we can find an arbitrarily small neighborhood $U$ of $\brp$ for which the following holds: If we choose $B \in \Psf^{-\infty, 0 , -\infty , -\infty ,0 ,-\infty}$ with $\WFs(B) \subset U$, then for every $u \in \mathcal{S}'$ and any choice of $N,M,L,K,S \in \mathbb{R}$, there exists $C > 0$ such that
\begin{equation} \label{semi-fredhom estimate at brp}
\| B u \|_{ H_{\mathrm{d3sc,3co,res}}^{\ast, \vor, \ast, \ast, \vob, \ast} } \leq C ( \| Pu \|_{H_{\mathrm{d3sc,3co,res}}^{\ast, \vor + 1, \ast, \ast, \vob + 2, \ast}} + \| G u \|_{H_{\mathrm{d3sc,3co,res}}^{\ast, r_0, \ast, \ast, \vob, \ast}} + \| u \|_{ H_{\mathrm{d3sc,3co,res}}^{ N,M,L,K, \vob, S } } )
\end{equation}
in the strong sense that if the right hand of (\ref{semi-fredhom estimate at brp}) is finite, then so is the left hand side, and the estimate holds.
\end{proposition}

\begin{proof}
Let $\psi \in \mathcal{C}^{\infty}_{c}( [0,\infty) )$ be such that conditions (\ref{tangential propagation psi condition}) are satisfied. Let $\tilde{\psi} \in \mathcal{C}^{\infty}_{c}( [ 0, \infty) )$ be another cut-off at $0$ such that $\tilde{\psi} = 1$ on the support of $\psi$. Moreover, let $\tbff \in \mathcal{C}^{\infty}( \tscX )$ be the extension of $\bff$ defined in (\ref{global radial m 0.5}), and $\hat{\rho}_{\mf} \in \mathcal{C}^{\infty}( \Xd )$ be the defining function for $\psf_{\dmf} \Xd$ given by $\hat{\rho}_{\mf} = \rho_{\mf} / \tbff$, where $\rho_{\mf}$ is determined by $x = \rho_{\mf} \tbff$. Here $x$ is the canonical boundary defining function for $\Xo$. Then we will write
\begin{equation*}
\rho_{ \brp } =  \tilde{\psi}( \hat{\rho}_{\mf} )  \psi( \bff ) \big( ( \ltau - \utaures )^2 + | \umures |_{h^{\tindex}}^2 \big) + ( \tau - \lambda )^2 ,
\end{equation*}
where we note that $\rho_{\brp}$ defines $\brp$ quadratically on the intersection between $\bcv$ and $\psf_{\dmf} \Xd$, which can be seen in the same way as in the below threshold case. \par

Unlike in the below threshold case, here we will need to make explicit computation of $H_{p} \rho_{ \brp }$. For this, we first note that
\begin{align*}
& H_{p}  \big( ( \ltau - \utaures )^2 + |\umures|_{h^{\tindex}}^2 \big)  = 4 \hat{\rho}_{\mf} \tbff^{2} \big( \utaures \big( ( \ltau - \utaures )^2 + | \umures |_{h^{\tindex}}^2 \big) - ( \ltau - \utaures ) | \lmu |_{h_{\tindex}}^2   \big)  \\
& \qquad = 4 \lambda \hat{\rho}_{\mf} \tbff^2 \big( ( \ltau - \utaures )^2 + | \umures |_{h^{\tindex}}^2  \\
& \qquad \quad + ( \utaures - \lambda ) \big( ( \ltau - \utaures )^2 + | \umures |_{h^{\tindex}}^2 \big) - 4 ( \ltau - \utaures ) | \lmu |_{h_{\tindex}}^2 \big), 
\end{align*}
as well as 
\begin{align*}
& H_{p} ( \tau - \lambda )^2 = - 4 \hat{\rho}_{\mf} \tbff^2 ( \tau - \lambda ) | \mu |_{h}^2 \\
& \qquad = 4 \hat{\rho}_{\mf} \tbff^2 \big(  \lambda ( \tau - \lambda )^2 +  ( \tau - \lambda )^3 +  ( \tau - \lambda ) ( | \tau |^2 + | \mu |_{h}^2 - \lambda ) \big). 
\end{align*}
Moreover, we have
\begin{equation*}
H_{p} \psi( \bff ) = ( \ltau - \utaures ) \hat{\rho}_{\mf} \tbff^2 \bff \psi'(\bff). 
\end{equation*}
Thus, overall
\begin{equation} \label{global radial p 1}
H_{p} \rho_{ \brp } = 4 \lambda \hat{\rho}_{\mf} \tbff^2 ( \rho_{\brp} +  ( \tau - \lambda )^2 + F_{0} )
\end{equation}
in a small neighborhood of $\psf_{\dmf} \Xd$, where $F_0$ vanishes cubically at $\brp$. \par

We now set
\begin{equation*}
 \varphi_{1} = \psi( \rho_{\brm} ), \, \varphi_{2} = \psi(p)
\end{equation*}
and consider the symbol defined by
\begin{equation*}
a_0 = \hat{\rho}_{\mf}^{-\vor - 1/2} \tbff^{-\vob - 1} \varphi_1 \varphi_2 .
\end{equation*}
Then by using (\ref{global radial m 2.05}) and (\ref{global radial p 1}), we can compute
\begin{equation*}
H_{p} a_{0}^2 = - \tilde{b}_{0}^2  + 2 (\log \tbff) b_{1,0}^2 - b_{2,0}^2 - b_{3,0}^2,
\end{equation*}
for some symbols $b_{0}$, $b_{1,0}$, $b_{2,0}$ and $b_{3,0}$. Here, $\tilde{b}_{0}^2$ is defined by the same formula as (\ref{global radial m 2.2}), except that non-negativity is now ensured due to the threshold condition 
\begin{equation*}
\vor > - \frac{1}{2}   \ \text{at $\brp$}.
\end{equation*}
Thus in particular, $\tilde{b}_0^2$ can indeed be written as the square of a symbol. Likewise, we will define $b_{1,0}$, $b_{2,0}$ as in (\ref{global radial m 2.3}). We also define $b_{3,0}$ to be the negative of $e_0$ as in (\ref{global radial m 2.3.1}), i.e.,
\begin{equation*}
b_{3,0}^{2} = - 8 \lambda \rho_{\dmf}^{-2 \vor} \tbff^{-2 \vob} \varphi_2^2 \varphi_3^2 \varphi_1 \psi'( \rho_{\brp} )  ( \rho_{\brp} +  ( \tau - \lambda )^2 + F_{0} ). 
\end{equation*}
Now, on the support of $\psi'( \rho_{\brp} )$, we know that $\rho_{\brp}$ is bounded below by some positive constant. Meanwhile, since $F_0$ vanishes cubically at $\brp$, we know that $|F_0| \leq C \rho_{\brp}^{3/2}$ for some $C > 0$. Finally, $(\tau - \lambda)^2$ is non-negative. Then since $\psi' \leq 0$, we can conclude that $b_{3,0}^2$ is indeed non-negative, and can be written as the square of a symbol due to (\ref{tangential propagation psi condition}). \par

Let $\phi_{s}$, $f_{s}$, $s \in [0,1]$ be defined as in (\ref{global radial m 2.1}) and set $b_{j,s} = \phi_{s}^{-K} b_{j,s}$, $j =,1,2,3$. Furthermore, let $b_{s}^2$ be defined as in (\ref{global radial m 2.2}), then the positivity of $b_{s}^2$ forces the amount of regularization $K$ to be restricted, i.e., we need to choose $K$ such that
\begin{equation} \label{global radial p 2}
\vor > - \frac{1}{2} + K \ \text{at $\brp$}.
\end{equation}
Assuming henceforth that (\ref{global radial p 2}) is satisfied. Then we have
\begin{equation*}
H_{p} a_{s}^2 + \delta_0 \hat{\rho}_{\mf}^{2 \vor + 2} \tbff^{2 \vob + 2} a_{s}^4 = - b_{s}^2 + 2 (\log \tbff) b_{1,s}^2 - b_{2,s}^2 - b_{3,s}^2.
\end{equation*}
Let $A_{s}$, $B_{s}$, $B_{j,s}$ be some quantizations of $a_{s}$, $b_{s}$, $b_{j,s}$ respectively for $j=2,3$, and moreover let $B_{1,s} \in L^{\infty}( (0,1] ; \Psf^{-\infty, \vor, -\infty , -\infty  , \vob, -\infty} )$ be chosen such that conditions (\ref{always these conditions for B1s}) are satisfied. Let $\Lambda$ be some quantization of an elliptic extension of $\hat{\rho}_{\mf}^{\vor + 1} \tbff^{\vob+2}$ from the support of $a_0$. Then we have
\begin{equation*}
 i [ P,  A_{s}^{\ast} A_{s}   ] + \delta_{0}  ( \Lambda A_{s}^{\ast} A_{s}  ) ( \Lambda  A_{s}^{\ast} A_s )  = -  B_{s}^{\ast} B_{s} + 2 B_{1,s}^{\ast} ( \log \tbff ) B_{1,s}   - B_{2,s}^{\ast} B_{2,s} - B_{3,s}^{\ast} B_{3,s} + R_{s},
\end{equation*}
where $R_{s} \in L^{\infty}( [0,1)_{s} ; \Psf^{-\infty, 2\vor - 1 + 2 \delta  , -\infty, -\infty, \vob , -\infty } )$ by Corollary \ref{commutator corollary}. \par

One mild issue here is that since $K$ cannot be chosen arbitrarily large, in order to carry out the usual procedure, we then need to introduce an additional regularization argument to show that
\begin{equation*}
i \langle [ P, A_{s}^{\ast} A_{s} ]u ,u \rangle_{L^2} = 2 \mathrm{Im} \langle Pu , A_{s}^{\ast} A_s u \rangle_{L^2}
\end{equation*}
with the point being that integration by part can be carried out. Such a regularization will be independent of the existing ones. See for examples \cite[Chapter 11, \S 2]{PeterNotes} or \cite[\S 5.4.7]{AndrasBook} for details. \par

However, assuming this is done, and suppose that $\tilde{G}, \tilde{E} \in \Psfo^{-\infty, 0, -\infty, -\infty, 0, -\infty}$ are chosen such that conditions (\ref{global radial m 2.6}) are satisfied with $\tilde{G}$ in places of $G$. Then by the standard procedure, we must have
\begin{equation*}
\| B u \|_{ H_{\mathrm{d3sc,3co,res}}^{\ast, \vor, \ast, \ast, \vob, \ast} } \leq C ( \| Pu \|_{H_{\mathrm{d3sc,3co,res}}^{\ast, \vor + 1, \ast, \ast, \vob + 2, \ast}} + \| \tilde{G} u \|_{ H_{\mathrm{d3sc,3co,res}}^{ \ast, \vor - 1/2 + 2\delta, \ast, \ast, \vob, \ast } } + \| u \|_{ H_{\mathrm{d3sc,3co,res}}^{ N,M,L,K, \vob, S } } ).
\end{equation*}
This estimate cannot be iterated arbitrarily many times due to the regularization issue mentioned above. But if $\vor-1/2 + 2\delta \leq r_0$, then we can get (\ref{semi-fredhom estimate at brp}) via the trivial inclusion estimate and elliptic regularity. Otherwise, we can iterate further and obtain (\ref{semi-fredhom estimate at brp}) nonetheless. This proves the required estimate.
\end{proof}

\section{Global estimate with symbolic decay} \label{almost semi-Fredholm estimates with symbolic decay section}
In \S \S \ref{principal type propagation section}--\ref{global radial point estimate section}, we have provided a complete description of microlocal propagation for solutions to $Pu = f${\ep}albeit only along the symbolic characteristic set $\Sigma_{\sigma}$. Moreover, these propagation estimates were stated only in the forward direction. It is easy to see that one can also obtain the analogous propagation results in the backward direction. However, in that case, the roles of $\vor_{+}$, $\vol_{+}$, $\vob_{+}$ and $\vor_{-}$, $\vol_{-}$, $\vob_{-}$ must be switched. \par

More concretely, the principal type propagation estimate in the backward direction can be proved by applying Proposition \ref{principal type propagation second microlocal version} to $-P$ instead of $P$, in which case every `backward' in the statements of the proposition should be replaced by `forward'. In the cases of the the radial point estimates (i.e., Propositions \ref{Proposition Rndm-}--\ref{Global radial set estimate +}), one observes that the natures of the radial points, when viewed as equilibria for the flow of $\rho_{\dmf}^{-1} \rho_{\dtsccf}^{-1} \rho_{\tcocf}^{-2}H_{p}$, are switched whenever one replaces every `$+$' by `$-$'. We refer the readers to \S \ref{subsection characterization for the flow of the second microlocal dynamic} for details.  \par

The first step in obtaining the proposed Fredholm estimates would be to patch the aforementioned propagation estimates together. More precisely, we will use principal type propagation estimates to connect the various radial point estimates in $\bcv$, keeping careful track the behaviors of the flow of $\rho_{\dmf}^{-1} \rho_{\dtsccf}^{-1} \rho_{\tcocf}^{-2} H_p$ in the meanwhile. On the other hand, standard microlocal elliptic regularity estimate can be applied away from $\Sigma_{\sigma}$. \par

The end result is the following:
\begin{proposition} \label{fredholm estimate with symbolic decay}
Let $m_{\pm}$,  $\vor_{\pm}$, $\vol_{\pm}$, $\vob_{\pm}$, $s_{\pm}$ be variable orders satisfying the conditions specified in \S \ref{variable order construction section}. Then for every $M, S \in \mathbb{R}$ and sufficiently small $\delta > 0$, we have
\begin{align} \label{almost semi-Fredholm estimates with symbolic decay}
\begin{split}
 \| u \|_{ H_{\mathrm{d3sc,3co,res}}^{ m_{ \pm }, \vor_{ \pm }, \vol_{ \pm }, \vor_{ \pm } + \vol_{ \pm }, \vob_{ \pm }, s_{ \pm }  } } & \leq  C ( \| P u \|_{ H_{\mathrm{d3sc,3co,res}}^{ m_{ \pm } - 2, \vor_{ \pm } + 1, \vol_{ \pm }, \vor_{ \pm } + \vol_{\pm  } + 1, \vob_{ \pm } + 2, s_{ \pm } - 2 } } \\
 & \quad + \| u \|_{H_{\mathrm{d3sc,3co,res}}^{M, \vor_{\pm} - \delta, \vol_{\pm} , \vor_{\pm} + \vol_{\pm} - \delta, \vob_{\pm} ,S} } )
\end{split}
\end{align}
in the strong sense that if the right hand sides of (\ref{almost semi-Fredholm estimates with symbolic decay}) is finite, then so are the left hand sides, and the estimate holds.
\end{proposition}
We remark that the presence of only a $\delta$-gain at $\psf_{\dmf}\Xd$ and $\dtsccf$ in the last term of (\ref{almost semi-Fredholm estimates with symbolic decay}) is due to the constraints at the above threshold radial estimates.
\begin{proof}
Fix $u \in \mathcal{S}'$ for which the right hand sides of (\ref{almost semi-Fredholm estimates with symbolic decay}) are finite. For definiteness, we will focus on the case where the orders are $m_{+}$, $\vor_{+}$, $\vol_{+}$, $\vob_{+}$, $s_{+}$. We also drop the plus signs from now on. Moreover, since (\ref{almost semi-Fredholm estimates with symbolic decay}) is weaker than the microlocal elliptic regularity estimates, which can be applied to $P$ away from $\Sigma_{\sigma}$, we will henceforth focus only on the situation in a neighborhood of $\bcv$. Additionally, we will assume that there is only one $\mathcal{C}_{\tindex}$, as the general case follows easily from propagation along the free flow and this local discussion. \par

We now proceed to patch the propagation estimates together. For brevity, we will only verbally explain why the flow of $\rho_{\dmf}^{-1} \rho_{\dtsccf}^{-1} \rho_{\tcocf}^{-2} H_{p}$ in $\bcv$ indeed allows this, for writing down the actual estimates would be extremely cumbersome task. We refer to \cite[\S 3.2]{AndrewNSC} for how this is done carefully in the two-body setting. Furthermore, our discussion below will be from the viewpoint of the forward flow of $\rho_{\dmf}^{-1} \rho_{\dtsccf}^{-1} \rho_{\tcocf}^{-2} H_{p}$, for the backward flow will only be relevant in the case where the variable orders have the minus signs. We also refer to Figure \ref{figure 6} for a graphical illustration.  \par

\begin{figure} 
\centering
\includegraphics{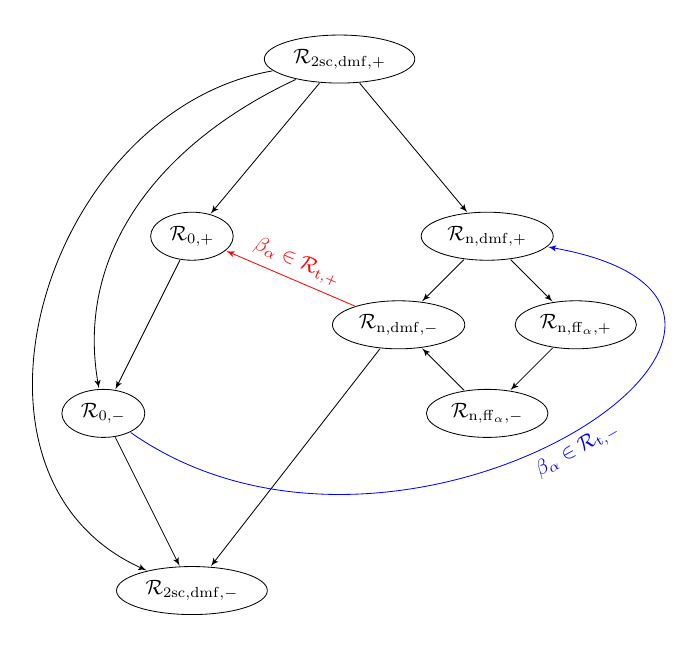}
\caption{A schematic flowchart showing trajectories of all the possible forward integral curves $\gamma$ of $\rho_{\dmf}^{-1} \rho_{\dtsccf}^{-1} \rho_{\tcocf}^{-2} H_{p}$ in $\Sigma_{\sigma}$. The microlocal regularity of $u$ -- assuming $Pu$ is sufficiently regular, can be propagated along such curves over $\Sigma_{\sigma}$ from $\mathcal{R}_{\mathrm{2sc,dmf}, +}$ to $\mathcal{R}_{\mathrm{2sc,dmf}, -}$, and we can assume a priori microlocal regularity at $\mathcal{R}_{\mathrm{2sc,dmf},+}$. Note that the red arrow represents those $\gamma$ along which $\beta_{\tindex} \in \mathcal{R}_{\mathrm{t},+}$, while the blue arrow represents those $\gamma$ along which $\beta_{\tindex} \in \mathcal{R}_{\mathrm{t},-}$. Thus, by monotonicity of the $\rho_{\dmf}^{-1} \rho_{\dtsccf}^{-1} \rho_{\tcocf}^{-2} H_{p}$-flow in the free variables, once the microlocal regularity of $u$ has been propagated along the blue arrow, it can no longer be propagated along the red arrow. In particular, microlocal propagation of regularity following the above diagram does not give rise to a circular argument.} 
\label{figure 6}
\end{figure}

We will start propagating from $\brp$. Recall that this is a global source for the flow of $\rho_{\dmf}^{-1} \rho_{\dtsccf}^{-1} \rho_{\tcocf}^{-2}H_p$. Moreover, by Proposition \ref{Global radial set estimate +}, we can always conclude microlocal regularity for $u$ near $\brp$, in the sense that $u$ microlocally belongs to $H_{\mathrm{d3sc,3co,res}}^{m,\vor,\vol,\vor+\vol,\vob,s}$ in a neighborhood of $\brp$. Our goal now is to propagate this amount of microlocal regularity for $u$ to the whole of $\bcv$, though by principal type propagation, it would be enough if we can conclude microlocal regularity at the radial sets. \par

To this end, let $\gamma$ denote an arbitrary forward integral curve of $\rho_{\dmf}^{-1} \rho_{\dtsccf}^{-1} \rho_{\tcocf}^{-2} H_{p}$. For brevity, henceforth we will write 
\begin{equation*}
\gamma_{-} = \lim_{t \rightarrow -\infty} \gamma(t), \ \gamma_{+} = \lim_{t \rightarrow + \infty} \gamma(t).
\end{equation*}
Note that if $\gamma$ begins at $\brp$ and $\gamma$ does not intersect neither $\dtsccf$ nor $\tcocf$, then $\gamma$ can be viewed as a usual forward integral curve on $^{\mathrm{sc}}T^{\ast}_{\partial \Xo} \Xo$ as in the two-body case. Thus $\gamma$ must end at $\brm$. Then principal type propagation estimate (i.e., Proposition \ref{principal type propagation second microlocal version}) allows us to propagate regularity along $\gamma$ into a punctured neighborhood of $\brp$.  \par

Still assuming that $\gamma$ begins at $\brp$, but now $\gamma$ has non-empty intersection with either $\dtsccf$ or $\tcocf$. Then there are a few cases to be looked at, which depend on whether $\gamma$ is contained in one of the boundary faces or not. Note that a complete description of these cases have been covered throughly in \S \ref{subsection characterization for the flow of the second microlocal dynamic}, and we expect the readers to be familiar with the contents therein for the following discussions. \par

We first examine the cases where $\gamma$ has non-empty intersection with $\tcocf$. Suppose that $\gamma$ is contained in $\tcocf$ but not $\dtsccf$. Thus $\gamma$ is contained in $\tcocf \cap \psf_{\dmf} \Xd$. Then the only possibilities for the complete trajectory of $\gamma$ are:
\begin{enumerate}
\item $\gamma_{-} \in \brp$, $\gamma_{+} \in \brm$.
\item $\gamma_{-} \in \brp$, $\gamma_{+} \in \mathcal{R}_{\mathrm{n},\dmf, +}(\beta_{\tindex})$, $\beta_{\tindex} \in \Sigma_{\mathrm{t}}$. 
\item $\gamma_{-} \in  \brp$, $\gamma_{+} \in \mathcal{R}_{0, +}$. 
\end{enumerate} \par

Observe that if $\beta_{\tindex} \in \Sigma_{\mathrm{t}} \backslash \mathcal{R}_{\mathrm{t},-}$, then case (2) exhausts all the possible $\gamma$ for which $\gamma_{+}  \in \mathcal{R}_{\mathrm{n},\dmf,+}(\beta_{\tindex})$. Indeed, by definition, we are considering all $\gamma \subset \tcocf$ with this property. On the other hand, by Proposition \ref{proposition characterization of the integral curves which live on dtsccf} we know that no $\gamma \subset \dtsccf$ can satisfy $\gamma_{+} \in \brm$. Thus, it follows from Proposition \ref{Proposition Rndm-} that we can conclude microlocal regularity for $u$ at $\bigcup_{\beta_{\tindex} \in \Sigma_{\mathrm{t}} \backslash \mathcal{R}_{\mathrm{t},-} }\mathcal{R}_{\mathrm{n},\dmf,+}(\beta_{\tindex})$. \par

Now, suppose we start from case (2) above with $\beta_{\tindex} \in \Sigma_{\mathrm{t}} \backslash \mathcal{R}_{\mathrm{t},-}$. Then we can continue to propagate microlocal regularity for $u$ out of each $\mathcal{R}_{\mathrm{n},\dmf,+}(\beta_{\tindex})$. Note that the free variables must be constants along the flow at $\dtsccf$. Then we have the following possibilities:
\begin{enumerate}[label={}]
\item[(4)] $\gamma_{-} \in \mathcal{R}_{\mathrm{n},\dmf,+}(\beta_{\tindex})$, $\gamma_{+} \in \mathcal{R}_{\mathrm{n}, \mathrm{dmf}, -}(\beta_{\tindex})$, $\beta_{\tindex} \in \Sigma_{\mathrm{t}} \backslash \mathcal{R}_{\mathrm{t},-}$ by Proposition \ref{proposition characterization of the integral curves which live on dtsccf}(1).
\item[(5)] $ \gamma_{-} \in \mathcal{R}_{\mathrm{n},\dmf,+}(\beta_{\tindex})$, $ \gamma_{+} \in \mathcal{R}_{\mathrm{n}, \dff, +}(\beta_{\tindex})$, $\beta_{\tindex} \in \Sigma_{\mathrm{t}} \backslash \mathcal{R}_{\mathrm{t},-}$ by Proposition \ref{proposition characterization of the integral curves which live on dtsccf}(3). 
\end{enumerate}
In fact, Proposition \ref{proposition characterization of the integral curves which live on dtsccf} implies that case (5) exhausts all the possible $\gamma$ for which $\gamma_{+} \in \mathcal{R}_{\mathrm{n},\dff,+}(\beta_{\tindex})$, $\beta_{\tindex} \in \Sigma_{\mathrm{t}} \backslash \mathcal{R}_{\mathrm{t},-}$. Thus by Proposition \ref{Propositive estimate at Rndff+}, we can conclude microlocal regularity for $u$ at every such $\mathcal{R}_{\mathrm{n},\dff,+}(\beta_{\tindex})$.
\par

We next take the conclusion of case (5) to be our starting point, and then propagate microlocal regularity for $u$ out of $\mathcal{R}_{\mathrm{n},\dff,+}(\beta_{\tindex})$ for every $\beta_{\tindex} \in \Sigma_{\mathrm{t}} \backslash \mathcal{R}_{\mathrm{t},-}$. But the only possibility in this case is that
\begin{enumerate}[label={}]
\item[(6)] $\gamma_{-} \in \mathcal{R}_{\mathrm{n}, \dff, +}(\beta_{\tindex})$, $\gamma_{+} \in \mathcal{R}_{\mathrm{n},\dff,-}(\beta_{\tindex})$, $\beta_{\tindex} \in  \Sigma_{\mathrm{t}} \backslash \mathcal{R}_{\mathrm{t},-}$ by Proposition \ref{proposition characterization of the integral curves which live on dtsccf}(2).
\end{enumerate} 
In fact, Proposition \ref{proposition characterization of the integral curves which live on dtsccf} implies that case (6) exhausts all the possible $\gamma$ for which $\gamma_{+} \in \mathcal{R}_{\mathrm{n},\dff,-}(\beta_{\tindex})$, $\beta_{\tindex} \in \Sigma_{\mathrm{t}} \backslash \mathcal{R}_{\mathrm{t},-}$. Thus by Proposition \ref{below threshold Rndff- proposition}, we can conclude microlocal regularity for $u$ at every such $\mathcal{R}_{\mathrm{n},\dff,-}(\beta_{\tindex})$. \par

From the conclusion of case (6), we can further propagate microlocal regularity for $u$ out of $\mathcal{R}_{\mathrm{n},\dff,-}(\beta_{\tindex})$, $\beta_{\tindex} \in \Sigma_{\mathrm{t}} \backslash \mathcal{R}_{\mathrm{t},-}$. But then there is again only one possible behavior for the trajectory of $\gamma$, namely
\begin{enumerate}[label={}]
\item[(7)] $\gamma_{-} \in \mathcal{R}_{\mathrm{n},\dff,-}(\beta_{\tindex})$, $\gamma_{+} \in \mathcal{R}_{\mathrm{n},\dmf,-}(\beta_{\tindex})$, $\beta_{\tindex} \in \Sigma_{\mathrm{t}} \backslash \mathcal{R}_{\mathrm{t},-}$ by Proposition \ref{proposition characterization of the integral curves which live on dtsccf}(4).
\end{enumerate}
Moreover, Proposition \ref{proposition characterization of the integral curves which live on dtsccf} implies that the combination of cases (4) and (7) above exhaust all the possible $\gamma$ for which $\gamma_{+} \in \mathcal{R}_{\mathrm{n},\dmf,-}(\beta_{\tindex})$, $\beta_{\tindex} \in \Sigma_{\mathrm{t}} \backslash \mathcal{R}_{\mathrm{t},-}$. Then by Proposition \ref{Proposition Rndm-}, we can conclude microlocal regularity for $u$ at every such $\mathcal{R}_{\mathrm{n},\dmf,-}(\beta_{\tindex})$ as well. \par

The above discussion allows us to further propagate microlocal regularity for $u$ out of $\mathcal{R}_{\mathrm{n},\dmf,-}(\beta_{\tindex})$, $\beta_{\tindex} \in \Sigma_{\mathrm{t}} \backslash \mathcal{R}_{\mathrm{t},-}$. Suppose that $\gamma$ begins at such a $\mathcal{R}_{\mathrm{n},\dmf,-}(\beta_{\tindex})$. Then the only possibilities for the complete trajectory of $\gamma$ are now: 
\begin{enumerate}[label={}]
\item[(8)] $\gamma_{-} \in \mathcal{R}_{\mathrm{n},\dmf,-}(\beta_{\tindex})$, $\gamma_{+} \in \brm$, $\beta_{\tindex} \in \Sigma_{\mathrm{t}} \backslash \mathcal{R}_{\mathrm{t},-}$. 
\item[(9)] $\gamma_{-} \in \mathcal{R}_{\mathrm{n},\dmf,-}(\beta_{\tindex})$, $\gamma_{+} \in \mathcal{R}_{0,+}$, $\beta_{\tindex} \in \mathcal{R}_{\mathrm{t},+}$ by Proposition \ref{proposition characterization of the integral curves which converge to R0}(4).
\end{enumerate}
Indeed, one simply need to rule out the cases where $\gamma_{-} \in \mathcal{R}_{\mathrm{n}, \dmf, -}(\beta_{\tindex})$, $\gamma_{+} \in \mathcal{R}_{0,-}$. This is impossible if $\beta_{\tindex} \in \Sigma_{\mathrm{t}} \backslash \mathcal{R}_{\mathrm{t},-}$, which is what we are assuming here. Since otherwise we would have contradiction with the evolution in the free variables. \par

In fact, we are particularly interested in case (9), since when combined with case (3) above, Proposition \ref{proposition characterization of the integral curves which converge to R0}(4) shows that we have exhausted all the possible $\gamma$ for which $\gamma_{+} \in \mathcal{R}_{0,+}$. Thus by Proposition \ref{radial point estimate at R0p}, we can conclude microlocal regularity for $u$ at $\mathcal{R}_{0,+}$. \par

Continuing to propagate microlocal regularity for $u$ from $\mathcal{R}_{0,+}$, the only possibility is
\begin{enumerate}[label={}]
\item[(10)] $\gamma_{-} \in \mathcal{R}_{0,+}$, $\gamma_{+} \in \mathcal{R}_{0,-}$ by Proposition \ref{proposition characterization of the integral curves which converge to R0}(3).
\end{enumerate}
We remark that Proposition \ref{proposition characterization of the integral curves which converge to R0}(3) in particular rules out the possibility where $\gamma_{-} \in \mathcal{R}_{0,+}$ and $\gamma_{+} \in \brm$. \par
At this point, it is also of interest to consider the case where $\gamma$ begins at $\brp$, has non-empty intersection with $\tcocf$, does not intersect $\dtsccf$, and is not contained in $\tcocf$. Then by Proposition \ref{proposition characterization of the integral curves which converge to R0}(1), the only possibility is that
\begin{enumerate}[label={}]
\item[(11)] $\gamma_{-} \in \brp$, $\gamma_{+} \in \mathcal{R}_{0,-}$.
\end{enumerate}
In fact, by Proposition \ref{precise description of integral flow near ff}(1)(a), this translates to the three-body setting as those $\gamma$ given by the lift of some integral curve of $\sH_{p,\mf}$, say $\tilde{\gamma}$, such that $\tilde{\gamma}$ intersects ${^{\mathrm{3sc}}T^{\ast}}_{\partial \ff} X$ after traveling a full distance of $\pi$. In particular, again by Proposition \ref{proposition characterization of the integral curves which converge to R0}(1), case (11) exhausts all the possible $\gamma$ for which $\gamma_{+} \in \mathcal{R}_{0,-}$. Thus by Proposition \ref{radial point estimate at R0m}, we can conclude microlocal regularity for $u$ at $\mathcal{R}_{0,-}$. \par

We now focus on microlocal propagation from $\mathcal{R}_{0,-}$. Suppose that $\gamma$ begins at $\mathcal{R}_{0,-}$, then the possibilities for the trajectory of $\gamma$ are:
\begin{enumerate}[label={}]
\item[(12)] $\gamma_{-} \in \mathcal{R}_{0,-}$, $\gamma_{+} \in \brm$. 
\item[(13)] $\gamma_{-} \in \mathcal{R}_{0,-}$, $\gamma_{+} \in \mathcal{R}_{\mathrm{n},\dmf,+}(\beta_{\tindex})$, $\beta_{\tindex} \in \mathcal{R}_{\mathrm{t},-}$.
\end{enumerate}
By Proposition \ref{proposition characterization of the integral curves which converge to dtsccf}(1) and Proposition \ref{proposition characterization of the integral curves which converge to R0}(5), it should be clear that the combination of case (2) above with case (13) exhausts all the possible $\gamma$ for which $\gamma_{+} \in \mathcal{R}_{\mathrm{n},\dmf,+}(\beta_{\tindex})$ where $\beta_{\tindex} \in \mathcal{R}_{\mathrm{t},-}$. Thus by Proposition \ref{Proposition Rndm+} once again, we can conclude microlocal regularity for $u$ at all such $\mathcal{R}_{\mathrm{n},\dmf,+}(\beta_{\tindex})$ as well. \par

It follows that we can propagate microlocal regularity for $u$ out of $\mathcal{R}_{\mathrm{n},\dmf,+}(\beta_{\tindex})$ following steps (4)--(7) verbatim, except we now have $\beta_{\tindex} \in \mathcal{R}_{\mathrm{t},-}$ instead of $\beta_{\tindex} \in \Sigma_{\mathrm{t}} \backslash \mathcal{R}_{\mathrm{t},-}$. Thus, we can conclude microlocal regularity for $u$ at all $\mathcal{R}_{\mathrm{n},\dmf,\pm}(\beta_{\tindex})$ and $\mathcal{R}_{\mathrm{n}, \dff, \pm}(\beta_{\tindex})$ where $\beta_{\tindex} \in \mathcal{R}_{\mathrm{t},-}$. Moreover, if $\gamma$ begins at $\mathcal{R}_{\mathrm{n},\dmf,-}(\beta_{\tindex})$, $\beta_{\tindex} \in \mathcal{R}_{\mathrm{t},-}$, then the only possibility is that
\begin{enumerate}[label={}]
\item[(14)] $\gamma_{-} \in \mathcal{R}_{\mathrm{n},\dmf,-}(\beta_{\tindex})$, $\gamma_{+} \in \brm$, $\beta_{\tindex} \in \mathcal{R}_{\mathrm{t},-}$.
\end{enumerate} 
Indeed, this must be the case, for otherwise there would again be contradiction with evolution in the free variables.
\par

Finally, we consider the cases where $\gamma$ begins at $\brp$, does not intersect $\tcocf$, is not contained in $\dtsccf$, and has non-empty intersection with $\dtsccf$. Then the only possibility for the trajectory of $\gamma$ is that
\begin{enumerate}[label={}]
\item[(15)] $\gamma_{-} \in \brp $, $\gamma_{+} \in \mathcal{R}_{\mathrm{n},\dmf,+}(\beta_{\tindex})$, $\beta_{\tindex} \in \Sigma_{\mathrm{n}}$.
\end{enumerate} 
We can then go through the strategy outlined in steps (4)--(7), except that we now assume $\beta_{\tindex} \in \Sigma_{\mathrm{n}}$. Moreover, we will use Propositions \ref{transversal below threshold at ndmf-}--\ref{transversal above threshold Rndff+} in the arguments. Then the conclusion is that microlocal regularity for $u$ can now be deduced at $\mathcal{R}_{\mathrm{n},\dmf,\pm}(\beta_{\tindex})$, $\mathcal{R}_{\mathrm{n},\dff, \pm}(\beta_{\tindex})$ where $\beta_{\tindex} \in \Sigma_{\mathrm{n}}$. Moreover, if $\gamma$ begins at such a $\mathcal{R}_{\mathrm{n},\dmf, -}(\beta_{\tindex})$, then by viewing $\gamma$ as the lift of some integral of $\sH_{p,\mf}$, it should be clear that the only possibility is
\begin{enumerate}[label={}]
\item[(16)] $\gamma_{-} \in \mathcal{R}_{\mathrm{n},\dmf,-}(\beta_{\tindex})$, $\gamma_{+} \in \brm$, $\beta_{\tindex} \in \Sigma_{\mathrm{n}}$. 
\end{enumerate} \par
It remains to conclude microlocal regularity for $u$ at $\brm$. For this, it suffices to note that cases (1), (8), (10), (12), (14) and (16) together exhaust all the possible $\gamma$ which can end at $\brm$. Thus the conclusion follows from Proposition \ref{global below threshold}.
\end{proof}

\section{Decay estimate at the corner face} \label{decay at the corner face section}
By Proposition \ref{fredholm estimate with symbolic decay}, thus far we have managed to prove the required Fredholm estimates with an error term with better decay in every symbolic sense. In this section, we will further improve the remainder term of (\ref{almost semi-Fredholm estimates with symbolic decay}) at the corner face $\cf$. \par

To be precise, we will show that
\begin{proposition} \label{decay at the corner face proposition}
Let $m_{\pm}$, $\vor_{\pm}$, $\vol_{\pm}$, $\vob_{\pm}$, $s_{\pm}$ be the orders constructed in \S \ref{variable order construction section}. Then for sufficiently negative $M, S  < 0$ and arbitrarily small $\delta > 0$, we have
\begin{align} \label{corner face decay equation 1}
\begin{split}
 \| u \|_{ H_{\mathrm{d3sc,3co,res}}^{ m_{ \pm }, \vor_{ \pm }, \vol_{ \pm }, \vor_{ \pm } + \vol_{ \pm }, \vob_{ \pm }, s_{ \pm }  } } & \leq  C ( \| P u \|_{ H_{\mathrm{d3sc,3co,res}}^{ m_{ \pm } - 2, \vor_{ \pm } + 1, \vol_{ \pm }, \vor_{ \pm } + \vol_{\pm  } + 1, \vob_{ \pm } + 2, s_{ \pm } - 2 } } \\
 & \quad + \| u \|_{H_{\mathrm{d3sc,3co,res}}^{M, \vor_{\pm} - \delta, \vol_{\pm} , \vor_{\pm} + \vol_{\pm} - \delta , \vob_{\pm} - \delta ,S} } )
\end{split}
\end{align}
for all $u \in H_{\mathrm{d3sc,3co,res}}^{m_{\pm}, \vor_{\pm}, \vol_{\pm}, \vor_{\pm} + \vol_{\pm}, \vob_{\pm}, s_{\pm}}$ such that $Pu \in H_{\mathrm{d3sc,3co,res}}^{ m_{ \pm } - 2, \vor_{ \pm } + 1, \vol_{ \pm }, \vor_{ \pm } + \vol_{\pm  } + 1, \vob_{ \pm } + 2, s_{ \pm } - 2 }$.
\end{proposition}

For definiteness,  we will again focus on the case when the orders are $m_{+}$, $\vor_{+}$, $\vol_{+}$, $\vob_{+}$, $s_{+}$, and also drop the plus signs from now on. Moreover, we will continue to assume that $\mathcal{C} = \mathcal{C}_{\tindex}$ has just one element. It will be obvious to the readers that this calculation can be extended naturally to the general case. Indeed, the main ingredient of the proof will be a commutator calculation for the indicial operators at $\cf$. By disjointedness, this calculation can be made at each $\cf$ simultaneously, and the end result will be the same. See \cite{AndrasThesis} for how this kind of calculations work in the case where $\mathrm{ff}$ is more general in the three-body setting (i.e., not just given by a disjoint union of $\ff$, where each $\ff$ is the lift of some $\mathcal{C}_{\tindex}$).

To prove (\ref{corner face decay equation 1}), we will let $Q \in \Psfo^{-\infty, 0, 0 ,0 ,0, -\infty}$ be such that $\hat{N}_{\cf}(Q) = q_{\tindex} \in \mathcal{C}^{\infty}_{c}( \mathcal{C}_{\tindex} \times \mathbb{R}^{n_{\tindex}} )$ is supported in an arbitrarily small neighborhood of $\Sigma_{\mathrm{t}}$, and is in particular identically $1$ near $\Sigma_{\mathrm{t}}$. Then starting from the remainder term in (\ref{almost semi-Fredholm estimates with symbolic decay}), we can write
\begin{equation} 
\label{corner face decay equation 2}
 \| u \|_{H_{\mathrm{d3sc,3co,res}}^{ M, \vor - \delta , \vol, \vor + \vol - \delta, \vob, S }} \leq C (  \| Q u \|_{H_{\mathrm{d3sc,3co,res}}^{ M, \vor - \delta , \vol , \vor + \vol - \delta , \vob, S }}  + \| ( I - Q ) u \|_{H_{\mathrm{d3sc,3co,res}}^{ M, \vor - \delta , \vol, \vor + \vol - \delta, \vob, S }} ).
\end{equation}
However, recall that $P \in \Psfo^{2,0,0, 0 ,0,2}$ is elliptic on the support of $1- q_{\tindex}$ at $\cf$, so that by elliptic regularity (in particular Proposition \ref{proposition microlocal elliptic regularity estimate chapter 1 thesis}, part (3)), we have
\begin{equation*}
\| ( I - Q ) u \|_{H_{\mathrm{d3sc,3co,res}}^{M, \vor - \delta , \vol , \vor + \vol - \delta , \vob, S}} \leq C ( \| P u \|_{H_{\mathrm{d3sc,3co,res}}^{M-2, \vor - \delta, \vol  , \vor + \vol - \delta , \vob, S - 2}} + \| u \|_{H_{\mathrm{d3sc,3co,res}}^{M, \vor - \delta , \vol , \vor + \vol - \delta , F, S}} ),
\end{equation*}
where $F < 0$ is arbitrarily negative, so that (\ref{corner face decay equation 2}) can in fact be improved to
\begin{align} \label{corner face decay equation 2.4}
\begin{split}
\| u \|_{H_{\mathrm{d3sc,3co,res}}^{ M, \vor - \delta , \vol, \vor + \vol - \delta, \vob, S }}  & \leq C( \| Pu \|_{H_{\mathrm{d3sc,3co,res}}^{M-2, \vor - \delta , \vol, \vor + \vol - \delta, \vob  , S - 2 }} \\
& \quad + \| Q u \|_{H_{\mathrm{d3sc,3co,res}}^{ M, \vor - \delta, \vol  , \vor + \vol -  \delta, \vob, S }}  +  \| u \|_{H_{\mathrm{d3sc,3co,res}}^{M, \vor - \delta, \vol   , \vor + \vol - \delta, F, S }} ). 
 \end{split}
\end{align} \par

It remains the estimate the norm of $Qu$ appearing in (\ref{corner face decay equation 2.4}). For this, we will consider a further spatial decomposition. Namely, let $\varphi_{\dmf} \in \mathcal{C}^{\infty}( \Xd )$ be a cut-off function at $\dmf$ such that $\varphi_{\dmf}$ is identically $1$ near $\dmf$, and let $\varphi_{\dff} = 1 - \varphi_{\dmf}$. We will compute
\begin{equation}   \label{corner face decay equation 2.5}
\| Qu \|_{H_{\mathrm{d3sc,3co,res}}^{M, \vor - \delta , \vol, \vor + \vol - \delta, \vob, S}} \leq \| \varphi_{\dmf} Q u \|_{H_{\mathrm{d3sc,3co,res}}^{M,  \vor - \delta , \vol, \vor + \vol - \delta, \vob, S}} + \| \varphi_{\dff} Q \|_{H_{\mathrm{d3sc,3co,res}}^{M, \vor - \delta , \vol , \vor + \vol - \delta, \vob ,  S}} 
\end{equation}
and then estimate the norms of $\varphi_{\dmf} Q u$ and $\varphi_{\dff} Q u$ using different strategies.

\subsection{Decay estimate near the main face}
We first consider the situation near $\dmf$. 
\begin{lemma} \label{decay at corner face decay near main face lemma}
Let $M \in \mathbb{R}$, $\vor, \vov \in \mathcal{C}^{\infty}( \psf \Xd )$, $\vob \in \mathcal{C}^{\infty}( \mathcal{C}_{\tindex} \times \overline{\mathbb{R}^{n_{\tindex}}} )$. Let also $\varphi_{\dmf} \in \mathcal{C}^{\infty}( \Xd )$ be a cut-off function at $\dmf$ whose support is contained in $\{ x_{\dmf} \leq \epsilon \}$, where $\epsilon > 0$ is arbitrarily small and $x_{\dmf} \in \mathcal{C}^{\infty}( \Xd )$ is a global boundary defining function for $\dmf$. Then for every $\delta > 0$, there exists $C > 0$ such that we have
\begin{equation} 
\label{corner face decay equation 3}
\| x_{\dmf}^{\delta} \varphi_{\dmf} u \|_{H_{\mathrm{d3sc,3co,res}}^{M, \vor, \ast , \vov, \vob , \ast }} \leq \epsilon^{\delta} C \| \varphi_{\dmf} u \|_{H_{\mathrm{d3sc,3co,res}}^{M, \vor, \ast , \vov, \vob, \ast}}.
\end{equation}
\end{lemma}
\begin{proof}
For convenience, in the discussion below we will take $\ast = 0$, for it would be irrelevant on the support of $\varphi_{\dmf}$ even if it was non-zero. \par

We first recall from \S \ref{subsection Sobolev spaces under second microlocalization} that
\begin{equation} \label{corner face decay equation 4.9.9.1}
\| x_{\dmf}^{\delta} \varphi_{\dmf} u \|_{H_{\mathrm{d3sc,3co,res}}^{M, \vor, 0 , \vov, \vob, 0}}^2 \simeq \| x_{\dmf}^{\delta} \varphi_{\dmf} u \|_{H_{\mathrm{d3sc,3co,res}}^{M,N,0,K,F,0}}^2 + \| \tilde{\Lambda} x_{\dmf}^{\delta} \varphi_{\dmf} u  \|_{H_{\mathrm{d3sc,3co,res}}^{M,\vor, 0 ,\vov - \vob, 0 ,0}}^2,
\end{equation}
where $\tilde{\Lambda} \in \Psf^{0,0,0,\vob, \vob, 0}$ is some quantization of $x_{\cf}^{-\vob}$, and $N, K, F < 0$ are sufficiently negative constants. In particular, $\tilde{\Lambda}$ must be supported in a neighborhood of the diagonal. Moreover, we can write
\begin{equation}
\label{corner face decay equation 4.9.1}
\| x_{\dmf}^{\delta} \varphi_{\dmf} u \|_{H_{\mathrm{d3sc,3co,res}}^{M,N,0,K,F,0}}^2 \simeq \| x_{\dmf}^{\delta} \varphi_{\dmf} u \|_{H_{\mathrm{3co}}^{M', N, 0, F}}^2 + \| \Lambda' x_{\dmf}^{\delta} \varphi_{\dmf} u \|_{L^2}^2,
\end{equation}
as well as 
\begin{align}
\label{corner face decay equation 4.9.2}
\begin{split}
\| \tilde{\Lambda} x_{\dmf}^{\delta} \varphi_{\dmf} u \|_{H_{\mathrm{d3sc,3co,res}}^{M, \vor, 0, \vov - \vob, 0 ,0}}^2 \simeq {} & \| \tilde{\Lambda} x_{\dmf}^{\delta} \varphi_{\dmf} u \|_{H_{\mathrm{d3sc,3co,res}}^{M', N, 0, K, 0 ,0}}^2 + \| \Lambda \tilde{\Lambda} x_{\dmf}^{\delta} \varphi_{\dmf} u \|_{L^2}^2 \\
\simeq {} & \|  \tilde{\Lambda} x_{\dmf}^{\delta} \varphi_{\dmf} u \|_{H_{\mathrm{3co}}^{M'', N, 0,0}}^2 \\
&  + \| \Lambda'' \tilde{\Lambda} x_{\dmf}^{\delta} \varphi_{\dmf} u \|_{L^2} + \| \Lambda \tilde{\Lambda} x_{\dmf}^{\delta} \varphi_{\dmf} u \|_{L^2}^2, 
\end{split}
\end{align}
where $\Lambda \in \Psf^{M, \vor, 0, \vov - \vob, 0,0}$, $\Lambda' \in \Psi_{\mathrm{d3sc,3co,res}}^{M,N,0,K,F,0}$ and $\Lambda'' \in \Psi_{\mathrm{d3sc,3co,res}}^{M', N, 0, K , 0,0} $ are some elliptic operators in the symbolic sense which are also supported in a neighborhood of the diagonal, while $M', M'' < 0$ are sufficiently negative constants.

We will consider the above terms individually. To start, it is easy to see that we have
\begin{align*}
\| x_{\dmf}^{\delta} \varphi_{\dmf} u \|_{H_{\mathrm{3co}}^{M', N , 0 , F}}^2 & \simeq \| x_{\cf}^{-F} x_{\dmf}^{\delta} \varphi_{\dmf} u \|_{L^2}^2 + \|  \Lambda_0 x_{\dmf}^{\delta} \varphi_{\dmf} u \|_{L^2}^2 \\
& \leq C \epsilon^{2\delta} ( \| x_{\cf}^{-F} \varphi_{\dmf} u \|_{L^2}^2 + \| x_{\dmf}^{-\delta} \Lambda_0 x_{\dmf}^{\delta} \varphi_{\dmf} u \|_{L^2}^2 ),
\end{align*}
where $\Lambda_0 \in \Psi_{\mathrm{3coc}}^{M', N ,0,F}$ is elliptic in the symbolic sense, and moreover supported near the diagonal. But since $x_{\dmf}^{-\delta} \Lambda_0 x_{\dmf}^{\delta}$ belongs to $\Psi_{\mathrm{3coc}}^{M',N,0,F}$, by boundedness, we can conclude
\begin{equation}
\label{corner face decay equation 5}
\| x_{\dmf}^{\delta} \varphi_{\dmf} u \|_{H_{\mathrm{3co}}^{M', N , 0 ,F}}  \leq C \epsilon^{\delta} \| \varphi_{\dmf} u \|_{H_{\mathrm{3co}}^{M', N ,0 , F}}.
\end{equation}
Likewise, we have 
\begin{align*}
\| \tilde{\Lambda} x_{\dmf}^{\delta} \varphi_{\dmf} u \|_{H_{\mathrm{3co}}^{M'', N,0,0}}^{2} & \simeq \| \tilde{\Lambda} x_{\dmf}^{\delta} \varphi_{\dmf} u \|_{L^2}^{2} + \| \Lambda_0' \tilde{\Lambda} x_{\dmf}^{\delta} \varphi_{\dmf} u \|_{L^2}^2 \\
& \leq C  \epsilon^{2 \delta} (  \| x_{\dmf}^{-\delta} \tilde{\Lambda} x_{\dmf}^{\delta} \varphi_{\dmf} u  \|_{L^2}^2 + \| x_{\dmf}^{-\delta} \Lambda_0' x_{\dmf}^{\delta} x_{\dmf}^{-\delta} \tilde{\Lambda} x_{\dmf}^{\delta} \varphi_{\dmf} u \|_{L^2}^2  ),
\end{align*}
where $\Lambda_0' \in \Psi_{\mathrm{3coc}}^{M'', N, 0,0}$ is elliptic in the symbolic sense and is supported near the diagonal. Thus, since $x_{\dmf}^{-\delta} \Lambda_0' x_{\dmf}^{\delta} \in \Psi_{\mathrm{3coc}}^{M'', N, 0,0}$, we can conclude that
\begin{equation}
\label{corner face decay equation 5.0.1}
\| \tilde{\Lambda} x_{\dmf}^{\delta} \varphi_{\dmf} u \|_{H_{\mathrm{3co}}^{M'', N, 0,0}} \leq C \epsilon^{\delta} \| x_{\dmf}^{-\delta} \tilde{\Lambda} x_{\dmf}^{\delta} \varphi_{\dmf} u \|_{H_{\mathrm{3co}}^{M'', N, 0,0}}
\end{equation}
holds as well.

In fact, the same strategy can also be used to estimate the $L^2$-norms in (\ref{corner face decay equation 4.9.1}) and (\ref{corner face decay equation 4.9.2}). This leads us to the estimates
\begin{equation}
\label{corner face decay equation 5.0.2}
\| \Lambda' x_{\dmf}^{\delta} \varphi_{\dmf} u \|_{L^2} \leq C \epsilon^{\delta} \| x_{\dmf}^{-\delta} \Lambda' x_{\dmf}^{\delta} \varphi_{\dmf} u \|_{L^2}, 
\end{equation}
and
\begin{align}
\label{corner face decay equation 5.0.3}
\begin{split}
\| \Lambda'' \tilde{\Lambda} x_{\dmf}^{\delta} \varphi_{\dmf} u \|_{L^2} & \leq C \epsilon^{\delta} \| x_{\dmf}^{-\delta} \Lambda'' x_{\dmf}^{\delta} x_{\dmf}^{-\delta} \tilde{\Lambda} x_{\dmf}^{\delta} \varphi_{\dmf} u \|_{L^2}, \\
\| \Lambda \tilde{\Lambda} x_{\dmf}^{\delta} \varphi_{\dmf} u \|_{L^2} & \leq C \epsilon^{\delta} \| x_{\dmf}^{-\delta} \Lambda x_{\dmf}^{\delta} x_{\dmf}^{-\delta} \tilde{\Lambda} x_{\dmf}^{\delta} \varphi_{\dmf} u \|_{L^2}. 
\end{split}
\end{align}

By substituting (\ref{corner face decay equation 5}), (\ref{corner face decay equation 5.0.2}) into (\ref{corner face decay equation 4.9.1}), we now have
\begin{align}
\label{corner face decay equation 5.0.4}
\begin{split}
\| x_{\dmf}^{\delta} \varphi_{\dmf} u \|_{H_{\mathrm{d3sc,3co,res}}^{M,N,0,K,F,0}}^2 & \leq C \epsilon^{2 \delta} ( \| \varphi_{\dmf} u \|_{H_{\mathrm{3co}}^{M', N, 0, F}}^2 + \| x_{\dmf}^{-\delta} \Lambda' x_{\dmf}^{\delta} \varphi_{\dmf} u \|_{L^2}^2 ) \\
& \leq C \epsilon^{2 \delta} \| \varphi_{\dmf} u \|_{H_{\mathrm{d3sc,3co,res}}^{M,N,0,K,F,0}}^2.
\end{split}
\end{align}
Similarly, by substituting (\ref{corner face decay equation 5.0.1}), (\ref{corner face decay equation 5.0.3}) into (\ref{corner face decay equation 4.9.2}), we have
\begin{align*}
\begin{split}
& \| \tilde{\Lambda} x_{\dmf}^{\delta} \varphi_{\dmf} u \|_{H_{\mathrm{d3sc,3co,res}}^{M, \vor, 0, \vov - \vob, 0,0}}^2 \leq C \epsilon^{2 \delta} ( \|  x_{\dmf}^{-\delta} \tilde{\Lambda} x_{\dmf}^{\delta} \varphi_{\dmf} u \|_{H_{\mathrm{3co}}^{M'', N, 0,0}}^2  \\
& \qquad + \| x_{\dmf}^{-\delta} \Lambda'' x_{\dmf}^{\delta} x_{\dmf}^{-\delta} \tilde{\Lambda} x_{\dmf}^{\delta} \varphi_{\dmf} u \|_{L^2}^2 + \| x_{\dmf}^{-\delta} \Lambda x_{\dmf}^{\delta} x_{\dmf}^{-\delta} \tilde{\Lambda} x_{\dmf}^{\delta} \varphi_{\dmf} u \|_{L^2}^2 ),
\end{split}
\end{align*}
 which implies that 
 \begin{equation}
 \label{corner face decay equation 5.0.5}
 \| \tilde{\Lambda} x_{\dmf}^{\delta} \varphi_{\dmf} u \|_{H_{\mathrm{d3sc,3co,res}}^{M, \vor, 0, \vov - \vob, 0,0}} \leq C \epsilon^{\delta} \| x_{\dmf}^{-\delta} \tilde{\Lambda} x_{\dmf}^{\delta} \varphi_{\dmf} u  \|_{H_{\mathrm{d3sc,3co,res}}^{M, \vor, 0, \vov - \vob, 0 ,0}}.
 \end{equation}
Finally, by substituting (\ref{corner face decay equation 5.0.4}) and (\ref{corner face decay equation 5.0.5}) back into (\ref{corner face decay equation 4.9.9.1}), we arrive at
\begin{equation*}
\| x_{\dmf}^{\delta} \varphi_{\dmf} u \|_{H_{\mathrm{d3sc,3co,res}}^{M, \vor, 0, \vov, \vob, 0}}^2 \leq C \epsilon^{2\delta} ( \| \varphi_{\dmf} u \|_{H_{\mathrm{d3sc,3co,res}}^{M,N,0,K,F,0}}^2 + \| x_{\dmf}^{-\delta} \tilde{\Lambda} x_{\dmf}^{\delta} \varphi_{\dmf} u \|_{H_{\mathrm{d3sc,3co,res}}^{M, \vor, 0, \vov - \vob, 0,0}}^2  ),
\end{equation*}
from which (\ref{corner face decay equation 3}) follows immediately since $x_{\dmf}^{-\delta} \tilde{\Lambda} x_{\dmf}^{\delta} \in \Psf^{0,0,0,\vob,\vob,0}$.
\end{proof}

Now, suppose we apply (\ref{corner face decay equation 3}) with $Qu$ in places of $u$. Then for any arbitrarily small $\epsilon > 0$, we can take the support of $\varphi_{\dmf}$ to be sufficiently small, such that
\begin{align*}
\| \varphi_{\dmf} Q u \|_{H_{\mathrm{d3sc,3co,res}}^{M, \vor - \delta , \vol , \vor + \vol - \delta, \vob, S}} &  = \| x_{\dmf}^{\delta} \varphi_{\dmf} x_{\dmf}^{-\delta} Q u \|_{H_{\mathrm{d3sc,3co,res}}^{M, \vor - \delta, \vol , \vor + \vol - \delta, \vob, S}}  \\
&  \leq C \epsilon^{\delta} \| \varphi_{\dmf}x_{\dmf}^{-\delta} Qu \|_{H_{\mathrm{d3sc,3co,res}}^{ M, \vor - \delta, \vol , \vor + \vol - \delta, \vob, S }} \leq C \epsilon \| u \|_{H_{\mathrm{d3sc,3co,res}}^{m, \vor, \vol, \vor + \vol, \vob, s}}.
\end{align*}
By substituting this equation back into (\ref{corner face decay equation 2.5}), which is followed by (\ref{corner face decay equation 2.4}) and then (\ref{almost semi-Fredholm estimates with symbolic decay}), we conclude that
\begin{align*}
\| u \|_{H_{\mathrm{d3sc,3co,res}}^{m, \vor, \vol, \vor + \vol, \vob, s } } & \leq C(  \| P u \|_{H_{\mathrm{d3sc,3co,res}}^{ M - 2, \vor - \delta, \vol, \vor + \vol - \delta , \vob , S - 2 } } \\
& \quad + \epsilon \| u \|_{H_{\mathrm{d3sc,3co,res}}^{m, \vor, \vol, \vor + \vol, \vob, s }} + \| \varphi_{\dff}  Q u \|_{H_{\mathrm{d3sc,3co,res}}^{M, \vor - \delta , \vol  , \vor + \vol - \delta, \vob, S}} + \| u \|_{H_{\mathrm{d3sc,3co,res}}^{M, \vor - \delta, \vol  , \vor + \vol - \delta, F, S}} ),
\end{align*}
and upon taking $\epsilon$ to be small enough, we would have shown that
\begin{align} \label{big decay estimate near cf and dmf}
\begin{split}
\| u \|_{H_{\mathrm{d3sc,3co,res}}^{m, \vor, \vol, \vor + \vol, \vob, s } } & \leq C(  \| P u \|_{H_{\mathrm{d3sc,3co,res}}^{ M - 2, \vor - \delta, \vol, \vor + \vol - \delta, \vob , S - 2 } } \\
& \quad + \| \varphi_{\dff} Q u \|_{H_{\mathrm{d3sc,3co,res}}^{M, \vor - \delta, \vol  , \vor + \vol - \delta, \vob, S}} + \| u \|_{H_{\mathrm{d3sc,3co,res}}^{M, \vor - \delta , \vol  , \vor + \vol - \delta, F, S}} ).
\end{split}
\end{align}

\subsection{Adaptation of Vasy's argument} \label{adaption of vasy argument subsection}
We now make the positive commutator calculations which will allow us to gain decay at $\cf$ away from $\dmf$. The strategy here will be to reduce our construction to one which resembles the construction made by Vasy in \cite[\S 4]{AndrasSM}. \par

We begin by noting a suitable reduction to the variable order $\vor$ at $\dtsccf$. This will be necessary for Vasy's argument to apply (since we use it as a black box here). Recall from \S \ref{variable order construction section}, in particular \S \ref{variable order smooth square root subsection}, that when restricted to $\dtsccf$, the variable order $\vor$ takes the form
\begin{equation*}
\vor = - \frac{1}{2} + \varphi \beta^{\tindex} \phi( (\beta^{\tindex})^{-1} \vor^{\tindex} ) + \vol, \quad \beta^{\tindex} > 0.
\end{equation*}
Here, $\varphi \in \mathcal{C}^{\infty}( \psf \Xd )$ is a cut-off at $\bcv$, $\phi$ is any $\mathcal{C}^{\infty}( \mathbb{R} )$ function such that $\phi ' \geq 0$ with $\phi(t) = \pm \epsilon_{\pm}$ near $ t = \pm 1$ for some $\epsilon_{\pm} > 0$, and 
\begin{equation*}
\vor^{\tindex} =  \beta^{\tindex} \frac{ \utau }{ ( \intn )^{1/2} }.
\end{equation*}
Henceforth, we shall further require that, say
\begin{equation*}
t + \frac{1}{2} \leq \phi (t) \leq t + 1.
\end{equation*}
Such a $\phi$ can certainly be arranged. For instance, it is possible if $\epsilon_{-} = -1/4$, $\epsilon_{+} = 7/4$. If additionally we require $\varphi_{1} \leq 1$, then it is easy to see that
\begin{equation} \label{our variable order compared with Vasy's one}
\vor - \beta^{\tindex} \leq \hvor \leq \vor - \frac{\beta^{\tindex}}{2}, \quad \hvor = -\frac{1}{2} + \varphi \vor^{\tindex} + \vol.
\end{equation} \par

We now state the following crucial lemma:
\begin{lemma} \label{decay at corner face away from main face proposition}
Let $Q \in \Psf^{-\infty,0,0,0,0,-\infty}$, $\varphi_{\dff} \in \mathcal{C}^{\infty}(\Xd)$ be specified as above. Then for every sufficiently negative $M,N,S<0$ and small $\delta  > 0$, we can find $C > 0$ such that
\begin{align} \label{decay at cf main lemma inequality}
\begin{split}
\| \varphi_{\dff} Q u \|_{H_{\mathrm{d3sc,3co,res}}^{M, \ast , \vol, \hvor + \vol, \vob, S}} & \leq C( \epsilon^{-1} \| P u \|_{H_{\mathrm{d3sc,3co,res}}^{ M , N , \vol , \vor + \vol + 1 , \vob + 2 , S}} \\
&\quad +  \epsilon \| u \|_{H_{\mathrm{d3sc,3co,res}}^{M, N, \vol, \hvor + \vol, \vob, S}} + \| u \|_{H_{\mathrm{d3sc,3co,res}}^{M, N , \vol, \hvor + \vol + \delta, \vob -  \delta, S}} )
\end{split}
\end{align}
whenever the right hand side of (\ref{decay at cf main lemma inequality}) is finite.
\end{lemma}
\begin{proof}[Proof of Proposition \ref{decay at the corner face proposition} assuming Lemma \ref{decay at corner face away from main face proposition}]
Starting from (\ref{big decay estimate near cf and dmf}), it remains to estimate the norm of $\varphi_{\dff} Qu$. First note that by taking $\beta^{\tindex} > 0$ sufficiently small, we can take the case of $\delta = \beta^{\tindex}$ (recall that we indeed have the freedom in choosing $\delta$, so long as it is small enough) in (\ref{almost semi-Fredholm estimates with symbolic decay}). Then by using (\ref{our variable order compared with Vasy's one}), we can estimate
\begin{equation} \label{tangential away from o cal 0.05}
\| \varphi_{\dff} Q u \|_{ H_{\mathrm{d3sc,3co,res}}^{M, \ast , \vol, \vor + \vol - \beta^{\tindex}, \vob, S} } \leq C \| \varphi_{\dff} Q u  \|_{H_{\mathrm{d3sc,3co,res}}^{M, \ast , \vol, \hvor + \vol, \vob, S}}.
\end{equation}
On the other hand, applying Lemma \ref{decay at corner face away from main face proposition} with some $\delta' > 0$ in places of $\delta$, we note that the last two terms on the right hand side of (\ref{decay at cf main lemma inequality}) satisfy
\begin{align} \label{tangential away from o cal 0.06}
\begin{split}
\| u \|_{H_{\mathrm{d3sc,3co,res}}^{M, N, \vol, \hvor + \vol, \vob, S}} & \leq C \| u \|_{H_{\mathrm{d3sc,3co,res}}^{M,N,\vol, \vor + \vol - \beta^{\tindex}/2,\vob, S}}, \\
 \| u \|_{H_{\mathrm{d3sc,3co,res}}^{M, N , \vol, \hvor + \vol + \delta', \vob -  \delta', S}} & \leq C \| u \|_{H_{\mathrm{d3sc,3co,res}}^{M,N, \vol, \vor + \vol - \beta^{\tindex}/2 + \delta', \vob -  \delta', S}}.
\end{split}
\end{align}
Thus by substituting (\ref{tangential away from o cal 0.06}) into (\ref{decay at cf main lemma inequality}) and the resulting estimate into (\ref{tangential away from o cal 0.05}), we have
\begin{align*}
\begin{split}
\| \varphi_{\dff} Q u \|_{H_{\mathrm{d3sc,3co,res}}^{M, \ast , \vol, \vor + \vol - \beta^{\tindex}, \vob, S}} & \leq C( \epsilon^{-1} \| P u \|_{H_{\mathrm{d3sc,3co,res}}^{m-2, \vor  + 1 , \vol , \vor + \vol + 1 , \vob + 2 , s - 2}} \\
&\quad +  \epsilon \| u \|_{H_{\mathrm{d3sc,3co,res}}^{M, N, \vol, \vor + \vol, \vob, S}} + \| u \|_{H_{\mathrm{d3sc,3co,res}}^{M, N , \vol, \vor + \vol - \beta^{\tindex}/2 + \delta' , \vob - \delta', S}} ).
\end{split}
\end{align*}
Now, by choosing $\delta'$ to be so small such that $\beta^{\tindex} \geq 4 \delta'$, we can further substitute the above estimate back into (\ref{big decay estimate near cf and dmf}), and we finally get
\begin{align*}
\begin{split}
\| u \|_{H_{\mathrm{d3sc,3co,res}}^{m, \vor, \vol, \vor + \vol, \vob, s } } & \leq C( \epsilon^{-1} \| P u \|_{H_{\mathrm{d3sc,3co,res}}^{ m - 2, \vor + 1, \vol, \vor + \vol + 1 , \vob + 2 , s - 2 } } \\
& \quad + \epsilon \| u \|_{H_{\mathrm{d3sc,3co,res}}^{m, \vor, \vol, \vor + \vol, \vob, s }}  + \| u \|_{H_{\mathrm{d3sc,3co,res}}^{M, \vor - \beta^{\tindex} , \vol, \vor + \vol - \delta', \vob - \delta', S}} )
\end{split}
\end{align*}
Then the required claim follows from taking $\epsilon$ small enough, which is followed by choosing $\delta$ such that $\min \{ \beta^{\tindex}, \delta' \} \geq \delta$.
\end{proof}
The rest of this subsection and the next will be devoted to the proof of Lemma \ref{decay at corner face away from main face proposition}. To begin, we shall arrange for a $A \in \Psf^{-\infty, -\infty , \vol, \hvor + \vol  + 1/2, \vob + 1, -\infty}$ such that
\begin{equation} \label{tangential away from o cal 0.1}
\hat{N}_{\cf}(A) = q_{\tindex} \hbff^{-n^{\tindex}/4} A^{\tindex} \hbff^{n^{\tindex}/4} \hbff^{-\vol + \vob /2 + 1/2}  \varphi_{\Co}.
\end{equation}
Here we remark that $-\vol + \vob/2$ is a constant. We also make $q_{\tindex}$ explicit by setting
\begin{equation*}
q_{\tindex} = \psi ( \intt - \lambda^{2} ) e^{K \ltau},
\end{equation*}
where $\psi \in \mathcal{C}^{\infty}_{c}( \mathbb{R} )$ is a cut-off function near $0$, while $\varphi_{\Co} \in \mathcal{C}^{\infty}( [ \hat{X}^{\tindex} ; \{ 0 \} ] )$ is a cut-off at $\Co$ such that $\varphi_{\mathcal{C}^{\tindex}_{0}}$ is identically $1$ near $\Co$, and $K >0$ is large to be determined later. \par

The difficulty therefore lies in the construction of $A^{\tindex}$, which we will actually construct to live in $\Psi_{\mathrm{bc}, \delta}^{ \hvor + \vol - \vob - 1/2, 0, 0 } ( [ \hat{X}^{\tindex} ; \{ 0 \} ] )$. Moreover, we will choose $A^{\tindex}$ to be symmetric with respect to $L^{2}_{\mathrm{b}}([ \hat{X}^{\tindex} ; \{ 0 \} ]) = L^{2}([ \hat{X}^{\tindex} ; \{ 0 \} ], \nu_{\mathrm{b}})$, where $\nu_{\mathrm{b}}$ is the natural b-density on $[ \hat{X}^{\tindex} ; \{ 0 \} ]$. Lastly, $A^{\tindex}$ must be dilation invariant with respect to $\hbff$. The advantage of this setup is that
\begin{equation} \label{tangential away from o cal 1}
 \hat{N}_{\cf}( A^{\ast} A ) = q_{\tindex}^2 \varphi_{\Co}  \hbff^{ - \vol + \vob / 2 + 1/2 -n^{\tindex}/4} (A^{\tindex})^2 \hbff^{ - \vol + \vob / 2 + 1/2 + n^{\tindex}/4} \varphi_{\Co}.
\end{equation} \par

Let us remark that (\ref{tangential away from o cal 0.1}) indeed lives in the cone calculus, and in particular is a member of $\Psi_{\mathrm{coc}, \delta}^{\hvor + \vol - \vob - 1/2, -\infty, \vol - \vob/2  - 1/2 }( [ \hat{X}^{\tindex} ; \{ 0 \} ] )$ due to the presence of the cut-off $\varphi_{\Co}$. In fact, it is the case that $\tilde{A}^{\tindex} \varphi_{\Co}$ is a conormal co-operator for any conormal b-operator $\tilde{A}^{\tindex}$. One can see this rather easily by splitting the kernel of $\tilde{A}^{\tindex} \varphi_{\Co}$ into pieces that are supported on the various regions of $[ \hat{X}^{\tindex} ; \{ 0 \} ]$, and then observing that they belong to the correct classes. \par

We now return to the commutator formula proved in \S \ref{Subsection commutator formulae}. In particular, we will apply Lemma \ref{commutator formula lemma} in the current context. It will be convenient to relax the requirement for the principal symbol calculations in (\ref{tangential discussion 0.9}) and (\ref{tangential discussion 1.001}). Then Lemma \ref{commutator formula lemma} implies that for any $\tilde{B} \in \Psf^{-\infty, -\infty , \vol - 1/2, \hvor + \vol - 1/2, \vob, -\infty}$, $\tilde{E} \in \Psf^{-\infty, -\infty , 2 \vol, 2 \hvor + 2 \vol + 2 \delta, 2 \vob, -\infty}$ which are sufficiently classical at $\cf$, such that 
 \begin{align}
 \hat{N}_{\cf}( \tilde{B} ) & = ( - \mathsf{H}_{ \inttcf } \vob )^{1/2} \hat{N}_{\cf}( A ),  \label{corner face commutator formula 1} \\
 \begin{split}
 \hat{N}_{\cf}( \tilde{E} ) & =  i[  \hat{N}_{\cf}( \Delta_{z^{\tindex}} ) + 2 \ltau \hbff D_{\hbff}, \hat{N}_{\cf}( A^{\ast} A ) ] \label{corner face commutator formula 2} \\
& \quad + \mathsf{H}_{\inttcf} \hat{N}_{\cf}( A^{\ast}A ) - ( 2\vob + 2 ) \ltaucf \hat{N}_{\cf}( A^{\ast}A ),
\end{split}
 \end{align}
 we can find some $\tilde{R} \in  \Psf^{-\infty, -\infty, 2 \vol , 2 \hvor + 2 \vol + 2 \delta, 2 \vob - 2\delta, - \infty}$ for which
 \begin{equation} \label{corner face commutator formula 3}
 i [ P,  A ^{\ast} A] =  \tilde{B}^{\ast} ( \log x ) \tilde{B} + \tilde{E} + \tilde{R}.
\end{equation}   \par

We will study the extent to which (\ref{corner face commutator formula 2}) is negative with our construction of $A$. To this end, we will consider first negativity the commutator 
\begin{equation}  \label{tangential away from o cal 2.9}
i[ \hat{N}_{\cf}( \Delta_{z^{\tindex}} ) + 2 \ltau \hbff D_{\hbff}, \hat{N}_{\cf}( A^{\ast} A ) ].  
\end{equation}
We will need to factor out the cut-offs. Thus we write (\ref{tangential away from o cal 2.9}) as
\begin{align*} \label{tangential away from o cal 3}
q_{\tindex}^2 \varphi_{\Co} i [ \hat{N}_{\cf}( \Delta_{z^{\tindex}} ) + 2 \ltau \hbff D_{\hbff},  \hbff^{-\vol + \vob/2 + 1/2 - n^{\tindex}/4} ( A^{\tindex})^2 \hbff^{ -\vol + \vob/2 + 1/2 + n^{\tindex}/4 } ] \varphi_{\Co} & \\
+ \, q_{\tindex}^2 ( E_{L}^{\tindex} + E_{R}^{\tindex} ) & ,
\end{align*} 
where we have defined
\begin{equation*} \label{tangential away from o cal 3.5}
\begin{gathered}
E_{L}^{\tindex} =  i [ \hat{N}_{\cf}( \Delta_{z^{\tindex}} ) + 2 \ltau \hbff D_{\hbff} ,  \varphi_{\Co}  ] \hbff^{-\vol + \vob/2 + 1/2 - n^{\tindex}/4} ( A^{\tindex})^2 \hbff^{ -\vol + \vob/2 + 1/2 + n^{\tindex}/4 }  \varphi_{\Co}, \\
E_{R}^{\tindex} = i \hbff^{n^{\tindex}/2} \varphi_{\Co} \hbff^{-\vol + \vob/2 + 1/2 - n^{\tindex}/4} ( A^{\tindex})^2 \hbff^{ -\vol + \vob/2 + 1/2 + n^{\tindex}/4 }   [ \hat{N}_{\cf}( \Delta_{z^{\tindex}} ) + 2 \ltau \hbff D_{\hbff}, \varphi_{\Co} ].
 \end{gathered}
\end{equation*}
Note that $E_{L}^{\tindex}, E_{R}^{\tindex} \in \Psi_{\mathrm{coc}, \delta}^{2 \hvor + 2 \vol - 2 \vob  , -\infty, -\infty}( [ \hat{X}^{\tindex} ; \{ 0 \} ] )$ due to the commutation with $\varphi_{\Co}$. Indeed, the presence of $\varphi_{\Co}$ alone guarantees the decay at $\overline{^{\mathrm{co}}T^{\ast}}_{\Co}[ \hat{X}^{\tindex} ; \{ 0 \} ]$, while the fact that it's identically $1$ near $\Cinfty$ ensures decay at $\overline{^{\mathrm{co}}T^{\ast}}_{\Cinfty}[ \hat{X}^{\tindex} ; \{ 0 \} ]$.  \par

We now focus on the term
\begin{equation} \label{tangential away from o cal 4}
i [ \hat{N}_{\cf}( \Delta_{z^{\tindex}} ),  \hbff^{-\vol + \vob/2 + 1/2 - n^{\tindex}/4} ( A^{\tindex})^2 \hbff^{ -\vol + \vob/2 + 1/2 + n^{\tindex}/4 } ].
\end{equation}
Since this is at first an operator in $\Psi_{\mathrm{bc}, \delta}^{2 \hvor + 2 \vol - 2 \vob + 2 \delta, -2 \vol + \vob, 2 \vol - \vob}( [ \hat{X}^{\tindex} ; \{ 0 \} ] )$, we will multiply both sides of it by $\hbff^{\vol - \vob / 2}$. This doesn't affect the sign of (\ref{tangential away from o cal 4}), but may lead to an operator which is dilation invariant in $\hbff$. Indeed, we can compute that
\begin{align} \label{tangential main commutator calculation}
\begin{split}
& i \hbff^{\vol - \vob / 2} [ \hat{N}_{\cf} ( \Delta_{z^{\tindex}} ), \hbff^{-\vol + \vob / 2 + 1/2 - n^{\tindex}/4} (A^{\tindex})^{2} \hbff^{-\vol + \vob / 2 + 1/2 + n^{\tindex}/4} ] \hbff^{\vol - \vob/2} \\ 
& \qquad = i \hbff^{ \vol - \vob / 2 } (  \hat{N}_{\cf}( \Delta_{z^{\tindex}} ) \hbff^{-\vol + \vob / 2 + 1/2 - n^{\tindex}/4} (A^{\tindex})^{2} \hbff^{-\vol + \vob / 2 + 1/2 + n^{\tindex}/4}  ) \hbff^{\vol - \vob / 2}  \\
& \qquad \quad- i \hbff^{ \vol - \vob / 2} ( \hbff^{-\vol + \vob / 2 + 1/2 - n^{\tindex}/4} (A^{\tindex})^{2} \hbff^{-\vol + \vob / 2 + 1/2 + n^{\tindex}/4} \hat{N}_{\cf}( \Delta_{z^{\tindex}} ) ) \hbff^{\vol - \vob / 2}  \\
& \qquad = i \hbff^{-n^{\tindex}/4} ( \hbff^{ \vol - \vob/2 - 1/2 }  \efN \hbff^{ - \vol + \vob/2 + 1/2 } )  ( \hbff^{-1/2} (A^{\tindex})^{2} \hbff^{1/2} ) \hbff^{n^{\tindex}/4}  \\
& \qquad \quad - i \hbff^{ -n^{\tindex}/4 } ( \hbff^{1/2} (A^{\tindex})^{2} \hbff^{-1/2} ) ( \hbff^{ - \vol + \vob/2 + 1/2 } \efN \hbff^{ \vol - \vob/2 - 1/2 } ) \hbff^{n^{\tindex}/4}, 
\end{split}
\end{align}
where we have also defined
\begin{align*}
\efN & = \hbff^{  ( n^{\tindex} + 2 )/4 } \hat{N}_{\cf} ( \Delta_{z^{\tindex}} ) \hbff^{ - ( n^{\tindex} - 2 )/4 } \\
& = 4 ( \hbff D_{\hbff} )^{2} + \Delta_{h^{\tindex}} + \left( \frac{n^{\tindex} - 2}{2} \right)^{2},
\end{align*}
which is again translation invariant in $\hbff$. \par

Now, recall from the properties of the Mellin transform in $\hbff$ that since both $(A^{\tindex})^2$ and $\efN$ are dilation invariant in $\hbff$, the effect of conjugating these operators by $\hbff^{k}$ (i.e., multiplication by this from the right, and by its inverse from the left) is replacing $\hbff D_{\hbff}$ by $\hbff D_{\hbff} - ik$, or on the Mellin transformed side replacing $\utaucob$ by $\utaucob - ik$. Writing such a change by affixing $( \cdot - ik )$ to the operator, followed by conjugating (\ref{tangential main commutator calculation}) by $\hbff^{-n^{\tindex}/4}$, we can continue the calculation from (\ref{tangential main commutator calculation}) and conclude that it is equal to
\begin{equation} \label{tangential interactive commutator calculation 2}
i \efN( \cdot - i ( b  + 1 )/2 ) A^{\tindex}( \cdot - i/2 )^2  - i A^{\tindex}( \cdot + i/2 )^2 \efN( \cdot + i( b + 1 )/2 ),
\end{equation}
where we are writing $b = \vob - 2 \vol$. \par

Notice that the terms in (\ref{tangential interactive commutator calculation 2}) are all multiplication operators on the Mellin transform side (i.e., no $\hbff$ dependency). So {if we choose $A^{\tindex}$ to depend on $y^{\tindex}$ and its b-dual variables only through $\Delta_{h^{\tindex}}$, and still symmetric}, then (\ref{tangential interactive commutator calculation 2}) can be reduced to a commutative calculation on the Mellin transformed side, and is thus equal to
\begin{equation} \label{tangential interactive commutator calculation imaginary}
- 2 \mathrm{Im}  A^{\tindex} ( \cdot - i / 2 )^2 \efN( \cdot - i ( b  + 1 )/2 ) ,
\end{equation}
in the sense that this is the imaginary part of (\ref{tangential interactive commutator calculation 2}) on the Mellin transformed side. It follows that in order to control the negativity of (\ref{tangential interactive commutator calculation 2}), and subsequently that of (\ref{tangential main commutator calculation}), it suffices to do so for the term (\ref{tangential interactive commutator calculation imaginary}).  \par

In fact, up to a sign and a simple scaling, the control of negativity of (\ref{tangential interactive commutator calculation imaginary}) has already been considered in details by Vasy in \cite[\S 4.3]{AndrasSM}. Thus, here we shall only discuss how to directly apply these results, and refer the readers to \cite[\S 4.3]{AndrasSM} for more details. \par

To this end, we will construct $A^{\tindex}$ so that it `almost' takes the form 
\begin{align} \label{important construction of tilde A tangential propagation}
\begin{split}
&   \exp \Big(  \frac{\tilde{\beta}^{\tindex}}{2} \frac{ 2 \utaucob }{ ( 4 ( \utaucob )^2 + \Delta_{h^{\tindex}} + \tilde{F}^{2} )^{1/2} }  \\
& \qquad + \big( \frac{\beta^{\tindex}}{2}  \frac{ 2 \utaucob }{  ( 4 ( \utaucob )^2 + \Delta_{h^{\tindex}} + F^2 )^{1/2} }  \big) \log ( 4 ( \utaucob )^{2} + \Delta_{h^{\tindex}} + F^2 ) \\
& \qquad - \frac{ b   + 1 }{2} \log ( 4 ( \utaucob )^2 + \Delta_{h^{\tindex}} + \check{F}^2 ) \Big),
\end{split}
\end{align}
where $\tilde{F} \geq F \geq \check{F} > 1$, $\tilde{\beta} \geq 0$ are parameters, and the square root and the logarithm are defined with branch cuts along the negative real axis, and are real for positive arguments. Notice that through a simple variable change (which we remark is the natural one as canonical transformations of the dual coordinates) $\utaucob = - \utaub/2$, we can put (\ref{important construction of tilde A tangential propagation}) into the form
\begin{align} \label{important construction of tilde A tangential propagation Vasy form}
\begin{split}
&  \exp \Big(  \frac{\tilde{\beta}^{\tindex}}{2} \frac{ \utaub }{ (  ( \utaub )^2 + \Delta_{h^{\tindex}} + \tilde{F}^{2} )^{1/2} }  \\
& \qquad + \big( \frac{\beta^{\tindex}}{2}  \frac{ \utaucob }{  ( ( \utaub )^2 + \Delta_{h^{\tindex}} + F^2 )^{1/2} }  \big) \log ( ( \utaub )^{2} + \Delta_{h^{\tindex}} + F^2 ) \\
& \qquad - \frac{ b   + 1 }{2} \log ( ( \utaub )^2 + \Delta_{h^{\tindex}} + \check{F}^2 ) \Big) ,  
\end{split}
\end{align}
which is exactly the expression that was considered in \cite[Equation (4.25)]{AndrasSM}, corresponding to the variable order
\begin{equation*} \label{special variable order form where Andras used it}
\hvor + \vol - \vob = \frac{1}{2} - ( b + 1 ) + \beta^{\tindex} \frac{\utaub}{ ( \intnb )^{1/2} },
\end{equation*}
where $|b  + 1 | < (n^{\tindex} - 2)/2$, which is also the variable order we presently use. In particular, this fixes a sign issue appearing in (\ref{tangential interactive commutator calculation imaginary}), in the sense that, with $\utaub$ instead of $\utaucob$ being the Mellin transformed variable, (\ref{tangential interactive commutator calculation imaginary}) also takes the same form as its correspondence in \cite[Equation (4.22)]{AndrasSM}. \par

Finally, one convolves (\ref{important construction of tilde A tangential propagation}) with the Gaussian
\begin{equation} \label{the Gaussian}
\frac{ 1 }{ \sqrt{ \pi s } }\exp \Big( - \frac{ 4 ( \utaucob )^2  }{2s} \Big) = \frac{ 1 }{ \sqrt{ \pi s } }\exp \Big( - \frac{ ( \utaub )^2  }{2s} \Big) 
\end{equation}
for $s > 0$ small, due to the mild problem (the `almost' part) that (\ref{important construction of tilde A tangential propagation}) is not an entire (operator valued) function. The rest of the arguments in \cite[\S 4.3]{AndrasSM} applies verbatim.  \par

In summary, we conclude that 
\begin{lemma} \label{indicial operator at cf tangential principal lemma}
Let $A^{\tindex}$ be given by the Mellin conjugate of the the convolution between (\ref{important construction of tilde A tangential propagation}) and (\ref{the Gaussian}). Then (\ref{tangential interactive commutator calculation imaginary}) is a negative definite for $\hat{F}$ sufficiently large. In particular, there exists $B^{\tindex}\in \Psi_{\mathrm{bc}, \delta}^{ \hvor + \vol - \vob , 0, 0}( [ \hat{X}^{\tindex} ; \{ 0 \} ] )$ which is elliptic, symmetric and positive definite with respect to $L^{2}_{\mathrm{b}}( [ \hat{X}^{\tindex} ; \{ 0 \} ] )$, such that
\begin{equation} \label{indicial operator at cf tangential principal lemma equation}
- 2 \mathrm{Im} A^{\tindex}( \cdot - i / 2 )^2 \efN( \cdot - i ( b  + 1 )/2 )  \leq - (B^{\tindex})^{2}.
\end{equation}
\end{lemma}

A simple corollary of this lemma, by using (\ref{indicial operator at cf tangential principal lemma equation}) as well as (\ref{tangential main commutator calculation}), is that we can find a positive indefinite operator $L^{\tindex}$ belonging to $\Psi^{2 \hvor + 2 \vol - 2 \vob + 2 \delta, 0, 0}_{\mathrm{bc}, \delta}( [ \hat{X}^{\tindex} ; \{ 0 \} ] )$ for any choice of small $\delta > 0$, such that we can write (\ref{tangential away from o cal 4}) as
\begin{equation*}
- \hbff^{-\vol + \vob/2  - n^{\tindex}/4}(  (B^{\tindex})^{2} + L^{\tindex} ) \hbff^{ -\vol + \vob/2  + n^{\tindex}/4 }.
\end{equation*}
We remark that the additional term $L^{\tindex}$ must contain the logarithmic term which usually occurs in a commutator estimate involving variable orders. \par

Next, we shall consider the term
\begin{equation} \label{tangential direction second commutator}
2i \ltau [ \hbff D_{\hbff}, \hbff^{-\vol + \vob/2 + 1/2 - n^{\tindex}/4} ( A^{\tindex})^2 \hbff^{ -\vol + \vob/2 + 1/2 + n^{\tindex}/4 } ]
\end{equation}
which can actually be understood using the same strategy as above. Indeed, by multiplying both sides of (\ref{tangential direction second commutator}) now by $\hbff^{\vol - \vob/2 - 1/2}$, we can write
\begin{align} \label{second term tangential calculation 1}
\begin{split}
& i \hbff^{\vol - \vob / 2 - 1/2} [ \hbff D_{\hbff},  \hbff^{-\vol + \vob/2 + 1/2 - n^{\tindex}/4} ( A^{\tindex})^2 \hbff^{ -\vol + \vob/2 + 1/2 + n^{\tindex}/4 } ] \hbff^{\vol - \vob / 2 - 1/2} \\
& \qquad = i ( \hbff^{\vol - \vob/2 - 1/2}( \hbff D_{\hbff} ) \hbff^{-\vol + \vob/2 + 1/2} ) ( \hbff^{-n^{\tindex}/4} (A^{\tindex})^2 \hbff^{n^{\tindex}/4} ) \\
& \qquad  \quad - i ( \hbff^{-n^{\tindex}/4}(A^{\tindex})^2  \hbff^{n^{\tindex}/4} ) ( \hbff^{-\vol+\vob/2 +1/2} ( \hbff D_{\hbff} ) \hbff^{\vol-\vob/2 -1/2} )
\end{split}
\end{align}
Now, as $(A^{\tindex})^2$ commutes with $(\hbff D_{\hbff})( \cdot + i ( \vol - \vob/2 - 1/2 ) )$ on the Mellin transformed side, so must be the case for $(A^{\tindex})^2 ( \cdot - in^{\tindex}/4 )$. Thus we can conclude that (\ref{second term tangential calculation 1}) is also equal to
\begin{equation*}
( - 2 \vol + \vob  + 1 ) \hbff^{-n^{\tindex}/4} (A^{\tindex})^2 \hbff^{n^{\tindex}/4}.
\end{equation*}
Subsequently, we can write (\ref{tangential direction second commutator}) as
\begin{align} \label{second term tangential calculation 2}
\begin{split}
& ( - 4 \vol + 2\vob  + 2 ) \ltau \hbff^{- \vol + \vob/2  + 1/2 -n^{\tindex}/4} (A^{\tindex})^2  \hbff^{ -\vol + \vob/2  + 1/2 + n^{\tindex}/4}.
\end{split} 
\end{align}

\subsection{Proof of Lemma \ref{decay at corner face away from main face proposition}} 
Recalling (\ref{tangential away from o cal 1}), thus far we have worked out that
\begin{align*}
i [ \hat{N}_{\cf}( \Delta_{z^{\tindex}} ) + 2 \ltau \hbff D_{\hbff}, \hat{N}_{\cf}(A^{\ast}A) ] = & - q_{\tindex}^2 \varphi_{\Co}  \hbff^{-\vol + \vob/2 - n^{\tindex}/4} ( (B^{\tindex})^{2} + L^{\tindex} ) \hbff^{ -\vol + \vob/2 + n^{\tindex}/4 } \varphi_{\Co}  \\
& - ( 4 \vol - 2 \vob - 2 ) \ltau \hat{N}_{\cf}( A^{\ast}A ) + q_{\tindex}^{2} ( E^{\tindex}_{L} + E_{R}^{\tindex}).
\end{align*}
Recall that our objective is to study $\hat{N}_{\cf}(\tilde{E})$. From (\ref{corner face commutator formula 2}), it remains to consider
\begin{equation} \label{tangential away from o cal 5.5}
\sH_{\intt} \hat{N}_{\cf}( A^{\ast}A ) - ( 2\vob + 2 ) \ltaucf \hat{N}_{\cf}( A^{\ast}A ),
\end{equation}
but by a straightforward computation, we find that
\begin{equation*}
\sH_{\intt} \hat{N}_{\cf}( A^{\ast}A ) = - 4 K | \lmu |_{h_{\tindex}}^2 \hat{N}_{\cf}( A^{\ast}A ),  
\end{equation*}
so we can conclude that
\begin{align*}  
 \hat{N}_{\cf}( \tilde{E} ) = & - q_{\tindex}^2 \varphi_{\Co}  \hbff^{-\vol + \vob/2  - n^{\tindex}/4} ( (B^{\tindex})^{2} + L^{\tindex} ) \hbff^{ -\vol + \vob/2 + n^{\tindex}/4 }  \varphi_{\Co} \\
& - 4 ( K  |\mu_{\tindex}|_{h_{\tindex}}^2 +  \vol \ltau ) \hat{N}_{\cf}( A^{\ast}A ) + q_{\tindex}^2( E_{L}^{\tindex} + E_{R}^{\tindex} ).
\end{align*} \par

Now, suppose we choose $B \in  \Psf^{-\infty, -\infty, \vol, \hvor + \vol, \vob, -\infty} $ such that 
\begin{equation*}
\hat{N}_{\cf}(B) = q_{\tindex} \hbff^{-n^{\tindex}/4} B^{\tindex} \hbff^{n^{\tindex}/4} \hbff^{-\vol + \vob/2} \varphi_{\Co},
\end{equation*}
then we have
\begin{equation} \label{tangential away from o cal 6}
\hat{N}_{\cf}( B^{\ast} B ) = q_{\tindex}^2 \varphi_{\Co}  \hbff^{-\vol + \vob/2 - n^{\tindex}/4} (B^{\tindex})^{2} \hbff^{ -\vol + \vob/2 + n^{\tindex}/4 }  \varphi_{\Co} .
\end{equation} \par

On the other hand, by the threshold conditions we know that $\vol \ltau > 0$ near $\mathcal{R}_{\mathrm{t},\pm}$, while away from $\mathcal{R}_{\mathrm{t},\pm}$ but still near $\Sigma_{\mathrm{t}}$, we can take $K$ large enough so that $K | \mu_{\tindex} |^{2}_{h_{\tindex}} + \vol \ltau$ remains to be positive. In particular, positivity always holds on the support of $q_{\tindex}$. Thus, we can choose $B_{1} \in \Psf^{-\infty, -\infty, \hvor + \vol, \vob, -\infty}$ such that 
\begin{equation*}
\hat{N}_{\cf}(B_1) = 2 ( K |\mu|_{h_{\tindex}}^2 + \vol \ltau )^{\frac{1}{2}} \hat{N}_{\cf}(A),
\end{equation*}
in which case we also have
\begin{equation}  \label{tangential away from o cal 7}
\hat{N}_{\cf}( B_{1}^{\ast} B_{1} ) = 4 ( K |\mu_{\tindex}|_{h_{\tindex}}^2 + \vol \ltau ) \hat{N}_{\cf}( A^{\ast} A ).
\end{equation} \par

Next we come back to the terms $E_{L}^{\tindex}$ and $E_{R}^{\tindex}$. By taking $\varphi_{\Co}$ to be identically $1$ in an arbitrarily large neighborhood of $\Co$, it is clear that we have 
\begin{center}
$\varphi_{\mathcal{C}^{\tindex}_{\infty}} E_{L}^{\tindex} = E_{L}^{\tindex}$, $E_{R}^{\tindex} \varphi_{\mathcal{C}^{\tindex}_{\infty}} = E_{R}^{\tindex}$,
\end{center}
where $\varphi_{\mathcal{C}^{\tindex}_{\infty}} \in \mathcal{C}^{\infty}( [ \hat{X}^{\tindex} ; \{ 0 \} ] )$ is some bump function at $\mathcal{C}^{\tindex}_{\infty}$ that is identically $1$ where $\varphi_{\Co}$ is not identically $1$. In particular, the support of $\varphi_{\Cinfty}$ can be assumed to be arbitrarily small (by varying $\varphi_{\Co}$). In fact, we can assume that $\varphi_{\mathcal{C}^{\tindex}_{\infty}}$ is the restriction to $\cf$ of some cut-off at $\dmf$, say $\varphi_{\dmf} \in \mathcal{C}^{\infty}( \Xd )$. Thus, if we choose $E_{L}, E_{R} \in \Psf^{-\infty, -\infty , -\infty , 2\hvor + 2\vol, 2\vob, -\infty}$ such that $\hat{N}_{\cf}( E_{L} ) = q_{\tindex}^2 E_{L}^{\tindex}$, $\hat{N}_{\cf}( E_{R} ) = q_{\tindex}^2 E_{R}^{\tindex}$, then we also have
\begin{equation}  \label{tangential away from o cal 8}
\begin{gathered}
\hat{N}_{\cf}( \varphi_{\dmf} E_{L} ) = \varphi_{\mathcal{C}^{\tindex}_{\infty}} q_{\tindex}^2 E_{L}^{\tindex} = q_{\tindex}^2 E_{L}^{\tindex}, \\
 \hat{N}_{\cf}( E_{R} \varphi_{\dmf} ) = q_{\tindex}^2 E_{R}^{\tindex} \varphi_{\mathcal{C}^{\tindex}_{\infty} } = q_{\tindex}^2 E_{R}^{\tindex}.
\end{gathered}
\end{equation} \par

Finally, we would like to choose some $L \in \Psf^{-\infty, -\infty, 2 \vol, 2 \vor + 2\vol + 2 \delta, 2 \vob, -\infty }$ such that
\begin{equation} \label{tangential away from o cal 8.25}
\hat{N}_{\cf}( L ) = q_{\tindex}^2 \varphi_{\Co} \hbff^{-\vol + \vob/2 - n^{\tindex}/4} L^{\tindex} \hbff^{ -\vol +  \vob/2 + n^{\tindex}/4 } \varphi_{\Co},
\end{equation} 
with $L$ still being positive indefinite with respect to the $L^{2}$ inner product. By the positivity of $L^{\tindex}$, one could take its square root, and write $L^{\tindex} = (B^{\tindex}_{2})^{2}$. However, it is easy to see that $L^{\tindex}$ cannot be elliptic  (again, in view of $L^{\tindex}$ being the logarithmic term in a standard symbolic calculation), thus one in general cannot show that $B^{\tindex}$ belongs to the small b-calculus. To get around this problem, we instead construct $L$ explicitly. \par

Let $\Lambda \in \Psf^{0, \vob/2, \vob/2, \vob, \vob, \vob/2}$ be some quantization of $\phi(x_{\tindex}) x_{\tindex}^{-\vob/2}$. Then we will explicitly define
\begin{equation}  \label{tangential away from o cal 8.5}
L = x_{\tindex}^{n^{\tindex}/4} \Lambda^{\ast} Q^{\ast} \varphi_{\Co} \hbff^{-\vol + \vob/2 - n^{\tindex}/4} L^{\tindex} \hbff^{-\vol + \vob/2 + n^{\tindex}/4} \varphi_{\Co} Q \Lambda x_{\tindex}^{-n^{\tindex}/4}.
\end{equation}
Notice that with the above definition, $L$ is indeed positive indefinite with respect to $L^2$. To show this, it would be enough to recall (\ref{3co and co inner product scaling 1}), and then make a straightforward computation using the positivity of $L^{\tindex}$ with respect to $L^{2}_{\mathrm{b}}( [ \hat{X}^{\tindex} ; \{ 0 \} ] )$, from which we have
\begin{equation*}
\langle Lu , u \rangle_{L^2} = \int_{[0,\infty) \times \mathcal{C}_{\tindex}} \langle L^{\tindex} v, v\rangle_{L^{2}_{\mathrm{b}}( [ \hat{X}^{\tindex} ; \{ 0 \} ] )} \frac{dx_{\tindex}}{x_{\tindex}^{n_{\tindex}+1}} dy_{\tindex} \geq 0,
\end{equation*}
where we are writing $v = \hbff^{-\vol + \vob/2 + n^{\tindex}/4} \varphi_{\Co} Q \Lambda x_{\tindex}^{-n^{\tindex}/4}u$. \par

It is not immediately obvious that (\ref{tangential away from o cal 8.5}) belongs to the second microlocal calculus. We will clarify this via the following lemma. In fact, it is not anymore difficult to consider slightly more general cases.
\begin{lemma}   \label{lemma construction of L decay at cf}
Suppose that 
\begin{equation*}
\begin{gathered}
l^{\tindex} \in \mathbb{R}, \quad \vom^{\tindex}, \vor^{\tindex} \in \mathcal{C}^{\infty}( \overline{ ^{\mathrm{co}}T^{\ast}} [ \hat{X}^{\tindex} ; \{ 0 \} ] ), \quad A^{\tindex} \in \Psi_{\mathrm{coc}, \delta}^{\vom^{\tindex} , \vor^{\tindex}, l^{\tindex} }( [ \hat{X}^{\tindex} ; \{ 0 \} ] ), \\
\vor, \vov \in \mathcal{C}^{\infty}( \psf \Xd ), \quad \vol, \vob \in \mathcal{C}^{\infty}( \mathcal{C}_{\tindex} \times \overline{\mathbb{R}^{n_{\tindex}}} ), \quad Q \in \Psf^{-\infty, \vor , \vol, \vov, \vob, -\infty} 
\end{gathered}
\end{equation*}
Assume additionally that $Q$ is supported near $\cf$, and moreover $\WFs(Q)$ is contained in a neighborhood of $\tcocf$ where the free variables are bounded (in particular, $\WFs(Q)$ is compactly contained away from fiber infinity). Then we have
\begin{equation} \label{tangential away from o cal 8.6}
\varphi_{\cf} A^{\tindex} Q \in \Psf^{-\infty, \vor + \vor^{\tindex} , \vol + l^{\tindex} , \vov + \vom^{\tindex} , \vob, -\infty }, \quad \hat{N}_{\cf}( \varphi_{\cf} A^{\tindex} Q ) = A^{\tindex} \hat{N}_{\cf}(Q),
\end{equation}
where $\varphi_{\cf} \in \mathcal{C}^{\infty}( \Xd )$ is some cut-off function at $\cf$. Here the variables orders $\vom^{\tindex}, \vor^{\tindex}$ make senses in (\ref{tangential away from o cal 8.6}) by being understood as arbitrary extensions $\mathcal{C}^{\infty}( \psf \Xd )$ extensions outside of the $\WFs(Q)$.
\end{lemma}
\begin{proof}[Proof of Lemma \ref{lemma construction of L decay at cf}]
Writing $Q$ as a partial quantization near $\cf$, it is clear that
\begin{equation}  \label{tangential away from o cal 8.6.1}
A^{\tindex} Q = \frac{1}{( 2 \pi )^{n_{\tindex}}} \int_{\mathbb{R}^{n_{\tindex}}}  e^{ i ( z_{\tindex} - z_{\tindex}' ) \cdot \zeta^{\mathrm{3co}}_{\tindex} } A^{\tindex}  \hat{Q} ( z_{\tindex}, \zeta^{\mathrm{3co}}_{\tindex} ) d\zeta_{\tindex}^{\mathrm{3co}},
\end{equation}
where $\hat{Q}$ is the operator-valued symbol of $Q$. 
To show the membership of (\ref{tangential away from o cal 8.6}), one needs to show that the kernel of $A^{\tindex} \hat{Q}$ belongs to the correct classes when restricted to various regions. In fact, since we are already restricted to a neighborhood of $\cf$, it is enough to consider all operators of the form $\phi^{\tindex} A^{\tindex} \hat{Q} \phi^{\tindex}, \phi^{\tindex} A^{\tindex} \hat{Q}  \psi^{\tindex}$, where $\phi^{\tindex}, \psi^{\tindex} \in \mathcal{C}^{\infty}( [ \hat{X}^{\tindex} ; \{ 0 \} ])$ are cut-off functions with disjoint supports. \par

It is straightforward (albeit a very mundane task) to show that the kernels of $\phi^{\tindex} A^{\tindex} \hat{Q} \psi^{\tindex}$ behave correctly. This amounts to using the smoothness of either one of the operators away from the diagonal, and the fact that the usual decay conditions near $\mathcal{C}^{\infty}_{0}$ and $\mathcal{C}^{\infty}_{\infty}$ get carried over to the composition. Moreover, as $A^{\tindex}$ does not depend on the free variables, the symbolic properties of $\hat{Q}$ in $( z_{\tindex}, \zeta_{\tindex}^{\mathrm{3co}} )$ are also carried over to the composition. \par

The only issue is if $A^{\tindex}$ is supported away from the diagonal while $\hat{Q}$ is supported near the diagonal (in the interaction variables). In this case, one in general does not have the required infinite orders of decay in $\langle \zeta_{\tindex}^{\mathrm{3co}} \rangle$. However, this is irrelevant since by assumption our $Q$ decays to infinite orders at fiber infinity.  \par
Thus, it remains to consider operators of the form $\phi^{\tindex} A^{\tindex} \hat{Q} \psi^{\tindex}$, with the problem being the composition of two quantizations, where singularities could occur. In general, one needs to check the calculations near both $\mathcal{C}^{\tindex}_{0}$ and $\mathcal{C}^{\infty}_{\infty}$. But since these calculations are analogous, we will focus only on the case near $\mathcal{C}^{\tindex}_{0}$ for definiteness (in particular, the case near $\mathcal{C}^{\tindex}_{0}$ would be enough for our purpose in this section, due to the presence of $\varphi_{\mathcal{C}^{\tindex}}$ in (\ref{tangential away from o cal 8.5})). \par
To this end, locally over $\mathcal{C}^{\tindex}_{0}$, let $a^{\tindex}$ denote the left symbol of $A^{\tindex}$ and $q$ the right symbol (in the interaction variables only) of $\hat{Q}$. Then we can write $A^{\tindex}\hat{Q}$ as
\begin{align*}
& \frac{1}{( 2 \pi )^{n^{\tindex}}} \int_{\mathbb{R}^{n^{\tindex}}}   e^{ - i ( \hat{t}_{\ff} - \hat{t}_{\ff}' ) \utaucob + i ( y^{\tindex} - (y^{\tindex})' ) \cdot \umucob } \\
& \qquad \times a^{\tindex} ( \hat{t}_{\ff}, y^{\tindex}, \utaucob, \umucob ) q ( z_{\tindex}, \zeta_{\tindex}^{\mathrm{3co}}, \hat{t}_{\ff}', (y^{\tindex})',  \utaucob, \umucob )   d\utaucob d\umucob
\end{align*}
where $\hbff = e^{-\hat{t}_{\ff}}$. Thus, the correctness of the first claim in (\ref{tangential away from o cal 8.6}) comes down to whether or not left symbol reduction can be applied to the above expression, such that the resulting symbol belongs to the correct class. \par
In fact, upon going through the usual procedure, which involves Taylor expanding about the diagonal $\{ \hat{t}_{\ff} = \hat{t}_{\ff}', y^{\tindex} = (y^{\tindex})'\}$, followed by taking an asymptotic summation, one can see that each term in the expansion belongs to the correct symbol classes. This is because every such term is the product with a symbol that is some derivatives of $q$, and that $a^{\tindex}$ can be viewed as a symbol conormal to $\psf\Xd$ where $q$ is microlocally non-trivial. Indeed, the only obstacle preventing $a^{\tindex}$ from automatically being conormal to $\psf \Xd$ near $\cf$ (in the spatial sense) is the lack of conormality at fiber infinity.   \par

Therefore, it remains to consider the remainder term $\hat{R}$ (i.e., the difference between the near-diagonal part of $A^{\tindex} \hat{Q}$ and the quantization of the aforementioned asymptotic summation). The standard method then allows us to write $\hat{R}$ as some b-quantization of a symbol $r$ in the interaction variables, where $r$ is a symbol residual in every symbolic sense. \par

Finally, the indicial operator property in (\ref{tangential away from o cal 8.6}) follows easily from (\ref{tangential away from o cal 8.6.1}). This concludes the proof of Lemma \ref{lemma construction of L decay at cf}.
\end{proof}
We now choose $Q \in \Psf^{-\infty,0,0,0,0,-\infty}$ to be some quantization of $q_{\tindex} \psi(\rho_{\tcocf})$, where $\psi \in \mathcal{C}^{\infty}( [0,\infty) )$ is a cut-off at $0$ and $\rho_{\tcocf}$ a boundary defining function for $\tcocf$. Then the required properties for $L$ follows easily from Lemma \ref{lemma construction of L decay at cf}. \par

By putting (\ref{tangential away from o cal 6}), (\ref{tangential away from o cal 7}) and (\ref{tangential away from o cal 8}) together with the above, we can now choose
\begin{equation*}
\tilde{E} = - B^{\ast} B - B_{2}^{\ast} B_2 - L  + \varphi_{\dmf} E_{L} + E_{R} \varphi_{\dmf}.
\end{equation*}
Then it follows from (\ref{corner face commutator formula 3}) that we have
\begin{equation*}
i [ P, A^{\ast} A] = -B^{\ast}B - B^{\ast}_1 B_1 + \tilde{B}^{\ast} ( \log x ) \tilde{B} - L  + \varphi_{\dmf} E_{L} + E_{R} \varphi_{\dmf} + \tilde{R}
\end{equation*}
where we recall that $\tilde{R} \in  \Psf^{-\infty, -\infty, 2 \vol , 2 \hvor + 2 \vol + 2 \delta, 2 \vob - 2 \delta, - \infty}$.
Thus, by paring the above in $L^{2}$ and then throwing away the negative indefinite terms, we get that
\begin{equation} \label{tangential away from o cal 9}
\| B u \|_{L^{2}}^2 \leq | \langle AP u , A u  \rangle_{L^{2}}  | + | \langle \varphi_{\dmf} E_{L} u, u \rangle_{L^2} | + | \langle E_{R} \varphi_{\dmf} u,  u \rangle_{L^2} | + | \langle \tilde{R} u, u \rangle_{L^2}  |.
\end{equation}
\par

We first connect the left hand side of (\ref{tangential away from o cal 9}) with (\ref{decay at cf main lemma inequality}). Note that since $B^{\tindex}$ is elliptic and positive, it must be invertible with inverse $(B^{\tindex})^{-1} \in \Psi_{\mathrm{bc}, \delta}^{-\hvor - \vol, 0, 0}( [ \hat{X}^{\tindex} ; \{ 0 \} ] )$. If we choose $\tilde{q}_{\tindex} \in \mathcal{C}^{\infty}_{c}( \mathcal{C}_{\tindex} \times \mathbb{R}^{n_{\tindex}} ), \tilde{\varphi}_{\Co} \in \mathcal{C}^{\infty}( [ \hat{X}^{\tindex} ; \{ 0 \} ] )$ such that they are identically $1$ on the supports of $q_{\tindex}$ and $\varphi_{\Co}$ respectively, then we can define an operator $G \in \Psf^{-\infty, -\infty, -\vol, -\hvor - \vol, -\vob, -\infty}$ with
\begin{equation*}
\hat{N}_{\cf}( G ) = \tilde{q}_{\tindex} \tilde{\varphi}_{\Co} \hbff^{\vol -\vob/2} \hbff^{-n^{\tindex}/4} ( B^{\tindex} )^{-1} \hbff^{n^{\tindex}/4}.
\end{equation*}
It is then obvious that
\begin{equation*}
\hat{N}_{\cf}( GB ) = \tilde{q}_{\tindex} q_{\tindex} \tilde{\varphi}_{\Co} \hbff^{\vol - \vob/2} \hbff^{-n^{\tindex}/4} ( B^{\tindex} )^{-1} B^{\tindex} \hbff^{n^{\tindex}/4} \hbff^{-\vol+\vob/2  } \varphi_{\Co} = q_{\tindex} \varphi_{\Co}.
\end{equation*}
Thus, since $Q \in \Psf^{-\infty, 0, 0, 0, 0, -\infty}$ is chosen such that $\hat{N}_{\cf}(Q) = q_{\tindex}$, if we also choose $\varphi_{\dff}$ such that $\varphi_{\mathcal{C}^{\tindex}_0}$ is exactly the restriction of $\varphi_{\dff}$ to $\cf$, then we have
\begin{equation*}
GB - \varphi_{\dff} Q \in \Psf^{-\infty, -\infty ,0 , 0, - \delta, -\infty}.
\end{equation*}
It follows that we can estimate
\begin{align} \label{tangential away from o cal 10}
\begin{split}
\| \varphi_{\dff} Q u \|_{ H_{\mathrm{d3sc,3co,res}}^{M, \ast , \vol, \hvor + \vol, \vob, S} } & \leq C ( \|  GB u \|_{ H_{\mathrm{d3sc,3co,res}}^{ M , N , \vol, \hvor + \vol, \vob, S} } + \|  u \|_{H_{\mathrm{d3sc,3co,res}}^{ M, N, \vol, \hvor + \vol, \vob - \delta, S }} ) \\
& \leq C ( \| B u \|_{L^2} + \| u \|_{H_{\mathrm{d3sc,3co,res}}^{ M, N, \vol, \hvor + \vol, \vob -  \delta, S }} ).
\end{split}
\end{align}
\par
We next consider the right hand side of (\ref{tangential away from o cal 9}). By using Young's inequality, we have
\begin{align*}
 | \langle APu, Au \rangle_{L^2} | & \leq \epsilon^{-2} \| APu \|_{ H_{\mathrm{d3sc,3co,res}}^{0, 0 , 0, 1/2, 1,0} }^2 + \epsilon^2 \| A u \|_{H_{\mathrm{d3sc,3co,res}}^{0, 0, 0, -1/2, -1, 0}}^2 \\
& \leq   \epsilon^{-2} C \| P u \|_{H_{\mathrm{d3sc,3co,res}}^{M, N , \vol, \hvor + \vol + 1 , \vob + 2 , S}}^2 + \epsilon^2 C \| u \|_{ H_{\mathrm{d3sc,3co,res}}^{M, N , \vol, \hvor + \vol, \vob, S} }^2.
\end{align*}
On the other hand, let $\tilde{\varphi}_{\dmf} \in \mathcal{C}^{\infty}(\Xd)$ be a cut-off at $\dmf$ such that $\tilde{\varphi}_{\dmf} = 1$ on the support of $\varphi_{\dmf}$. Note that the support of $\tilde{\varphi}_{\dmf}$ can be made arbitrarily small (since this is the case for $\varphi_{\dmf}$). Then by Lemma \ref{decay at corner face decay near main face lemma} as well as the decay properties of $E_{L}$ and $E_{R}$, we can estimate
\begin{align*}
| \langle \varphi_{\dmf} E_{L} u, u \rangle_{L^2} | & \leq \| \varphi_{\dmf} E_{L} u \|_{H_{\mathrm{d3sc,3co,res}}^{-M, -N + 1, - \vol, - \hvor - \vol, -\vob, - S}} \|  \tilde{\varphi}_{\dmf} u \|_{H_{\mathrm{d3sc,3co,res}}^{M, N - 1, \vol, \hvor + \vol, \vob, S} } \\
& \leq \epsilon^2 C \| u \|_{H_{\mathrm{d3sc,3co,res}}^{M, N, \vol, \hvor + \vol, \vob, S}}^2, 
\end{align*}
and likewise
\begin{equation*}
| \langle E_{R} \varphi_{\dmf} u, u \rangle_{L^2} | \leq \epsilon^2 C \| u \|_{H_{\mathrm{d3sc,3co,res}}^{M, N, \vol, \hvor + \vol , \vob, S}}^2.
\end{equation*}
Lastly, for the $\langle \tilde{R} u, u \rangle_{L^2}$ term, we estimate easily that
\begin{align*}
| \langle \tilde{R} u , u \rangle_{L^2} | & \leq \| \tilde{R} u \|_{H_{\mathrm{d3sc,3co,res}}^{- M, -N, - \vol, - \hvor - \vol - \delta, -\vob + \delta , - S}} \| u \|_{H_{\mathrm{d3sc,3co,res}}^{ M, N , \vol, \hvor + \vol + \delta, \vob - \delta, S}} \\
& \leq C \| u \|_{H_{\mathrm{d3sc,3co,res}}^{ M,  N , \vol, \hvor + \vol + \delta, \vob - \delta, S }}^2.
\end{align*} \par

Putting everything together, we now conclude that 
\begin{align*}
\| Bu \|_{L^2} & \leq C( \epsilon^{-1} \| P u \|_{H_{\mathrm{d3sc,3co,res}}^{M, N , \vol , \vor + \vol + 1 , \vob + 2 , S}} \\
&\quad +  \epsilon \| u \|_{H_{\mathrm{d3sc,3co,res}}^{M, N, \vol, \hvor + \vol, \vob, S}} + \| u \|_{H_{\mathrm{d3sc,3co,res}}^{M, N , \vol, \hvor + \vol + \delta, \vob - \delta, S}} ).
\end{align*}
This inequality can be further substituted into (\ref{tangential away from o cal 10}), so now we have
\begin{align*}
\| \varphi_{\dff} Q u \|_{H_{\mathrm{d3sc,3co,res}}^{M, \ast, \vol, \hvor + \vol, \vob, S}} & \leq C( \epsilon^{-1} \| P u \|_{H_{\mathrm{d3sc,3co,res}}^{M, N , \vol , \vor + \vol + 1 , \vob + 2 , S}} \\
&\quad +  \epsilon \| u \|_{H_{\mathrm{d3sc,3co,res}}^{M, N, \vol, \hvor + \vol, \vob, S}} + \| u \|_{H_{\mathrm{d3sc,3co,res}}^{M, N , \vol, \hvor + \vol + \delta, \vob - \delta, S}} ),
\end{align*}
which is exactly the required estimate (\ref{decay at cf main lemma inequality}).

\section{Decay estimate at the front face} \label{decay at the front face section}
In this section, we will start from Proposition \ref{decay at the corner face proposition} and show that the error term in (\ref{corner face decay equation 1}) can be further improved at $\dff$. This will be the final ingredient for the proof of Theorem \ref{intro; main theorem 2}, which will be presented in \S \ref{section 10 proof of main theorem subsection} below. \par

At least initially, in this section, we will still work with the assumption that $\mathcal{C}$ has just one element. However, unlike in the proofs of Propositions \ref{fredholm estimate with symbolic decay} and \ref{decay at the corner face proposition}, the calculations in this section do carry to the general case straightforwardly. This is because our analysis below will involve directly the \emph{normal operator} of $P$ at $\dff$, which we now define as
\begin{equation} \label{section 10 normal operator of P}
N_{\dff} ( P ) = \Delta + V^{\tindex} - \lambda^{2},
\end{equation}
i.e., just what the operator $P$ would look like if $\mathcal{C} = \mathcal{C}_{\tindex}$. We will prove an estimate for every $\tindex \in \ind$ with $P$ replaced by $N_{\dff}(P)$, and then at the end patch them together. Notice that we clearly have
\begin{equation*}
\hat{N}_{\dff}( N_{\dff} ( P ) ) = \hat{N}_{\dff}(P)
\end{equation*}
for every $\tindex \in \ind$.

\subsection{Invertibility of the indicial operator}
First of all, recall that we have
\begin{equation*}
\hat{N}_{\dff}(P) = \Delta_{z^{\tindex}} + V^{\tindex} - ( \lambda^{2} - | \tau_{\tindex} |^{2} - | \mu_{\tindex} |_{h_{\tindex}}^2 ) = \Delta_{z^{\tindex}} + V^{\tindex} - ( \lambda^{2} - |\zeta_\alpha|^2) ).
\end{equation*}
As mentioned already, one can view $\hat{N}_{\dff}(P)$ as a family of stationary `two-body' operators. Such operators have been extensively studied in the literature. In particular, they are shown to be invertible between suitable anisotropic Sobolev spaces in a uniform sense. The precise statements can be found in \cite[Theorem 4.5, but see also Theorem 1.1, Remark 1.2 and Remark 1.4]{AndrasSM}, which will be the cornerstones for the rest of this section. \par

In stating a result which best fits our current context, we will rephrase \cite[Theorem 4.5]{AndrasSM} by the following:
\begin{theorem} \label{Andras second microlocal main theorem}      
Suppose that $s \in \mathbb{R}$, and let $l \in \mathbb{R}$ be such that $|l+1| < (n^{\tindex} - 2)/2$. Let also $\vor^\tindex_{\pm} \in \mathcal{C}^{\infty}( {^{\mathrm{b}}S^{\ast} X^{\tindex}} )$ be given by 
\begin{equation} \label{section 10 usual variable order}
\vor^{\tindex}_{\pm} = -\frac1{2} \pm \beta^{\tindex} \frac{\utau}{( \intn )^{1/2}}, \quad \beta^{\tindex} > 0. 
\end{equation}
Then the maps
\begin{equation} \label{Andras second microlocal main map}
\hat{N}_{\dff}(P)(\zeta_\alpha) : H_{\mathrm{sc,b}}^{s, \vor^\tindex_{\pm}, l}(X^{\tindex}) \rightarrow \big\{ u \in H_{\mathrm{sc,b}}^{s, \vor^\tindex_{\pm}, l}(X^{\tindex}) : \hat{N}_{\dff}(P)(\zeta_\alpha) u \in  H_{\mathrm{sc,b}}^{s - 2, \vor^\tindex_{\pm} + 1 , l + 2}(X^{\tindex})  \big\}
\end{equation}
are uniformly invertible for all $\zeta_{\tindex} \in \mathbb{R}^{n_{\tindex}}$. Here, uniformity means that
\begin{equation} \label{Vasy uniform Fredholm estimates two-body}
\| u \|_{H_{\mathrm{sc,b}}^{s, \vor^\tindex_{\pm} , l}} \leq C \| \hat{N}_{\dff}(P) (\zeta_{\tindex}) u \|_{H_{\mathrm{sc,b}}^{s-2, \vor^\tindex_{\pm} + 1, l + 2}}
\end{equation}
for a constant $C>0$ that is independent of $u \in H_{\mathrm{sc,b}}^{s, \vor^\tindex_{\pm}, l}$ and $\zeta_\alpha$.
\end{theorem}
\begin{proof} 
To conserve notations, we will use $\vor^{\tindex}$ for either choices of $\vor^{\tindex}_{+}$ or $\vor^{\tindex}_{-}$ in this proof. 

  The result for bounded $|\zeta_\tindex|$ is \cite[Theorem 5.7]{AndrasSM}. We prove the estimate, including uniformity, for all $\zeta_\tindex \in \mathbb{R}^{n_{\tindex}}$. For sufficiently large $|\zeta_\tindex|$, observe that  $\hat{N}_{\dff}(P)(\zeta_\alpha)$ is invertible in the scattering calculus, and therefore also has inverse in the scattering calculus. We claim that this inverse is uniformly bounded as $|\zeta_\tindex| \to \infty$.

  To prove this, we first set
  \begin{equation} \label{section 10 Andras theorem cal 1}
 A(\sigma) = \frac{1}{(2\pi)^{n^{\tindex}}} \int_{\mathbb{R}^{n^{\tindex}}} e^{ i ( z^{\tindex} - (z^{\tindex})' ) \cdot \zeta^{\tindex} } ( |\zeta^{\tindex}| + V + \sigma )^{-1} d\zeta^{\tindex} , \quad \sigma = |\zeta_\alpha|^2 - \lambda^2. 
  \end{equation}
It is not difficult to see that, for all sufficiently large $\sigma$ so that the symbol is greater than $1$, this is a  uniformly bounded (as $\sigma$ tends to infinity) family of scattering symbols of order $(-2,0)$. That it is a symbol of order $(-2, 0)$ is clear, so only the uniformity needs addressing. \par

This uniformity follows from the following observation: When we take any derivative of the symbol, we obtain a quotient where the numerator consists of a product of derivatives of $|\zeta^\alpha|^2 + V$, and thus is independent of $\sigma$, while the denominator is a positive integral power of $|\zeta^\alpha|^2 + V + \sigma$. From this, it is easy to check that the symbol estimates hold uniformly as $\sigma \to \infty$. 
  It is also easy to check that as a symbol of order $(0,0)$, resp. $(-1, 0)$, it is $O(\sigma^{-1})$, resp. $O(\sigma^{-1/2})$, as $\sigma \to \infty$.

We now consider 
  $$
  (\Delta_{z^\alpha} + V + \sigma) A(\sigma) = I + E(\sigma),
  $$
  where this identity defines $E(\sigma)$. By standard elliptic theory, $E(\sigma)$ is a scattering pseudodifferential operator of order $(-1, -1)$. However, as a symbol of order $(0,0)$, all its seminorms decay as $O(\sigma^{-1})$. Since the operator norm is controlled by a finite number of seminorms in $S^{0,0}$, it follows that the operator norm of $E(\sigma)$ is $O(\sigma^{-1})$, and is thus bounded by $1/2$ for sufficiently large $\sigma$. Therefore, $I + E(\sigma)$ is invertible for large $\sigma$, and it follows that 
  \begin{equation*}
      (\Delta_{z^\alpha} + V + \sigma)^{-1} = A(\sigma) ( I + E(\sigma))^{-1} \in \Psi_{\mathrm{sc}}^{-2,0}(X^{\tindex})
  \end{equation*}
has uniformly bounded seminorms in this space. This inverse, which is $(\hat{N}_{\dff}(P)(\zeta_\tindex))^{-1}$, is uniformly bounded as a map between scattering Sobolev spaces $H_{\mathrm{sc}}^{s, \vor} \to H_{\mathrm{sc}}^{s+2, \vor}$ (including variable order spaces).

The function $\vor^{\tindex}$ is not an admissible variable order at zero frequency in the scattering calculus. However, it is an admissible variable order in the $\mathrm{b}$-calculus. Thus, to prove the uniform boundedness, we will require a second ingredient with respect to b-Sobolev spaces. It follows from Vasy \cite[Theorem 4.5, Remark 1.4]{AndrasSM} that if we assume $u \in H_\mathrm{b}^{\vor^\tindex  -l, l}$ and $\hat{N}_{\dff}(P)(\zeta_\tindex) u \in H_{\mathrm{b}}^{\vor^\tindex -l-1, l+2}$, then there is a uniform bound 
\begin{equation*}
 \| u \|_{H_\mathrm{b}^{\vor^\tindex  -l, l}}   \leq C \| \hat{N}_{\dff}(P)(\zeta_\tindex) u \|_{H_\mathrm{b}^{\vor^\tindex  -l-1, l+2}}.
\end{equation*}
The constant $C$ here is independent of $\zeta_\tindex$. Indeed, this estimate is proved via a positive commutator estimate, where the commutator is independent of $\zeta_\tindex$, leading immediately to the uniformity.

Now, to combine these two estimates, we choose a microlocal cutoff $Q \in \Psi_{\mathrm{sc,b}}^{0,0,-\infty}$ so that $Q$ is microlocally equal to zero in a neighborhood of the b-face, and microlocally equal to the identity in a neighborhood of the fiber infinity. It follows that $[\hat{N}_{\dff}(P)(\zeta_\tindex), Q] \in \Psi_{\mathrm{sc,b}}^{-\infty, -1, -\infty}$. Thus, the commutator lies in \emph{both} the scattering calculus, as an operator of order $(-\infty, -1)$, and in the b-calculus, as an operator of order $(-1, -\infty)$. Now we take $u \in H_{\mathrm{sc,b}}^{s, \vor^\tindex, l}$ and then decompose it as $u = Qu + ( I - Q)u$. We also let $\tilde \vor^\tindex$ be a variable order in the scattering calculus which is equal to  $\vor^\tindex$ on the wavefront set of $Q$, with $\tvor^{\tindex} \leq \vor^{\tindex} + 1$ everywhere, and moreover $\tvor^{\tindex}$ is constant, with $\tvor^{\tindex} \leq l$ in a neighborhood of the zero section over the boundary. Then we have the estimates 
\begin{equation*}
\begin{gathered}
 \| Qu \|_{H_{\mathrm{sc,b}}^{s, \vor^\tindex, l}} \leq C  \| Qu \|_{H_{\mathrm{sc}}^{s, \tilde \vor^\tindex}} + \| u \|_{H_{\mathrm{sc}}^{N, N}} , \\ 
\| ( I - Q) u \|_{H_{\mathrm{sc,b}}^{s, \vor^\tindex, l}} \leq C \| ( I - Q)u \|_{H_{\mathrm{b}}^{\vor^\tindex, l}} + \| u \|_{H_{\mathrm{sc}}^{N, N}}, 
\end{gathered}
\end{equation*}
where $N$ is sufficiently negative. On each piece, we apply our uniform estimate on sc-, resp. b-Sobolev spaces, from which we obtain
\begin{equation*}
\begin{gathered}
\| Qu \|_{H_{\mathrm{sc}}^{s, \tilde \vor^\tindex}} \leq C \| \hat{N}_{\dff}(P)(\zeta_\tindex) Qu \|_{H_{\mathrm{sc}}^{s-2, \tilde \vor^\tindex+1}}, \\
\| ( I  - Q)u \|_{H_{\mathrm{b}}^{\vor^\tindex-l, l}} \leq C \| \hat{N}_{\dff}(P)(\zeta_\tindex) ( I - Q)u \|_{H_{\mathrm{b}}^{\vor^\tindex-l-1, l+2}}. 
\end{gathered}
\end{equation*}
 \par

We now commute the normal operator past the microlocal cutoffs. This yields
\begin{align*}\begin{split} 
 \| \hat{N}_{\dff}(P)(\zeta_\tindex) Qu \|_{H_{\mathrm{sc}}^{s-2, \tilde \vor^\tindex+1}} & \leq \| Q\hat{N}_{\dff}(P)(\zeta_\tindex) u \|_{H_{\mathrm{sc}}^{s-2, \tilde \vor^\tindex+1}} \\ 
 & \quad + \| [\hat{N}_{\dff}(P)(\zeta_\tindex), Q] u \|_{H_{\mathrm{sc}}^{s-2, \tilde \vor^\tindex+1}}, 
 \end{split}
 \end{align*}
 as well as
 \begin{align}
 \begin{split} \label{section 10 Andras theorem commute 2}
 \| \hat{N}_{\dff}(P)(\zeta_\tindex) ( I - Q)u \|_{H_{\mathrm{b}}^{\vor^\tindex-l-1, l+2}} & \leq \| ( I - Q) \hat{N}_{\dff}(P)(\zeta_\tindex) u \|_{H_{\mathrm{b}}^{\vor^\tindex-l-1, l+2}} \\ 
 & \quad + \| [\hat{N}_{\dff}(P)(\zeta_\tindex), ( I - Q)]u \|_{H_{\mathrm{b}}^{\vor^\tindex-l-1, l+2}}. 
\end{split}
\end{align} 
By the membership of $Q$, it is clear that
\begin{equation*}
\begin{gathered}
\|  Q \hat{N}_{\dff} (P) ( \zeta_{\tindex} ) u \|_{H_{\mathrm{sc}}^{ s - 2, \tvor^{\tindex} + 1 }} \leq C  \| \hat{N}_{\dff} (P) ( \zeta_{\tindex} ) u \|_{H_{\mathrm{sc,b}}^{s-2, \vor^{\tindex} + 1, \ast }}, \\
\| ( I - Q ) \hat{N}_{\dff}(P) ( \zeta_{\tindex} ) u \|_{H_{\mathrm{b}}^{\vor^{\tindex} - l - 1, l + 2}} \leq C \| \hat{N}_{\dff}(P) ( \zeta_{\tindex} ) u  \|_{H_{\mathrm{sc,b}}^{\ast, \vor^{\tindex} + 1, l + 2}}.
\end{gathered}
\end{equation*}
Likewise, we can easily estimate
\begin{equation*}
\| [\hat{N}_{\dff}(P)(\zeta_\tindex), Q] u \|_{H_{\mathrm{sc}}^{s-2, \tvor^{\tindex} + 1, } } \leq C \| u \|_{H_{\mathrm{sc}}^{\ast, \tvor^{\tindex}} }.
\end{equation*}
To estimate the second commutator term (i.e., the one in the second line of (\ref{section 10 Andras theorem commute 2})), we recall that $[\hat{N}_{\dff}(P)(\zeta_\tindex), ( I - Q)] = - [ \hat{N}_{\dff}(P)( \zeta_{\tindex} ), Q ]$ is microlocally supported away from the b-face, so it can be seen as a scattering operator. Hence the b-Sobolev space can be swapped for a sc-Sobolev space, up to an error that is completely residual. It follows that we have
\begin{equation*}
\| [ \hat{N}_{\dff}(P) (\zeta_{\tindex}), ( I - Q ) ] u \|_{H_{\mathrm{b}}^{\vor^{\tindex} - l -1, l +2} } \leq C \| u \|_{H_{\mathrm{sc}}^{N, \tvor^{\tindex}}} 
\end{equation*}
as well.  \par

Finally, to estimate the residual terms, it suffices to apply our uniform estimate on scattering Sobolev spaces, from which we get that
\begin{equation}
    \| u \|_{H_{\mathrm{sc}}^{N, \tilde \vor^\tindex}} \leq \| \hat{N}_{\dff}(P)(\zeta_\tindex) u \|_{H_{\mathrm{sc}}^{N-2, \tilde \vor^\tindex}},
\end{equation}
  and this suffices to complete the estimate by the construction of $\tilde{\mathsf{r}}^{\tindex}$.
\end{proof}

Theorem~\ref{Andras second microlocal main theorem} can be generalized to variable orders other than the specific $\vor^\tindex_{\pm}$ stated in the theorem. For example, let $\tvor^{\tindex}_{\pm}$ denote variable orders satisfying the usual monotonicity and threshold conditions, i.e., 
 \begin{equation} \label{Andras second microlocal variable order requirement -1}
 \begin{gathered}
 \text{$\pm \tvor^{\tindex}_{\pm}$ is non-increasing along the flow of $ (\intn)^{-1/2} (x^{\tindex})^{-1} H_{\intn}$} \\
 \text{when restricted to ${ ^{\mathrm{b}}S^{\ast} X^{\tindex} }$,}
 \end{gathered}
 \end{equation}
 as well as
 \begin{equation} \label{Andras second microlocal variable order requirement}
\begin{gathered}
\text{$ \pm \tvor^{\tindex}_{\pm} < - \frac{1}{2}$ at $ \Big\{  \frac{\utau}{( \intn )^{1/2}}$ = $\frac{\utaub}{( \intnb )^{1/2}} = -1$ \Big\}}, \\
\text{$ \pm \tvor^{\tindex}_{\pm} > - \frac{1}{2}$ at $ \Big\{  \frac{\utau}{( \intn )^{1/2}}$ = $\frac{\utaub}{( \intnb )^{1/2}} = 1$ \Big\}},
\end{gathered}
\end{equation}
 and also $\tvor^{\tindex}_{\pm} \geq \vor^\tindex_{\pm}$. Then we claim that the result also holds with $\tvor_{\pm}^{\tindex}$ replacing $\vor^\tindex_{\pm}$.  \par

To prove this, we only need to address the b-Sobolev estimate above, since the scattering result already holds with any scattering variable order. In terms of the b-Sobolev estimate, we first prove a uniform estimate with error term:
 \begin{equation}
\| u \|_{H_{\mathrm{b}}^{\tvor^{\tindex}_{\pm} -l, l}}   \leq C  \big( \| \hat{N}_{\dff}(P)(\zeta_\tindex) u \|_{H_{\mathrm{b}}^{\tvor^{\tindex}_{\pm} -l-1, l+2}} + \| u \|_{H_{\mathrm{b}}^{N, l}} \big),
 \label{eq:b Sobolev est general r}\end{equation}
where $N$ is sufficiently negative. This can be done with the standard method, so we only sketch the argument. We note that we have to estimate $u$ in three microlocal regions: 
\begin{itemize}
\item near the boundary and localized to finite b-momentum, in which case this is trivially estimated by the term $\| u \|_{H_{\mathrm{b}}^{N, l}}$ with a fixed constant; 
\item away from the boundary, in which case we get a uniform bound using the same argument as was used in the scattering calculus argument below (\ref{section 10 Andras theorem cal 1}); 
\item near the boundary and near infinite b-momentum, which is done using positive commutator estimates, with constants independent of $|\zeta_\tindex|$ because the commutators are independent of $\zeta_\tindex$. 
\end{itemize}
Once \eqref{eq:b Sobolev est general r} has been established, we simply estimate the error term on the right hand side using Theorem~\ref{Andras second microlocal main theorem}, and this suffices due to the assumption that $\vor^\tindex_{\pm} \leq \tvor^{\tindex}_{\pm}$. 

 Next, we also prove the result for a variable order $\tvor^{\tindex}_{\pm}$ satisfying the usual monotonicity and threshold conditions, and satisfying the opposite inequality, $\tvor^{\tindex}_{\pm} \leq \vor^\tindex_{\pm}$. This follows directly from duality and from the result just noted when the variable order is \emph{greater} than $\vor^\tindex_{\pm}$, recalling that the operator norm of an operator is equal to the operator norm of its adjoint. 

 We will record our findings above as a corollary:

\begin{corollary} \label{Andras second microlocal main corollary}
Suppose that $s \in \mathbb{R}$, and let $l \in \mathbb{R}$ be such that $|l + 1| < ( n^{\tindex} - 2 )/2$. Let also $\tvor^{\tindex}_{\pm} \in \mathcal{C}^{\infty}( {^{\mathrm{b}}S^{\ast}} X^{\tindex}  )$ be such that either $\tvor_{\pm}^{\tindex} \geq \vor^{\tindex}_{\pm}$ or $\tvor_{\pm}^{\tindex} \leq \vor^{\tindex}_{\pm}$, where $\vor^{\tindex}_{\pm}$ are given by (\ref{section 10 usual variable order}). Then the same statements (\ref{Andras second microlocal main map}) and (\ref{Vasy uniform Fredholm estimates two-body}) hold with $\vor^{\tindex}_{\pm}$ replaced by $\tvor^{\tindex}_{\pm}$.
\end{corollary}

\begin{remark}
In fact, we expect the results of Theorem \ref{Andras second microlocal main theorem} to work for all choices of $\tvor^{\tindex} \in \mathcal{C}^{\infty}( ^{\mathrm{b}}S^{\ast}X^{\tindex} )$ which satisfy either one of the monotonicity/threshold conditions in (\ref{Andras second microlocal variable order requirement -1}) and (\ref{Andras second microlocal variable order requirement}). The proofs should follow straightforwardly, possibly with minor modifications, from the proof of \cite[Theorem 5.7]{AndrasSM}. 
\end{remark}

In order to directly apply Corollary \ref{Andras second microlocal main corollary}, one needs a variable order which can at least locally (in phase space) be identified as an element of $\mathcal{C}^{\infty}( ^{\mathrm{b}}S^{\ast} X^{\tindex} )$. As we will see shortly, such a role is taken by $\vor - \vol$. Recall again (either from \S \S \ref{variable order construction section} or \ref{adaption of vasy argument subsection}) that when restricted to $\dtsccf$, the variable orders $\vor = \vor_{+}$, $\vol = \vol_{+}$ satisfy
\begin{equation*}
\vor = - \frac{1}{2} + \varphi \beta^{\tindex} \phi ( ( \beta^{\tindex} )^{-1} \vor^{\tindex} ) + \vol, \quad \vor^{\tindex} = \vor^{\tindex}_{+} = \beta^{\tindex} \frac{\utau}{( \intn )^{1/2}}, \quad \beta^{\tindex} > 0,
\end{equation*}
where $\varphi \in \mathcal{C}^{\infty}( \psf \Xd )$ is a cut-off at $\bcv$ and $\phi_1 \in \mathcal{C}^{\infty}(\mathbb{R})$. Specifically, $\varphi = \psi (p)$, where $\psi \in \mathcal{C}^{\infty}(\mathbb{R})$ is a cut-off at $0$ and $p$ is the principal symbol of $P$, while $\phi' \geq 0$ with $\phi(t) < 0$ constant near $-1$, $\phi (t) > 0$ constant near $1$, and 
\begin{equation} \label{front face decay cal 0.68}
t + \frac{1}{2} \leq \phi (t) \leq t + 1.
\end{equation}
\par

It would therefore be convenient if we can remove the dependency of $\vor - \vol$ on the free variables in a manageable way, and which only needs to hold on some small neighborhood of $\psf_{\dff} \Xd$, say $U$, where the free variables are also bounded. One simple way to do this is to consider
\begin{equation*}
\tvor^{\tindex} = - \frac{1}{2} + \beta^{\tindex} \phi ( (\beta^{\tindex})^{-1}  \vor^{\tindex} ).
\end{equation*}
Notice that $\vor^{\tindex}$ is indeed smooth on $U$, thus $\tvor^{\tindex}$ must be smooth on $U$ as well. Moreover, $\tvor^{\tindex}$ restricts to a $\mathcal{C}^{\infty}( ^{\mathrm{b}}S^{\ast} X^{\tindex} )$ function on $\dtsccf$, and by the construction of $\phi$, both the monotonicity condition (\ref{Andras second microlocal variable order requirement -1}) and the threshold conditions (\ref{Andras second microlocal variable order requirement}) are satisfied. \par

In fact, it is easy to see that
\begin{equation*}
\tvor^{\tindex} - \beta^{\tindex} \leq \vor^{\tindex} \leq \tvor^{\tindex} - \frac{\beta^{\tindex}}{2}.
\end{equation*}
Thus one can apply Corollary \ref{Andras second microlocal main corollary} for the variable order  $\tvor^{\tindex} - 2\delta$ if $\delta > 0$ is chosen such that
\begin{equation} \label{front face decay cal 0.69}
 \beta^{\tindex} \leq 2 \delta,
\end{equation}
since in this case we have $\tvor^{\tindex} - 2 \delta \leq \tvor - \beta^{\tindex}$. Recall that we can always take $\beta^{\tindex}$ to be arbitrarily small. Another simple comparison that we can make is
\begin{equation}  \label{front face decay cal 0.7}
\vor - 2 \beta^{\tindex} \leq \tvor^{\tindex} + \vol \leq \vor + 2 \beta^{\tindex}
\end{equation}  
Here, the factor $2$ comes from the right hand side of (\ref{front face decay cal 0.68}).  \par
A similar calculation works for the case with $\vor_{-}$ and $\vol_{-}$.
\par

In practice, it will be convenient to change the free variables from $(y_{\tindex}, \ltau, \lmu)$ to $( z_{\tindex}, \zeta_{\tindex} )$. This is because since $\intt = |\zeta_{\tindex}|^2$ (a crucial property of the translation invariance of the Euclidean metric), one sees that $\hat{N}_{\dff}(P)$ is dependent only on $|\zeta_{\tindex}|^2$, and can thus be viewed as a Fourier multiplier in the free variables. Namely, let $\mathcal{F}_{z_{\tindex} \rightarrow \zeta_{\tindex}}$ denote the Fourier transform which sends $z_{\tindex}$ to its dual variable $\zeta_{\tindex}$. Then we have
\begin{equation}  \label{front face decay cal 0.8}
N_{\dff}(P) = \mathcal{F}_{ z_{\tindex} \rightarrow \zeta_{\tindex} }^{-1} \hat{N}_{\dff}(P) \mathcal{F}_{ z_{\tindex} \rightarrow \zeta_{\tindex} },
\end{equation}
where we recall that
\begin{equation*}
N_{\dff}(P) = \Delta + V^{\tindex} - \lambda^2.
\end{equation*}
This property is essential for the analysis in the subsections below.

\subsection{An interpolation inequality} \label{section 10 interpolation inequality subsection}
It is very convenient to use interpolation. Thus in this subsection, we will state a crude interpolation result which can be proved just using $L^2$ methods, and which suffices for our needs. We will interpolate in all exponents except for the exponent at $\dff$, which will be fixed throughout.

\begin{lemma} \label{interpolation lemma section 10}
Let $u \in H^{m, \vor, \vol, \vor + \vol, 2\vol + b, s}_{\mathrm{d3sc, 3co, res}}$. Assume that $b \in \mathbb{R}$ satisfies $|b+1| < (n^\tindex -2)/2$, and that the order at $\dtsccf$ is larger than the order at $\tcocf$, i.e., $\vor  \geq  \vol + b$. 
Then for any $\delta > 0$, $N > \delta$ and $\epsilon > 0$, there exists $C_{\epsilon}$ such that 
\begin{equation} \label{section 10 interpolation 1}
\| u \|_{H^{m - \delta, \vor - \delta, \sfl , \sfr + \sfl - \delta, 2\sfl + b - \delta, s - \delta}_{\mathrm{d3sc, 3co, res}}}
\leq \epsilon \| u \|_{H^{m, \sfr, \sfl, \sfr + \sfl , 2\sfl + b , s}_{\mathrm{d3sc, 3co, res}}} + 
C_{\epsilon} \| u \|_{H^{m - N, \sfr - N, \sfl, \sfr + \sfl - N, 2\sfl + b - N, s - N}_{\mathrm{d3sc, 3co, res}}}. 
\end{equation} 
\end{lemma}

Before we proceed to the proof of Lemma \ref{interpolation lemma section 10}, let us first remark on its relevancy to us. Let $\vor_{\pm}$, $\vol_{\pm}$, $\vob_{\pm}$ be the variable orders constructed in \S \ref{variable order construction section}, and let also $b_{\pm} = \vob_{\pm} - 2 \vol_{\pm}$. Then Lemma \ref{interpolation lemma section 10} can only be applied provided that
\begin{equation} \label{section 10 interpolation subsection cal 1}
\vor_{\pm} \geq \vol_{\pm} + b_{\pm}.
\end{equation}
We consider first the case where the signs of the variable orders are $+$. By writing out the precise definitions of these functions, we find that condition (\ref{section 10 interpolation subsection cal 1}) is equivalent to 
\begin{equation*} \label{section 10 interpolation subsection cal 1.1}
\frac{1}{2} + \varphi \beta^{\tindex} \phi ( ( \beta^{\tindex} )^{-1} \vor^{\tindex} ) \geq b_{+} + 1,
\end{equation*}
where the functions $\varphi$, $\phi$, etc. have been recalled above. Thus, if we take the parameter $\beta^{\tindex}$ to be sufficiently small, then a non-trivial interval in the $b_{+}$ parameter, centered at $b_{+} = -1$, satisfies the condition $\vor_{+} \geq \vol_{+} + b_{+}$ (as well as $|b_{+} + 1| < ( n^{\tindex} - 2 )/2$). \par

Similarly, in the cases where the variable orders have $-$ signs, recall that we have
\begin{equation*}
\vor_{-} = - \vor_{+} - 1, \, \vol_{-} = - \vol_{+} , \, \vob_{-} = - \vob_{+} - 2, \, b_{-} = - b_{+} - 2.
\end{equation*}
Thus (\ref{section 10 interpolation subsection cal 1}) is equivalent to
\begin{equation*} \label{section 10 interpolation subsection cal 1.2}
- \frac{1}{2} + \varphi \beta^{\tindex} \phi ( (\beta^{\tindex})^{-1} \vor^{\tindex} ) \leq b_{+} + 1,
\end{equation*}
and the discussion above again applies in this case as well. 
\begin{proof}[Proof of Lemma \ref{interpolation lemma section 10}]
It suffices to prove the result when $N = 2^k \delta$ for some positive integer $k$. We can interpolate first in the orders at the `symbolic' faces, followed by an interpolation at the two corner faces $\dtsccf$ and $\tcocf$ (where measurement at the latter face should be interpreted more globally at $\cf$) simultaneously.

Inductively, it suffices to show that, at any particular face or combination of faces{\ep}let us take the fiber infinity for definiteness{\ep}we have
\begin{equation}
\| u \|_{H^{m - \delta, \sfr , \sfl, \sfr + \sfl , 2\sfl + b, s }_{\mathrm{d3sc, 3co, res}}}
\leq  C ( \| u \|^{1/2}_{H^{m, \sfr, \sfl, \sfr + \sfl , 2\sfl + b , s}_{\mathrm{d3sc, 3co, res}}}   
 \| u \|^{1/2}_{H^{m - 2\delta, \sfr, \sfl, \sfr + \sfl, 2\sfl + b, s}_{\mathrm{d3sc, 3co, res}}} + \| u \|_{H^{m - N, \sfr , \sfl, \sfr + \sfl , 2\sfl + b, s }_{\mathrm{d3sc, 3co, res}}} ).
\label{eq: interp first step}\end{equation} 
By iterating this $k$ times, we arrive at the inequality
\begin{equation}
\| u \|_{H^{m - \delta, \sfr , \sfl, \sfr + \sfl , 2\sfl + b, s }_{\mathrm{d3sc, 3co, res}}}
\leq  C ( \| u \|^{1- 2^{-k}}_{H^{m, \sfr, \sfl, \sfr + \sfl , 2\sfl + b , s}_{\mathrm{d3sc, 3co, res}}}  
 \| u \|^{2^{-k}}_{H^{m - 2^k \delta, \sfr, \sfl, \sfr + \sfl, 2\sfl + b, s}_{\mathrm{d3sc, 3co, res}}} ).
\label{eq: interp 2nd step} \end{equation} 
 Then the inequality follows from Young's inequality together with the `Peter-Paul' trick.

We continue to work at fiber infinity, although the argument is identical at any symbolic face. To this end, choose an operator $A \in \Psf^{m - \delta, \sfr , \sfl, \sfr + \sfl , 2\sfl + b, s}$ that is elliptic at fiber infinity. Then we can find a parametrix $B$ such that $R = BA - I \in \Psf^{-N, 0, 0, 0, 0, 0}$ using the usual elliptic construction at fiber infinity. It follows that  
\begin{equation}
\| u \|_{H^{m - \delta, \sfr , \sfl, \sfr + \sfl , 2\sfl + b, s }_{\mathrm{d3sc,3co,res}}}
\leq C (  \| Au \|_{L^2} + \| u \|_{H^{m - \delta - N, \sfr , \sfl, \sfr + \sfl , 2\sfl + b, s }_{\mathrm{d3sc,3co,res}}} ). 
\label{eq: interp first step 2}
\end{equation} 
Thus 
\begin{equation}
\| u \|^2_{H^{m - \delta, \sfr , \sfl, \sfr + \sfl , 2\sfl + b, s }_{\mathrm{d3sc, 3co, res}}}
\leq C (  \ang{ Au, Au}_{L^2} + \| u \|^2_{H^{m - \delta - N, \sfr , \sfl, \sfr + \sfl , 2\sfl + b, s }_{\mathrm{d3sc,3co,res}}} ).
\label{eq: interp first step 3}\end{equation} 
We similarly choose an operator $A_\delta \in \Psf^{\delta, 0, 0, 0, 0, 0}$ that is elliptic at fiber infinity. Let $B_\delta$ be a parametrix for $A_\delta$ similar to the above, with $B_\delta A_\delta - I = R_\delta \in \Psf^{-2N, 0, 0, 0, 0, 0}$. Then 
\begin{align}
\begin{split}
\label{eq:L2 interp}
\ang{ Au, Au}_{L^2} & = \ang{(B_\delta A_\delta - R_\delta) Au, Au}_{L^2}   = \ang{A_\delta Au, B_\delta ^* Au}_{L^2}  - \ang{R_\delta Au, Au}_{L^2} \\
& \leq C (  \| u \|_{H^{m, \sfr, \sfl, \sfr + \sfl , 2\sfl + b , s}_{\mathrm{d3sc,3co,res}}}   
 \| u \|_{H^{m - 2\delta, \sfr, \sfl, \sfr + \sfl, 2\sfl + b, s}_{\mathrm{d3sc,3co,res}}} + \| u \|^2_{H^{m - N, \sfr , \sfl, \sfr + \sfl , 2\sfl + b, s }_{\mathrm{d3sc,3co,res}}} ),
\end{split}
\end{align}
where the second line follows by Cauchy-Schwarz and by using boundedness on the second microlocalized, d3sc,3co,res-Sobolev spaces. Together, this and \eqref{eq: interp first step 3}  imply \eqref{eq: interp first step}.

After interpolating at the symbolic faces, we can reduce to the case of $s=m+ \vol$, $\vor = r+\vol$, where $r$ is constant, and that the order at $\dtsccf$ is the same as the order at $\tcocf$, namely, $2\vol + b$. 
The last condition is needed as we will choose below an operator with invertible indicial operator at $\tcocf$, valued in multiples of the identity. This imposes the condition that the orders at $\dtsccf$ and $\tcocf$ agree.

Consider the operator $x_{\dmf}^{-r} x_{\dff}^{-b} \Lambda_{\vol}$, where $x_{\dmf}, x_{\dff} \in \mathcal{C}^{\infty}(\Xd)$ are respectively boundary defining functions for $\dmf$ and $\dff$, and where $\Lambda_{\vol}$ is a quantization of the symbol $|z|^{\sfl}$. Then $\Lambda_{\vol} \in \Psf^{0, \vol, \vol, 2\vol, 2\vol, \vol}$, and its indicial operators at both $\cf$ and $\dff$ are equal to the identity. \par 
By a direct calculation, we find that 
\begin{equation}
\Lambda_{\vol}  \Lambda_{-\vol} - I  \in \Psf^{0, 0, -1 + \delta, -1+\delta, -2+\delta, -1+\delta}.
\end{equation}
We can make a finite Neumann series to produce a better approximate inverse,  and reduce the order of the error at $\cf$ as much as we like. Assume that $u \in H_{\mathrm{d3sc,3co,res}}^{m, \sfl + r, \sfl, 2\sfl + b, 2\sfl + b, m + \vol}$. We now set  
\begin{align*}
A &= ( I + \Delta)^{m/2} x_{\dmf}^{-r +(b-\delta)/2}  x_{\dff}^{(b-\delta)/2} \Lambda_{\vol + b/2 - \delta/2} \in \Psf^{m, \vol + r, \vol, 2\vol + b - \delta, 2\vol + b - \delta, m+\vol }, \\
A_\delta &=  x_{\dmf}^{\delta/2} x_{\dff}^{\delta/2} \Lambda_{\delta/2} \in \Psf^{0, 0, 0, \delta, \delta, 0}, 
\end{align*} 
where $\Lambda_{\vol}$ is as above. The operator $A$ has an inverse modulo $\Psf^{0, 0, 0, -N, -N, 0}$, since it is the product of three invertible operators together with $\Lambda_{\vol}$ which has this property. Thus the operators $A$ and $A_\delta$ symbolically at $\dtsccf$ and globally at $\cf$ have the analogous algebraic properties as we had above at fiber infinity. The calculation in \eqref{eq:L2 interp} can now be repeated to obtain interpolation at these two faces. 

This completes the proof of the lemma. 
\end{proof}

Thus, by using the above lemma, we can now start with the remainder terms found in Proposition \ref{decay at the corner face proposition}, and estimate as in (\ref{section 10 interpolation subsection cal 1}), assuming that the variable orders now satisfy more restrictive conditions as discussed after the statement of Lemma \ref{interpolation lemma section 10}. By absorbing the $\epsilon$-term, it follows that we would have reduced (\ref{corner face decay equation 1}) to a pair of estimates
\begin{equation} \label{section 10 renewed corner face estimate}
 \| u \|_{ H_{\mathrm{d3sc,3co,res}}^{ m_{ \pm }, \vor_{ \pm }, \vol_{ \pm }, \vor_{ \pm } + \vol_{ \pm }, \vob_{ \pm }, s_{ \pm }  } } \leq  C ( \| P u \|_{ H_{\mathrm{d3sc,3co,res}}^{ m_{ \pm } - 2, \vor_{ \pm } + 1, \vol_{ \pm }, \vor_{ \pm } + \vol_{\pm  } + 1, \vob_{ \pm } + 2, s_{ \pm } - 2 } } + \| u \|_{H^{M, N, \vol_{\pm}, K , F, S}_{\mathrm{d3sc, 3co, res}}} ),
\end{equation}
assuming still that $u \in H_{\mathrm{d3sc,3co,res}}^{m_{\pm}, \vor_{\pm}, \vol_{\pm}, \vor_{\pm} + \vol_{\pm}, \vob_{\pm}, s_{\pm}}$, $Pu \in H_{\mathrm{d3sc,3co,res}}^{ m_{ \pm } - 2, \vor_{ \pm } + 1, \vol_{ \pm }, \vor_{ \pm } + \vol_{\pm  } + 1, \vob_{ \pm } + 2, s_{ \pm } - 2 }$

\subsection{Estimating the remainder} In what remains of this section as well as the next, we will estimate the remainder terms in (\ref{section 10 renewed corner face estimate}). \par

As mentioned at the beginning of this section, we first prove an estimate with respect to $N_{\dff}(P)$, $\tindex \in \ind$. In fact, even now we shall proceed slightly generally, and our result below will be proved with respect to a suitable microlocalization at $\dff$.

\begin{proposition} \label{section 10 main proposition}
Let $\vor_{\pm}$, $\vol_{\pm}$, $\vob_{\pm}$ be the orders constructed in \S \ref{variable order construction section} and $U_{\tindex} \subset \mathcal{C}_{\tindex} \times \mathbb{R}^{n_{\tindex}}$ be a small neighborhood of $\Sigma_{\dff}$. Suppose that $B \in \Psf^{-\infty, -\infty, 0, 0, 0, -\infty}$ satisfies
\begin{equation} \label{wave front set elliptic set condition decay at dff}
\Sigma_{\dff} \subset \mathrm{Ell}_{\dff}(B) \subset \mathrm{WF}_{\dff}'(B) \subset U_{\tindex}.
\end{equation}
Then we can find a small $\delta > 0$, such that all sufficiently negative $M, N, K, F, S < 0$ and small $\delta >0$, there exists $C > 0$ such that
\begin{align} \label{decay at dff main proposition estimate}
\begin{split}
\| B u \|_{H_{\mathrm{d3sc,3co,res}}^{M, \ast , \vol_{\pm}, K , F, S }} & \leq C ( \| N_{\dff}(P) u \|_{H^{ M , N , \vol_{\pm} , \vor_{\pm} + \vol_{\pm} + 1 - \delta,  \vob_{\pm} + 2 - \delta  , S  }_{\mathrm{d3sc, 3co, res}}} \\
& \quad + \| u \|_{ H^{ M, N , \vol_{\pm} - \delta, \vor_{\pm} + \vol_{\pm} - \delta, \vob_{\pm} - \delta, S   }_{\mathrm{d3sc,3co,res}} } ).
\end{split}
\end{align}
\end{proposition}
For definiteness, we will focus on the case with orders $\vor_{+}$, $\vol_{+}$, $\vob_{+}$, and drop the $+$ signs.
\begin{proof}
First, let $\rho_{\dff} \in \mathcal{C}^{\infty}( \psf \Xd )$ be a defining function for $\psf_{\dff} \Xd$, and let $q_{\tindex} \in \mathcal{C}_{c}^{\infty}( \mathcal{C}_{\tindex} \times \mathbb{R}^{n_{\tindex}})$ be a cut-off at $U_{\tindex}$ such that $q_{\tindex} = 1$ on $U_{\tindex}$. We will define 
\begin{equation*}
Q_0 \in \Psf^{-\infty, -\infty, 0, 0, 0, 0}, \ \Lambda \in \Psf^{0,\vol, \vol, 2\vol, 2 \vol, \vol}
\end{equation*}
such that $Q_0$ is some quantization of $q_{\tindex} \psi(\rho_{\dff})$, where $\psi \in \mathcal{C}^{\infty}_{c}( [0,1) )$ is a cut-off function at $0$, and $\Lambda$ is some quantization of $\psi( x_{\tindex} ) x_{\tindex}^{-\vol}$. Let also
\begin{equation*}
B^{\tindex} \in \Psi_{\mathrm{sc}}^{-N,-N}(X^{\tindex})
\end{equation*}
be invertible with inverse in $\Psi_{\mathrm{sc}}^{N,N}(X^{\tindex})$, where we will take $N>0$ to be arbitrarily large. Note that we also have $B^{\tindex} \in \Psi_{\mathrm{sc,b}}^{-N,-N,-N}(X^{\tindex})$. Then we will define
\begin{equation*}
B_0 = \varphi_{\dff} B^{\tindex} Q_0 \Lambda,
\end{equation*}
where $\varphi_{\dff} \in \mathcal{C}^{\infty}( \Xd )$ is a cut-off at $\dff$. \par

A priori, it is not clear that $B_0$ belongs to the second microlocal calculus. This is clarified through the following lemma, which is a direct analogy to Lemma \ref{lemma construction of L decay at cf}, except our analysis now takes place near $\dff$.

\begin{lemma} \label{membership of weird operators near dff}
Suppose that 
\begin{equation*}
\begin{gathered}
l^{\tindex} \in \mathbb{R}, \quad \vom^{\tindex}, \vor^{\tindex} \in \mathcal{C}^{\infty}( \overline{ ^{\mathrm{sc,b}}T^{\ast}} X^{\tindex} ), \quad A^{\tindex} \in \Psi_{\mathrm{sc,b}, \delta }^{\vom^{\tindex} , \vor^{\tindex}, l^{\tindex} }( X^{\tindex} ), \\
\vov, \vos \in \mathcal{C}^{\infty}( \psf \Xd ), \quad \vol, \vob \in \mathcal{C}^{\infty}( \mathcal{C}_{\tindex} \times \overline{\mathbb{R}^{n_{\tindex}}} ), \quad Q \in \Psf^{-\infty, -\infty , \vol, \vov, \vob, \vos}. 
\end{gathered}
\end{equation*}
Assume additionally that $Q$ is supported near $\dff$, and moreover $\WFs(Q)$ is contained in a neighborhood of $\psf_{\dff} \Xd$ where the free variables are bounded (in particular, $\WFs(Q)$ is compactly contained away from fiber infinity). Then we have
\begin{equation} \label{decay at dff special operator in lemma eq}
\varphi_{\dff} A^{\tindex} Q \in \Psf^{-\infty, -\infty, \vol, \vov + \vor^{\tindex}, \vob + l^{\tindex}, \vos + \vom^{\tindex} }, \quad \hat{N}_{\dff}( \varphi_{\dff} A^{\tindex} Q ) = A^{\tindex} \hat{N}_{\dff}(Q),
\end{equation}
where $\varphi_{\dff} \in \mathcal{C}^{\infty}( \Xd )$ is some cut-off function at $\cf$. Here the variables orders $\vom^{\tindex}, \vor^{\tindex}$ make sense in (\ref{decay at dff special operator in lemma eq}) by being understood as arbitrary extensions $\mathcal{C}^{\infty}( \psf \Xd )$ extensions outside of the $\WFs(Q)$.\end{lemma}

The proof of this lemma proceeds similarly to that of Lemma \ref{lemma construction of L decay at cf}, and is thus omitted.

By the above lemma, we now know that $B_0  \in \Psf^{-\infty, -\infty, \vol, 2\vol - N, 2\vol - N, \vol -N}$. Our strategy next is to estimate the $L^2$ norm of $B_0$, and have it be bounded by the right hand side of (\ref{decay at dff main proposition estimate}). Assuming this is done, then Proposition \ref{section 10 main proposition} can be proved easily. \par

Indeed, by applying elliptic regularity at $\dff$, i.e., Proposition \ref{proposition microlocal elliptic regularity estimate chapter 1 thesis}, part (1), and using condition (\ref{wave front set elliptic set condition decay at dff}), as well as the fact that $U_{\tindex} \subset \mathrm{Ell}_{\dff}(B_0)$ by construction. Assuming that $F, S, L < 0$ are sufficiently negative. Then we have
\begin{align} \label{decay at dff main reduction step}
\begin{split}
\| B u \|_{H_{\mathrm{d3sc,3co,res}}^{M, \ast , \vol, K , F , S}}  &\leq C \| B u \|_{H_{\mathrm{d3sc,3co,res}}^{M,\ast, \vol,  2\vol - N , 2 \vol - N, \vol - N}} \\
& \leq C ( \| B_0 u \|_{L^2} + \| u \|_{ H_{\mathrm{d3sc,3co,res}}^{M, \ast , L , 2\vol - N, 2\vol - N, \vol -N } }  )
\end{split}
\end{align}
if $N > 0$ is relatively small.  \par

For the rest of this proof, we will focus on estimating the $L^2$ norm of $B_0 u$. The main idea is to use the Plancherel theorem in the free variables, as well as the fact that $B^{\tindex}$ depends only on the interaction variables. Then we have 
\begin{align}   \label{front face decay cal 1.9}
\begin{split}
 \| B_0 u \|_{L^{2}}^{2} & \leq \int_{\mathbb{R}^{n_{\tindex}}} \| B^{\tindex} Q_0 \Lambda  u  \|_{L^{2}(X^{\tindex})}^2 d z_{\tindex} \\
&  = \int_{\mathbb{R}^{n_{\tindex}}} \| B^{\tindex} \mathcal{F}_{z_{\tindex} \rightarrow \zeta_{\tindex}} (Q_0 \Lambda   u ) \|_{ L^2 (X^{\tindex}) }^2 d\zeta_{\tindex} \\
& \leq C \int_{\mathbb{R}^{n_{\tindex}}} \| \mathcal{F}_{z_{\tindex} \rightarrow \zeta_{\tindex}} (Q_0 \Lambda  u ) \|_{H_{\mathrm{sc,b}}^{ - N, -N, - N }  (X^{\tindex}) }^2 d \zeta_{\tindex} \\
& \leq C \int_{\mathbb{R}^{n_{\tindex}}} \| \mathcal{F}_{z_{\tindex} \rightarrow \zeta_{\tindex} } ( Q_0 \Lambda u ) \|_{H_{\mathrm{sc,b}}^{-N, \tvor^{\tindex} - 2 \delta, b - 2\delta }(X^{\tindex})}^2 d\zeta_{\tindex}, 
\end{split}
\end{align}
where we recall that $b = \vob - 2 \vol$ is a constant such that $|b + 1| < ( n^{\tindex} - 2 )/2$. Here $\delta > 0$ is chosen to be sufficiently small such that $\tvor^{\tindex} - 2 \delta$, $b - 2 \delta$ still satisfy the threshold conditions specified in Corollary \ref{Andras second microlocal main corollary}, which has also been discussed after the statement of the corollary. It follows that we can apply Corollary \ref{Andras second microlocal main corollary} to further estimate
\begin{align}
\begin{split}
& \int_{\mathbb{R}^{n_{\tindex}}} \| \mathcal{F}_{z_{\tindex} \rightarrow \zeta_{\tindex}} (Q_0 \Lambda u ) \|_{H_{\mathrm{sc,b}}^{-N, \tvor^{\tindex} - 2 \delta, b - 2\delta } (X^{\tindex}) }^2 d \zeta_{\tindex} \\
& \qquad \leq C \int_{\mathbb{R}^{n_{\tindex}}} \| \hat{N}_{\dff}(P) \mathcal{F}_{z_{\tindex} \rightarrow \zeta_{\tindex}} ( Q_0 \Lambda u ) \|_{ H_{\mathrm{sc,b}}^{ - N -2, \tvor^{\tindex} + 1 - 2 \delta , b + 2 - 2 \delta } ( X^{\tindex} ) }^2 d \zeta_{\tindex}.
\end{split}
\end{align} \par

Now, recall that we have
\begin{align}  \label{front face decay cal 2}
\begin{split}
& \| \hat{N}_{\dff}(P) \mathcal{F}_{z_{\tindex} \rightarrow \zeta_{\tindex}} ( Q_0 \Lambda  u ) \|_{H_{\mathrm{sc,b}}^{ - N - 2, \tvor^{\tindex} + 1 - 2 \delta,  b + 2 - 2 \delta }(X^{\tindex})}^2 \\
& \qquad \simeq \|  \hat{N}_{\dff}(P) \mathcal{F}_{z_{\tindex} \rightarrow \zeta_{\tindex}} (Q_0 \Lambda   u) \|_{H_{\mathrm{b}}^{-K,b+ 2 - 2 \delta  }(X^{\tindex}) }^2  + \|  A^{\tindex} \hat{N}_{\dff}(P) \mathcal{F}_{z_{\tindex} \rightarrow \zeta_{\tindex}} ( Q_0 \Lambda  u ) \|_{L^{2}(X^{\tindex})}^2,
\end{split}
\end{align}
where $K > 0$ is taken arbitrarily large, and
\begin{equation*}
A^{\tindex} \in \Psi_{\mathrm{sc,b}, \delta }^{ - N - 2, \tvor^{\tindex} + 1 - 2 \delta, b + 2 - 2 \delta  }(X^{\tindex})
\end{equation*}
is elliptic in the symbolic sense, and in particular can be taken supported near the diagonal. \par

Integrating the last term in (\ref{front face decay cal 2}) over $\zeta_{\tindex} \in \mathbb{R}^{n_{\tindex}}$, followed by using Plancherel in the free variables again and then identity (\ref{front face decay cal 0.8}), we see that
\begin{align}  \label{front face decay cal 2.1}
\begin{split}
& \int_{\mathbb{R}^{n_{\tindex}}}  \|  A^{\tindex} \hat{N}_{\dff}(P) \mathcal{F}_{z_{\tindex} \rightarrow \zeta_{\tindex}} ( Q_0 \Lambda  u ) \|_{L^{2}(X^{\tindex})}^2 d\zeta_{\tindex} \\
& \qquad = \int_{\mathbb{R}^{n_{\tindex}}} \| A^{\tindex} N_{\dff}(P) Q_0 \Lambda   u \|_{L^{2}(X^{\tindex})}^2 d \zeta_{\tindex} = \| A^{\tindex} N_{\dff}(P) Q_0 \Lambda   u \|_{L^{2}}^2.
\end{split}
\end{align} 
We can then commute out
\begin{equation}  \label{front face decay cal 2.1.1}
\| A^{\tindex} N_{\dff}(P) Q_0 \Lambda u \|_{L^{2}} \leq \| A^{\tindex} Q_0 \Lambda N_{\dff}(P) u \|_{L^{2}} + \| A^{\tindex} [ N_{\dff}(P), Q_0 \Lambda ] u \|_{L^2}.
\end{equation} 
By choosing $Q_0$ to be supported near the diagonal, we can assume that $A^{\tindex} Q_0$ is supported near $\dff$, i.e., $A^{\tindex} Q_0 = \varphi_{\dff} A^{\tindex} Q_0$ where $\varphi_{\dff}$ is a cut-off at $\dff$. Here, we have also used that $Q_0$ is supported near $\dff$, since it is given by a quantization of some symbol that is supported near $\psf_{\dff}\Xd$. Then Lemma \ref{membership of weird operators near dff} tells us that
\begin{equation} \label{front face decay cal 2.1.2}
A^{\tindex} Q_0 \Lambda \in \Psf^{-\infty, -\infty, \vol, 2 \vol + \tvor^{\tindex} + 1 - 2 \delta, 2 \vol + b + 2 - 2 \delta , \vol - N - 2  }.
\end{equation}
On the other hand, since $\hat{N}_{\dff}( Q_0 \Lambda )$ is scalar valued (and thus $\hat{N}_{\dff} ( [ N_{\dff}(P), Q_0 \Lambda ] ) = 0$), we can apply a calculation similar to what was done in \S \ref{Subsection commutator formulae} to conclude that
\begin{equation*}
[ N_{\dff}(P), Q_0 \Lambda ] \in \Psf^{-\infty,-\infty, \vol - 1 + \delta , 2 \vol - 1 + \delta , 2 \vol - 2 + \delta , \vol - 1 + \delta }.
\end{equation*}
Moreover, $[ N_{\dff}(P), Q_0 \Lambda ]$ is also an operator with support near $\dff$ (since $\Lambda$ is supported near the diagonal), so we can again conclude from Lemma \ref{membership of weird operators near dff} that
\begin{equation} \label{front face decay cal 2.1.3}
A^{\tindex} [ N_{\dff}(P) , Q_0 \Lambda ] \in \Psf^{-\infty, -\infty, \vol - 1 + \delta, 2 \vol + \tvor^{\tindex} - \delta, 2 \vol + b - \delta, \vol - 3  - N + \delta}.
\end{equation} 
\par

Now, one might expect to be able to estimate (\ref{front face decay cal 2.1.1}) directly. However, there is still the mild problem where the variable order $\tvor^{\tindex}$ only makes sense near $\WFs(Q_0 )$, and needs to be interpreted via extensions in (\ref{front face decay cal 2.1.2}) and (\ref{front face decay cal 2.1.3}). \par

For this, we simply take $\tilde{Q} \in \Psf^{-\infty,-\infty,0,0,0,0}$  such that $\WFs( Q_0 ) \cap \WFs( I - \tilde{Q} ) = \emptyset$, and moreover $\WFs( \tilde{Q} )$ is contained in a small neighborhood of $\psf_{\dff} \Xd$ where the free variables are bounded. Then we plainly have
\begin{align*}
\| A^{\tindex} Q_0 \Lambda N_{\dff}(P) u \|_{L^{2}} & \leq \| A^{\tindex} Q_0 \Lambda \tilde{Q} N_{\dff}(P) u\|_{L^2} \\
& \quad + \| A^{\tindex} Q_0 \Lambda ( I - \tilde{Q} ) N_{\dff}(P) u \|_{L^2}, 
\end{align*}
and 
\begin{align*}
\| A^{\tindex}[ N_{\dff}(P), Q_0 \Lambda ] u \|_{L^2} & \leq \| A^{\tindex} [ N_{\dff}(P), Q_0 \Lambda ] \tilde{Q} u \|_{L^2} \\
& \quad + \|  A^{\tindex}[ N_{\dff}(P), Q_0 \Lambda ] ( I - \tilde{Q} ) u \|_{L^2}.
\end{align*} 
By (\ref{front face decay cal 2.1.2}) and (\ref{front face decay cal 0.7}), and using the fact that $2 \vol + b = \vob$, we can estimate 
\begin{align*}
  \| A^{\tindex} Q_0 \Lambda \tilde{Q} N_{\dff}(P) u \|_{L^2} & \leq C \| \tilde{Q} N_{\dff}(P) u \|_{H_{\mathrm{d3sc,3co,res}}^{\ast, \ast, \vol, 2 \vol + \tvor^{\tindex} + 1 - 2 \delta, 2 \vol + b + 2 - 2 \delta, \vol - N - 2  } } \\
 & \leq C  \| \tilde{Q} N_{\dff}(P) u \|_{H_{\mathrm{d3sc,3co,res}}^{\ast, \ast, \vol, \vor + \vol + 1 + 2 \beta^{\tindex} - 2 \delta , \vob + 2 - 2 \delta, \vol - N - 2  } }   \\
& \leq C \| N_{\dff}(P) u \|_{H_{\mathrm{d3sc,3co,res}}^{ \ast , \ast , \vol, \vor + \vol + 1, \vob + 2, S}} ,
\end{align*}
where in the last step we will take $2 \beta^{\tindex} < \delta$. Notice that this relation between $\delta$ and $\beta_{\tindex}$ is consistent with (\ref{front face decay cal 0.69}). We can therefore write $2\beta^{\tindex} - \delta = - \delta'$ for some $\delta' > 0$. Moreover, from (\ref{front face decay cal 2.1.3}), the same calculation yields
\begin{equation*}
\| A^{\tindex} [ N_{\dff}(P) , Q_0 \Lambda ] \tilde{Q} u \|_{L^2} \leq C  \| u \|_{H_{\mathrm{d3sc,3co,res}}^{\ast,\ast, \vol - 1 + \delta, \vor + \vol - \delta', \vob - \delta, S}},
\end{equation*}
where we remind that $\vob = 2 \vol + b$. Meanwhile, by Lemma \ref{membership of weird operators near dff}, we also have
\begin{equation*}
\begin{gathered}
A^{\tindex} Q_0 \Lambda ( I - \tilde{Q} ) \in \Psf^{-\infty, -\infty, \vol, -\infty,  2 \vol + b + 2 - 2 \delta, -\infty}, \\
A^{\tindex}[ N_{\dff}(P) , Q_0 \Lambda ] ( I - \tilde{Q} ) \in \Psf^{-\infty, -\infty, \vol - 1 + \delta, -\infty, 2 \vol + b - \delta, -\infty},
\end{gathered}
\end{equation*}
and thus
\begin{equation*}
\begin{gathered}
\| A^{\tindex} Q_0 \Lambda ( I - \tilde{Q} ) N_{\dff}(P) \|_{L^2} \leq C \| N_{\dff}(P) u \|_{ H_{\mathrm{d3sc,3co,res}}^{\ast,\ast, \vol, \ast, \vob + 2, \ast  } }, \\
\|  A^{\tindex}[ N_{\dff}(P) , Q_0 \Lambda ] ( I - \tilde{Q} ) u  \|_{L^2} \leq C \| u \|_{H_{\mathrm{d3sc,3co,res}}^{\ast, \ast, \vol - 1 + \delta, \ast, \vob - \delta, \ast } }.
\end{gathered}
\end{equation*}
It follows from putting the above back into (\ref{front face decay cal 2.1.1}), we finally get that
\begin{align} \label{front face decay cal 2.1.4}
\begin{split}
\| A^{\tindex} N_{\dff}(P) Q \Lambda u \|_{L^{2}} & \leq C ( \| N_{\dff}(P) u \|_{H_{\mathrm{d3sc,3co,res}}^{ \ast , \ast , \vol, \vor + \vol + 1, \vob + 2, S}}  + \| u \|_{H_{\mathrm{d3sc,3co,res}}^{\ast,\ast, \vol - 1 + \delta, \vor + \vol - \delta', \vob - \delta, S}} ),
\end{split}
\end{align}
which is an estimate for the first term in the second line of (\ref{front face decay cal 2}).
\par

In order to estimate the first term in the second line of (\ref{front face decay cal 2}), we will need the following lemma:

\begin{lemma} \label{useful lemma 3co decay at dff}
 Let $k$ be an even positive integer. Then for all $v$ supported in a fixed small neighborhood of $\dff$, say $\{ x_{\dff} \leq 1 \}$, where $x_{\dff} \in \mathcal{C}^{\infty}(\Xd)$ is a boundary defining function of $\Xd$, we have
 \begin{equation}
\int_{\mathbb{R}^{n_{\tindex}}} \ang{\zeta_\tindex}^{-2k} \| \mathcal{F}_{ z_{\tindex} \rightarrow \zeta_{\tindex} } v \|_{H^{-k, b}_{\mathrm{b}} (X^{\tindex})}^2 \, d\zeta_{\tindex} \leq C \| v \|^2_{H^{-k, b, b, 0}_{\mathrm{3co}}},
\label{eq:v norm} \end{equation}
and consequently, 
 \begin{equation}
\int_{\mathbb{R}^{n_{\tindex}}}  \|  \mathcal{F}_{z_{\tindex} \rightarrow \zeta_{\tindex}} v \|_{H^{-k, b}_{\mathrm{b}}(X^{\tindex})}^2 \, d\zeta_{\tindex} \leq C \| (1 + \Delta_{z_{\tindex}} )^{k/2} v \|^2_{H^{-k, b, b, 0}_{\mathrm{3co}}} . 
\label{eq:v norm 2} \end{equation}
\end{lemma}
\begin{proof}
The second identity \eqref{eq:v norm 2} follows immediately from \eqref{eq:v norm} by replacing $v$ with $(1 + \Delta_{z_{\tindex}} )^{k/2} v$. So it suffices to prove \eqref{eq:v norm}. 
 
 We define $\mathcal{H}^{k, -b}$ to be the Hilbert space 
 \begin{equation}\begin{gathered}
 \mathcal{H}^{k, -b} = \Big\{ w \in \mathcal{S}'(\mathbb{R}^n) : \int_{\mathbb{R}^{n_{\tindex}}} \ang{\zeta_{\tindex}}^{2k} \| \mathcal{F}_{z_{\tindex} \rightarrow \zeta_{\tindex}} w(\zeta_a) \|_{H^{k, -b}_\mathrm{b} ( X^\tindex )}^2 \, d\zeta_\tindex < \infty \Big\}, \\
 \ang{w_1, w_2}_{\mathcal{H}^{k, -b}} \coloneq \int_{\mathbb{R}^{n_{\tindex}}} \ang{\zeta_\tindex }^{2k} \ang{ \mathcal{F}_{z_{\tindex} \rightarrow \zeta_{\tindex} } w_1(\zeta_{\tindex} ), \mathcal{F}_{z_{\tindex} \rightarrow \zeta_{\tindex}} w_2(\zeta_\tindex ) }_{H^{k, -b}_\mathrm{b} (X^\tindex )} \, d\zeta_\tindex .
 \end{gathered}\end{equation}
 Now let $v \in H^{-k, b, b, 0}_{\mathrm{3co}}$ have support in $\{ x_{\dff} \leq 1 \}$. Then $v$ can be represented as 
 \begin{equation*}
 v = x_{\partial X^{\tindex}}^{b} \sum_j V^{(j)} v_j, \quad V^{(j)} \in \mathcal{V}_{\mathrm{3co}}^k, \quad v_j \in L^2,
 \end{equation*}
 with the squared norm of $v$ comparable to the infimum of $\sum_j \| v_j \|_2^2$ over all such representations. Here the $V^{(j)}$ should generate $\mathcal{V}_{\mathrm{3co}}^k$ as a module over $C^\infty(\Xd)$. We can take such a generating set, and split each  $V^{(j)} = \chi V^{(j)} + (1 - \chi) V^{(j)}$, where $\chi = 1$ on $\{  x_{\dff} \leq 1 \}$, vanishing outside $\{ x_{\dff} \geq 2 \}$. Then the $(1 - \chi) V^{(j)}$ elements can be discarded due to the support condition on $v$, and we may hence assume that the 3co-vector fields can be chosen from a generating set of partial derivatives in $z_{\tindex}$, and b-derivatives in the interaction variables. Then consider the $L^2$ pairing $ \ang{v, w}_{L^2}$, where $w \in \mathcal{H}^{k, -b}$. By using the representation as above for $v$, integrating by parts to move the vector fields to $w$, and expressing in terms of the partial Fourier transform applied to both functions, we see that 
 \begin{equation}
  \label{eq:v norm 3} 
 | \ang{v, w}_{L^2}| \leq C \| v \|_{H^{-k, b, b, 0}_{\mathrm{3co}}} \| w \|_{\mathcal{H}^{k, -b}}. 
\end{equation}
 Thus, $\{ v \in H^{-k, b, b, 0}_{\mathrm{3co}} \mid \text{supp } v \subset \{ x_{\dff} \leq 1 \} \}$ is embedded in the dual space of $\mathcal{H}^{k, -b}$, which is easily seen to be the space $\mathcal{H}^{-k, b}$. Moreover, \eqref{eq:v norm 3} shows that the $\mathcal{H}^{-k, b}$ norm of $v$ is bounded by the $H^{-k, b, b, 0}_{\mathrm{3co}}$ norm of $v$. This implies \eqref{eq:v norm}. 
\end{proof}

By using (\ref{eq:v norm 2}), and taking $K$ to be a large even integer, we can now estimate
\begin{align} \label{front face decay cal 2.1.4.1}
\begin{split}
& \int_{\mathbb{R}^{n_{\tindex}}} \|  \hat{N}_{\dff}(P) \mathcal{F}_{z_{\tindex} \rightarrow \zeta_{\tindex}} (Q_0 \Lambda   u) \|_{H_{\mathrm{b}}^{-K,b+ 2 - 2 \delta  }(X^{\tindex}) }^2 d\zeta_{\tindex} \\
& \qquad \leq C  \| ( I + \Delta_{z_{\tindex}} )^{K/2} N_{\dff}(P) ( Q_0 \Lambda u ) \|_{H_{\mathrm{3co}}^{-K, b + 2 - 2 \delta, b + 2 - 2 \delta , 0}}^2 \\
& \qquad \leq C \| N_{\dff}(P) ( I + \Delta_{z_{\tindex}} )^{K/2}  Q_0 \Lambda u \|_{H_{\mathrm{d3sc,3co,res}}^{-K, b + 2 - 2 \delta, 0, b + 2 - K - 2 \delta , b + 2 - 2\delta, -K }}^2.
\end{split}
\end{align}
Although we just have $( I + \Delta_{z_{\tindex}} )^{K/2} \in \Psf^{K,0,0,0,0,K}$, we can actually show that, upon being composed with $Q_0$, it holds that
\begin{equation} \label{front face decay cal 2.1.4.2}
( I + \Delta_{z_{\tindex}} )^{K/2} Q_0 \in \Psf^{-\infty, -\infty, 0,0,0,0}.
\end{equation}
Indeed, since $Q_0$ is only microlocally supported in a neighborhood of $\psf_{\dff}\Xd$ where the free variables are bounded, the symbol of $( I + \Delta_{z_{\tindex}} )^{K/2}$, namely just $\langle \zeta_{\tindex} \rangle^{K}$, will also be bounded where $Q_0$ is microlocally supported.  \par

Thus starting from the last line of (\ref{front face decay cal 2.1.4.1}), we can commute 
\begin{align*}
& \| N_{\dff}(P) ( I + \Delta_{z_{\tindex}} )^{K/2}  Q_0 \Lambda u \|_{H_{\mathrm{d3sc,3co,res}}^{-K, b + 2 - 2 \delta, 0, b + 2 - K - 2 \delta , b + 2 - 2\delta, -K }} \\
& \qquad \leq \| ( I + \Delta_{z_{\tindex}} )^{K/2} Q_0 \Lambda N_{\dff}(P) u \|_{H_{\mathrm{d3sc,3co,res}}^{-K, b + 2 - 2 \delta, 0, b + 2 - K - 2 \delta , b + 2 - 2\delta, -K }} \\
& \qquad \quad + \| [ N_{\dff}(P), (I + \Delta_{z_{\tindex}} )^{K/2} Q_0 \Lambda ] u  \|_{H_{\mathrm{d3sc,3co,res}}^{-K, b + 2 - 2 \delta, 0, b + 2 - K - 2 \delta , b + 2 - 2\delta, -K }}.
\end{align*}
By using (\ref{front face decay cal 2.1.4.2}) and again $2 \vol + b = \vob$, we can estimate
\begin{align*}
& \| ( I + \Delta_{z_{\tindex}} )^{K/2} Q_0 \Lambda N_{\dff}(P) u \|_{H_{\mathrm{d3sc,3co,res}}^{-K, b + 2 - 2 \delta, 0, b + 2 - K - 2 \delta , b + 2 - 2\delta, -K }} \\
& \qquad \leq C \| N_{\dff}(P) u  \|_{H_{\mathrm{d3sc,3co,res}}^{\ast,\ast, \vol, \vob + 2 - K - 2 \delta,\vob + 2 - 2 \delta, \vol - K }}.
\end{align*}
On the other hand, it is clear that $\hat{N}_{\dff}( ( I + \Delta_{z_{\tindex}} )^{K/2} ) = \langle \zeta_{\tindex} \rangle^{K}$, thus $\hat{N}_{\dff}( [ N_{\dff}(P) , ( I + \Delta_{z_{\tindex}} )^{K/2} Q_0 \Lambda ] ) = 0$,
and we can again show that
\begin{equation*}
[ N_{\dff}(P), ( I + \Delta_{z_{\tindex}} )^{K/2} Q_0 \Lambda ] \in \Psf^{-\infty, -\infty, \vol - 1 + \delta, 2 \vol - 1 + \delta, 2 \vol - 2 + \delta, \vol + 1 + \delta }.
\end{equation*}
It follows that
\begin{align*}
& \| [ N_{\dff}(P) , (I + \Delta_{z_{\tindex}} )^{K/2} Q_0 \Lambda ] u  \|_{H_{\mathrm{d3sc,3co,res}}^{-K, b + 2 - 2 \delta, 0, b + 2 - K - 2 \delta , b + 2 - 2\delta, -K }} \\
& \qquad \leq C \| u \|_{H_{\mathrm{d3sc,3co,res}}^{\ast,\ast, \vol - 1 + \delta, \vob + 1 - K - \delta, \vob - \delta, \vol + 1 + \delta - K }},
\end{align*}
where we have again achieved an error estimate with improved order (i.e., strictly less than $\vol$) at $\dff$. Thus if $K$ is large enough, then we have
\begin{align*}
\begin{split}
& \int_{\mathbb{R}^{n_{\tindex}}} \|  \hat{N}_{\dff}(P) \mathcal{F}_{z_{\tindex} \rightarrow \zeta_{\tindex}} (Q_0 \Lambda   u) \|_{H_{\mathrm{b}}^{-K,b+ 2 - 2 \delta  }(X^{\tindex}) }^2 d\zeta_{\tindex} \\
& \qquad \leq C ( \| N_{\dff}(P) u \|_{H_{\mathrm{d3sc,3co,res}}^{\ast, \ast, \vol, F , \vob + 2 , S}}^2  + \| u \|_{H_{\mathrm{d3sc,3co,res}}^{ \ast, \ast, \vol - 1 + \delta, F, \vob - \delta, S } }^2 ),
\end{split}
\end{align*}
where $F,S < 0$ can be made arbitrarily negative. \par

Finally, by combining the above with (\ref{front face decay cal 2.1.4}), as well as the calculations which precede it, we conclude that
\begin{equation*}
\begin{split}
\| B_0 u \|_{L^2} \leq C ( \| N_{\dff}(P) u \|_{H_{\mathrm{d3sc,3co,res}}^{ \ast , \ast , \vol, \vor + \vol + 1, \vob + 2, S}}  + \| u \|_{H_{\mathrm{d3sc,3co,res}}^{\ast,\ast, \vol - 1 + \delta, \vor + \vol - \delta', \vob - \delta, S}} ).
\end{split}
\end{equation*}
Substituting this estimate back into (\ref{decay at dff main reduction step}) proves the Proposition.
\end{proof}

\section{Proof of Theorem \ref{intro; main theorem 2}} \label{section 10 proof of main theorem subsection}
Finally, in this section, we will finish the proof of Theorem \ref{intro; main theorem 2}. In fact, we can now clarify the assumptions on the orders $m_{\pm}$, $\vor_{\pm}$, $\vol_{\pm}$, $\vob_{\pm}$, $s_{\pm}$: We will take them in accordance with the construction of \S \ref{variable order construction section}, and moreover assume that the additional constraints imposed in Lemma \ref{interpolation lemma section 10} are satisfied. See also the discussions following the statement of Lemma \ref{interpolation lemma section 10}.   \par

Notice that, in most of the discussions above, we have assumed $\mathcal{C} = \mathcal{C}_{\tindex} $. However, as we have mentioned numerous time throughout this paper, it should be obvious that much of the results generalize easily to the case of $\mathcal{C} = \{ \mathcal{C}_{\tindex} : \tindex \in \ind \}$ as defined in (\ref{three-body blowup}).  \par

In particular, the construction of the second microlocalized operators $\Psf^{\vom, \vor, \vol, \vov, \vob, \vos}(\Xd)$ and the  phase space $\psf \Xd$ can be easily generalized to the setting where
\begin{equation*}
X = [ [ \overline{\mathbb{R}^{n}} ; \mathcal{C} ] ; \mathrm{ff} \cap \mf ].\end{equation*}
Here we recall that $\mathrm{ff} = \{ \ff : \tindex \in \ind \}$, where each $\ff$ is the lift of $\mathcal{C}_{\tindex}$ to $[ \overline{\mathbb{R}^{n}} ; \mathcal{C} ]$. As for the variable orders, using the same notations to denote the boundary faces of $\psf \Xd$, we will again require that
\begin{equation*}
\mathsf{m}  \in \mathcal{C}^{\infty}( { ^{\mathrm{d3sc,3co,res}}S^{\ast}} \Xd ),  \quad \mathsf{r} \in \mathcal{C}^{\infty}( \overline{ ^{\mathrm{d3sc,3co,res}}T^{\ast} }_{\mathrm{dmf}} \Xd ).
\end{equation*}
However, we must now assume $\vov, \vos, \vol, \vob \in \mathcal{C}^{\infty}(\psf \Xd)$ are such that
\begin{equation*}
\begin{gathered}
 \mathsf{v}|_{\dtsccf} \in \mathcal{C}^{\infty}( \dtsccf ), \quad \mathsf{s}|_{\rf}   \in \mathcal{C}^{\infty}( \mathrm{rf}_{\tindex} ), \\
  \vol |_{\psf_{\ff}\Xd} = \vol |_{\tcocf}, \, \vob |_{\psf_{\ff}\Xd} = \vob |_{\tcocf} \in \mathcal{C}^{\infty}( \mathcal{C}_{\tindex} \times \overline{ \mathbb{R}^{n_{\tindex}} } )
 \end{gathered}
\end{equation*}
for \emph{every} $\tindex \in \ind$. The Sobolev space $H_{\mathrm{d3sc,3co,res}}^{\vom, \vor, \vol, \vov, \vob, \vos}$ can be defined as well. Then by disjointness, Propositions \ref{almost semi-Fredholm estimates with symbolic decay}, \ref{decay at the corner face proposition} can be seen to easily generalize.

Now, starting from the statement of Proposition \ref{decay at the corner face proposition}, it remains to show that there exists some $\delta > 0$ such that
\begin{align} \label{proof of main theorem cal 1}
\begin{split}
\| u \|_{H_{\mathrm{d3sc,3co,res}}^{M, \vor_{\pm} - \delta , \vol_{\pm}, \vor_{\pm} + \vol_{\pm} - \delta, \vob_{\pm} - \delta, S } } & \leq C ( \| P u \|_{H^{ m_{\pm} - 2 , \vor_{\pm} + 1 , \vol_{\pm} , \vor_{\pm} + \vol_{\pm} + 1, \vob_{\pm} + 2  , s_{\pm} - 2  }_{\mathrm{d3sc, 3co, res}}} \\
& \quad + \| u \|_{ H^{ M, \vor_{\pm} - \delta , \vol_{\pm} - \delta, \vor_{\pm} + \vol_{\pm} - \delta, \vob_{\pm} - \delta, S  }_{\mathrm{d3sc,3co,res}} } ),
\end{split}
\end{align}
where $M,S \in \mathbb{R}$ are arbitrarily negative. But through Lemma \ref{interpolation lemma section 10}, we have already shown that the main result of Proposition \ref{decay at the corner face proposition} can be further improved to (\ref{section 10 renewed corner face estimate}), namely
\begin{equation} \label{final section estimate 1}
 \| u \|_{ H_{\mathrm{d3sc,3co,res}}^{ m_{ \pm }, \vor_{ \pm }, \vol_{ \pm }, \vor_{ \pm } + \vol_{ \pm }, \vob_{ \pm }, s_{ \pm }  } } \leq  C ( \| P u \|_{ H_{\mathrm{d3sc,3co,res}}^{ m_{ \pm } - 2, \vor_{ \pm } + 1, \vol_{ \pm }, \vor_{ \pm } + \vol_{\pm  } + 1, \vob_{ \pm } + 2, s_{ \pm } - 2 } } + \| u \|_{H^{M, N, \vol_{\pm}, K , F, S}_{\mathrm{d3sc, 3co, res}}} ),
\end{equation}
where $N,K,F \in \mathbb{R}$ are arbitrarily negative. Thus, it is enough to show that (\ref{proof of main theorem cal 1}) follows from (\ref{final section estimate 1}). \par

Let $q_{\tindex} \in \mathcal{C}^{\infty}_{c}( \mathcal{C}_{\tindex} \times \mathbb{R}^{n_{\tindex}} )$, $\tindex \in \ind$ be a family such that each $q_{\tindex}$ cuts off at $\Sigma_{\dff}$. Moreover, assume that $Q \in \Psf^{-\infty, -\infty, 0,0,0,0}$ is such that $\hat{N}_{\dff}(Q) = q_{\tindex}$ for every $\tindex \in \ind$. Then we have
\begin{equation*}
\| u \|_{H_{\mathrm{d3sc,3co,res}}^{M, N , \vol_{\pm}, K, F, S } } \leq \| Q u \|_{H_{\mathrm{d3sc,3co,res}}^{M, N , \vol_{\pm}, K, F, S } }  + \| ( I - Q )  u \|_{H_{\mathrm{d3sc,3co,res}}^{M, N , \vol_{\pm}, K, F, S } } .
\end{equation*} \par

Since $\mathrm{WF}_{\dff}'( I - Q ) \subset \mathrm{Ell}_{\dff}( P )$, by elliptic regularity at $\dff$, we already have
\begin{equation*}
\| ( I - Q )  u \|_{H_{\mathrm{d3sc,3co,res}}^{M, N , \vol_{\pm}, K, F, S } }  \leq C ( \| P u \|_{H_{\mathrm{d3sc,3co,res}}^{M - 2, N , \vol_{\pm}, K, F, S - 2 } } +  \| u \|_{H_{\mathrm{d3sc,3co,res}}^{M, N , L, K, F, S } }   ),
\end{equation*} 
where $L \in \mathbb{R}$ is arbitrarily negative. \par

On the other hand, let $\varphi_{\mathcal{C}_{\tindex}}$, $\tindex \in \ind$ be a family of $\mathcal{C}^{\infty}( \overline{\mathbb{R}^{n}} )$ functions such that $\sum_{\tindex \in \ind} \varphi_{\mathcal{C}_{\tindex}} = 1$. Moreover, we assume that the support of each $\varphi_{\mathcal{C}_{\tindex}}$ contains $\mathcal{C}_{\tindex}$, but not any other $\mathcal{C}_{\tindex'}$ such that $\tindex' \neq \tindex$. Then we can write 
\begin{equation*}
\| Q u \|_{H_{\mathrm{d3sc,3co,res}}^{M,N, \vol_{\pm}, K, F, S}} \leq \sum_{\tindex \in \ind} \| Q \varphi_{\mathcal{C}_{\tindex}} u \|_{H_{\mathrm{d3sc,3co,res}}^{M,N, \vol_{\pm}, K, F, S}}.
\end{equation*}
By Proposition \ref{section 10 main proposition}, for every $\tindex \in \ind$, we have
\begin{align*}
\begin{split}
\| Q \varphi_{\mathcal{C}_{\tindex}} u \|_{H_{\mathrm{d3sc,3co,res}}^{M, N , \vol_{\pm}, K , F, S }} & \leq C ( \| N_{\dff}(P) \varphi_{\mathcal{C}_{\tindex}} u \|_{H^{ M , N , \vol_{\pm} , \vor_{\pm} + \vol_{\pm} + 1 , \vob_{\pm} + 2  , S  }_{\mathrm{d3sc, 3co, res}}} \\
& \quad + \| u \|_{ H^{ M, N , \vol_{\pm} - \delta, \vor_{\pm} + \vol_{\pm} - \delta, \vob_{\pm} - \delta, S   }_{\mathrm{d3sc,3co,res}} } )
\end{split}
\end{align*}
for some $\delta > 0$. We can also estimate
\begin{align*}
\| N_{\dff}(P) \varphi_{\mathcal{C}_{\tindex}} u \|_{H^{ M , N , \vol_{\pm} , \vor_{\pm} + \vol_{\pm} + 1, \vob_{\pm} + 2  , S  }_{\mathrm{d3sc, 3co, res}}} & \leq \| ( N_{\dff}(P) - P ) \varphi_{\mathcal{C}_{\tindex}} u  \|_{H^{ M , N , \vol_{\pm} , \vor_{\pm} + \vol_{\pm} + 1,  \vob_{\pm} + 2  , S  }_{\mathrm{d3sc, 3co, res}}} \\
& \quad + \| P  \varphi_{\mathcal{C}_{\tindex}} u \|_{H^{ M , N , \vol_{\pm} , \vor_{\pm} + \vol_{\pm} + 1, \vob_{\pm} + 2  , S  }_{\mathrm{d3sc, 3co, res}}}.
\end{align*} \par

We now note that
\begin{equation*}
 N_{\dff}(P) - P = \sum_{\tindex' \neq \tindex} V^{\tindex'},
\end{equation*}
where $V^{\tindex'} \in S^{-2 - \delta}( X^{\tindex'} )$, $\tindex' \neq \tindex$. Thus, upon lifting $V^{\tindex'}$ to $\psf \Xd$, we find that:
\begin{equation*}
\begin{gathered}
\text{$V^{\tindex'}$ decays to order $-2 - \delta$ at $\psf_{\dmf} \Xd$, $\psf_{\dff} \Xd$ and $\rf$}; \\
\text{$V^{\tindex'}$ decays to order $-4 - 2 \delta$ at $\dtsccf$ and $\tcocf$.}
\end{gathered}
\end{equation*}
Meanwhile, the spatial orders at $\mathrm{dff}_{\tindex'}$, $\mathrm{cf}_{\tindex'}$ where $\tindex' \neq \tindex$ do not matter on the support of $\varphi_{\mathcal{C}_{\tindex}}$ (i.e., $\varphi_{\mathcal{C}_{\tindex}}$ vanishes to infinite orders at these faces). Thus we have
\begin{equation*}
( N_{\dff}(P) - P ) \varphi_{\mathcal{C}_{\tindex}} \in \Psi_{\mathrm{d3sc,3co,res}}^{0, - 2 - \delta, -2 - \delta, -4 - 2 \delta, -4 - 2 \delta, - 2 - \delta}.
\end{equation*}
It follows that we can estimate
\begin{equation*}
\| ( N_{\dff}(P) - P ) \varphi_{\mathcal{C}_{\tindex}} u  \|_{H^{ M , N , \vol_{\pm} , \vor_{\pm} + \vol_{\pm} + 1 , \vob_{\pm} + 2  , S  }_{\mathrm{d3sc, 3co, res}}} \leq C \| u \|_{H_{\mathrm{d3sc,3co,res}}^{M, N -2 - \delta, \vol_{\pm} - 2 - \delta, \vor_{\pm} + \vol_{\pm} - 3 - 2 \delta,  \vob_{\pm} - 2 - 2 \delta, S - 2 - \delta }}.
\end{equation*}
Meanwhile, we clearly have
\begin{align*}
& \| P  \varphi_{\mathcal{C}_{\tindex}} u \|_{H^{ M , N , \vol_{\pm} , \vor_{\pm} + \vol_{\pm} + 1, \vob_{\pm} + 2  , S  }_{\mathrm{d3sc, 3co, res}}} \\
& \qquad \leq \| \varphi_{\mathcal{C}_{\tindex}} P  u \|_{H^{ M , N , \vol_{\pm} , \vor_{\pm} + \vol_{\pm} + 1, \vob_{\pm} + 2  , S  }_{\mathrm{d3sc, 3co, res}}} + \| [ P , \varphi_{\mathcal{C}_{\tindex}} ] u \|_{H^{ M , N , \vol_{\pm} , \vor_{\pm} + \vol_{\pm} + 1, \vob_{\pm} + 2  , S  }_{\mathrm{d3sc, 3co, res}}} \\
& \qquad \leq C ( \| P u \|_{H_{\mathrm{d3sc,3co,res}}^{M,N, \vol_{\pm}, \vor_{\pm} + \vol_{\pm} + 1, \vob_{\pm} + 2, S}} + \| u \|_{H_{\mathrm{d3sc,3co,res}}^{M, N, \ast, \ast, \ast, \ast}} ),
\end{align*}
since $[ P , \varphi_{\mathcal{C}_{\tindex}} ] = [ \Delta, \varphi_{\mathcal{C}_{\tindex}} ]$ spatially vanishes to infinity orders everywhere except for at $\dmf$. \par

This concludes the proof of Theorem \ref{intro; main theorem 2}.

\end{document}